\newcommand\reallywidehat[1]{\arraycolsep=0pt\relax%
\begin{array}{c}
\stretchto{
  \scaleto{
    \scalerel*[\widthof{\ensuremath{#1}}]{\kern-.5pt\bigwedge\kern-.5pt}
    {\rule[-\textheight/2]{1ex}{\textheight}} 
  }{\textheight} %
}{0.8ex}\\           
#1\\                 
\rule{-1ex}{0ex}
\end{array}
}
\newcommand{\bdot}{\boldsymbol{.}}
  \newcommand{\bdtimes}{\buildrel{\bdot}\over\times}
\newcommand{\sB}{\ensuremath{\mathscr{B}}\xspace}
\newcommand{\sE}{\ensuremath{\mathscr{E}}\xspace}
\newcommand{\sG}{\ensuremath{\mathscr{G}}\xspace}
\newcommand{\sH}{\ensuremath{\mathscr{H}}\xspace}
\newcommand{\sM}{\ensuremath{\mathscr{M}}\xspace}
\newcommand{\sN}{\ensuremath{\mathscr{N}}\xspace}
\newcommand{\sP}{\ensuremath{\mathscr{P}}\xspace}
\newcommand{\sQ}{\ensuremath{\mathscr{Q}}\xspace}
\newcommand{\sR}{\ensuremath{\mathscr{R}}\xspace}
\newcommand{\sS}{\ensuremath{\mathscr{S}}\xspace}
\newcommand{\sT}{\ensuremath{\mathscr{T}}\xspace}
\newcommand{\sV}{\ensuremath{\mathscr{V}}\xspace}
\newcommand{\sW}{\ensuremath{\mathscr{W}}\xspace}
\newcommand{\sX}{\ensuremath{\mathscr{X}}\xspace}
\newcommand{\sY}{\ensuremath{\mathscr{Y}}\xspace}
\newcommand{\sZ}{\ensuremath{\mathscr{Z}}\xspace}
\newcommand{\eK}{{\sf K}}
\newcommand{\eE}{{\sf E}}
\newcommand{\eH}{{\sf H}}
\newcommand{\eG}{{\sf G}}
\newcommand{\fkm}{\ensuremath{\mathfrak{m}}\xspace}
\newcommand{\fkM}{\ensuremath{\mathfrak{M}}\xspace}
\newcommand{\fkX}{\ensuremath{\mathfrak{X}}\xspace}
\newcommand{\fkZ}{\ensuremath{\mathfrak{Z}}\xspace}
\newcommand{\mar}[1]{\marginpar{\tiny #1}}
\newcommand{\et}{{\text{\rm \'et}}}
\newcommand{\diam}{{\Diamond}}
\newcommand{\sdiam}{{\blacklozenge}}
 \newcommand{\bDiam}{\blacklozenge}
\renewcommand{\inf}{{\rm inf}}
\newcommand{\BA}{\ensuremath{\mathbb {A}}\xspace}
\newcommand{\BB}{\ensuremath{\mathbb {B}}\xspace}
\newcommand{\BC}{\ensuremath{\mathbb {C}}\xspace}
\newcommand{\BD}{\ensuremath{\mathbb {D}}\xspace}
\newcommand{\BF}{\ensuremath{\mathbb {F}}\xspace}
\newcommand{\BG}{\ensuremath{\mathbb {G}}\xspace}
\newcommand{\BL}{\ensuremath{\mathbb {L}}\xspace}
\newcommand{\BM}{\ensuremath{\mathbb {M}}\xspace}
\newcommand{\BN}{\ensuremath{\mathbb {N}}\xspace}
\newcommand{\BP}{\ensuremath{\mathbb {P}}\xspace}
\newcommand{\BQ}{\ensuremath{\mathbb {Q}}\xspace}
\newcommand{\BR}{\ensuremath{\mathbb {R}}\xspace}
\newcommand{\BV}{\ensuremath{\mathbb {V}}\xspace}
\newcommand{\BW}{\ensuremath{\mathbb {W}}\xspace}
\newcommand{\BX}{\ensuremath{\mathbb {X}}\xspace}
\newcommand{\BZ}{\ensuremath{\mathbb {Z}}\xspace}
\newcommand{\CA}{\ensuremath{\mathcal {A}}\xspace}
\newcommand{\CB}{\ensuremath{\mathcal {B}}\xspace}
\newcommand{\CE}{\ensuremath{\mathcal {E}}\xspace}
\newcommand{\CF}{\ensuremath{\mathcal {F}}\xspace}
\newcommand{\CG}{\ensuremath{\mathcal {G}}\xspace}
\newcommand{\CH}{\ensuremath{\mathcal {H}}\xspace}
\newcommand{\CI}{\ensuremath{\mathcal {I}}\xspace}
\newcommand{\CL}{\ensuremath{\mathcal {L}}\xspace}
\newcommand{\CM}{\ensuremath{\mathcal {M}}\xspace}
\newcommand{\CO}{\ensuremath{\mathcal {O}}\xspace}
\newcommand{\CP}{\ensuremath{\mathcal {P}}\xspace}
\newcommand{\CV}{\ensuremath{\mathcal {V}}\xspace}
\newcommand{\CX}{\ensuremath{\mathcal {X}}\xspace}
\newcommand{\CY}{\ensuremath{\mathcal {Y}}\xspace}
\newcommand{\ab}{{\mathrm{ab}}}
\newcommand{\ad}{{\mathrm{ad}}}
\DeclareMathOperator{\Aut}{Aut}
\DeclareMathOperator{\Gal}{Gal}
\newcommand{\GL}{\mathrm{GL}}
\newcommand{\GSp}{\mathrm{GSp}}
\newcommand{\id}{\ensuremath{\mathrm{id}}\xspace}
\DeclareMathOperator{\Ker}{Ker}
\newcommand{\loc}{\ensuremath{\mathrm{loc}}\xspace}
\newcommand{\red}{\ensuremath{\mathrm{red}}\xspace}
\DeclareMathOperator{\Res}{Res}
\newcommand{\RRZ}{{\rm {RZ}}}
\DeclareMathOperator{\Spa}{Spa\,}
\DeclareMathOperator{\Spec}{Spec\,}
\DeclareMathOperator{\Spd}{Spd\,}
\DeclareMathOperator{\Spf}{Spf\,}
\newcommand{\Sp}{{\mathrm{Sp}}}
\newcommand{\wt}{\widetilde}
\newcommand{\wh}{\widehat}
\newcommand{\norm}[1]{\|{#1}\|}
\newcommand{\ov}{\overline}
\newcommand{\incl}{\hookrightarrow}
\newcommand{\crys}{{\rm crys}}
\newcommand{\lps}{[\![}
\newcommand{\rps}{]\!]}
\newcommand{\llps}{(\!(}
\newcommand{\lrps}{)\!)}
\newtheorem{theorem}{Theorem}
\newtheorem{proposition}[theorem]{Proposition}
\newtheorem{prop/constr}[theorem]{Proposition/Construction}
\newtheorem{lemma}[theorem]{Lemma}
\newtheorem {conjecture}[theorem]{Conjecture}
\newtheorem{corollary}[theorem]{Corollary}
\theoremstyle{definition}
\newtheorem{definition}[theorem]{Definition}
\newtheorem{example}[theorem]{Example}
\newtheorem{remark}[theorem]{Remark}
\newtheorem{remarks}[theorem]{Remarks}
\newenvironment{altenumerate}
   {\begin{list}
      {\textup{(\theenumi)} }
      {\usecounter{enumi}
       \setlength{\labelwidth}{0pt}
       \setlength{\labelsep}{0pt}
       \setlength{\leftmargin}{0pt}
       \setlength{\itemsep}{\the\smallskipamount}
       \renewcommand{\theenumi}{\roman{enumi}}
      }}
   {\end{list}}
\newenvironment{altitemize}
   {\begin{list}
      {$\bullet$}
      {\setlength{\labelwidth}{0pt}
	   \setlength{\itemindent}{5pt}
       \setlength{\labelsep}{5pt}
       \setlength{\leftmargin}{0pt}
       \setlength{\itemsep}{\the\smallskipamount}
      }}
   {\end{list}}
\numberwithin{equation}{subsection}
\numberwithin{theorem}{subsection}
\renewcommand{\to}{%
   \ifbool{@display}{\longrightarrow}{\rightarrow}%
   }
\let\shortmapsto\mapsto
\renewcommand{\mapsto}{%
   \ifbool{@display}{\longmapsto}{\shortmapsto}%
   }
\newlength{\olen}
\newlength{\ulen}
\newlength{\xlen}
\newcommand{\xra}[2][]{%
   \ifbool{@display}%
      {\settowidth{\olen}{$\overset{#2}{\longrightarrow}$}%
       \settowidth{\ulen}{$\underset{#1}{\longrightarrow}$}%
       \settowidth{\xlen}{$\xrightarrow[#1]{#2}$}%
       \ifdimgreater{\olen}{\xlen}%
          {\underset{#1}{\overset{#2}{\longrightarrow}}}%
          {\ifdimgreater{\ulen}{\xlen}%
             {\underset{#1}{\overset{#2}{\longrightarrow}}}
             {\xrightarrow[#1]{#2}}}}%
      {\xrightarrow[#1]{#2}}
   }
\newcommand{\xyra}[2][]{%
   \settowidth{\xlen}{$\xrightarrow[#1]{#2}$}%
   \ifbool{@display}%
      {\settowidth{\olen}{$\overset{#2}{\longrightarrow}$}%
       \settowidth{\ulen}{$\underset{#1}{\longrightarrow}$}%
       \ifdimgreater{\olen}{\xlen}%
          {\mathrel{\xymatrix@M=.12ex@C=3.2ex{\ar[r]^-{#2}_-{#1} &}}}%
          {\ifdimgreater{\ulen}{\xlen}%
             {\mathrel{\xymatrix@M=.12ex@C=3.2ex{\ar[r]^-{#2}_-{#1} &}}}
             {\mathrel{\xymatrix@M=.12ex@C=\the\xlen{\ar[r]^-{#2}_-{#1} &}}}}}%
      {\mathrel{\xymatrix@M=.12ex@C=\the\xlen{\ar[r]^-{#2}_-{#1} &}}}%
   }
\newcommand{\xla}[2][]{%
   \ifbool{@display}%
      {\settowidth{\olen}{$\overset{#2}{\longleftarrow}$}%
       \settowidth{\ulen}{$\underset{#1}{\longleftarrow}$}%
       \settowidth{\xlen}{$\xleftarrow[#1]{#2}$}%
       \ifdimgreater{\olen}{\xlen}%
          {\underset{#1}{\overset{#2}{\longleftarrow}}}%
          {\ifdimgreater{\ulen}{\xlen}%
             {\underset{#1}{\overset{#2}{\longleftarrow}}}
             {\xleftarrow[#1]{#2}}}}%
      {\xleftarrow[#1]{#2}}
   }
\newcommand{\isoarrow}{%
   \ifbool{@display}{\overset{\sim}{\longrightarrow}}{\xrightarrow\sim}%
   }
\newcommand{\proet}{{\rm proet}}
\newcommand{\bfB}{{\mathbf B}}
\newcommand{\quash}[1]{}
\DeclareSymbolFontAlphabet{\mathbb}{AMSb} 
\DeclareSymbolFontAlphabet{\mathbbl}{bbold} 
\newcommand{\Prism}{{\mathlarger{\mathbbl{\Delta}}}}
\newcommand{\pri}{{\scriptstyle\Prism}}
\newcommand{\LMint}{\wh{ L{\mathcal {M}}^{\rm int}_{\CG, b, \mu}}_{/x_0}}
\newcommand{\LG}{\wh{{L}^+_W\CG}}
\newcommand{\LMs}{\wh{ L{\mathcal {M}}^{\rm int}_{/x_0}}}
 \newcommand{\lpstperf}{{\llps t^{1/p^\infty}\lrps}}
  \newcommand{\pstperf}{{\lps t^{1/p^\infty}\rps}}
\newcommand{\Perf}{{\rm Perfd}}
\newcommand{\cred}{ }
\newcommand{\cmag}{}
\begin{document}

\title{$p$-adic shtukas and the theory of  global and local  Shimura varieties}
\author[G. Pappas]{Georgios Pappas}
\address{Dept. of Mathematics, Michigan State University, E. Lansing, MI 48824, USA}
\email{pappasg@msu.edu}

\author[M. Rapoport]{Michael Rapoport}
\address{Mathematisches Institut der Universit\"at Bonn, Endenicher Allee 60, 53115 Bonn, Germany, and University of Maryland, Department of Mathematics, College Park, MD 20742, USA}
\email{rapoport@math.uni-bonn.de}

\date{\today}

\begin{abstract}{We establish basic results on $p$-adic shtukas and apply them to 
 the theory of local and  global Shimura varieties, and on their interrelation.   We  construct canonical integral models for (local, and global) Shimura varieties of Hodge type 
 with parahoric level structure.}

\end{abstract}

\maketitle

\tableofcontents
 
\section{Introduction} 
Shimura varieties were formally defined  in Deligne's Bourbaki seminar talk \cite{DelBour}.
 Let $(\eG, X)$ be a Shimura datum, in the sense of Deligne. Attached to $(\eG, X)$ is a pro-system of quasi-projective algebraic varieties ${\rm Sh}_\eK(\eG, X)$ over $\BC$ whose members are enumerated by the open compact subgroups $\eK$ of the finite ad\`ele group $\eG(\BA_f)$.  These have canonical models  ${\rm Sh}_\eK(\eG, X)_\eE$ over the reflex field $\eE\subset \bar\BQ$ of $(\eG, X)$. Throughout the paper, we will impose the blanket assumption that the $\BQ$-split rank and the $\BR$-split rank of the center of $G$ coincide.

 Part of Deligne's philosophy of Shimura varieties is that (if the associated central character $w_X$ is defined over $\BQ$ and the connected center of $G$ splits over a CM-field \cite[Lem. B 3.9]{Reim})  the Shimura variety ${\rm Sh}_\eK(\eG, X)_\eE$ is a moduli space of motives over $\Spec(\eE)$ (with level-$\eK$-structure).  This philosophy has been a guiding principle behind much of the work on Shimura varieties in the last decades. However, a draw-back of this idea is that the concept of a motive is still eluding a precise definition. Scholze, in a lecture in Jan. 2019 in Essen, suggested that it might be profitable to instead view Shimura varieties as moduli spaces of certain shtukas \emph{over $\BZ$}, cf. also his ICM talk in Rio \cite{SchICM} and his Berkeley lectures \cite{Schber}. In particular, he suggested constructing  a ``universal" $\eG$-shtuka over the Shimura variety ${\rm Sh}_\eK(\eG, X)_\eE$. 
 Here a $\eG$-shtuka should be the number field analogue of the concept of $\eG$-shtuka for global function fields, cf. \cite[\S 11.1]{Schber}. However, at the moment it is also not clear how to define this concept. 

Let $p$ be a prime number. Scholze is able to define the concept of a shtuka \emph{over $\BZ_p$}, comp. \cite{Schber}. Therefore, after base change to the completion $E=\eE_v$ of $\eE$  at some $p$-adic place $v$, the Shimura variety ${\rm Sh}_\eK(\eG, X)_E={\rm Sh}_\eK(\eG, X)_\eE\otimes_\eE E$ should come with a family of $\BZ_p$-shtukas.  In fact, Scholze uses this insight to define \emph{local Shimura varieties} which are the analogues over $p$-adic fields of Shimura varieties. This puts into reality a hope spelled out in \cite{RV} and is a $p$-adic avatar of Scholze's idea on global Shimura varieties.  

Let $(G, b, \mu)$ be a local Shimura datum. We recall \cite{RV} that this means that $G$ is a reductive group over $\BQ_p$, that $b\in G(\breve\BQ_p)$, and that $\mu$ is a conjugacy class of minuscule cocharacters of $G_{\bar\BQ_p}$. Let $E\subset \bar\BQ_p$ be the local reflex field. Then Scholze associates to $(G, b, \mu)$ a pro-system of rigid-analytic spaces over the completion $\breve E$ of the maximal unramified extension of $E$ (with Weil descent datum down to $E$) whose members are enumerated by the open compact subgroups $K\subset G(\BQ_p)$.  These spaces are moduli spaces $\CM_{G, b, \mu}=(\CM_{G, b, \mu, K})_{ K\subset G(\BQ_p)}$ of $G$-shtukas (with level-$K$-structures). One important class of local Shimura varieties arises from Rapoport-Zink spaces: if the local Shimura datum arises from rational RZ data \cite{R-Z}, then the RZ tower associated with the corresponding RZ space is a local Shimura variety, cf. \cite[Cor. 24.3.5]{Schber}. In this special case, the theory of non-archimedean uniformization of Shimura varieties of PEL-type provides a link between global Shimura varieties and local Shimura varieties, cf. \cite[chap. 6]{R-Z}.

In this paper we develop Scholze's idea. We establish basic results on $p$-adic shtukas and apply them to 
 the theory of local and  global Shimura varieties, and on their interrelation.   We derive some 
 interesting consequences,
 such as a construction of canonical integral models for (local, and global) Shimura varieties of Hodge type 
 with parahoric level structure.

Let us now be more specific. In our formulations, we will use Scholze's language of diamonds, $v$-sheaves, etc.

\subsection{Results on shtukas} Let us start with stating our results\footnote{{\cmag Some of these results have been extended in very recent works of Gleason-Ivanov \cite{GlIv} and G\"uthge \cite{Guth}, which were completed during the refereeing process of this paper.}} on shtukas. Let $k$ be an algebraically closed field of characteristic $p$. For most of the time, $k$ is the residue field of $\breve E$. We recall from \cite{Schber} that a \emph{shtuka of height $h$} over a perfectoid space $S=\Spa (R, R^+)\in {\Perf}_k$ with leg at the untilt $S^\sharp$ of $S$ is a vector bundle of rank $h$ on the analytic adic space $S\bdtimes  \BZ_p$, together with a meromorphic Frobenius map $\phi_\sP$ which has a pole along the Cartier divisor $S^\sharp$ of $S\bdtimes  \BZ_p$. Here
$$
S\bdtimes  \BZ_p  =S\bdtimes {\rm Spa}(\BZ_p)= {\rm Spa}(W(R^+))\setminus \{[\varpi]= 0\} ,
$$
where $[\varpi]$ is the Teichm\" uller lift of a pseudo-uniformizer of $R^+$. 
For $\mu\in (\BZ^h)_{\geq}$, there is also the notion of a shtuka of rank $h$ bounded by $\mu$. Given a smooth group scheme $\CG$ over $\BZ_p$, there is also the variant notion of a $\CG$-shtuka over $S$ and, given a conjugacy class of cocharacters $\mu$ of the generic fiber $G$ of $\CG$, also the variant notion of a $\CG$-shtuka  over $S$ bounded by $\mu$. {\cmag Finally,  there are corresponding notions of families of such objects over   adic spaces or schemes, which are defined, roughly speaking, as 
sections of the $v$-stack of shtukas 
 over the diamond or $v$-sheaf attached to the adic space or to the scheme,  see \S  \ref{sss:vs}--\ref{sss:vsSch}, and Definitions \ref{deffamofsht}, \ref{def:shtsch} below.} As an illustration, a shtuka over $\Spec(K)$, for a perfect field $K$ of characteristic $p$, can be shown to correspond to a free $W(K)$-module $M$ of finite rank, equipped with an isomorphism $\phi_M\colon {\rm Frob}^*(M)[1/p]\isoarrow M[1/p]$, comp. Theorem \ref{FFisocrystal-Intro} below. For instance, a $p$-divisible group $\sG$ over a scheme of finite type over $O_E$ defines a shtuka of height equal to the height of $\sG$ and bounded by $\mu=(1^{(d)}, 0^{(h-d)})$, where $d$ is the dimension of $\sG$. 

When the adic space is in characteristic zero, there is a more explicit description of $\CG$-shtukas which is \'etale-sheaf theoretic in nature, provided that $\mu$ is minuscule:

 \begin{proposition}[see Proposition \ref{pairs2}]\label{pairs2Intro} 
 Let $X$ be a locally Noetherian adic space over $\Spa(E,O_E)$, with associated diamond $X^\diam$. Fix $(\CG, \mu)$, where $\CG$ is a  connected smooth model over $\BZ_p$ of a reductive group $G$ over $\BQ_p$, and where $\mu$ is a conjugacy class of minuscule cocharacters of $G_{\bar \BQ_p}$. 
 There is a functor $\sP\mapsto (\BP, {\rm HT}(\sP))$ which gives an equivalence between the categories of:
 
\noindent 1) $\CG$-shtukas $(\sP, \phi_\sP)$ over $X$ with one leg  bounded by $\mu$, and 
 
 \smallskip
 
 \noindent 2) pairs $(\BP, {\rm H})$ consisting 
 of a pro-\'etale $\underline{\CG(\BZ_p)}$-torsor $\BP$ over $X^\diam$ and a $\underline{\CG(\BZ_p)}$-equivariant map of sheaves
 ${\rm H}: \BP\to \CF^\diam_{G, \mu^{-1}}$ over $\Spd E$.\hfill
  \end{proposition}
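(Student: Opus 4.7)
The plan is to verify the equivalence $v$-locally on $X^\diam$. The category of families of $\CG$-shtukas over $X$ bounded by $\mu$ is, by definition, the global sections of the $v$-stack of such shtukas on $X^\diam$, while the right-hand side (pairs $(\BP,\mathrm{H})$) also manifestly forms a $v$-stack on $X^\diam$, since pro-\'etale $\underline{\CG(\BZ_p)}$-torsors and morphisms to the $v$-sheaf $\CF^\diam_{G,\mu^{-1}}$ descend along $v$-covers. I am therefore reduced to constructing an equivalence, functorially in $S$, after evaluation on an affinoid perfectoid $S=\Spa(R,R^+)$ equipped with a map to $X^\diam$. Let $S^\sharp=\Spa(R^\sharp,R^{\sharp+})$ be the corresponding untilt over $E$, cut out in $S\bdtimes\BZ_p$ by a Cartier divisor $\{\xi=0\}$. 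In this setting a $\CG$-shtuka bounded by $\mu$ is a $\CG$-torsor $\sP$ on $S\bdtimes\BZ_p$ together with a Frobenius $\phi_\sP$ which is an isomorphism away from $\{\xi=0\}$ and whose pole there is bounded by $\mu$.

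To construct the functor $\sP\mapsto(\BP,\mathrm{HT}(\sP))$, I would decompose $(\sP,\phi_\sP)$ into two pieces of data, one on each side of the leg $\{\xi=0\}$. The torsor $\BP$ arises from the restriction of $(\sP,\phi_\sP)$ to the integral region of $S\bdtimes\BZ_p$ away from $\{\xi=0\}$, where $\phi_\sP$ is an honest isomorphism: by the Kedlaya--Liu / Scholze equivalence between $\phi$-equivariant $\CG$-torsors on this integral region and pro-\'etale $\underline{\CG(\BZ_p)}$-torsors on $S$, this restriction yields $\BP$. The map $\mathrm{H}$ arises from the behavior at the leg: the restriction of $(\sP,\phi_\sP)$ to the formal neighborhood of $\{\xi=0\}$ encodes a modification of $\CG$-torsors bounded by $\mu$, classified by a point of the $B_{dR}^+$-affine Grassmannian $\mathrm{Gr}^{B_{dR}}_{\CG,\leq\mu}$. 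For minuscule $\mu$, the Bialynicki--Birula morphism of Caraiani--Scholze and Fargues identifies this bounded Grassmannian with $\CF^\diam_{G,\mu^{-1}}$ as a $v$-sheaf over $\Spd E$, and pro-\'etale local trivializations of $\BP$ turn the classifying point into the desired $\underline{\CG(\BZ_p)}$-equivariant section $\mathrm{H}\colon\BP\to\CF^\diam_{G,\mu^{-1}}$.

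The inverse functor is built by reversing these two constructions and gluing. Given $(\BP,\mathrm{H})$, the torsor $\BP$ produces a $\phi$-equivariant $\CG$-torsor on the integral region away from $\{\xi=0\}$, while $\mathrm{H}$, via the inverse Bialynicki--Birula isomorphism, specifies a modification of type $\mu$ at $\{\xi=0\}$. A Beauville--Laszlo-type gluing along the cover of $S\bdtimes\BZ_p$ by the complement of $\{\xi=0\}$ and the formal neighborhood of $\{\xi=0\}$ assembles these data into a $\CG$-shtuka bounded by $\mu$. That the two functors are mutually quasi-inverse is then essentially tautological, each step being an equivalence on its respective region.

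The main obstacle is the Bialynicki--Birula identification of the minuscule locus of $\mathrm{Gr}^{B_{dR}}_{\CG,\leq\mu}$ with the diamondification $\CF^\diam_{G,\mu^{-1}}$ of the classical flag variety: this is a deep input, but by now standard. A secondary technical point is the appearance of $\mu^{-1}$ rather than $\mu$ on the flag-variety side, which comes from the choice of direction of the modification (between the ``trivial'' bundle built from $\BP$ and the shtuka bundle $\sP$); this sign must be tracked carefully to ensure $\underline{\CG(\BZ_p)}$-equivariance of $\mathrm{H}$. Once these ingredients are in place, the remaining verifications—Beauville--Laszlo gluing and the check that the two functors are mutually inverse—are routine.
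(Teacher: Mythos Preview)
Your proposal is correct and follows essentially the same route as the paper. The paper first proves the unbounded version (Proposition \ref{pairs0}) over a perfectoid $S$---extracting $\BP$ from the $\phi$-equivariant restriction to $\CY_{[0,r]}(S)$ via \cite[Prop.~22.6.1]{Schber}, building the no-legs shtuka $\sP_0$ from $\BP$, and comparing $\sP$ with $\sP_0$ at the leg to obtain the Grassmannian point via Beauville--Laszlo, citing \cite[Prop.~12.4.6]{Schber}---then imposes the $\mu$-bound and composes with the Bia{\l}ynicki--Birula isomorphism (Proposition \ref{pairs1}), and finally passes to the adic space $Y$ by $v$-stack descent (Proposition \ref{pairs2}); your outline matches this step for step, including the handling of the sign $\mu^{-1}$.
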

 Here  $\CF_{G, \mu^{-1}}$ denotes a partial flag variety and ${\rm HT}$ is the sheaf-theoretic analogue of Scholze's Hodge-Tate period map.  The latter is also constructed by Hansen \cite{HansenPre}. 
 
 Let us now assume that $X$ is the adic space attached to a smooth scheme $\CX$. Then, under some hypotheses, a pro-\'etale $\CG(\BZ_p)$-cover defines a pair as in Proposition \ref{pairs2Intro} above:
 \begin{proposition}[see Proposition \ref{uniqueHT}]\label{detorsIntro}
 Let $\BP$ be a pro-\'etale $\CG(\BZ_p)$-cover over the smooth $E$-scheme $\CX$ which is de Rham and bounded by the minuscule cocharacter $\mu$. Then there is a natural $\CG$-shtuka bounded by $\mu$ over $\CX$ which under the correspondence of Proposition \ref{pairs2Intro} arises from the $\underline{\CG(\BZ_p)}$-torsor defined by $\BP$ and the Hodge-Tate period map defined by $\BP$. 
 \end{proposition}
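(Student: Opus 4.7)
The plan is to construct the desired shtuka by assembling the pair $(\BP^\diam, H)$ demanded by Proposition \ref{pairs2Intro} and then invoking that equivalence. Thus the real content is the passage from the scheme-theoretic input $\BP$ on $\CX$ to the diamond-theoretic data $(\BP^\diam, H)$.

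First, I would reduce to the diamond setting. Let $X$ be the locally Noetherian adic space attached to $\CX$, and let $X^\diam$ be its diamond over $\Spd E$. Because $\CX$ is smooth over $\Spec E$ and $E$ has characteristic zero, a $\CG$-shtuka over $\CX$ is the same datum as a $\CG$-shtuka over $X^\diam$, by the definition of families of shtukas over a scheme (Definition \ref{deffamofsht}). The pro-\'etale $\CG(\BZ_p)$-cover $\BP$ on $\CX$ pulls back along the natural morphism of sites to a pro-\'etale $\underline{\CG(\BZ_p)}$-torsor $\BP^\diam$ on $X^\diam$; this is the torsor required in part 2 of Proposition \ref{pairs2Intro}.

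Next I would produce the Hodge--Tate period morphism
$$
H \colon \BP^\diam \longrightarrow \CF^\diam_{G,\mu^{-1}}.
$$
The hypothesis that $\BP$ is deRham supplies the $B_{\mathrm{dR}}^+$-formalism attached to a $\CG(\BZ_p)$-local system, while the boundedness by $\mu$ asserts that the induced Hodge filtration on the associated $B_{\mathrm{dR}}^+$-module has type $\mu^{-1}$ at every geometric point. Since $\mu$ is minuscule, the Bialynicki-Birula isomorphism identifies the corresponding Schubert cell inside the $B_{\mathrm{dR}}^+$-affine Grassmannian of $G$ with $\CF^\diam_{G,\mu^{-1}}$; collecting the pointwise data over $X^\diam$ gives the desired $\underline{\CG(\BZ_p)}$-equivariant morphism $H$ of $v$-sheaves over $\Spd E$. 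With $(\BP^\diam,H)$ in hand, the equivalence of Proposition \ref{pairs2Intro} produces a $\CG$-shtuka over $X^\diam$ bounded by $\mu$, which by the first step is a $\CG$-shtuka over $\CX$, manifestly natural in $\BP$.

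The main obstacle, in my view, is verifying that $H$ is well-defined as a morphism of $v$-sheaves rather than only as a pointwise assignment, and that the pointwise Hodge filtrations really do have type exactly $\mu^{-1}$ in families. I would handle this by tannakian reduction: choose a faithful representation $\CG \hookrightarrow \GL_n$ and reduce the construction to the case of $\GL_n$-torsors, where the deRham property of a $\BZ_p$-local system together with the construction of the Hodge filtration in families (following Scholze and Liu--Zhu) yields the required morphism of $v$-sheaves; tannakian rigidity then transports this back to $\CG$. Compatibility with the shtuka produced by Proposition \ref{pairs2Intro} is then immediate from the construction, since both sides of the equivalence are built from the same $B_{\mathrm{dR}}^+$-lattice.
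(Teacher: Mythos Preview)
Your proposal is correct and follows essentially the same approach as the paper: reduce to the $\GL_n$ case by the Tannakian formalism, construct the de Rham--Tate map using the $\BB_{\rm dR}^+$-lattice $\BM_0$ furnished by the deRham hypothesis (the paper makes this explicit via Scholze's $\CO\BB_{\rm dR}$-formalism and cites descent as in Caraiani--Scholze to show the pointwise lattices assemble into a genuine $v$-sheaf morphism), and then invoke Proposition~\ref{pairs2Intro}. The only nuance you leave implicit is that the paper first lands in ${\rm Gr}_{G,\Spd(E)}$ via a map ${\rm DRT}$ and then composes with the Bia{\l}ynicki-Birula isomorphism to reach $\CF^\diam_{G,\mu^{-1}}$, using \cite[Prop.~7.9]{SchpHodge} to see the image has the correct type; but this is exactly what you sketch.
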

 The property of being de Rham is defined by Scholze in \cite{SchpHodge}. It implies that at every classical point $x$ of $\CX$, the fiber $\BP_x$ is a Galois representation of $\Gal(\ov{E(x)}/E(x))$ in $G(\BQ_p)$ of \emph{de Rham type} and is associated by Fontaine's functor ${\rm D_{dR}}$ to a filtered $G$-isocrystal with filtration type given by $\mu$.  The map ${\rm H}$ maps $x$ to the point in the flag variety given by the corresponding filtration of the $G$-isocrystal. 
 
 Shtukas in characteristic $p$  
 are crystalline in nature. This is more transparent when the topology on the rings is discrete. 
 Let $\sX=\Spec(A)$ be a perfect $k$-scheme. A \emph{meromorphic Frobenius crystal} over $\sX$ is  a vector bundle  $\sM$ over $\Spec(W(A))$ equipped with an isomorphism
\begin{equation}
\phi_\sM\colon {\rm Frob}^*(\sM)[{1}/{p}]\xrightarrow{ \sim\ } \sM[{1}/{p}] .
\end{equation}

 \begin{theorem}[see Theorem \ref{FFisocrystal}]\label{FFisocrystal-Intro}
There is an exact tensor equivalence between the category of meromorphic Frobenius crystals over $\Spec(A)$ and the category of shtukas  over $\Spec(A)$.  
\end{theorem}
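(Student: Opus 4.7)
The plan is to construct functors in both directions and verify they are mutually inverse by reducing, via $v$-descent, to an algebraization statement on a perfectoid cover.

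From a meromorphic Frobenius crystal $(\sM, \phi_\sM)$ on $\Spec(W(A))$, one produces a shtuka on $\Spec(A)^\diam$ as follows. For each perfectoid $S = \Spa(R, R^+)$ with map to $\Spec(A)^\diam$ (i.e., a ring map $A \to R^+$), functoriality of Witt vectors gives $W(A) \to W(R^+)$; base change of $\sM$ along this map produces a finite projective $W(R^+)$-module and hence a vector bundle on $\Spa(W(R^+)) \setminus \{[\varpi]=0\}$, while $\phi_\sM$ base-changes to a meromorphic Frobenius with pole along $\{p=0\}$, i.e., a shtuka with leg along the characteristic-$p$ locus. These families glue by $v$-descent to a shtuka over $\Spec(A)^\diam$, and the assignment is manifestly exact and compatible with tensor products.

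For the inverse functor, given a shtuka $(\sP, \phi_\sP)$ over $\Spec(A)^\diam$, one chooses a $v$-cover by a perfectoid $S = \Spa(R, R^+)$ and pulls back to obtain a vector bundle $\sE$ on $\Spa(W(R^+)) \setminus \{[\varpi]=0\}$ with meromorphic Frobenius that is an isomorphism after inverting $p$. The crucial \emph{algebraization step} is to identify such pairs $(\sE, \phi_\sE)$ with pairs $(M, \phi_M)$ consisting of a finite projective $W(R^+)$-module and an isomorphism $\Frob^*M[1/p] \isoarrow M[1/p]$. On the locus $\{p \ne 0\}$ this is the classical identification of vector bundles on (the integral model of) the relative Fargues--Fontaine curve with suitable modules over $W(R^+)[1/p]$; to extend across the characteristic-$p$ locus $\{p=0\}$ one exploits the Frobenius structure via a Beauville--Laszlo-type glueing, using that $\phi_\sE$ provides an integral identification between $\Frob^*\sE$ and $\sE$ near $\{p=0\}$. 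Once the algebraization is in place on the cover, standard $v$-descent (equivalently faithfully flat descent) of finite projective modules over $W(-)$ produces the meromorphic Frobenius crystal $(\sM, \phi_\sM)$ on $W(A)$.

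The two constructions are mutually inverse on the perfectoid cover by the algebraization statement, and hence on $\Spec(A)^\diam$ by descent. Both preserve the tensor structure, since the analytic-to-algebraic comparison commutes with tensor products of vector bundles and with the tensor of Frobenii, and both are exact because base change on finite projective modules is exact. The main obstacle is precisely the algebraization step: controlling the behaviour of a meromorphic-Frobenius analytic vector bundle across the closed locus $\{p=0\}$ inside $\Spa(W(R^+)) \setminus \{[\varpi]=0\}$. This requires substantive input from the Kedlaya--Liu style theory of vector bundles on the relative Fargues--Fontaine curve together with the Frobenius-driven Beauville--Laszlo extension; the Frobenius isomorphism is essential here, since without it analytic vector bundles on this locus need not be algebraic.
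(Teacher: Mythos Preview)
Your overall architecture (forward functor by base change, inverse by algebraizing on a perfectoid cover and descending) is reasonable, but the ``algebraization step'' is where the real content lies, and your sketch of it contains substantive errors.

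First, the geometry is off. A shtuka over $S=\Spa(R,R^+)$ with characteristic-$p$ leg is a vector bundle on $\CY_{[0,\infty)}(S)=\Spa(W(R^+))\setminus\{[\varpi]=0\}$; this locus already \emph{contains} $\{p=0\}$. The extension problem is across $\{[\varpi]=0\}$ (the ``$\infty$'' end of the curve), not across $\{p=0\}$. Moreover, at $\{p=0\}$ the Frobenius $\phi_\sE$ is \emph{meromorphic with a pole} (that is the leg), so it certainly does not give an integral identification $\Frob^*\sE\simeq\sE$ near $p=0$; your Beauville--Laszlo step as stated does not work. Second, the assertion that on $\{p\neq 0\}$ vector bundles on the relative Fargues--Fontaine curve are classically identified with finite projective $W(R^+)[1/p]$-modules is false in general: not every bundle on $X_{\mathrm{FF},S}$ arises from an isocrystal, and producing the isocrystal is precisely the hard step.

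The paper's proof proceeds quite differently. Full faithfulness is shown first and separately: one evaluates on the explicit cover $\Spa(A\llps t^{1/p^\infty}\lrps,A\lps t^{1/p^\infty}\rps)$, uses Proposition~\ref{FFres} (which requires $R^+=R^\circ$) to extend homomorphisms across $[\varpi]=0$, applies Kedlaya's GAGA, and then descends via the action $t\mapsto t^p$. Essential surjectivity is done in two stages. For $A=K$ algebraically closed, the bundle on the absolute curve $X_{\mathrm{FF},\Spd(K)}$ is identified with a Frobenius isocrystal by Ansch\"utz's theorem, and the integral lattice is found via Gleason's specialization argument using the affine Deligne--Lusztig variety. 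For general $A$, one does \emph{not} algebraize directly on an arbitrary perfectoid cover; instead one restricts to the open $U=\Spa(W(A)[1/p],W(A))$, passes to the explicit pro-\'etale $\BZ_p$-torsor $\hat U_\infty$, and invokes Sen's theory (vanishing of the Sen operator, checked pointwise using the field case) to effect Galois descent down to a $W(A)[1/p]$-module $M$; the integral $W(A)$-lattice $\sM\subset M$ is then produced using representability of the Witt affine Grassmannian and the $\delta$-invariance of the corresponding point. None of these ingredients---Proposition~\ref{FFres}, Ansch\"utz's result, Sen theory, the Witt Grassmannian---appears in your outline, and they are not replaceable by a generic Beauville--Laszlo argument.
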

This statement is subtle, even when $A$ is an algebraically closed field. In this case an essential ingredient in the proof is the relation between Frobenius isocrystals and vector bundles over the Fargues-Fontaine curve (see \cite{FarguesFontaine}, \cite{An2}).

The relation between characteristic zero and characteristic $p$ is given by  the following shtuka analogue of Tate's theorem on homomorphisms of $p$-divisible groups.

\begin{theorem}[see Theorem \ref{vshtExt}]\label{vshtExtIntro}
Let $\sX$ be a separated scheme which is normal and of finite type and flat over $\Spec(O_E)$.  Denote by $\CX=\sX\times_{\Spec(O_E)}\Spec(E)$ the generic fiber. Let $(\sV, \phi_\sV)$ and $(\sV', \phi_{\sV'})$ be two shtukas over $\sX$. Any homomorphism   $\psi_\CX: {(\sV, \phi_\sV)}_{|\CX}\to {(\sV', \phi_{\sV'})}_{|\CX}$ between their restrictions to $\CX$ extends uniquely to a homomorphism $\psi: (\sV, \phi_\sV)\to (\sV', \phi_{\sV'})$. 
\end{theorem}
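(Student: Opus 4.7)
The plan is to reformulate the statement in terms of sections of an auxiliary shtuka, reduce to a complete discrete valuation ring by normality, and then in that local situation invoke the shtuka analogue of Tate's theorem for $p$-divisible groups.

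First I would form the internal Hom shtuka $\sW := \sV^\vee\otimes\sV'$, whose Frobenius is $\phi_{\sW} = \phi_{\sV'}\otimes(\phi_\sV^\vee)^{-1}$; a homomorphism $(\sV,\phi_\sV)\to(\sV',\phi_{\sV'})$ is then the same as a Frobenius-invariant global section of $\sW$, and the theorem becomes the claim that every such section on $\CX$ extends uniquely to $\sX$. Uniqueness follows from the $O_E$-flatness of $\sX$: for any $v$-cover $\Spa(R,R^+)\to \sX^\diam$, the shtuka $\sW$ is a locally free bundle on $\Spa(R,R^+)\bdtimes\BZ_p$, and the restriction to the preimage of $\CX$ amounts to inverting a non-zero-divisor in the underlying $W(R^+)$-modules, so any section vanishing on $\CX$ is already zero.

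For existence I would use normality to reduce to a local situation. If one can extend $\psi_\CX$ after completing $\sX$ at each generic point of the special fibre $\sX_s$, then together with $\psi_\CX$ this provides a homomorphism over an open $U\subseteq\sX$ whose complement has codimension $\geq 2$; the reflexivity of the Hom sheaf (itself a consequence of the local freeness of $\sW$ and the normality of $\sX$) then extends it uniquely across the remaining codimension-$\ge 2$ locus, with Frobenius-compatibility being automatic by the uniqueness already established. One is thus reduced to proving the theorem in the case $\sX=\Spec(O)$ with $O$ a complete discrete valuation ring of mixed characteristic, flat over $O_E$, with residue field of characteristic $p$.

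In this local situation, let $C$ be the completion of an algebraic closure of $O[1/p]$, with ring of integers $O_C$, and let $\Gamma=\Gal(\ov{O[1/p]}/O[1/p])$. Since $\Spec(O_C)\to\Spec(O)$ is faithfully flat, it suffices to extend the pullback of $\psi_\CX$ over $\Spec(O_C)$ as a $\Gamma$-equivariant morphism of shtukas. Over $\Spec(O_C)$ the shtukas are classified by Breuil--Kisin--Fargues modules over $A_{\inf}(O_C)$, and a morphism of the associated $\BZ_p$-local systems on the generic fibre extends uniquely to a morphism of such modules; the extension is automatically $\Gamma$-equivariant because $\psi_\CX$ is, so faithfully flat descent produces the desired $\psi$ over $\Spec(O)$. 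The hard part will be precisely this last step: proving that an arbitrary morphism on the generic fibre automatically preserves the integral $A_{\inf}(O_C)$-structure. This shtuka Tate-type statement rests on a careful analysis of the geometry of $\Spa(A_{\inf}(O_C))$ near the leg $\xi=0$ together with the classification of vector bundles on the Fargues--Fontaine curve already invoked in Theorem~\ref{FFisocrystal-Intro}; the obstruction to integrality is ultimately a section of a coherent sheaf on $\Spec(A_{\inf}(O_C))\setminus\{\fkm\}$ which must vanish by the boundedness of the pole of $\phi_\sW$ along $\xi=0$.
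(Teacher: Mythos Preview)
Your reduction to the DVR case via a codimension argument does not go through. A homomorphism of shtukas over $\sX$ is not a section of a coherent sheaf on the scheme $\sX$; it is a section of the $v$-sheaf $H(\sV,\sV')\to\sX^\diam$ (defined in the proof of Theorem~\ref{vshtExt}), and the Hartogs-type extension you invoke for reflexive sheaves on a normal scheme has no direct analogue for sections of $v$-sheaves. In fact your step (ii) already fails: knowing the extension over $\CX^\diam$ and over each $\Spd(\widehat{\CO}_{\sX,\eta})$ for $\eta$ a generic point of the special fibre does not produce a section over any Zariski-open $U^\diam\subseteq\sX^\diam$ with $\mathrm{codim}(\sX\setminus U)\geq 2$. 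The set $\CX\cup\{\eta_1,\dots,\eta_r\}$ is not open unless $\dim\sX_s=0$, and there is no general spreading-out principle that would propagate a $v$-sheaf section from a formal completion to a Zariski neighbourhood. Your uniqueness argument has a similar defect: restriction from $\sX^\diam$ to $\CX^\diam$ is not given by inverting a non-zero-divisor in $W(R^+)$ for an arbitrary $(R,R^+)$-point of $\sX^\diam$; the paper obtains uniqueness from the density of $|\CX^\diam|$ in $|\sX^\diam|$ (Lemma~\ref{topflat}) together with the formal separatedness of $H$.

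The paper's proof avoids any scheme-level codimension reduction and stays entirely in the $v$-sheaf world. After reducing to $\sX=\Spec(A)$ affine, it constructs a single perfectoid $v$-cover $\Spd(\tilde R^+)\to\Spd(\hat A)$ of the $p$-adic formal completion, where $\tilde R^+$ is the $p$-completed integral closure of $\hat A$ in the union of all finite \'etale extensions of $\hat A[1/p]$; the point of this construction is that $\tilde R^+=\tilde R^\circ$. The central technical input is then Proposition~\ref{Extperfd}: over an integral perfectoid with $R^+=R^\circ$, a shtuka homomorphism on $\Spa(R^\sharp,R^{\sharp+})$ extends uniquely to $\Spd(R^{\sharp+})$. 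This in turn rests on Proposition~\ref{FFres} (full-faithfulness of restriction away from $[\varpi]=0$ for $\phi$-modules, proved via Kedlaya--Liu's relative Robba ring estimates) together with Proposition~\ref{extensiontoY}. Having extended over $\Spd(\tilde R^+)$, one descends along the $v$-cover, using the uniqueness half of Proposition~\ref{Extperfd} to verify the cocycle condition. Your local step over $O_C$ is in spirit the special case of Proposition~\ref{Extperfd} at a single algebraically closed point, but the global architecture around it cannot be repaired as stated.
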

In particular, using the Tannakian formalism, there is at most one extension of a $\CG$-shtuka on $\CX$ to a $\CG$-shtuka on $\sX$.

An important ingredient of the proof of Theorem \ref{vshtExtIntro} is the full-faithfulness result of Proposition \ref{FFres} which allows us, under a certain condition,  to extend
homomorphisms of shtukas over the   divisor ``at infinity'' $[\varpi]=0$.

 \subsection{Results on local Shimura varieties}
 
 Let us now state our results on local Shimura varieties.  As mentioned above, Scholze defines, starting with local Shimura data $(G, b, \mu)$, the local Shimura variety $\CM_{G, b, \mu}=(\CM_{G, b, \mu, K})_{ K\subset G(\BQ_p)}$. It is instructive to compare local Shimura varieties and global Shimura varieties. For local Shimura varieties, it is easy to see that they support a $\CG$-shtuka. This follows from the definition of $\CM_{G, b, \mu}$ as a moduli space of $\CG$-shtukas. In contrast, for global Shimura varieties, constructing the $\CG$-shtuka over them is quite an effort. On the other hand, contrary to their global counterpart where one can write down the set of $\BC$-points, there does not seem to be an explicit description of the set of $\BC_p$-points of $\CM_{G, b, \mu}$. Just as global Shimura varieties, local Shimura varieties can be explicitly described when $G$ is a torus or when $\mu$ is central; however, contrary to global Shimura varieties, local Shimura varieties have good functorial properties, e.g., they obey \emph{pushout functoriality}, comp. Proposition \ref{pushoutprop}.

When $K=\CG(\BZ_p)$ is a parahoric subgroup (with $\CG$ a corresponding parahoric model of $G$ over $\BZ_p$), then Scholze defines an \emph{integral model} $\CM_{\CG, b, \mu}^{\rm int}$ of $\CM_{G, b, \mu, K}$ over $O_{\breve E}$ as a $v$-sheaf: just as its general fiber  $\CM_{G, b, \mu, K}$, it represents a moduli problem of $\CG$-shtukas. Scholze conjectures that it is always representable by a formal scheme $\sM_{\CG, b, \mu}$ over $\Spf (O_{\breve E})$. The conjecture holds true if the data $(\CG, b, \mu)$ come from integral RZ data in the sense of \cite{R-Z}  (this excludes the cases of type (D) since they yield non-connected groups). In fact, in this case $\CM^{\rm int}_{\CG, b, \mu}$ is represented by the corresponding RZ formal scheme, cf.  \cite[Cor. 25.1.3]{Schber}. We give the following characterization of the representing formal scheme. In the formulation, there appears the \emph{$(b, \mu^{-1})$-admissible locus} $X_{\CG}(b, \mu^{-1})$  inside the \emph{Witt vector partial flag variety} over the algebraic closure $k$ of the residue field of $E$ \cite{ZhuAfGr} and the \emph{specialization map} on the set of classical points ${\rm sp_{\CM}}\colon  |\CM_{G,  b, \mu, K}|^{\rm class}\to X_{\CG}(b,\mu^{-1})(k)$ defined by Gleason \cite{Gl}, {\cred \cite{Gl21}.}
\begin{proposition}[see Proposition \ref{propRZ}]\label{propRZIntro}
Assume that $\CM_{\CG, b, \mu}^{\rm int}$ is representable by the formal scheme $\sM_{\CG,b, \mu}$. Then $\sM_{\CG,b, \mu}$ is the  unique  normal formal scheme $\sR$ which is flat and locally formally of finite type over $\Spf(O_{\breve E})$ and is equipped with identifications
\begin{altenumerate}
\item $\sR^{\rm rig}=\CM_{G,  b, \mu, K}$,
\item ${\sR}_\red^{\rm perf}=X_{\CG}(b,\mu^{-1})$,
\end{altenumerate}
such that the following diagram is commutative: 
\begin{equation}\label{canlambdaIntro}
\begin{aligned}   \xymatrix{
        |\sR^{\rm rig}|^{\rm class} \ar[r]^{\rm sp_{\sR}} \ar[d]_{=} & {\sR_\red}(k)   \ar[d]^{=}\\
         |\CM_{G,  b, \mu, K}|^{\rm class}\quad\ar[r]^{\,{\rm sp_{\CM}}}  & X_{\CG}(b,\mu^{-1})(k) .
        }
        \end{aligned}
    \end{equation}
\end{proposition}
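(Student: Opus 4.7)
The plan is to reduce the uniqueness statement to an identification of $v$-sheaves, using the fact (from Gleason's theory) that the functor $\sR \mapsto \sR^\diam$ is fully faithful on normal flat formal schemes locally formally of finite type over $\Spf(O_{\breve E})$. First, I would verify that $\sM_{\CG, b, \mu}$ itself satisfies (i)--(iii): (i) holds by the definition of $\CM^{\rm int}_{\CG, b, \mu}$ as an integral model whose generic fiber represents the moduli of $\CG$-shtukas with level structure; (ii) is the identification of the reduced perfect special fiber with the affine Deligne--Lusztig variety in the Witt vector partial flag variety, coming from the moduli description of the $k$-points; and (iii) is the compatibility between the scheme-theoretic specialization map on $\sM_{\CG, b, \mu}$ and Gleason's specialization map \cite{Gl} on the $v$-sheaf $\CM^{\rm int}_{\CG,b,\mu}$, which is built into the construction of the latter.

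Next, given any $\sR$ satisfying (i)--(iii), I would construct a canonical isomorphism of $v$-sheaves $\sR^\diam \isoarrow \CM^{\rm int}_{\CG, b, \mu}$. The comparison is tested on strictly totally disconnected perfectoid spaces $S = \Spa(R, R^+)$ equipped with an untilt $S^\sharp$ over $O_{\breve E}$. An $S^\sharp$-point of $\sR^\diam$ is a continuous morphism $\Spa(R^\sharp, R^{\sharp,+}) \to \sR$, and such a morphism decomposes into (a) its restriction to the adic generic fiber, giving a point in $\CM_{G, b, \mu, K}(S^\sharp)$ by (i); and (b) the induced map on reduced loci, giving a point of $X_\CG(b, \mu^{-1})$ by (ii); these two pieces of data are compatible by the specialization diagram (iii). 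Conversely, a pair of such compatible data glues to a unique $S^\sharp$-point of $\sR^\diam$, because $\sR$ is normal and flat (this is the content of Gleason's recognition result for normal kimberlites). The same description applies to $\sM_{\CG, b, \mu}^\diam = \CM^{\rm int}_{\CG, b, \mu}$, yielding the desired identification of $v$-sheaves; full faithfulness of $\sR\mapsto\sR^\diam$ then promotes this to an isomorphism $\sR \simeq \sM_{\CG, b, \mu}$ of formal schemes.

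The main obstacle will be the gluing step, namely that a pair consisting of a generic-fiber point and a reduced-special-fiber point, compatible under specialization, lifts uniquely to a map $\Spf(R^{\sharp,+}) \to \sR$ for normal flat $\sR$. This is where normality is truly essential: without it one can cook up several non-isomorphic formal schemes with the same generic fiber and reduced special fiber (e.g.\ by pinching), so the reconstruction would fail. The argument should invoke Gleason's results on the structure of $v$-sheaves associated to normal kimberlites, together with the extension-type principles embodied in Theorem~\ref{vshtExtIntro}, to match the shtukas on both sides and then to match the scheme-theoretic data. Once this gluing is justified, the uniqueness statement follows formally.
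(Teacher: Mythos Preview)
Your overall strategy—reduce to full faithfulness of the diamond functor on normal flat formal schemes, and match the triples (generic fiber, perfected reduced special fiber, specialization map)—is exactly the paper's approach. The paper, however, does not attempt to carry out the ``gluing step'' by hand: it simply cites Louren\c co's Hebbarkeitss\"atze \cite{LourencoRH} (recorded in \cite[18.4.2]{Schber}), which states precisely that a normal flat formal scheme locally formally of finite type over $\Spf(O_{\breve E})$ is determined by this triple. Since both $\sR$ and $\sM_{\CG,b,\mu}$ yield the same triple by hypothesis (i)--(iii), they coincide. (Part (iii) for $\sM_{\CG,b,\mu}$, i.e.\ that ${\rm sp}_{\sM}$ agrees with ${\rm sp}_{\CM}$, is shown in the paper via Breuil--Kisin theory, not just ``built into the construction''.)

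The gap in your proposal is in the decomposition/gluing argument itself. For a general affinoid perfectoid $S$ with untilt $S^\sharp$ over $O_{\breve E}$, an $S$-point of $\sR^\diam$ is \emph{not} simply a compatible pair consisting of a generic-fiber point and a reduced-special-fiber point: the map $\Spa(R^\sharp,R^{\sharp+})\to \sR^{\rm ad}$ carries integral information that is not recovered from these two projections alone. Reconstructing $\sR^\diam$ from the triple is precisely the nontrivial content of Louren\c co's theorem, and its proof is a Riemann extension argument on the structure sheaf (using normality), not a pointwise gluing on test objects. Your proposed tools for this step are also off target: Gleason's kimberlite formalism gives the specialization map but not the reconstruction, and Theorem~\ref{vshtExtIntro} concerns extension of shtuka morphisms on a fixed normal scheme and has no bearing here, since $\sR$ is an arbitrary formal scheme with the prescribed triple and carries no shtuka. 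The fix is simply to invoke \cite[18.4.2]{Schber} directly.
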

{\cmag Here $\sR^{\rm rig}$ is the generic fiber of the formal scheme $\sR$ in the sense of Berthelot \cite[chap.~5]{R-Z}. } That $\sM_{\CG,b, \mu}$ has the properties stated in the proposition follows from $p$-adic Hodge theory; the characterization follows from the fully faithfulness of the diamond functor, cf. \cite[18.4]{Schber}. 

 Gleason defines the {\em{$v$-sheaf formal completion}} of $\CM_{\CG, b, \mu}^{\rm int}$ at a point $x\in X_\CG(b, \mu^{-1})(k)$. If Scholze's conjecture on the representability of $\CM_{\CG, b, \mu}^{\rm int}$ is true, then the formal completion $\wh{{\CM}^{\rm int}_{\CG, b, \mu}}_{/x}$ is representable, i.e., is given by $\Spf(R)$ for a complete Noetherian local $O_{\breve E}$-algebra $R$. Indeed, if $\CM_{\CG, b, \mu}^{\rm int}$ is represented by $\sM_{\CG, b, \mu}$, then $R=\wh\CO_{\sM_{\CG, b, \mu}, x}$. We prove a kind of converse in the local Hodge type case. Here, we call the tuple $(p, G, \mu, \CG)$ of local Hodge type if the following conditions  are satisfied.
\begin{itemize}
 \item[1)] $(G, \mu)$ is of local Hodge type, i.e. there is a closed embedding $\rho\colon G\hookrightarrow \GL_n$ such that   $\rho\circ\mu$ is minuscule.
  \item[2)] $\CG$ is the Bruhat-Tits stabilizer group scheme 
 $\CG_x$ of a point $x$ in the extended Bruhat-Tits building of $G(\BQ_p)$ and  is connected, i.e., we have
 $
\CG=
\CG_x=
\CG_x^\circ$.
  \end{itemize}
 \begin{theorem}[see Theorem \ref{thmrepint}]\label{thmrepintIntro}
Let $(G, b, \mu)$ be a local Shimura datum and $\CG$ a parahoric model of $G$ such that $(p, G, \mu, \CG)$ is of local Hodge type. Assume that $\wh{{\CM}^{\rm int}_{\CG, b, \mu}}_{/x}$ is representable for all  $x\in X_{\CG}(b, \mu^{-1})(k)$. 
  Then $\CM_{\CG, b, \mu}^{\rm int}$ is representable by a normal formal scheme $\sM_{\CG, b, \mu}$ which is flat and locally formally of finite type over $\Spf O_{\breve E}$. 
   \end{theorem}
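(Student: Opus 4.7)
The strategy is to construct a formal scheme $\sM$ satisfying the characterization of Proposition~\ref{propRZIntro} by embedding $\CM^{\rm int}_{\CG, b, \mu}$ into a representable Rapoport--Zink space via the local Hodge embedding. First, I fix a closed embedding $\rho\colon \CG \hookrightarrow \GL(\Lambda)$ over $\BZ_p$ such that $\rho\circ\mu$ is minuscule, and choose a lift $b' \in \GL(\Lambda)(\breve\BQ_p)$ of $\rho(b)$. The triple $(\GL(\Lambda), b', \rho_*\mu)$ comes from integral Rapoport--Zink data, so by Scholze's comparison \cite[Cor.~25.1.3]{Schber} the integral moduli $\CM^{\rm int}_{\GL(\Lambda), b', \rho_*\mu}$ is representable by a formal scheme $\sN$ over $\Spf O_{\breve E}$. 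Functoriality of the moduli of shtukas produces a morphism of $v$-sheaves
\[
\iota\colon \CM^{\rm int}_{\CG, b, \mu} \longrightarrow \sN .
\]

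The central step is to show that $\iota$ is a closed immersion on the formal neighborhood of each $k$-point. For $x \in X_\CG(b, \mu^{-1})(k)$, the hypothesis gives $\Spf R_x = \wh{\CM^{\rm int}_{\CG, b, \mu}}_{/x}$ with $R_x$ a complete Noetherian local $O_{\breve E}$-algebra, hence a morphism $\Spf R_x \to \wh{\sN}_{/\iota(x)}$ between spectra of complete Noetherian local rings. I would prove it is a closed immersion tannakianly: a $\CG$-structure on the universal $\GL(\Lambda)$-shtuka is encoded by finitely many tensor cycles; these tensors are pinned down on the generic fiber by the $\CG$-shtuka on $\CM_{G, b, \mu, K}$ (in the spirit of Proposition~\ref{pairs2Intro}), and they extend uniquely from the generic fiber to all of $\Spf R_x$ by the shtuka analogue of Tate's theorem (Theorem~\ref{vshtExtIntro}). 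The closed sub-formal-spectrum of $\wh{\sN}_{/\iota(x)}$ cut out by the condition of preserving these tensors is then forced to agree with $\Spf R_x$.

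Given this, I would globalize as follows. The perfected reduced locus of $\CM^{\rm int}_{\CG, b, \mu}$ equals $X_\CG(b, \mu^{-1})$ by \cite{Gl} and \cite{ZhuAfGr}, and Gleason's specialization map identifies its image inside $\sN_\red^{\rm perf}$ with a locally closed perfect subscheme $Z$. Over each quasi-compact open piece of $Z$, I take the schematic image of $\iota$ inside the formal neighborhood $\wh{\sN}_{/Z}$, and then normalize in the rigid generic fiber $\CM_{G, b, \mu, K}$. The result is a formal scheme $\sM$ over $\Spf O_{\breve E}$ that is flat, normal, and locally formally of finite type, whose formal completion at each $k$-point equals $\Spf R_x$ by the previous step, whose generic fiber is $\CM_{G, b, \mu, K}$ by construction, and whose perfected reduced subscheme is $X_\CG(b, \mu^{-1})$. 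By Proposition~\ref{propRZIntro} and the full-faithfulness of the diamond functor on normal, flat, locally formally of finite type formal schemes \cite[18.4]{Schber}, the associated $v$-sheaf $\sM^{\diamondsuit}$ coincides with $\CM^{\rm int}_{\CG, b, \mu}$.

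The main obstacle is the closed immersion step: showing that the tensor data reducing the structure group from $\GL(\Lambda)$ to $\CG$ extend from the generic fiber of $\Spf R_x$ to the full formal neighborhood, and that they cut out $\Spf R_x$ as a closed sub-functor of $\wh{\sN}_{/\iota(x)}$. This is a shtuka-theoretic analogue of the crystalline Tate-module arguments underlying the Kisin--Pappas construction of integral canonical models at parahoric level, and depends crucially on Theorem~\ref{vshtExtIntro}.
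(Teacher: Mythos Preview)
Your overall architecture matches the paper's: embed $\CM^{\rm int}_{\CG,b,\mu}$ into a representable Rapoport--Zink space and use the local representability hypothesis together with de Jong-style glueing to produce the formal scheme. But two of your steps do not go through as written.

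First, you cannot in general choose a closed group scheme embedding $\rho\colon\CG\hookrightarrow\GL(\Lambda)$ over $\BZ_p$ with $\rho\circ\mu$ minuscule. The definition of ``local Hodge type'' only gives a closed embedding $G\hookrightarrow\GL_n$ over $\BQ_p$; on the integral level the best one gets (via Landvogt's building embedding) is a \emph{dilated immersion} $\CG=\bar\CG^{\rm sm}\to\bar\CG\hookrightarrow\CH$ into a parahoric $\CH$ of $\GL_n$ satisfying $\CG(\breve\BZ_p)=\CH(\breve\BZ_p)\cap G(\breve\BQ_p)$. The paper proves separately (Proposition~\ref{prop332}) that under this condition the induced map $\CM^{\rm int}_{\CG,b,\mu}\to\CM^{\rm int}_{\CH,\rho(b),\rho(\mu)}\times_{\Spd(O_{\breve E'})}\Spd(O_{\breve E})$ is a closed immersion of $v$-sheaves, via a direct verification of quasi-compactness, separatedness, the valuative criterion, and injectivity on $(C,O_C)$-points. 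No tensor argument is used, and Theorem~\ref{vshtExtIntro} plays no role here.

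Second, your ``closed immersion at formal neighbourhoods via tensors'' step is both unnecessary and incorrect. The map $\Spf R_x\to\wh{\sN}_{/\iota(x)}$ is in general only \emph{finite}, not a closed immersion: the paper deduces finiteness from injectivity of the $v$-sheaf map (the fibre over the closed point is Artin local), then defines $V_x$ as the scheme-theoretic image and observes that $\Spf R_x\to V_x$ is the normalisation of the unibranch $V_x$, hence an isomorphism on associated $v$-sheaves. Your proposed tensor-cut-out locus need not agree with $\Spf R_x$ (it could be strictly larger), and the tensors already live on $R_x$ since the $\CG$-shtuka is given there---there is nothing to extend. For the globalisation, the paper replaces your vague ``schematic image and normalise'' by a precise formal-descent statement (Proposition~\ref{deJong}, modelled on \cite[Prop.~7.5.2]{deJongCrys}): from the closed rigid subvariety $\CM_{G,b,\mu,K}\subset\sN^{\rm rig}$, the closed subscheme $T\subset\sN_{\rm red}$ given by the image of $X_\CG(b,\mu^{-1})$, and the closed formal subschemes $V_x\subset\wh{\sN}_{/\iota(x)}$, one constructs a unique closed formal subscheme $\fkZ\subset\sN$ realising all three; its normalisation is $\sM_{\CG,b,\mu}$.
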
 
   
 The proof of Theorem \ref{thmrepintIntro} consists in showing that the Hodge embedding $\rho$ induces a closed immersion of $v$-sheaves from $\CM_{\CG, b, \mu}^{\rm int}$ into $\CM_{\GL_n, \rho(b), \rho(\mu)}^{\rm int}$ and then showing, by imitating de Jong's construction of closed formal subschemes of formal schemes \cite{deJongCrys}, that this morphism is relatively representable. 
 
 Let us comment on the assumption appearing in the statement above. Scholze defines, using the Beilinson-Drinfeld style affine Grassmannian, for any local Shimura datum $(G, b, \mu)$ a $v$-sheaf $\BM^v_{\CG, \mu}$.  Scholze conjectures  in \cite [Conj. 21.4.1]{Schber} that this $v$-sheaf is representable by a normal scheme  $\BM^{\rm loc}_{\CG, \mu}$ proper  and flat over $O_E$ and with reduced special fiber. The scheme $\BM^{\rm loc}_{\CG, \mu}$ with its action by $\CG$ is called the {\it scheme local model\footnote{Under more restrictive hypotheses, 
 these local models also agree with the local models as defined in \cite{PZ}, and in \cite{HPR}.} }. This conjecture may be viewed as a linearized version of the representability of $\CM_{\CG, b, \mu}^{\rm int}$. Indeed, it may be conjectured that $\wh{{\CM}^{\rm int}_{\CG, b, \mu}}_{/x}$ is represented by the formal completion $\widehat{{\BM}^\loc_{\CG, \mu}}_{/y}$ of the scheme $\BM^{\rm loc}_{\CG, \mu}$, where $y$ is a point  of  $\BM^{\rm loc}_{\CG, \mu}(k)$ corresponding to $x$, cf. Conjecture \ref{conjtubeLM}. 
 
 The representability of $\BM^v_{\CG, \mu}$ is more accessible than the representability of $\CM_{\CG, b, \mu}^{\rm int}$. Indeed, Scholze's conjecture is now proved:  work  of Ansch\"utz, Gleason, Louren\c co and Richarz  \cite{AnRicLou}, cf. also Louren\c co's thesis \cite{LourencoThesis}, proves the conjecture in all cases,  except when $p=2$ and there is a  simple factor of $G_{\rm ad}\otimes_{\BQ_2}\breve\BQ_2$  of the form ${\rm Res}_{F/\breve\BQ_2}H$, where $H$  is the adjoint group corresponding to an odd ramified unitary group,
  or when $p=3$ and there is a  simple factor of $G_{\rm ad}\otimes_{\BQ_3}\breve\BQ_3$  of the form ${\rm Res}_{F/\breve\BQ_3}H$, where $H$  is the adjoint group corresponding to  a ramified triality group; the general case is  treated by Gleason and Louren\c co in \cite{GL}, basing themselves on \cite{AnRicLou}.

\begin{remark}
Consider the case when  $(p, G, \mu, \CG)$ is of local Hodge type which can be embedded, in the sense of Remark \ref{remlocglob}, into a  tuple $(p, \eG, X, \eK)$ of global Hodge type (see below for this terminology). If all $x\in X_{\CG}(b, \mu^{-1})(k)$ can be ``realized'' (in the sense of Remark \ref{remlocglob}) by  points $\bf x$ in the reduction of  $ {\rm Sh}_\eK(\eG, X)_E$,  then the assumption in Theorem \ref{thmrepintIntro} holds by Theorem \ref{extGshtShimIntro} below.  This realization hypothesis would follow from the resolution of the axioms on the reduction of Shimura varieties in \cite{HeR}, which is known in many cases, cf. \cite{Zhou,SYZ}. We therefore can view Theorem \ref{thmrepintIntro} as a blueprint to prove the representability of $\CM_{\CG, b, \mu}^{\rm int}$ by global methods. In particular, for $p\neq 2$, we can dispense with the assumption on the representability of $\wh{{\CM}^{\rm int}_{\CG, b, \mu}}_{/x}$ in Theorem \ref{thmrepintIntro} when $G$ is unramified \cite{Nie} or when $G$ is tamely ramified and residually split \cite{Zhou}. 
\end{remark}
\begin{remark}
In a sequel to this paper  \cite{PRintlsv}, we prove  the representability of $\CM_{\CG, b, \mu}^{\rm int}$ in greater generality by a local method, again using the method of proof of Theorem \ref{thmrepintIntro}. In particular, in the situation of Theorem \ref{thmrepintIntro}, the representability of ${\CM}^{\rm int}_{\CG, b, \mu}$ always holds when $p\neq 2$.
\end{remark}

 \subsection{Results on global Shimura varieties} Now let us state our results on global Shimura varieties. Start with Shimura data $(\eG, X)$ as above, and denote, as before, by ${\rm Sh}(\eG, X)_E$ its canonical model over $E=\eE_{v}$. We assume that $\eK\subset \eG(\BA_f)$ is of the form  $\eK=\eK_p\eK^p$, with $\eK_p=\CG(\BZ_p)$, where $\CG$ is a smooth model of $G=\eG\otimes_\BQ \BQ_p$ and where $\eK^p\subset \eG(\BA^p_f)$ is sufficiently small. Then there is a pro-\'etale 
 $\CG(\BZ_p)$-cover   
$\BP_\eK$ over $ {\rm Sh}_\eK(\eG, X)_E$
 obtained by the system of covers 
 \begin{equation}
  {\rm Sh}_{\eK'}(\eG, X)_E\to 
{\rm Sh}_{\eK}(\eG, X)_E,
 \end{equation} 
 where $\eK'=\eK'_p\eK^p\subset \eK=\eK_p\eK^p$, with $\eK'_p$ running over all compact open subgroups of $\eK_p=
\CG(\BZ_p)$, comp.  \cite[III]{MilneAA}, \cite[\S 4]{LZ} (note that $\CG(\BZ_p)=\varprojlim_{\eK'_p} \eK_p/\eK'_p$.) By Liu-Zhu \cite{LZ}, the pro-\'etale $\CG(\BZ_p)$-cover $
\BP_\eK$ over $ 
{\rm Sh}_\eK(\eG, X)_E$ is de Rham (and bounded by $\mu_X$). Using Proposition \ref{detorsIntro}, we obtain a $\CG$-shtuka as postulated by Scholze:

 \begin{proposition}[see Proposition \ref{ShimuraShtThm}]\label{ShimuraShtThmIntro}
 There exists a $\CG$-shtuka $\sP_{\eK, E}$ over ${\rm Sh}_\eK(\eG, X)_E$ with one leg bounded 
by $\mu_X$ which is associated with the pro-\'etale $\CG(\BZ_p)$-cover   $\BP_\eK$, in the sense of Proposition \ref{detorsIntro}. 
\end{proposition}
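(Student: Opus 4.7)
The plan is to apply Proposition \ref{detorsIntro} directly to the pro-\'etale $\CG(\BZ_p)$-cover $\BP_\eK$ over ${\rm Sh}_\eK(\eG, X)_E$. Three hypotheses must be verified. First, that $\BP_\eK$ is in fact a pro-\'etale $\CG(\BZ_p)$-torsor on a smooth $E$-scheme: this is standard, since for $\eK^p$ sufficiently small, ${\rm Sh}_\eK(\eG, X)_E$ is smooth, and as $\eK'_p$ runs over open compact subgroups of $\eK_p = \CG(\BZ_p)$, the finite-level transition maps ${\rm Sh}_{\eK'_p \eK^p}(\eG, X)_E \to {\rm Sh}_\eK(\eG, X)_E$ are finite \'etale torsors under the finite quotients $\eK_p/\eK'_p$ (using the neatness of $\eK$); passing to the inverse limit then endows $\BP_\eK$ with the desired pro-\'etale $\CG(\BZ_p) = \varprojlim_{\eK'_p} \eK_p/\eK'_p$ structure.

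Second, one must verify that $\BP_\eK$ is de Rham and bounded by $\mu_X$. Since $\mu_X$ is minuscule by Deligne's axioms for a Shimura datum, this is precisely the content of the Liu--Zhu theorem \cite{LZ}, as already recalled in the discussion preceding the statement. Geometrically, at every classical point $x \in {\rm Sh}_\eK(\eG, X)_E$, the fibre $\BP_{\eK, x}$ is the Galois representation arising from the $p$-adic \'etale realization attached to $x$ by the reciprocity law, and its associated filtered isocrystal (via Fontaine's $\mathrm{D}_{\mathrm{dR}}$) has filtration of type $\mu_X$.

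With both hypotheses in hand, Proposition \ref{detorsIntro} produces a natural $\CG$-shtuka $\sP_{\eK, E}$ over ${\rm Sh}_\eK(\eG, X)_E$ with one leg bounded by $\mu_X$. Unwinding the correspondence of Proposition \ref{pairs2Intro}, the underlying $\underline{\CG(\BZ_p)}$-torsor of $\sP_{\eK, E}$ is the diamond attached to $\BP_\eK$, and the sheaf-theoretic Hodge--Tate map ${\rm HT}(\sP_{\eK, E})$ coincides with the Hodge--Tate period map attached to $\BP_\eK$; by definition, this is what it means for $\sP_{\eK, E}$ to be associated to $\BP_\eK$ in the sense of Proposition \ref{detorsIntro}. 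The real technical content is thus entirely concentrated in the appeal to Liu--Zhu; once that is granted, the remainder is a formal consequence of the structural machinery developed earlier in the paper.
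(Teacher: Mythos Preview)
Your proposal is correct and follows essentially the same route as the paper: cite Liu--Zhu \cite{LZ} to establish that $\BP_\eK$ is de Rham with Hodge--Tate weights given by $\mu_X$, and then apply the machinery of \S\ref{ss:dR} (Definition~\ref{LSGShtuka}, equivalently Proposition~\ref{detorsIntro}) to produce the associated $\CG$-shtuka. The paper's own proof is even more terse, invoking \cite[Thm.~1.2]{LZ} together with \cite[Thm.~3.9~(iv)]{LZ} in a single line, but the logical content is identical to what you wrote.
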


 Furthermore, $\sP_{\eK, E}$ are supporting prime-to-$p$ Hecke correspondences, i.e., for  $g\in \eG(\BA^p_f)$ and $\eK'^{ p}$ with $g\eK'^{ p}g^{-1}\subset \eK_p$, there are compatible
isomorphisms $[g]^*(\sP_{\eK, E})\simeq \sP_{\eK', E}$ which cover  the natural morphisms $[g]: {\rm Sh}_{\eK_p\eK'^p}(\eG, X)_E\to  {\rm Sh}_{\eK_p\eK^p}(\eG, X)_E $. 

 When $(\eG, X)$ is of abelian type and  $\eK=\eK_p\eK^p$, where $\eK_p$ is a parahoric, Kisin and the first author \cite{KP} have constructed integral models $\sS_\eK$ of ${\rm Sh}_{\eK}(\eG, X)_E$, provided that  $p$ is odd, $G=\eG\otimes_\BQ \BQ_p$ splits over a tamely ramified extension and $p\nmid |\pi_1(G_{\rm der})|$. By an integral model, here we mean a scheme which is separated and of finite type over $\Spec(O_E)$ and whose generic fiber is identified with 
${\rm Sh}_{\eK}(\eG, X)_E$.
 This construction has been extended by Kisin and Zhou  \cite{KZhou} to cover many more cases. Here we construct such integral models in even greater generality when $(\eG, X)$ is of Hodge type and  $\eK=\eK_p\eK^p$, where $\eK_p$ is a parahoric with corresponding parahoric group scheme $\CG$ over $\BZ_p$. 
   
We call a tuple $(p, \eG, X, \eK)$, with $\eK=\eK_p\eK^p$, of   global Hodge type if the following conditions are satisfied. 
 
 \begin{itemize}
 \item[1)] $(\eG, X)$ is of Hodge type, i.e. there is a closed embedding of Shimura data $(\eG, X)\hookrightarrow (\GSp_{2g}, S_{2g}^\pm)$ into a group of symplectic similitudes with its Siegel datum.   
 \item[2)] $\eK_p=\CG(\BZ_p)$, where $\CG$ is the Bruhat-Tits stabilizer group scheme 
 $\CG_x$ of a point $x$ in the extended Bruhat-Tits building of $G(\BQ_p)$ and $\CG$ is connected, i.e., we have
 $
\CG=
\CG_x=
\CG_x^\circ$.
  \end{itemize}
   Note that when  $(p, \eG, X, \eK)$ is of  global Hodge type, then $(p, G, \mu_X, \CG)$ is of local Hodge type.  
\begin{theorem}[see Theorem \ref{mainhodge}]\label{extGshtShimIntro}
 Let $(p, \eG, X, \eK)$ be of {global Hodge type}. Then there exists a  pro-system of  normal  and flat integral models $\sS_\eK$  with generic fiber ${\rm Sh}_{\eK}(\eG, X)_E$, with finite \'etale transition maps for varying $\eK^p$, with  the  following properties.
  \begin{itemize}
\item[a)] For every dvr $R$ of characteristic $(0, p)$ over $O_E$, 
\begin{equation}
(\varprojlim\nolimits_{\eK^p}{\rm Sh}_\eK(\eG, X)_E)(R[1/p])=(\varprojlim\nolimits_{\eK^p}\sS_\eK)(R).
\end{equation}
 
\item[b)] The $\CG$-shtuka $\sP_{\eK, E}$ extends to a $\CG$-shtuka $\sP_{\eK}$ on $\sS_{\eK}$.\footnote{{\cmag When  $\CG$ is reductive (i.e., in a good reduction case), an integral extension $\sP_{\eK}$ of the $\CG$-shtuka $\sP_{\eK, E}$  was also constructed by Wu \cite{Wu}. His extension lives over the integral model $\sS_{\eK}$  given in 
Kisin \cite{KisinJAMS} (which is canonical in the sense of Milne).}}

 \item[c)] For  $x\in \sS_\eK(k)$, with associated $b_x\in G(\breve \BQ_p)$, there exists an isomorphism of formal completions
 $$
\Theta_x\colon  \widehat{{{{\CM}}}^{\rm int}_{ \CG, b_x, \mu }}_{/x_0}\isoarrow (\widehat{\sS_{\eK_{/x}}})^\diam ,
$$ 
 such that the pullback shtuka $\Theta_{ x}^*(\sP_\eK)$ coincides with the tautological shtuka on $ {\CM}^{\rm int}_{ \CG, b_x, \mu }$
 that arises from the definition of $ {\CM}^{\rm int}_{ \CG, b_x, \mu }$ as a moduli space of shtukas.  Here $x_0$ denotes the base point of $ {\CM}^{\rm int}_{ \CG, b_x, \mu }$. In particular, $\widehat{{{{\CM}}}^{\rm int}_{ \CG, b_x, \mu }}_{/x_0}$ is representable. 
\end{itemize}
\end{theorem}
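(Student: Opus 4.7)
Fix a Hodge embedding $\iota\colon (\eG, X) \hookrightarrow (\GSp_{2g}, S^\pm_{2g})$ and a $\BZ_p$-lattice $\Lambda$ in the symplectic space for which $\iota$ extends to a closed immersion $\CG \hookrightarrow \CG'$ of smooth affine $\BZ_p$-group schemes, where $\CG'$ is the stabilizer of $\Lambda$; this is available by Bruhat--Tits theory. Choose a sufficiently small prime-to-$p$ level $\eK'^p \supset \iota(\eK^p)$ so that $\iota$ induces a closed immersion into ${\rm Sh}_{\eK'}(\GSp_{2g}, S^\pm_{2g})_E$, where $\eK' = \CG'(\BZ_p)\eK'^p$. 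Let $\sS_{\eK'}/O_E$ be the standard Siegel integral model. Define $\sS_\eK$ as the normalization of the scheme-theoretic closure of ${\rm Sh}_\eK(\eG, X)_E$ inside $\sS_{\eK'}\otimes_{\BZ_{(p)}} O_E$. By construction, $\sS_\eK$ is normal, flat, and of finite type over $O_E$ with the correct generic fiber; the transition maps for varying $\eK^p$ are finite \'etale, inheriting the property from the Siegel side since normalization commutes with \'etale base change.

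\textbf{Extending the $\CG$-shtuka to $\sS_\eK$.} The universal $p$-divisible group on $\sS_{\eK'}$ defines a $\CG'$-shtuka $\sP'$ bounded by the minuscule cocharacter of the Siegel datum; let $\sP^\flat$ be its pullback to $\sS_\eK$. By the local Hodge type hypothesis we fix a finite collection of tensors $(s_\alpha)\subset \Lambda^\otimes$ whose pointwise stabilizer in $\CG'$ is exactly $\CG$. On the generic fiber, Proposition \ref{ShimuraShtThmIntro} (applied via Proposition \ref{detorsIntro} to the Liu--Zhu deRham $\CG(\BZ_p)$-cover $\BP_\eK$) supplies a $\CG$-shtuka $\sP_{\eK, E}$ whose pushforward along $\CG \hookrightarrow \CG'$ agrees with the restriction of $\sP^\flat$ to ${\rm Sh}_\eK(\eG, X)_E$. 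The tensors $s_\alpha$ thus furnish Frobenius-invariant sections of the associated tensor-bundles of $\sP^\flat$ on the generic fiber. Since $\sS_\eK$ is normal, flat, and of finite type over $O_E$, Theorem \ref{vshtExtIntro} extends each such section uniquely to a homomorphism of shtukas on all of $\sS_\eK$. By tannakian reconstruction, the extended tensors cut out a sub-$\CG$-shtuka $\sP_\eK$ of $\sP^\flat$, bounded by $\mu_X$ and extending $\sP_{\eK, E}$; this proves (b).

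\textbf{Extension property and local uniformization.} For (a), a point in ${\rm Sh}_\eK(R[1/p])$ yields, via $\iota$, a point of $\sS_{\eK'}(R[1/p])$, i.e.\ a polarized abelian variety over $R[1/p]$ with level structure. Since $\sP_{\eK,E}$ makes the associated Tate module a crystalline Galois representation, the abelian variety has good reduction over $R$ and extends to a point of $\sS_{\eK'}(R)$; by the universal property of normalization this lifts uniquely to $\sS_\eK(R)$. For (c), fix $x \in \sS_\eK(k)$. The fiber $x^*\sP_\eK$ is a $\CG$-shtuka over $\Spec(k)$, which by Theorem \ref{FFisocrystal-Intro} corresponds to a Frobenius $G$-isocrystal, hence to a class $b_x\in G(\breve\BQ_p)$. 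The restriction of $\sP_\eK$ to the formal completion $(\widehat{\sS_{\eK_{/x}}})^\diam$ is a family of $\CG$-shtukas bounded by $\mu_X$ deforming $x^*\sP_\eK$, giving a morphism $\Xi_x\colon (\widehat{\sS_{\eK_{/x}}})^\diam \to \widehat{\CM^{\rm int}_{\CG, b_x, \mu}}_{/x_0}$. One builds the two-sided inverse $\Theta_x$ by applying the Hodge embedding in reverse: the universal $\CG$-shtuka on the local Shimura variety gives, via the $p$-divisible group interpretation and Serre--Tate, an abelian scheme deformation which, equipped with the tensors and the prime-to-$p$ level, defines a morphism to $\widehat{\sS_{\eK_{/x}}}$. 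That $\Xi_x$ and $\Theta_x$ are mutually inverse follows by comparing tangent spaces through the formal local model $\BM^{\rm loc}_{\CG, \mu}$. The representability of $\widehat{\CM^{\rm int}_{\CG, b_x, \mu}}_{/x_0}$ is then immediate from that of $\widehat{\sS_{\eK_{/x}}}$.

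\textbf{Main obstacle.} The decisive step is (b): reducing the structure group of the Siegel-coming shtuka $\sP^\flat$ from $\CG'$ to $\CG$ on the whole of $\sS_\eK$, not merely on its generic fiber. Over the generic fiber this reduction is the output of Liu--Zhu via Proposition \ref{ShimuraShtThmIntro}, but its extension to the integral model rests entirely on Theorem \ref{vshtExtIntro} together with the normality of $\sS_\eK$. This is precisely where the shtuka-theoretic approach supplants the intricate integral $p$-adic Hodge theory arguments of Kisin--Pappas and Kisin--Zhou, and where the tameness and other restrictive hypotheses on $G$ appearing in prior work can be dropped. A secondary technical point is the tensor description of the inclusion $\CG \hookrightarrow \CG'$, which is folded into the local Hodge type hypothesis but is itself nontrivial for general parahorics.
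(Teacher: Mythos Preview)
Your overall strategy matches the paper, but there are two genuine gaps, one in (b) and one in (c).

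\textbf{Gap in (b).} You write that ``by tannakian reconstruction, the extended tensors cut out a sub-$\CG$-shtuka $\sP_\eK$ of $\sP^\flat$''. This is precisely the hard step, and it does not follow formally from Theorem \ref{vshtExtIntro}. What that theorem gives you is that the tensors $t_{a,E}$ extend to Frobenius-invariant sections $t_a$ of $(\sP^\flat)^\otimes$ over $\sS_\eK$. But to obtain a $\CG$-shtuka you must show that the sheaf $\underline{\rm Isom}_{(t_a),(s_a)}(\sV,\Lambda\otimes\CO)$ is actually a $\CG$-torsor over $S\bdtimes\BZ_p$ for every perfectoid $S\to\sS_\eK^\diam$, i.e.\ that it is locally nonempty. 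This is not automatic: the tensors only cut out the (possibly singular) Zariski closure $\bar\CG$ of $G$ in $\CG'$, and $\CG$ is its group smoothening, so one must separately check both that the tensor-isomorphism scheme is a $\bar\CG$-torsor and that this lifts to a $\CG$-torsor. The paper handles this in three steps (\S 4.6.3): over perfect $k$-algebras one shows the isocrystal with tensors gives a $G$-torsor after inverting $p$ (a nontrivial specialization argument, Lemma 4.6.4); over points $\Spa(C,C^+)$ with $C$ algebraically closed of characteristic $0$ one invokes Ansch\"utz's theorem that $\CG$-torsors on the punctured $\Spec(A_{\rm inf})$ are trivial to extend across the closed point; and then one glues via products of points and $v$-descent. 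The appeal to Ansch\"utz is essential and is exactly what your ``tannakian reconstruction'' sentence is hiding.

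\textbf{Gap in (c).} You say that the restriction of $\sP_\eK$ to $(\widehat{\sS_{\eK/x}})^\diam$ ``gives a morphism $\Xi_x$'' to the local Shimura variety. But a point of $\CM^{\rm int}_{\CG,b_x,\mu}$ is a shtuka \emph{together with a framing} $i_r$ over $\CY_{[r,\infty)}$ identifying it with the shtuka $(G,\phi_{b_x})$. Producing this framing over $\Spd(\hat R_x)$ is the content of Proposition 4.7.1 in the paper, and its proof is substantial: one must show that the quasi-isogeny trivialization of the ambient $\CH$-shtuka (coming from the Siegel RZ space) actually respects the $\CG$-structure, and this is done by analyzing the $v$-sheaf $\sH_b^{\geq 0}/\sG_b^{\geq 0}$ of reductions of HN-filtered torsors and showing, via a Banach--Colmez vanishing argument (Lemmas 4.2.6 and 4.2.7), that its global sections over $\Spd(\hat R_x)$ are detected on the closed point. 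Your proposed inverse $\Theta_x$ via ``Serre--Tate and the $p$-divisible group interpretation'' cannot be made sense of directly, because $\widehat{\CM^{\rm int}_{\CG,b_x,\mu}}_{/x_0}$ is a priori only a $v$-sheaf with no universal $p$-divisible group on it. The paper instead shows $\Psi_{G,x}=\Xi_x$ is an isomorphism by embedding both sides as closed sub-$v$-sheaves of the (representable) Siegel formal completion and using topological flatness and connectedness of the generic fiber (Proposition 3.4.1) to conclude equality of images.
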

Here  the element $b_x\in G(\breve \BQ_p)$ is well-defined up to $\sigma$-conjugacy by $\CG(\breve\BZ_p)$, and is determined by the fiber of $\sP_\eK$ at $x$, as follows. 
The pull-back  $x^*(\sP_\eK)$ is a $\CG$-shtuka over $\Spec(k)$, and yields a $\CG$-torsor $\sP_x$ over $\Spec(W(k))$ with an isomorphism 
$$
\phi_{\sP_x}\colon \phi^*(\sP_x)[1/p]\isoarrow\sP_x [1/p].
$$
 The choice of a trivialization of the $\CG$-torsor $\sP_x$ then defines  $b_x\in G(\breve \BQ_p)$.  The $\sigma$-conjugacy class $[b_x]$ lies in the subset $B(G, \mu^{-1})$ of $B(G)$.

The construction of $\sS_\eK$ is quite straightforward. The Hodge embedding defines a closed embedding of Shimura varieties into the Siegel type Shimura variety,
$$
{\rm Sh}_{\eK}(\eG, X)_E\hookrightarrow {\rm Sh}_{\eK^\flat}(\GSp_{2g}, S_{2g}^\pm)_\BQ\otimes_\BQ E .
$$
 Here, we need to choose the Siegel moduli level structure $\eK^\flat$ appropriately, in particular so that $\eK=\eK^\flat\cap G(\BA_f)$. After identifying ${\rm Sh}_{\eK^\flat}(\GSp_{2g}, S_{2g}^\pm)_\BQ$ with the generic fiber of the Siegel moduli space $\CA_{\eK^\flat}$ over $\BZ_{(p)}$, one defines $\sS_\eK$ as the normalization of the Zariski closure of ${\rm Sh}_{\eK}(\eG, X)_E$ inside $\CA_{\eK^\flat}\otimes_{\BZ_{(p)}} O_E$. It turns out a posteriori that $\sS_\eK$ is independent of the choice of the Hodge embedding, cf. Theorem \ref{uniqIntro}. A bonus of this independence is that we obtain for an inclusion $\eK_p\subset\eK'_p$ of parahoric subgroups an extension $\sS_{\eK}\to \sS_{\eK'}$ of the natural morphism ${\rm Sh}_{\eK}(\eG, X)_E\to {\rm Sh}_{\eK'}(\eG, X)_E$ to the integral models. Here $\eK=\eK_p\eK^p$ and $\eK'=\eK'_p\eK'^p$, with $\eK'^p=\eK^p$. This is a consequence of the functoriality of the formation of our integral models with respect to embeddings $(G, X, \eK)\hookrightarrow (G', X', \eK')$ of Shimura data (see Theorem \ref{functorialThm} for a precise formulation).
 
 Let us compare this theorem (for Shimura varieties of Hodge type) with  the main result of \cite{KP} 
 and its generalization by Kisin and Zhou in \cite{KZhou} (these references consider, more generally, Shimura varieties of abelian type).  Unlike in \cite{KP}, for our result we do 
not need any tameness hypothesis and we also dispense with the assumption $p\nmid\pi_1(\eG_{\rm der})$. 
The tameness hypothesis is significantly relaxed in the paper \cite{KZhou}, which ends up covering essentially all cases when $p\geq 5$. Nevertheless, our construction is, at least conceptually, simpler.  In particular, in contrast to \cite{KP}, \cite{KZhou}, there is no need for a judicious choice of a Hodge embedding. By the independence of the Hodge embedding, we  see that our integral models agree with those in \cite{KP}, \cite{KZhou}, when the latter exist. (Without this independence that we show here, the extension of the morphism ${\rm Sh}_{\eK}(\eG, X)_E\to {\rm Sh}_{\eK'}(\eG, X)_E$ between Shimura varieties for different parahorics  to integral models as above, was known to exist only when the models were constructed using ``compatible" Hodge embeddings, cf. \cite[\S 7]{Zhou}.) On the other hand, we do not produce a local model diagram, and do not even assert that there is an isomorphism 
$$
\widehat{\sS_{\eK_{/x}}}\simeq \widehat\BM^{\rm loc}_{{\CG, \mu}_{/y}},
$$
although this last fact follows a posteriori by combining our results with the main theorem of \cite{PRintlsv}, at least when $p$ is odd.
 Based on the Hodge type case, one should be able to extend the theorem to Shimura varieties of abelian type using Deligne's theory of connected components and ``twisting", as in \cite{KisinJAMS}, \cite{KP}, \cite{KZhou}. 

Let us comment on the proof of Theorem \ref{extGshtShimIntro}. Property a)  for the model $\sS_\eK$ follows from the N\'eron-Ogg-Shafarevich criterion of good reduction for abelian varieties. For property b), one realizes the $\CG$-shtuka $\sP_{\eK, E}$ in the generic fiber through some tensors in the pull-back   to $\sS_{\eK, E}$ under the Hodge embedding of the shtuka defined by the universal abelian scheme. Using Theorem \ref{vshtExtIntro}, these tensors extend over $\sS_\eK$. The crux is now to show that these tensors indeed define a $\CG$-shtuka over $\sS_\eK$. Here the main tool is the theorem of Ansch\"utz  \cite{An} that for an algebraically closed non-archimedean field $C$ of characteristic $p$,  any $\CG$-torsor on the punctured spectrum $\Spec (W(O_C))\setminus \{ s\}$ is trivial.  Finally, property c) essentially comes down to  showing that the pull-back of the $\CG$-shtuka $\sP_\eK$ to the completed local ring $\widehat{\CO}_{\sS_\eK, x}$
admits a ``framing".

We note that Theorem \ref{extGshtShimIntro} c)  provides a link between local and global Shimura varieties. 
This link is a priori different from the one provided by the theory of non-archimedean uniformization of Shimura varieties of PEL-type mentioned earlier, since it holds only point-wise. {\cmag The following theorem is the non-archimedean uniformization statement in our context. 

\begin{theorem}[see Theorem \ref{HK}]\label{introHK}
 Let $(p,\eG, X, \eK)$ be of global Hodge type. Let $x\in \sS_\eK(k)$. Then $\CM_{\CG, b_x, \mu}^{\rm int}$ is representable by a formal scheme $\sM_{\CG,b_x,\mu}$ and there is non-archimedean uniformization along the isogeny class $\CI(x)$ in $\sS_\eK\otimes_{O_E} k$, i.e., an isomorphism of formal schemes over $O_{\breve E}$,
$$
 I_x(\BQ)\backslash ({\sM_{\CG,b_x,\mu}}\times G(\BA_f^p)/\eK^p)\isoarrow \wh{({\sS_\eK\otimes_{O_E}O_{\breve E})}}_{/\CI(x)} .
$$
\emph{This isomorphism is to be interpreted (especially when $x$ is non-basic) as for its PEL counterpart in \cite[Thm. 6.23]{R-Z}. }
\end{theorem} 

Here the relation to the previous theorem is the obvious one: If   $( x_0, g)\in {\sM_{\CG,b_x,\mu}}(k)\times G(\BA_f^p)/\eK^p$ is a point representing the base point $x$, the isomorphism $\Theta_{ x}$ from Theorem \ref{extGshtShimIntro}  c) coincides with the completion of the uniformization morphism  at $x$.

}

The proof of Theorem \ref{introHK} is modelled on \cite{HP, HamaKim}, and proceeds by relating the integral local Shimura variety $\CM^{\rm int}_{\CG,b_x,\mu}$ to  a Rapoport-Zink space obtained by a global construction using the Hodge embedding. {\cmag In the statement of Theorem \ref{HK}, we make the assumption  that a certain condition $({\rm U}_x)$ is satisfied. This condition guarantees that  the  abelian varieties that are obtained by isogenies from $k$-points of the LHS of the uniformization map actually do define points  on the Shimura variety in question (this is not obvious since the Hodge type Shimura variety lacks a global moduli-theoretic interpretation). In our original submitted version of our paper, we conjectured that Condition  $({\rm U}_x)$ is always satisfied.  (Note that condition $({\rm U}_x)$ is identical with Axiom A in \cite[\S 4.3]{HamaKim}; Hamacher and Kim also conjectured that, in the tame case, it is always satisfied.) During the refereeing period of our paper, this conjecture has been proved in general by Gleason-Lim-Xu \cite[Cor. 1.10]{GLX22}. We refer to Remark \ref{prevres} for previous partial results  of Zhou and Nie.}

We also prove that the integral models $\sS_\eK$ are uniquely determined by their characteristic properties. The following uniqueness theorem should be compared with the main theorem of \cite{PCan}, where an analogous uniqueness theorem is proved. In loc.~cit., instead of $\CG$-shtukas, ``$(\CG, \mu)$-displays" are used for this characterization. 
The characterization of \cite{PCan} seems more concrete since it is adapted to $p$-divisible groups and gives more information when it applies.
For example, it is directly linked to the existence of a local model diagram; this is a useful feature which is harder to see here (compare the discussion in \S \ref{ss:LMD}). On the other hand, it is more limited in its applicability (essentially to Shimura varieties of Hodge type). An ``a priori" relation between the characterization in \cite{PCan} and the one given here is not clear; and something similar is true for the relation to the characterization of the good reduction model in the hyperspecial case by the \emph{extension property}, comp. \cite[\S 2.3.7]{KisinJAMS}. However, in practice, the integral models that have been constructed in various cases satisfy all these characterizations, provided the assumptions for them to apply are met.
It is remarkable that to obtain such a characterization, even in classical PEL type cases, one needs this more sophisticated approach.

   \begin{theorem}[see Theorem \ref{uniq}]\label{uniqIntro}
  Let $(\eG, X)$ be an arbitrary Shimura datum and consider open compact subgroups $\eK\subset G(\BA_f)$  of the form  $\eK=\eK_p\eK^p$, with $\eK_p=\CG(\BZ_p)$, where $\CG$ is a (connected) parahoric model of $G=\eG\otimes_\BQ \BQ_p$. There is at most one pro-system of  normal  and flat integral models $\sS_\eK$  with generic fiber ${\rm Sh}_{\eK}(\eG, X)_E$, with finite \'etale transition maps for varying sufficiently small $\eK^p$, with   properties a)--c) above. 
\end{theorem}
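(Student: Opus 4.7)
The plan is to show that, for each $\eK=\eK_p\eK^p$, the two models $\sS_\eK$ and $\sS'_\eK$ are canonically isomorphic as $O_E$-schemes extending the identity on ${\rm Sh}_\eK(\eG, X)_E$; compatibility under changing $\eK^p$ will be automatic by uniqueness. I form the closed subscheme $Z_\eK\subseteq \sS_\eK\times_{O_E}\sS'_\eK$ given by the scheme-theoretic closure of the diagonal of ${\rm Sh}_\eK(\eG, X)_E$, and let $\sT_\eK$ be its normalization. Then $\sT_\eK$ is normal, flat, and of finite type over $O_E$ with generic fiber ${\rm Sh}_\eK(\eG, X)_E$, equipped with birational projections $p\colon\sT_\eK\to \sS_\eK$ and $p'\colon\sT_\eK\to\sS'_\eK$. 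By Theorem \ref{vshtExtIntro} applied to $\sT_\eK$, the pullback shtukas $p^*\sP_\eK$ and $(p')^*\sP'_\eK$, which both coincide with $\sP_{\eK,E}$ on the generic fiber, are canonically isomorphic on all of $\sT_\eK$. It will suffice to show that $p$ is an isomorphism; the argument for $p'$ is symmetric.

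To see that $p$ is surjective on $k$-points, let $x\in \sS_\eK(k)$. Applying Cohen's structure theorem to the complete local ring $\widehat{\CO}_{\sS_\eK,x}$, which is flat over $O_E$ with residue field $k$, produces a section $\alpha\colon \Spec O_{\breve E}\to \sS_\eK$ whose closed point maps to $x$. Using the finite \'etaleness of the transition morphisms in $\{\sS_{\eK_p\eK'^p}\}_{\eK'^p}$ and the henselianity of $O_{\breve E}$ with algebraically closed residue field, $\alpha$ lifts to a compatible system $\tilde\alpha\in\varprojlim_{\eK'^p}\sS_{\eK_p\eK'^p}(O_{\breve E})$. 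Property a) translates this into a compatible system in $\varprojlim_{\eK'^p}\sS'_{\eK_p\eK'^p}(O_{\breve E})$, whose projection gives $\alpha'\in\sS'_\eK(O_{\breve E})$ sharing its generic fiber with $\alpha$ inside ${\rm Sh}_\eK(\eG, X)(\breve E)$. The pair $(\alpha,\alpha')$ defines an $O_{\breve E}$-point of $Z_\eK$, which lifts uniquely to an $O_{\breve E}$-point of $\sT_\eK$ by the universal property of normalization, using the normality of $O_{\breve E}$. Its reduction is a point $y\in\sT_\eK(k)$ with $p(y)=x$.

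For each such $y\in\sT_\eK(k)$ over $x$, set $x'=p'(y)$. The shtuka isomorphism from Theorem \ref{vshtExtIntro} restricted to $y$ identifies $x^*\sP_\eK$ with $(x')^*\sP'_\eK$, matching the elements $b_x$ and $b_{x'}$ together with compatibly matching base points $x_0$ on the local integral Shimura variety. Property c) applied to both models then furnishes identifications
\[
\widehat{(\sS_\eK)}_{/x}^\diam \;\cong\; \widehat{\CM^{\rm int}_{\CG, b_x, \mu}}_{/x_0} \;\cong\; \widehat{(\sS'_\eK)}_{/x'}^\diam,
\]
through which the induced morphism $\widehat{(\sT_\eK)}_{/y}^\diam \to \widehat{(\sS_\eK)}_{/x}^\diam$ corresponds to the identity on the common middle term. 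By the full-faithfulness of the diamond functor on normal formal schemes flat and locally formally of finite type over $O_{\breve E}$, the corresponding map $\widehat{(\sT_\eK)}_{/y}\to \widehat{(\sS_\eK)}_{/x}$ of formal schemes is an isomorphism. Hence $p$ induces isomorphisms on all completed local rings at closed points; since it is of finite type this forces $p$ to be \'etale at every closed point, and since $\sT_\eK$ is Jacobson, \'etale everywhere. A birational \'etale morphism that is surjective on closed points is an isomorphism, which concludes the argument.

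The main obstacle is the surjectivity of $p$ on $k$-points: it requires carefully chaining the lift of $x$ to an $O_{\breve E}$-point via the Cohen structure, the use of finite \'etaleness of the transition maps, the application of property a) at the pro-system level, and descent back to level $\eK$. A secondary subtlety is the identification of the base points $x_0$ on the local integral Shimura variety coming from the two different models, which must be carried out via the canonical shtuka isomorphism provided by Theorem \ref{vshtExtIntro} in order to compose the two instances of property c).
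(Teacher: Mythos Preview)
Your overall strategy---form the third model $\sT_\eK$ as the normalization of the graph closure, identify the pullback shtukas on it via Theorem~\ref{vshtExtIntro}, and show the projections are isomorphisms by comparing formal completions---is exactly the paper's. But the decisive step is missing.

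The isomorphisms $\Theta_x$ and $\Theta_{x'}$ furnished by property~c) are only asserted to \emph{exist}; nothing makes them canonical. A point of $\CM^{\rm int}_{\CG,b,\mu}$ is a $\CG$-shtuka \emph{together with a framing} $i_r$, so two maps $\widehat{(\sT_\eK)}_{/y}^\diam\rightrightarrows \widehat{\CM^{\rm int}_{\CG,b,\mu}}_{/x_0}$ that pull the tautological shtuka back to the same shtuka $p^*\sP_\eK\cong(p')^*\sP'_\eK$ may still differ by a section over $\widehat{(\sT_\eK)}_{/y}$ of the framing-automorphism $v$-sheaf $\wt G_b$ of $(G\times\CY_{[r,\infty)},\,b\times{\rm Frob})$. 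Your sentence ``through which the induced morphism \dots\ corresponds to the identity on the common middle term'' is precisely where this ambiguity is assumed away.

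The paper closes the gap with a rigidity result (Proposition~\ref{lemma423}): the projection $\wt G_b\to\underline{G_b(\BQ_p)}$ is a bijection on global sections over $\Spd(\widehat\CO_{\sS_\eK,x})$. The argument is that $\wt G_b^{>0}$ is a successive extension of positive Banach--Colmez spaces; since $[b]\in B(G,\mu^{-1})$ with $\mu$ minuscule, all the slopes occurring are $\le 1$ (Lemma~\ref{boundonlambda}), and such Banach--Colmez spaces have no nonzero sections over a complete Noetherian local ring because $\bigcap_n\widehat{\mathfrak m}^{p^n}=0$ (Lemma~\ref{BCzero}). Once the discrepancy between the two framings lies in $G_b(\BQ_p)$, it can be absorbed into the $G_b(\BQ_p)$-action on $\CM^{\rm int}_{\CG,b,\mu}$, and only then do the two composites from $\widehat{(\sT_\eK)}_{/y}$ agree. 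Without this, your diagram has no reason to commute.

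A smaller point: Cohen's structure theorem gives a coefficient ring \emph{inside} $\widehat\CO_{\sS_\eK,x}$, not a local surjection onto $O_{\breve E}$; for instance $O_{\breve E}[[t]]/(t^2-\pi)$ is normal and flat with residue field $k$ but has no $O_{\breve E}$-point. You should instead lift $x$ to an $O_F$-point for some finite $F/\breve E$ (the rigid generic fiber of $\Spf\widehat\CO_{\sS_\eK,x}$ is nonempty by flatness and has classical points), and then run your surjectivity argument with $O_F$ in place of $O_{\breve E}$. The paper in effect does this by starting from a point $\hat x\in{\rm Sh}_{\eK_p}(\eG,X)(F)$ and invoking property~a).
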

 The main point in the proof of this theorem is a rigidity property of the isomorphisms $\Theta_x$ in (c). This essentially amounts to a rigidity property of the framing of the pull-back of the $\CG$-shtuka $\sP_\eK$ to $\widehat{\CO}_{\sS_\eK, x}$, which was  constructed in the proof of property (c) of Theorem \ref{extGshtShimIntro}. For this, we show that the diamond automorphism group of the $\CG$-shtuka given by the trivial $\CG$-torsor and the Frobenius $\phi_b=b\times {\rm Frob}$ has as its  global sections over the completed local ring $\widehat{\CO}_{\sS_\eK, x}$ only the obvious ones, i.e. the $\sigma$-centralizer group $J_b(\BQ_p)$.

  We conjecture that models $\sS_\eK$ with these properties exist for general Shimura varieties, cf. Conjecture \ref{par812}. This conjecture should be compared with Deligne's proposal to construct the models $\sS_\eK$ as moduli spaces of motives: whereas it seems quite difficult to make this precise, our conjecture, which is based on Scholze's idea of moduli spaces of shtukas, is quite concrete and seems accessible in many cases (e.g., for Shimura varieties of abelian type). The conjecture should be seen in conjunction with Scholze's conjecture on the  representability of the $v$-sheaf local model (which is now proved, see above), and the conjectured representability of the $v$-sheaf local model diagram, cf. Conjecture  \ref{conjLMD}. Also, the construction of the map $x\mapsto b_x$ above would define a map to the set of $\CG(\breve\BZ_p)$-$\sigma$-conjugacy classes  as predicted in \cite[Axiom 4]{HeR},
  $$
  \Upsilon_\eK\colon \sS_\eK(k)\to G(\breve \BQ_p)/\CG(\breve\BZ_p)_\sigma ,
  $$
  which would  define in the general case the Newton-stratification, the KR-stratification, the EKOR-stratification and the central leaves  in the special fiber,  without the use of abelian varieties. Finally, Theorem \ref{introHK} should be seen in conjunction with the Langlands-Rapoport conjecture enumerating the isogeny classes in the special fiber, comp. \cite{KisinLR, R}. It should hold true in general.

This picture is quite general, and may not be easy to implement.  The most immediate next task is to establish this picture for Shimura varieties of abelian type. Also, one can hope to extend the results to parahoric subgroups which are not the stabilizer of a point in the building, but only equal to the neutral component of a stabilizer. In addition, for applications to moduli spaces, it would be useful to also allow non-connected stabilizer subgroups (called ``quasi-parahoric" in \cite[\S 25.3]{Schber}) as the level, comp. \cite{KP}. 

 Our normalizations impose that for a Shimura datum $(\eG, X)$ of PEL-type, with universal abelian variety $\CA_\eK$ over the integral model $\sS_\eK$, the fiber $\sP_{\eK, x}$ at a point $x\in \sS_\eK(k)$ corresponds under the natural representation of $\eG$  to $({\rm Frob}^{-1})^*({\rm H}^1_{\rm crys}(\CA_x)^*)$, 
 the Frobenius descent  of the linear dual of the crystalline cohomology of $\CA_x$, i.e., the Frobenius descent  of the linear dual  of the contravariant Dieudonn\'e module  $\BD(\CA_x)^*$ of the $p$-divisible group of $\CA_x$, cf. Example \ref{pdivExample}. For the crystalline period map, we trivialize $\BD(\CA_x)^*$ and vary the Hodge filtration so that the Grothendieck-Messing period map has its target in the flag variety $\CF_{G, \mu}$. For the Hodge-Tate period map, we trivialize the associated local system so that the Hodge-Tate period map has its target in the flag variety $\CF_{G, \mu^{-1}}$. The relation with prismatic cohomology given by $\sP_{\eK, x}$ corresponding to $({\rm Frob}^{-1})^*({\rm H}^1_{\rm crys}(\CA_x)^*)={\rm H}^1_{\pri}(\CA_x/W(k))^*(W(k), (p))$  leads us  to expect that the models  $\sS_\eK$ support  a more refined object $\sP_{\eK, \pri}$
which is a deperfection of the  $\CG$-shtuka  $\sP_{\eK}$. Here, ``deperfection" is meant in the sense of the theory of prisms of Bhatt and Scholze \cite[\S 1]{BSPrism} and the object $\sP_{\eK, \pri}$ should be, roughly, a $\CG$-torsor 
with Frobenius over the prismatic site of the $p$-adic 
completion $\wh{\sS_\eK}$. (More precisely, it should be a {\it prismatic Frobenius crystal with $\CG$-structure} over $\wh{\sS_\eK}$, see \S \ref{ss:prism}). In the Hodge type case, it should be constructed from the prismatic cohomology of the universal abelian scheme.

\bigskip

\noindent{\bf Acknowledgements:}   We thank Peter Scholze for his  comments on a preliminary version of this paper, {\cmag Ian Gleason for corrections and comments, and the referee for his attentive reading of the paper.} We also thank Patrick Daniels, Laurent Fargues, Thomas Haines, Kazuhiro Ito, Mark Kisin, Eva Viehmann and  Yihang Zhu for additional helpful correspondence, discussions, corrections and comments.

The first author acknowledges support by NSF grant \#DMS-2100743. Also, his visit to Bonn in March 2020 (cut short by Covid-19) was financed by P.~Scholze's Leibniz prize. 

\bigskip

\noindent{\bf Notations:} The following general conventions are used.
\begin{altitemize}
\item We fix an algebraic closure $\bar\BQ_p$ of the $p$-adic numbers
$\BQ_p$. If $F/\BQ_p$ is a finite extension with $F\subset \bar\BQ_p$, we denote by $\breve F$ the completion of the maximal unramified extension of $F$ in $\bar\BQ_p$. 
The rings of integers are denoted by $O_F$, resp. by $O_{\breve F}$ or $\breve O_F$. 
\item We often write $X\otimes_A B$ for $X\times_{\Spec (A)} \Spec (B)$. 

\item For a Huber pair $(A, A^+)$, we write $\Spd(A, A^+)$ for the $v$-sheaf $(\Spa(A, A^+))^\diam$. 
If $A^+=A^\circ$, we simply write $\Spd(A)$.
\end{altitemize}

\section{Shtukas}
\subsection{$v$-sheaves and other preliminaries}\label{ss2.1}
We will use  heavily many of the constructions and results of \cite{Schber} and, in particular, the language and techniques of perfectoid spaces and $v$-sheaves of Scholze. Here, we recall some of the basic terms and notations. The reader is referred to \cite{Schber} and \cite{Sch-Diam} for more details and to \cite{KL} for more background and other important constructions. Let $k$ be a perfect field of characteristic $p$.

Let $(R, R^+)$ be perfectoid over $k$, so that $S=\Spa(R, R^+)\in {\rm Perfd}_k$ is affinoid perfectoid. Then, in particular, $R$ is a complete Tate Huber ring and is uniform, so we can take the subring of power-bounded elements $R^\circ$ as a ring of definition.  Now pick a pseudouniformizer $\varpi\in R^\circ$. We can regard $R$ as a (uniform) Banach ring by using the sub-multiplicative norm $|\ |: R\to \BR_{\geq 0}$ defined by
\[
|x|={\rm inf}_{\{n | \varpi^n x\in R^\circ\}} p^n.
\]
(See \cite[p. 11]{Schber}, \cite[Chapt. 2]{KL}.)
We will also need the spectral norm
\begin{equation}\label{def:spnorm}
\alpha(x)=\lim_{s\mapsto+\infty} |x^s|^{1/s} ,
\end{equation}
which is power-multiplicative and defines the same topology on $R$.
Then 
\[
R^\circ=\{x\in R\ |\ \alpha(x)\leq 1\}
\]
 and $R^+\subset R^\circ$.

As in \cite[\S 11.2]{Schber}, for $S=\Spa(R, R^+)\in{\rm Perfd}_k$ as above, we set
\begin{equation}
S\bdtimes  \BZ_p  =S\bdtimes {\rm Spa}(\BZ_p)= {\rm Spa}(W(R^+))\setminus \{[\varpi]= 0\},
\end{equation}
where $\varpi\in R^+$ is a pseudouniformizer of $(R, R^+)$ and $[\varpi]\in W(R^+)$ the Teichm\"uller representative.
By \cite[Prop. 11.2.1]{Schber} and its proof, this is a sousperfectoid (analytic) adic space covered by the affinoid subsets 
$\Spa(R_n, R_n^+)$, $n=1$, $2,\ldots $, 
of $\Spa(W(R^+))$, where
\begin{equation}
R_n=\Big\{\sum_{i\geq 0} [r_i] \left(\frac{p}{[\varpi^{1/p^n}]}\right)^i\ |\ r_i\in R, \ r_i\to 0\Big\}.
\end{equation}
Set also 
\begin{equation}
\CY(R, {R^+})={\rm Spa}(W(R^+))\setminus \{[\varpi]= 0, p=0\} .
\end{equation}
This is also a sousperfectoid (analytic) adic space (see \cite[\S 13.1]{Schber} for the case $(R,R^+)=(C, \CO_C)$, or the proof of Proposition 3.6 in \cite{Kedlaya}, in general).
We can define a continuous map (see  \cite[\S 15]{Schber} for the case $(R,R^+)=(C, \CO_C)$, and \cite[\S II.1.12]{FS} in general)
 \begin{equation}
\kappa:  |\CY(R, {R^+})| \to [0,\infty],
 \end{equation}
 taking any point $x\in \CY(R, {R^+})$ with rank-$1$-generalization $\tilde x$  to $\log(|[\varpi](\tilde x)|)/\log(|p(\tilde x)|)$. (This map depends on the choice of the pseudo-uniformizer $\varpi$). For an interval $I=[a, b]\subset [0,\infty]$ with rational endpoints, we denote by $\CY_I(R, R^+)$, or $\CY_I(S)$, the open of $\CY(R, {R^+})$ contained in $\kappa^{-1}(I)$, given by
 $$
 \CY_I(S)=\{ |p|^b\leq |[\varpi]|\leq|p|^a\} .
 $$ 
  Then, 
 \begin{equation}
 \begin{aligned}
 \CY_{[0,\infty)}(R, R^+)&={\rm Spa}(W(R^+))\setminus \{[\varpi]= 0\}=S\bdtimes \BZ_p\\
 \CY_{(0,\infty]}(R, R^+)&={\rm Spa}(W(R^+))\setminus \{p= 0\} .
 \end{aligned}
 \end{equation}
  We also have 
 $\CY_{[0, p^n]}(S)=\Spa(R_n, R_n^+)$ as above. In this, we can see that $R_n$ is independent of the choice of $R^+$.  This, 
 together with a result of Kedlaya-Liu \cite[Thm. 2.7.7]{KL},  implies that
 the (exact) category of vector bundles on $\CY_{[0,\infty)}(S)=S\bdtimes\BZ_p$ does not depend on the choice of $R^+$ in the pair $(R, R^+)$. In particular, we obtain:
 
 \begin{proposition}\label{ppProp}
 The natural restriction map gives an exact equivalence between vector bundles over $\Spa(R, R^+)\bdtimes \BZ_p$ 
and vector bundles over $\Spa(R, R^\circ)\bdtimes \BZ_p$. \hfill $\square$
\end{proposition}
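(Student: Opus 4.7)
The plan is to reduce, via a Stein-type exhaustion by affinoid opens, to a statement about finite projective modules over the Tate rings $R_n$, which are manifestly independent of the choice of $R^+\subset R^\circ$; the argument is essentially already spelled out in the paragraph preceding the Proposition.

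First, the inclusion $R^+\hookrightarrow R^\circ$ induces an open immersion of sousperfectoid analytic adic spaces
\[
\Spa(R,R^\circ)\bdtimes\BZ_p \hookrightarrow \Spa(R,R^+)\bdtimes\BZ_p,
\]
so pullback defines an exact restriction functor on the categories of vector bundles; this is the natural functor of the statement. Both spaces admit a Stein exhaustion by the rational subsets $\CY_{[0,p^n]}$ as $n\to\infty$: on the larger space these are $\Spa(R_n,R_n^+)$, on the smaller they are $\Spa(R_n,R_n^{\prime +})$ for the integral structure induced by $R^\circ$. By the explicit formula recalled above, the Tate ring $R_n$ depends only on $R$ and the chosen pseudo-uniformizer $\varpi$, not on the integral subring chosen between $R^+$ and $R^\circ$.

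Second, each $\Spa(R_n,R_n^+)$ is sousperfectoid by \cite[Prop.~11.2.1]{Schber}, hence stably uniform, so the result of Kedlaya--Liu \cite[Thm.~2.7.7]{KL} identifies the category of vector bundles on it with the category of finite projective $R_n$-modules; the same holds verbatim for $\Spa(R_n,R_n^{\prime +})$. Under these identifications the restriction functor becomes the identity on finite projective $R_n$-modules, so it is an exact equivalence at each finite stage $n$, compatibly with the rational transition maps for $n\leq n+1$.

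Third, to pass from the affinoid equivalences to an equivalence on the whole Stein space, I would invoke Tate acyclicity / descent for vector bundles on stably uniform analytic adic spaces, again a consequence of the Kedlaya--Liu machinery: a vector bundle on $\CY_{[0,\infty)}(R,R^+)$ is the same as a compatible system of finite projective $R_n$-modules $(M_n)_n$ with $M_{n+1}\otimes_{R_{n+1}}R_n \simeq M_n$, and likewise for $R^\circ$. Since the equivalence at each stage $n$ respects the transition maps, the global restriction functor is fully faithful and essentially surjective, and visibly exact. The step requiring the most care is the glueing, i.e.\ verifying that Tate acyclicity applies to the cover by the $\CY_{[0,p^n]}$ uniformly in the choice of integral structure; this follows from stability of the sousperfectoid property under the rational subset construction, so no new input is needed beyond \cite{KL} and \cite[\S 11]{Schber}.
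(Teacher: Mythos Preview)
Your proposal is correct and follows essentially the same approach as the paper: the argument is already sketched in the paragraph immediately preceding the Proposition, which notes that $R_n$ is independent of the choice of $R^+$ and cites \cite[Thm.~2.7.7]{KL} to identify vector bundles on $\Spa(R_n, R_n^+)$ with finite projective $R_n$-modules. You have simply made explicit the Stein-glueing step that the paper leaves implicit.
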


Note that the construction of $\CY_{[0,\infty)}(R, R^+)$ ``globalizes" and one can make sense of $\CY_{[0,\infty)}(S)$ for a general perfectoid space $S$ over $k$, cf. \cite[Prop. II.1.13]{FS}. We also have the following  descent result, cf. \cite[Prop. 19.5.3]{Schber}. 

\begin{proposition}\label{vstack}
Sending a perfectoid space $S$ over $k$ to the groupoid of vector bundles over $\CY_{[0,\infty)}(S)$ gives a $v$-stack. \hfill $\square$
\end{proposition}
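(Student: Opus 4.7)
The plan is to verify $v$-descent for the fibered category sending $S \in {\rm Perfd}_k$ to the groupoid of vector bundles on $\CY_{[0,\infty)}(S) = S\bdtimes \BZ_p$. Vector bundles already satisfy analytic descent on any adic space, so I would reduce to the stack condition along a single $v$-cover $S' = \Spa(R', R'^+) \to S = \Spa(R, R^+)$ of affinoid perfectoid spaces over $k$.

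Next, I would exploit the increasing exhaustion
\[
\CY_{[0,\infty)}(S) \,=\, \bigcup_{n\geq 1}\,\Spa(R_n, R_n^+),
\]
and the analogous one for $S'$. Together with the Kedlaya--Liu Beauville--Laszlo glueing principle (\cite[Thm.~2.7.7]{KL}, already invoked in Proposition \ref{ppProp}), this identifies vector bundles on $\CY_{[0,\infty)}(S)$ with compatible systems of vector bundles on the sousperfectoid affinoids $\Spa(R_n, R_n^+)$ along the natural restrictions. The identification is functorial in $S$ and commutes with base change to $S'$, so the problem reduces to $v$-descent at each fixed level $n$, since taking a 2-limit of stacks along the flat transition maps $R_n \hookrightarrow R_{n+1}$ yields a stack.

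At fixed $n$, I would descend vector bundles on the sousperfectoid affinoid $\Spa(R_n, R_n^+)$ along the map $\Spa(R'_n, R'^+_n) \to \Spa(R_n, R_n^+)$. The strategy is to pass to an explicit pro-\'etale perfectoid cover of $\Spa(R_n, R_n^+)$ (built from the perfectoid structure of $R$, compatibly with the construction of $R_n$ as a Tate series ring in $p/[\varpi^{1/p^n}]$), apply Scholze's $v$-descent for vector bundles on perfectoid spaces, and then transfer the conclusion back to the sousperfectoid level by the fully faithful embedding of vector bundles on a sousperfectoid affinoid into vector bundles on its perfectoid cover. Functoriality of the cover construction in $R$ ensures the descent datum along $R_n \to R'_n$ pulls back to a descent datum along the perfectoid covers.

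The main obstacle I expect is checking that $R_n \to R'_n$ inherits enough of the $v$-cover property from $R \to R'$ so that the induced map of perfectoid covers is again a $v$-cover in ${\rm Perfd}_k$, where Scholze's descent applies. This is where the explicit power-series presentation of $R_n$ enters: one must verify that the operation $R \mapsto R_n$ behaves well with respect to completed tensor products against $R \to R'$, and in particular preserves $v$-coverings after perfection. Once this is in hand, the descent at each $n$ assembles, via the Kedlaya--Liu identification, into descent for vector bundles on $\CY_{[0,\infty)}(S)$, yielding the $v$-stack property.
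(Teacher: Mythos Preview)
The paper does not prove this; it simply cites \cite[Prop.~19.5.3]{Schber}. Your outline is essentially the strategy of that reference: exhaust $\CY_{[0,\infty)}(S)$ by the sousperfectoid affinoids $\Spa(R_n,R_n^+)$ and reduce to $v$-descent of vector bundles on each piece. The obstacle you flag is indeed the heart of the matter, but it is resolved more cleanly than by explicit power-series manipulation. The general input (cf.\ \cite[Thm.~17.1.3]{Schber}, extended to sousperfectoid spaces as in \cite{HansenKedlaya}, which the paper invokes later in the proof of Proposition~\ref{propBKFshtuka}) is that on any sousperfectoid analytic adic space the structure sheaf, and hence the category of vector bundles, satisfies $v$-descent; equivalently, vector bundles agree with $v$-vector bundles on the associated diamond. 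Since $(\CY_{[0,\infty)}(S))^\diamond = S\times\Spd(\BZ_p)$ by \eqref{prodformulaEq}, a $v$-cover $S'\to S$ induces the $v$-cover $S'\times\Spd(\BZ_p)\to S\times\Spd(\BZ_p)$ of diamonds, and descent follows at once. Your hands-on route through explicit pro-\'etale perfectoid covers and tilts would also work, but amounts to reproving this general sousperfectoid descent in the particular case of the rings $R_n$; the ``full faithfulness'' you mention is by itself not enough for effective descent, so you would in any case need to invoke the equivalence of vector bundles on the sousperfectoid base with equivariant vector bundles on its perfectoid cover, which is exactly the content of the general statement.
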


 For an interval $I\subset [0,\infty]$ and $S=\Spa(R, R^+)$ affinoid perfectoid, we will write
 \begin{equation}
 B_{R, R^+}^I=B_{S}^{I}=\Gamma(\CY_{I}(R, R^+),  \CO_{\CY}).
 \end{equation}
 If $r=m/n>0$, then
\[
B_{S}^{[m/n,\infty]}= {W(R^+)\Big\langle\frac{[\varpi]^{n}}{p^m}}\Big\rangle\Big[\frac{1}{p}\Big],
\]
where ${W(R^+)\big\langle\frac{[\varpi]^{n}}{p^m}}\big\rangle$ is the $p$-adic completion of ${W(R^+)\big[\frac{[\varpi]^{n}}{p^m}}\big]$.
 Recall also the $p$-adic completion of the universal divided power hull 
\[
A_{{\rm crys}}(R^{+}/\varpi)=W(R^+)\big\langle\big(\frac{[\varpi^n]}{n!}\big)_{n\geq 1}\big\rangle\to R^+/\varpi,
\] 
and set 
\[
B_{\rm crys}^+(R^{+}/\varpi)=A_{{\rm crys}}(R^{+}/\varpi)[1/p].
\]
 For $r\gg0$, there  are natural homomorphisms
\begin{equation}
W(R^+)\to B_{\rm crys}^+(R^{+}/\varpi)\to B_{S}^{[r,\infty]}=\Gamma(\CY_{[r,\infty]}(R, R^+), \CO_\CY) ,
\end{equation} 
see, for example, \cite[\S 6.2]{FarguesGeometrization}. {\cmag Note again that if $\infty\not\in I$, then $B^I_S=B^I_{R,R^+}$
does not depend on $R^+$ and we can denote it as $B^I_R$.}

The Frobenius of $R^+$ induces a ring homomorphism $W(R^+)\to W(R^+)$. This gives morphisms 
$\CY_{[r,\infty)}(R,R^+)\to \CY_{[pr,\infty)}(R,R^+)\hookrightarrow \CY_{[r,\infty)}(R,R^+)$ and also $\CY_{[r,\infty]}(R,R^+)\to \CY_{[pr,\infty]}(R,R^+)\hookrightarrow \CY_{[r,\infty]}(R,R^+)$, which we will denote
${\rm Frob}_S$ or simply $\phi$, if no confusion arises. Let us denote by ${\rm Vec}^\phi_{\CY_{[r,\infty)}(R,R^+)}$
the category of vector bundles $\sV$ over $\CY_{[r,\infty)}(R,R^+)$ with (isomorphism) $\phi$-structure, i.e. an isomorphism 
\[
\phi_{\sV}: \phi^*(\sV)\xrightarrow{\sim} \sV,
\]
and similarly for $\CY_{[r,\infty]}(R,R^+)$. {\cmag There is a similar definition of ${\rm Vec}^\phi_{\CY_{I}(R,R^+)}$, 
for any interval $I$ with endpoints $0$ and $\infty$. 

{\cmag Assume $r>0$. Since $\CY_{[r,\infty]}(R,R^+)$ is affinoid and sheafy (since it is sousperfectoid, see \cite[Prop. 3.6]{Kedlaya}), we can see by using \cite[Theorem 5.2.8]{Schber}, \cite[Theorem 2.7.7]{KL}, that taking global sections gives an equivalence of categories between ${\rm Vec}^\phi_{\CY_{[r,\infty]}(R,R^+)}$ and the category of projective finitely generated $ B_{S}^{[r,\infty]}$-modules
with a Frobenius semilinear map whose linearization is an isomorphism. Following \cite[\S 6]{KL}, \cite[\S 12.3, \S 13.2]{Schber}, we call them ``$\phi$-modules over  $ B_{S}^{[r,\infty]}$" or ``$\phi$-modules over $\CY_{[r,\infty]}(R,R^+)$''.}
\quash{We do not know if the corresponding fact is true for ${\rm Vec}^\phi_{\CY_{[r,\infty)}(R,R^+)}$ since vector bundles over the non-affinoid $\CY_{[r,\infty)}(R,R^+)$ are harder to describe. It does hold though for ${\rm Vec}^\phi_{\CY_{(0,\infty)}(R,R^+)}$ by \cite[Theorem 6.3.12]{KL} which gives an equivalence of categories between
${\rm Vec}^\phi_{\CY_{(0,\infty)}(R,R^+)}$ and the category of $\phi$-modules over $B^{(0,\infty)}_S$. }

The following result is implicitly stated in 
\cite[\S 6]{FarguesGeometrization}.  
\begin{proposition}\label{FFres}
Suppose that $R^+=R^\circ$. Then, for $r>0$,
the restriction functor
\[
{\rm Vec}^\phi_{\CY_{[r,\infty]}(R,R^+)}\to {\rm Vec}^\phi_{\CY_{[r,\infty)}(R,R^+)}
\]
is fully-faithful. It is an equivalence of categories when $(R, R^\circ)=(C, O_C)$ is an algebraically closed perfectoid field. 
\end{proposition}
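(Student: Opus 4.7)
The claim splits into full faithfulness over any $(R,R^+)$ with $R^+=R^\circ$ and $r>0$, and essential surjectivity when $(R,R^+)=(C,O_C)$. For full faithfulness I would reduce to $\phi$-invariant sections: given $\phi$-modules $\sV,\sV'$ over $\CY_{[r,\infty]}$, the internal-Hom bundle $\CN=\mathcal{H}om(\sV,\sV')$ carries an induced $\phi$-structure, and morphisms of $\phi$-modules on either $\CY_{[r,\infty]}$ or $\CY_{[r,\infty)}$ correspond bijectively to the $\phi$-invariant global sections of $\CN$. It therefore suffices to show that for every $\phi$-module $\CN$ on $\CY_{[r,\infty]}$ the restriction map
\[
\Gamma(\CY_{[r,\infty]},\CN)^{\phi=1}\ \longrightarrow\ \Gamma(\CY_{[r,\infty)},\CN)^{\phi=1}
\]
is a bijection. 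Injectivity is immediate, since $\CY_{[r,\infty)}$ is an open dense subspace of the reduced analytic adic space $\CY_{[r,\infty]}$ (the complement is the closed locus $\{[\varpi]=0\}$) and $\CN$ is locally free.

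For surjectivity, fix $s>r$ and use the cover $\CY_{[r,\infty]}=\CY_{[r,\infty)}\cup \CY_{[s,\infty]}$ with intersection $\CY_{[s,\infty)}$: by gluing, extending a $\phi$-invariant section $\psi$ across the crystalline-type boundary $\{[\varpi]=0\}$ reduces to extending $\psi|_{\CY_{[s,\infty)}}$ to a section of $\CN$ on the affinoid $\CY_{[s,\infty]}$. Its ring of global sections is the $p$-adically complete $B^{[s,\infty]}_S=W(R^+)\langle[\varpi]^n/p^m\rangle[1/p]$ (with $s=m/n$). Iterating the $\phi$-invariance identity $\psi=\phi_\CN\circ\phi^*(\psi)$ yields $\psi=\phi_\CN^k\circ(\phi^k)^*(\psi)$ for every $k\geq 1$; since $\phi([\varpi])=[\varpi]^p$, the $p$-adic size of $(\phi^k)^*([\varpi]^n/p^m)$ shrinks like $p^{-m(p^k-1)}$, so the sequence of iterated Frobenius pullbacks converges $p$-adically in $\CN|_{\CY_{[s,\infty]}}$ to a genuine section extending $\psi|_{\CY_{[s,\infty)}}$. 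This $p$-adic convergence argument — and the corresponding uniform bookkeeping of Frobenius iterates across the closed boundary $\{[\varpi]=0\}$ — is the main technical obstacle.

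For essential surjectivity when $(R,R^+)=(C,O_C)$, given a $\phi$-module $(\sV,\phi_\sV)$ on $\CY_{[r,\infty)}$, I would first extend $\sV$ to a $\phi$-module on $\CY_{(0,\infty)}$ via iterated Frobenius pullback: the isomorphism $\phi\colon\CY_{[r/p,\infty)}\xrightarrow{\sim}\CY_{[r,\infty)}$ lets one define $\sV$ on $\CY_{[r/p,r]}$ as $\phi^*(\sV|_{\CY_{[r,pr]}})$, compatibly with the existing structure on overlaps. The Fargues--Fontaine classification of $\phi$-modules on $\CY_{(0,\infty)}$ over an algebraically closed perfectoid field \cite{FarguesFontaine} then exhibits $\sV$, up to isomorphism, as a direct sum of standard isoclinic bundles $\CO(\lambda)$; each such bundle arises by base change along $W(k_C)[1/p]=\breve\BQ_p\to B^{[r,\infty]}_S$ of an explicit isocrystal over $\breve\BQ_p$, and so extends canonically to $\CY_{[r,\infty]}$. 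Combined with the full faithfulness above, this gives the claimed equivalence.
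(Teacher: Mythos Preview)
Your essential surjectivity argument is fine and matches the paper's (which simply cites \cite[Thm.~11.1.12]{FarguesFontaine}). The full faithfulness argument, however, has a real gap.

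The sentence ``the sequence of iterated Frobenius pullbacks converges $p$-adically in $\CN|_{\CY_{[s,\infty]}}$'' does not parse. The identity $\psi=\phi_\CN^k\circ(\phi^k)^*(\psi)$ is a tautology on $\CY_{[s,\infty)}$: each side is literally $\psi$, so nothing is converging. The pullbacks $(\phi^k)^*(\psi)$ live only on $\CY_{[s,\infty)}$ (since $\phi^k$ carries the boundary $\{[\varpi]=0\}$ to itself), so you never produce a sequence in $\Gamma(\CY_{[s,\infty]},\CN)$. Your observation that $(\phi^k)^*([\varpi]^n/p^m)$ has small $p$-adic norm in $B_S^{[s,\infty]}$ is correct but does not help: Frobenius applied to a section with a singularity at $[\varpi]=0$ makes the singularity \emph{worse} (it sends $[\varpi]^{-d}$ to $[\varpi]^{-pd}$), and there is no well-defined finite ``order of pole'' for general sections of $B_S^{[s,\infty)}$ that would let you exploit this. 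A second, decisive symptom: the hypothesis $R^+=R^\circ$ is nowhere used in your sketch, yet full faithfulness fails without it (see the paragraph after the statement in the paper).

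The paper's argument is genuinely different. After extending by Frobenius to compare $\CY_{(0,\infty]}$ with $\CY_{(0,\infty)}$ (so one works with $B^+(R)=B_S^{(0,\infty]}$ inside $B(R)=B_S^{(0,\infty)}$), it introduces the Kedlaya--Liu family of power-multiplicative norms $\lambda(\alpha^s)$, $s>0$, built from the spectral norm $\alpha$ on $R$. In a fixed basis, with Frobenius matrix $A\in M_n(B^+(R))$, the relation $x=A\cdot\phi(x)$ together with $\limsup_{s\to\infty}\lambda(\alpha^s)(b)^{1/s}\le 1$ for $b\in B^+(R)$ and the identity $\lambda(\alpha^s)\circ\phi=\lambda(\alpha^{ps})$ yield
\[
\limsup_{s\to\infty} N_s(x)^{1/s}\ \le\ \bigl(\limsup_{s\to\infty} N_s(x)^{1/s}\bigr)^{1/p},
\]
forcing the limsup $\le 1$. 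A Kedlaya--Liu criterion \cite[Lem.~5.2.11(a)]{KL} (valid precisely because $R^+=R^\circ$) then gives $x\in (B^+(R))^n$. This norm-family limsup is the quantitative substitute for the naive ``pole order'' you are reaching for, and it is where the hypothesis on $R^+$ enters.
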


 The result follows directly from the work in \cite[\S 11]{FarguesFontaine} when $(R, R^\circ)$ is a perfectoid field $(K, K^\circ)$. (See \cite[Rem. 11.1.11]{FarguesFontaine} and \cite[\S 13.3]{Schber}. In this case, the restriction functor is not an equivalence in general, unless $K$ is algebraically closed.) 
Also, as was pointed out to the authors by Scholze, full-faithfulness fails to hold if $R^+\neq R^\circ$, even when $R$ is a complete non-archimedean algebraically closed field. Indeed, by Proposition \ref{ppProp}, the target category does not depend on the choice of $R^+\subset R^\circ$ but the source category does.

\begin{remark}\label{Frobeniusamplify}
The usual argument of ``continuation by Frobenius"  shows that the restriction functors
\[
{\rm Vec}^\phi_{\CY_{(0,\infty]}(R,R^+)}\to  {\rm Vec}^\phi_{\CY_{[r,\infty]}(R,R^+)}
,\qquad
{\rm Vec}^\phi_{\CY_{(0,\infty)}(R,R^+)}\to  {\rm Vec}^\phi_{\CY_{[r,\infty)}(R,R^+)}
\]
are equivalences of categories for all $r>0$ (see \cite[Prop. 22.1.1]{Schber}). Using this we see that Proposition
\ref{FFres} is equivalent to the statement that, when $R^+=R^\circ$, the restriction functor
\[
{\rm Vec}^\phi_{\CY_{(0,\infty]}(R,R^+)}\to {\rm Vec}^\phi_{\CY_{(0,\infty)}(R,R^+)}
\]
is fully-faithful. Note that, by \cite[Theorem 6.3.12]{KL}, there is an equivalence of categories between
${\rm Vec}^\phi_{\CY_{(0,\infty)}(R,R^+)}$ and the category of $\phi$-modules over $B^{(0,\infty)}_{R}$,
with ``$\phi$-modules" defined as above. \quash{Using this and the above equivalence of continuation by Frobenius we can also see that, for $r>0$, 
there is an equivalence of categories between
${\rm Vec}^\phi_{\CY_{[r,\infty)}(R,R^+)}$ and the category of $\phi$-modules over $B^{[r,\infty)}_R$.}
\end{remark}

\begin{proof} (of Proposition \ref{FFres}) By applying the functor to the internal Hom between vector bundles with $\phi$-structure, we see that
it is enough to show that, for $\sV\in {\rm Ob}({\rm Vec}^\phi_{\CY_{[r,\infty]}(R,R^+)})$, restriction gives an isomorphism
\[
\Gamma(\CY_{[r,\infty]}(R,R^+), \sV)^{\phi_\sV=1}\xrightarrow{\sim}\Gamma(\CY_{[r,\infty)}(R,R^+), \sV)^{\phi_\sV=1}.
\]
The vector bundle $\sV$ is given by a $\phi$-module   over $B^{[r,\infty]}_{R,R^\circ}$. Set for simplicity $B^{r,+}_R=B^{[r,\infty]}_{R,R^\circ}$, $B^r_R=B^{[r,\infty)}_{R}$, $B^+_R=B_{R,R^\circ}^{(0,\infty]}$, $B_R=B_{R}^{(0,\infty)}$.  Recall that a $\phi$-module  over $B^{r,+}_R$ is a pair $(M,\phi_M)$ of a finite projective $B^{r,+}_R$-module $M$ and an isomorphism
\[
\phi_M: M\otimes_{B^{r,+}_R}B^{pr, +}_R\xrightarrow{\sim} M ,
\]
which is  linear over $\phi: B^{pr, +}_R\xrightarrow{\sim} B^{r,+}_R$. The statement amounts to the claim that
 the natural map
\begin{equation*}\label{bijectiveM}
M^{\phi_M=1}\to (M\otimes_{B^{r,+}_R}B^{r}_R)^{\phi_M=1}
\end{equation*}
is bijective. 
 
We first reduce to the case that the module $M=\Gamma(\CY_{[r,\infty]}(R,R^+), \sV)$ is finite free over $B^{r,+}_R$. This reduction follows the argument in the proof of \cite[Theorem II.2.6]{FS} (cf.  \cite[Cor. 1.5.3]{KL}): We pick a surjection $\psi: F=(B^{r,+}_R)^m\to M$ with $N=\ker(\psi)$ and choose a splitting $F=M\oplus N$. We would like to give the source $F$ a $\phi$-module structure 
\[
\phi_F: F\otimes_{B^{r,+}_R}B^{pr, +}_R\xrightarrow{\sim} F
\]
making $\psi$ equivariant for $\phi$. The desired $\phi_F$ is a lift of $\phi_M$ via $\psi$, which has to be an isomorphism. Hence, finding $\psi_F$ is possible
if there is an isomorphism
\[
\phi^*(N\otimes_{B^{r,+}_R}B^{pr, +}_R)= N\otimes_{B^{r,+}_R, \phi}B^{r, +}_R\simeq N
\]
of $B^{r,+}_R$-modules. Consider the Grothendieck group of finite projective $B^{r,+}_R$-modules. In this, the classes of both $N$ and 
$N\otimes_{B^{r,+}_R, \phi}B^{r, +}_R$ are given by $[(B^{r,+}_R)^m]-[M]$. Hence, there is such an isomorphism after possibly increasing $m$, i.e. after adding a finite free module to $F$.
We conclude that there is a $\phi$-module $(N,\phi_{N})$ such that $M\oplus N=F\simeq (B^{r,+}_R)^n$. Using this we see that it is enough to show that, for a $\phi$-module $(M,\phi_M)$ with $M$ free, the natural map
\[
M^{\phi_M=1}\to (M\otimes_{B^{r,+}_R}B^{r}_R)^{\phi_M=1}
\]
is bijective. After fixing a basis $M\simeq (B^{r,+}_R)^n$, we can write the map $\phi_M$ on $M$ by using an $n\times n$ matrix $A=(a_{ij})$ with entries in $B^{r,+}_R$,
i.e. $\phi_M(x)=A\cdot \phi(x)$. In fact, $A\in \GL_n(B^{r, +}_R)$ so the inverse $A^{-1}$ also has  entries in $B^{r,+}_R$.

We will use some more work of Kedlaya-Liu (see \cite[\S 5.1, \S 5.2]{KL}): For $r>0$ we have
\[
B^{[0,r]}_{R}[1/p]=\{\sum^{+\infty}_{i\gg -\infty}[x_i]p^i\in W(R)[1/p]\ |\ \lim_{i\to+\infty} p^{-i}\alpha(x_i)^r=0\}.
\]
For $x=\sum^{+\infty}_{i\gg-\infty}[x_i]p^i\in B^{[0,r]}_{R}[1/p]$,   and $r\geq s>0$, set
\[
\norm x_s=\norm {\sum^{+\infty}_{i\gg-\infty}[x_i]p^i}_s=\max_i \{p^{-i}\alpha(x_i)^s\}.
\]
Here, $\alpha$ is the spectral norm for $R$, cf. \eqref{def:spnorm}. (In \cite{KL}, $\norm x_s$ is denoted as $\lambda(\alpha^s)(x)$.) This induces a power-multiplicative norm on $B^{[0,r]}_{R}[1/p]$ which also extends on various other related rings described in \emph{loc. cit.}, see \cite[Prop. 5.1.2]{KL}. 
In particular, for each $s>0$ with $s\in [t,t']$, the norm $\norm{\  }_s$ extends to $B_S^{[t,t']}$. Therefore, for each $s$ with $r\leq s$, the norm $\norm{\  }_s$ extends to $B^r_R=B^{[r,\infty)}_S$ which contains $B^{r,+}_R=B^{[r,\infty]}_S$.
The following is an extension of \cite[Lem. 5.2.11 (a)]{KL} and \cite[Prop. 1.10.7]{FarguesFontaine}. Recall $R^+=R^\circ$ and $r>0$.

\begin{lemma}\label{KLlemmaext}
An element $x\in B^r_R$ belongs to $B^{r,+}_R$ if and only if 
\[
\limsup_{s\mapsto +\infty}\norm x_s^{1/s}\leq  1.
\]
\end{lemma}

\begin{proof}
Note that \cite[Lem. 5.2.11 (a)]{KL} gives the corresponding statement for $B_R$ and $B^+_R$. On the other hand, \cite[Prop. 1.10.7]{FarguesFontaine} implies the statement of the lemma, when $R=K$ is a perfectoid field. The following is a variation of the argument in the proof of \cite[Lem. 5.2.11 (a)]{KL}.

1) First consider these $x$ in $B^r_R$ which can be written in the form $x=\sum^{+\infty}_{i=a} [x_i]p^i$, with $x_i\in R$. Then 
\[
\limsup_{s\mapsto +\infty}\norm x_s^{1/s}=\limsup_{s\mapsto +\infty} \sup_i \{p^{-i/s}\alpha(x_i)\}.
\]
If $x_i\in R^\circ$ then $\alpha(x_i)\leq 1$. So, if $x_i\in R^\circ$ for all $i$, then $\sup_i \{p^{-i/s}\alpha(x_i)\}\leq p^{|a|/s}$, so  $\limsup_{s\mapsto +\infty}\ \norm x_s^{1/s}\leq 1$. On the other hand, if there is some $i$ such that $\alpha(x_i)>1$, then for all $\epsilon >0$ we can find $s\gg 0$ such that $p^{-i/s}\alpha(x_i)>1+\epsilon$ and then $\limsup_{s\mapsto +\infty}\ \norm x_s^{1/s}>1$. This gives that if $\limsup_{s\mapsto +\infty}\ \norm x_s^{1/s}\leq 1$, then $x_i\in R^\circ$ for all $i$, and so $x\in W(R^\circ)[1/p]\subset B^{r, +}_R$.

2) Next we show that if $x\in B^{r,+}_R$, then $\limsup_{s\mapsto +\infty} \norm x_s^{1/s}\leq  1$, which is the one direction of the implication. Suppose $r=m/n$. If $x\in B^{r,+}_R=W(R^\circ)\langle \frac{[\varpi]^n}{p^m}\rangle [1/p]$, then $p^ax=y\in W(R^\circ)\langle \frac{[\varpi]^n}{p^m}\rangle$ for some $a\geq 1$. We have $\norm{p^ax}_s^{1/s}=p^{-a/s} \norm x_s^{1/s}$. Hence,  it is enough to show $\limsup_{s\mapsto +\infty}\ \norm y_s^{1/s}\leq 1$ for 
$y\in W(R^\circ)\langle \frac{[\varpi]^n}{p^m}\rangle$. But  all such $y$ are $p$-adic limits of elements in $W(R^\circ)[1/p]$ (so also of the type appearing in (1) above), and so this holds by (1) and continuity. It follows that $\limsup_{s\mapsto +\infty}\norm x_s^{1/s}\leq 1$ for all $x\in B^{r,+}_R$.

3) We now deal with the converse in the case of a general element $x\in B^r_R$. 
We  observe that for any such $x$, there is $a\in \BZ$, so that we can write $x=y+z$ with $y=\sum^{+\infty}_{i=a} [y_i]p^i$, $y_i\in R$, as in (1) above, and 
with $z\in B^{r,+}_R$, cf. \cite[Lemma 5.2.8]{KL}. Our assumption and (2) applied to $z$ gives that $\limsup_{s\mapsto +\infty}\norm y_s^{1/s}\leq 1$. Hence by (1), $y\in B^{r,+}_R$ and so $x\in B^{r,+}_R$ also.
\end{proof}

Choose a basis $e_1,\ldots , e_n$ of $M$. 
For $x=(x_1,\ldots , x_n)^t\in (B^r_R)^n\simeq M\otimes_{B^{r,+}_R}B^r_R$, and $s\geq r$, set
\[
\norm x_s=\max_i\{ \norm {x_i}\mid i=1,\ldots, n\}.
\]
Hence, for $x\in M\otimes_{B^{r,+}_R}B^r_R$ with $\phi_M(x)=A\cdot \phi(x)=x$, i.e. with $\phi(x)=A^{-1}\cdot x$, we find
\[
  \norm {\phi(x)}_s^{1/s}=  \norm {A^{-1}\cdot x}_s^{1/s}\leq \norm {A^{-1}}^{1/s}_s\cdot \norm {x}_s^{1/s}
  \]
where $\norm B_s=\norm {(b_{ij})}_s=\max_{ij}\norm {b_{ij}}_s$. For $b\in B^{r,+}_R$, we have $\limsup_{s\mapsto+\infty}\norm b_s^{1/s}\leq 1$ by Lemma \ref{KLlemmaext}. Recall that the 
entries   of $A^{-1}$ are in $B^{r,+}_R$ so  $\limsup_{s\mapsto +\infty}\ \norm {A^{-1}}^{1/s}_s\leq 1$. Also observe that
\[
 \norm {\phi(x)}_s^{1/s}= \norm {x}_{ps}^{1/s}.
\]
So given $\epsilon>0$, there is $s_0\gg 0$ such that, for $s\geq s_0$, we have
\[
(\norm x_{ps}^{1/ps})^p\leq (1+\epsilon)\cdot \norm x_{s}^{1/s}.
\]
 It follows from this and \cite[Lemma 5.2.1]{KL} that 
 \[
\limsup_{s\mapsto +\infty} \norm x_s^{1/s}\leq  1.
\]
Now use Lemma \ref{KLlemmaext} to conclude that $x\in (B^{r,+}_R)^n=M$.

The second assertion, in the case when $(R, R^\circ)$ is an algebraically closed perfectoid field $(C, O_C)$, follows from \cite[Thm. 11.1.9, Cor. 11.1.13]{FarguesFontaine} (see also \cite[Theorem 13.2.1]{Schber}). 
\end{proof}
}

Recall that given an untilt $R^\sharp$ of $R$,  there is a canonical surjection  $W(R^+)\to R^{\sharp +}$ whose kernel is generated by an element $\xi=\xi_{R^\sharp}\in W(R^+)$ which is primitive of degree $1$, comp. \cite[\S 11.3]{Schber}.  Recall that $B^+_{\rm dR}(R^\sharp)$ is the $(\xi)$-adic completion of 
 $W(R^+)[[\varpi]^{-1}]$ and that $B_{\rm dR}(R^\sharp)=B^+_{\rm dR}(R^\sharp)[1/\xi]$. The element $\xi$ defines a closed Cartier divisor on $\CY(R,R^+)$, on $\CY_{[0,\infty)}(R,R^+)$, and  a Cartier divisor on the scheme $Y(R,R^+)$, where
\begin{equation}\label{Yscheme}
Y(R,R^+)=\Spec(W(R^+))\setminus V(p, [\varpi]).
\end{equation}
Consider  the open immersion $
j(R,R^+): Y(R,R^+)\hookrightarrow \Spec(W(R^+)).$
There is also a map of locally ringed spaces $\CY(R,R^+)\to Y(R,R^+)$.

\begin{theorem}[Kedlaya \cite{Kedlaya}]\label{FFKedlaya}
a) (GAGA)  Pull-back along the map of locally ringed spaces $\CY(R,R^+)\to Y(R,R^+)$ induces an exact tensor equivalence
between the corresponding categories of vector bundles.

b) The restriction $j(R,R^+)^*$ induces a fully-faithful tensor functor between the  categories of vector bundles on $Y(R,R^+)$ and on $ \Spec(W(R^+))$.

c) If $(R, R^+)=(K, K^+)$ is a perfectoid field, then the restriction $j(K, K^+)^*$ induces an equivalence between 
the corresponding tensor categories of vector bundles.
\end{theorem}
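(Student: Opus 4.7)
The plan is to treat the three parts in sequence, anchored by a Beauville--Laszlo-style decomposition of both $Y(R,R^+)$ and $\CY(R,R^+)$ along the two Cartier divisors cut out by $p$ and $[\varpi]$. More precisely, cover $Y(R,R^+)$ by the two affines $U_p = \Spec W(R^+)[1/p]$ and $U_{[\varpi]} = \Spec W(R^+)[1/[\varpi]]$ with intersection $\Spec W(R^+)[1/(p[\varpi])]$, and cover $\CY(R,R^+)$ by the analogous rational opens $\CY_{(0,\infty]}(R,R^+)$ and $\CY_{[0,\infty)}(R,R^+)$ with intersection $\CY_{(0,\infty)}(R,R^+)$. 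Vector bundles on either side are reconstructed from these two pieces via faithfully flat descent along this canonical cover, so all three parts reduce to (i) piecewise comparison statements and (ii) control of global sections.

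For (a), I would establish the scheme-to-adic equivalence on each of the two pieces separately, then check compatibility on the overlap. The key input is Kedlaya--Liu glueing along the exhaustive system $\CY_{[0,p^n]}(R,R^+) = \Spa(R_n, R_n^+)$ inside $\CY_{[0,\infty)}(R,R^+)$: analytic vector bundles are identified with compatible systems of finitely generated projective modules over the rings $B_S^{[0,p^n]}$, and these assemble to a module over the global sections ring, which on the algebraic side is recovered as $W(R^+)[1/[\varpi]]$. A symmetric argument handles $\CY_{(0,\infty]}(R,R^+)$, and compatibility on the overlap follows by the same principle. The main technical point is that, after these identifications, the algebraic and analytic glueing data on the overlap match up; this uses Kedlaya's sousperfectoid framework in an essential way.

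For (b), the heart of the matter is the algebraic identity
\[
W(R^+) \;=\; W(R^+)[1/p] \,\cap\, W(R^+)[1/[\varpi]]
\]
inside $W(R^+)[1/(p[\varpi])]$. This is a consequence of $p$-torsion freeness of $W(R^+)$ together with the fact that multiplication by $[\varpi]$ is injective on $W(R^+)/p^n$ for every $n$ (which itself follows from the hypothesis that $R^+$ is reduced, so that the Witt vector truncations are $[\varpi]$-regular). This identity gives $\Gamma(Y(R,R^+), \CO_Y) = W(R^+)$. Applying it to the internal Hom sheaf $\underline{\Hom}(\CE,\CF)$ of two vector bundles on $\Spec W(R^+)$, which is itself a finitely generated projective $W(R^+)$-module, yields full faithfulness of $j(R,R^+)^*$.

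For (c), when $(R,R^+) = (K,K^+)$ is a perfectoid field, $W(K^+)$ is a local ring and every finitely generated projective module over it is free. Given (b), what remains is essential surjectivity: for a vector bundle $\CE$ on $Y(K,K^+)$, realise it as $j^*\widetilde{M}$ for a free $W(K^+)$-module $M$. My plan is to set $M = \Gamma(Y(K,K^+), \CE)$, transfer the question across to $\CY(K,K^+)$ by (a), and apply the classification of vector bundles on the one-dimensional analytic space $\CY(K,K^+)$ in the spirit of Kedlaya--Liu (and the Fargues--Fontaine curve) to conclude that $M$ is finitely generated free of the correct rank with $j^*\widetilde{M} \simeq \CE$. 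The hardest step, which I expect to be the main obstacle, is precisely establishing finite generation and freeness of $M$: it requires controlling the behaviour of $\CE$ at both ends of $\CY(K,K^+)$ simultaneously, and the argument genuinely uses the perfectoid field hypothesis, since (as noted in Proposition \ref{FFres} and its accompanying discussion) analogous statements are known to fail for more general $R$.
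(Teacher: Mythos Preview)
The paper does not give a proof: it simply cites the relevant results in Kedlaya's paper \cite{Kedlaya} (Thm.~3.8 for (a), Rem.~3.11 for (b), Thm.~2.7 for (c)). Your sketch for (b) is correct and captures the argument --- the identity $W(R^+) = W(R^+)[1/p] \cap W(R^+)[1/[\varpi]]$ applied to internal Homs gives full faithfulness.

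Your approach to (a), however, has a genuine gap. The decomposition into the two opens $\CY_{[0,\infty)}$ and $\CY_{(0,\infty]}$ is fine, but the piecewise comparison you propose is not easier than the original statement. Your claim that compatible systems of finite projective $R_n$-modules ``assemble to a module over the global sections ring, which on the algebraic side is recovered as $W(R^+)[1/[\varpi]]$'' is incorrect as stated: the global sections $\Gamma(\CY_{[0,\infty)}(S), \CO) = \varprojlim_n R_n$ form a Fr\'echet completion strictly larger than $W(R^+)[1/[\varpi]]$, and descending from a finite projective module over this Fr\'echet ring (or from a compatible system over the $R_n$) to one over the dense subring $W(R^+)[1/[\varpi]]$ is itself a GAGA-type algebraization statement of the same depth as the one you set out to prove. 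The same issue recurs on the overlap $\CY_{(0,\infty)}$, so the decomposition does not reduce the difficulty. Your plan for (c) is likewise too schematic: ``apply the classification of vector bundles on $\CY(K,K^+)$'' gestures at the right circle of ideas but does not name the mechanism by which the perfectoid-field hypothesis forces $\Gamma(Y(K,K^+), \CE)$ to be finitely generated free; this is the actual content of \cite[Thm.~2.7]{Kedlaya}, and it does not follow from the Fargues--Fontaine classification (which concerns the \emph{quotient} $\CY_{(0,\infty)}/\phi^{\BZ}$, not $\CY$ itself).
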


\begin{proof} Part (a) is \cite[Thm. 3.8]{Kedlaya}.
For part (b) see \cite[Rem. 3.11]{Kedlaya}, cf. {\cred \cite[Thm. 2.5]{Gl21}.} Part (c) follows from \cite[Thm. 2.7]{Kedlaya}, see also \cite[Prop. 14.2.6]{Schber}.
\end{proof}

\subsubsection{The $v$-sheaf associated to an adic space}\label{sss:vs} Let $\CY$ be an adic space over $\Spa(\BZ_p)$. The  $v$-sheaf    $\CY^\diam\to \Spd(\BZ_p)$   over ${\rm Spd}(\BZ_p)$ is the functor on ${\rm Perfd}_{k}$   which associates 
 to   $S={\rm Spa}(R, R^+)$  in ${\rm Perfd}_{k}$  the set of isomorphism 
 classes of pairs
 $
 (S^\sharp, x)
 $ 
where 
 \begin{itemize}
 \item[1)] $(S^\sharp, \iota)=(\Spa(R^\sharp, R^{\sharp +}), \iota)$ is an untilt of $S$ over $\BZ_p$,
 \item[2)] $x: S^\sharp \to \CY$ is a $\BZ_p$-morphism of adic spaces.
\end{itemize}
(See \cite[18.1]{Schber}. In fact, this construction of $\CY^\diam$ also applies to pre-adic spaces $\CY$ over $\Spa(\BZ_p)$; see \cite[App. to \S 3]{Schber} for the notion of pre-adic space. (There is also a similar construction for pre-adic spaces over
$\Spa(\BQ_p)$ producing $v$-sheaves over $\Spd(\BQ_p)$.)  

\subsubsection{The $v$-sheaf associated to a formal scheme}\label{sss:vfs} Let $\fkX$ be a formal scheme over $\Spf(\BZ_p)$ which is locally formally of finite type.  Consider
the corresponding adic space $\fkX^{\rm ad}$ over $\Spa(\BZ_p)$. We can then take the corresponding $v$-sheaf $(\fkX^{\rm ad})^\diam$
over  $\Spd(\BZ_p)$ as in \S\ref{sss:vs}; we denote this $v$-sheaf simply by $\fkX^\diam$.

\subsubsection{The $v$-sheaves associated to schemes}\label{sss:vsSch}
 Following \cite[\S 2.2]{AnRicLou}, we will   give two (different) constructions of $v$-sheaves associated to schemes over $\BZ_p$. These constructions are somewhat subtle. Suppose $\sX=\Spec(A)$ is an affine scheme over $\BZ_p$:
 \smallskip
 
 i) Let $\sX^{\bDiam}=\Spec(A)^{\bDiam}$ (``small diamond")
 to be\footnote{In \cite{AnRicLou}, the notation used is $\sX^{\diamond}$. Our notation is intended to make the distinction from $\sX^{\diam}$ more transparent.} the $v$-sheaf over $\Spd(\BZ_p)$ which associates to the perfectoid Tate Huber pair $(R, R^+)$ the set of isomorphism classes of triples $(R^\sharp, \iota, f)$, where $(R^\sharp, \iota)$ is an untilt over $\BZ_p$ and $f: A\to R^{\sharp,+}$ is a ring homomorphism.
 \smallskip
 
 ii) We let $\sX^\diam=\Spec(A)^\diam$ (``large diamond")
 to be the $v$-sheaf over $\Spd(\BZ_p)$ which associates to the perfectoid Tate Huber pair $(R, R^+)$ the set of isomorphism classes of triples $(R^\sharp, \iota, f)$, where $(R^\sharp, \iota)$ is an untilt over $\BZ_p$ and $f: A\to R^{\sharp}$ is a ring homomorphism.
 \smallskip
 
 Both these constructions ``glue" to give functors $(\ )\mapsto (\ )^{\bDiam}$, resp. 
 $(\ )\mapsto (\ )^\diam$, from the category of schemes over $\Spec(\BZ_p)$ to the category of $v$-sheaves
 over $\Spd(\BZ_p)$. There is a natural transformation $j: (\ )^{\bDiam}\mapsto (\ )^\diam$ such that
 \begin{equation}\label{jsX}
 j_\sX: \sX^{\bDiam}\to \sX^\diam
 \end{equation}
 is an open immersion of $v$-sheaves if $\sX$ is separated of finite type over $\BZ_p$. It is an isomorphism if $\sX$ is proper, cf. \cite[\S 2.2]{AnRicLou}.

For schemes separated of finite type over $\BZ_p$, both of these functors can be obtained by first going through the category of (pre-)adic spaces:
 If $\sX$ is such a scheme,
 we denote by $\wh\sX^{\rm ad}$ the adic space over ${\rm Spa}(\BZ_p)=\Spa(\BZ_p, \BZ_p)$ obtained from the formal scheme $\wh\sX$ given by the $p$-adic completion of $\sX$.  We can apply \S\ref{sss:vs} to $\CY=\wh\sX^{\rm ad}$. As in \S \ref{sss:vfs}, we then just write $(\wh\sX)^\diam$ instead of $(\wh\sX^{\rm ad})^\diam$.   By \cite[Rem. 2.11]{AnRicLou} we have a natural isomorphism   for the associated $v$-sheaves, 
\[
 \sX^{\bDiam}=(\wh\sX)^\diam.
 \]
 On the other hand, we can  consider the adic space over $\Spa(\BZ_p )$ given by the fiber product defined as in \cite[Prop. 3.8]{Huber} 
 \[
 \sX^{\rm ad}=\sX\times_{\Spec(\BZ_p)}\Spa(\BZ_p ).
 \]
We then have $\sX^\diam=(\sX^{\rm ad})^\diam$.
 There is a natural open embedding  of adic spaces 
 \[
 \wh\sX^{\rm ad}\lhook\joinrel\longrightarrow \sX^{\rm ad} ,
 \]
 which is an isomorphism if $\sX\to \Spec(\BZ_p)$ is proper, cf. \cite[ Rem. 4.6 (iv)]{Huber}.
 After applying the $\diam$-functor of \S\ref{sss:vs}, this gives the open immersion \eqref{jsX}.

Suppose now that $X$ is a separated scheme of finite type over $\BQ_p$ or a rigid analytic space over $\BQ_p$. We can consider the corresponding adic space $X^{\rm ad}$ over $\Spa(\BQ_p, \BZ_p)$ and then its associated $v$-sheaf $(X^{\rm ad})^\diam$ over $\Spd(\BQ_p)$ as in \S \ref{sss:vs} above, cf. \cite[10.2]{Schber}. We denote this simply by $X^\diam$. (This is a diamond in the sense of Scholze, \cite{Schber}, \cite{Sch-Diam}.) This gives functors $X\mapsto X^\diam$   (on the category of schemes over $\BQ_p$, resp. of rigid-analytic spaces over $\BQ_p$); in the case of schemes over $\BQ_p$ this agrees 
 with the functor $X\mapsto X^\diam$ defined above. 
  
 If $\sX$ is a separated scheme of finite type over $\BZ_p$,
 there is a natural isomorphism of $v$-sheaves over $\Spd(\BQ_p)$,
 \[
 (\sX^\diam)_\eta:=\sX^\diam\times_{\Spd(\BZ_p)}\Spd(\BQ_p)= (\sX\otimes_{\BZ_p}\BQ_p)^\diam .
 \]
Similarly, we can consider the $v$-sheaf $\sX^{\sdiam}$ over $\Spd(\BZ_p)$ and its ``generic fiber" which is a $v$-sheaf over $\Spd(\BQ_p)$,
\[
(\sX^{\sdiam})_\eta: =\sX^{\sdiam}\times_{\Spd(\BZ_p)}\Spd(\BQ_p)=(\wh\sX)^\diam\times_{\Spd(\BZ_p)}\Spd(\BQ_p) .
\]
 We have a natural isomorphism of $v$-sheaves over $\Spd(\BQ_p)$,
 \[
 (\sX^{\sdiam})_\eta= (\wh\sX_\eta)^\diam,
 \]
 where $\wh\sX_\eta$ is Berthelot's rigid analytic generic fiber of the formal scheme $\wh\sX$. 
 There is  an open immersion   of $v$-sheaves
 \begin{equation}
 j_{\sX_\eta}: (\sX^{\bDiam})_\eta= (\wh\sX_\eta)^\diam\lhook\joinrel\longrightarrow (\sX\otimes_{\BZ_p}\BQ_p)^\diam=  (\sX^\diam)_\eta,
 \end{equation}
obtained by taking the generic fiber of \eqref{jsX}.  {\cmag  It also arises by applying} the $\diam$-functor to the open embedding of rigid-analytic spaces $\wh\sX_\eta\hookrightarrow (\sX\otimes_{\BZ_p}\BQ_p)^{\rm rig}$.

 {\cmag
 \begin{proposition}\label{propFFvsheaf}  (\cite[Prop. 18.4.1]{Schber}, cf.  \cite{LourencoRH}) The functor $\fkX\mapsto \fkX^\diam$ from flat and normal formal schemes locally formally of finite type over 
$\Spf(\BZ_p)$ to $v$-sheaves over $\Spd(\BZ_p)$ is fully faithful. \qed
 \end{proposition}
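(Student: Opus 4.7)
The plan is to reduce to the affinoid case and combine Scholze's fully faithfulness result for normal rigid-analytic varieties on the generic fibre \cite[\S 18.4]{Schber} with a normality argument for the integral structure. Since morphisms of adic spaces glue along open affinoid covers and the diamond functor commutes with open immersions and $v$-covers, I may assume $\sY_i=\Spa(A_i,A_i)$, where $A_i$ is a flat, normal, topologically of finite type $\BZ_p$-algebra (so $A_i^+=A_i$, since $A_i$ is integrally closed in itself). Then $\Hom(\sY_1,\sY_2)$ identifies with the set of continuous $\BZ_p$-algebra maps $\varphi\colon A_2\to A_1$, and the task becomes to show that any $v$-sheaf morphism $h\colon \sY_1^\diam\to \sY_2^\diam$ over $\Spd(\BZ_p)$ arises from a unique such $\varphi$.

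Pulling $h$ back along $\Spd(\BQ_p)\to\Spd(\BZ_p)$ yields a morphism $h_\eta$ between the generic-fibre diamonds $\sY_{i,\eta}^\diamondsuit$, where $\sY_{i,\eta}=\Spa(A_i[1/p],A_i)$ is a normal rigid-analytic variety over $\BQ_p$. By Scholze's fully faithfulness theorem, $h_\eta$ comes from a unique morphism of rigid spaces $f_\eta\colon \sY_{1,\eta}\to \sY_{2,\eta}$, or equivalently from a unique continuous $\BQ_p$-algebra map $\varphi_\eta\colon A_2[1/p]\to A_1[1/p]$. It remains to show $\varphi_\eta(A_2)\subseteq A_1$. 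For this, I would test against integral points: for each complete nonarchimedean extension $K/\BQ_p$ with perfectoid tilt and each continuous map $x\colon A_1\to O_K$, the associated element of $\sY_1^\diam(\Spa(O_K^\flat))$ is sent by $h$ to an integral point of $\sY_2$, i.e.\ a continuous map $A_2\to O_K$. Compatibility of $h$ with its restriction to the generic fibre forces this latter map to coincide with the composition $A_2\hookrightarrow A_2[1/p]\xrightarrow{\varphi_\eta} A_1[1/p]\to K$; hence $\varphi_\eta(A_2)$ lands in $O_K$ at every classical integral point of $\sY_1$. Using the standard fact that for $A_1$ flat, normal, and topologically of finite type over $\BZ_p$,
$$
A_1=\bigl\{\,a\in A_1[1/p]\;:\;x(a)\in O_K\text{ for every continuous }x\colon A_1\to O_K,\,K/\BQ_p\text{ complete}\,\bigr\},
$$
we conclude $\varphi_\eta(A_2)\subseteq A_1$, which provides the desired $\varphi$. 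Faithfulness then follows from the uniqueness of $\varphi_\eta$, and fullness because the constructed $\varphi$ recovers $h$ both on generic-fibre perfectoid test objects (via Scholze) and on integral $\Spa(O_K)$-valued points (by construction), hence on all perfectoid test objects by the $v$-sheaf property.

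The main obstacle will be the final step: reconstructing $A_1$ from $A_1[1/p]$ by intersecting the integer rings at all classical integral points over varying $K/\BQ_p$. This is where the \emph{normal} hypothesis is essential: without it, one would only recover the seminormalization of $A_1$, not $A_1$ itself. The affinoid reduction and Scholze's input are formal, so the entire content of the hypotheses on $\sY_i$ is concentrated in this integral-reconstruction step.
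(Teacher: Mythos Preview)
Your strategy—recover $\varphi_\eta$ on the generic fibre via \cite[Prop.~10.2.3]{Schber}, then use integrality at $O_K$-points and normality of $A_1$ to push $\varphi_\eta$ down to $\varphi\colon A_2\to A_1$—is essentially the content of Louren\c{c}o's Hebbarkeitssatz that underlies \cite[Prop.~18.4.1]{Schber}, which the paper cites as a black box. So the architecture is right, and the ``integral reconstruction'' step is indeed where normality does the work; calling it a ``standard fact'' undersells it, but it is correct.

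The genuine gap is the last sentence. You assert that agreement of $h$ and $\varphi^\diam$ on generic-fibre test objects and on $\Spa(O_K)$-points propagates to all perfectoid test objects ``by the $v$-sheaf property.'' This is not a $v$-descent statement: characteristic-$p$ test objects $\Spa(C,C^+)$ with $C^\sharp=C$ (so $p=0$ in $R^{\sharp+}$) are not reached by any $v$-cover built from your two classes. What you actually need is a separatedness-plus-density argument: the equalizer $E$ of $h$ and $\varphi^\diam$ is a closed sub-$v$-sheaf of $\sY_1^\diam$ because the diagonal of the affinoid $\sY_2^\diam$ is a closed immersion; $E$ contains the generic fibre $\sY_{1,\eta}^\diam$; since $\sY_1$ is $\BZ_p$-flat, Lemma~\ref{topflat} gives that $|\sY_{1,\eta}^\diam|$ is dense in $|\sY_1^\diam|$, so $|E|=|\sY_1^\diam|$; and then \cite[Lem.~17.4.1]{Schber} (a closed immersion of small $v$-sheaves that is surjective on underlying spaces is an isomorphism) forces $E=\sY_1^\diam$. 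Once you insert this, the argument is complete. Note that with this fix the $O_K$-point check becomes irrelevant for the \emph{fullness} step—it was needed only to construct $\varphi$, not to identify $\varphi^\diam$ with $h$.
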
 
  \quash{
 \begin{proof} 
See \cite[Ch. 4, Thm. 4.6]{LourencoThesis} for 
the proof of a slightly stronger result.
This is obtained by combining  \cite[Prop. 10.2.3]{Schber} which gives the result for such adic spaces over $\Spa(\BQ_p, \BZ_p)$, and 
\cite[Prop. 18.4.1]{Schber} which uses \cite{LourencoRH} and gives the corresponding result for normal flat 
formal schemes over ${\rm Spf}(\BZ_p)$ which are locally formally of finite type. 
 \end{proof}}

 \begin{corollary}\label{corFF}
Let $\sX$, $\sY$ be normal schemes which are flat and separated of finite type over $\Spec(\BZ_p)$.
Let $f^v: \sX^{\bDiam}\to \sY^{{\bDiam}}$ be a morphism between the corresponding $v$-sheaves over $\Spd(\BZ_p)$
and $g: \sX\otimes_{\BZ_p}\BQ_p\to
 \sY\otimes_{\BZ_p}\BQ_p$ a morphism of schemes between their generic fibers over $\Spec(\BQ_p)$.
 Suppose that the diagram of $v$-sheaf maps 
 \begin{equation}
 \begin{aligned}
   \xymatrix{
         \sX^{\bDiam}\times_{\Spd(\BZ_p)}\Spd(\BQ_p) \ar[r]^{\ \ \ j_{\sX_\eta}} \ar[d]_{f^v\times_{\Spd(\BZ_p)}{\rm id}_{\Spd(\BQ_p)}} &  (\sX\otimes_{\BZ_p}\BQ_p)^\diam\ar[d]^{g^\diam}\\
         \sY^{\bDiam}\times_{\Spd(\BZ_p)}\Spd(\BQ_p)\ar[r]^{\ \ \ j_{\sY_\eta}} & (\sX'\otimes_{\BZ_p}\BQ_p)^\diam
        }
        \end{aligned}
    \end{equation}
commutes. 
Then 
 there is a unique morphism $f: \sX\to \sY$ of schemes over $\Spec(\BZ_p)$ such that $f^{\bDiam}=f^v$
 and $g=f\otimes_{\BZ_p} {\rm id}_{\BQ_p}$. 
\end{corollary}
\begin{proof}
Using Proposition \ref{propFFvsheaf} we obtain that there is $\wh f: \wh\sX\to \wh\sY$ between
formal $p$-adic completions which corresponds to $f^v$, i.e. with $f^v=(\wh f)^\diam=f^{\bDiam}$. By the full-faithfulness of the $\diam$-functor from normal rigid analytic spaces to $v$-sheaves  (see \cite[Prop. 10.2.3]{Schber}), $\wh f$ extends the morphism
between the rigid generic fibers induced by $g$. Note that the pair $(g, \wh f)$ defines a map of underlying topological spaces
$|g|\cup|\wh f|: |\sX|\to |\sY|$ which is continuous since it commutes with specializations. Using this and uniqueness, we see that  we can reduce showing the existence of $f$ to the case where $\sX$ and $\sY$ are affine.  But for a normal flat $\BZ_p$-algebra $A$ of finite type, we have $A=(A\otimes_{\BZ_p}\BQ_p)\cap \wh A$ (intersection in $\wh{A}\otimes_{\BZ_p}\BQ_p$). It now follows that $g$ respects the $\BZ_p$-integral structures and extends to $f$. 
\end{proof}
{\cmag The above statement motivates the following definition.
\begin{definition}\label{def:diam/}
For a scheme $\sX$ over $\Spec(\BZ_p)$, we let
\begin{equation}\label{notdiag}
\sX^{\diam/}:=\sX^\sdiam\sqcup_{(\sX^\sdiam\times_{\Spd(\BZ_p)}\Spd(\BQ_p))}(\sX\otimes_{\BZ_p}\BQ_p)^\diam
\end{equation}
be the coproduct $v$-sheaf over $\Spd(\BZ_p)$. Then $\sX\mapsto \sX^{\diam/}$ gives a functor from schemes over $\Spec(\BZ_p)$ to $v$-sheaves over $\Spd(\BZ_p)$. 
\end{definition}}

\begin{remark}
The considerations of \S\S \ref{sss:vs}--\ref{sss:vsSch} extend to the case where the ground ring $\BZ_p$ is replaced by a complete discrete valuation ring with perfect residue field. We apply this to the ring of integers in a finite extension $E$ of $\BQ_p$, or of $\breve\BQ_p$.
\end{remark}
 
}

\quash{  \begin{corollary}\label{corFF}
Let $\sX$, $\sX'$ be normal schemes which are flat and separated of finite type over $\Spec(\BZ_p)$.
Let $f^v: \sX^\sdiam\to \sX'^{\sdiam}$ be a morphism between the corresponding $v$-sheaves over $\Spd(\BZ_p)$
and $g: \sX\otimes_{\BZ_p}\BQ_p\to
 \sX'\otimes_{\BZ_p}\BQ_p$ a morphism between their generic fibers over $\Spec(\BQ_p)$.
 Suppose that the diagram
 \begin{equation}
 \begin{aligned}
   \xymatrix{
         \sX^\sdiam\times_{\Spd(\BZ_p)}\Spd(\BQ_p) \ar[r]^{\ \ \ j_{\sX_\eta}} \ar[d]_{f^v\times_{\Spd(\BZ_p)}{\rm id}_{\Spd(\BQ_p)}} &  (\sX\otimes_{\BZ_p}\BQ_p)^\diam\ar[d]^{g^\diam}\\
         \sX'^\sdiam\times_{\Spd(\BZ_p)}\Spd(\BQ_p)\ar[r]^{\ \ \ j_{\sX'_\eta}} & (\sX'\otimes_{\BZ_p}\BQ_p)^\diam
        }
        \end{aligned}
    \end{equation}
commutes. 
Then 
 there is a unique morphism $f: \sX\to \sX'$ of schemes over $\Spec(\BZ_p)$ such that $f^\sdiam=f^v$
 and $g=f\otimes_{\BZ_p} {\rm id}_{\BQ_p}$. 
\end{corollary}
\begin{proof}
ANOTHER SKETCH: Using Proposition \ref{propFFvsheaf} we obtain that there is $\wh f: \wh\sX\to \wh\sX'$ between
formal $p$-adic completions which corresponds to $f^v$, i.e. with $f^v=(\wh f)^\diam=f^\sdiam$. By our condition and full-faithfulness of the $\diam$-functor from normal rigid analytic spaces to $v$-sheaves  (see \cite[Prop. 10.2.3]{Schber}), $\wh f$ extends the morphism
between the rigid generic fibers induced by $g$. It now follows that $g$ respects the $\BZ_p$-integral structures and extends to $f$. {\cred We can now see that $g$ extends to give $f$ as follows: Consider the graph $Z=\Gamma_g$ and take its scheme theoretic (flat) closure $\sZ\subset \sX\times \sX'$. Next consider the $p$-adic completion $\wh \sZ$ and   $\Gamma_{\wh f}$, these are both flat closed formal subschemes of $\wh \sX\times_{\Spf(\BZ_p)}\wh\sX'$. Our condition implies that these two have the same $\Spf(O_K)$-points, for all finite extensions $K/\BQ_p$. We will show that they coincide: $\wh\sZ=\Gamma_{\wh f}$. For this, we can assume that both $\sX=\Spec(A)$ and $\sX'=\Spec(A')$ are affine; then $\wh \sX\times_{\Spf(\BZ_p)}\wh\sX'=\Spf(\wh{A\otimes_{\BZ_p}A'})$ and the two formal subschemes are given by two ideals $I$, $J$ in $B:=\wh{A\otimes_{\BZ_p}A'}$. Their generic fibers are both normal subspaces in the rigid generic fiber of $\sX\times_{\BZ_p}\sX'$ with the same $K$-points, for all finite $K/\BQ_p$, so they are equal and we have $(B/I)[1/p]=(B/J)[1/p]$.
Since  $B/I$, $B/J$, are $\BZ_p$-flat, we have 
$I=J$, so   $\wh\sZ=\Gamma_{\wh f}$. Now consider the composition of $\sZ\subset \sX\times_{\BZ_p} \sX'\xrightarrow{pr_1}\sX$; this is an isomorphism on generic fibers and on $p$-adic completions, so it is an isomorphism $\sZ\xrightarrow{\sim}\sX$. We can now find the desired $f:\sX\to \sX'$ as the inverse of this isomorphism followed by the second projection.} \mar{\cred is this OK??}
\end{proof}
}

\subsubsection{Products of points}\label{vcover} For future use we record the construction, following \cite{Sch-Diam}, \cite{Schber}, of a cover for the $v$-topology of the affinoid perfectoid $S=\Spa(R, R^+)$ over $k$: Consider a product $\prod_{i\in I} V_i$ of valuation rings $V_i$ with complete algebraically closed fraction field $K_i$ of characteristic $p$, where $I$ ranges over the set of points of $S$, each point given by $\Spa(K_i, V_i)\to \Spa(R, R^+)$ (see \cite[Prop. 4.2.5]{Schber}). Let $\varpi\in R^+$ be a pseudouniformizer for $R$ and denote by $\varpi_i\in V_i$ its image under $R\to K_i$; then $\varpi_i$ is 
 a pseudouniformizer for $K_i$. Set $\underline{\varpi}=(\varpi_i)_i$. Now let $B^+=\prod_{i\in I} V_i$ with the $\underline\varpi$-adic topology and set $B=B^+[1/\underline{\varpi}]\subset \prod_{i\in I} K_i$. The map $T=\Spa(B, B^+)\to S=\Spa(R, R^+)$ gives a $v$-cover of $S$. We call such a $T$ a \emph{product of points}. Note that  a product of points $T$ as above is a {\sl strictly totally disconnected} perfectoid space, in the sense of \cite{Sch-Diam}, {\cred see \cite[Prop. 1.6]{Gl}.}
Using $v$-covers given by products of points, we can often reduce various questions to the case $S=\Spa(C, C^+)$ with $C$ an algebraically closed field.

 \subsection{Shtukas} 
 
 We recall the definition of Scholze's $p$-adic mixed characteristic shtukas over perfectoid spaces.
 
 \begin{definition}\label{vectsht}
  Let $S=\Spa(R, R^+)\in{\rm Perfd}_k$, i.e. $S$ is affinoid perfectoid over $k$, and let 
 $S^\sharp=\Spa(R^\sharp, R^{\sharp +})$ be an untilt of $S$ over $  W(k)$. A  \emph{shtuka     over $S$} with one leg at $S^\sharp$ is a vector bundle $\sV$  over   the analytic adic space $S\bdtimes \BZ_p $ together with an isomorphism  
 $$
\phi_{\sV}: {\rm Frob}_S^*(\sV)|_{S\bdtimes \BZ_p\setminus S^\sharp}\xrightarrow{\sim} \sV|_{S\bdtimes \BZ_p\setminus S^\sharp}
 $$ 
 which is meromorphic along the closed Cartier divisor $S^\sharp$ of $S\bdtimes \BZ_p$.  
 \end{definition}

   The rank of the vector bundle $\sV$ is also called the height of the shtuka. Note that
 \begin{equation}\label{prodformulaEq}
 ( S\bdtimes  \BZ_p  )^\diamondsuit=S\times{\rm Spd}(\BZ_p)
 \end{equation}
 and that the untilt $S^\sharp$ corresponds to a section of $S\times{\rm Spd}(\BZ_p)\to S$, or equivalently to a morphism  $S\to  \Spd(\BZ_p)$, see \cite[11.2, 11.3]{Schber}. Hence, instead of saying ``a shtuka over $S$ with one leg at $S^\sharp$", we may equivalently say ``a shtuka over $S/\Spd(\BZ_p)$". 
 
 Let us remark here that, by using Proposition \ref{vstack}, one sees that 
 the notion of shtuka extends to general perfectoid spaces $S$ over $k$ with a morphism $S\to \Spd(\BZ_p)$ and that sending 
 $S/ \Spd(\BZ_p)$ to the groupoid of shtukas over $S/\Spd(\BZ_p)$ gives a stack for the $v$-topology.

 To simplify notations, we often write $\phi$ for ${\rm Frob}_S$.

\begin{definition}\label{vectshtMin} 
 A \emph{minuscule shtuka}  of height $h$ and dimension $d$ over $S$ with one leg at $S^\sharp$ is a shtuka $(\sV, \phi_{\sV})$  of height $h$ over $S$ with one leg at $S^\sharp$ such that
 \[
\sV\subset \phi_{\sV}({\rm Frob}_S^*(\sV))
\]
with $\phi_{\sV}({\rm Frob}_S^*(\sV))/\sV$ of the form $(i_{S^\sharp})_*(\sW)$, where $\sW$ is a vector bundle of rank $d$ over $S^\sharp$. 
 \end{definition}
 
We note that in the theory of Shimura varieties, it is not enough to  only consider minuscule shtukas because in the context of $\CG$-shtukas (cf. \S \ref{ss:CG-shtukas} below), even if $\mu$ is a minuscule cocharacter of $G$, there may not exist faithful representations $r\colon G\to \GL_h$ such that $r\circ\mu$ is a minuscule cocharacter of $\GL_h$. 

  \subsubsection{Shtukas and BKF-modules}\label{sss:BKFmod} We recall the related notion of a Breuil-Kisin-Fargues (BKF-) module, comp. \cite[Def. 11.4.3]{Schber} (for algebraically closed nonarchimedean fields).
  
  Recall (\cite[Def. 17.5.1]{Schber}) that an \emph{integral perfectoid ring} is a $p$-complete $\BZ_p$-algebra $R$ such that Frobenius is surjective on $R/p$, such that there is an element $\pi\in R$ with $\pi^p=pu$ for a unit $u\in R^\times$, and such that the kernel of $\theta\colon W(R^\flat)\to R$ is a principal ideal $(\xi)$. Here $R^\flat =\varprojlim\nolimits_{x\mapsto x^p} R$. 
    
 \begin{remarks}\label{remintperf} (i)  Let $S=\Spa(R, R^+)\in{\rm Perfd}_k$, i.e., $S$ is affinoid perfectoid over $k$, and let 
 $S^\sharp=\Spa(R^\sharp, R^{\sharp +})$ be an untilt of $S$. Then  $R^{\sharp +}$ is integral perfectoid (see \cite[Lem. 3.20, in combination with Lem. 3.9 and the discussion in Rem. 3.8]{BMS}).
 
 (ii) If $R$ is integral perfectoid with $pR=0$, then $R=R^\flat$ and $(\xi)=(p)\subset W(R)$, cf \cite[Lem. 3.10]{BMS}. Hence $R$ is a perfect ring. Conversely, a perfect ring
 in characteristic $p$ is integral perfectoid. 
 \end{remarks}
 
\begin{definition}\label{def:BKF}
Let $R$ be an integral perfectoid ring.  A \emph{Breuil-Kisin-Fargues (BKF-)module}
 over $R$  is a vector bundle $\sV$  over $\Spec(W(R^\flat))$ together with an isomorphism 
 \[
 \phi_{\sV}: \phi^*(\sV)[1/\xi]\xrightarrow{\ \sim\ } \sV[1/\xi].
 \]
 
If $S=\Spa(R, R^+)\in{\rm Perfd}_k$ and 
 $S^\sharp=\Spa(R^\sharp, R^{\sharp +})$ is an untilt of $S$, we also speak of a \emph{BKF-module over $S$ with leg along $S^\sharp$} instead of a BKF-module over $R^{\sharp +}$. 
  \end{definition}
  
 \begin{remark}
  In \cite{Schber}, \cite{BMS}, the terminology ``{Breuil-Kisin-Fargues (BKF-)module}
 over $R$"  is also used for a vector bundle $\sV_\inf$  over $\Spec(W(R^\flat))$ together with an isomorphism
 \[
 \phi_{\sV_\inf}: \phi^*(\sV_\inf)[1/\phi(\xi)]\xrightarrow{\ \sim\ } \sV_\inf[1/\phi(\xi)].
 \]
  For example, such a BKF-module is naturally associated to a $p$-divisible group over $R$ using Dieudonn\'e theory, see \cite[Thm. 17.5.2]{Schber}. To distinguish from the above, we will say that this has leg along $\phi(\xi)=0$ (or along $\phi^{-1}(S^\sharp)$). Since $\phi: W(R^\flat)\to W(R^\flat)$ is an isomorphism, there is an exact equivalence between the categories of the two types of BKF-module, which is obtained by twisting the $W(R^\flat)$-module structure: $\sV=(\phi^{-1})^*(\sV_\inf)=W(R^\flat)\otimes_{\phi^{-1}, W(R^\flat)}\sV_\inf$.
  (See also \cite[Rem. 11.4.6]{Schber}.)
 \end{remark}

 \begin{definition}\label{defassshtukaBKF}
 Let $S=\Spa(R, R^+)\in{\rm Perfd}_k$  and let
 $S^\sharp=\Spa(R^\sharp, R^{\sharp +})$ be an untilt of $S$. Let $\sV$ be a BKF-module over $S$ with leg at $S^\sharp$. The  \emph{shtuka over $S$ associated to $\sV$}  with one leg at $S^\sharp$ is the shtuka obtained by pull back along the map of locally ringed spaces 
 \begin{equation*}
 \CY_{[0,\infty)}(R,R^+)\to \Spec(W(R^+)) .
 \end{equation*}
We then also say that $\sV$ is an extension of the shtuka. In general, this extension is not uniquely determined by the shtuka.  
  \end{definition}
 \begin{proposition}\label{exttoW} 
 Let $S=\Spa(R, R^+)\in{\rm Perfd}_k$ and let
 $S^\sharp=\Spa(R^\sharp, R^{\sharp +})$ be an untilt of $S$. The restriction functor from the category of BKF-modules over $S$ with leg at $S^\sharp$ to the category of shtukas over $S$ with leg at $S^\sharp$ is faithful. It is fully faithful if $R^+=R^o$. It is an equivalence of categories  if $S=\Spa(C, O_C)$ for an algebraically closed non-archimedean extension $C$ of $k$. 
 \end{proposition}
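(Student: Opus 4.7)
The plan is to analyze the restriction functor in three stages, each exploiting the open cover
\[
\CY(R,R^+) \;=\; \CY_{[0,\infty)}(R,R^+) \;\cup\; \CY_{[r,\infty]}(R,R^+)
\]
whose intersection is $\CY_{[r,\infty)}(R,R^+)$ for any $r > 0$. Combining Theorem \ref{FFKedlaya}(a) (GAGA) and (b) (fully faithful restriction $\Spec W(R^+)\to Y(R,R^+)$), the assignment sending a BKF-module over $S$ to its associated Frobenius-equivariant vector bundle on $\CY(R,R^+)$ (with Frobenius meromorphic along the Cartier divisor cut out by $\xi$) is fully faithful. Thus in each case it suffices to study the restriction of such objects from $\CY(R,R^+)$ to $\CY_{[0,\infty)}(R,R^+)$.

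For full faithfulness when $R^+=R^\circ$, I would take a shtuka morphism $g$ between the restrictions of two BKF-modules, restrict it further to the overlap $\CY_{[r,\infty)}$ with $r$ larger than the $\kappa$-value of $S^\sharp$ (so that the meromorphicity disappears and $g|_{\CY_{[r,\infty)}}$ is a $\phi$-module morphism), and invoke Proposition \ref{FFres} to extend uniquely to a $\phi$-module morphism $h$ on $\CY_{[r,\infty]}$. By the uniqueness in that extension, $g$ and $h$ agree on $\CY_{[r,\infty)}$, so they glue to a Frobenius-equivariant morphism on $\CY(R,R^+)$, which then lifts uniquely to a BKF-morphism. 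The uniqueness at each step also gives faithfulness in this case.

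For faithfulness with a general $R^+\subset R^\circ$, I would reduce to the previous case by base change along the injection $W(R^+)\hookrightarrow W(R^\circ)$: given a BKF-morphism $f\colon M\to M'$ whose shtuka-restriction vanishes, Proposition \ref{ppProp} implies that the induced shtuka-morphism on the smaller $\CY_{[0,\infty)}(R,R^\circ)$ also vanishes; the $R^+=R^\circ$ case then forces $f\otimes_{W(R^+)}W(R^\circ)=0$, and since $M, M'$ are finitely generated projective (so the canonical maps $M\to M\otimes_{W(R^+)}W(R^\circ)$ and $M'\to M'\otimes_{W(R^+)}W(R^\circ)$ are injective), this yields $f=0$. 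For essential surjectivity when $S=\Spa(C,\CO_C)$ with $C$ algebraically closed, I would start with a shtuka $(\sV,\phi_\sV)$, restrict to $\CY_{[r,\infty)}$ for $r$ larger than the $\kappa$-value of the leg so that $\phi_\sV$ becomes a genuine isomorphism there, and use the equivalence case of Proposition \ref{FFres} to extend uniquely to a $\phi$-module on $\CY_{[r,\infty]}$. Gluing with $(\sV,\phi_\sV)$ along the overlap produces a Frobenius-structured vector bundle $\widetilde\sV$ on $\CY(C,\CO_C)$ with meromorphic Frobenius along $S^\sharp$; by Theorem \ref{FFKedlaya}(c), $\widetilde\sV$ corresponds to a vector bundle on $\Spec W(\CO_C)$, and the Frobenius structure transfers uniquely to produce a BKF-module restricting to $(\sV,\phi_\sV)$.

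The main obstacle is the extension of Frobenius-structured vector bundles across the divisor $\{[\varpi]=0\}$ (i.e., $\kappa=\infty$), which is precisely the content of Proposition \ref{FFres}. The hypothesis $R^+=R^\circ$ enters only through that proposition's proof (where it is essential for full-faithfulness), and algebraic closedness of $C$ enters only through its equivalence statement; these are the sole sources of the corresponding restrictions in the three parts.
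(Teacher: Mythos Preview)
Your proof is correct and follows essentially the same architecture as the paper's: factor the restriction through $\CY(R,R^+)$ via Theorem~\ref{FFKedlaya}(a)(b), then control the passage from $\CY$ to $\CY_{[0,\infty)}$ via Proposition~\ref{FFres}. Two minor differences are worth noting. For bare faithfulness the paper argues more directly, simply observing that restriction of vector bundles along the open immersion $\CY_{[0,\infty)}\hookrightarrow\CY$ is faithful (equivalently, $[\varpi]$ is a non-zero-divisor on sections); your base-change reduction to $R^+=R^\circ$ via Proposition~\ref{ppProp} and the injection $W(R^+)\hookrightarrow W(R^\circ)$ is a valid alternative but slightly more circuitous. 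For essential surjectivity the paper simply invokes Fargues' theorem \cite[Thm.~14.1.1]{Schber}, whereas you unpack that statement into its constituent pieces already recorded here---the equivalence case of Proposition~\ref{FFres} together with Theorem~\ref{FFKedlaya}(c)---and carry out the gluing explicitly; this is a genuine proof of the result rather than a citation, and your observation that $Y(C,O_C)\setminus V(\xi)=\Spec(W(O_C)[1/\xi])$ (since the closed point lies on $V(\xi)$) is exactly what makes the transfer of the Frobenius structure go through.
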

 \begin{proof}
By Theorem \ref{FFKedlaya} (b), restriction along $j(R,R^+)\colon Y(R,R^+)\hookrightarrow\Spec (W(R^+))$ is a fully faithful functor from the category of vector bundles on $\Spec (W(R^+))$  to the category of vector bundles on $Y(R,R^+)$. By Theorem \ref{FFKedlaya} (a), there is an equivalence of categories between the categories of vector bundles on $\CY(R,R^+)$ and on $Y(R,R^+)$. The first assertion now follows since the restriction along $\CY_{[0,\infty)}(R,R^+)\hookrightarrow \CY(R,R^+)$ 
is also faithful. By Proposition \ref{FFres}, it is fully faithful when $R^+=R^\circ$, but not in general, by the comment after the statement of Proposition \ref{FFres}. 
{\cmag The last statement is Fargues' theorem. See \cite[Thm. 14.1.1]{Schber} which states this result (and more) when $C^\sharp$ has characteristic $0$. The statement in the case that $C^\sharp$ has characteristic $p$ is shown by the same argument, as outlined in the proof of \cite[Thm. 14.1.1]{Schber}.}
 \end{proof}

\subsubsection{The Fargues-Fontaine curve}\label{ss:FF} For $S=\Spa(R, R^+)$ affinoid perfectoid over $k$, we can consider the (adic) relative {\sl Fargues-Fontaine curve}
$X_{{\rm FF}, S}$ defined   as the quotient
\begin{equation}
X_{{\rm FF}, S}=\CY_{(0,\infty)}(S)/({\rm Frob}_S)^\BZ.
\end{equation}

Let $(\sV, \phi_\sV)$ be a shtuka over $S$ with one leg at $S^\sharp$. Then, there is
$r>0$ such that $\CY_{[r,\infty)}(S)$ does not intersect the divisor given by $S^\sharp$. Note that 
$\CY_{[r,\infty)}(S)\to X_{{\rm FF}, S}$ is surjective for all $r>0$. The restriction of
$\sV$ to $\CY_{[r,\infty)}(S)$ descends to a vector bundle $\sV_{{\rm FF}}$ on the quotient $X_{{\rm FF}, S}$.
Also, there is $r'>0$ such that
$\CY_{(0,r']}(S)$ does not intersect the divisor given by $S^\sharp$ and, as above, we
can descend the restriction of $(\sV, \phi_\sV)$ to $\CY_{(0,r']}(S)$ to a vector bundle $\sV'_{{\rm FF}}$ on $X_{{\rm FF}, S}$ (see \cite[\S 14.1 and Prop. 22.1.1]{Schber}). We can see, by pulling back to products of points, that both these functors
 $(\sV,\phi_\sV)\mapsto \sV_{\rm FF}$ and  $(\sV,\phi_\sV)\mapsto \sV'_{\rm FF}$ from the category of shtukas over $S$ to the category of vector bundles over $X_{\rm FF, S}$ are faithful. (They are not fully faithful.)
 
 Recall that $X_{{\rm FF}, S}$ makes sense for all perfectoid spaces $S$, and sending $S$ to the groupoid of vector bundles on $X_{{\rm FF}, S}$ defines a $v$-stack
(\cite[Prop. II.2.1]{FS}).

\subsection{Families of shtukas}\label{FamSht} We will also want to consider ``families" of  shtukas. This leads to the following definition.  

 {\cmag
 \begin{definition}\label{deffamofsht}
Let $\CF$ be a $v$-sheaf over ${\rm Spd}(\BZ_p)$. A  shtuka $(\sV, \phi_\sV)$ over $\CF$   is a section of the $v$-stack given by the groupoid of shtukas 
 over $\CF$. In other words, a shtuka over $\CF$ is a functorial rule  which to any point in  $x\in  \CF(S)$, where $S\in {\rm Perfd}_k$,  associates a  shtuka  $(\sV_S , \phi_{\sV_S })$   over $S$ with one leg at the untilt $S^\sharp$ given by $S\xrightarrow{x} \CF\to \Spd(\BZ_p)$.  
\end{definition} 

  As an example, let $\CX$ be an adic space  
  over $\Spa(\BZ_p)$ and denote as before, in Section \ref{sss:vs},  
  by $\CX^\diam\to \Spd(\BZ_p)$ the corresponding $v$-sheaf.  Then we obtain the notion of a shtuka over $\CX^\diam$.
 We can think of the  shtuka $\sV$ as having one leg  at the ``universal'' untilt given by $\CX^\diam \to {\rm Spd}(W(k))\to {\rm Spd}(\BZ_p)$. 
 
 \begin{definition}\label{def:shtsch}
 Let $O_E$ be a complete dvr of mixed characteristic with perfect residue field. Let $\sX$ be a scheme over $O_E$. A shtuka over $\sX$ is a shtuka over the $v$-sheaf (over $\Spd(O_E)$), 
 \[
 \sX^{\diam/} :=\sX^\sdiam\sqcup_{(\sX^\sdiam)_\eta} (\sX\otimes_{O_E}E)^\diam ,
 \]
cf. \eqref{notdiag}. In other words, a shtuka over $\sX$ is given by  a pair consisting  of a shtuka over the $v$-sheaf $\sX^\sdiam$ and a shtuka over the $v$-sheaf $(\sX\otimes_{O_E}E)^\diam$ together with an isomorphism between their pull backs to $(\sX^\sdiam)_\eta =\sX^\sdiam\times_{\Spd(O_E)}\Spd(E) $.
\end{definition}
}

{\cmag
\begin{remark}\label{rem:shtsch}
a) If $p$ is a unit in $\Gamma(\sX, O_{\sX})$, then  a shtuka over $\sX$ is the same as a shtuka over the $v$-sheaf $\sX^\diam$. By contrast, if   $p$ is nilpotent in $\Gamma(\sX,O_\sX)$,  
 then a shtuka over $\sX$ is a shtuka 
 over the $v$-sheaf $\sX^\sdiam$. Often, for clarity, we will still specify the $v$-sheaf ($\sX^{\diam/}$, $\sX^\diam$ or $\sX^\sdiam$), that we are using when talking of shtukas over schemes. 

b) By a shtuka over a formal scheme $\frak{X}$ over $\Spf(O_E)$ we mean a shtuka over the $v$-sheaf $(\frak{X}^{\rm ad})^\diam$. In the above description of Definition \ref{def:shtsch}, 
if $\sX\to \Spec(O_E)$ is separated of finite type, a shtuka over $\sX^\sdiam$ is given by a shtuka over the formal scheme $\wh\sX$ given by the $p$-adic completion, see \S \ref{sss:vsSch}.

 c) There is a natural map of $v$-sheaves over $\Spd(O_E)$
\[
\sX^{\diam/}\to \sX^\diam.
\]
 This is an isomorphism if $\sX\to \Spec(O_E)$ is proper, cf. \cite[\S 2.2]{AnRicLou}. It is not surjective in general, as one can see in the example of $\sX=\Spec(O_E[t])$.
One could also consider shtukas over the $v$-sheaf $\sX^\diam$;  the ``smaller" $v$-sheaf $\sX^{\diam/}$ is better suited to our application.
\end{remark}}

\begin{example}\label{pdivExample}
{\cmag Let $\sG$ be a $p$-divisible group of height $h$ and dimension $d$ over $\sX$, where $\sX$ is a scheme over $\Spec(W(k)) $. 
Then there is an associated shtuka $\CE(\sG)$ of height $h$ and dimension $d$ over $\sX^\sdiam$ with one leg, as follows. We may assume that $\sX=\Spec(A)$.
 Let $S=\Spa (R, R^+)\in {\rm Perfd}_{k}$. Suppose that $(S^\sharp, x)$ is a point of $\sX^\sdiam$ over $\Spd(\BZ_p)$, given by an untilt $S^\sharp=\Spa(R^\sharp, R^{\sharp, +})$ of $S$ and $x^*: A\to R^{\sharp +}$.}
 
 Recall that using \cite[Thm. 17.5.2]{Schber} we can associate to a $p$-divisible group $\sG$ over $\Spec(R^{\sharp +})$ a finite projective $W(R^+)$-module $M_\inf=M_\inf(\sG)$ together
 with an isomorphism
 \[
 \phi_{M_\inf}: \phi^* (M_\inf)[1/\phi(\xi)]\xrightarrow{\sim} M_\inf[1/\phi(\xi)].
 \]
 Note that, in this, the leg is along $\phi(\xi)=0$. We have
 \[
 M_\inf\subset  \phi_{M_\inf} (\phi^* (M_\inf))\subset M_\inf[1/\phi(\xi)].
 \]
  The module $M_\inf$ is obtained using Dieudonn\'e theory. If $pR^{\sharp +}=(0)$ so that $R^{\sharp +}=R^+$,
 then $M_\inf(\sG)$ is canonically the $W(R^+)$-linear dual of the value of the contravariant Dieudonn\'e crystal $\BD(\sG)$ of $\sG$ at $W(R^+)$, i.e. $M_\inf(\sG)=\BD(\sG)(W(R^+))^*$. For example, we have $M_\inf(\mu_{p^\infty})=W(R^+)$, with $\phi$ given by $p^{-1}$.
 
 Then the value of the shtuka $\CE(\sG)$ on $(S^\sharp, x)$ is given by the following shtuka $\CE_S$ on $S$ with one leg at $S^\sharp$. Namely, let $\CE_S$ be equal to the restriction  to the complement $\CY_{[0,\infty)}(S)=\Spa(W(R^+))\setminus \{[\varpi]=0\}$  of the pull-back $(\phi^{-1})^*(M_\inf(x^*(\sG)), \phi_{M(x^*(\sG))})$.  In other words, $\CE(\sG)$ is the shtuka associated to the BKF-module (with leg along $\xi=0$)
 $$
 (M(x^*(\sG)), \phi_{M(x^*(\sG))})=(\phi^{-1})^*(M_\inf(x^*(\sG)), \phi_{M_\inf(x^*(\sG))}) .
 $$ 
  \end{example}
 {\cmag
 \begin{remark}\label{redia}
  Note that the previous  construction does not apply to the ``large diamond" $\sX^\diam$. For example, we do not know how to define for a  $p$-divisible group over $\Spec(\BF_p[t])$ a shtuka over $\Spec(\BF_p[t])^\diam$ that extends the shtuka over $\Spec(\BF_p[t])^\sdiam$ given above.
  This is one reason that we mainly consider shtukas over $\sX^\sdiam$, or, in the mixed characteristic case, over $\sX^{\diam/}$.
  \end{remark}}

\subsubsection{Shtukas in characteristic $p$}

We will see that shtukas in characteristic $p$ have a Dieudonn\'e module like behaviour. We introduce the following definition. 
{\cmag \begin{definition}Let $\sX$ be a perfect {\cmag quasi-compact and quasi-separated} $k$-scheme. 
 A \emph{meromorphic Frobenius crystal} over $\sX$ is a pair $(\sM, \phi_\sM)$, where  $\sM$ is a finitely generated projective module over the sheaf of rings $W(\CO_\sX)$  and $\phi_\sM$ is an isomorphism
\begin{equation*}
\phi_\sM\colon \phi^*(\sM)[{1}/{p}]\xrightarrow{ \sim\ } \sM[{1}/{p}] .
\end{equation*}
Here $\phi$ denotes the Frobenius on $W(\CO_\sX)$.  The pair $(\sM[1/p], \phi_\sM)$ is the corresponding Frobenius isocrystal over $\sX$.
\end{definition} }
\begin{remark}\label{meroFrobBKF}
By Remark \ref{remintperf} (ii), we see that a meromorphic Frobenius crystal over the perfect $k$-scheme $\Spec(A)$ is the same as a BKF-module over the integral perfectoid ring $A$. 
\end{remark}

A meromorphic Frobenius crystal  $(\sM, \phi_\sM)$ over the perfect $k$-scheme $\sX$ gives a shtuka over  $\sX^\sdiam$, as follows. {\cmag Suppose $\sX=\Spec(A)$ is affine.} Let $S=\Spa(R, R^+)\in {\rm Perfd}_k$. Then a point of $\sX^\sdiam$ with values in $S$ is given by an  untilt $S^\sharp=\Spa(R^\sharp, R^{\sharp +})$, where $R^{\sharp +}$ is a $k$-algebra,  and a homomorphism $x^*\colon A\to R^{\sharp +}$. Then $R^\sharp=R$ and the  kernel of the natural map $W(R^+)\to R^{\sharp +}$ is generated by $\xi=p$. By extension of scalars, the map $W(A)\to W(R^+)$ defines therefore a BKF-module $\sM^\natural=\sM\otimes_{W(A)}W(R^+)$ over $S$ with leg along $S^\sharp$, which in turn defines a shtuka  over $S$ with leg along $S^\sharp$, cf. Definition \ref{defassshtukaBKF}.

\begin{theorem}\label{FFisocrystal}
The functor given by the construction above gives an exact fully faithful tensor equivalence
 from the category of meromorphic Frobenius crystals over the perfect $k$-scheme $\sX$ to the category of shtukas  over $\sX^\sdiam$. 
\end{theorem}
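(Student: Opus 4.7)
The plan is to reduce to the affine case $\sX = \Spec A$ with $A$ a perfect $k$-algebra, and then to combine $v$-local trivializations on $\sX^\diam$ with a faithfully flat descent for vector bundles on $\Spec W(A)$.

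First, I observe that the functor is well-defined and enjoys the elementary compatibility properties. Since $\sX$ is a perfect $k$-scheme, any point $(S^\sharp, x)$ of $\sX^\diam$ valued in $S = \Spa(R, R^+) \in \Perfd_k$ has $S^\sharp$ forced into characteristic $p$: the map $x^* \colon A \to R^{\sharp +}$ factors through a map $A \to R^+$, so $R^{\sharp +} = R^+$ and the kernel of $\theta \colon W(R^+) \to R^+$ is generated by $p$. By Remark \ref{meroFrobBKF}, a meromorphic Frobenius crystal over $A$ is exactly a BKF-module over the integral perfectoid ring $A$, and the construction simply applies $\sM \mapsto \sM \otimes_{W(A)} W(R^+)$ followed by restriction to $\CY_{[0,\infty)}(S)$. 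Exactness and tensor compatibility are immediate from the corresponding properties of base change, combined with Proposition \ref{ppProp}. Zariski descent on $\sX$ reduces the claim to $\sX = \Spec A$ affine.

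For full faithfulness, I pass to the internal Hom object $\CH om(\sM_1,\sM_2)$ and reduce to the claim that Frobenius-fixed global sections agree on both sides. Pick a $v$-cover of $\sX^\diam$ by a product of points $T = \Spa(B, B^+)$ as in \S\ref{vcover}, so that $B^+ = B^\circ = \prod_i V_i$ is a product of valuation rings of algebraically closed non-archimedean fields of characteristic $p$. Proposition \ref{exttoW} in its ``$R^+ = R^\circ$'' incarnation identifies morphisms between the restricted shtukas on $T$ with $\phi$-equivariant morphisms between the corresponding BKF-modules on $B^+$, i.e.\ with $\phi$-equivariant maps of vector bundles on $\Spec W(B^+)$. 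Applying the same argument to a product-of-points cover of $T \times_{\sX^\diam} T$ matches the descent data on both sides, and faithfully flat descent for vector bundles on $\Spec W(\cdot)$ along $W(A) \to W(B^+)$ yields the desired map on $\Spec W(A)$.

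For essential surjectivity, given a shtuka $(\sV, \phi_\sV)$ on $\sX^\diam$, I again take a $v$-cover by a product of points $T = \Spa(B, B^+)$ and invoke Proposition \ref{exttoW} to extend $(\sV, \phi_\sV)|_T$ uniquely to a BKF-module over $B^+$, i.e.\ a vector bundle on $\Spec W(B^+)$ with Frobenius away from $p$. The descent data on $T \times_{\sX^\diam} T$ transport, by the same argument, to descent data for this BKF-module along $W(A) \to W(B^+)$, which produces the sought-after meromorphic Frobenius crystal over $A$; its associated shtuka is then canonically identified with $(\sV, \phi_\sV)$.

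The main obstacle will be the descent step: one must know that vector bundles on $\Spec W(\cdot)$ together with their Frobenius structure satisfy descent along the maps of Witt rings arising from $v$-covers $T \to \sX^\diam$. Because $\sX$ lies in characteristic $p$, such a cover amounts to an arc-type cover $A \to B^+$ of perfect $k$-algebras, and the technical input one needs is that the Witt vector functor applied to such a cover preserves enough faithful flatness (or, at worst, enough structure) to run descent for vector bundles with Frobenius. Once this is in hand, the theorem assembles cleanly from Proposition \ref{exttoW} and the observation that in characteristic $p$ the leg divisor $\xi = 0$ coincides with $p = 0$, which makes the passage between shtukas on $\CY_{[0,\infty)}(S)$ and BKF-modules on all of $\Spec W(R^+)$ both possible and, crucially, unique.
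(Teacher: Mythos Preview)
Your proposal has a genuine gap in the essential surjectivity argument, and the descent step you flag as the ``main obstacle'' is indeed unresolved.

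For essential surjectivity you write: ``invoke Proposition \ref{exttoW} to extend $(\sV, \phi_\sV)|_T$ uniquely to a BKF-module over $B^+$''. But Proposition \ref{exttoW} only asserts that the restriction functor is an \emph{equivalence} when $S = \Spa(C, O_C)$ for a single algebraically closed non-archimedean field; for a product of points $T = \Spa(B, B^+)$ it gives at best full faithfulness (and that only when $B^+ = B^\circ$, which requires all $V_i = O_{K_i}$, not automatic in the setup of \S\ref{vcover}). So you cannot extend the shtuka to a BKF-module over $B^+$ by this route. Even if you extend over each factor $(C_i, O_{C_i})$ separately via Fargues' theorem, assembling these into a finite projective $W(\prod_i O_{C_i})$-module requires a uniform bound on the extensions, which is not free. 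Regarding descent: vector bundles on $\Spec W(R)$ do satisfy $v$-descent on perfect $R$ by Bhatt--Scholze \cite{BS} (see Proposition \ref{propvstack} later in the paper), so that part of your obstacle is surmountable---but this does not repair the missing extension step.

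The paper's proof is substantially different and does not proceed by extending the shtuka to a BKF-module over a cover. For essential surjectivity over an algebraically closed field $K$, it passes to the Fargues--Fontaine curve over $\Spec(K)^\diam$, invokes Ansch\"utz's theorem \cite{An2} identifying bundles there with Frobenius isocrystals over $W(K)[1/p]$, and then uses Gleason's description of $\Spd(K)$-points of the integral shtuka moduli via the affine Deligne--Lusztig set to produce the $W(K)$-lattice. For general perfect $A$, it exploits the identification $\Spd(A)\times\Spd(\BZ_p) \simeq \Spa(W(A))^\diam$, builds a pro-\'etale $\BZ_p$-perfectoid cover of $\Spa(W(A)[1/p], W(A))$, and runs Sen theory on the Banach algebra $W(A)[1/p]$ to descend the module (reducing the vanishing of Sen's operator to the field case already handled); the lattice in $W(A)$ is then found via the representability of the Witt affine Grassmannian. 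Your approach, by contrast, tries to stay entirely on the BKF-module side and descend along a single arc-type cover, which is conceptually cleaner but founders precisely because the extension-to-BKF step is not available beyond $(C, O_C)$.
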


\begin{proof}\footnote{A different proof of this Theorem is given in \cite{GlIv}, see \emph{loc. cit.} Thm 10.4. The proof in \cite{GlIv} still uses Sen theory as in II) below.}We first show that the functor is fully faithful. {\cmag We can quickly reduce to the affine case $\sX=\Spec(A)$.}

Let $\psi: (\sM_{\rm sht}, \phi_{\sM_{\rm sht}})\to (\sN_{\rm sht}, \phi_{\sN_{\rm sht}})$ be a homomorphism between the shtukas corresponding to 
$(\sM, \phi_{\sM})$ and  $(\sN, \phi_{\sN})$. Set $(R, R^+)=(A\llps t^{1/p^\infty}\lrps, A\lps t^{1/p^\infty}\rps)$.
Recall that here $A\pstperf$ is the $(t)$-adic completion of the perfect algebra $A[t, t^{1/p}, t^{1/p^2}, \ldots ]$. The elements of 
$A\pstperf$ are represented as power series
\[
\sum_{i\in \BZ[1/p]_{\geq 0}} a_i t^i
\]
with $a_i\in A$ and with support (i.e. set of indices $i$ for which $a_i\neq 0$) which is either finite, or forms an increasing unbounded sequence. 
Then, $A\lpstperf=A\pstperf[1/t]$. Consider the $v$-cover $\pi\colon S=\Spa(R, R^+)\to \Spec(A)^\sdiam$. 

The homomorphism $\psi$ gives, by evaluating on $S$, a homomorphism of vector bundles  over $\CY_{[0,\infty)}(R,R^+)=\CY_{[0,\infty)}(S)$,
\[
\psi(S): \sM_{\rm sht}(S)\to \sN_{\rm sht}(S) .
\]
 These bundles come by restriction from vector bundles $\underline\sM(S)$ and $\underline\sN(S)$ over $\CY_{[0, \infty]}(R,R^+)=\CY(R,R^+)$. Note that $R^+=A\lps t^{1/p^\infty}\rps=R^\circ$, so we can apply Proposition \ref{FFres}  to $r>0$. Using this and then glueing, we see that $\psi(S)$ uniquely extends to a homomorphism of vector bundles over $\CY(R,R^+)$ which respects the Frobenius structures,
\[
\underline \psi(S): \underline\sM(S)\to \underline\sN(S) .
\]
 Recall $\underline\sM(S)$ is the pull-back of $\sM$ under
$\CY(R,R^+)\to \Spec(W(R^+))\to \Spec(W(A))$, and similarly for $\underline\sN(S)$. By Lemma \ref{FFKedlaya} (a), this
comes from a unique $W(R^+)$-linear homomorphism 
\[
\underline \psi(R^+): \sM\otimes_{W(A)}W(R^+)\to \sN\otimes_{W(A)}W(R^+).
\]
It remains to descend this to a $W(A)$-homomorphism by showing that the image of $\sM\subset \sM\otimes_{W(A)}W(R^+)$ under
$\underline \psi(R^+)$ lands in $\sN\subset \sN\otimes_{W(A)}W(R^+)$. {\cmag Since $\sN$ is finite projective, there is a $W(A)$-module $\sN'$ such that $\sN\oplus\sN'\simeq W(A)^n$. Using $\sN\subset \sN\oplus\sN'$ we can view $\underline \psi(R^+)$ as taking values in 
$
(\sN\oplus\sN')\otimes_{W(A)}W(R^+)\simeq W(R^+)^n$. Since 
\[
(\sN\oplus\sN')\cap (\sN\otimes_{W(A)}W(R^+))=\sN,
\]
 it is enough to show that the image of $\underline \psi(R^+)$ lies in $\sN\oplus\sN'\simeq W(A)^n$. Consider $k$-algebra homomorphisms $x^*: A\to K(x)$ with $K(x)$ a perfect field which induce $x^*: A\lps t^{1/p^\infty}\rps\to K(x)\lps t^{1/p^\infty}\rps$. Since $A$ is perfect, $A\hookrightarrow \prod_x K(x)$ for a set of $x$, and to show that an element $w(t)$ of $W(A\lps t^{1/p^\infty}\rps)=\prod_{i\geq 1} A\lps t^{1/p^\infty}\rps$ is ``constant", i.e. it lies in $W(A)=\prod_{i\geq 1} A$, it is enough to show that $x^*(w(t))\in W(K(x))$ for all $x$. Indeed,
 \[
 W(A\lps t^{1/p^\infty}\rps)\cap \prod_x W(K(x))=W(A)
 \]
 with the intersection taking place in $\prod_x W(K(x)\lps t^{1/p^\infty}\rps)$. This shows that it is enough to prove that
 $x^*\underline \psi(R^+)$ is in $W(K(x))$, for all $x$ as above, and it allows us to reduce to the case that $A$ is a field $K$. Now both $\sM$ and $\sN$ are finite free, so $\underline\psi(R^+)$ is given by a matrix $(r_{ij})$ with entries in $W(R^+)=W(K\lps t^{1/p^\infty}\rps)$ which we try to show are in $W(K)$. We use that $\psi(S)$ comes with descent data along $\pi$, i.e.
we have the identity
\begin{equation}\label{2projections}
p_1^*(\psi(S))=p_2^*(\psi(S))
\end{equation}
over $S\times_{\Spec(K)^\diam} S$. Note that 
\[
\CO^+(S\times_{\Spec(K)^\diam} S)=\CO^+(\widetilde{\bf D}^*_{K\lpstperf})=K\lps t_1^{1/p^\infty}, t_2^{1/p^\infty}\rps,
\]
where $\widetilde{\bf D}^*_{K\lpstperf}$ is the perfectoid punctured open disk over $K\lpstperf$ (compare to the proof of \cite[Prop. 18.3.1]{Schber}). Let $r_{ij}\in W(R^+)$ be an entry of the matrix giving $\underline \psi(R^+)$ as above. Using (\ref{2projections}) above, we see that the images of $r_{ij}$ under the two natural
maps
\[
W(R^+)\to \Gamma(\CY_{[0,\infty)}(S),\CO)\to \Gamma(\CY_{[0,\infty)}(S\times_{\Spec(K)^\diam} S), \CO)
\]
agree. Since 
\[
W(\CO^+(S\times_{\Spec(K)^\diam} S))=W(K\lps t_1^{1/p^\infty}, t_2^{1/p^\infty}\rps)\hookrightarrow \Gamma(\CY_{[0,\infty)}(S\times_{\Spec(K)^\diam} S), \CO),
\]
the images of $r_{ij}$ under the two natural
maps
\[
W(R^+)=W(K\lps t^{1/p^\infty}\rps)\to  W(K\lps t_1^{1/p^\infty}, t_2^{1/p^\infty}\rps)
\]
given by $t\mapsto t_1$, $t\mapsto t_2$, agree. This implies that $r_{ij}$ has, at the same time, only powers of $t_1$ and only powers of $t_2$, so it is constant, i.e. belongs to $W(K)$. By the above, this completes the proof of full-faithfulness.}

 We now proceed to show that the functor gives an equivalence of categories by showing it is also 
essentially surjective.

I) We first treat the case that $A=K$ is an algebraically closed field.
Let $(\sV, \phi_\sV)$ be a shtuka  of rank $d$ over $\Spec(K)^\sdiam$. This gives a corresponding vector bundle  $\sV_{{\rm FF}}$ of rank $d$ over the (absolute) Fargues-Fontaine curve $X_{{\rm FF}, \Spec(K)^\sdiam}$. 
This is meant in the sense described in \cite{An2} (since there is  not really  a relative FF curve $X_{{\rm FF}, \Spec(K)^\sdiam}$): there is a category whose objects we think of as the ``vector bundles over $X_{{\rm FF}, \Spec(K)^\sdiam}$". The objects are given by descent from a perfectoid $v$-cover, as follows. 
Set $L=K\llps t^{1/p^\infty}\lrps$, which is perfectoid.
  Then $\pi: T=\Spa(L, O_L)\to \sX^\sdiam=\Spec(K)^\sdiam$ is a $v$-cover and $\pi^*(\sV, \phi_{\sV})$ is 
  a shtuka over $T$ with descent data along $\pi$. By restriction from $\CY_{[0, \infty)}(L,O_L)$ to $\CY_{(0, \infty)}(L,O_L)$
  followed by descent, we obtain a corresponding vector bundle $\sE$ over the Fargues-Fontaine curve $X_{{\rm FF}, T}=X_{{\rm FF}, L}$. The descent datum along $\pi$ gives an isomorphism   over $X_{{\rm FF}, T\times_{\sX^\sdiam}T}$,
  \[
  p^*_1(\sE)\simeq p_2^*(\sE).
  \]
 This describes $\sV_{{\rm FF}}$. In particular, by definition, $\sE=\pi^*(\sV_{FF})$.
 
 In \cite{An2}, Ansch\"utz shows that the category of vector bundles over $X_{{\rm FF}, \Spec(K)^\sdiam}$ is canonically equivalent to the category of Frobenius isocrystals over $W(K)[1/p]$. Hence, there is a Frobenius isocrystal $(V, \phi_V)$ over 
  $W(K)[1/p]$ such that $\sE$ (with its descent datum) is obtained from
  $(V, \phi_V)$. It follows, by the construction of this equivalence, that the pull-back of $(V, \phi_V)$ under 
  \[
  \CY_{(0, \infty)}(L,O_L)\to \Spec(W(O_L)[1/p])\to \Spec(W(K)[1/p])
  \] 
  agrees with the restriction of $\pi^*(\sV, \phi_\sV)$ from $\CY_{[0,\infty)}(L,O_L)$ to $ \CY_{(0, \infty)}(L,O_L)$. 
  In particular, we can choose a framing
 of the shtuka $\pi^*(\sV, \phi_\sV)$ over $\Spa(L, O_L)$ which respects the $v$-descent data, and thus obtain a $\Spa(L, O_L)$-point
 of the moduli stack of shtukas with framing $\CM^{\rm int}_{\GL_d, b, \mu}$ together with $v$-descent data, i.e. a point of $\CM^{\rm int}_{\GL_d, b, \mu}$ with values in $\Spec(K)^\sdiam=\Spd(K)$.
  (See \S  \ref{defintSht} for the notation. Here, the element $b$ is determined by $(V,\phi_V)$ and we take $\mu$ sufficiently large.)  The argument in the proof of \cite[Prop. 2.30]{Gl21} now applies and implies that this point is given by a $\Spec(K)$-valued point of a corresponding affine Deligne-Lusztig variety (see \S \ref{Zhu}, and especially Theorem \ref{Gleason} (a)). This translates 
 to the fact that the shtuka over the whole $\CY_{[0,\infty)}(L, O_L)$ comes from a meromorphic Frobenius crystal $(M, \phi_M)$; here $M$ is a $W(K)$-lattice in $V$ and $\phi_M={\phi_V}_{|M[1/p]}$. This shows that our functor is essentially surjective.

 II) We now consider the general affine case $\sX=\Spec(A)$, with $A$ a perfect $k$-algebra.
Let $(\sV, \phi_\sV)$ be a shtuka  of rank $d$ over $\sX^\sdiam$.
We consider the $v$-cover
 \[
 S=\Spa(A\llps t^{1/p^\infty}\lrps
, A\lps t^{1/p^\infty}\rps)\to \Spd(A).
\]
We have a surjective map of $v$-sheaves
\[
S\times\Spd(\BZ_p)\to \Spd(A)\times\Spd(\BZ_p).
\]
 We also have an isomorphism of $v$-sheaves over $\Spd(\BZ_p)$, 
\begin{equation}\label{productformulaSpa}
\Spd(A)\times\Spd(\BZ_p)\simeq \Spa(W(A))^\diam ,
\end{equation}
obtained {\cmag by the argument in the proof of   \cite[Prop. 11.2.1]{Schber}.}
Here we write $\Spa(W(A))$ for $\Spa(W(A), W(A))$ where $W(A)$ is equipped with the $p$-adic topology; this is a pre-adic space in the sense of \cite[App. to \S 3]{Schber}. By the above $ \sV   $ gives a section of the $v$-stack of vector bundles over $\Spa(W(A))^\diam$. Set 
\[
(B, B^+)=(W(A)[1/p], W(A)).
\]
 Note that as in \cite[Rem. 13.1.2]{Schber},
the open $U=\Spa(B, B^+)$ is an sousperfectoid analytic adic space. In fact, we can obtain a 
 perfectoid cover 
 \[
 \hat U_\infty\to U^\diam=\Spa(B, B^+)^\diam
 \]
 of the $v$-sheaf $U^\diam$ as follows. 
 
 {\cmag Denote by $\hat\CO_\infty=\widehat {\BZ_p[\mu_{p^\infty}]}$ the $p$-adic completion of the 
  ring of integers $\BZ_p[\mu_{p^\infty}]$ in the $\BZ^\times_p$-extension $K_\infty=\BQ_p(\mu_{p^\infty})$ of $\BQ_p$. Write, as usual, $\BZ_p^\times=(\BZ/p\BZ)^\times\times (1+p\BZ_p)$ if $p>2$, and
  $\BZ_2^\times=\BZ/2\BZ\times (1+4\BZ_2)$ for $p=2$. For $p>2$ set 
   $\Gamma=\Gamma_0=1+p\BZ_p\simeq \BZ_p$ and $\Gamma_n=1+p^{n+1}\BZ_p\simeq p^n\BZ_p$, so that $K_n:=(K_\infty)^{\Gamma_n}=\BQ_p(\mu_{p^{n+1}})$, $\CO_n:=\BZ_p[\mu_{p^{n+1}}]$. For $p=2$, set $\Gamma_n=1+2^{n+2}\BZ_2$.}
  Then $\hat\CO_\infty$ supports
  a continuous $\BZ_p^\times$-action. We  now consider
 \[
 \hat U_\infty =\Spa(B\hat\otimes_{\BZ_p}\hat\CO_\infty, B^+\hat\otimes_{\BZ_p}\hat\CO_\infty).
 \]
This is perfectoid. Indeed, set $\hat B_\infty=B\hat\otimes_{\BZ_p}\hat\CO_\infty$;  this is a Tate ring with pseudo-uniformizer 
$1-\zeta_{p^2}$. Its subring of bounded elements is
\[
\hat B_\infty^\circ=\hat B_\infty^+=B^+\hat\otimes_{\BZ_p}\CO_\infty=W(A)\hat\otimes_{\BZ_p}\hat\CO_\infty .
\]
 {\cmag Since $\widehat {\BZ_p[\mu_{p^\infty}]}/(p)\simeq \BF_p[x^{1/p^\infty}]/(x^{p-1})$, we have}
 \[
 \hat B_\infty^\circ/(p)\simeq A[x^{1/p^\infty}]/(x^{p-1})
 \]
 and Frobenius is surjective on $\hat B_\infty^\circ/(p)$. Hence, $\hat B_\infty$ is perfectoid and $\hat U_\infty=\Spa(\hat B_\infty, \hat B_\infty^+)$ is 
 a perfectoid space.  The tilt $\hat U_\infty^\flat$ of $\hat U_\infty$ is
 \[
\hat U_\infty^\flat\simeq \Spa(A\lpstperf, A\pstperf).
 \]
The map $\hat U_\infty\to \Spa(W(A))^\diam=\Spd(A)\times\Spd(\BZ_p)$ is given by $\Spa(A\lpstperf, A\pstperf)\to \Spd(A)$
and the choice of $\hat U_\infty$ as an untilt of $\Spa(A\lpstperf, A\pstperf)$.

By restricting $\sV$ along the ``generic fiber"
 \[
U^\diam=\Spa(W(A))^\diam\times_{\Spd\BZ_p}\Spd\BQ_p\to  \Spa(W(A))^\diam
 \]
 we obtain a section of the $v$-stack of vector bundles over  $U^\diam$. 
By $v$-descent of vector bundles over perfectoid spaces, we see that its value at $\hat U_\infty\to U^\diam$ 
 is given by an actual vector bundle   over the affinoid perfectoid space $\hat U_\infty$, given, in turn, by 
 a finite projective $\hat B_\infty$-module $\wt M$. This comes with descent data for the $v$-cover $\hat U_\infty\to U^\diam$. Here, one needs to be careful: Since $U$ is not perfectoid,  the $v$-descent datum does not automatically 
 give a module over $B$.  
 However, we can use that $\hat U_\infty\to U$ is a pro-\'etale $\BZ_p^\times$-torsor. It is enough to first show that 
 the descent datum for $\hat U_\infty\to U_n$ is effective for some $\Gamma_n$; then we can proceed with usual \'etale descent
 for $U_n\to U$. We first note that using \cite[Cor. 5.4.42]{GabberRamero} 
 we see that there is $n$ and a finite projective $R_n$-module $P_n$ with an isomorphism 
 \[
 P_n\hat\otimes_{U_n}\hat U_\infty\simeq \wt M.
 \]
 This allows to express the descent datum along $\hat U_\infty\to U_n$
 by a continuous $1$-cocycle
 \[
 c: \Gamma_n\to {\rm Aut}_{\hat B_\infty}(P_n\hat\otimes_{B_n}\hat B_\infty)
 \]
 for the $\Gamma_n$-action on $\hat B_\infty$ coming from the $\Gamma_n$-action on $\CO_\infty$.
 For simplicity, we can assume $\Gamma=\Gamma_n$ by a base change and omit the subscript $n$ throughout the rest
 of the argument. After this change, 
 we would like to show that this descent datum is effective and gives an $B$-module $M$
 with a $\Gamma$-equivariant isomorphism
 \[
 M\hat\otimes_B \hat B_\infty\xrightarrow{\sim} \wt M.
 \]
 By ``usual" \'etale descent, we see that
 the effectivity is true if $c$ is cohomologous to a cocycle $c'$ which is trivial on a subgroup of finite index 
 of $\Gamma$. 
 
By (I)  the result holds (i.e. the module $\wt M$ obtained from a shtuka over $\Spd(A)$
 has effective descent datum) when $A$ is an algebraically closed field. Therefore, this is the case after base changing
 by $A\to k'$, where $k'$ is any algebraically closed field. In general, we can understand the set $ {\rm H}^1_{\rm cont}(\Gamma, {\rm Aut}_{\hat B_\infty}(P\hat\otimes_{B}\hat B_\infty))$ using the methods of Sen. In particular, the reference \cite[\S 2]{SenBSMF},
 details a version of this method for Banach $\BQ_p$-algebras, which is sufficiently general for our purposes.
(In fact, Sen considers the more complicated case of continuous cocycles for the Galois group ${\rm Gal}(\bar\BQ_p/\BQ_p)$ and tensors with $\BC_p={\bar\BQ_p}^\wedge$; here we only need to consider $\Gamma={\rm Gal}(K_\infty/\BQ_p)$ and tensor with $\hat K_\infty$.) 
 First of all, by loc.  cit. \S 2.3, there is a continuous cocycle
 \[
 c_\infty: \Gamma\to {\rm Aut}_{B_\infty}(P\otimes_{B}B_\infty)\subset {\rm Aut}_{\hat B_\infty}(P\hat\otimes_{B}\hat B_\infty)
 \]
  which is cohomologous to $c$.
 Using $c_\infty$, Sen defines in loc. cit. \S 2.3, an ``operator"
 $\psi_{c, B}\in {\rm End}_{B}(P)$ (denoted $\phi$ there) whose vanishing is equivalent to the triviality of the restriction 
 of the cohomology class of $c$ on a finite index subgroup of $\Gamma$ (see loc. cit. \S 2.5). The specialization of 
 $\psi_{c, B}$ by $B=W(A)[1/p]\to W(k')[1/p]$ is $\psi_{c, W(k')[1/p]}$. This is zero by the above, for all $A\to k'$, with $k'$ algebraically closed. Hence, since $A$ is reduced and $P$ is a projective module, $\psi_{c, B}=0$ and the
 descent is effective. The Frobenius structure on the module $\wt M$ respects the descent datum and so it descends to a 
 Frobenius structure on $M$.  
 
 So far our construction produced a  finite projective $W(A)[1/p]$-module $M$ with 
 \[
 \phi_M: \phi^*M\xrightarrow{\sim} M
 \]
which induces the pull-back (restriction) of the shtuka   by 
\[
U^\diam=\Spa(W(A)[1/p], W(A))^\diam\to \Spd(A)\times \Spd(\BZ_p).
\]
It remains to show that there is a lattice, i.e. a projective finite $W(A)$-submodule $\sM\subset M$ 
on which $\phi_M$ is meromorphic, such that the shtuka  is induced by $\sM$. Using the $v$-cover
 \[
 S=\Spa(A\lpstperf  
, A\pstperf )\to \Spd(A),
\]
we see that the shtuka over $S$ gives a finite projective $W(A\lpstperf)$-module 
\[
\sM_\infty\subset M\otimes_{W(A)[1/p]}W(A\lpstperf)[1/p]
\]
with descent data.\quash{This corresponds to a $A\lpstperf$-point of the Witt affine Grasmannian
parametrizing $W(A')$-lattices in $M\otimes_{W(A)[1/p]}W(A')[1/p]$, for a variable perfect $A$-algebra $A'$. The existence of the descent data on the shtuka implies that this point
is invariant under the $A$-automorphism $\delta$ of $A\lpstperf$ determined by $t\mapsto t^p$. Using the representability of the
Witt affine Grassmannian of \cite{BS} and $A\lpstperf ^{\delta=1}=A$, we can now see that this $A\lpstperf$-point is induced by a
 uniquely determined $A$-valued point; this gives the desired lattice $\sM\subset M$. {\cmag Indeed, since $\sM[1/p]=\sM\otimes_{W(A)}W(A)[1/p]=M$, the pair $(\sM, \phi_M)$ is
a meromorphic Frobenius crystal. By the construction above, this pair induces the shtuka $(\sV, \phi_\sV)$ over $\Spd(A)$.}}
 {\cmag We will show that the desired $W(A)$-module is $\sM=M\cap \sM_\infty$, the intersection taking place in 
$M\otimes_{W(A)[1/p]}W(A\lpstperf)[1/p]=\sM_\infty[1/p]$.  We first show that $\sM$ is a lattice in $M$, i.e. a finite projective $W(A)$-module
with $\sM[1/p]=M$. By \cite[Thm 4.1]{BS}, it is enough to prove this  after base changing to a (scheme-theoretic) $v$-cover of the perfect scheme $\Spec(A)$. By \cite[Thm 6.1]{IvanovArc}, there is such a $v$-cover, given by some algebra homomorphism $A\to \tilde A$, such that the base change $\tilde M:=M\otimes_{W(A)[1/p]}W(\tilde A)[1/p]$
is  finite free, $\tilde M\simeq  W(\tilde A)[1/p]^h$. The base change $\tilde \sM_\infty=\sM_\infty\otimes_{W(A\lpstperf)}W(\tilde A\lpstperf)$ corresponds to a $\tilde A\lpstperf$-point of the Witt affine Grasmannian of \cite{BS}
parametrizing $W(A')$-lattices in $M\otimes_{W(A)[1/p]}W(A')[1/p]\simeq W(A')[1/p]^h$, for a variable perfect $A$-algebra $A'$. 
The existence of the descent data on the shtuka implies that this point
is invariant under the $\tilde A$-automorphism $\delta$ of $\tilde A\lpstperf$ determined by $t\mapsto t^p$. Using the representability of the
Witt affine Grassmannian of \cite{BS} and $\tilde A\lpstperf ^{\delta=1}=\tilde A$, we can now see that this $\tilde A\lpstperf$-point is induced by a
 uniquely determined $\tilde A$-valued point; this corresponds to a finite projective $W(\tilde A)$-lattice $\tilde\sM\subset \tilde M$ which is then necessarily 
 $\tilde\sM=\tilde M\cap \tilde \sM_\infty$. We conclude as above by $v$-descent that $\sM=M\cap \sM_\infty$ is a lattice in $M$. Now we observe that, since $\sM[1/p]=\sM\otimes_{W(A)}W(A)[1/p]=M$, the pair $(\sM, \phi_M)$ is
a meromorphic Frobenius crystal. By the construction above, this pair induces the shtuka $(\sV, \phi_\sV)$ over $\Spd(A)$.}

Finally, we verify the exactness properties. The exactness of the original functor is easy since meromorphic Frobenius 
crystals are supported on finite projective modules. {\cmag Showing that we have an exact equivalence needs more care. We start with the following lemma.}

\begin{lemma}\label{exactLemma}
Let $B\to B'$ be an injective ring homomorphism with the property that the corresponding map $\Spec(B')\to \Spec(B)$ is surjective on closed points. A complex
 \[
 M_\bullet\ :\ 0\to M_1\to M_2\to M_3\to 0
 \]
 of finite projective $B$-modules is exact if and only if the base change $M_\bullet\otimes_{B}B'$ is exact.
 \end{lemma}
 
 \begin{proof}
 This follows by an argument as in \cite[Lem. 11.4]{An}  which we repeat here.  
 It is enough to show that the exactness of $M_\bullet \otimes_{B}B'$  implies that $M_\bullet$ is exact, so we assume
 $M_\bullet \otimes_{B}B'$ is exact.
 First observe that, since $M_1\otimes_B B'\to M_2\otimes_B B'$ is injective and the $M_i$ are projective and $B\to B'$ is injective, the map $M_1\to M_2$ is also injective. Now let $Q={\rm coker}(M_2\to M_3)$. Then
 $Q$ is finitely generated over $B$. If $\frak m\subset B$ is a maximal ideal then,  by our assumption, there is a maximal ideal $\frak m'\subset B'$ above $\frak m$. We have $Q\otimes_{B}B'_{\frak m'}=(0)$ which, by using Nakayama's lemma, gives $Q_{\frak m}=(0)$. Therefore, since $Q_{\frak m}=(0)$ holds for all $\frak m$, $Q=(0)$. We can now write $M_2=N\oplus M_3$ and apply the same argument to the cokernel 
 of the composition 
 \[
 M_1\to M_2=N\oplus M_3\xrightarrow{\rm pr} N
 \]
 to show it is also trivial. So, $M_1\to N$ is surjective and hence an isomorphism.
 \end{proof}
 
{\cmag  We will apply this Lemma to the natural map $B=W(A)\to B'=W(A\llps t^{1/p^\infty}\lrps)$, where $A$ is a perfect $k$-algebra. The condition on closed points is satisfied: Every maximal ideal of $W(A)$ is the inverse image $\frak m+(p)$ of a maximal ideal $\frak m$ of $A$ via $W(A)\to A=W(A)/(p)$. The kernel of $W(A\llps t^{1/p^\infty}\lrps)\to A\llps t^{1/p^\infty}\lrps\to (A/\frak m)\llps t^{1/p^\infty}\lrps$ is a maximal ideal that lies over $\frak m+(p)$.  

To show that we have an exact equivalence we can quickly reduce to the affine case $\sX=\Spec(A)$. We suppose we have a sequence $\sM_\bullet: 0\to \sM_1\to \sM_2\to \sM_3\to 0$ of finite projective $W(A)$-modules underlying a sequence of meromorphic Frobenius crystals over $A$. We assume that the induced sequence of shtukas over $\Spec(A)^\sdiam$ is exact and we would like to show $\sM_\bullet$ is exact. By our assumption, the induced sequence of shtukas over 
the $v$-cover $S=\Spa(A\llps t^{1/p^\infty}\lrps, A\lps t^{1/p^\infty}\rps)\to \Spec(A)^\sdiam$ is also exact. 
By restricting these shtukas from $\CY_{[0,\infty)}(S)$ to the affinoid $\CY_{[0,1]}(S)$, we obtain an exact sequence of finite projective $\Gamma(\CY_{[0,1]}(S), \CO)$-modules. By the definition of the functor, this sequence is obtained by base changing $\sM_\bullet$ along $W(A)\to \Gamma(\CY_{[0,1]}(S), \CO)$. Recall the natural ring homomorphisms
\begin{equation*}
\Gamma(\CY_{[0,\infty)}(S), \CO)\to \Gamma(\CY_{[0,1]}(S), \CO)\to W(A\llps t^{1/p^\infty}\lrps).
\end{equation*}
The composition $W(A)\to \Gamma(\CY_{[0,1]}(S), \CO)\to W(A\llps t^{1/p^\infty}\lrps)$ is the natural map $W(A)\to W(A\llps t^{1/p^\infty}\lrps)$. The above now shows that the sequence of 
$W(A\llps t^{1/p^\infty}\lrps)$-modules obtained by base changing $\sM_\bullet$ along $W(A)\to   W(A\llps t^{1/p^\infty}\lrps)$ is also exact. We now  apply  Lemma \ref{exactLemma} to $B=W(A)\to B'=W(A\llps t^{1/p^\infty}\lrps)$. 
We obtain that  $\sM_\bullet$ is exact, which 
implies the result.}
\end{proof}

\begin{remark}\label{pdivSpecialization}
Let $\sG$ be a $p$-divisible group over
 $\Spec(O_C)$, where $C$ is a complete non-archimedean 
 algebraically closed field. Let $\kappa=O_C/\mathfrak m_C$ be the residue
 field and set $\bar\sG=\sG\otimes_{O_C}\kappa$ and $i: \Spec(\kappa)\to \Spec(O_C)$.  Then, under the equivalence of Theorem \ref{FFisocrystal}, the pull-back $i^*(\CE(\sG))$ over $\Spec(\kappa)^\sdiam$ of the 
 shtuka $\CE(\sG)$ over $\Spec(O_C)^\sdiam$ 
 described in Example \ref{pdivExample},
 corresponds to the meromorphic Frobenius crystal over $\kappa$ given by
 \begin{equation}\label{Dnatural}
 \BD^\natural(\bar\sG)=(\phi^{-1})^*(\BD(\bar\sG)^*).
 \end{equation}
 Here, $\BD(\bar\sG)=\BD(\bar\sG)(W(\kappa))$ is the contravariant Dieudonn\'e $W(\kappa)$-module of $\bar\sG$,
 and $(\ )^*$ denotes the $W(\kappa)$-linear dual.
 \end{remark}

 \cmag

\subsubsection{Shtukas and BKF-modules, revisited}\label{exBKFgeneral}

Here we explain a certain extension of the shtuka associated to a BKF-module as in Definition \ref{defassshtukaBKF}.  
 
{\cmag Let $S=\Spa(R, R^+)$ be affinoid perfectoid over $\BF_p$ and let $S^\sharp=\Spa(R^\sharp, R^{\sharp +})$ be an untilt of $S$ over $\BZ_p$. 
Then $R^{\sharp +}$ is integral perfectoid; let $(\xi)$ be the kernel
of $W(R^+)\to R^{\sharp +}$, cf. \S \ref{sss:BKFmod}.}
\quash{Let $(R^\sharp, R^{\sharp +})$ be integral perfectoid\mar{\cred rephrase to start with $(R,R^+)$...} over $\BZ_p$ with tilt $(R, R^+)$ and let $(\xi)$ be the kernel
of $W(R^+)\to R^{\sharp +}$, cf. \S \ref{sss:BKFmod}.}  There is a  functor from BKF-modules over $R^{\sharp +}$
to shtukas over the $v$-sheaf $\Spd(R^{\sharp +})=\Spa(R^{\sharp +}, R^{\sharp +})^\diam\to \Spd(\BZ_p)$ which is given
as follows. Let  $(A, A^+)\in {\rm Perfd}_k$, and  let the $\Spa(A, A^+)$-point of $\Spd(R^{\sharp +})$ be given by  $\Spa(A^\sharp, A^{\sharp +})\to \Spa(R^{\sharp +}, R^{\sharp +})$ induced by $R^{\sharp +}\to A^{\sharp +}$.
This induces a map $W(R^+)\to W(A^+)$.  Now the functor is given by base change via $W(R^+)\to W(A^+)$ followed by
pullback along $\CY_{[0,\infty)}(A, A^+)\to \Spec(W(A^+))$.

\begin{proposition}\label{propBKFshtuka}
The above functor from BKF-modules over $R^{\sharp +}$
to shtukas over the $v$-sheaf $\Spd(R^{\sharp +})=\Spa(R^{\sharp +}, R^{\sharp +})^\diam\to \Spd(\BZ_p)$ 
is fully faithful. It is an equivalence of categories when $(R^\sharp, R^{\sharp +})=(K^\sharp, K^{\sharp+})$, where $K^\sharp$ is a perfectoid field with $K^{\sharp +}$ an open and bounded valuation ring.
\end{proposition}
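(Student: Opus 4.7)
The plan is to reduce the statement to the previously established cases of Proposition \ref{exttoW} (in particular Fargues' theorem) by $v$-descent, using the chain of comparisons supplied by Theorem \ref{FFKedlaya} and Proposition \ref{FFres}. Well-definedness of the functor is clear, since the primitive degree-one element $\xi$ generating $\ker(W(R^+)\to R^{\sharp+})$ maps under base change to the corresponding primitive element $\xi_{A^\sharp}$, so the meromorphy condition along the leg is preserved.

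For full faithfulness, the tensor structure and internal Homs reduce the claim to the bijectivity, for any BKF-module $M$ over $R^{\sharp+}$, of
\[
M^{\phi}\;:=\;\{\,m\in M\mid \phi_M(\phi^*m)=m\,\}\;\longrightarrow\;\Gamma\bigl(\Spd(R^{\sharp+}),\,M_{\rm sht}\bigr)^{\phi}.
\]
The right-hand side is computed via $v$-descent along the natural map $\Spa(R,R^+)\to \Spd(R^{\sharp+})$ induced by the identity untilt of $R^{\sharp+}$, together with its self-products. There it becomes the set of $\phi$-equivariant sections of $M$ on $\CY_{[0,\infty)}(R,R^+)$ satisfying a cocycle condition. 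A boundedness argument in the style of Proposition \ref{FFres} (using the Kedlaya--Liu spectral-norm estimates) shows that any such $\phi$-invariant section has bounded norm and therefore extends across the divisor $\{[\varpi]=0\}$ to all of $\CY(R,R^+)$. Theorem \ref{FFKedlaya}(a) and (b) then lift it to an element of $M=\Gamma(\Spec W(R^+),M)$, and the cocycle compatibility is automatic for sections living intrinsically on $\Spec W(R^+)$.

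For essential surjectivity in the perfectoid field case, start with a shtuka $\sV_{\rm sht}$ over $\Spd(K^{\sharp+})$, pick a completed algebraic closure $C^\sharp$ of $K^\sharp$ together with a compatible open bounded valuation subring $C^{\sharp+}\supset K^{\sharp+}$, and refine further to a product-of-points cover. On each algebraically closed stalk, the equivalence assertion of Proposition \ref{exttoW} (extended from $\Spa(C,O_C)$ to $\Spa(C,C^+)$ via the same Kedlaya GAGA-and-Frobenius-continuation package) produces a BKF-module. Galois descent along $\Gal(C^\sharp/K^\sharp)$, which preserves $W(K^+)\hookrightarrow W(C^+)$, assembles these into a BKF-module $M$ over $K^{\sharp+}$ whose associated shtuka is $\sV_{\rm sht}$.

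The main obstacle lies both in the boundedness step of full faithfulness and in the analogue of Fargues' theorem for $\Spa(C,C^+)$: both rely on adapting Kedlaya--Liu spectral-norm estimates to situations where the relevant Huber pair fails to be uniform (that is, $R^+\neq R^\circ$), since an integral perfectoid ring such as a strict open bounded valuation subring of a perfectoid field tilts to a non-uniform pair. Circumventing this will require either extending these norm estimates directly or passing to a finer $v$-cover (for instance a product of points) over which uniformity holds, and then descending the conclusion.
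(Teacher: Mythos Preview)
Your full-faithfulness argument rests on computing sections over $\Spd(R^{\sharp+})$ by $v$-descent along $\Spa(R,R^+)\to\Spd(R^{\sharp+})$, but this map is not a $v$-cover: it is the open immersion of the generic fiber $\Spd(R^{\sharp+})\times_{\Spd(\BZ_p)}\Spd(\BQ_p)$, and misses all points where the image of the pseudouniformizer vanishes. So the value of the shtuka on $\Spa(R,R^+)$ together with a cocycle condition on self-products does \emph{not} recover the full datum over $\Spd(R^{\sharp+})$; what it recovers is only a shtuka over $S$ with leg at $S^\sharp$, and you are then back in the situation of Proposition~\ref{exttoW}, for which full faithfulness genuinely requires $R^+=R^\circ$. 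The obstacle you flag at the end is therefore not a technical wrinkle but reflects that your descent step has discarded exactly the extra information carried by the shtuka living over the integral base $\Spd(R^{\sharp+})$.

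The paper's proof exploits this extra information directly rather than trying to recover it by spectral-norm estimates. Via the product formula $\Spd(R^+)\times\Spd(\BZ_p)\simeq\Spa(W(R^+))^\diam$ of (\ref{productformulaSpa}), a shtuka over $\Spd(R^{\sharp+})$ already furnishes a section of the $v$-stack of vector bundles over all of $\Spa(W(R^+))^\diam$; in particular, restricting to $\CY(R,R^+)^\diam$ (the whole curve, including the locus $[\varpi]=0$) and invoking $v$-descent for vector bundles on the sousperfectoid adic space $\CY(R,R^+)$ yields an honest section of $\sN\otimes\sM^{-1}$ over $\CY(R,R^+)$, with no extension step and no uniformity hypothesis. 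Theorem~\ref{FFKedlaya} then carries this to $W(R^+)$. For the perfectoid-field equivalence the paper does not use Galois descent from an algebraic closure (your sketch leaves open the $\Spa(C,C^+)$ case, as you note); instead it defers to Proposition~\ref{extensiontoY}, which builds the extension to $\CY(R,R^+)$ by a pro-\'etale $\BZ_p$-tower and Sen-theory argument, after which Theorem~\ref{FFKedlaya}(c) finishes.
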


\begin{proof}
 Let $(\sM, \phi_\sM)$ and $(\sN, \phi_\sN)$
be two BKF-modules over $R^{\sharp +}$, so $\sM$ and $\sN$ are finite projective
$W(R^+)$-modules. Let
\[
\psi: (\sM_{\rm sht}, \phi_{\sM_{\rm sht}})\to (\sN_{\rm sht}, \phi_{\sN_{\rm sht}})
\]
be a homomorphism between the corresponding shtukas over $\Spd(R^{\sharp +})\to\Spd(\BZ_p)$.
Then $\sM_{\rm sht}$, $\sN_{\rm sht}$, give global sections of the $v$-stack of vector bundles
over $\Spd(R^+)\times \Spd(\BZ_p)\simeq \Spa(W(R^+))^\diam$, cf. (\ref{productformulaSpa}). Indeed, let $T=\Spa(A, A^+)$ be affinoid perfectoid over $k$ and let $a: T\to \Spd(R^+)$ be given by a continuous $A^+\to R^+$. Then the untilt $R^{\sharp +}$ gives $\Spd(R^+)\to \Spd(\BZ_p)$  and we obtain
by composition an untilt $T^\sharp=\Spa(A^\sharp, A^{\sharp +})$ with $R^{\sharp +}\to A^{\sharp +}$
(\cite[Lemma 4.7]{Gl}); this gives 
$a': T\to \Spd(R^{\sharp +})$.
Then the restriction of the global section corresponding to $\sM_{\rm sht}$ under $T\times \Spd(\BZ_p)=\CY_{[0,\infty)}(A, A^+)^\diam\to \Spd(R^+)\times \Spd(\BZ_p)$ is given by the evaluation $(a')^*\sM_{\rm sht}$ of the shtuka at $a'$.

Consider now the pullbacks of $\sM_{\rm sht}$ and $\sN_{\rm sht}$
via $\CY(R, R^+)^\diam \to \Spa(W(R^+))^\diam$. 
To prove that $\psi$ is induced by a unique map $\sM\to \sN$ over $W(R^+)$, we first observe that by Theorem \ref{FFKedlaya}, it is enough to show the corresponding statement for the vector bundles over the sousperfectoid analytic adic space $\CY(R, R^+)$ which are obtained by pulling back $\sM$ and $\sN$ along $\CY(R, R^+)\to \Spec(W(R^+))$. Recall that the structure sheaf of a sousperfectoid analytic adic space is a sheaf for the $v$-topology (this follows by extending \cite[Thm. 17.1.3]{Schber} or \cite[Thm. 3.5.5]{KL}, see \cite{HansenKedlaya}). This implies that the global sections of $\sN\otimes\sM^{-1}$ over $\CY(R, R^+)$
agree with the global sections of $\sN_{\rm sht}\otimes\sM^{-1}_{\rm sht}$ over $\CY(R, R^+)^\diam$.
Since by the above, $\psi$ gives a global section of $\sN_{\rm sht}\otimes\sM^{-1}_{\rm sht}$ over 
$\CY(R, R^+)^\diam$ we obtain a unique corresponding global section of $\sN\otimes\sM^{-1}$ over $\CY(R, R^+)$. This implies the desired fully-faithfulness. 

The last statement giving an equivalence of categories in the case of $(K^\sharp, K^{\sharp+})$, follows from Proposition \ref{extensiontoY}, which we will show later, and the remark immediately after it.
\end{proof}

\begin{remark}\label{rem239}
We summarize as follows the functors introduced above. Let $R^{\sharp +}$ be an integral perfectoid ring, with tilt $R^+$. Let $S=\Spa(R, R^+)$ and $S^\sharp=\Spa(R^{\sharp }, R^{\sharp +})$. Then we have a commutative diagram of functors,
\begin{equation*}
   \xymatrix @R=30pt @C=22pt @M=8pt{ 
  & \save[]+<0cm, 1.2cm>*\txt<15pc>{\{BKF-modules over $R^{\sharp +}$\} }
\ar@<2pt>[dl]_{\text{restr/eval}}  \ar@<-2pt>[dr]^{\text{restr}} \restore   & \\
       \txt{\{shtukas over $\Spd(R^{\sharp +})/\Spd(\BZ_p)$\}}  \ar@<-3pt>[rr]^{\text{eval}}  &&  \txt{\{shtukas over $S$, with leg at $S^\sharp$\}.} 
   }
\end{equation*}
Here, the arrow ``restr'' is referred to in Proposition \ref{exttoW}, and is the restriction induced by the map $\CY_{[0, \infty)}(R, R^+)\to \Spec(W(R^+))$: this functor is faithful and,  when $R^{\sharp +}=R^{\sharp\circ}$, even fully faithful. The functor ``restr/eval" is referred to in  Proposition \ref{propBKFshtuka} and is given by the evaluation on $(A, A^+)\in {\rm Perfd}_k$ via restriction along $\CY_{[0, \infty)}(A, A^+)\to \Spec(W(R^+))$: this functor is fully faithful and an equivalence in the case of a perfectoid field. It is reasonable\footnote{\cmag This indeed follows from  \cite[Thm. 4.25]{Guth} for any integral perfectoid ring $R^{\sharp +}$ (use loc.~cit., Prop. 2.19, to relate \emph{perfect-prismatic $F$-crystals} of loc.~cit to our BKF-modules). As a consequence, the proof of Proposition \ref{Extperfd} can be simplified.} to expect that this functor  is often   an equivalence
of categories. The arrow ``eval" is the evaluation on $\Spa (R, R^+)\to \Spd(R^\sharp, R^{\sharp +})$.  In the proofs, the following various kinds of ``vector bundles" play a role: vector bundles over $\Spec(W(R^+))$, vector bundles over the adic spaces $\CY(R, R^+)$ and $\CY_{[0,\infty)}(R, R^+)$, and sections of the $v$-stack of vector bundles over the $v$-sheaf $\Spa(W(R^+))^\diam$.
\end{remark}

\smallskip

 \subsection{$\CG$-shtukas}\label{ss:CG-shtukas}
 
\subsubsection{Background}\label{sss:backg}

We give some preliminaries and fix notations and definitions.

We let $G$ be a connected reductive group over $\BQ_p$ and $\{\mu\}$ a $G(\bar\BQ_p)$-conjugacy class of  cocharacters  $\mu:{\BG_m}_{\bar\BQ_p}\to G_{\bar\BQ_p}$.
Denote by $E$ the field of definition of $\{\mu\}$, i.e. the fixed field
of all $\sigma\in {\rm Gal}(\bar\BQ_p/\BQ_p)$ for which $\sigma(\mu)$ is $G(\bar\BQ_p)$-conjugate to $\mu$. In the sequel, we will also write $\mu$ for the conjugacy class. 
Let $k$ be an algebraic closure of the residue field $\kappa=\kappa_E$ of $E$.

Let
\[
P_\mu=\{g\in G\ |\ \lim_{t\to \infty}{\rm ad}(\mu(t))g\ \hbox{\rm exists}\}
\]
be the parabolic associated to $\mu$. We consider also $P_{\mu^{-1}}$ which is the opposite
parabolic. We set 
\begin{equation}
\CF_{G,\mu}=G/P_\mu ,
\end{equation}
which is a smooth projective variety defined over the reflex field $E$.

Let $[b]$ be the $\sigma$-conjugacy class of an element $b \in G(\breve\BQ_p)$.  We will always assume that $[b]$ is neutral acceptable for $\mu^{-1}$, i.e., $[b]\in B(G, \mu^{-1})$.  In other words, $\nu_b\leq \overline{(\mu^{-1})}_{\rm dom}$ and $\kappa(b)=-\mu^\sharp$, with the notation of \cite{RV}.
If $\mu$ is minuscule, we will call
$(G, b, \mu)$ as above a \emph{local Shimura datum}, cf. \cite[Def. 24.1.1]{Schber}, comp. \cite{RV}.

\subsubsection{Local models}
Let $G$ be a reductive group over $\BQ_p$, with smooth parahoric model $\CG$ over $\BZ_p$. Consider the ``Beilinson-Drinfeld style affine Grassmannian" $v$-sheaves
 \begin{equation}
 {\rm Gr}_{G, {\rm Spd}(E)},\quad  {\rm Gr}_{\CG, {\rm Spd}(O_E)}
 \end{equation}
 over $\Spd(E)$, resp. over $\Spd(O_E)$, cf. \cite[Ch. 20]{Schber}. 
As in \cite[Prop. 20.2.3]{Schber} and if $\mu$ is minuscule, there is a closed immersion of the diamond associated to the flag variety of parabolic subgroups of type $\mu$, 
 \begin{equation*}
 \CF_{G,\mu}^\diamondsuit={\rm Gr}_{G, {\rm Spd}(E),\mu}\subset {\rm Gr}_{G, {\rm Spd}(E)} .
 \end{equation*}
Assume $\mu$ is minuscule. The following definition occurs implicitly in   \cite[Ch. 21]{Schber}:  we define  the \emph{$v$-sheaf local model} as the closure of  $\CF_{G,\mu}^\diamondsuit$ in ${\rm Gr}_{\CG, {\rm Spd}(O_E)}$, in the sense of $v$-sheaves (i.e., the minimal closed superset whose pullback to any perfectoid space is stable under generalizations, see \cite[2.1]{AnRicLou}). We  use the following notation, 
 \begin{equation}\label{LMBD}
 \BM_{\CG, \mu}^v= \BM_\mu^v={\rm Gr}_{\CG, {\rm Spd}(O_E),\mu} .
 \end{equation}
  The following theorem was conjectured by Scholze-Weinstein, cf. \cite {Schber}, Conj. 21.4.1].
  \begin{theorem}[Gleason-Louren\c co \cite{GL}]\label{LMconj}
 There exists a proper flat scheme ${\mathbb M}^{\rm loc}_{\CG, \mu}$ over $\Spec(O_E)$ with $\CG$-action and reduced special fiber such that its associated  $v$-sheaf over ${\rm Spd}(O_E)$ is equal to  $ \BM_{\CG, \mu}^v$. (Note that by \cite[Rem. 21.4.2]{Schber},  ${\mathbb M}^{\rm loc}_{\CG, \mu}$ is necessarily a normal scheme, and that, by \cite[Prop. 18.4.1]{Schber}, such a  scheme is unique if it exists.) 
  \end{theorem}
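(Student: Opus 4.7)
The plan is to construct a candidate proper flat normal $O_E$-scheme $\BM^{\rm loc}_{\CG,\mu}$ and then identify its associated $v$-sheaf with $\BM^v_{\CG,\mu}$ via the full-faithfulness of the diamond functor (Proposition \ref{propFFvsheaf}). By \eqref{LMBD} we have $\BM^v_{\CG,\mu}\times_{\Spd O_E}\Spd E = \CF_{G,\mu}^\diamondsuit$, so the generic fiber of any candidate is forced to be $\CF_{G,\mu}$.

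First I would reduce to a group of $\GL$-type. Choose a closed embedding of smooth affine $\BZ_p$-group schemes $\rho\colon\CG\hookrightarrow\CG'$, with $\CG'$ a parahoric for a product of Weil restrictions of general linear groups, arranged so that the composition $\rho\circ\mu$ has only weights $0$ and $1$ on the standard representation, i.e.\ remains minuscule for $\CG'$. Since $\mu$ is minuscule, such a representation can be constructed from the weight-zero and weight-one parts of a suitable faithful representation of $\CG$. The induced morphism of Beilinson-Drinfeld affine Grassmannians yields a closed immersion $\BM^v_{\CG,\mu}\hookrightarrow\BM^v_{\CG',\rho\circ\mu}$. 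For $\GL$-type parahorics the local model is classical: $\BM^{\rm loc}_{\CG',\rho\circ\mu}$ can be written down explicitly inside a product of Grassmannians via lattice-chain conditions, and its representability as a proper flat $O_E$-scheme with the expected $v$-sheaf follows by direct inspection. I would then define $\BM^{\rm loc}_{\CG,\mu}$ to be the normalization of the scheme-theoretic closure of $\CF_{G,\mu}$ inside $\BM^{\rm loc}_{\CG',\rho\circ\mu}\otimes_{\BZ_p}O_E$. This is automatically proper, flat, and normal over $O_E$, and the $\CG$-action on the generic fiber extends by normality.

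The main obstacle will be verifying that the special fiber of $\BM^{\rm loc}_{\CG,\mu}$ is reduced, which is essentially the Pappas-Rapoport coherence conjecture. In the tame case I would appeal to Zhu's theorem, which deduces this from global Schubert varieties in an equal characteristic affine Grassmannian together with geometric Satake. The wildly ramified cases are treated in \cite{AnRicLou} and \cite{LourencoThesis} using the Bhatt-Scholze Witt vector affine Grassmannian and reductions to tame or hyperspecial situations; the residual odd ramified unitary group at $p=2$ and ramified triality at $p=3$ are handled in \cite{GL} by explicit computations near the base point of the local model.

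Once reducedness is known, the special fiber identifies with the $\mu$-admissible union of affine Schubert varieties in the Witt vector affine Grassmannian for $\CG$, matching the special fiber of $\BM^v_{\CG,\mu}$ via the specialization map and Theorem \ref{FFisocrystal}. Combining this with generic-fiber agreement and the flatness and normality of both sides, Proposition \ref{propFFvsheaf} applied through formal completions at closed points promotes these matchings to an isomorphism of $v$-sheaves. The uniqueness of the representing scheme asserted in the statement then follows from the same full-faithfulness principle.
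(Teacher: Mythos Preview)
The paper does not give its own proof of this theorem. The statement is attributed to Gleason--Louren\c co \cite{GL} in the theorem heading, and the paper simply cites it as an external result, with Remark~\ref{schlocmod} supplying historical context (the earlier partial results of \cite{AnRicLou} and \cite{LourencoThesis}, and the comparison with the Pappas--Zhu/Levin local models in the acceptable case). So there is no proof in the paper to compare your proposal against.

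That said, a few comments on your sketch. Your first reduction step is not available in general: there is no reason one can always find a closed embedding $\CG\hookrightarrow\CG'$ into a $\GL$-type parahoric with $\rho\circ\mu$ minuscule. That is precisely the ``local Hodge type'' hypothesis, and it fails for many $(G,\mu)$ (e.g.\ adjoint groups of exceptional type). The actual arguments in \cite{AnRicLou} and \cite{GL} do not proceed via such an embedding; they work directly with the Beilinson--Drinfeld Grassmannian and its Schubert varieties, establishing flatness and reducedness of the special fiber by a mixture of test-function computations, comparison with equal-characteristic local models, and the nearby-cycles formalism. Your invocation of ``Zhu's theorem'' for the tame case is in the right spirit, but the passage from there to the wild cases is the substantial content of those papers, not a routine reduction. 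Finally, your last paragraph is too loose: matching generic and special fibers of two $v$-sheaves, even with flatness and normality, does not by itself yield an isomorphism via Proposition~\ref{propFFvsheaf}; one needs to actually produce a morphism of $v$-sheaves (or of the underlying formal/adic spaces) and then check it is an isomorphism, which is what the cited works do.
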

  
    We call ${\mathbb M}^{\rm loc}_{\CG, \mu}$ the \emph{(scheme) local model}.
    
    \begin{remark}\label{schlocmod}
 Before \cite{GL}, 
the most complete result on this conjecture was due to Ansch\"utz, Gleason, Louren\c co and Richarz \cite{AnRicLou}, comp. also \cite{LourencoThesis}.
 They proved that the conjecture  holds for all $(G,\mu)$ when $p\geq 5$ and in many cases when $p=2,3$.

 There is an a priori different general approach to (scheme) local models, if $G$ splits over a tamely ramified extension of $\BQ_p$, or more generally, if $G$ is {\cmag{\sl essentially tamely ramified}}
  as defined in Remark \ref{Gacc}, cf. \cite{PZ}, \cite{Levin}. Conjecturally (see \cite[Conj. 2.16]{HPR}), the local models of \cite{PZ} 
satisfy the conditions of Theorem \ref{LMconj} (provided they are slightly adjusted when $p$ divides $|\pi_1(G_{\rm der})|$ by using a $z$-extension as in \cite[\S 2.6]{HPR}.) Hence, this approach should also give ${\mathbb M}^{\rm loc}_{\CG, \mu}$ as above. This conjectural agreement is proven in \cite[Thm. 2.15]{HPR},
in almost all cases when $(G, \mu)$ is of abelian type, which is our main case of interest, see also  \cite{LourencoThesis}.  By definition \cite[Def. 9.6]{HLR}, ``of abelian type''  means that there is a central lift $(G_1, \mu_1)$ of $(G_\ad, \mu_\ad)$ which is of local Hodge  type, i.e., admits a closed embedding $\rho: G_1\hookrightarrow \GL_n$ with $\rho\circ \mu_1$ minuscule, cf. \cite[Rem. 5.5 (i)]{RV}. 
\end{remark}

\subsubsection{Witt vector affine  Grassmannian}\label{Conjadm}
Recall the 
Witt vector affine Grassmannian ${\rm Gr}^W_{\CG}$ of \cite{ZhuAfGr}, \cite{BS},
which is an ind-perfectly proper scheme over $k$. Suppose that $S=\Spa(R, R^+)\in {\rm Perfd}_k$ and take $S^\sharp=S$, i.e. the untilt is in characteristic $p$.
Then, since $B_{\rm dR}^+(R^{\sharp})=W(R)$ and $\xi=p$, we have a natural bijection
\[
{\rm Gr}_{\CG, {\rm Spd}(O_E)}(S)\xrightarrow{\ \sim\ } ({\rm Gr}^W_{\CG})^\sdiam(S)
\] 
functorial in $S$ (see \cite[\S 20.3]{Schber}, especially the passage after Prop. 20.3.2). Here, as before, 
$({\rm Gr}^W_{\CG})^\sdiam$ is the $v$-sheaf associated to ${\rm Gr}^W_{\CG}$. 
For a perfect (discrete) $k$-algebra $R$, we set $R^\sdiam:=\Spa(R, R)^\diam$.
We obtain
\[
{\rm Gr}_{\CG, {\rm Spd}(O_E)}(R^\sdiam)\xrightarrow{\ \sim\ }  ({\rm Gr}^W_{\CG})^\sdiam(R^\sdiam)={\rm Gr}^W_{\CG}(R).
\]
Here, the equality $({\rm Gr}^W_{\CG})^\sdiam(R^\sdiam)={\rm Gr}^W_{\CG}(R)$ is obtained by the full-faithfulness of the functor $Z\mapsto Z^\sdiam$
from perfect schemes to $v$-sheaves. 

Composing with $\BM_{\CG, \mu}^v(R^\sdiam)\hookrightarrow {\rm Gr}_{\CG, {\rm Spd}(O_E)}(R^\sdiam)$
gives
\begin{equation}\label{LMWAff}
\iota\colon\BM_{\CG, \mu}^v(R^\sdiam)\to {\rm Gr}^W_{\CG}(R).
\end{equation}
If $K$ is a (discrete) algebraically closed field of characteristic $p$, this gives
\[
 \BM_{\CG, \mu}^v(K^\sdiam)\to {\rm Gr}^W_{\CG}(K)=G(W(K)[1/p])/\CG(W(K)).
\]
If  ${\mathbb M}^{\rm loc}_{\CG, \mu}$ above exists, then $\BM_{\CG, \mu}^v(K^\sdiam)={\mathbb M}^{\rm loc}_{\CG, \mu}(K)$.

By \cite[Thm. 6.16]{AnRicLou}, the map \eqref{LMWAff} 
identifies $ \BM_{\CG, \mu}^v(K^\sdiam)$ with the union inside $ G(W(K)[1/p])/\CG(W(K))$,
\begin{equation}\label{ptsofvLMo}
\BM_{\CG, \mu}^v(K^\sdiam)=\bigcup\nolimits_{w\in {\rm Adm}(\mu)_\CG}{\rm Gr}^W_{\CG, w}(K) . 
\end{equation}
Here, ${\rm Adm}(\mu)_\CG$ denotes the \emph{admissible set} in the double quotient $W_\CG\backslash\widetilde W/W_\CG$ of the Iwahori-Weyl group $\widetilde W$ of $G$, and ${\rm Gr}^W_{\CG, w}$ is the Schubert cell in ${\rm Gr}^W_{\CG}$ corresponding to $w$.  
(For the abelian type case, see also \cite[Chapt. 4, Cor. 4.24]{LourencoThesis},  
which also gives this, provided that 
the conditions, for $p=2$, $3$, in Remark \ref{schlocmod} are satisfied.)

\subsubsection{$\CG$-shtukas}
We recall Scholze's notion of a $\CG$-shtuka over a perfectoid space, cf. \cite{Schber}. 
  \begin{definition}\label{defGsht}
   Let $S\in{\rm Perfd}_k$, i.e., $S$ is a perfectoid space over $k$, and let 
 $S^\sharp$ be an untilt of $S$ over $  O_{\breve E} $. A \emph{$\CG$-shtuka  over $S$} with one leg at $S^\sharp$
is a pair 
\begin{equation}
(\sP , \phi_{\sP }) ,
\end{equation}
 where
\begin{itemize}
\item[1)] $\sP $ is a $\CG$-torsor  over the analytic adic space $ S\bdtimes  \BZ_p $,
\item[2)] $\phi_{\sP }$   is a $\CG$-torsor isomorphism
\begin{equation}
\phi_{\sP }:  {\rm Frob}^*_S(\sP )_{| S\bdtimes  \BZ_p  \setminus S^\sharp}  \xrightarrow{\sim} \sP _{| S\bdtimes  \BZ_p  \setminus S^\sharp} 
\end{equation}
which is meromorphic along the closed Cartier divisor $S^\sharp\subset  S\bdtimes  \BZ_p  $.
\end{itemize}
We say that the $\CG$-shtuka $\sP$ over $S$ with one leg at $S^\sharp$ is \emph{bounded by $\mu$} if the relative position of $\phi_{\sP }({\rm Frob}^*_S(\sP) )$ and $\sP $ at $S^\sharp$  (in this order!)  is bounded by $\BM_{\CG, \mu}^v$. 
 \end{definition}
   
Let us explain the meaning of the term ``bounded by $\BM_\mu^v$", comp. \cite[Def. 20.3.5]{Schber}.  
For $S=\Spa(C, O_C)$, with $C$ an non-archimedean complete algebraically closed field, the complete local ring $\widehat\CO_{\CY_{[0,\infty)}(S), x}$ at the point $x$ that corresponds to the untilt $S^\sharp=\Spa(C^\sharp, O_{C^{\sharp}})$ is  a discrete valuation ring with residue field $C^\sharp$. In fact, 
$\widehat\CO_{\CY_{[0,\infty)}(S), x}$ is isomorphic to $B_{\rm dR}^+(C^\sharp)$ (recall that this is  $W(C^\sharp)$ if $C^\sharp\simeq C$ has characteristic $p$.) This ring   is strictly henselian so every $\CG$-torsor over its spectrum is trivial.  Let $S=\Spa(R, R^+)$ be affinoid perfectoid over $k$, and let $(\sP, \phi_\sP)$ be a $\CG$-shtuka over $S$ with one leg at $S^\sharp$. Let  
$x: T=\Spa(C, O_C)\to S$ be a point over $\Spd(O_E)$ with $C$ algebraically closed, as above. By the above, we can choose a trivialization
\[
\beta: {\rm Frob}^*_T(\sP_T)_{|\Spec(\widehat\CO_{\CY_{[0,\infty)}(T), x})}\xrightarrow{\sim} \CG\times \Spec(\widehat\CO_{\CY_{[0,\infty)}(T), x})
\]
of the pull-back of the $\CG$-torsor $ {\rm Frob}^*_T(\sP_T)$ to the completion of $T\bdtimes  \BZ_p $ along the corresponding untilt $T^\sharp$. Fixing such a trivialization, we can consider the pair 
\[
(   \sP_T ,\   \beta\cdot \phi_{\sP_T}^{-1} )
\]
where
\[
\phi_{\sP_T}:  {\rm Frob}^*_T (\sP_T)_{| T\bdtimes  \BZ_p  \setminus T^\sharp} \xrightarrow{\sim}  (\sP_T)_{| T\bdtimes  \BZ_p  \setminus T^\sharp}. 
\] 
This pair gives a $\CG$-torsor and a trivialization
 of its restriction to $\Spec ({\rm Frac}(\widehat\CO_{\CY_{[0,\infty)}(T), x}))$.   By \cite[Def. 20.3.1, Prop. 20.3.2]{Schber}, such a pair  (together with the untilt $T^\sharp$), corresponds to an $T$-valued point  of ${\rm Gr}_{\CG, {\rm Spd}(O_E)}$. The boundedness condition on the relative position is that, for all $T\to S$ as above, this point factors through $\BM^v_\mu={\rm Gr}_{\CG, {\rm Spd}(O_E),\mu}\incl {\rm Gr}_{\CG, {\rm Spd}(O_E)}$. To be clear, the point $ {\rm Frob}^*_T(\sP_T)$ is the base point of the affine Grassmannian, and $(\phi_\sP)^{-1}(\sP_T)$ is considered as a variable torsor to be compared with $ {\rm Frob}^*_T(\sP_T)$.

 Let us compare the above definition with Definition \ref{vectsht}.
  
 \begin{lemma}
 Let $\CG=\GL_h$ and $\mu_d=(1^{(d)}, 0^{(h-d)})$. There is a functorial equivalence of categories between  $\CG$-shtuka over $S/\Spd(\BZ_p)$ bounded by $\mu_d$ and shtuka over $S/\Spd(\BZ_p)$ of height $h$ and dimension $d$. 
  \end{lemma}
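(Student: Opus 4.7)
My approach is to transport everything along the standard representation $\GL_h \hookrightarrow \GL(\BA^h)$. Sending a $\GL_h$-torsor $\sP$ on the analytic adic space $S \bdtimes \BZ_p$ to the associated rank-$h$ vector bundle $\sV := \sP \times^{\GL_h} \BA^h$ gives an equivalence between $\GL_h$-torsors and rank-$h$ vector bundles on $S \bdtimes \BZ_p$. Under this equivalence, a meromorphic $\GL_h$-equivariant isomorphism $\phi_\sP$ with pole along $S^\sharp$ corresponds to a meromorphic $\CO$-linear isomorphism $\phi_\sV \colon \mathrm{Frob}_S^*(\sV)|_{S \bdtimes \BZ_p \setminus S^\sharp} \xrightarrow{\sim} \sV|_{S \bdtimes \BZ_p \setminus S^\sharp}$. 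Without any boundedness constraint, this already identifies $\GL_h$-shtukas in the sense of Definition \ref{defGsht} with height-$h$ shtukas in the sense of Definition \ref{vectsht}, functorially in $S$.

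The remaining task is to match the two boundedness conditions. Since $\mu_d = (1^{(d)}, 0^{(h-d)})$ is minuscule, $\CF_{\GL_h, \mu_d}$ is the Grassmannian $\mathrm{Gr}(d, h)$, and $\BM^v_{\GL_h, \mu_d}$ is the ``Grassmannian-type'' sub-$v$-sheaf of $\mathrm{Gr}_{\GL_h, \Spd(\BZ_p)}$ that, on a geometric point $T = \Spa(C, O_C) \to S$ with untilt $T^\sharp$, parametrizes $B^+_{\mathrm{dR}}(C^\sharp)$-lattices $\Lambda$ with $\xi \Lambda_0 \subset \Lambda \subset \Lambda_0$ and $\Lambda_0/\Lambda$ free of rank $d$ over $C^\sharp$. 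Unwinding the discussion of boundedness following Definition \ref{defGsht} -- in which the base point of the Grassmannian is $\mathrm{Frob}_T^*(\sP_T)$ trivialized by some $\beta$, and the variable torsor is $\sP_T$ trivialized meromorphically by $\beta \cdot \phi_{\sP_T}^{-1}$ -- the condition that $(\sP, \phi_\sP)$ is bounded by $\mu_d$ at $T$ becomes precisely the inclusion $\sV_T \subset \phi_{\sV_T}(\mathrm{Frob}^*(\sV_T))$ with quotient free of rank $d$ over $T^\sharp$. This is the pointwise form of Definition \ref{vectshtMin}.

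To upgrade this pointwise matching to an equivalence of categories over a general $S \in \Perf_k$, I would note that boundedness in Definition \ref{defGsht} is by construction tested at all geometric points, while the condition of Definition \ref{vectshtMin} -- that $\phi_\sV(\mathrm{Frob}^*\sV)/\sV \cong (i_{S^\sharp})_\ast \sW$ for some rank-$d$ vector bundle $\sW$ on $S^\sharp$ -- is a coherent-sheaf condition along the Cartier divisor $S^\sharp$ that may be verified $v$-locally, using the product-of-points cover of \S\ref{vcover} together with the $v$-descent of Proposition \ref{vstack}. The main obstacle I expect is not conceptual but bookkeeping of conventions: Definition \ref{defGsht} takes the relative position of $\phi_\sP \mathrm{Frob}^*(\sP)$ with respect to $\sP$ in that order, and one must verify carefully that this forces the quotient to sit on the $\phi_\sV \mathrm{Frob}^*(\sV)$ side, matching Definition \ref{vectshtMin} directly rather than through an inversion of $\mu_d$.
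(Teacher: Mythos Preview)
Your proof is correct and follows essentially the same approach as the paper: both use the standard equivalence between $\GL_h$-torsors and rank-$h$ vector bundles, then match the boundedness conditions via the description of the $\mu_d$-locus in the Beilinson--Drinfeld affine Grassmannian (which the paper simply defers to \cite[Prop.~19.4.2]{Schber} and its proof). Your explicit unwinding of the relative-position convention and the $v$-local check that the pointwise condition upgrades to the coherent-sheaf condition of Definition~\ref{vectshtMin} are exactly what that citation encapsulates.
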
 
    \begin{proof} We use the equivalence of categories between the category of $\GL_h$-bundles and the category of vector bundles of rank $h$. The statement then follows from the description of ${\rm Gr}_{\GL_h, \Spd(\BZ_p), \mu_d}$
    in \cite[10, 20]{Schber}, see in particular \cite[Prop. 19.4.2]{Schber} and its proof. 
    \end{proof} 
    
    By Proposition \ref{ppProp}, for $S=\Spa(R, R^+)$  affinoid perfectoid, restriction gives an exact equivalence between vector bundles over $\Spa(R, R^+)\bdtimes \BZ_p$ 
and vector bundles over $\Spa(R, R^\circ)\bdtimes \BZ_p$. Using the Tannakian interpretation of $\CG$-torsors 
as in \cite[Thm. 19.5.1]{Schber}, it also follows that restriction gives an equivalence between $\CG$-shtukas
over $\Spa(R, R^+)$ and $\Spa(R, R^\circ)$.

 The notion of a $\CG$-shtuka bounded by $\mu$ generalizes to any perfectoid space $S$ over $k$ 
with a map $S\to \Spd(\CO_E)$. Also, by \cite[Prop. 19.5.3]{Schber}, sending $S/\Spd(\CO_E)$ to the groupoid of $\CG$-shtukas over $S/\Spd(\CO_E)$ (bounded by $\mu$)
gives a $v$-stack.
    
    \subsubsection{$\CG$-BKF-modules} We can define the related notion of a $\CG$-Breuil-Kisin-Fargues (BKF-) module.

\begin{definition} 
Let $R$ be an integral perfectoid ring, cf. \S \ref{sss:BKFmod}.  A \emph{$\CG$-Breuil-Kisin-Fargues} (BKF-)\emph{module} over $R$  is a 
$\CG$-torsor $\sP$ over $\Spec(W(R^\flat))$ together with an isomorphism
 \[
 \phi_{\sP}: \phi^*(\sP)[1/\xi]\xrightarrow{\ \sim\ } \sP[1/\xi].
 \]
 Here, again, $\xi$ is a primitive generator of the kernel of the map $W(R^\flat)\to R$. 
 
If $S=\Spa(R, R^+)\in{\rm Perfd}_k$, i.e., $S$ is affinoid perfectoid over $k$, and 
 $S^\sharp=\Spa(R^\sharp, R^{\sharp +})$ is an untilt of $S$, we also speak of a $\CG$-\emph{BKF-module over $S$ with leg along $S^\sharp$} instead of a $\CG$-BKF-module over $R^{\sharp +}$, comp. Definition \ref{def:BKF}. 
\end{definition}

As in Definition \ref{defassshtukaBKF},  a $\CG$-BKF module over $S\in{\rm Perfd}_k$ with leg along the untilt $S^{\sharp }$ of $S$
  defines a $\CG$-shtuka $\sP$ over $S$ with one leg at $S^\sharp$. We then say that the shtuka $\sP$ extends. Note that since the restriction functor from $ \CY_{[0,\infty]}(S)$ to $ \CY_{[0,\infty)}(S)$ is not fully faithful in general (cf. the comment after Proposition \ref{FFres}), such an extension may not be unique.  
 If $\sP$ is bounded by $\mu$, in the sense above, then we will call the extension  a \emph{$(\CG,\mu)$-Breuil-Kisin-Fargues module}.

 \subsubsection{Specializations of $\CG$-Shtukas}\label{overpoint}  
 Let $C$ be a non-archimedean complete algebraically closed field over $O_E$ with a valuation of rank $1$ and let $O_C$ be its valuation ring. Set $S=\Spa(C^\flat, O^\flat_C)$ and let  $\sP$ be a $\CG$-shtuka over $S$ with leg at $\Spa (C, O_C)$. 
 We can obtain a corresponding Frobenius $G$-isocrystal over the residue field $\kappa$ of $O_C$ as follows.  For $r\gg 0$, the $\phi$-$G$-torsor given by $\sP_{|\CY_{[r,\infty)}(S)}$
 descends to a $G$-torsor over the Fargues-Fontaine curve $\CY_{(0,\infty)}(S)/\phi^\BZ$, cf. Proposition \ref{exttoW}. By Fargues' theorem \cite[Thm. 14.1.1]{Schber}, this corresponds to  a Frobenius $G$-isocrystal over $\kappa$.  
  
  \begin{proposition}\label{AnExtension} Let $(\sP, \phi_\sP)$ be a $\CG$-shtuka over $\Spa(C^\flat, O_C^\flat)/\Spd(O_E)$ with one leg at $\Spa(C, O_C)$.
The $\CG$-shtuka $\sP$
extends, in the sense of Definition \ref{defassshtukaBKF}, to a unique $\CG$-BKF-module $\sP^{ \natural}$ over $O_C$, i.e., a module over $W(O^\flat_C)$.
 Base changing  $\sP^{ \natural}$ via $W(O^\flat_C)\to W(O^\flat_C/\mathfrak m_{C^\flat})=W(\kappa)$ gives a $\CG$-torsor over $\Spec(W(\kappa))$,
 \[
 \sP_0=\sP^{ \natural}\otimes_{W(O^\flat_C)}W(\kappa) ,
 \] 
  with Frobenius
  \[
 \phi_{\sP_0}:  {\rm Frob}^*(\sP_0)[1/p]\xrightarrow{\ \sim \ }\sP_0[1/p].
  \]
 \end{proposition}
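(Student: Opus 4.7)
The plan is to extend $(\sP, \phi_\sP)$ from the analytic adic space $\CY_{[0,\infty)}(S)$ to the scheme $\Spec(W(O_C^\flat))$ in three stages, combining Fargues' theorem (Proposition \ref{exttoW}), Kedlaya's GAGA (Theorem \ref{FFKedlaya}), and Ansch\"utz's theorem on the triviality of $\CG$-torsors on the punctured spectrum of $W(O_C^\flat)$. The vector bundle versions of each stage are already in the text; the remaining task is to promote these equivalences to the $\CG$-setting via the Tannakian formalism, and to handle the codimension-two extension across the closed point $s = V(p, [\varpi])$.

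First, I would extend $\sP$ from $\CY_{[0,\infty)}(S)$ to $\CY(S)$: since $(C, O_C)$ is an algebraically closed perfectoid field with $O_C = O_C^\circ$, Proposition \ref{FFres} combined with the standard continuation-by-Frobenius gives an equivalence of categories between $\phi$-vector bundles on $\CY_{[0,\infty)}(S)$ and on $\CY(S)$. Applying this to every vector bundle attached to a representation in $\text{Rep}(\CG)$ and invoking Tannakian reconstruction extends the $\phi$-$\CG$-torsor uniquely to $\CY(S)$. Next, Kedlaya's GAGA (Theorem \ref{FFKedlaya}(a)) converts this into a $\phi$-$\CG$-torsor on the scheme $Y(C^\flat, O_C^\flat) = \Spec(W(O_C^\flat)) \setminus \{s\}$. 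Finally, to extend across $s$, I would use Theorem \ref{FFKedlaya}(c) to extend each representation-associated vector bundle to $\Spec(W(O_C^\flat))$ (an equivalence because $C^\flat$ is a perfectoid field), together with Ansch\"utz's theorem \cite{An}, which asserts that every $\CG$-torsor on $\Spec(W(O_C^\flat)) \setminus \{s\}$ is trivial, to promote the resulting exact tensor functor into a genuine $\CG$-torsor $\sP^\natural$ on $\Spec(W(O_C^\flat))$. The Frobenius needs no further extension: since $\xi$ is primitive of degree $1$ it lies in the maximal ideal $(p, [\varpi])$, so $V(\xi) \ni s$ and hence $\Spec(W(O_C^\flat))[1/\xi] \subset Y(C^\flat, O_C^\flat)$, which is exactly the locus on which $\phi_\sP$ has already been defined.

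For the base change, Remark \ref{remintperf}(ii) identifies the image of $\xi$ in $W(\kappa)$ as a generator of $(p)$, so the meromorphic Frobenius of $\sP^\natural$ specializes to the required isomorphism $\phi_{\sP_0}\colon \phi^*(\sP_0)[1/p] \xrightarrow{\sim} \sP_0[1/p]$ on $\sP_0 = \sP^\natural \otimes_{W(O_C^\flat)} W(\kappa)$. Uniqueness of the extension is inherited stage by stage, since each of Proposition \ref{FFres}, Theorem \ref{FFKedlaya}(a), and Theorem \ref{FFKedlaya}(c) is an equivalence of categories in our setting. The main obstacle I anticipate is the Tannakian reconstruction of the $\CG$-torsor across the closed point: the standard reconstruction of a $\CG$-torsor from an exact tensor functor $\text{Rep}(\CG) \to \text{VB}$ typically requires noetherian hypotheses on the target scheme, which $\Spec(W(O_C^\flat))$ does not satisfy. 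It is precisely Ansch\"utz's theorem that rescues the argument, by guaranteeing that the tensor functor obtained on $Y(C^\flat, O_C^\flat)$ comes from an honest (and necessarily trivial) $\CG$-torsor, whose unique vector-bundle extension across $s$ then assembles into the desired $\CG$-BKF-module.
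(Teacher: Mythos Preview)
Your proposal is correct and follows essentially the same route as the paper: extend the $\phi$-$\CG$-torsor across the boundary at infinity (the paper cites Fargues' theorem directly, you cite the equivalent Proposition~\ref{FFres} in the algebraically closed case), pass to the punctured scheme via GAGA, then apply Ansch\"utz's extension theorem to cross the closed point, and finally read off $\xi \equiv p \bmod W(\mathfrak m_{C^\flat})$ for the base change to $W(\kappa)$. One small caution in your phrasing: the shtuka has a leg at $S^\sharp$, so it is not a $\phi$-bundle on all of $\CY_{[0,\infty)}(S)$; the extension step applies to the restriction $\sP|_{\CY_{[r,\infty)}(S)}$ for $r\gg 0$ (away from the leg), which is then glued back with the original torsor on $\CY_{[0,r']}(S)$---this is implicit in your ``continuation-by-Frobenius'' but should be stated precisely.
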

 
 \begin{proof}
 Using Fargues' theorem as above, we also see that, for $r\gg0$, the $\phi$-$\CG$-torsor $\sP_{|\CY_{[r,\infty)}(S)}$ extends to
 a $\phi$-$\CG$-torsor over $\CY_{[r,\infty]}(S)$. By the Extension Conjecture \ref{paraextconj} proved by Ansch\"utz \cite{An},
 $\sP$ extends to a $\CG$-torsor $\sP^{ \natural}$ over $\Spec(A_\inf)=\Spec(W(O^\flat_C))$ (this torsor is actually trivial). The Frobenius $\phi_\sP$ extends to a meromorphic map
 \[
 \phi_{\sP^{ \natural}}: \phi^*(\sP^{ \natural})[1/\xi]\xrightarrow{\ \sim \ }\sP^{ \natural}[1/\xi]
 \] 
 which then defines the $\CG$-BKF-module. The second part of the statement follows since $\xi\equiv p\, \mod   
 W(\mathfrak m_{C^\flat})$.
  \end{proof}
  
  \begin{remark}
 This proposition is the basis for the construction of the specialization map for moduli of local shtuka as in \cite{Gl}, \cite{Gl21},
 see Theorem \ref{Gleason} below. 
 We will explain this in detail later, see Remark \ref{ideaSpecialization}. In fact, we can see that if $\sP$ is bounded by $\mu$, then
 the map $\phi_{\sP_0}: {\rm Frob}^*(\sP_0)[1/p]\xrightarrow{\ \sim \ }\sP_0[1/p]$ also has pole bounded by $\mu$. \end{remark}

  \subsubsection{Families of $\CG$-Shtukas} We will also want to consider ``families" of $\CG$-shtukas, cf. \S \ref{FamSht}. {\cmag The following definition  is modelled on the corresponding definition of a family of ``vector space'' shtukas  of \S \ref{deffamofsht}. 
  
  \begin{definition}\label{deffamofshtG}
  Let $\CF$ be a $v$-sheaf over ${\rm Spd}(O_E)$. 
  A $\CG$-shtuka $(\sP, \phi_{\sP})$ over $\CF$, resp. a $\CG$-shtuka $(\sP, \phi_{\sP})$ over $\CF$
 with leg bounded by $\mu$,   is a  section of the $v$-stack given by the groupoid of  $\CG$-shtukas  
 over $\CF$, resp.  is a  section of the $v$-stack given by the groupoid of  $\CG$-shtukas  
 over $\CF$ with leg bounded by $\mu$. In particular, it is a functorial rule   which to any point  $x$ of $\CF$ with values in $S\in {\rm Perfd}_k$  associates a $\CG$-shtuka  $(\sP_S , \phi_{\sP_S })$ over $S$  
with one leg at the untilt $S^\sharp$ given by $  S\xrightarrow{x} \CF\to \Spd(O_E)$, in the resp. case bounded by $\BM_\mu^v$.
\end{definition} 
  
 We can now define $\CG$-shtuka over adic spaces, schemes, or formal schemes over $O_E$, as $\CG$-shtukas over the corresponding $v$-sheaves, by generalizing the ``vector space'' shtuka definitions of \S \ref{FamSht}.   For example, suppose that $\sX$ is a scheme over $\Spec(O_E)$. Following \S \ref{def:shtsch}, we can define a $\CG$-shtuka over $\sX$ to be a $\CG$-shtuka over the $v$-sheaf $\sX^{\diam/}$ over $\Spd(O_E)$. Note that the corresponding notion of a vector space shtuka $(\sV, \phi_\sV)$ of rank $n$  of \S \ref{FamSht} corresponds to taking $\CG=\GL_n$.    
   }
 
 \quash{
 \begin{definition}\label{deffamofshtG}
 A $\CG$-shtuka $(\sP, \phi_{\sP})$ over $\CX^\diam \to {\rm Spd}(O_E)$, resp. a $\CG$-shtuka $(\sP, \phi_{\sP})$ over $\CX^\diam \to {\rm Spd}(O_E)$
 with leg bounded by $\mu$,   is a  section of the $v$-stack given by the groupoid of  $\CG$-shtukas  
 over $\CX^\diam / {\rm Spd}(O_E)$, resp.  is a  section of the $v$-stack given by the groupoid of  $\CG$-shtukas  
 over $\CX^\diam / {\rm Spd}(O_E)$ with leg bounded by $\mu$. In particular, it gives a map which to any point $(S^\sharp, x)$ of $\CX^\diam $ with values in $S\in {\rm Perfd}_k$  associates a $\CG$-shtuka  $(\sP_S , \phi_{\sP_S })$ over $S$  
with one leg at $S^\sharp$, in the resp. case bounded by $\BM_\mu^v$, and this association comes with $v$-descent data, cf. Definition \ref{defGsht}.
\end{definition} 

 In the above definition, we can think of the $\CG$-shtuka $\sP$ as having one leg  at the ``universal'' untilt given by $\CX^\diam \to {\rm Spd}(O_E)\to {\rm Spd}(\BZ_p)$.  The corresponding notion of a (vector space) shtuka $(\sV, \phi_\sV)$ of rank $n$ over $\CX^\diam/\Spd(O_E)$ of \S \ref{FamSht}  corresponds to taking $\CG=\GL_n$. 
 }

 Applying the Tannaka formalism to Theorem \ref{FFisocrystal}, we obtain the following example.
  \begin{example}\label{G-AlgClosedPoint}
 Let $\sX=\Spec(A)$, with $A$ a perfect $k$-algebra. Then there is an equivalence of categories between the category of $\CG$-shtukas over $\sX^\sdiam$ and the category of $\CG$-torsors $\sP$ on $\Spec (W(A))$ equipped with an isomorphism 
 $$
 \phi_\sP\colon \phi^*(\sP)[1/p]\isoarrow \sP[1/p] .
 $$ 
  \end{example}

\subsection{Shtukas and local systems}

In this subsection, we consider shtukas in characteristic zero. We explain a relation of shtukas with local systems which support a suitable sheaf theoretic Hodge-Tate period map.
 
 \subsubsection{$\CG$-shtukas and de Rham-Tate period maps}  Let $S=\Spa(R, R^+)\in{\rm Perfd}_k$, i.e. $S$ is affinoid perfectoid over $k$, and let 
 $S^\sharp=\Spa(R^\sharp, R^{\sharp +})$ be an untilt of $S$ over $  O_{\breve E} $. 
 Suppose now that the leg $S^\sharp$ is over $\Spa(E,O_E)$, i.e. that $S\to \Spd(O_E)$ factors through $S\to\Spd(E)$. Then, there exists $r>0$ such that $\CY_{[0,r]}(S)\subset  S\bdtimes  \BZ_p  \setminus S^\sharp$. 
As before, in what follows, we will often denote ${\rm Frob}_S$ by $\phi$, for simplicity.

Let $(\sP, \phi_\sP)$ be a $\CG$-shtuka over $S$, with leg at $S^\sharp$. The restriction of $(\sP, \phi_\sP)$ to $\CY_{[0,r]}(S)$ defines a $\varphi^{-1}$-equivariant $\CG$-torsor on  $\CY_{[0,r]}(S)$.
By \cite[Prop. 22.6.1]{Schber}, this defines a pro-\'etale $\underline{\CG(\BZ_p)}$-torsor $\BP$ over $S$ with an isomorphism of $\varphi^{-1}$-equivariant $\CG$-torsors over ${\CY_{[0,r]}}(S)$,
\[
\sP_{|\CY_{[0,r]}(S)}\simeq {\BP}\times^{\underline{\CG(\BZ_p)}}(\CG\times_{\Spa(\BZ_p)}{\CY_{[0,r]}(S)}) .
\]
 We say that the $\underline{\CG(\BZ_p)}$-torsor $\BP$
is \emph{ associated} to $\sP$. 

The $\phi^{-1}$-$\CG$-torsor $\sP_{|\CY_{[0,r]}(S)}$ descends to a $\CG$-torsor $\sP_0$ over the relative Fargues-Fontaine curve $\CX_{{\rm FF}, S}=\CY_{(0,\infty)}(S)/\varphi^\BZ$. This gives, after pullback and extension, a $\phi$-$\CG$-torsor over $\CY_{[0,\infty)}(S)=S\bdtimes \BZ_p$,
\begin{equation}
\sP_0={\BP}\times^{\underline{\CG(\BZ_p)}}(\CG\times_{\Spa(\BZ_p)}{\CY_{[0,\infty)}(S)}) .
\end{equation}
 In other words, $\sP_0$ is a $\CG$-shtuka over $S$ with no legs.

By  construction, the  $\CG$-torsors $\sP_0$ and $\sP$ over $S\bdtimes\BZ_p$ agree away from $\bigcup_{n\geq 1}\phi^n_S(S^\sharp)$.
In fact, by the argument in the proof of \cite[Prop 12.4.1]{Schber}, there is a unique $\phi^{-1}$-isomorphism
\begin{equation}
i_\sP: \sP|_{\CY_{[0,\infty)}(S)\setminus\bigcup_{n\geq 1}\phi^n(S^\sharp)}\simeq \sP_0|_{\CY_{[0,\infty)}(S)\setminus\bigcup_{n\geq 1}\phi^n(S^\sharp)} .
\end{equation}
 In particular, $i_\sP$ gives an isomorphism 
of $\sP$ and $\sP_0$ over the completion of $\CY_{[0,\infty)}(S)$ along the leg $S^\sharp$.

Suppose that the pro-\'etale $\underline{\CG(\BZ_p)}$-torsor ${\BP}$ on $S$ is trivial and choose a
trivialization 
$
a: {\BP}\simeq \underline{\CG(\BZ_p)}\times S.
$
This gives a $\phi$-trivialization of the $\CG$-shtuka with no legs,
\[
\alpha_0: \CG\times_{\Spa(\BZ_p)}{(S\bdtimes \BZ_p)} \xrightarrow{\sim} \sP_0.
\]
 By the above, $\alpha_0$ composed with $i_\sP^{-1}$   induces a trivialization $\alpha$ of the pullback of $\sP$ to the completion of $S\bdtimes \BZ_p$ along $S^\sharp$. 
 Now consider the pair 
 \[
 ({\rm Frob}_S^*(\sP),\ \alpha^{-1}\circ \phi_\sP ) .
 \]
 This gives an $S$-point of ${\rm Gr}_{G, \Spd(E)}$ over $\Spd(E)$, cf. \cite[Def. 20.2.1, Prop. 20.2.2]{Schber}, which we denote by 
 ${\rm DRT}(\sP)(a)$. This construction produces 
 a $\underline{\CG(\BZ_p)}$-equivariant 
map of $v$-sheaves, the de Rham-Tate  map, 
\begin{equation}\label{DefDRT}
{\rm DRT}(\sP):  {\BP}\to {\rm Gr}_{G, \Spd(E)} .
\end{equation}

\begin{proposition}\label{pairs0}
Let $S/ \Spd(E)$ be perfectoid. The functor 
 \[
 (\sP, \phi_\sP)\mapsto (\BP, {\rm DRT}(\sP))
 \]
 sending the $\CG$-shtuka $(\sP, \phi_\sP)$ over $S/ \Spd(E)$  with one leg
to the pair consisting 
of a pro-\'etale $\underline{\CG(\BZ_p)}$-torsor ${\BP}$ on $S$ and a $\underline{\CG(\BZ_p)}$-equivariant 
map ${\rm DRT}(\sP): {\BP}\to  {\rm Gr}_{G, \Spd(E)}$ over ${\Spd(E)}$ gives an equivalence
between the category of $\CG$-shtukas over $S/\Spd(E)$ with one leg 
and the category of such pairs $(\BP, {\rm D})$.
\end{proposition}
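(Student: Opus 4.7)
My plan is to construct an explicit quasi-inverse functor and then verify that the two compositions are naturally isomorphic to the identity. Given a pair $(\BP,\mathrm{D})$, the first step is to form the pushout
$$
\sP_0 := \BP \times^{\underline{\CG(\BZ_p)}}\bigl(\CG\times_{\Spa(\BZ_p)}(S\bdtimes \BZ_p)\bigr),
$$
which is a $\phi$-equivariant $\CG$-torsor on $S\bdtimes \BZ_p$, i.e., a $\CG$-shtuka with no leg, and whose restriction to $\CY_{[0,r]}(S)$ recovers $\BP$ via the equivalence of \cite[Prop.~22.6.1]{Schber}. Next, pro-\'etale locally on $S$, a trivialization $a$ of $\BP$ produces, by composition with $\mathrm{D}$, an $S$-point of $\mathrm{Gr}_{G,\Spd(E)}$; by the definition of this Beilinson--Drinfeld style Grassmannian \cite[Def.~20.2.1, Prop.~20.2.2]{Schber}, this point is the datum of a $\CG$-torsor $\sQ_a$ on the formal completion of $S\bdtimes \BZ_p$ along $S^\sharp$, together with a trivialization $\beta_a$ of its restriction to the punctured completion. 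Via Beauville--Laszlo style gluing along the Cartier divisor $S^\sharp$, I would glue $\sQ_a$ with $\sP_0|_{(S\bdtimes \BZ_p)\setminus S^\sharp}$ along $\beta_a$ to obtain a $\CG$-torsor $\sP_a$ on $S\bdtimes \BZ_p$ together with a canonical meromorphic identification $\sP_a\simeq \sP_0$ away from $S^\sharp$.

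The next step is to transport the Frobenius structure and descend the construction. Since the divisors $\phi^n(S^\sharp)$ for $n\geq 1$ are disjoint from a neighborhood of $S^\sharp$, the Frobenius on $\sP_0$ transports through the meromorphic identification to a Frobenius $\phi_{\sP_a}$ on $\sP_a$ which is an isomorphism away from $S^\sharp$ and meromorphic along it. The $\underline{\CG(\BZ_p)}$-equivariance of $\mathrm{D}$ will imply that replacing $a$ by $a\cdot g$ yields a canonically isomorphic $\CG$-shtuka, so the local construction descends pro-\'etale to give a $\CG$-shtuka $\sP$ on $S/\Spd(E)$ with one leg. This defines the candidate quasi-inverse $(\BP,\mathrm{D})\mapsto (\sP,\phi_\sP)$.

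To finish, I would check both compositions. Starting from $(\sP,\phi_\sP)$, the associated torsor is by definition $\BP$, the torsor with no leg $\sP_0$ constructed from $\BP$ coincides with the $\sP_0$ of the excerpt, and the uniqueness of the $\phi^{-1}$-isomorphism $i_\sP$ from \cite[Prop.~12.4.1]{Schber} ensures that gluing $\sP_0$ with the modification $\mathrm{DRT}(\sP)$ along $S^\sharp$ reconstructs $\sP$ canonically together with its Frobenius. Conversely, starting from $(\BP,\mathrm{D})$, the associated pro-\'etale torsor of the constructed $\sP$ is tautologically $\BP$, and $\mathrm{DRT}(\sP)$ equals $\mathrm{D}$ on trivializations by construction. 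The main obstacle will be making the Beauville--Laszlo gluing of $\CG$-torsors along the Cartier divisor $S^\sharp$ in the sousperfectoid analytic adic space $S\bdtimes \BZ_p$ rigorous, and verifying that it is compatible with pro-\'etale descent in a $\CG$-equivariant way; once this is granted the remainder is a formal combination of the identifications in \cite[Prop.~22.6.1]{Schber} and \cite[Prop.~12.4.1]{Schber}, upgraded to the Tannakian $\CG$-setting.
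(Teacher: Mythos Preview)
Your overall strategy---build an explicit quasi-inverse via Beauville--Laszlo gluing, transport the Frobenius, descend pro-\'etale locally, then verify both compositions---is exactly the right one, and the paper's proof amounts to saying that precisely this is carried out in \cite[Prop.~12.4.6]{Schber} (first for $\GL_n$ over a point, then in general by pro-\'etale descent and the Tannakian description of ${\rm Gr}_{G,\Spd(E)}$ in \cite[Prop.~20.3.2]{Schber}). However, there is a genuine gap in your construction of the inverse.

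You modify $\sP_0$ at the single divisor $S^\sharp$ to obtain $\sP_a$, so that $\sP_a\simeq\sP_0$ away from $S^\sharp$, and then claim that the transported Frobenius ``is an isomorphism away from $S^\sharp$ and meromorphic along it''. This fails: since $\sP_a\simeq\sP_0$ exactly away from $S^\sharp$, the pullback ${\rm Frob}_S^*(\sP_a)\simeq{\rm Frob}_S^*(\sP_0)\simeq\sP_0$ holds exactly away from ${\rm Frob}_S^{-1}(S^\sharp)$. Hence the transported map $\phi_{\sP_a}=j^{-1}\circ\phi_{\sP_0}\circ\phi^*(j)$ is meromorphic at \emph{both} $S^\sharp$ and ${\rm Frob}_S^{-1}(S^\sharp)$, producing a shtuka with two legs rather than one. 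The problem is already visible in the forward direction: the $\phi^{-1}$-isomorphism $i_\sP$ of \cite[Prop.~12.4.1]{Schber} identifies $\sP$ with $\sP_0$ away from $\bigcup_{n\geq 1}\phi^n(S^\sharp)$, not away from $S^\sharp$; thus $\sP$ is a modification of $\sP_0$ at the \emph{infinitely many} divisors $\phi(S^\sharp),\phi^2(S^\sharp),\ldots$, while at $S^\sharp$ itself one has $\sP\simeq\sP_0$. A single Beauville--Laszlo gluing at $S^\sharp$ cannot recover this.

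The fix, which is the actual content of the argument in \cite[Prop.~12.4.6]{Schber}, is to take the modification datum at $S^\sharp$ supplied by $\mathrm{D}$ (which encodes the relative position of $\phi^*\sP$ and $\sP\simeq\sP_0$ there) and propagate it by Frobenius to produce compatible modification data at all $\phi^n(S^\sharp)$ for $n\geq 1$. On any bounded region $\CY_{[0,r]}(S)$ only finitely many of these divisors appear, so the resulting family of modifications defines a $\CG$-torsor on each $\CY_{[0,r]}(S)$, and these glue to give $\sP$ on $S\bdtimes\BZ_p$. By construction the induced Frobenius then has its sole pole at $S^\sharp$. Once you make this correction, your descent step and the verification of both compositions go through as you outline.
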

\begin{proof}
The case of $S=\Spa(C^\flat)$, where $C^\flat\in {\rm Perfd}_k$ is an algebraically closed field, and where $\CG=\GL_n$, is the content of \cite[Prop. 12.4.6]{Schber}. In this case, a pair $(\BP, {\rm D})$ corresponds to a finite free $\BZ_p$-module $T$ equipped with a $B^+_{\rm dR}$-lattice $\Xi\subset T\otimes_{\BZ_p}B_{\rm dR}$. In the general case, by pro-\'etale descent, we may assume that $\BP$ is trivial.  Given the description of ${\rm Gr}_{G, \Spd(E)}$ provided by 
\cite[Prop. 20.3.2]{Schber} (which uses Beauville-Laszlo glueing), we then see that 
the proof of \cite[Prop. 12.4.6]{Schber}
 goes over without change, see also 
 \cite[3.5]{CaraSch}. (But beware that, compared to \cite{Schber}, we have the leg on a different Frobenius twist.) \end{proof}

 Suppose now that the $\CG$-shtuka $(\sP, \phi_\sP)$ has leg bounded by $\mu$, which  we assume minuscule.
Then we can see that ${\rm DRT}(\sP)(a)$ lies in\footnote{Note the inverse in $\mu^{-1}$ here.  This is due to the fact that $\sP$ is the base point of the affine Grassmannian   and  $ \phi_\sP({\rm Frob}^*(\sP))$  is considered variable.} ${\rm Gr}_{G, \Spd(E), \mu^{-1}}$. 
The Bia{\l}ynicki-Birula construction gives an isomorphism
\[
{\rm Gr}_{G, \Spd(E), \mu^{-1}}\xrightarrow{\sim} \CF^\diam_{G, \mu^{-1}}.
\]
Composing ${\rm DRT}(\sP)$ with this isomorphism, we obtain the \emph{Hodge-Tate map}, 
\begin{equation}\label{DefHT}
{\rm HT}(\sP): {\BP}\to \CF^\diam_{G, \mu^{-1}} .
\end{equation}

\begin{proposition}\label{pairs1}
Let $S/ \Spd(E)$ be perfectoid. Fix $(\CG, \mu)$, where $\mu$ is minuscule. 
The functor 
 \[
( \sP, \phi_\sP)\mapsto (\BP, {\rm HT}(\sP))
 \]
 sending the $\CG$-shtuka $\sP$ over $S/ \Spd(E)$  with one leg bounded by $\mu$
to the pair consisting 
of a pro-\'etale $\underline{\CG(\BZ_p)}$-torsor ${\BP}$ on $S$ and a $\underline{\CG(\BZ_p)}$-equivariant 
map ${\rm HT}(\sP): {\BP}\to \CF^\diam_{G, \mu^{-1}}$ over ${\Spd(E)}$ gives an equivalence
between the category of $\CG$-shtukas over $S/\Spd(E)$ with one leg bounded by $\mu$ 
and the category of such pairs $(\BP, {\rm H})$.\qed
\end{proposition}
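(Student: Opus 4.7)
The plan is to deduce this from the previously established Proposition \ref{pairs0} by imposing the boundedness-by-$\mu$ condition on both sides of that equivalence, and then composing with the Bia{\l}ynicki-Birula isomorphism on the target. Concretely, I would show that a $\CG$-shtuka $(\sP,\phi_\sP)$ over $S/\Spd(E)$ with one leg is bounded by $\mu$ if and only if the associated de Rham--Tate map ${\rm DRT}(\sP)\colon \BP\to {\rm Gr}_{G,\Spd(E)}$ from \eqref{DefDRT} factors through the closed $v$-subsheaf ${\rm Gr}_{G,\Spd(E),\mu^{-1}}\subset {\rm Gr}_{G,\Spd(E)}$. Granting this, composing with the Bia{\l}ynicki-Birula isomorphism ${\rm Gr}_{G,\Spd(E),\mu^{-1}}\isoarrow \CF^\diam_{G,\mu^{-1}}$ (available since $\mu$ is minuscule) transports the restricted equivalence of Proposition \ref{pairs0} to the desired equivalence between $\CG$-shtukas bounded by $\mu$ and pairs $(\BP,{\rm H})$.

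The first and main step is to verify the boundedness compatibility. After pro-\'etale localization we may trivialize $\BP$ via some $a$, which produces the trivialization $\alpha$ of $\sP$ over the completion of $S\bdtimes \BZ_p$ along $S^\sharp$ away from the leg, and ${\rm DRT}(\sP)(a)$ is by construction the point of ${\rm Gr}_{G,\Spd(E)}$ classified by the pair $({\rm Frob}^*(\sP),\,\alpha^{-1}\circ\phi_\sP)$ under the Beauville--Laszlo description of \cite[Prop.~20.2.2, Prop.~20.3.2]{Schber}. On the other hand, by Definition \ref{defGsht}, boundedness by $\mu$ means that at every geometric point $T=\Spa(C,O_C)\to S$ the pair $(\sP_T,\beta\cdot \phi_{\sP_T}^{-1})$ factors through $\BM_\mu^v\subset {\rm Gr}_{\CG,\Spd(O_E)}$; inverting $\phi_\sP$ exactly swaps the role of the base-point torsor and the variable torsor, which replaces the cocharacter by its inverse. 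Thus the two boundedness conditions correspond under $\mu \leftrightarrow \mu^{-1}$, and since in the situation over $\Spd(E)$ the $v$-sheaf $\BM_\mu^v$ base changes to ${\rm Gr}_{G,\Spd(E),\mu^{-1}}$ (using that $\mu$ is minuscule, so the bounds cut out the Schubert variety associated to $\mu^{-1}$), ${\rm DRT}(\sP)$ factors through ${\rm Gr}_{G,\Spd(E),\mu^{-1}}$ iff $\sP$ is bounded by $\mu$. Well-definedness and $\underline{\CG(\BZ_p)}$-equivariance of the resulting map globally (independent of the choice of trivialization $a$) is inherited from the corresponding property of ${\rm DRT}$ in Proposition \ref{pairs0}.

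The second step is a formal invocation of the Bia{\l}ynicki-Birula isomorphism ${\rm Gr}_{G,\Spd(E),\mu^{-1}}\isoarrow \CF_{G,\mu^{-1}}^\diam$, which is recalled immediately before the statement. This isomorphism is $G$-equivariant and hence $\underline{\CG(\BZ_p)}$-equivariant, and upon composition identifies ${\rm DRT}(\sP)$ with ${\rm HT}(\sP)$ as in \eqref{DefHT}. Full faithfulness and essential surjectivity of the functor $(\sP,\phi_\sP)\mapsto(\BP,{\rm HT}(\sP))$ then follow immediately from the corresponding assertions in Proposition \ref{pairs0} once we know that the boundedness conditions on both sides correspond.

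The principal technical obstacle I expect is step one: carefully matching the two conventions for boundedness (the $\sigma$-pole convention used in Definition \ref{defGsht} vs. the affine-Grassmannian convention appearing in the target of ${\rm DRT}$), keeping track of the inversion $\mu\leftrightarrow \mu^{-1}$ induced by swapping the roles of $\phi_\sP$ and $\phi_\sP^{-1}$, and checking that the $v$-sheaf ${\rm Gr}_{\CG,\Spd(O_E),\mu}$ pulls back along $\Spd(E)\to \Spd(O_E)$ to ${\rm Gr}_{G,\Spd(E),\mu^{-1}}$ under these conventions (where the sign flip is an artifact of having moved the leg to a different Frobenius twist, as already noted in the proof of Proposition \ref{pairs0}). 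Once this book-keeping is done, the rest of the argument is formal.
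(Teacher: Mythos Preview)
Your proposal is correct and takes essentially the same approach as the paper: deduce the statement from Proposition \ref{pairs0} by checking that boundedness by $\mu$ corresponds to ${\rm DRT}(\sP)$ factoring through ${\rm Gr}_{G,\Spd(E),\mu^{-1}}$, then compose with the Bia{\l}ynicki-Birula isomorphism. In fact the paper gives no separate proof for this proposition at all; the relevant observations (that ${\rm DRT}(\sP)(a)$ lands in ${\rm Gr}_{G,\Spd(E),\mu^{-1}}$ when $\sP$ is bounded by $\mu$, with the sign explained in a footnote, and the composition with Bia{\l}ynicki-Birula yielding ${\rm HT}(\sP)$) are recorded in the paragraph immediately preceding the statement, and the proposition is then stated without proof as an immediate consequence.

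One minor comment on exposition: your sentence ``the $v$-sheaf $\BM_\mu^v$ base changes to ${\rm Gr}_{G,\Spd(E),\mu^{-1}}$'' conflates two separate steps. The base change of $\BM_\mu^v$ to $\Spd(E)$ is ${\rm Gr}_{G,\Spd(E),\mu}$; the passage to $\mu^{-1}$ comes entirely from swapping which torsor is the base point (i.e., from replacing $(\sP,\beta\cdot\phi_\sP^{-1})$ by $({\rm Frob}^*(\sP),\alpha^{-1}\circ\phi_\sP)$), exactly as you say elsewhere and as the paper's footnote explains. This does not affect the validity of your argument.
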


\subsubsection{Variant for adic spaces}
Let $Y$ be a locally Noetherian adic space over $\Spa(E,O_E)$. Later, we will apply the set-up to $Y=X^{\rm ad}$, the adic space associated to a scheme $X$ of finite type over $E$.

  Let $(\sP, \phi_\sP)$ be a $\CG$-shtuka over $Y^\diam/\Spd(E)$.
 For $S\in {\rm Perfd}_{k}$, with $S$-valued point $(S^\sharp, x: S^\sharp\to Y)$ of $Y^\diam$, the $v$-sheaf $\sP$ gives $\sP_S$, a $\CG$-shtuka
 over $S$ with leg at $S^\sharp$. Since the construction of the previous paragraph,
 \[
 \sP_S\mapsto (\BP(S), {\rm DRT}(\sP)(S))
 \]
  is functorial in $S$, it associates to $\sP$ a pair $(\BP, {\rm DRT}(\sP))$, consisting  of a
 $\underline{\CG(\BZ_p)}$-torsor $\BP$ over $Y^\diam$ and a $\underline{\CG(\BZ_p)}$-equivariant map of $v$-sheaves
 ${\rm DRT}: \BP\to {\rm Gr}_{G, \Spd(E)}$. If $\sP$ has leg bounded by the minuscule $\mu$ then, by composing  
 with the Bia{\l}ynicki-Birula morphism, we obtain
 \begin{equation}
 {\rm HT}: \BP\to \CF^\diam_{G, \mu^{-1}}.
 \end{equation}
 The previous proposition immediately gives:

 \begin{proposition}\label{pairs2} 
 Let $Y$ be a locally Noetherian adic space over $\Spa(E,O_E)$. Fix $(\CG, \mu)$, where $\mu$ is minuscule. 
 The functor $\sP\mapsto (\BP, {\rm HT}(\sP))$ gives an equivalence between the categories of:
 
 1) $\CG$-shtukas $(\sP, \phi_\sP)$ over $Y^\diam\to {\rm Spd}(E)$ with one leg  bounded by $\mu$, and 
 
 2) pairs $(\BP, {\rm H})$ consisting 
 of a pro-\'etale $\underline{\CG(\BZ_p)}$-torsor $\BP$ over $Y^\diam$ and a $\underline{\CG(\BZ_p)}$-equivariant map of $v$-sheaves
 ${\rm H}: \BP\to \CF^\diam_{G, \mu^{-1}}$ over $\Spd E$.\hfill
 $\square$
  \end{proposition}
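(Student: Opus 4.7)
The plan is to deduce this from Proposition \ref{pairs1} by $v$-descent. Both sides of the claimed equivalence assemble into $v$-stacks on $\Perfd_k/\Spd(E)$. On the shtuka side this is the content of the remark following Definition \ref{defGsht}, which rests on \cite[Prop. 19.5.3]{Schber}. On the side of pairs $(\BP, \mathrm{H})$, pro-\'etale $\underline{\CG(\BZ_p)}$-torsors on perfectoid spaces descend along $v$-covers (since $\underline{\CG(\BZ_p)}$ is a $v$-sheaf, being a cofiltered limit of $v$-sheaves $\underline{\CG(\BZ_p/p^n)}$), and $v$-sheaf morphisms $\BP \to \CF^\diam_{G,\mu^{-1}}$ satisfy descent tautologically.

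By Definition \ref{deffamofshtG}, a $\CG$-shtuka over $Y^\diam$ with one leg bounded by $\mu$ is precisely a global section over $Y^\diam$ of the first $v$-stack (pulled back along $Y^\diam \to \Spd(E)$), and a pair $(\BP,\mathrm{H})$ as in the statement is a global section of the second. The construction preceding the proposition, namely the evaluation $(\sP,\phi_\sP) \mapsto (\BP(S), \mathrm{HT}(\sP)(S))$ at each point $(S^\sharp, x\colon S^\sharp \to Y) \in Y^\diam(S)$, is by its very definition functorial in $S$. Hence it assembles into a morphism between these two $v$-stacks over $\Spd(E)$, which when evaluated on global sections over $Y^\diam$ yields the functor of the proposition.

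By Proposition \ref{pairs1}, this morphism of $v$-stacks is an equivalence of groupoids when restricted to every affinoid perfectoid $S$ lying over $\Spd(E)$. A morphism of $v$-stacks that is fiberwise an equivalence is itself an equivalence, so taking global sections over $Y^\diam$ gives the desired equivalence of categories. Concretely, for full faithfulness, a morphism of pairs $(\BP,\mathrm{HT}(\sP)) \to (\BP',\mathrm{HT}(\sP'))$ lifts uniquely on each $S$-point to a morphism of shtukas, and uniqueness forces compatibility with $v$-descent data. For essential surjectivity, the inverse of \ref{pairs1} applied on each $S$-point produces a shtuka over $S$ via Beauville-Laszlo gluing along $S^\sharp$ (combined with the inverse Bia{\l}ynicki-Birula isomorphism), and the canonicity of this construction ensures it is functorial in $S$ and assembles into a section of the shtuka $v$-stack over $Y^\diam$.

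The only point requiring mild care is the bookkeeping of $v$-descent for the torsor $\BP$ and its map to the flag variety; the boundedness condition by $\mu$ and the compatibility of the Bia{\l}ynicki-Birula isomorphism with pullback cause no difficulty. Thus no substantive obstacle is expected beyond this routine verification.
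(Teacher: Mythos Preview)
Your proposal is correct and takes essentially the same approach as the paper. The paper treats this proposition as an immediate consequence of Proposition \ref{pairs1} (the statement is followed directly by a $\square$, with the text ``The previous proposition immediately gives:'' preceding it), and your argument spells out precisely the $v$-descent reasoning that makes this immediate: both sides form $v$-stacks over $\Spd(E)$, the functor is compatible with pullback, and Proposition \ref{pairs1} supplies the equivalence on every affinoid perfectoid test object.
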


  \subsection{Shtukas and de Rham local systems}\label{ss:dR}
  
  Here, as an application of the last subsection, we show  how to construct a shtuka which is ``associated" to a de Rham $p$-adic local system over a smooth scheme (or smooth rigid analytic variety) which is defined
  over a finite extension $E$ of $\BQ_p$ or of $\breve\BQ_p$. More generally, the construction works when
  $E$ is replaced by any discretely valued complete non-archimedean field extension of $\BQ_p$
  with perfect residue field. 
      
  \subsubsection{de Rham local systems}
  
  Let $X$ be a smooth scheme (of finite
  type) or a smooth rigid-analytic variety over $E$. Let $Y=X^{\rm ad}$ be the corresponding analytic adic space over $\Spa(E, O_E)$.

Recall  from 
 \cite[\S 6]{SchpHodge} the definitions of the
 sheaves of rings $\CO_Y$, $\widehat\CO_Y$, $\widehat\CO^+_Y$, $\widehat\CO_{Y^\flat}$, $\widehat\CO^+_{Y^\flat}$, $\BA_{\inf, Y}=W(\widehat\CO^+_{Y^\flat})$, $\BB_{\rm dR}^+$, $\BB_{\rm dR}$, $\CO\BB_{\rm dR, Y}^+$,  $\CO\BB_{\rm dR, Y}$, on the pro-\'etale site $Y_{\rm proet}$.  
 Consider the morphisms of sites $\nu: Y_{\proet}\to Y_{\et}$ and $\lambda: Y_{\et}\to Y_{\rm an}$, where $Y_{\rm an}$ is the site
 of open subsets of $Y$.
  There is a homomorphism of sheaves of rings
 \begin{equation}
 \theta: \BA_{\inf, Y}\to \widehat\CO^+_Y
 \end{equation}
  whose kernel is locally principal on $Y_{\rm proet}$, i.e. locally generated by a single element, and hence the sheaf of rings $\BA_{\inf, Y}[1/\ker(\theta)]$
 makes sense.  The reader is referred to \cite[\S 4, \S 6, \S 7]{SchpHodge} for more details.
 In particular, \cite[Lem. 7.3]{SchpHodge} shows how pulling back by $\nu$, resp. $\lambda$, allows us to identify between 
 various possible notions of ``vector bundles" over $Y$.
 
 If $\BL$ is a lisse $\BZ_p$-local system over $Y_{\et}$, denote by $\hat\BL$ the corresponding 
 $\hat\BZ_p$-local system over $Y_{\proet}$.   We set
 \begin{equation}
 {\rm D}_{\rm dR}(\BL)=\nu_*(\hat\BL\otimes_{\hat\BZ_p} \CO\BB_{\rm dR, Y}).
 \end{equation}
 By \cite{LZ}, this is a vector bundle $\CE= {\rm D}_{\rm dR}(\BL)$ over $Y$ with a (separated and exhaustive decreasing) filtration by locally direct summands ${\rm Fil}^i$, $i\in\BZ$, and an integrable connection  $\nabla$ satisfying Griffiths transversality with respect to the filtration.
 There is a $\CO\BB_{\rm dR, Y}$-linear ``comparison" map  
 \begin{equation}
 a_{\rm dR}(\BL): \nu^*{\rm D}_{\rm dR}(\BL)\otimes_{\CO_Y}\CO\BB_{\rm dR, Y}\to \hat\BL\otimes_{\hat\BZ_p} \CO\BB_{\rm dR, Y}.
 \end{equation}
 
 \begin{definition}(\cite[Def. 8.3]{SchpHodge}, \cite{LZ})
The local system $\BL$ is \emph{de Rham} if $a_{\rm dR}(\BL)$ is an isomorphism.
 \end{definition}
 
We note that this property only depends on  the $\BQ_p$-local system $\BV:=\BL[1/p]$. Then $\BL$ is associated to $({\rm D}_{\rm dR}(\BL), {\rm Fil}^i, \nabla)$, i.e.
 \[
\hat \BL\otimes_{\hat\BZ_p}\BB_{\rm dR}^+\cong {\rm Fil}^0\big({\rm D}_{\rm dR}(\BL)\otimes_{\CO_Y} \CO\BB_{\rm dR, Y}\big)^{\nabla=0}.
 \]
  Set $\BM:=\hat\BL\otimes_{\hat\BZ_p}\BB_{\rm dR}^+$ and also
 \[
 \BM_0:=({\rm D}_{\rm dR}(\BL)\otimes_{\CO_Y} \CO\BB^+_{\rm dR, Y})^{\nabla=0} ,
 \]
 which is also a $\BB_{{\rm dR}, Y}^+$-local system.
The $\BB_{{\rm dR}, Y}^+$-local systems $\BM$ and $\BM_0$ have filtrations ${\rm Fil}^j\BM=\ker(\theta)^j\BM$ and  ${\rm Fil}^j\BM_0=\ker(\theta)^j\BM_0$ obtained from the filtration on $\BB_{\rm dR, Y}^+$. 
 For this filtration, we have
  \[
   {\rm gr}^0 \BM=\hat\BL\otimes_{\hat\BZ_p} \widehat\CO_Y.
  \]
Both $\BM$ and $\BM_0$ are ``$\BB_{\rm dR, Y}^+$-lattices in the same $\BB_{\rm dR, Y}$-space" via a canonical comparison isomorphism
\[
c: \hat\BL\otimes_{\hat\BZ_p}\BB_{\rm dR, Y}=\BM\otimes_{\BB_{\rm dR, Y}^+}\BB_{\rm dR, Y}\xrightarrow{\ \ \simeq\ \ } \BM_0\otimes_{\BB_{\rm dR, Y}^+}\BB_{\rm dR, Y}.
\]
By \cite[Prop. 7.9]{SchpHodge} we have
\[
(\BM\cap {\rm Fil}^i\BM_0)/(\BM \cap {\rm Fil}^{i+1}\BM_0) = {\rm Fil}^{-i}{\rm D}_{\rm dR}(\BL)\otimes_{\CO_Y} \widehat\CO_Y(i) 
\subset {\rm gr}^i\BM_0 \cong {\rm D}_{\rm dR}(\BL)\otimes_{\CO_Y} \widehat {\CO}_Y(i)
\]
 for all $i\in \BZ$.

\begin{example}
 Let  $f:A\to X$ be an abelian scheme. We consider the ${\BZ}_p$-local system on $Y=X^\ad$, given by the $p$-adic \'etale cohomology  $\BL=R^1f_{*, \et}({\BZ}_p)$.  We refer to \cite[Thm. 2.2]{CaraSch} which uses \cite{SchpHodge}; by loc. cit. (or \cite{LZ}) this ${\BZ}_p$-local system is de Rham. We have
 \[
 {\rm D}_{\rm dR}(\BL)={\rm H}^1_{\rm dR}(A/Y)
 \]
 equipped with its Gauss-Manin connection and its Hodge filtration. Also
  \[
  \BM:=\hat\BL\otimes_{\hat\BZ_p} \BB_{\rm dR, Y}^+={\rm H}^1(A, \BB_{\rm dR, A}^+) ,
  \]
  which is compared to 
  \[
  \BM_0=\big({\rm H}^1_{\rm dR}(A/Y)\otimes_{\CO_Y}\CO\BB_{\rm dR, Y}^+\big)^{\nabla=0}.
  \]
  The comparison $c$ identifies
\[
\BM\cap {\rm Fil}^j\BM_0/(\BM\cap {\rm Fil}^{j+1}\BM_0)={\rm Fil}^{-j}{\rm H}^1_{\rm dR}(A/Y)\otimes_{\CO_Y}\hat\CO_Y(j)
\subset
{\rm gr}^j\BM_0={\rm H}^1_{\rm dR}(A)\otimes_{\CO_Y}\hat\CO_Y(j).
\]
In particular, we obtain $\BM_0\subset\BM$. This also gives an ascending filtration 
on 
 $
   {\rm gr}^0 \BM=\hat\BL\otimes_{\hat\BZ_p} \widehat\CO_Y
  $
defined by ${\rm Fil}_{-j}(\hat\BL\otimes_{\hat\BZ_p}\widehat\CO_Y)=\BM\cap {\rm Fil}^j\BM_0/({\rm Fil}^1\BM\cap {\rm Fil}^{j}\BM_0)$. So, we have
\[
\ker(\theta) \BM\subset \BM_0\subset \BM ,
\]
with the corresponding filtration on $ \BM/\ker(\theta) \BM={\rm gr}^0(\BM)=\BL\otimes_{\BZ_p} \widehat\CO_Y$, with  graded pieces
\[
{\rm gr}_{j}(\hat\BL\otimes_{\hat\BZ_p}\widehat\CO_Y)={\rm gr}^j {\rm H}^1_{\rm dR}(A/Y)\otimes_{\CO_Y}\widehat\CO_Y(-j).
\]
This is the Hodge-Tate filtration:
\begin{align*}
\BM/\BM_0&=R^0f_*(\Omega^1_{A/Y})\otimes_{\CO_Y}\widehat\CO_Y(-1),\\
\BM_0/\ker(\theta)\BM&=R^1f_*(\CO_A)\otimes_{\CO_Y}\widehat\CO_Y. 
\end{align*}
\end{example}
\smallskip

 We continue with a general de Rham $\BZ_p$-local system $\BL$ over $X$.
 Since $Y=X^{\rm ad}$, we write, for simplicity, $X^\diam/\Spd(E)$ instead of $Y^\diam/\Spd(E)=(X^{\rm ad})^\diam/\Spd(E)$.
 Let $n$ be the $\BZ_p$-rank of $\BL$. Then the 
 sheaf of trivializations (``frames") 
 \[
\underline{\rm Isom}(\BZ_p^n\times Y, \BL)
 \]
  is a $\underline{\GL_n(\BZ_p)}$-torsor over $Y_{\proet}$.  For an $S/\Spd(E)$-valued point of $X^\diam/\Spd(E)$ given by $x: S^\sharp\to Y=X^{\rm ad}$ over $E$, we set
 \[
 \BP(S)=\{\alpha: (\underline\BZ_p^n)_{S^\sharp}\xrightarrow{\ \simeq\ } x^*(\BL)\}.
 \]
 This gives a  $v$-sheaf which is a pro-\'etale $\underline{\GL_n(\BZ_p)}$-torsor
 over $X^\diam$.
 \begin{proposition}\label{uniqueHT}
 Let $\BL$ be a de Rham $\BZ_p$-local system of rank $n$ over the smooth $E$-scheme $X$, with associated pro-\'etale $\underline{\GL_n(\BZ_p)}$-torsor $\BP$ over $X^\diam.$ There is a unique $\underline{\GL_n(\BZ_p)}$-equivariant
 map of $v$-sheaves over $\Spd(E)$,
 \begin{equation*}
 {\rm DRT}(\BL): \BP\to {\rm Gr}_{\GL_n, \Spd(E)} ,
 \end{equation*}
such that:

A point $\alpha\in \BP(S)$ with values in an  affinoid perfectoid $S=\Spa(R, R^+)$ over $\kappa_E$ with $x: S^\sharp=\Spa(R^\sharp, R^{\sharp +})\to Y$
 over $E$ is mapped to the point of ${\rm Gr}_{\GL_n, \Spd(E)}$ given by the $\BB_{\rm dR}^+(R^\sharp)$-lattice equal to the inverse image under $\alpha$ of $x^*(\BM_0)\subset x^*(\BM_0)\otimes_{\BB_{\rm dR}^+(R^\sharp)}\BB_{\rm dR}(R^\sharp)=x^*(\BL)\otimes_{\BZ_p}\BB_{\rm dR}(R^\sharp)$: 
 \[
 \alpha^{-1}(x^*(\BM_0))\subset \BB_{\rm dR}(R^\sharp)^n\xrightarrow{\alpha\otimes 1} x^*(\BL)\otimes_{\BZ_p}\BB_{\rm dR}(R^\sharp).
 \]
  \end{proposition}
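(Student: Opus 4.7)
The uniqueness part is formal: a morphism of $v$-sheaves over $\Spd(E)$ is determined by its values on affinoid perfectoid test objects, so the prescription in the statement leaves no ambiguity. The substance lies in verifying that the formula really defines a morphism to ${\rm Gr}_{\GL_n, \Spd(E)}$ which is functorial in $S$ and $\underline{\GL_n(\BZ_p)}$-equivariant.

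First, I would check well-definedness. Fix $S=\Spa(R,R^+)$ affinoid perfectoid over $\kappa_E$ equipped with $x\colon S^\sharp\to Y$ over $E$. The map $x$ pulls back the pro-\'etale sheaves on $Y$ discussed in \S\ref{ss:dR}: concretely, using \cite[Lem. 6.18, Prop. 6.10]{SchpHodge} and the fact that affinoid perfectoid pro-\'etale covers of $Y$ compute sections of $\widehat\CO^+$ and $\BA_{\inf}$ by the obvious base change, one identifies $x^*(\BB^+_{\rm dR})=\BB^+_{\rm dR}(R^\sharp)$, $x^*(\hat\BL)=x^*(\BL)$ (the constant local system), and $x^*(\BM)=x^*(\BL)\otimes_{\BZ_p}\BB^+_{\rm dR}(R^\sharp)$. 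Since $(\CO\BB^+_{\rm dR})^{\nabla=0}=\BB^+_{\rm dR}$ and $\nabla$-invariants commute with the relevant base change, one similarly gets
\[
x^*(\BM_0)=\bigl(x^*{\rm D}_{\rm dR}(\BL)\otimes_{\CO_{S^\sharp}}\CO\BB^+_{\rm dR}(R^\sharp)\bigr)^{\nabla=0},
\]
which is a finite projective $\BB^+_{\rm dR}(R^\sharp)$-module of rank $n$. The deRham hypothesis forces the comparison $c$ to pull back to an isomorphism
\[
x^*(\BM)\otimes_{\BB^+_{\rm dR}(R^\sharp)}\BB_{\rm dR}(R^\sharp)\isoarrow x^*(\BM_0)\otimes_{\BB^+_{\rm dR}(R^\sharp)}\BB_{\rm dR}(R^\sharp),
\]
so $x^*(\BM_0)$ is a $\BB^+_{\rm dR}(R^\sharp)$-lattice in $x^*(\BL)\otimes_{\BZ_p}\BB_{\rm dR}(R^\sharp)$. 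Given a frame $\alpha\in\BP(S)$, the inverse image $\alpha^{-1}(x^*(\BM_0))$ is then a $\BB^+_{\rm dR}(R^\sharp)$-lattice in $\BB_{\rm dR}(R^\sharp)^n$, which is exactly what the moduli description of \cite[Def. 20.2.1, Prop. 20.2.2]{Schber} attaches to an $S$-valued point of ${\rm Gr}_{\GL_n,\Spd(E)}$.

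Next I would verify functoriality. A morphism $f\colon S'\to S$ in ${\rm Perfd}_{\kappa_E}$ over $\Spd(E)$ with compatible untilt $f^\sharp\colon S'^\sharp\to S^\sharp$ and $x'=x\circ f^\sharp$ induces a base change $\BB^+_{\rm dR}(R^\sharp)\to\BB^+_{\rm dR}(R'^\sharp)$ under which $x^*(\BM_0)$ maps to $x'^*(\BM_0)$ and $x^*(\BL)\otimes \BB_{\rm dR}(R^\sharp)$ maps to $x'^*(\BL)\otimes \BB_{\rm dR}(R'^\sharp)$, by the compatibilities recalled above. The pulled-back frame $\alpha\circ f$ gives the lattice $(\alpha\circ f)^{-1}(x'^*(\BM_0))=f^*\bigl(\alpha^{-1}(x^*(\BM_0))\bigr)$, which is exactly the $f$-pullback of our point in the affine Grassmannian. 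Equivariance under $\underline{\GL_n(\BZ_p)}$ is an elementary calculation: for $g\in\GL_n(\BZ_p)$ acting by $\alpha\mapsto \alpha\circ g$, one has $(\alpha\circ g)^{-1}(x^*(\BM_0))=g^{-1}\cdot\alpha^{-1}(x^*(\BM_0))$, which agrees with the natural $\GL_n(\BZ_p)\subset\GL_n(\BB^+_{\rm dR}(R^\sharp))$-action on ${\rm Gr}_{\GL_n,\Spd(E)}$.

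The main technical obstacle is Step~1, namely the identification of the pullback sheaves $x^*(\BM)$, $x^*(\BM_0)$, and $x^*(\CO\BB^+_{\rm dR,Y})$ in terms of Fontaine's period rings $\BB^+_{\rm dR}(R^\sharp)$ and $\CO\BB^+_{\rm dR}(R^\sharp)$. This is essentially the content of the pro-\'etale computations of \cite{SchpHodge} and their extension in \cite{LZ} to the relative setting; once these identifications are in place (including the fact that taking $\nabla=0$ commutes with the relevant base change), the remaining verifications are formal manipulations.
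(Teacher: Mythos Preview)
Your approach is essentially the same as the paper's: verify that the prescription produces a well-defined point of ${\rm Gr}_{\GL_n,\Spd(E)}$, then note that uniqueness is formal. The paper's proof is much terser but identical in spirit.

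One small difference worth noting: for the key technical step (your Step~1), the paper does not attempt a direct computation of $x^*(\BM_0)$ via $\nabla$-invariants of $\CO\BB^+_{\rm dR}$. Instead it observes that $x^*(\BM_0)$ and $x^*(\BM)$ are a priori pro-\'etale sheaves on the affinoid perfectoid $S^\sharp$, and then invokes the descent result in the proof of \cite[Thm.~3.4.5]{CaraSch} to conclude that such $\BB^+_{\rm dR}$-local systems are equivalent to finite projective $\BB^+_{\rm dR}(R^\sharp)$-modules. This is cleaner than your route through $\CO\BB^+_{\rm dR}(R^\sharp)$ and the assertion that ``$\nabla$-invariants commute with the relevant base change,'' which is itself part of what needs justification. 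Your acknowledgment that this identification is the main technical obstacle is correct; the Caraiani--Scholze reference packages it more directly than the combination of \cite{SchpHodge} and \cite{LZ} you cite.
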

  \begin{proof}
 Here, the pull-backs $x^*(\BM_0)$ and $x^*(\BM)$ via $x: S^\sharp=\Spa(R^\sharp, R^{\sharp +})\to Y=X^{\rm ad}$
 are first considered as sheaves for the pro-\'etale topology (as defined in \cite[\S 3]{SchpHodge}), over the affinoid perfectoid $S^\sharp=\Spa(R^\sharp, R^{\sharp +})$. By descent (see the proof of \cite[Thm. 3.4.5]{CaraSch}) these sheaves are given by finite projective
 $\BB_{\rm dR}^+(R^\sharp)$-modules which we also denote by $x^*(\BM_0)$ and $x^*(\BM)$, and the construction above produces a point of 
 ${\rm Gr}_{\GL_n, \Spd(E)}$ as claimed. Note that there is a pro-\'etale cover 
 $U=\varprojlim_i U_i\to Y$ such that $\hat U=\Spa(R^\sharp, R^{\sharp +})$ is an affinoid perfectoid 
 mapping to $Y$ as above  (see \cite[Def. 4.3, Prop. 4.8]{SchpHodge}), over which $\BP$ has a point;
 then $\hat U^\diam\to X^\diam$ is $v$-surjective and the uniqueness follows.   
  \end{proof}

  \begin{definition}\label{LSVectorShtuka}
 Let $\BL$ be a de Rham $\BZ_p$-local system of rank $n$ over the smooth $E$-scheme $X$. 
 The  shtuka  $(\sV, \phi_\sV)$ of rank $n$ over $X^\diam/\Spd(E)$ \emph{corresponding to} $\BL$ is the vector space shtuka whose corresponding $\GL_n$-shtuka is given by the pair $(\BP, {\rm DRT}(\BL))$ via  Proposition \ref{pairs0}.
  \end{definition}
 
 Suppose that ${\rm DRT}(\BL)$ factors through
 ${\rm Gr}_{\GL_n, \Spd(E), \mu^{-1}}$, where $\mu$ is a minuscule coweight of $\GL_n$.
 Equivalently, we have, pro\'etale locally on $S$, 
 \begin{equation}\label{minuPos}
  \alpha^{-1}(\BM_0)=g\cdot \BB^+_{\rm dR}(R^\sharp)^n,\quad  g\in \GL_n(\BB^+_{\rm dR}(R^\sharp))\mu(\xi)^{-1} \GL_n(\BB^+_{\rm dR}(R^\sharp)) .
 \end{equation}
 We have a $\underline{\GL_n(\BZ_p)}$-equivariant map  of $v$-sheaves over $\Spd(E)$,
 \begin{equation}\label{HdgdR}
 {\rm HT}(\BL):  \BP\to {\rm Gr}_{\GL_n, \Spd(E), \mu^{-1}}\simeq \CF_{\GL_n,\mu^{-1}}^\diam .
 \end{equation}
 Then, in the above, the  shtuka  $(\sV, \phi_\sV)$ has leg bounded by $\mu$ and also corresponds to the pair $(\BP, {\rm HT}(\BL))$ in the sense of  Proposition \ref{pairs1}.

 \subsubsection{Generalization to torsors}
 By the Tannakian formalism we can extend the previous construction from local systems to torsors. More precisely, fix $G$ over $\BQ_p$ as before with parahoric model $\CG$, and let  $\BP$ be a pro-\'etale ${\CG(\BZ_p)}$-cover over the smooth scheme $X$ over $E$. For simplicity, we denote also by $\BP$ the corresponding pro-\'etale $\underline{\CG(\BZ_p)}$-torsor over
  $Y^\diam=X^\diam$, cf. \cite[\S 9.3]{Schber}. This amounts to giving, functorially, a pro-\'etale $\CG(\BZ_p)$-cover $\BP(S)$ over $S$, for every $S\in {\rm Perfd}_{k}$ and  every $S$-valued point of $X^\diam/\Spd(E)$, i.e.,
  $(S^\sharp, x)$ with  $x:S^\sharp\to X^{\rm ad}$, as above. Here, 
 for $S=\Spa(R, R^+)$, the torsor $\BP(S)$ is obtained
by pulling-back $\BP$ along $x$, followed by the tilting equivalence.

  If $\rho: G\to \GL(W)$ is a finite dimensional $\BQ_p$-rational representation, then there
  exists a $\BZ_p$-lattice $\Lambda\subset W$ such that $\rho(\CG(\BZ_p))\subset \GL(\Lambda)$. Set $\BL_{\rho,\Lambda}$ 
  for the corresponding $\BZ_p$-local system over $Y_{\proet}$ whose torsor of frames is given by
  $
  \underline{\GL(\Lambda)}\buildrel{\underline{\CG(\BZ_p)}}\over\times \BP.
  $
  \begin{definition}\label{defDR}
We say that the pro-\'etale $\CG(\BZ_p)$-cover $\BP$ is \emph{de Rham}, if for each  $(\rho,\Lambda)$ as above the local system $\BL_{\rho,\Lambda}$ is de Rham.

  This definition is independent of $\Lambda$. It is enough to check this property on one single faithful representation $\rho$.
    \end{definition}
  
  Suppose that $\BP$ is a de Rham pro-\'etale ${\CG(\BZ_p)}$-cover over $X$. Then $\CE_\rho:={\rm D}_{\rm dR}(\BL_{\rho,\Lambda})$ with its connection and filtration only depends on $\BL[1/p]$ and 
  $\rho: G\to \GL(W)$, and not on the lattice $\Lambda\subset W$. For 
  each classical point $x$ of $X$ with residue field $E(x)$  finite over $E$, this compares with Fontaine's theory in the sense that 
  the fiber 
  $
  {\rm D}_{\rm dR}(\BL_{\rho,\Lambda})_{x}
  $
  is canonically isomorphic to the value ${\rm D}_{\rm dR}((\BL_{\rho})_{\bar x})$ of Fontaine's functor applied to the 
  representation of ${\rm Gal}(\ov {E(x)}/E(x))$ given by the fiber $(\BL_{\rho})_{\bar x}$. 
  
  The functor $\rho\mapsto (\CE_{\rho, x}, {\rm Fil}^i(\CE_{\rho, x}))$ from representations of $G$ to filtered vector spaces defines a  conjugacy class $\{\mu_{\BP, x}\}$, defined over $E(x)$, of a coweight $\mu_{\BP, x}$ of $G_{\ov {E(x)}}$. 
 If $x$, $x'$ are in the same connected component of $X$, then $\{\mu_{\BP, x}\}=\{\mu_{\BP, x'}\}$. 
Assume that for all classical points $x$ of $X$, the conjugacy class $\{\mu_{\BP, x}\}$ is constant, equal to $\{\mu\}$. Then 
\cite[Prop. 7.9]{SchpHodge} implies that the above construction gives a $\underline{\CG(\BZ_p)}$-equivariant map of $v$-sheaves over $\Spd E$,
\begin{equation}\label{HdgGdR}
{\rm HT}: \BP\to {\rm Gr}_{G, \Spd(E), \mu^{-1}}=\CF_{G, \mu^{-1}}^\diam .
\end{equation}

  \begin{definition}\label{LSGShtuka}
 Let $\BP$ be a de Rham pro-\'etale ${\CG(\BZ_p)}$-cover 
   over the smooth $E$-scheme $X$ and assume that $\{\mu_{\BP, x}\}=\{\mu\}$
   is constant.
 The  $\CG$-shtuka  $(\sP, \phi_\sP)$ over $X^\diam/\Spd(E)$ with one leg bounded by $\mu$ \emph{corresponding to} $\BP$
is the $\CG$-shtuka corresponding to $\BP$ and the morphism \eqref{HdgGdR} via Proposition \ref{pairs2}. 
  \end{definition}
  
   \begin{remark}
  The map ${\rm HT}: \BP\to \CF_{G,\mu^{-1}}^\diam$ obtained from a de Rham pro-\'etale ${\CG(\BZ_p)}$-cover can be thought of as a sheaf analogue of Scholze's Hodge-Tate period map for Shimura varieties. This construction is also given by Hansen in \cite{HansenPre}. Hansen actually shows that $\BP$ is given by a diamond. Here, we only consider $\BP$ as a $v$-sheaf and we do not really need any additional geometric structure.
 \end{remark}

 \subsection{Maps of $\CG$-shtukas}\label{ss:MapsGsht}

 Our main goal is to show that maps between shtukas suitably extend and show Theorem \ref{vshtExt} of the introduction. In most of this section we discuss the linear case, i.e. take $\CG=\GL_d$. The results in the general case are obtained from this by applying the Tannakian equivalence.

\subsubsection{Maps between bundles over the curve}\label{ss:BC} 

Recall that for $S=\Spa(R, R^+)$ affinoid perfectoid over $k$, we have the (adic) relative Fargues-Fontaine curve
$X_{{\rm FF}, S}$.  Recall also that sending $S$ to the groupoid of vector bundles on $X_{{\rm FF}, S}$ defines a $v$-stack
(\cite[Prop. II.2.1]{FS}).

Let $\sE_1$ and $\sE_2$ be two vector bundles over $X_{{\rm FF}, S}$.
 Sending a perfectoid space $T$ over $k$ to the set
 \begin{equation}
  \underline{\CH}_{S}(\sE_1, \sE_2)(T)=\{(\alpha, f) \mid \alpha\in  T\to S, f\in {\rm Hom}_{X_{{\rm FF}, T}}(\alpha^*(\sE_1), \alpha^*(\sE_2))\}
 \end{equation}
 gives a $v$-sheaf $\underline{\CH}_{S}(\sE_1, \sE_2)$ with a morphism 
 \[
 H: \underline{\CH}_{S}(\sE_1, \sE_2)\to S.
 \]
 
 \begin{proposition}\label{BCspace} 
 a) The $v$-sheaf $\underline{\CH}_{S}(\sE_1, \sE_2)$ is represented by a locally spatial
 diamond over $S$. 
 
 b) The morphism $H: \underline{\CH}_{S}(\sE_1, \sE_2)\to S$
 is partially proper (in the sense of \cite[17.4.7]{Schber}).
 \end{proposition}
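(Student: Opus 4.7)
The plan is to recognize $\underline{\CH}_S(\sE, \sE')$ as a relative Banach--Colmez space and invoke the structural theory of such spaces, which at this level of generality is due to Fargues--Scholze. For any $\alpha\colon T\to S$, one has a natural identification
\[
\Hom_{X_{{\rm FF}, T}}(\alpha^*\sE, \alpha^*\sE') = H^0(X_{{\rm FF}, T}, \alpha^*(\sE^\vee\otimes\sE')).
\]
Setting $\sF:=\sE^\vee\otimes\sE'$, the $v$-sheaf $\underline{\CH}_S(\sE, \sE')$ is identified with the relative Banach--Colmez space $\mathrm{BC}_S(\sF)$ sending $T\to S$ to $H^0(X_{{\rm FF}, T}, \sF_T)$. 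The task thus reduces to showing, for an arbitrary vector bundle $\sF$ on $X_{{\rm FF}, S}$, that $\mathrm{BC}_S(\sF)$ is a locally spatial diamond and that its structure map to $S$ is partially proper.

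Next I would $v$-localize on $S$. Both target classes (being a locally spatial diamond over $S$, and being partially proper over $S$) descend along $v$-covers, so one may pass to a $v$-cover of $S$ by products of points, as in \S\ref{vcover}. Over such a strictly totally disconnected base, the Kedlaya--Liu slope formalism produces a filtration of $\sF$ whose successive quotients are isoclinic, of the form $\CO(\lambda)$ for $\lambda\in\BQ$. A d\'evissage via the long exact sequence in cohomology then reduces the problem to the three basic cases $\sF=\CO(d/h)$ with $d>0$, $d=0$, and $d<0$. In each case one has an explicit description:
\begin{altitemize}
\item for $d>0$, $\mathrm{BC}_S(\CO(d/h))$ is a relative open perfectoid ball, realized via Frobenius-fixed points on $\CY_{(0,\infty)}(S)$, and is manifestly a spatial diamond partially proper over $S$;
\item for $d=0$, $\mathrm{BC}_S(\CO)=\underline{\BQ_p}\times S$ is a constant sheaf, hence a locally spatial diamond partially proper over $S$;
\item for $d<0$, presenting $\sF$ as the kernel of a surjection between bundles of non-negative slope realizes $\mathrm{BC}_S(\sF)$ as a fiber in a diagram of locally spatial diamonds, hence of the same form.
\end{altitemize}

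Partial properness at each step follows from stability of this property under fiber products, closed immersions, and the formation of kernels in the $v$-topos, together with its verification in each basic case. Alternatively, the valuative criterion can be checked directly on $\mathrm{BC}_S(\sF)$: given a rank-$1$ valuation extension and a section of $\sF$ over the generic fiber of the associated relative FF-curve, the normality of $X_{{\rm FF},T}$ and standard glueing for coherent sheaves produce a unique extension over the full family.

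The main technical obstacle is the negative-slope case. Over a geometric point $H^0$ vanishes, but in families the $v$-sheaf of sections is not trivial and one must ensure it remains representable by a locally spatial diamond. The essential input is that this ``negative'' BC space arises as the kernel in a short exact sequence of $v$-sheaves whose outer terms are already controlled by the positive-slope and slope-zero cases; the desired representability and partial properness then follow from the stability properties alluded to above. This is the technical heart of the Fargues--Scholze classification that ultimately underlies the proof.
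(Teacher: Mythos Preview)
Your approach is correct and matches the paper's: identify $\underline{\CH}_S(\sE,\sE')$ with the Banach--Colmez space of $\sF=\sE^\vee\otimes\sE'$ and invoke Fargues--Scholze. The paper does exactly this, citing \cite[Prop.~II.2.15]{FS} as a black box rather than unpacking the slope d\'evissage you sketch.

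Two small clarifications. First, the paper adds one ingredient you leave implicit: partial properness includes (quasi-)separatedness, and this is extracted from the group structure on $\underline{\CH}_S(\sE,\sE')$ by checking that the zero section $0_S\colon S\hookrightarrow\underline{\CH}_S(\sE,\sE')$ is a closed immersion. Second, your negative-slope discussion is slightly off: for $\sF$ of purely negative slopes one has $H^0(X_{{\rm FF},T},\sF_T)=0$ for \emph{every} $T$, so $\mathrm{BC}_S(\sF)$ is just the zero section $S$ and there is nothing to do. The genuine subtlety in the d\'evissage lies in the $H^1$ terms (the ``negative'' Banach--Colmez spaces) that appear in the long exact sequence when unwinding the Harder--Narasimhan filtration, not in $H^0$ of negative-slope pieces.
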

 
 \begin{proof} Since
 \[
 {\rm Hom}_{X_{{\rm FF}, T}}(\alpha^*(\sE_1), \alpha^*(\sE_2))={\rm H}^0(X_{{\rm FF}, T}, \alpha^*(\sE_2\otimes \sE_1^\vee)) ,
 \]
 both assertions follow immediately from 
 \cite[Prop.  II.2.15]{FS} applied to the bundle $\sE_2\otimes\sE_1^\vee$. Note here that the (quasi-)separatedness of $H$ follows by using the group structure, and showing that the zero section $0_S: S\hookrightarrow \underline{\CH}_{S}(\sE_1, \sE_2)$ is a closed immersion. 
\end{proof}

In general, if $Y$ is a  $v$-sheaf over $\Spd(k)$, {\cmag and $\sE_1$ and $\sE_2$ vector bundles ``over
$X_{{\rm FF}, Y}$" (in the sense described in \cite{An2}),} we can consider the $v$-sheaf given by
\[
\underline{\CH}_{Y}(\sE_1, \sE_2)(T)=\{(\alpha, f)\mid \alpha\in Y(T), f\in \underline{\rm Hom}_{X_{{\rm FF}, T}}(\alpha^*(\sE_1), \alpha^*(\sE_2))\} ,
\]
which affords a map $H: \underline{\CH}_{Y}(\sE_1, \sE_2)\to Y$.

{\cmag
 \begin{proposition}\label{FormalSep} If $Y$ is formally separated (in the sense of Gleason \cite[Def. 3.27]{Gl}), then the map $H:  \underline{\CH}_{Y}(\sE_1, \sE_2)\to Y$ is also formally separated.
 \end{proposition}

\begin{proof}
 For simplicity, set $\CF=\underline{\CH}_{Y}(\sE_1, \sE_2)$. 

1) First we observe that $H:\CF\to Y$ is separated, i.e. the diagonal $\CF\to \CF\times_Y\CF$ is a closed immersion: This follows from \cite[Prop. II.2.16]{FS}, see also Proposition \ref{BCspace} above.

2) We next prove that the diagonal 
$
\CF\to \CF\times_Y\CF
$
is formally adic in the sense of \cite[Def. 3.20]{Gl}. Then, by (1), $\CF\to \CF\times_Y\CF$ is formally closed, so, by definition, $H:\CF\to Y$ is formally separated.

Since $Y$ is formally separated, $\Delta: Y\to Y\times Y$ is formally adic. By \cite[Prop. 3.24]{Gl},  
we see that the base change of $Y\to Y\times Y$ by $H\times H: \CF\times\CF\to Y\times Y$, which is 
\[
  (\CF\times\CF)\times_{Y\times Y}Y\simeq \CF\times_Y\CF\to \CF\times\CF,
\]
is formally adic. Now, if $\CF\to\CF\times \CF$ is formally adic, it follows by the definition and 
standard properties of Cartesian diagrams that $\CF\to \CF\times_Y\CF$ is also formally adic.

 It remains to show that $\CF\to \CF\times \CF$ is formally adic. 
 By \cite[Lem.  3.30]{Gl}, it is enough 
to show that the homomorphism
\[
(\CF_{\red})^\sdiam\to \CF
\]
induced by adjunction, is an injective map of $v$-sheaves. Here, $\CF_{\red}$ is the reduction of the $v$-sheaf defined in \cite[Def. 3.12]{Gl}, see also \S \ref{par331}.  By the proof of \cite[Lem. 3.30]{Gl},   $(\CF_{\red})^\sdiam$
is the $v$-sheaf associated to the presheaf which sends an affinoid perfectoid $\Spa(R,R^+)$ to 
$
\CF(\Spec(R^+)^{\sdiam}).
$
The adjunction map
\[
{\rm adj}(R, R^+): \CF(\Spec(R^+)^{\sdiam})\to \CF(\Spa(R, R^+))
\]
is given by evaluating at $ \Spa(R, R^+)\to \Spec(R^+)^\sdiam=\Spd(R^+)$ given by $(R^+, R^+)\hookrightarrow (R, R^+)$. It will be enough to show that ${\rm adj}(R,R^+)$ is injective, for all $(R,R^+)$.

Set 
\[
(A, A^+)=(R^+\llps t^{1/p^\infty}\lrps
, R^+\lps t^{1/p^\infty}\rps)
\]
Recall that  $A^+=R^+\pstperf$ is the $t$-adic completion of the perfect algebra $R^+[t, t^{1/p}, t^{1/p^2}, \ldots ]$ and $A=R^+\lpstperf=R^+\pstperf[1/t]$.
 The elements of 
$R^+\pstperf$ are represented as power series
\[
\sum_{i\in \BZ[1/p]_{\geq 0}} r_i t^i
\]
with $r_i\in R^+$ and with support (i.e. set of indices $i$ for which $r_i\neq 0$) which is either finite, or forms an increasing unbounded sequence. Then $T=\Spa(A, A^+)$ is an affinoid perfectoid (with the $t$-topology) and
 \[
 c: T=\Spa(A, A^+)\to \Spd(R^+).
\]
is a $v$-cover.
 
By $v$-descent
\[
\CF(\Spec(R^+)^{\sdiam})\hookrightarrow \CF(T)=\CF(\Spa(A, A^+)).
\]
Choose a pseudouniformizer $\varpi$ of $R^+$. For any such choice, the morphism $a: \Spa(R, R^+)\to \Spec(R^+)^\sdiam=\Spd(R^+)$ is equal to the composition
\[
\Spa(R, R^+)\xrightarrow{t=\varpi} \Spa(A, A^+)\xrightarrow{c} \Spd(R^+).
\]
So, it will be enough to show that
\[
\CF(\Spd(R^+))\to \CF(\Spa(A, A^+))\to \prod_{\varpi} \CF(\Spa(R, R^+))
\]
is injective, where the product is over all pseudouniformizers of $R^+$.
We will use:

\begin{lemma}\label{BCinjectiveLemma}
\[
\bigcap_{\varpi}\ker(R^+\llps t^{1/p^\infty}\lrps\xrightarrow{t\mapsto \varpi} R)=(0).
\]
\end{lemma}

 \begin{proof}
 Suppose $f$ is in the intersection above. There is $N\geq 0$ such that 
 $t^Nf\in R^+\lps t^{1/p^\infty}\rps$ and so, without loss of generality, we can suppose that $f=f(t)\in R^+\lps t^{1/p^\infty}\rps$. Assuming $f\neq 0$ we will obtain a contradiction. There exists a perfectoid
 $\Spa(K, V)\to \Spa(R, R^+)$, with $V$ a  valuation ring and $K$ its fraction field, such that $f\neq 0$ in $V\lps t^{1/p^\infty}\rps$. Write 
 \[
 f=f(t)=\sum_{i=0}^\infty a_i t^{m_i}
 \] 
 with $a_i\in V$,
$
 0\leq m_0<m_1<\cdots <m_n<\cdots 
$,
 all in $\BZ[1/p]$, and $a_0\neq 0$. Choose a pseudouniformizer $\varpi\in R^+$, it is topologically nilpotent and a unit in $R$, and $\varpi\neq 0$ in $V$. Since $\varpi$ is topologically nilpotent, there is $m\geq 1$  
 such that 
 \[
 |\varpi |^{m(m_1-m_0)}<|a_0|.
 \]
 Since $\varpi^m$ is still a pseudouniformizer of $(R, R^+)$, we have $f(\varpi^m)=0$ in $R$ and so also in $V$. This implies 
 \[
|a_0||\varpi|^{mm_0}= |a_1 \varpi^{mm_1}+ \cdots +a_n\varpi^{mm_n}+\cdots |\leq {\rm sup}_{n\geq 1}|\{|a_n||\varpi|^{mm_n}\}\leq |\varpi|^{mm_1}
 \]
 which contradicts our choice of $m$. \end{proof}
 
Take two elements of $\CF(\Spec(R^+)^\sdiam)$ given by $(\alpha_i, f_i)$, $i=1$, $2$,  in $\CF(\Spa(A, A^+))$, with the same image in $\prod_\varpi\CF(\Spa(R, R^+))$. Since $Y$ is formally separated, the diagonal $Y\to Y\times Y$ is formally adic. Hence, by \cite[Lem. 3.30]{Gl} and an argument as above, we have $\alpha_1=\alpha_2$ 
and $\alpha:=\alpha_1=\alpha_2$   factors as $\Spa(A, A^+)\xrightarrow{c} \Spec(R^+)^\sdiam\to Y$. Write $f=f_1-f_2$.

Choose $I=[1, p]$ and consider the affinoid $Y_{T,I}=\CY_{T, I}$ (notation as in \cite{FS}), after fixing the choice of $t$ as a pseudouniformizer of $(A,A^+)$.  We will use the map 
$\pi: Y_{T,I}\rightarrow  X_{FF, T}=Y_{T, I}/\phi$ 
in the definition of the FF curve (\cite[Prop. II.1.6]{FS}). We see that
\[
{\rm Hom}_{X_{FF, T}}(\alpha^*\sE_1, \alpha^*\sE_2)\subset {\rm Hom}_{Y_{T,I}}(\pi^*(\alpha^*\sE_1), \pi^*(\alpha^*\sE_2)).
\]
Now enlarge the projective modules corresponding to $\sE_{T, i}=\alpha^*\sE_i$ over $X_{FF, T}$ as in the proof of \cite[Thm. II.2.6]{FS}, i.e. find $\CG'_i$ over $X_{FF, T}$, such that $\pi^*\sE_{T,i}\oplus \pi^*\CG'_i$ are the sheaves given by finite free $\CO(Y_{T, I})=\Gamma(Y_{T,I}, \CO_{Y_{T,I}})$-modules of rank $n_i$. Then, since
\[
 {\rm Hom}_{Y_{T,I}}(\pi^*\sE_{T,1}, \pi^*\sE_{T,2})\subset  {\rm Hom}_{Y_{T,I}}(\pi^*\sE_{T,1}\oplus \pi^*\CG'_1, \pi^*\sE_{T,2}\oplus \pi^*\CG'_2)\simeq {\rm Mat}_{n_1\times n_2}(\CO({Y_{T, I}}) ),
\]
an element $(\alpha, f)$ of $\CF(T)=\CF(\Spa(A, A^+))$ is determined by $\alpha\in Y(T)$ and a matrix $M=M(f)$ with entries in $\CO({Y_{T, I}})$, i.e. functions on $Y_{T,I}$. 

Assume that $f$ is in the kernel of the map given by $\CF(\Spa(A, A^+))\to\prod_{\varpi} \CF(\Spa(R, R^+))$; then $f$ is also zero
after pulling back by $Y^\varpi_{(R, R^+), I}\to Y_{T, I}$, given by $t\mapsto \varpi$, to all such $Y^\varpi_{(R, R^+), I}$.
(Note here that we include the superscript on $Y^\varpi_{(R, R^+), I}$ to emphasize that this depends on the choice of $\varpi$, see \cite[II.1.2]{FS}.)
The locus $|Z(f)|$ on $Y_{T,I}$ where $M(f)$ is zero is a ``Zariski closed'' subset of the topological space $|Y_{T,I}|$ underlying  $Y_{T,I}$; by our assumption,
this contains the images of $|Y^\varpi_{ (R, R^+), I}|$ for all $t\mapsto \varpi$.

We now  claim that $|Z(f)|=|Y_{T,I}|$; this implies that $M(f)=0$ and hence  
$f=0$ in $\CF(\Spa(A, A^+))$. In turn, this implies the desired injectivity. By \cite[Lem. IV.4.23., p. 142]{FS} (this uses that ``Zariski closed is strongly Zariski closed", shown in \cite[Rem. 7.5]{BS}), the complement of the
image of $|Y_{T,I}|-|Z(f)|$ under $v: |Y_{T,I}|\to |T|$ is Zariski closed. Hence
\[
|T|-v(|Y_{T,I}|-|Z(f)|)
\]
is a Zariski closed subset $|W|$ of $|T|$ underlying an affinoid perfectoid $W=\Spa(B, B^+)\to T$ defined by an ideal $J\subset R^+\llps t^{1/p^\infty}\lrps$. By our assumption
and the same reference,  all the morphisms $\Spa(R, R^+)\to T$ given by $t\mapsto\varpi$ factor through $W$ and we have 
\[
J\subset \bigcap_{\varpi}\ker(R^+\llps t^{1/p^\infty}\lrps\xrightarrow{t\mapsto \varpi} R).
\]
Lemma \ref{BCinjectiveLemma} gives $J=(0)$ which implies $|Y_{T,I}|=|Z(f)|$.
As we just saw above, this concludes the proof.
\end{proof}
}

\subsubsection{Maps between shtukas}

 Let $R^{\sharp +}$ be an integral perfectoid flat $\BZ_p$-algebra in the sense of \cite[Def. 17.5.1]{Schber}, cf. \S \ref{sss:BKFmod}; in particular, it is $p$-adically complete. We can take an element of the form $\pi=p^{1/p}\cdot(\hbox{\rm unit})\in R^{\sharp +}$ as a pseudouniformizer. 
 Set $R^\sharp=R^{\sharp +}[1/p]=R^{\sharp +}[1/\pi]$ and also consider the tilt 
 $\Spa(R, R^+)$ of $\Spa(R^\sharp, R^{\sharp +})$. 
 We consider the $v$-sheaves  $S^{+}=\Spa(R^{\sharp +}, R^{\sharp +})^\diam$ and 
 $S=\Spa(R^{\sharp}, R^{\sharp +})=\Spa(R^{\sharp}, R^{\sharp +})^\diam$ over $\Spd(\BZ_p)$.

 \begin{prop/constr}\label{extensiontoY}
 Let $(\sV, \phi_\sV)$ be a shtuka over $S^{+}/ \Spd(\BZ_p)$. There is a 
  vector bundle with meromorphic Frobenius $(\sV^+, \phi_{\sV^+})$
 over  $\CY(R,R^+)$ which extends the  shtuka $(\sV, \phi_\sV)_{|S}$ given as the evaluation
of $(\sV, \phi_\sV)$ on the affinoid perfectoid
 \[
 S=\Spa(R^{\sharp}, R^{\sharp +})\to S^+.
 \]
 \end{prop/constr}

\begin{remark}\label{rem275}
We expect that, in many cases at least, $(\sV^+, \phi_{\sV^+})$ ``further extends along $[\varpi]=p=0$", i.e. it is obtained
by restriction along the morphism of locally ringed spaces
\[
\CY(R, R^+)\to \Spec(W(R^+))
\]
from a uniquely determined $W(R^+)$-module
with meromorphic Frobenius structure, more precisely from a BKF module over $R^{\sharp +}$.
For example, when $(R, R^+)=(K, K^+)$ with $K^+$ a valuation ring of the
perfectoid field $K$, this follows from the result of Kedlaya (Theorem \ref{FFKedlaya}).
Combined with Proposition \ref{extensiontoY} above, this shows that in this case of a perfectoid field, the functor of Proposition \ref{propBKFshtuka}
is essentially surjective and hence an equivalence of categories. {\cmag In fact, the same conclusion holds 
when $(R, R^+)=((\prod_{i\in I}K_i)[1/(\varpi_i)], \prod_{i\in I}K^+_i)$ is a product of points, when restricting to shtukas of fixed rank. Indeed, we can apply the argument above, by using the extension of Kedlaya's result given in \cite[Prop. 2.7]{Gl21}.}
\end{remark}
 
 \begin{proof}
 Note that the shtuka $(\sV, \phi_\sV)_{|S}$ is a vector bundle with meromorphic Frobenius over $\CY_{[0,\infty)}(R, R^+)$.
 Let $\varpi=\pi^\flat\in R^+$ be the pseudo-uniformizer of $R^+$ that corresponds to $\pi$. Consider the  sousperfectoid adic space
  \[
U= \Spa\left(W(R^+)\left\langle \frac{[\varpi]}{p^a}\right\rangle[1/p], W(R^+)\left\langle \frac{[\varpi]}{p^a}\right\rangle\right)
 \]
 for $a\gg 0$. Here, $W(R^+)\langle \frac{[\varpi]}{p^a}\rangle$ is the completion of $W(R^+)[ \frac{[\varpi]}{p^a}]$ for the $p$-adic topology. (Note that $[\varpi]^n\mapsto 0$ in the $p$-adic topology). We can consider $B=W(R^+)\langle \frac{[\varpi]}{p^a}\rangle[1/p]$ as a Banach $\BQ_p$-algebra.  As in the proof of Theorem \ref{FFisocrystal}, we can pull back by
 \[
 U^\diam\to \Spa(W(R^+))^\diam=S^+\times\Spd(\BZ_p)
 \] 
 and then use the $\BZ^\times_p$-pro-\'etale perfectoid cover
 \[
 \hat U_\infty\to U=\Spa\left(W(R^+)\left\langle \frac{[\varpi]}{p^a}\right\rangle[1/p], W(R^+)\left\langle \frac{[\varpi]}{p^a}\right\rangle\right)
 \]
 constructed by complete tensoring with $\hat\CO_\infty$: Here
 \[
  \hat U_\infty=\Spa(\hat B_\infty, \hat B_\infty^+)
 \]
 with 
\[
\hat B_\infty:=W(R^+)\left\langle \frac{[\varpi]}{p^a}\right\rangle[1/p]\hat\otimes_{\BZ_p} \hat\CO_\infty, \quad
\hat B_\infty^+:=W(R^+)\left\langle \frac{[\varpi]}{p^a}\right\rangle\hat\otimes_{\BZ_p} \hat\CO_\infty.
\]
We obtain  a finite projective $\hat B_\infty$-module $\wt M$ with $\BZ^\times_p$-pro-\'etale descent data.
 The argument in the proof of Theorem \ref{FFisocrystal} now applies for the Banach algebra $B=W(R^+)\langle \frac{[\varpi]}{p^a}\rangle[1/p]$: This gives that $\wt M$ descends to a $B$-module $M$ if there is such descent for the base changes after 
 \[
 W(R^+)\left\langle \frac{[\varpi]}{p^a}\right\rangle\to W(O_C)\left\langle \frac{[\varpi]}{p^a}\right\rangle
 \]
 given by all the continuous
 \[
f:  (R, R^+)\to (C, O_C), \qquad f(\varpi)=\varpi_C.
 \]
 This allows us to reduce the proof of descent to the case of a shtuka over $\Spd(O_{C^\sharp})$, where $O_{C^\sharp}$
 has the analytic topology and $C^\sharp$ is algebraically closed. In this case, we know by Fargues-Fontaine 
 (cf. \cite[13.2]{Schber}) 
 that the restriction of the shtuka
 via $\Spa(C^\sharp, O_{C^\sharp})\to \Spd(O_{C^\sharp})$ uniquely extends to a BKF module $M(O_C)$ over $W(O_C)$.
 By full-faithfulness of the restriction ``away from $\infty$" (Proposition \ref{FFres}, see also the proof of Proposition \ref{propBKFshtuka}) we see that
 this $W(O_C)\langle \frac{[\varpi_C]}{p^a}\rangle$-module $M(O_C)\otimes_{W(O_C)}W(O_C)\langle \frac{[\varpi_C]}{p^a}\rangle $ provides the descent module; so descent holds in this situation. 
 
 By the above we see that there is a vector bundle with Frobenius structure over the (sousperfectoid) analytic adic   space $U$
 which descends the vector bundle with Frobenius structure over $\hat U_\infty$ obtained by the shtuka $(\sV, \phi_\sV)$
 over $S^+$. Note that $\CY(R, R^+)$ is a (sousperfectoid) analytic adic space which is covered by the open subspaces $U$ and $\CY_{[0, b]}(R, R^+)\subset \CY_{[0,\infty)}(R, R^+)$. We can now obtain, by glueing, the desired vector bundle $(\sV^+, \phi_{\sV^+})$ over $\CY(R, R^+)$
 that extends $(\sV, \phi_{\sV})_{|S}$.
 \end{proof}

  We will now assume in addition that    $R^{\sharp +}=R^{\sharp\circ}$. Then
 we also have $R^+=R^\circ$.

 \begin{proposition}\label{Extperfd}
 For $R^{\sharp +}$ integral perfectoid as above with $R^{\sharp +}=R^{\sharp \circ}$, 
consider the $v$-sheaves  $S^{+}=\Spa(R^{\sharp +}, R^{\sharp +})^\diam$ and 
 $S=\Spa(R^{\sharp}, R^{\sharp +})^\diam$ over $\Spd(\BZ_p)$. Let $(\sV, \phi_\sV)$ and  $(\sV', \phi_{\sV'})$ be two shtukas over
 $S^{+}/ \Spd(\BZ_p)$. Every homomorphism
 \[
 \psi_{S}: (\sV, \phi_\sV)_{|S}\to (\sV', \phi_{\sV'})_{|S}
 \]
between their restrictions to $S/ \Spd(\BZ_p)$, extends uniquely to a homomorphism
\[
\psi: (\sV, \phi_\sV)\to (\sV', \phi_{\sV'})
\]
 over $S^+/ \Spd(\BZ_p)$.
 \end{proposition}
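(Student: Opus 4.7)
The proof falls into three stages: lifting to the adic space $\CY(R, R^+)$, extending across the boundary $[\varpi]=0$, and descending back to $S^+$. The hypothesis $R^+ = R^\circ$ will enter only in the middle stage, where it is needed to invoke the full faithfulness of Proposition \ref{FFres}.

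By Proposition \ref{extensiontoY}, the two shtukas extend to vector bundles with meromorphic Frobenius $(\sV^+, \phi_{\sV^+})$ and $(\sV'^+, \phi_{\sV'^+})$ on $\CY(R, R^+)$ that restrict on $\CY_{[0,\infty)}(R, R^+)$ to the shtukas $\sV_{|S}$ and $\sV'_{|S}$ obtained by evaluating $\sV$ and $\sV'$ at the identity point of $S$. The hypothesis on $\psi_S$, evaluated at the identity point, then yields a Frobenius-equivariant map $\psi_0$ between these restrictions. Choose $r > 0$ so that $\CY_{[r,\infty]}(R, R^+)$ is disjoint from the leg $S^\sharp$. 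Then the restriction of $\psi_0$ to $\CY_{[r,\infty)}(R, R^+)$ is a morphism of $\phi$-modules in the sense of Remark \ref{defphimod}, and Proposition \ref{FFres} --- applicable because $R^+ = R^\circ$ --- extends it uniquely to a $\phi$-equivariant morphism on $\CY_{[r,\infty]}(R, R^+)$. Gluing with $\psi_0$ over the overlap $\CY_{[r,\infty)}(R,R^+)$ produces a unique Frobenius-equivariant homomorphism $\psi^+: \sV^+ \to \sV'^+$ on all of $\CY(R, R^+)$.

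To descend $\psi^+$ to a morphism of $v$-stack sections over $S^+$, I would argue point-by-point. A point $(T^\sharp, x: T \to S^+)$ of $S^+$, with $T = \Spa(A, A^+)$, induces a continuous map $R^+ \to A^+$ on tilts and hence morphisms of sousperfectoid adic spaces $\CY(A, A^+) \to \CY(R, R^+)$ and $\CY_{[0,\infty)}(A, A^+) \to \CY_{[0,\infty)}(R, R^+)$ respecting Frobenius. Pulling back $\psi^+$ along the first produces, by functoriality of the construction in Proposition \ref{extensiontoY}, a Frobenius-equivariant morphism between the values $x^*\sV$ and $x^*\sV'$; naturality in $(T^\sharp, x)$ then assembles these into the desired $\psi$, and its restriction to $S$ recovers $\psi_S$ by construction. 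Uniqueness is forced by the uniqueness in the extension step together with the faithfulness in Proposition \ref{FFres}. The main technical obstacle is the functoriality step: one must check that Proposition \ref{extensiontoY}'s extension commutes with pullback along $R^+ \to A^+$, which entails re-running the pro-\'etale $\BZ_p$-descent argument in its proof while tracking $\psi^+$ alongside the bundles.
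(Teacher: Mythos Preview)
Your first two paragraphs are correct and match the paper's opening moves: extend both shtukas to $\CY(R,R^+)$ via Proposition \ref{extensiontoY}, then use Proposition \ref{FFres} (valid since $R^+=R^\circ$) to extend $\psi_S$ across the divisor $[\varpi]=0$, obtaining $\psi^+$ on all of $\CY(R,R^+)$.

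The descent paragraph, however, contains a genuine gap. You assert that a point $x: T=\Spa(A,A^+)\to S^+$ induces morphisms $\CY(A,A^+)\to\CY(R,R^+)$ and $\CY_{[0,\infty)}(A,A^+)\to\CY_{[0,\infty)}(R,R^+)$. This is false when $x$ is not adic, i.e.\ when the pseudouniformizer $\pi^\flat\in R^+$ maps to $0$ in $A^+$ (equivalently, when the untilt $T^\sharp$ lies in characteristic $p$). In that case the induced map $\Spa(W(A^+))\to\Spa(W(R^+))$ sends all of $\CY_{[0,\infty)}(A,A^+)$ into the locus $\{[\varpi]=0\}$, which is precisely the complement of $\CY_{[0,\infty)}(R,R^+)$; so there is no morphism along which to pull back $\psi^+$. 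These non-adic points are exactly the points of $S^+$ that do not factor through $S$, so they cannot be avoided.

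The paper handles this by a two-case analysis over points $\Spa(C,C^+)$. For adic points it proceeds roughly as you sketch (though it passes through the scheme $Y(R,R^+)$ and the $W(R^+)$-modules of global sections rather than invoking functoriality of Proposition \ref{extensiontoY}). For non-adic points it factors $x$ through $\Spec(R^+_{\rm red})^\diam$, invokes Theorem \ref{FFisocrystal} to identify the shtukas with meromorphic Frobenius crystals, then chooses a characteristic-zero lift $\tilde x$ of $x$, defines $\psi(x)^+$ by reducing $\psi(\tilde x)^+$ modulo $W(\mathfrak m_{\tilde C})$, and checks independence of the lift using the compatibility diagram \eqref{diagramM}. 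Finally it assembles these into $\psi_W$ on a product-of-points $v$-cover $W\to S^+$ and verifies descent by a pointwise argument.

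Your uniqueness argument is also incomplete: uniqueness of the extension $\psi^+$ on $\CY(R,R^+)$ only controls the value of $\psi$ at the single point $S\to S^+$, not at all points of $S^+$. The paper instead deduces uniqueness from the formal separatedness of Proposition \ref{FormalSep} combined with \cite[Prop.~1.4.13]{Gl} and the faithfulness of $(\sV,\phi_\sV)\mapsto\sV_{\rm FF}$.
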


 \begin{proof} {\cmag Since $S^+$ is formally separated, the uniqueness follows from Proposition \ref{FormalSep} and \cite[Prop. 4.9]{Gl}, together with the fact that the functor $(\sV, \phi_{\sV})\mapsto \sV_{\rm FF}$ is faithful. (In fact, this argument implies that if an extension exists, it is unique, even when $R^{\sharp +}\neq R^{\sharp \circ}$.)}
 
 Let us discuss the existence.  By Proposition \ref{extensiontoY} above, $(\sV, \phi_\sV)$ and $(\sV', \phi_{\sV'})$ give vector bundles $\sV^+$ and  $\sV'^+$  over $\CY(R,R^+)$
 with meromorphic 
 Frobenius. Suppose that $r>0$ is large enough so that $\CY_{[r, \infty)}(R,R^+)$ does not intersect the divisor $S$ where the Frobenius is not an isomorphism.
 By Proposition \ref{FFres}, since $R^+=R^\circ$,
  restriction  from $\CY_{[r, \infty]}(R,R^+)$ to $\CY_{[r, \infty)}(R,R^+)$ gives a fully-faithful functor from the category of vector bundles with $\phi$-structure over $\CY_{[r, \infty]}(R,R^+)$ to the category of  vector bundles with $\phi$-structure over $\CY_{[r, \infty)}(R,R^+)$.
This implies that the homomorphism $\psi_S$ over $\CY_{[0,\infty)}(R,R^+)$ uniquely extends to a homomorphism of vector bundles $\psi_{\CY(R,R^+)}\colon \sV^+\to \sV'^+$ over the adic space $\CY_{[0,\infty]}(R,R^+)=\CY(R,R^+)$. Hence, by the GAGA-equivalence of Theorem \ref{FFKedlaya}, this corresponds to a homomorphism of vector bundles over the scheme $Y(R,R^+)$, 
\[
\psi_{Y(R, R^+)}: V^+\to V'^+ .
\]
{\cmag Note the open immersion
\[
j: \Spec(W(R^+)[1/p])\hookrightarrow Y(R, R^+).
\]
We set $\psi_{W(R^+)[1/p]}:=j^*\psi_{Y(R, R^+)}$.

Now let $x: T=\Spa(B, B^+)\to S^+=\Spd(R^{\sharp +})$, with $T$ affinoid perfectoid over $k$, given by 
an untilt $T^\sharp=\Spa(B^\sharp, B^{\sharp +})$ and a continuous $R^{\sharp +}\to B^{\sharp +}$ which corresponds to $R^+\to B^+$.
We want to construct a homomorphism $\psi_T\colon x^*(\sV)\to x^*(\sV')$ over $T\bdtimes \BZ_p$. We will first assume that $T=\Spa(C, C^+)$, with $C$ a complete nonarchimedean algebraically closed field over $k$.

1) Suppose that $x$ is adic, so that $\pi^\flat$ maps to a pseudouniformizer of $C^{+}$. 
Then $x$ factors through $S=\Spa(R^\sharp, R^{\sharp +})^\diam$ and we set $\psi_T$ to be the pull-back of $\psi_S$ by $x: T\to S$.

Note that in this case,  we also have a  homomorphism $\psi(x): x^*(\sV)\to x^*(\sV')$ over $\CY(C,C^+)=\Spa(W(C^+))\setminus \{[\varpi_{C}]=0, p=0\}$ obtained by pulling back $\psi_{\CY(R,R^+)}$ by $x$. Then $\psi_T$ is also given by the restriction of 
$\psi(x)$ to $\CY_{[0,\infty)}(C,C^+)$. 

By results of Kedlaya (see \cite[Thm. 3.8]{Kedlaya} and \cite[Prop. 14.2.6]{Schber}),  
$\psi(x)$ uniquely extends to a homomorphism $\psi(x)^+: (x^*\sV)^+\to (x^*\sV')^+$ of corresponding $W(C^+)$-modules. The homomorphism $x^*: R^+\to C^+$ also gives $x^*: W(R^+)\to W(C^+)$ and we have
\begin{equation}\label{pullb1}
\psi(x)^+[1/p]=x^*\psi_{W(R^+)[1/p]},
\end{equation}
as maps $(x^*\sV)^+[1/p]\to (x^*\sV')^+[1/p]$.
 
2)   Suppose that $x$ is not adic. Then $\pi^\flat$ maps to $0$ in $C^+$ and $C^{\sharp +}=C^+$, i.e.
the untilt $T^\sharp=T$ is in characteristic $p$. Set $R^+_{\rm red}:=(R^{\sharp +}/(\pi))_{\rm red}=(R^+/(\pi^\flat))_{\rm red}$. The point $x$ factors as
\[
x: T=\Spa(C, C^+)\to \Spec(R^+_{\rm red})^\sdiam\to \Spd(R^{\sharp +}),
\]
and the shtukas $x^*(\sV, \phi_\sV)$ and $x^*(\sV, \phi_{\sV'})$, are pull-backs of shtukas over $\Spd(R^+_{\rm red})=\Spec(R^+_{\rm red})^\sdiam$. \quash{By Theorem \ref{FFisocrystal}, these are given by meromorphic Frobenius  crystals underlying corresponding $W(R^+_{\rm red})$-modules $\sM $ and $\sM' $.}

Now, using that $R^{\sharp +}$ is $\BZ_p$-flat, we can
find a point $\tilde x: \Spa(\tilde C , \tilde C^{+})\to S^+$ as in case 1), 
i.e. with untilt $(\tilde C^\sharp, \tilde C^{\sharp +})$ over $(\BQ_p, \BZ_p)$, and such that the corresponding $\tilde x^*: R^{\sharp +}\to \tilde C^{\sharp +}$ lifts
$x^*: R^{\sharp +}\to R^{\sharp +}/(\pi)=R^+/(\pi^\flat)\to C^+$. This is meant in the sense 
that there is a map $k(\tilde C^{\sharp})=O_{\tilde C^{\sharp}}/\fkm_{\tilde C^{\sharp}}\hookrightarrow C$ restricting to 
$\tilde C^{\sharp +}/\fkm_{\tilde C^{\sharp}}\to C^+$ which, when composed with $ \tilde x^*\, {\rm mod}\ \fkm_{\tilde C^{\sharp}}$, gives $x^*$. There is a commutative diagram
 \begin{equation}\label{diagramM}
 \begin{aligned}
   \xymatrix{
          \Spd(C^+) \ar[r]^{} \ar[d]_{} &  \Spd(R^+_{\rm red})\ar[d]_{}\\
         \Spd(\tilde C^{\sharp +})\ar[r]^{} &  \Spd(R^{\sharp +}).
        }
        \end{aligned}
    \end{equation}
We have two shtukas $\tilde x^*(\sV, \phi_\sV)$ and $\tilde x^*(\sV, \phi_{\sV'})$ over $\Spd(\tilde C^{\sharp +})$ obtained by pulling back via $\tilde x: \Spd(\tilde C^{\sharp +})\to S^+=\Spd(R^{\sharp +})$. Pulling back these by $\Spd(C^+)\to \Spd(\tilde C^{\sharp +})$ given by $\tilde C^{\sharp +}\to C^+$, recovers the shtukas $x^*(\sV, \phi_\sV)$ and $x^*(\sV, \phi_{\sV'})$.
Note that there are equivalences of categories between shtukas over $\Spd(C^+)$ and BKF-modules (with leg at $p$, i.e. meromorphic Frobenius crystals) over $W(C^+)$, shtukas over $\Spd(\tilde C^{\sharp +})$ and BKF-modules over $W(\tilde C^+)$ (with leg at $\tilde C^{\sharp +}$), see Proposition \ref{propBKFshtuka}, also Remarks \ref{rem239}, \ref{rem275},
 and also shtukas over $\Spd(R^+_{\rm red})$ and BKF-modules (with leg at $p$, i.e. meromorphic Frobenius crystals) over $W(R^+_{\rm red})$ (Theorem \ref{FFisocrystal}); these equivalences are compatible with pull-backs. Using these equivalences, we can now define
\[
\psi(x)^+:=\psi(\tilde x)^+\, {\rm mod}\, (W(\fkm_{\tilde C}))
\]
as a map $(x^*\sV)^+\to (x^*\sV')^+$ between the corresponding $W(C^+)$-modules. We can now see, using 
the commutativity of (\ref{diagramM}) and the above, that $\psi(x)^+[1/p]$, and therefore also $\psi(x)^+$, is independent of the lift $\tilde x$. Indeed, $\psi(x)^+[1/p]$ is also given as the pull-back
\begin{equation}\label{pullb2}
\psi(x)^+[1/p]=x^*\psi_{W(R^+)[1/p]}
\end{equation}
of $\psi_{W(R^+)[1/p]}=j^*\psi_{Y(R, R^+)}$ under
\[
x: \Spec(W(C^+)[1/p])\to \Spec(W(R^+_{\rm red})[1/p])\to \Spec(W(R^+)[1/p]).
\]}
 {\cmag We can now consider the general case in which $T=\Spa(B, B^+)$ is affinoid perfectoid with $x:T\to S^+$. 
\quash{We have $x^*:  W(R^+)[1/p]\to  W(B^+)[1/p]$ giving
 \[
 \psi_{W(B^+)[1/p]}=x^*\psi_{W(R^+)[1/p]},
 \]
 and, roughly speaking, we would like to use this to obtain $\psi_T: x^*\sV\to x^*\sV'$.}
 Choose a pseudo-uniformizer $\varpi$ of $B^+$ and form a corresponding product of points $f: Z=\Spa((\prod_{i\in I}C^+_i)[1/(\varpi_i)], \prod_{i\in I}C^+_i)\to T$ which is a $v$-cover of $T$. The composition $g=x\cdot f$ is given by $R^{\sharp +}\to \prod_{i\in I}C^{\sharp +}_i$ which gives $g^+: Z^+:=\Spd(\prod_{i\in I}C^{\sharp +}_i)\to S^+$. We have shtukas $(g^+)^*\sV$, $(g^+)^*\sV'$ over $Z^+$; by Remark  \ref{rem275} these correspond to BKF modules $\prod_i (x_i^*\sV)^+$ and $\prod_i (x_i^*\sV')^+$ over $W(\prod_{i\in I}C^+_i)=\prod_{i\in I}W(C^+_i)$. These modules are well-defined (up to canonical isomorphism) and only depend on $R^+\to \prod_{i\in I}C^+_i$ and the shtukas $\sV$, $\sV'$ over $S^+$. For each $i\in I$, the composition $x_i: \Spa(C_i, C^+_i)\to Z\to S^+$ 
gives a point of $S^+$ to which we can apply the construction above. We obtain $\psi(x_i)^+: (x_i^*\sV)^+ \to (x_i^*\sV')^+$ which gives a homomorphism $\psi_Z^+: \prod_i (x_i^*\sV)^+ \to \prod_i (x_i^*\sV')^+$ over $W(\prod_i C^+_i)$; this restricts to give a morphism $\psi_Z: (x\cdot f)^*\sV\to (x\cdot f)^*\sV'$ of shtukas over $Z$. Using the above we will see that 
$\psi_Z$ satisfies the $v$-descent condition and hence gives a homomorphism $\psi_T: x^*\sV\to x^*\sV'$ of shtukas over $T$.  We want to check the equality $p^*_1\psi_Z=p^*_2\psi_Z$ of the two pull-backs of the morphism $\psi_Z$ by the two projections $p_i: Z\times_TZ\to Z$, $i=1,2$. Consider a point $t=(t_1, t_2): \Spa(C, C^+)\to Z\times_T Z$. The two points 
$t_1$, $t_2$ give by composition the same map 
 $\Spa(C, C^+)\to  T\to S^+$. By the above, the pull-backs over $W(C^+)[1/p]$ of the two maps $p_1^*(\psi^+_Z[1/p])$, $p_2^*(\psi^+_Z[1/p])$ by $t$ are given by the base change of $\psi_{W(R^+)[1/p]}$ by the map $R^+\to C^+$ given by the composition $\Spa(C, C^+)\to  S^+$, and so they are equal. Hence,   the pull-backs of the two maps $p_1^*(\psi_Z)$, $p_2^*(\psi_Z)$ by $t$ are also equal as maps between vector bundles over $\CY_{[0,\infty)}(C, C^+)$. 
 But the restriction along all points $\Spa(C, C^+)\to Z\times_TZ$
is faithful on maps of vector bundles over $\CY_{[0,\infty)}(Z\times_TZ)$, cf. \cite[Lem. 2.3]{FarguesAbelJacobi}, and the descent property follows. Now by \cite[Prop. 19.5.3]{Schber}, the map $\psi_Z$ descends to $\psi_T: x^*\sV\to x^*\sV'$.

Finally we need to show that the maps $\psi_T: x^*\sV\to x^*\sV'$ for variable $x: T\to S^+$ give a homomorphism of shtukas $\psi: (\sV,\phi_\sV)\to (\sV',\phi_{\sV'})$ over the $v$-sheaf $S^+$: Consider a $v$-cover $W\to S^+$ by a perfectoid space $W$. By the work above, we obtain $\psi_W$ which we would like to show satisfies descent along $W\to S^+$. This is done by an argument similar to the one above. Let $T$ be a product of points with a map $T\to S^+$ and set $W'=W\times_{S^+}T$. We want to check equality of the pull-backs over the perfectoid $W'\times_T W'=(W\times_{S^+}W)\times_{S^+}T$.
Consider a point $t=(t_1,t_2): \Spa(C,C^+)\to W'\times_T W'$. The two points $t_i: \Spa(C,C^+)\to W'$ give the same map $\Spa(C, C^+)\to T\to S^+$ after composition, and by the argument above we see $t^*_1\psi_{W'}=t^*_2\psi_{W'}$ where $\psi_{W'}$ is the pull-back of $\psi_W$. This is enough to deduce that $\psi_W$ satisfies descent. }
\end{proof}

\subsubsection{Extending maps between shtukas}
In this subsection, we establish a relation between shtukas in characteristic zero and characteristic $p$. Let $\sX$ be a separated scheme of finite type and flat over $\Spec ( O_E)$.  Denote by $X=\sX\times_{\Spec (O_E)}\Spec (E)$ the generic fiber.

\begin{theorem}\label{vshtExt} Assume that $\sX$ is normal. 
Let $(\sV, \phi_\sV)$ and $(\sV', \phi_{\sV'})$ be two shtukas  over $\sX$. Any homomorphism   $\psi_X: {(\sV, \phi_\sV)}_{|X}\to {(\sV', \phi_{\sV'})}_{|X}$ between their restrictions to $X$ extends uniquely to a homomorphism $\psi: (\sV, \phi_\sV)\to (\sV', \phi_{\sV'})$ of shtukas over $\sX$.
\end{theorem}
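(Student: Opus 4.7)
The plan is to descend Proposition \ref{Extperfd} along a $v$-cover of $\sX^\diam$. Zariski-locally, I may assume $\sX=\Spec(A)$ with $A$ a normal flat $O_E$-algebra of finite type. The adic space $\sX^{\rm ad}$ is covered by its two open subspaces $X^{\rm ad}$ and $\widehat\sX^{\rm ad}=\Spa(\widehat A,\widehat A)$, and $\psi_X$ is already given on $X^\diam$. Thus the task is to extend $\psi_X$ to $\widehat\sX^\diam$ in a manner compatible with $\psi_X$ over the overlap $X^\diam\cap\widehat\sX^\diam$.

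The core step is to choose a $v$-cover $\{S_i^+\to\widehat\sX^\diam\}$ by affinoid perfectoids $S_i^+=\Spd(R_i^{\sharp+})$, where $R_i^{\sharp+}$ is a flat integral perfectoid $\BZ_p$-algebra with $R_i^{\sharp+}=R_i^{\sharp\circ}$. Such covers can be produced by pulling back a product-of-points $v$-cover of $\widehat\sX^{\rm ad}$ (cf.~\S\ref{vcover}) and, if necessary, replacing each $R_i^{\sharp+}$ by its integral closure in $R_i^\sharp$; the normality of $A$ ensures that $\widehat A$ is $p$-adically normal, so these integrally-closed integral perfectoid covers behave well. Over each $S_i^+$, the generic fiber $S_i=S_i^+\times_{\Spd\BZ_p}\Spd\BQ_p$ maps into $X^\diam$ (since there $p$ is invertible), and the pullback $(\psi_X)|_{S_i}$ extends uniquely to a homomorphism $\psi_i$ over $S_i^+$ by Proposition \ref{Extperfd}.

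To glue the $\psi_i$ into a global homomorphism $\psi$ over $\sX^\diam$, I would verify the descent condition on overlaps: for any $v$-test $T\to S_i^+\times_{\widehat\sX^\diam}S_j^+$ with $T$ an affinoid perfectoid of the same type (which we can arrange by further $v$-refinement), the two pullbacks $\psi_i|_T$ and $\psi_j|_T$ both restrict on the generic fiber $T\times_{\Spd\BZ_p}\Spd\BQ_p$ to the further pullback of $\psi_X$, so they agree there; Proposition \ref{Extperfd} then forces them to agree on $T$. The compatibility of the glued $\psi$ with $\psi_X$ over $X^\diam\cap\widehat\sX^\diam$ follows by the same uniqueness mechanism, and so does the global uniqueness of the extension: any difference of two extensions vanishes on $X^\diam$, hence its pullback to every $S_i^+$ vanishes on the generic fiber $S_i$, hence on $S_i^+$ by Proposition \ref{Extperfd}, and then on $\sX^\diam$ by $v$-descent.

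The main obstacle is the construction of the $v$-cover of $\widehat\sX^\diam$ with the integrality condition $R^{\sharp+}=R^{\sharp\circ}$, since this condition is essential for invoking the full-faithfulness of Proposition \ref{FFres} that underpins the extension step in Proposition \ref{Extperfd}; once that cover is in place, the remainder of the argument is a formal $v$-sheaf gluing. Applying the Tannakian formalism to the linear case then yields the corresponding uniqueness-of-extension statement for $\CG$-shtukas announced in the introduction.
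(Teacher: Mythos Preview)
Your overall strategy is the paper's: reduce to $\sX=\Spec(A)$, cover $\sX^\diam$ by $X^\diam$ and $(\wh\sX)^\diam=\Spd(\hat A)$, find a $v$-cover of $\Spd(\hat A)$ by an affinoid perfectoid with $R^{\sharp+}=R^{\sharp\circ}$, apply Proposition~\ref{Extperfd} there, and descend. You also correctly isolate the construction of that cover as the only nontrivial step. The gap is that the construction you sketch does not work. The product-of-points construction of \S\ref{vcover} applies to affinoid \emph{perfectoid} spaces, not to $\Spa(\hat A,\hat A)$; for a product over all points (including higher-rank ones) one has $B^+\subsetneq B^\circ$ in general; and ``replacing $R^{\sharp+}$ by its integral closure in $R^\sharp$'' neither forces $R^{\sharp+}=R^{\sharp\circ}$ nor preserves $v$-surjectivity onto $\Spd(\hat A)$, since passing from $(R^\sharp,R^{\sharp+})$ to $(R^\sharp,R^{\sharp\circ})$ discards higher-rank points.

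The paper builds the cover differently and explicitly. Set $R=\hat A[1/p]$; normality of $A$ gives $R^\circ=\hat A$. Let $(R_j)_j$ run over the finite \'etale extensions of $R$ inside a fixed algebraic closure of $\mathrm{Frac}(R)$, let $B_j$ be the integral closure of $\hat A$ in $R_j$, set $\tilde R^+=(\varinjlim_j B_j)^{\wedge_p}$ and $\tilde R=\tilde R^+[1/p]$. Then $(\tilde R,\tilde R^+)$ is affinoid perfectoid with $\tilde R^+=\tilde R^\circ$ (this is \cite[Lem.~10.1.6]{Schber}), and $\Spd(\tilde R^+)\to\Spd(\hat A)$ is a $v$-cover: any $\hat A\to C^+$ with $C$ algebraically closed factors through each $B_j$ because $B_j/\hat A$ is integral and $C^+$ is a valuation ring, hence through $\tilde R^+$ (Lemma~\ref{vCover}). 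This is where the normality of $A$ is actually used.

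For the descent, you do not need to refine the fiber product $\tilde Y^+\times_{\Spd(\hat A)}\tilde Y^+$ to something ``of the same type''. The paper observes that the corresponding ring $D^+$ is $\BZ_p$-flat with $D=D^+[1/p]$ perfectoid, and invokes only the \emph{uniqueness} clause of Proposition~\ref{Extperfd}; that clause rests on Proposition~\ref{FormalSep} (formal separatedness of the Hom-sheaf) and does not use $D^+=D^\circ$. So the two pullbacks agree because they agree on the generic fiber, and the extension descends.
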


\begin{proof}  {\cmag Recall that a shtuka over $\sX$ is, by Definition \ref{def:shtsch}, a shtuka over the $v$-sheaf $\sX^{\diam/}$ over $\Spd(O_E)$.} We can easily see that we may assume that $\sX$ is affine, $\sX=\Spec(A)$ with $A$ a normal domain. 
Consider the $v$-sheaf $H(\sV, \sV')\to \sX^\sdiam$ whose points with values in the affinoid perfectoid $T=\Spec(B, B^+)$
is the set 
\begin{equation}
H(\sV, \sV')(T)=\{(\alpha, f)\ |\ \alpha\in \sX^\sdiam(T), f: \alpha^*(\sV, \phi_\sV)\to \alpha^*(\sV', \phi_{\sV'})\} .
\end{equation}
As usual, we denote by $\hat A$ the $p$-adic completion of $A$ which is integrally closed in $R=\hat A[1/p]$.
Applying \cite[Lem. 10.1.6]{Schber} to the Tate ring $R=\hat A[1/p]$ (with $R^\circ=\hat A$) we obtain
 an affinoid perfectoid $\Spa(\tilde R, \tilde R^+)$ over $\Spa(E, O_E)$ with a morphism
 \[
  \Spa(\tilde R, \tilde R^+)\to \Spa(\hat A[1/p], \hat A)\to \Spa(\hat A, \hat A).
 \]
 In this situation, $(\tilde R, \tilde R^+)$ can be obtained as follows: Fix an algebraic closure $\bar F$
 of the fraction field $F$ of $R$ and consider the filtered direct system $\varinjlim\nolimits_j R_j$ over all finite
\'etale extensions $R_j/R$ contained in $\bar F$. Let $B_j$ be the integral closure of $\hat A$ in $R_j$ and
consider the $p$-adic completion 
 \[
 \tilde R^+:=\widehat{\varinjlim\nolimits_j B_j}.
 \]
Finally, take $\tilde R=\tilde R^+[1/p]$. Note that  $\tilde R^+$ is the ring of power bounded elements $\tilde R^\circ$
in the Tate ring $\tilde R$.
 Set 
 \begin{equation*}
 \begin{aligned}
 Y=\Spa(\hat A[1/p], \hat A)^\diam, \quad Y^+=\Spd(\hat A, \hat A), \quad
 \tilde Y=\Spa(\tilde R, \tilde R^+)^\diam,\quad  \tilde Y^+=\Spd(\tilde R^+, \tilde R^+).
 \end{aligned}
 \end{equation*}
  They all come with morphisms to $\Spd(\BZ_p)$. 
 
 \begin{lemma}\label{vCover} The morphism
 \[
 \beta: \tilde Y^+\to Y^+=\sX^\sdiam=(\wh\sX)^\diam
 \]
  is a surjective morphism of $v$-sheaves.
  \end{lemma}
  
 \begin{proof} {\cmag The morphism $\tilde Y\to Y$ is a surjective morphism of $v$-sheaves and the same result for $\tilde Y^+\to Y^+$ then follows from the more general result \cite[Prop. 2.31]{AnRicLou}. Here we give a more direct argument.} We can apply \cite[Lem. 17.4.9]{Schber} (see also the comment below that lemma):
  The morphism $\beta$ is quasi-compact  
   and for every complete non-archimedean algebraically closed field $C$,
  a morphism $\Spa(C, C^+)\to \Spa(\hat A, \hat A)$ given by $\hat A\to C^+\subset C$ factors as $A\to B_j\to C^+$
  (since $B_j/\hat A$ is integral and $C^+$ is a valuation ring) and so to
  \[
  \hat A\to \widehat {\varinjlim\nolimits_j B_j}=\tilde R^+\to C^+.
  \]
  This gives $\Spa(C, C^+)\to \Spa(\tilde R^+, \tilde R^+)$, i.e. a $\Spa(C, C^+)$-point of $\Spd(\tilde R^+)$.
  \end{proof}

The given homomorphism $\psi_X$ gives a morphism of $v$-sheaves $f: Y \to H(\sV, \sV')$. We consider the composition
\[
\tilde f: \tilde Y\to Y\to H(\sV, \sV') ,
\]
which corresponds to the shtuka homomorphism $\psi_{\tilde Y}$
obtained by pulling back $\psi_X$ to $\tilde Y$. 
Since $\tilde R^+=\tilde R^\circ$, we can apply Proposition \ref{Extperfd} to $\psi_{\tilde Y}$. This gives an extension of $\tilde f$ to $\tilde f^+: \tilde Y^+\to H(\sV, \sV')$. We would like to descend this to $f^+: Y^+\to H(\sV, \sV')$
along the $v$-cover $\beta: \tilde Y^+\to Y^+$.  Let
$$
Z^+=\tilde Y^+\times_{Y^+}   
 \tilde Y^+,\quad  Z=\tilde Y\times_{Y}   
 \tilde Y .
$$
To show descent of $\tilde f^+$ to $f^+\in H(\sV, \sV')(Y^+)$, we have to check the equality of the two pull-backs $p_1^*(\tilde f^+)$, resp. $p_2^*(\tilde f^+)$,
 in $H(\sV, \sV')(Z^+)$;
  this equality is true   in $ H(\sV, \sV')(Z)$.
  
 {\cmag Let us set
  \[
D^+=\widehat{\tilde R^+\otimes_{\hat A }\tilde R^+},\quad  D=(\widehat{\tilde R^+\otimes_{\hat A }\tilde R^+})[1/p],
\]
where the hat denotes the $p$-adic completion. Also let $\tilde D^{+}$ be the $p$-adic completion of the integral closure of the image
of $\tilde R^+ \otimes_{\hat A}\tilde R^+$
in $(\tilde R^+ \otimes_{\hat A}\tilde R^+)[1/p]$. Then we have $Z=\Spa(D,\tilde D^+)^\diam$ and $Z^+=\Spa(D^+, D^+)^\diam$. We also
have $\tilde Z^+:=\Spa(\tilde D^+, \tilde D^+)^\diam\to Z^+$ which is a $v$-cover.}
\quash{ Recall that $Z=\Spa(D,D^+)^\diam$, resp. $Z^+=\Spa(D^+, D^+)^\diam$, with
 \begin{equation*}
\begin{aligned}
D&=\tilde R\hat\otimes_{\hat A[1/p]}\tilde R\\
D^+&=\text{ the completion of the integral closure of the image
of $\tilde R^+\otimes_{\hat A}\tilde R^+$
in $D$.}
\end{aligned}
\end{equation*}
}
{\cmag By its construction, $D=\tilde D^+[1/p]$ is perfectoid and $\tilde D^+$ is flat over $\BZ_p$. 
By Proposition \ref{Extperfd},  
a point of $H(\sV, \sV')(Z)$ has at most one extension in $H(\sV, \sV')(\tilde Z^+)$. It follows that $p_1^*(\tilde f^+)=p_2^*(\tilde f^+)$ over $\tilde Z^+$ and so also over $Z^+$. This concludes the proof of the existence of the extension to $\sX^\sdiam=(\wh\sX)^\diam$. We can now see, by uniqueness, that we obtain the extension to $\sX^{\diam/}=\sX^\sdiam\sqcup_{\sX^\sdiam\times_{\Spd(O_E)}\Spd(E)}X^\diam$.} 
\end{proof}

\quash{To extend to $\sX^\diam$ now use the cover\mar{\cred change/remove}
\[
X^\diam \sqcup (\wh\sX)^\diam \to \sX^\diam
\]
and the construction above. The uniqueness follows by an argument as above, using that $|X^\diam|$ is dense in $|\sX^\diam|$, cf. Lemma \ref{topflat}.}

\begin{remark}
The previous theorem bears a formal resemblance to the theorem of de Jong-Tate on extending a homomorphism given outside a divisor between $p$-divisible groups over a normal base scheme. It would be interesting to extend Theorem \ref{vshtExt} to this general setting. 
\end{remark}

 Using the Tannakian equivalence,  Theorem \ref{vshtExt} immediately implies:

\begin{corollary}\label{shtExt}
Let $(\sP, \phi_\sP)$ and $(\sP', \phi_{\sP'})$ be two $\CG$-shtukas over $\sX$. Any isomorphism   $\psi_X: {(\sP, \phi_\sP)}_{|X}\xrightarrow{\ \sim\ }{(\sP', \phi_{\sP'})}_{|X}$ between their restrictions to $X$ extends uniquely to an isomorphism $\psi: (\sP, \phi_\sP)\xrightarrow{\ \sim\ }(\sP', \phi_{\sP'})$ over $\sX$. \hfill$\square$
\end{corollary}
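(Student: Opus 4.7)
The plan is to deduce the corollary from Theorem \ref{vshtExt} by applying the Tannakian formalism uniformly in representations of $\CG$. A $\CG$-shtuka over $\sX$ is equivalent to an exact tensor functor from $\Rep_{\BZ_p}(\CG)$ to the category of shtukas over $\sX$, and an isomorphism of $\CG$-shtukas is the same as a tensor-compatible natural isomorphism between such functors (compare the tensor-theoretic interpretation of $\CG$-torsors used in \cite[Thm.~19.5.1]{Schber}).

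First, for each finite free $\BZ_p$-representation $\rho\colon \CG\to \GL(\Lambda)$, the $\CG$-shtukas $\sP$ and $\sP'$ give associated vector shtukas $\sV_\rho$ and $\sV'_\rho$ on $\sX$, and $\psi_X$ induces an isomorphism $\psi_{X,\rho}\colon \sV_\rho|_X\isoarrow \sV'_\rho|_X$ of shtukas over $X$. By Theorem \ref{vshtExt}, each $\psi_{X,\rho}$ extends uniquely to a shtuka homomorphism $\psi_\rho\colon \sV_\rho\to \sV'_\rho$ over $\sX$.

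Next, the assignment $\rho\mapsto \psi_\rho$ is automatically compatible with the tensor structure by the uniqueness clause of Theorem \ref{vshtExt}. Indeed, given representations $\rho_1,\rho_2$, both $\psi_{\rho_1\otimes \rho_2}$ and $\psi_{\rho_1}\otimes \psi_{\rho_2}$ are shtuka homomorphisms between $\sV_{\rho_1\otimes\rho_2}$ and $\sV'_{\rho_1\otimes\rho_2}$ whose restrictions to $X$ coincide (because $\psi_X$ is tensor compatible as an isomorphism of $\CG$-shtukas); hence they agree on all of $\sX$. The same argument handles duals, the unit, and functoriality in $\rho$, so the family $\{\psi_\rho\}$ defines a tensor-natural transformation of fiber functors.

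Finally, one checks that each $\psi_\rho$ is an isomorphism. Applying Theorem \ref{vshtExt} to $\psi_X^{-1}$ produces extensions $\eta_\rho\colon \sV'_\rho\to \sV_\rho$. The composites $\eta_\rho\circ \psi_\rho$ and $\psi_\rho\circ \eta_\rho$ are shtuka endomorphisms of $\sV_\rho$, resp.\ $\sV'_\rho$, whose restrictions to $X$ are the identity; by the uniqueness clause of Theorem \ref{vshtExt} applied to the identity shtuka endomorphism, they are the identity on all of $\sX$. Hence each $\psi_\rho$ is an isomorphism, and the tensor-compatible system $\{\psi_\rho\}$ corresponds to the desired isomorphism $\psi\colon (\sP,\phi_\sP)\isoarrow (\sP',\phi_{\sP'})$ of $\CG$-shtukas extending $\psi_X$. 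Uniqueness of $\psi$ follows from the uniqueness in Theorem \ref{vshtExt} applied representation by representation. There is no real obstacle beyond bookkeeping: the entire statement is a formal Tannakian consequence of Theorem \ref{vshtExt}.
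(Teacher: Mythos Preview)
Your proof is correct and is exactly the Tannakian argument the paper has in mind: the paper simply writes ``Using the Tannakian equivalence, Theorem \ref{vshtExt} immediately implies'' and marks the corollary with a $\square$. You have spelled out in detail what the paper leaves as a one-line remark.
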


 \section{Local Shimura varieties and their integral models}\label{s:LSV}

\subsection{Local Shimura varieties} 
We recall Scholze's  local Shimura varieties \cite[\S 24]{Schber}  and list some functorial properties of them. 

\subsubsection{Definitions} \label{ss:LSV}Let $\CG$ be a smooth affine group scheme over $\BZ_p$ with generic fiber $G$ a reductive group over $\BQ_p$, and $b\in G(\breve \BQ_p)$, and $\mu$  a conjugacy class of cocharacters of $G$. It is assumed that the $\sigma$-conjugacy class of $b$ lies in $B(G, \mu^{-1})$  and that $\CG$ has connected special fiber. Then Scholze associates to a triple $(\CG, b, \mu)$   a moduli space of shtukas ${\rm Sht}_{\CG, b, \mu}$. It is given as a ``diamond moduli space" of certain $\CG$-shtukas with one leg bounded by $\mu$ with a fixed associated Frobenius element, cf.  \cite[\S\S 23.1, 23.2, 23.3]{Schber}. 

More precisely, consider the functor on ${\rm Perfd}_k$ that sends $S$ to the set of isomorphism classes of quadruples 
\begin{equation}
 (S^\sharp,  \sP , \phi_{\sP }, i_r) ,
 \end{equation}
 where
 \begin{itemize}
 
\item[1)] $S^\sharp$ is an untilt of $S$ over $\Spa({\breve E})$,
\item[2)] $(\sP , \phi_{\sP })$ is a $\CG$-shtuka over $S$ with one leg along $S^\sharp$ bounded 
by $\mu$,
\item[3)] $i_r$ is an isomorphism of $G$-torsors 
\begin{equation}
i_r: G_{\CY_{[r,\infty)}(S)}\xrightarrow{\sim} \sP_{\, |\CY_{[r,\infty)}(S)}
\end{equation}
for large enough $r$ (for an implicit choice of pseudouniformizer $\varpi$), under which
$\phi_{\sP }$ is identified with $\phi_{G, b}$. We call $i_r$ a \emph{framing}.
 \end{itemize}
 Here we have denoted by $G_{\CY_{[r,\infty)}(S)}$ the trivial $G$-torsor over ${\CY_{[r,\infty)}(S)}$ (denoted $G\times {\CY_{[r,\infty)}(S)}$ in \cite[App. to \S 19]{Schber}), and by 
 \begin{equation}\label{phi_G}
 \phi_b=\phi_{G, b}\colon \phi^*(G_{\CY_{[r,\infty)}(S)})=G_{\CY_{[\frac{1}{p}r,\infty)}(S)}\xrightarrow{b\phi} G_{\CY_{[r,\infty)}(S)}
 \end{equation}
  the  $\phi$-linear isomorphism induced by right multiplication by $b$, denoted by $b\times {\rm Frob}$ in \cite[Def. 23.1.1]{Schber}.
In 3) we mean more precisely an equivalence class, where $i_r$ and $i'_{r'}$ are called equivalent if there exists $r''\geq r, r'$ such that ${i_r}_{\, |\CY_{[r'',\infty)}(S)}={i'_{r'}}_{\, |\CY_{[r'',\infty)}(S)}$. 

 Note that the definition above makes sense even when we do not make the hypothesis that $\mu$ is minuscule.  One of the main results of \cite{Schber} is that, in this generality,  
${\rm Sht}_{\CG, b, \mu}$ is a $v$-sheaf and is represented by a locally spatial diamond over $\Spd(E)$; this is shown by employing the crystalline period morphism (also called the ``Grothendieck-Messing period morphism"),
\begin{equation}\label{GMmap}
\pi_{GM}: {\rm Sht}_{\CG, b, \mu}\to {\rm Gr}_{G, \Spd(\breve  E), \leq\mu},
\end{equation}
which is \'etale. We set
\begin{equation}
J_b(\BQ_p)=\{g\in G(W(k)[1/p])\ |\ gb\sigma(g)^{-1}=b\}.
\end{equation}
The group $J_b(\BQ_p)$ acts on ${\rm Sht}_{\CG, b, \mu}$ by changing the framing, 
\[
g\cdot  (S^\sharp,  \sP , \phi_{\sP }, i_r)= (S^\sharp,  \sP , \phi_{\sP }, i_r\circ g^{-1}).
\]

 Let $K=\CG(\BZ_p)$. Using the period morphism, one sees that ${\rm Sht}_{\CG, b, \mu}={\rm Sht}_{G, b, \mu, K}$ only depends on $\CG$ via $K$.
 Then $K$ varies through the open compact subgroups of $G(\BQ_p)$ and  there is an action of $G(\BQ_p)$ on the tower  
$({\rm Sht}_{G,b, \mu, K})_K$.

There is a Weil descent datum on the tower. Let $\tau$ be the relative Frobenius automorphism of $\breve E$ over $E$, and define the $v$-sheaf ${\rm Sht}_{\CG, b, \mu}^{(\tau)}$ by
$$
{\rm Sht}_{\CG, b, \mu}^{(\tau)}(S)={\rm Sht}_{\CG, b, \mu}(S\times_{\Spa (k), \tau}\Spa (k)) .
$$ If $S=\Spa (R, R^+)$, with structure morphism $\epsilon\colon k\to R$, denote by $R_{[\tau]}$ the same ring with the $k$-algebra structure defined by $k\buildrel{x\mapsto x^q}\over\to k\buildrel{\epsilon}\over\to R$. Here $q=|\kappa_E|$. Then $S\times_{\Spa k, \tau}\Spa k=\Spa (R_{[\tau]}, R^+_{[\tau]})$.  We define the Weil descent datum 
\begin{equation}\label{Weildes}
\omega\colon {\rm Sht}_{\CG, b, \mu}\to {\rm Sht}_{\CG, b, \mu}^{(\tau)}
\end{equation}
by sending a point $ (S^\sharp,  \sP , \phi_{\sP }, i_r)$ of ${\rm Sht}_{\CG, b, \mu}$ with values in $S=\Spa (R, R^+)$ to the point of ${\rm Sht}_{\CG, b, \mu}((R, R^+)_{[\tau]})$ given by $ (S^\sharp,  \sP , \phi_{\sP }, i'_{r'})$. Here $r'=q{r}$, and we use the identification 
$$
\phi\colon \CY_{[r, \infty)}(R, R^+)=\CY_{[r', \infty)}((R, R^+)_{[\tau]}) . 
$$
Then $i'_{r'}$ is defined as the composition
\begin{equation*}
 G_{\CY_{[r',\infty)}((R, R^+)_{[\tau]})}\xrightarrow{\phi_b^f}G_{\CY_{[r,\infty)}(R, R^+)}\xrightarrow{\sim} \sP_{\, |\CY_{[r,\infty)}(R, R^+)}= 
 \sP_{\, |\CY_{[r',\infty)}((R, R^+)_{[\tau]})},
\end{equation*}
where $q=p^f$ and where $\phi_b^f=(\phi_b)^f$ denotes the $f$-fold iteration of the $\phi$-linear automorphism of the trivial $G$-torsor in \eqref{phi_G}.

Any group homomorphism $\rho\colon G\to G'$  compatible with the other  data $(G, b, \mu)\to (G',  b', \mu')$ induces a  morphism of towers of $v$-sheaves,
\begin{equation}\label{functLSV}
{\rm Sht}_{G, b, \mu}\to {\rm Sht}_{G', b', \mu'}\times_{\Spd(\breve{E'})}\Spd(\breve E) ,
\end{equation}
compatible with descent data. 
Here $ E\supset E'$ denote the corresponding reflex fields.  

 The \emph{local Shimura varieties} (LSV) correspond to the cases that $\mu$ is minuscule, i.e., when $(G, b, \mu)$ is a local Shimura datum, cf. \S \ref{sss:backg}. This is the main case of interest here, and we will concentrate on it.  In this case, 
${\rm Gr}_{G, \Spd(\breve  E), \leq \mu}=\CF^\diamondsuit_{G, \mu, \breve  E}$ 
and as in \cite[\S 24.1]{Schber}, 
\begin{equation}
{\rm Sht}_{G,b, \mu, K}=\CM_{G,b,\mu, K}^\diamondsuit ,
\end{equation}
for a uniquely-determined smooth rigid analytic space $\CM_{G,b,\mu, K}$ over $\breve  E$. 
{\cmag
\begin{proposition}\label{LSVqcqs}
\noindent(i)  Assume that  $\rho^{-1}(Z_{G'})\subset Z_G$, where $Z_G$ and $Z_{G'}$ denote the centers of $G$ and $G'$. 
  Then the induced morphism of towers of rigid-analytic spaces with $G'(\BQ_p)$-action is  a closed immersion,
\begin{equation*}
{\CM}_{G, b, \mu}\times^{G(\BQ_p)}G'(\BQ_p)\to {\CM}_{G', b', \mu'}\times_{\Sp(\breve{E'})}\Sp(\breve E) .
\end{equation*}
 (ii) Suppose that $\rho:G\to G'$ has finite kernel. Then, for $\rho(K)\subset K'$, the morphism  \eqref{functLSV} induces a qcqs morphism,
$$
{\CM}_{G, b, \mu, K}\to {\CM}_{G', b', \mu', K'}\times_{\Sp(\breve{E'})}\Sp(\breve E).
$$

\end{proposition}
\begin{proof}
For (i), we claim that   the assumption implies
\begin{equation}\label{eqadm}
(\CF_{G, \mu, \breve  E})^{\rm adm}=\rho^{-1}((\CF_{G', \mu', \breve  E'}\times_{\Sp(\breve E')}\Sp(\breve E))^{\rm adm}) .
\end{equation}
Here there appear the admissible sets, which are by definition the images under the crystalline period maps, comp. \eqref{GMmap}. This then implies that
$$
{\CM}_{G, b, \mu}\times^{G(\BQ_p)}G'(\BQ_p)\simeq  ({\CM}_{G', b', \mu'}\times_{\Sp(\breve{E'})}\Sp(\breve E))\times_{(\CF_{G', \mu', \breve  E'}\times_{\Sp(\breve E')}\Sp(\breve E))} \CF_{G, \mu, \breve  E} ,
$$
hence the LHS is a closed subspace of ${\CM}_{G', b', \mu'}\times_{\Sp(\breve{E'})}\Sp(\breve E)$. This means that for any $K\subset G(\BQ_p)$, there exists $K'\subset G'(\BQ_p)$ with $K'\supset \rho(K)$ such that the induced map ${\CM}_{G, b, \mu, K}\to {\CM}_{G', b', \mu', K'}\times_{\Sp(\breve{E'})}\Sp(\breve E)$ is a closed immersion.  

To prove the claim, consider a $C$-valued point $x$ of $\CF_{G, \mu, \breve E}$. Then $x$  lies in the admissible locus if and only if the corresponding modification $\CE_{b,x}$ of $\CE_b$ at $\infty$ is a trivial  $G$-bundle on the FF curve, cf. \cite[Thm. 22.6.2]{Schber}.  Note that, as we are assuming that $[b]\in B(G, \mu^{-1})$, we have that $\kappa(b)=-\mu^\natural$, and hence $\CE_{b,x}$ is trivial if and only if  $\CE_{b,x}$ is semi-stable, or, equivalently,   the corresponding element in $B(G)$ is basic. We note that this characterization shows that the admissible locus is open, as follows from the upper semi-continuity of the Newton point (\cite[Cor. 22.5.1]{Schber}) and the local constancy of the Kottwitz invariant in $\pi_1(G)_\Gamma$ (\cite[Thm. III.2.7]{FS}). Now the image of $x$ lies in the admissible set in $\CF_{G', \mu', \breve E'}$ if and only if the  $G'$-bundle $\CE_{b', \rho_*(x)}=\rho_*(\CE_{b, x})$ is a semi-stable $G'$-bundle on the FF curve. If $\CE_{b, x}$ is the trivial $G$-bundle then, obviously, also $\rho_*(\CE_{b, x})$ is the trivial $G'$-bundle. Conversely, if $\rho_*(\CE_{b, x})$ corresponds to a basic element in $B(G')$ then, since  $\rho^{-1}(Z_{G'})\subset Z_G$, the same holds for $\CE_{b, x}$. In fact, under our assumption the natural map $\rho_*:B(G)\to B(G')$ satisfies $\rho_*^{-1}(B(G')_{\rm basic})=B(G)_{\rm basic}$. Indeed, $\nu_b$ is central in $G$ if and only $\nu_{\rho(b)}=\rho\circ\nu_b$ is central in $G$.  

For (ii), we note that the assumption of (i) is satisfied: indeed, any element $\tilde z\in \rho^{-1}(Z_{G'})$ defines a morphism $G\to\ker(\rho): g\mapsto \tilde z g \tilde z^{-1}g^{-1}$; since $G$ is connected and $\ker(\rho)$ finite, the morphism is constant, i.e., $\tilde z\in Z_G$. The morphism in (ii) factors as 
\begin{equation*}
{\CM}_{G, b, \mu, K}\to {\CM}_{G, b, \mu}\times^{G(\BQ_p)}G'(\BQ_p)/K'\to {\CM}_{G', b', \mu', K'}\times_{\Sp(\breve{E'})}\Sp(\breve E) .
\end{equation*}
By (i), the second morphism is qsqc. The first two  spaces map by \'etale morphisms to $(\CF_{G, \mu, \breve  E})^{\rm adm}$ (with fibers $G(\BQ_p)/K$, resp. $G'(\BQ_p)/K'$). Hence the first morphism is qsqc, and therefore also the composed morphism.
\end{proof}
}

\quash{
We use the alternative description of ${\rm Sht}_{G, b, \mu, K}(S)$ in terms\mar{\cred  Fix this proof? ``I don't understand''.}
 of $\underline{K}$-lattices in a pro-\'etale $\underline{G(\BQ_p)}$-torsor over the admissible set $(\CF_{G, \mu, \breve  E})^{\rm adm}$, which is used in Scholze's proof of the fact that these moduli functors are locally spatial diamonds, cf. \cite[Prop. 23.3.3]{Schber}.  The  proof of \cite[Prop. 23.3.3]{Schber} shows that if the lattices in question are allowed to vary in a bounded way,  the corresponding sub-$v$-sheaf is a spatial diamond, and in particular qcqs. This proves (i). 
}

   \subsubsection{Pushout functoriality}\label{ss:pushout}
Let $\rho\colon G\to G'$ be a group homomorphism compatible with the other data $(G, b, \mu)\to (G',  b', \mu')$ of local Shimura varieties. We assume that the kernel is a central subgroup and that the cokernel is a torus, i.e., $\rho$ induces an isomorphism
\begin{equation}\label{pushoutad}
\rho_\ad\colon G_\ad\isoarrow G'_\ad .
\end{equation}
  In \cite{PRintlsv} such $\rho$ are called    ad-isomorphisms (following Kottwitz). 
\begin{proposition}\label{pushoutprop}
Assume \eqref{pushoutad}.  The  morphism \eqref{functLSV} of pro-systems
 \[
{\CM}_{G, b, \mu}\to {\CM}_{G', b', \mu'}\times_{\Spa(\breve{E'})}\Spa(\breve E)
\]
 with action of $G(\BQ_p)$, resp. $G'(\BQ_p)$, induces an isomorphism of pro-systems with $G'(\BQ_p)$-action,
\begin{equation}\label{mapGG'}
{\CM}_{G, b, \mu}\times^{G(\BQ_p)}G'(\BQ_p)\isoarrow {\CM}_{G', b', \mu'}\times_{\Spa(\breve{E'})}\Spa(\breve E) .
\end{equation}
\end{proposition}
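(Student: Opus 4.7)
The plan is to compare both sides at infinite level via the crystalline period morphism \eqref{GMmap} and then descend to finite level. First I would observe that the ad-isomorphism assumption \eqref{pushoutad} forces $\rho^{-1}(Z_{G'})\subset Z_G$: indeed, for $g\in\rho^{-1}(Z_{G'})$, its image in $G_\ad\simeq G'_\ad$ is trivial, so $g\in Z_G$. Hence the hypothesis of Proposition \ref{LSVqcqs}(ii) is satisfied, and the identity \eqref{eqadm} established in its proof, together with the canonical identification $\CF_{G,\mu}\simeq \CF_{G',\mu'}$ after base change to $\breve E$ (both flag varieties depending only on $(G_\ad,\mu_\ad)$), yields an identification of admissible loci over $\Sp(\breve E)$, which I denote by $X^{\rm adm}$.

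Next, I would exploit the fact that $\CM_{G,b,\mu,\infty}:=\varprojlim_K \CM_{G,b,\mu,K}$ identifies with a pro-\'etale $\underline{G(\BQ_p)}$-torsor $\BP_G$ over $X^{\rm adm}$, and similarly $\CM_{G',b',\mu',\infty}=\BP_{G'}$ is a pro-\'etale $\underline{G'(\BQ_p)}$-torsor. Over a geometric point $x\in X^{\rm adm}(C)$, the fiber $\BP_{G,x}$ is the set of $G$-trivializations of the modification $\CE_{b,x}$ on the relative Fargues-Fontaine curve (necessarily a trivial $G$-bundle, by admissibility), and similarly for $\BP_{G',\rho(x)}$, using that $\rho_*\CE_{b,x}=\CE_{b',\rho(x)}$. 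Pushforward of shtukas along $\rho$ then provides a $\rho$-equivariant morphism $\BP_G\to \BP_{G'}$, and hence a morphism
\[
\Phi\colon \BP_G \times^{\underline{G(\BQ_p)}}\underline{G'(\BQ_p)} \longrightarrow \BP_{G'}
\]
of pro-\'etale $\underline{G'(\BQ_p)}$-torsors over $X^{\rm adm}$; the map \eqref{mapGG'} at finite level $K'\subset G'(\BQ_p)$ is obtained from $\Phi$ by passing to $K'$-quotients, while compatibility with the Weil descent data \eqref{Weildes} is built into the construction via $\rho_*$.

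To finish, I would show $\Phi$ is an isomorphism. Since the pushout of a pro-\'etale torsor along a continuous group homomorphism is again a pro-\'etale torsor, both source and target of $\Phi$ are pro-\'etale $\underline{G'(\BQ_p)}$-torsors over $X^{\rm adm}$, and $\Phi$ is $G'(\BQ_p)$-equivariant by construction. It therefore suffices to check $\Phi$ on geometric fibers, where both sides become non-empty $G'(\BQ_p)$-torsors, and any equivariant map between such is automatically an isomorphism. Note that the possible failure of surjectivity of $\rho\colon G(\BQ_p)\to G'(\BQ_p)$ (the cokernel being a torus with potentially nontrivial $H^1(\BQ_p,-)$, as in $\SL_n\to \GL_n$) is precisely what is absorbed by the pushout on the LHS. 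The step I expect to require the most care is the careful formulation of the pushout of pro-\'etale torsors and the verification that the resulting isomorphism at infinite level descends, upon taking $K'$-quotients, to a genuine isomorphism of pro-systems of rigid-analytic spaces; this should however reduce to formal properties of torsors under totally disconnected groups in the pro-\'etale topology, together with the already-established identification of admissible loci.
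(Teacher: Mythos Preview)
Your proposal is correct and follows essentially the same approach as the paper: both identify the admissible loci via the ad-isomorphism (using Proposition \ref{LSVqcqs}(ii) and its proof), and then compare the towers as torsors over this common base. The only difference is in packaging the final step: the paper uses the qcqs property from Proposition \ref{LSVqcqs}(ii) to reduce to checking bijectivity on $C$-valued points directly (where both fibers are identified with $G'(\BQ_p)$), whereas you phrase the same argument in terms of an equivariant map of pro-\'etale $\underline{G'(\BQ_p)}$-torsors at infinite level; the paper's route is slightly more economical and avoids the descent-to-finite-level verification you flag as requiring care.
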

\begin{proof} 
By  Proposition \ref{LSVqcqs} (ii), the morphism of pro-systems is qcqs. Therefore it remains  to show the bijectivity of \eqref{mapGG'} on $C$-valued points, for any algebraically closed non-archimedean field $C$. Consider the following commutative diagram of (pro-systems of) rigid analytic spaces over $\breve E$, in which the vertical arrows are the crystalline period maps,
 \begin{displaymath}
   \xymatrix{
          {{\CM}}_{G, b, \mu} \ar[r]^{} \ar[d] & {{\CM}}_{G', b', \mu'}\times_{\Spa(\breve{E'})}\Spa(\breve E)  \ar[d]\\
         \CF_{G_\ad, \mu_\ad,  E_\ad}\times_{\Spa({E_\ad})}\Spa(\breve E)\ar[r]^{} & \CF_{G'_\ad, \mu'_\ad,  E'_\ad}\times_{\Spa({E'_\ad})}\Spa(\breve E).
        }
    \end{displaymath}
   The images of the vertical maps are the admissible sets. By our assumption, the lower horizontal arrow is an isomorphism. Under this isomorphism, the admissible sets correspond to each other, as follows from the argument in the proof of (ii) of Proposition \ref{LSVqcqs}.  Now consider the following diagram, where in the lower line appear the admissible sets,
    \begin{displaymath}
   \xymatrix{
          {{\CM}}_{G, b, \mu}\times^{G(\BQ_p)}G'(\BQ_p) \ar[r]^{} \ar[d] & {{\CM}}_{G', b', \mu'}\times_{\Spa(\breve{E'})}\Spa(\breve E)  \ar[d]\\
        (\CF_{G_\ad, \mu_\ad,  E_\ad}\times_{\Spa(E_\ad)}\Spa(\breve E))^{\rm adm}\ar[r]^{\simeq} &  (\CF_{G'_\ad, \mu'_\ad,  E'_\ad}\times_{\Spa(\breve{E'}_\ad)}\Spa(\breve E))^{\rm adm} .
        }
    \end{displaymath}
Now the fibers of the left vertical arrow are identified with $G(\BQ_p)\times^{G(\BQ_p)}G'(\BQ_p)=G'(\BQ_p)$, and hence map bijectively to the fibers of the right vertical arrow.
\end{proof}

\subsubsection{LSV of dimension zero}
Recall that $\dim {\CM}_{G, b, \mu}=\dim\CF_{G, \mu}=\langle\mu,2\rho\rangle$. Hence 
$$
\dim {\CM}_{G, b, \mu}=0 \iff \mu\colon \BG_{m, {\bar\BQ_p} }\to G_{\bar\BQ_p} \text{ is central. }
$$
Let us assume this. Denote by $Z^o$ the connected center of $G$, and by $\mu_Z\in X_*(Z^o)$ the element corresponding to $\mu$. Then $E(G, \mu)=E(Z^o, \mu_Z)$. Let $b_Z\in Z^o(\breve\BQ_p)$ be a representative of the unique $\sigma$-conjugacy class in $B(Z^o, \mu^{-1})$. Via push-out of torsors along the map $Z^o\to G$, we obtain a morphism of rigid-analytic spaces over $\breve E$,
\begin{equation}\label{zerodim}
{\CM}_{Z^o, b_Z, \mu_Z}\to {\CM}_{G, b, \mu} .
\end{equation}

\begin{proposition} Let $\dim {\CM}_{G, b, \mu} =0$. The morphism \eqref{zerodim} induces an isomorphism of towers of rigid-analytic spaces over $\breve E$, compatible with Weil descent down to $E=E(G, \mu)$, 
$$
{\CM}_{Z^o, b_Z, \mu_Z}\times^{Z^o(\BQ_p)}G(\BQ_p)\isoarrow {\CM}_{G, b, \mu} .
$$
\end{proposition}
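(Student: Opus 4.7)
Since $\mu$ is central, $P_\mu = G$, so $\CF_{G,\mu} = \Spec E$ is a single point; likewise $\CF_{Z^o,\mu_Z} = \Spec E$. The Grothendieck--Messing period maps for $(G,b,\mu)$ and $(Z^o,b_Z,\mu_Z)$ are therefore \'etale morphisms landing in a single point, so each of $\CM_{G,b,\mu}$ and $\CM_{Z^o,b_Z,\mu_Z}$ is a $0$-dimensional smooth rigid-analytic tower over $\Sp\breve E$, i.e.\ a disjoint union of $\Sp$'s of finite unramified extensions of $\breve E$. To prove that \eqref{zerodim} induces the claimed isomorphism of towers compatibly with $G(\BQ_p)$-action and Weil descent, it therefore suffices to check this on $C$-valued points for any algebraically closed complete non-archimedean extension $C$ of $\breve E$, equivariantly for the relevant actions and descent data.

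For such $C$ we use Proposition~\ref{pairs1} to describe $\CM_{G,b,\mu,K}(C)$: a $\CG$-shtuka $(\sP,\phi_\sP)$ over $\Spa(C^\flat)$ with one leg bounded by the central minuscule $\mu$ is equivalent to a pair $(\BP,\mathrm{HT}(\sP))$ consisting of a pro-\'etale $\underline{\CG(\BZ_p)}$-torsor and a $\underline{\CG(\BZ_p)}$-equivariant map $\mathrm{HT}(\sP)\colon \BP \to \CF_{G,\mu^{-1}}^\diam$. Since $\CF_{G,\mu^{-1}}^\diam = \Spd E$ is a single point, $\mathrm{HT}(\sP)$ carries no information, and $(\sP,\phi_\sP)$ is determined by $\BP$, which is itself trivial over $\Spa(C^\flat)$ after a choice of section. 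The framing $i_r$ then corresponds, modulo the action of $J_b(\BQ_p)$, to an identification of the Frobenius $G$-isocrystal over $\breve\BQ_p$ extracted from $(\sP,\phi_\sP)$ as in \S\ref{overpoint} with the standard one $(G_{\breve\BQ_p}, b\phi)$. The analogous description applies verbatim to $\CM_{Z^o,b_Z,\mu_Z,K_Z}(C)$ with $K_Z = K \cap Z^o(\BQ_p)$.

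The crux is that centrality of $\mu$ forces the element $g_\mu \in G(B_{\mathrm{dR}}^+)$ measuring the relative position of $\phi_\sP$ at the leg to lie in $Z^o(B_{\mathrm{dR}}^+)$---concretely, one may take $g_\mu = \mu(\xi)$, which factors through $Z^o$. After normalizing so that $b$ is the image under $Z^o \hookrightarrow G$ of $b_Z$---possible because $[b]\in B(G,\mu^{-1})$ is basic with Kottwitz invariant $-\mu^\sharp$ and hence descends from the unique class $[b_Z] \in B(Z^o,\mu_Z^{-1})$---every framed $\CG$-shtuka of this type admits a canonical reduction of structure group to a framed $\CZ^o$-shtuka bounded by $\mu_Z$, where $\CZ^o$ is the smooth model of $Z^o$ obtained as the connected component of the scheme-theoretic intersection $\CG \cap Z^o$. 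Unravelling identifications yields a $G(\BQ_p)$-equivariant bijection
\[
\Big(\CM_{Z^o,b_Z,\mu_Z,K_Z}(C) \times G(\BQ_p)/K\Big)\big/Z^o(\BQ_p) \;\isoarrow\; \CM_{G,b,\mu,K}(C)
\]
for each level $K$, compatible as $K$ varies and with Weil descent data. The main obstacle is precisely this canonical reduction: one must argue, via the Tannakian formalism for $\CG$-shtukas combined with Ansch\"utz's extension theorem (Proposition~\ref{AnExtension}), that central $\mu$-boundedness genuinely imposes a $\CZ^o$-structure on $(\sP,\phi_\sP)$ that is functorial in $C$-points and compatible with the framings.
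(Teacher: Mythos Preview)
Your approach is substantially more elaborate than necessary, and the key step---the canonical reduction of a framed $\CG$-shtuka to a framed $\CZ^o$-shtuka---is explicitly flagged as ``the main obstacle'' but never actually carried out. Invoking the Tannakian formalism and Ansch\"utz's theorem is not a proof; in particular, you have not explained why the \emph{framing} $i_r$, which is a $G$-torsor isomorphism over $\CY_{[r,\infty)}(S)$, reduces in any controlled way to $Z^o$-data. (It does not reduce canonically: that is exactly what the extra $G(\BQ_p)$-factor is absorbing, and sorting this out correctly is the whole content of the bijection you assert.) Also, your first paragraph reduces to checking $C$-points without justifying why a bijection on $C$-points suffices; the paper uses the qcqs property of the map, established in Proposition~\ref{LSVqcqs}(ii), for this.

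The paper bypasses all of this by using the Grothendieck--Messing period map directly. Since $\mu$ is central, both $\CF_{G,\mu}$ and $\CF_{Z^o,\mu_Z}$ are single points, and the period maps are \'etale with fibers $G(\BQ_p)/K$ and $Z^o(\BQ_p)/K_Z$ over their respective admissible loci (which here coincide with the full flag varieties). Hence for any algebraically closed non-archimedean $C/\breve E$, the $C$-points of the tower $\CM_{G,b,\mu}$ are identified with $G(\BQ_p)$, and so are the $C$-points of $\CM_{Z^o,b_Z,\mu_Z}\times^{Z^o(\BQ_p)}G(\BQ_p)$. Combined with qcqs, this gives the isomorphism in two lines. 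Your route via Proposition~\ref{pairs1} and an explicit shtuka reduction could perhaps be made to work, but it replaces a trivial fiber computation with a delicate torsor-reduction argument that you have not completed.
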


\begin{proof}
Both $\CF(Z^o, \mu)$ and $\CF(G, \mu)$ reduce to a point. The assertion follows because the map is qsqc and since for any algebraically closed non-archimedean extension $C$ of $E$, the $C$-points of both source and target are identified with $G(\BQ_p)$. 
\end{proof}

\subsubsection{The torus case}
Let $G=T$ be a torus. In this case, there is a unique $[b]\in B(T, \mu^{-1})$, and ${\CM}_{T, b, \mu}$ is zero-dimensional. Let $\BC_p$ be the completion of $\bar\BQ_p$. 
\begin{proposition} The rigid-analytic space ${\CM}_{T, b, \mu}$  has a natural model over $\Sp (E)$, compatible with its Weil descent datum. 
 There is an identification 
$$
{\CM}_{T, b, \mu, K}(\BC_p)=T(\BQ_p)/K, \quad K\subset T(\BQ_p), 
$$
such that $\gamma\in\Gal(\bar\BQ_p/E)$ acts through its quotient $\Gal(\bar\BQ_p/E)^\ab$; furthermore, $\gamma$ with preimage $\epsilon\in E^\times$ 
 under the reciprocity map ${\rm rec}_E\colon E^\times \to \Gal(\bar\BQ_p/E)^\ab$ acts as 
$$
xK\mapsto {\rm N}_\mu(\epsilon)xK . 
$$
Here ${\rm N}_\mu$ is the composition ${\rm N}_\mu\colon  \Res_{E/\BQ_p}(\BG_m)\xrightarrow{\Res_{E/\BQ_p}(\mu)} \Res_{E/\BQ_p}(T)\xrightarrow{{\rm N}_{E/\BQ_p}} T$. 
\end{proposition}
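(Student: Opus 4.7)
The plan is threefold: (a) identify $\CM_{T, b, \mu, K}$ as a pro-(finite-\'etale) cover of $\Sp(\breve E)$ with fiber $T(\BQ_p)/K$; (b) use the Weil datum $\omega$ from \eqref{Weildes} to descend to a model over $\Sp(E)$, thereby obtaining a $\Gal(\bar\BQ_p/E)$-action on $\BC_p$-points; (c) identify the resulting character with ${\rm N}_\mu\circ {\rm rec}_E^{-1}$ via local class field theory.

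For step (a), since $T$ is a torus, $\mu$ is trivially minuscule and $\CF_{T,\mu} = T/T$ is a single point. By Proposition \ref{pairs2}, $T$-shtukas with one leg bounded by $\mu$ over any locally Noetherian adic space $Y$ over $\Sp(E)$ correspond to pro-\'etale $\underline{T(\BZ_p)}$-torsors on $Y^\diam$, the map $\mathrm{H}\colon \BP \to \CF^\diam_{T, \mu^{-1}} = \Spd(E)$ being automatic. The framing at infinity in the moduli problem is governed by $J_b(\BQ_p) = T(\BQ_p)$ (using commutativity of $T$), and on a geometric point $\Spa(\BC_p, \CO_{\BC_p}) \to \Sp(\breve E)$ the underlying torsor is trivial, so a choice of framing modulo $K$ picks out a coset in $T(\BQ_p)/K$. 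This yields a $T(\BQ_p)$-equivariant bijection $\CM_{T, b, \mu, K}(\BC_p) \simeq T(\BQ_p)/K$ and globally realizes $\CM_{T, b, \mu, K}\to \Sp(\breve E)$ as pro-(finite-\'etale).

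For step (b), the Weil datum $\omega$ combined with rigid-analytic \'etale descent produces a pro-(finite-\'etale) model over $\Sp(E)$. Evaluating on $\BC_p$-points gives an action of $\Gal(\bar\BQ_p/E)$ on $T(\BQ_p)/K$ by $T(\BQ_p)$-equivariant bijections, which must be translations by elements of the abelian group $T(\BQ_p)/K$. Passing to the limit over $K$, this defines a continuous character $\chi_\mu\colon \Gal(\bar\BQ_p/E)^\ab \to T(\BQ_p)$, and the sought formula is $\chi_\mu = {\rm N}_\mu\circ {\rm rec}_E^{-1}$.

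The bulk of the work, and the main obstacle, is step (c). I would exploit functoriality in $(T,\mu)$: every such pair with $\mu$ defined over $E$ receives a canonical map from the universal pair $(T_0,\mu_0) := (\Res_{E/\BQ_p}\BG_m, \mu_0)$, where $\mu_0$ is the cocharacter into the factor indexed by a fixed embedding $E\hookrightarrow \bar\BQ_p$, and $\mu$ is recovered from $\mu_0$ by pushout. Both $\chi_\mu$ and ${\rm N}_\mu\circ {\rm rec}_E^{-1}$ are compatible with pushout under this map (the former via the functoriality \eqref{functLSV}, the latter by construction of ${\rm N}_\mu$), reducing the problem to the universal case. There, $\CM_{T_0, b, \mu_0}$ is identified with the Lubin-Tate tower for the formal $O_E$-module $\hat\BG_{m,O_E}$ (for instance by checking that the deformation-theoretic universal property of the latter is encoded by the Hodge-Tate pair furnished by Proposition \ref{pairs2}, or via Example \ref{pdivExample}), whose Galois representation is the classical Lubin-Tate character ${\rm rec}_E^{-1}\colon \Gal(\bar\BQ_p/E)^\ab \to E^\times = T_0(\BQ_p)$, matching ${\rm N}_{\mu_0}\circ{\rm rec}_E^{-1}$. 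The delicacy lies in reconciling sign and inversion conventions—those in the reciprocity map (arithmetic vs.\ geometric Frobenius), in the Bia{\l}ynicki-Birula twist $\mu^{-1}$ entering \eqref{DefHT}, and in the exponent $f$ appearing in \eqref{Weildes}—and in verifying that they collectively produce the unsigned formula ${\rm N}_\mu$ rather than its inverse; this requires careful $p$-adic Hodge-theoretic bookkeeping and is where the genuine technical content resides.
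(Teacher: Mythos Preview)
Your proposal is correct and follows essentially the same strategy as the paper: reduce via pushout functoriality to the universal pair $(T_0,\mu_0)=(\Res_{E/\BQ_p}\BG_m,\mu_0)$, then identify $\CM_{T_0,b_0,\mu_0}$ with the Lubin-Tate tower and read off the Galois action from classical Lubin-Tate theory. The only difference is that the paper makes the latter identification by quoting the comparison of local Shimura varieties with Rapoport--Zink spaces \cite[Cor.~24.3.5]{Schber} (and then \cite{MChen}) rather than arguing directly via Proposition~\ref{pairs2} or Example~\ref{pdivExample}; this citation also absorbs the sign bookkeeping you flag in step~(c).
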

\begin{proof}
By push-out functoriality, we are reduced to the case where $T=T_0=\Res_{E/\BQ_p}(\BG_m)$ and $\mu=\mu_0$, where $\mu_0\in X_*(T)={\rm Ind}_{\BQ_p}^E(\BZ)$ is the canonical element given by 
$$
 (\mu_0)_\varphi=\begin{cases}
    \begin{aligned}
      1,  &\text{ if $\varphi=\id$ }\\
      0  , &\text{ otherwise.}     
    \end{aligned}
  \end{cases}
$$
Consider the rational RZ-data of EL-type given by the semi-simple $\BQ_p$-algebra $E$, the standard $E$-vector space $V$ of dimension $1$ and the cocharacter $\mu_0$, comp. \cite[\S 4.1]{RV}. The associated algebraic group over $\BQ_p$ associated to these rational RZ-data is $T_0$. Let $(M_{(E, V, \mu_0)})_{ K}, K\subset T_0(\BQ_p)$ be the associated RZ-tower over $\breve E$, with its descent datum to $E$. Then $(M_{(E, V, \mu_0)})_{ K}$ coincides with ${\CM}_{T_0, b_0, \mu_0, K}$, cf. \cite[Cor. 24.3.5]{Schber}. The result follows after identifying $(M_{(E, V, \mu_0)})_{ K}$ with the Lubin-Tate tower, cf. \cite[\S 3]{MChen}. More precisely, let $X_\pi$ be the Lubin-Tate group over $E$ corresponding to a uniformizer $\pi$ of $E$. Then the $p$-adic Tate module $T_p(X_\pi)$ is independent of $\pi$ (\cite[\S 3.7]{Ser}),  the space
 of trivializations of $T_p(X_\pi)$ is $(M_{(E, V, \mu_0)})_{ K}$, and the Galois action by $\Gal_E$ is given by Lubin-Tate theory, cf. \cite[\S 3.4]{Ser}. 
\end{proof}

 \subsection{Integral models}
 Suppose in addition that  $K$ is parahoric with $\CG$ the corresponding Bruhat-Tits group scheme. Then Scholze gives in \cite[Def. 25.1.1]{Schber} also  a construction of an ``integral model" $\CM^{\rm int}_{\CG, b, \mu}$  of ${\rm Sht}_{G, b, \mu, K}$ over $\Spd(O_{\breve E})$. 
 
 \begin{definition}\label{defintSht}
  Let $(G, b, \mu)$ be a local Shimura datum, {\cmag and let $\CG$ be a parahoric group scheme which is a model of $G$ over $\BZ_p$.}  The \emph{integral moduli 
  space of local shtuka} $\CM^{\rm int}_{\CG, b, \mu}$ is
the  functor  that sends $S\in {\rm Perfd}_k$ to the  set of isomorphism classes of tuples 
 \begin{equation}
 (S^\sharp,  \sP , \phi_{\sP }, i_r) ,
 \end{equation}
 where
 \begin{itemize}
 
\item[1)] $S^\sharp$ is an untilt of $S$ over $\Spa( O_{\breve E})$,
\item[2)] $(\sP , \phi_{\sP })$ is a $\CG$-shtuka over $S$ with one leg along $S^\sharp$ bounded 
by $\mu$,
\item[3)] $i_r$ is a framing, i.e. an isomorphism of $G$-torsors
\begin{equation}
i_r: G_{\CY_{[r,\infty)}(S)}\xrightarrow{\sim} \sP_{\, |\CY_{[r,\infty)}(S)}
\end{equation}
for large enough $r$ (for an implicit choice of pseudouniformizer $\varpi$), under which
$\phi_{\sP }$ is identified with $\phi_b=b\times {\rm Frob}_S$.
 \end{itemize}
 \end{definition}

  This definition makes sense even if $\mu$ is not minuscule. By loc. cit.,   $\CM^{\rm int}_{\CG, b, \mu}$ is a $v$-sheaf whose generic fiber 
 is {\cmag ${\rm Sht}_{G, b, \mu, K}$.}  In fact, by \cite[Prop. 2.23]{Gl21},  $\CM^{\rm int}_{\CG, b, \mu}$ is a small $v$-sheaf. 
 If $\mu$ is minuscule, we can think of the $v$-sheaf $\CM^{\rm int}_{\CG, b, \mu}$ as an integral model 
 of the rigid-analytic local Shimura variety $\CM_{G,b,\mu, K}$ and call it \emph{the integral local Shimura variety} (integral LSV). The Weil descent datum \eqref{Weildes} on $\CM_{G,b,\mu, K}$ extends to a Weil descent datum on $\CM^{\rm int}_{\CG, b, \mu}$ from $O_{\breve E}$ down to $O_E$.  Also note that if $b'=g^{-1} b \sigma(g)$, then  the association $(S^\sharp, \CP, \phi_\CP, i_r)\mapsto (S^\sharp, \CP, \phi_\CP, i_r\circ g)$ induces an isomorphism $\CM^{\rm int}_{\CG, b, \mu}\isoarrow\CM^{\rm int}_{\CG, b', \mu}$.

  The following conjecture is implicit in \cite[\S 25.1]{Schber}.
 \begin{conjecture}[Scholze]\label{repMint}
 Assume that $\mu$ is minuscule. There exists a normal formal scheme $\sM_{\CG,b, \mu}$, flat and  locally formally of finite type over $\Spf O_{\breve E}$, whose associated $v$-sheaf is equal to $\CM^{\rm int}_{\CG, b, \mu}$. (Note that by \cite[Prop. 18.4.1]{Schber} this formal scheme is unique if it exists.)
 \end{conjecture}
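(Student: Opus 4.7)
The natural approach is to follow the blueprint established by Theorem \ref{thmrepintIntro}, splitting the argument into a reduction to the local Hodge type situation and a local analysis. First, I would reduce to the local Hodge type case by means of a central $z$-extension: for a general reductive group $G$ over $\BQ_p$, one can find a central extension $1\to T\to \tilde G\to G\to 1$ by a torus $T$ such that $(\tilde G, \tilde\mu)$ admits a faithful representation $\rho\colon \tilde G\hookrightarrow \GL_n$ with $\rho\circ \tilde\mu$ minuscule. Choosing a parahoric $\tilde\CG$ of $\tilde G$ covering $\CG$ and a lift $\tilde b\in \tilde G(\breve\BQ_p)$ of $b$, the tuple $(p,\tilde G,\tilde\mu,\tilde\CG)$ is then of local Hodge type. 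Via an integral analogue of the pushout functoriality of Proposition \ref{pushoutprop}, so that $\CM^{\rm int}_{\tilde\CG,\tilde b,\tilde\mu}$ presents $\CM^{\rm int}_{\CG,b,\mu}$ after quotienting by the $T(\BQ_p)$-action and twisting, the representability of $\CM^{\rm int}_{\CG,b,\mu}$ would follow from that of $\CM^{\rm int}_{\tilde\CG,\tilde b,\tilde\mu}$.

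In the local Hodge type case, one applies Theorem \ref{thmrepintIntro}, whose only remaining hypothesis is the representability of the formal completions $\widehat{\CM^{\rm int}_{\tilde\CG,\tilde b,\tilde\mu}}_{/x}$ at all $x\in X_{\tilde\CG}(\tilde b,\tilde\mu^{-1})(k)$. The key step is therefore to establish this representability. The natural route is to verify Conjecture \ref{conjtubeLM}, which would identify each such formal completion with the formal completion $\widehat{\BM^{\rm loc}_{\tilde\CG,\tilde\mu}}_{/y}$ of the scheme local model at a point $y$ corresponding to $x$ under the specialization map. Since the scheme local model exists by Theorem \ref{LMconj}, its formal completions are representable by complete Noetherian local rings, and the desired conclusion would follow immediately.

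The main obstacle is therefore the construction and analysis of an isomorphism of $v$-sheaves $\widehat{\CM^{\rm int}_{\tilde\CG,\tilde b,\tilde\mu}}_{/x}\isoarrow \widehat{\BM^{\rm loc}_{\tilde\CG,\tilde\mu}}_{/y}$. The strategy is to construct an integral version of the Grothendieck-Messing period morphism \eqref{GMmap} taking the formal completion of the integral LSV to that of the local model, and to show it is an isomorphism. Formal \'etaleness should be a consequence of Ansch\"utz's trivialization theorem for $\CG$-torsors on the punctured spectrum $\Spec(W(O_C))\setminus\{s\}$, used via the tannakian description of $\CG$-shtukas to reduce to the $\GL_n$-case, where the statement is controlled by Grothendieck-Messing deformation theory of $p$-divisible groups. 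A careful comparison of the admissible locus on both sides, using the classification of $\CG$-shtukas by meromorphic Frobenius crystals from Theorem \ref{FFisocrystal} together with the description of $\BM^v_{\tilde\CG,\tilde\mu}(K^\diam)$ in terms of the admissible set \eqref{ptsofvLMo}, would then yield the bijectivity on $k$-points, which combined with formal \'etaleness gives the isomorphism. This is precisely the program carried out in the sequel \cite{PRintlsv}, and one expects it to succeed at least when $p\neq 2$; the residual cases at $p=2$ will require additional care owing to the exceptional situations still outstanding in the scheme local model conjecture.
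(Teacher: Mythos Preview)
The statement you are addressing is a \emph{conjecture} in the paper, not a theorem; the paper contains no proof of it. What the paper does prove is the conditional result Theorem~\ref{thmrepint}: representability holds in the local Hodge type case \emph{provided} the formal completions $\widehat{\CM^{\rm int}_{\CG,b,\mu}}_{/x}$ are already known to be representable (Conjecture~\ref{conjtubeMint}). The unconditional representability (in the abelian type case, for $p\neq 2$) is deferred to the sequel \cite{PRintlsv}, as you yourself note. So there is no ``paper's own proof'' to compare against; your proposal is a strategy sketch for results beyond the scope of the present paper.

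That said, two points in your sketch deserve correction. First, the $z$-extension reduction does not carry an arbitrary $(G,\mu)$ to the local Hodge type case; it carries the \emph{abelian type} case to the Hodge type case (this is the definition of abelian type, cf.\ Remark~\ref{schlocmod}). For groups not of abelian type the conjecture remains open even after \cite{PRintlsv}. Second, and more substantively, your mechanism for Conjecture~\ref{conjtubeLM} is not right: there is no ``integral Grothendieck--Messing period morphism'' from $\widehat{\CM^{\rm int}_{\CG,b,\mu}}_{/x}$ to $\widehat{\BM^{\rm loc}_{\CG,\mu}}_{/y}$ that one shows to be formally \'etale. The crystalline period map \eqref{GMmap} is \'etale only on the generic fiber; integrally the link between the two formal completions is Gleason's diagram \eqref{GDia}, in which \emph{both} legs are $\widehat{L^+_W\CG}$-torsors over the respective bases, and the isomorphism is obtained by comparing these two torsor structures rather than by exhibiting a direct \'etale map. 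Grothendieck--Messing theory does not apply in the form you suggest, since the Frobenius on $W(R^+)$ is bijective and there is no nilpotent divided-power thickening in play.
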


\begin{remark}
 The conjecture holds true in many cases when the data $(\CG, b, \mu)$ come from integral RZ data in the sense of \cite{R-Z}  
 (this excludes the cases of type (D) since they yield non-connected groups). In fact, in this case $\CM^{\rm int}_{\CG, b, \mu}$ is represented by the corresponding RZ formal scheme, cf.  \cite[Cor. 25.1.3]{Schber}. More precisely, one has to define the RZ formal scheme using the flat closure local model instead of the naive local model, cf. \cite[\S 21.6]{Schber}. This is conditional on showing that the flat closure local model is normal. Conjecture \ref{repMint} is proved in \cite{PRintlsv} when the local Shimura datum $(G, b, \mu)$ is of abelian type, if $p\neq 2$ and if $p=2$ and $G_\ad$ is a product of simple factors of type A or C.
 \end{remark}  
  
  Just as LSV, so also the formation of their integral models is functorial. More precisely, let $G\to G'$ be a group homomorphism compatible with local Shimura data
  $(G,b, \mu)\rightarrow (G', b', \mu')$. Then there is an inclusion of corresponding reflex fields, $E\supset E'$. Let $\CG$ and $\CG'$ be parahoric models of $G$, resp. $G'$ such that $G\rightarrow G'$ extends to $\CG\to \CG'$. Push-out of torsors under $\CG\to \CG'$ gives a $v$-sheaf morphism
  \begin{equation}\label{functmor}
  \rho: \CM^{\rm int}_{\CG,b, \mu}\to \CM^{\rm int}_{\CG', b', \mu'}\times_{\Spd( O_{\breve E'})}\Spd( O_{\breve E}),
  \end{equation}
compatible with Weil descent data.

\subsection{The reduced locus of integral LSV and specialization}\label{Zhu}

Recall that $k$ denotes the algebraic closure of the residue field $\kappa=\kappa_E$ of the reflex field $E$ of $(G, \mu)$.  
 
\begin{definition}\label{defadmloc} Let $X_{\CG}(b,\mu^{-1})$ be  the functor which to a perfect $k$-algebra $R$ associates the set of isomorphism classes
of pairs $(\CP, \alpha)$ where
\begin{itemize}
\item[1)] $\CP$  is a $\CG$-torsor over $\Spec(W(R))$, 
\item[2)]   $\alpha$ is a $\CG$-torsor isomorphism
$$
\alpha: \CG\times{\Spec(W(R)[1/p])}\xrightarrow{\sim} \CP[1/p]
$$ 
such that $\phi_\CP=\alpha\circ\phi_b\circ\phi^*(\alpha)^{-1}$ (where  $\phi_b=b\times {\rm Frob}$) 
 defines the structure of a meromorphic Frobenius crystal
  $$\phi_\CP: {\rm Frob}^*(\CP)[1/p]\xrightarrow{\sim} \CP[1/p].
  $$ It is required that the corresponding $\CG$-shtuka has leg along the  divisor $p=0$   with pole bounded by $\mu$. 
\end{itemize}
\end{definition}

\begin{remark}\label{rem332}
The above definition makes sense also when $\mu$ is not minuscule.
Explicitly, the term  ``pole bounded by $\mu$" comes down to the following condition.  Using the trivialization $\alpha$ we can view the pair
 $( \CP, \phi_\CP\circ \phi^*(\alpha))$ as a pair of a $\CG$-torsor over $\Spec(W(R))$ together with a trivialization of the restriction of this torsor to $\Spec(W(R)[1/p])$. By \cite[3.1]{ZhuAfGr} (see also \cite{BS}), this gives an $R$-valued point of the  Witt vector affine partial flag variety ${\rm Gr}^W_{\CG}$. Then the pole condition  means that this $R$-valued point  factors through the map $\iota\colon \BM^v_{\CG, \mu}(R^\sdiam)\to {\rm Gr}^W_{\CG}(R)$ of \eqref{LMWAff}. By work of Ansch\"utz-Gleason-Louren\c co-Richarz \cite[Thm 6.16]{AnRicLou} (see \S \ref{Conjadm}), this condition is equivalent to asking that for all $K$-valued points of $\Spec (R)$ with $K$ algebraically closed, the corresponding point
in ${\rm Gr}^W_{\CG}(K)=G(W(K)[1/p])/\CG(W(K))$ lies in 
$
{\rm Gr}^W_{\CG, {\rm Adm}(\mu)}(K)=\bigcup\nolimits_{w\in {\rm Adm}(\mu)_\CG}{\rm Gr}^W_{\CG, w}(K). 
$
\end{remark}

By  \cite[Thm 6.16]{AnRicLou}, the image 
$
(\BM^v_{\CG, \mu})(K^\sdiam)\to {\rm Gr}^W_{\CG}(K)
$
is equal to the set of points of some finite union of affine Schubert varieties in the ind-perfectly proper Witt affine Grassmannian ${\rm Gr}^W_{\CG}$. Using this, we can see as in  \cite[\S 1]{ZhuAfGr} that
the functor $X_{\CG}(b,\mu^{-1})$ is represented by a perfect $k$-scheme which is, in fact, locally perfectly
of finite type over $k$. We call $X_{\CG}(b,\mu^{-1})$ the \emph{$(b, \mu^{-1})$-admissible locus} inside the Witt affine Grassmannian. 
The group $J_b(\BQ_p)$ acts on $X_{\CG}(b,\mu^{-1})$ by
\[
g\cdot (\CP, \alpha)=(\CP, \alpha\circ g^{-1}).
\]

 \subsubsection{Specialization for $v$-sheaves}\label{par331}
 We now recall some constructions and results of Gleason \cite{Gl}, \cite{Gl21}.  
 Denote by $\widetilde {\rm Perfd}$ the category of small $v$-sheaves on ${\rm Perfd}_k$.
 Recall the functor $S\mapsto S^\sdiam$ from perfect affine $k$-schemes to $\widetilde {\rm Perfd}$.
 If $\CF$ is a small $v$-sheaf on ${\rm Perfd}_k$, then its reduction $\CF_{\red}\in \wt{\rm SchPerf}$
 is the small \emph{scheme-theoretic $v$-sheaf}  (see \cite[Rem.  3.13]{Gl}) with
 \[
 \CF_{\rm red}(S)={\rm Hom}_{\widetilde{\rm Perfd}}(S^\sdiam, \CF),
 \]
for $S$ a perfect $k$-scheme.
 It makes sense to consider the corresponding $v$-sheaf  $(\CF_{\red})^\sdiam$ and
 there is a natural adjunction morphism of $v$-sheaves
 \begin{equation}\label{adjred}
 (\CF_{\red})^\sdiam\to \CF.
 \end{equation}
 Gleason explains certain conditions on $\CF$ that allow for the construction of
a continuous \emph{specialization map}
\begin{equation}
{\rm sp}_\CF\colon |\CF|\to |\CF_{\rm red}|.
\end{equation}
Here, $|\CF|$ and $|\CF_{\rm red}|$ are the topological spaces associated to these sheaves as in
 \cite{Sch-Diam}, comp. \cite[\S 1]{Gl}, i.e., the equivalence classes of maps $\Spa(K, K^+)\to \CF$, where $K$ is a perfectoid field of characteristic $p$ and $K^+$ a bounded open valuation subring. If $\CF$ comes with a map of sheaves $\CF\to \Spd(\BZ_p)$, we set
 $\CF_{\eta}=\CF\times_{\Spd(\BZ_p)}\Spd(\BQ_p)$. Then we can consider the
 composition of the natural map $|\CF_{\eta}|\to |\CF|$ with the specialization map ${\rm sp}_\CF$,
 \begin{equation}
 {\rm sp}_{\CF_\eta}\colon |\CF_{\eta}|\to |\CF_{\rm red}|.
 \end{equation}
Assume in addition that $\CF_{\rm red}$ is represented by a scheme and that
the natural adjunction map \eqref{adjred}  is a closed immersion.
 In this situation, for a closed point $x\in \CF_{\rm red}$, Gleason \cite[Def. 4.18]{Gl} defines  a sub-$v$-sheaf $\widehat\CF_{/x}$ of $\CF$
that  he calls the \emph{tubular neighborhood}  of $x$ in $\CF$.  We will use the more traditional name \emph{formal completion}.  Namely,
\begin{equation}\label{functcompl}
\widehat\CF_{/x}(S)=\{ y\colon S\to \CF\mid {\rm sp}_\CF\circ y(|S|)\subset \{x\}\}. 
\end{equation}
One can then take its generic fiber  (which is traditionally called the \emph{tube} over $x$)
 \[
 (\widehat\CF_{/x})_\eta:=\widehat\CF_{/x}\times_{\Spd(\BZ_p)}\Spd(\BQ_p).
 \] 
 
Let us exemplify these definitions when $\CF=\fkX^\diam$, where $\fkX$ is a formal scheme which is flat, separated and formally locally of
 finite type over ${\rm Spf}(\breve \BZ_p)$. Then $\CF_{\red}$ is represented by the perfection of the underlying
 reduced $k$-scheme $\fkX_{\rm red}$ of $\fkX$. In this case, there is a specialization map
 which is a continuous map of topological spaces
 \[
 {\rm sp}: |\fkX_{\eta}|\to |\fkX_{\rm red}|,
 \]
where $\fkX_{\eta}$ is the generic fiber of $\fkX$ considered as an adic space over $\breve\BQ_p$.
There is also a surjective map
\[
 {\rm sp}: |\fkX^{\rm rig}|^{\rm class}\to \fkX_{\rm red}(k) ,
\]
where $|\fkX^{\rm rig}|^{\rm class}$ denotes the classical points of the rigid space $\fkX^{\rm rig}$
given by the generic fiber of $\fkX$, defined as by Berthelot, cf. \cite[chap. 5]{R-Z}.
For $\CF=\fkX^\diam$, with $\fkX$ a formal scheme as above, there are natural identifications
 \[
 |\fkX_\eta|=|\CF_\eta|,\quad |\CF_{\rm red}|=|\fkX_{\rm red}|,
 \]
 and, under these, ${\rm sp}_{\CF_\eta}$ agrees with
 the specialization map ${\rm sp}: |\fkX_{\eta}|\to |\fkX_{\rm red}|$ (see \cite{Gl}, \S   4).
 For a closed point $x\in \fkX_{\rm red}$, by \cite[Prop.  4.19]{Gl}, we can also identify formal completions
 \[
 \widehat\CF_{/x}=(\widehat{\fkX}_{/x})^\diam=\Spa(\widehat\CO_{\fkX, x}, \widehat\CO_{\fkX, x})^\diam.
 \]

  \subsubsection{Specialization for integral models of LSV}One of the main results of \cite{Gl21}, is that, under certain assumptions, $\CF=\CM^{\rm int}_{\CG, b, \mu}$ affords a specialization map to $X_\CG(b, \mu^{-1})$. 
  
  \begin{theorem}\label{Gleason}(Gleason \cite{Gl21})

a) The $v$-sheaf ${\CM}^{\rm int}_{\CG, b, \mu}$ is small and its reduced locus   $({\CM}^{\rm int}_{\CG, b, \mu})_{\rm red}$ 
  is represented by the perfect $k$-scheme $X_\CG(b, \mu^{-1})$. The identification with the $(b, \mu^{-1})$-admissible locus, 
  \[
  ({\CM}^{\rm int}_{\CG, b, \mu})_{\rm red}\xrightarrow{\ \sim\ } X_{\CG}(b,\mu^{-1})
  \]
   is functorial in $(\CG, b, \mu)$.
   
   b) The adjunction morphism 
   \[
  ({\CM}^{\rm int}_{\CG, b, \mu})_{\rm red}^ \sdiam\simeq  X_{\CG}(b,\mu^{-1})^\sdiam\to {\CM}^{\rm int}_{\CG, b, \mu}
   \]
  is a closed immersion. 
  
  c) The $v$-sheaf ${\CM}^{\rm int}_{\CG, b, \mu}$ is ``specializing" in the terminology of loc. cit., \S 1.4. Hence,
  there is a continuous specialization map
  \[
  {\rm sp}: |{\rm Sht}_{G, b, \mu, K}|\to |X_{\CG}(b,\mu^{-1})|
  \]
  functorial in $(\CG, b, \mu)$, which also defines
  \[
  {\rm sp}: |{\rm Sht}_{G, b, \mu, K}|^{\rm class}\to X_{\CG}(b, \mu^{-1})(k).
  \]
  \end{theorem}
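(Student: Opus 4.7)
The plan is to combine three ingredients: the Tannakian characteristic-$p$ form of Theorem \ref{FFisocrystal} (Example \ref{G-AlgClosedPoint}), which identifies $\CG$-shtukas over $\Spec(A)^\diam$, for $A$ a perfect $k$-algebra, with $\CG$-torsors on $\Spec(W(A))$ equipped with a meromorphic Frobenius; Ansch\"utz's extension theorem, packaged as Proposition \ref{AnExtension}, which extends any $\CG$-shtuka over $\Spa(C^\flat,O_C^\flat)$ with leg at $\Spa(C,O_C)$ uniquely to a $\CG$-BKF module over $W(O_C^\flat)$; and the affine-Schubert-variety description of the $v$-sheaf local model in \eqref{ptsofvLMo}, through which the $\mu$-boundedness of a shtuka on $\Spec(A)^\diam$ translates into the admissibility condition of Definition \ref{defadmloc}.

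For (a), an $A$-point of $({\CM}^{\rm int}_{\CG,b,\mu})_\red$ is by definition a morphism $\Spec(A)^\diam\to {\CM}^{\rm int}_{\CG,b,\mu}$, and such a morphism automatically factors through the characteristic-$p$ locus $\Spd(k)\subset \Spd(O_{\breve E})$. Example \ref{G-AlgClosedPoint} converts the associated $\CG$-shtuka, together with the framing datum $i_r$, into a pair $(\CP,\alpha)$ exactly as in Definition \ref{defadmloc}; the $\mu$-bound matches the admissibility condition via the identification \eqref{ptsofvLMo}. This yields the bifunctorial identification $({\CM}^{\rm int}_{\CG,b,\mu})_\red\simeq X_\CG(b,\mu^{-1})$, and representability of the latter by a perfect $k$-scheme locally of perfect finite type follows from the results of \cite{ZhuAfGr,BS} combined with the finite union of Schubert varieties appearing in \eqref{ptsofvLMo}. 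Smallness of ${\CM}^{\rm int}_{\CG,b,\mu}$ then follows by sandwiching it between the locally spatial diamond ${\CM}_{G,b,\mu,K}^\diam$ and $X_\CG(b,\mu^{-1})$.

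For (b), I would describe the adjunction concretely as sending $(\CP,\alpha)$ over $A$ to the shtuka obtained by pulling $(\CP,\phi_\CP)$ back along $\CY_{[0,\infty)}(A,A)\to \Spec(W(A))$ with the induced framing, and then identify its image with the mod-$p$ fiber ${\CM}^{\rm int}_{\CG,b,\mu}\times_{\Spd(O_{\breve E})}\Spd(k)$: a point $(S^\sharp,\sP,\phi_\sP,i_r)$ lies in the image iff $S^\sharp$ is in characteristic $p$, which is a closed condition since $\Spd(k)\hookrightarrow \Spd(O_{\breve E})$ is a closed immersion of $v$-sheaves. The monomorphism property and injectivity on geometric points come from the full faithfulness in Theorem \ref{FFisocrystal} together with the equivalence between $\CG$-BKF modules and the associated $v$-sheaf-theoretic shtukas in characteristic $p$.

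For (c), the underlying pointwise map is constructed via Proposition \ref{AnExtension}: a rank-one point represented by $\Spa(C,O_C)\to {\CM}^{\rm int}_{\CG,b,\mu}$ with $C$ algebraically closed pulls the universal shtuka back to $\Spa(C^\flat,O_C^\flat)$, where Ansch\"utz's theorem produces a canonical $\CG$-BKF module over $W(O_C^\flat)$; base change along $W(O_C^\flat)\to W(\kappa_C)$ together with the framing yields a $\kappa_C$-point of $X_\CG(b,\mu^{-1})$, whose image in $|X_\CG(b,\mu^{-1})|$ is declared to be ${\rm sp}(x)$. Independence from the representative and well-definedness follow from the uniqueness of the extension (Propositions \ref{FFres}, \ref{extensiontoY}, and Theorem \ref{FFKedlaya}). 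The hard part, which I expect to be the main obstacle, is promoting this pointwise assignment to a continuous map of topological spaces --- equivalently, verifying Gleason's \emph{specializing} axiom. This requires a controlled analysis of the tubular neighborhoods $\widehat{({\CM}^{\rm int}_{\CG,b,\mu})}_{/x}$ at closed points in the sense of \eqref{functcompl}: one must interpolate characteristic-$0$ deformations of a characteristic-$p$ shtuka so as to compare $|{\CM}^{\rm int}_{\CG,b,\mu}|$ topologically with $|X_\CG(b,\mu^{-1})|$, leveraging the stratification of $X_\CG(b,\mu^{-1})$ by affine Schubert varieties and the boundedness coming from \eqref{ptsofvLMo}.
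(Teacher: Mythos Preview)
The paper's own proof is a pure citation: it refers to Gleason's Propositions 2.3.5, 2.3.10 and Lemma 2.3.11 for (a) and (b), and to Proposition 2.3.10 plus Proposition 1.4.17 for (c), with the single added observation that Gleason's arguments, written for reductive $\CG$, invoke reductivity only through his Theorem 2.1.18 (a torsor extension result), which Ansch\"utz has since extended to arbitrary parahorics. Your proposal is therefore not a different route to the same destination but rather an attempted reconstruction of what lies behind those citations; the ingredients you isolate (Example~\ref{G-AlgClosedPoint}, Proposition~\ref{AnExtension}, and the Schubert description~\eqref{ptsofvLMo}) are indeed the right ones, and your treatment of (a) and your honest flagging of the continuity issue in (c) are both on target.

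There is, however, a genuine gap in your argument for (b). You assert that the image of the adjunction $X_\CG(b,\mu^{-1})^\diam\to {\CM}^{\rm int}_{\CG,b,\mu}$ equals the entire mod-$p$ fiber ${\CM}^{\rm int}_{\CG,b,\mu}\times_{\Spd(O_{\breve E})}\Spd(k)$, but you only justify the inclusion in one direction. The other direction requires that for an arbitrary characteristic-$p$ affinoid perfectoid $S=\Spa(R,R^+)$, every $\CG$-shtuka over $\CY_{[0,\infty)}(S)$ with leg at $p=0$ and framing extends to a $\CG$-torsor over $\Spec(W(R^+))$ --- i.e., comes from an $R^+$-point of the perfect scheme $X_\CG(b,\mu^{-1})$. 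Proposition~\ref{AnExtension} gives this only for $(C,O_C)$ with $C$ algebraically closed; passing from there to general $S$ is exactly the content of Gleason's Lemma 2.3.11, and it is precisely where the extension result (Gleason's Theorem 2.1.18, replaced in the parahoric case by Ansch\"utz's theorem) enters. Without this step, neither the closed-immersion claim in (b) nor the formal adicness underlying the specializing property in (c) is established. Theorem~\ref{FFisocrystal} and Example~\ref{G-AlgClosedPoint} concern shtukas over $\Spec(A)^\diam$ for discrete perfect $A$, not over arbitrary perfectoid $\Spa(R,R^+)$, so they do not bridge the gap on their own.
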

  
\begin{proof} {\cmag This follows from \cite[Thm. 2]{Gl21} and its proof, see also \cite[Prop. 2.30, Lem. 2.31]{Gl21}.}
\quash{In the case that $\CG$ is reductive, (a) and (b) follow from\mar{\cred read this to update Gleason refs}
 \cite[Prop. 2.3.5, Prop. 2.3.10, Lem. 2.3.11]{Gl}. More precisely, the cited proofs use the reductivity of $\CG$ only through  \emph{loc. cit.}, Thm. 2.1.18. But this last theorem extends to the case of arbitrary parahoric group schemes by Ansch\"utz \cite{An}, comp. section \ref{s:parext}.  For part (c), the specializing property is proved in  \emph{loc. cit.}, Prop. 2.3.10 (again, the reductivity of $\CG$ only enters through Thm. 2.1.18). That this implies the facts on the specialization map is  \cite[Prop. 1.4.17]{Gl}. }
\end{proof}
  Theorem \ref{Gleason} above holds for all $\mu${\cmag, not necessarily minuscule}. However, in the sequel, we will return to our blanket assumption that $\mu$ is minuscule. The following conjecture is the local analogue of Scholze's Conjecture \ref{repMint}.
 \begin{conjecture}\label{conjtubeMint}
 Assume that $\mu$ is minuscule.   Let $x\in X_{\CG}(b, \mu^{-1})(k)$. Then the formal completion  $\wh{{\CM}^{\rm int}_{\CG, b, \mu}}_{/x}$ is representable, i.e., there exists a normal complete Noetherian local ring $R$ such that 
   $$\wh{{\CM}^{\rm int}_{\CG, b, \mu}}_{/x}\simeq \Spd(R).
   $$
 \end{conjecture}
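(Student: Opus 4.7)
The plan is to prove Conjecture \ref{conjtubeMint} by establishing the stronger Conjecture \ref{conjtubeLM}, namely by constructing an isomorphism of $v$-sheaves
\[
\wh{{\CM}^{\rm int}_{\CG, b, \mu}}_{/x} \;\isoarrow\; \bigl(\widehat{\BM^{\rm loc}_{\CG, \mu}}_{/y}\bigr)^\diam ,
\]
for a suitable point $y \in \BM^{\rm loc}_{\CG, \mu}(k)$ associated to $x$. Since the scheme local model exists, is normal, and is locally of finite type over $O_E$ by Theorem \ref{LMconj}, the right-hand side is represented by $\Spd$ of a normal complete Noetherian local ring; hence the conjecture follows.

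The construction proceeds in two directions. In one direction, I would build a period-type morphism $\wh{{\CM}^{\rm int}_{\CG, b, \mu}}_{/x}\to \BM^v_{\CG,\mu}$ as follows: for any $S\in {\rm Perfd}_k$ mapping to the formal completion, the shtuka $(\sP,\phi_\sP)$ together with its framing extends, by Proposition \ref{AnExtension} (and its relative version via the $\BZ_p$-flatness of the complete local ring), to a $\CG$-BKF module. The relative position of $\sP$ and $\phi_{\sP}({\rm Frob}^*\sP)$ along the untilt $S^\sharp$ yields, via \cite[Prop.~20.3.2]{Schber}, an $S$-point of ${\rm Gr}_{\CG,\Spd(O_E)}$ which, by boundedness, factors through $\BM^v_{\CG,\mu}$. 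Specialization at $x$ determines $y$ and confines the morphism to the formal completion at $y$. In the opposite direction, I would construct a $\CG$-shtuka starting from a point of $\widehat{\BM^{\rm loc}_{\CG,\mu}}_{/y}^\diam$ via a Tannakian ``local Grothendieck--Messing'' argument: one begins with the $\CG$-torsor $\sP_0$ over $\Spec(W(k))$ with Frobenius $\phi_b$ (coming from the fixed trivialization at $x$), and uses the universal $\CG$-lattice data encoded in $\widehat{\BM^{\rm loc}_{\CG,\mu}}_{/y}$ together with Proposition \ref{propBKFshtuka} to produce a $\CG$-BKF module, hence by restriction a $\CG$-shtuka with framing, lying in $\wh{{\CM}^{\rm int}_{\CG, b, \mu}}_{/x}$. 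Corollary \ref{shtExt} and the fully-faithfulness results of Propositions \ref{FFres} and \ref{propBKFshtuka} should give that these two assignments are mutually inverse.

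The main obstacle lies in the reverse construction: deforming the $\CG$-structure in a way that reproduces the universal shtuka from the formal local model data requires a genuinely Tannakian lift, which is subtle because $\CG$-BKF modules need not correspond to $\CG$-shtukas functorially on arbitrary bases (the restriction functor of Proposition \ref{propBKFshtuka} is an equivalence only for perfectoid fields). In the local Hodge type case one can circumvent this by choosing a Hodge embedding $\rho\colon G\hookrightarrow \GL_n$ with $\rho\circ\mu$ minuscule, reducing to the (known) representability of the Rapoport--Zink space for $p$-divisible groups, and then cutting out the $\CG$-stratum by tensor conditions à la de Jong \cite{deJongCrys}; this is essentially the strategy underlying Theorem \ref{thmrepintIntro}, which shows that the hypothesis of representability of the formal completions propagates from $\GL_n$ to $\CG$. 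In the general (non-Hodge type) case, the most promising route is to embed $(G,\mu)$ into a local Hodge type datum via a $z$-extension as in \cite[\S 2.6]{HPR}, combined with pushout functoriality (Proposition \ref{pushoutprop}) and Ansch\"utz's extension theorem used in Proposition \ref{AnExtension}, to deduce the general case from the Hodge type one.
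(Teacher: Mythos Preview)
The statement you are addressing is a \emph{conjecture} in the paper, not a theorem; there is no proof of it in the paper to compare against. The paper establishes representability of $\wh{{\CM}^{\rm int}_{\CG, b, \mu}}_{/x}$ only in special cases, and even then by an entirely different route: it constructs integral models of global Shimura varieties of Hodge type (Theorem \ref{mainhodge}) and shows that their formal completions realize $\wh{{\CM}^{\rm int}_{\CG, b_x, \mu}}_{/x_0}$ via the morphism $\Psi_{G,x}$ of \S\ref{sss:comploc}. No direct comparison with the local model is made; in fact, the identification you aim for is stated separately as Conjecture \ref{conjtubeLM} and left open. The paper does exploit Gleason's diagram \eqref{GDia}, which has precisely the shape of your ``two directions,'' but only to extract topological information (Proposition \ref{tubestruc}), never representability.

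Your proposal therefore does not constitute a proof, and the gap is the one you yourself name. The reverse construction --- producing a framed $\CG$-shtuka from a point of $\widehat{\BM^{\rm loc}_{\CG,\mu}}_{/y}$ --- is exactly the content of Conjecture \ref{conjtubeLM}, and neither Proposition \ref{propBKFshtuka} nor Corollary \ref{shtExt} supplies it: the former is an equivalence only over perfectoid fields, and the latter concerns extension across the special fiber of a normal scheme, not deformation from a point. Your appeal to Theorem \ref{thmrepintIntro} is also misdirected: that theorem \emph{assumes} representability of the formal completions (hypothesis 3) and deduces global representability of $\CM^{\rm int}_{\CG,b,\mu}$, not the other way around. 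Finally, the $z$-extension step is too optimistic: not every $(G,\mu)$ is of abelian type, and pushout functoriality alone does not transfer representability of formal completions downward along ad-isomorphisms. A genuinely local proof along these lines is carried out in the sequel \cite{PRintlsv}, but it requires substantial new input beyond what you sketch.
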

  It is clear that Conjecture \ref{conjtubeMint} follows from Conjecture \ref{repMint}: indeed, if ${\CM}^{\rm int}_{\CG, b, \mu}$ is representable by ${\sM}_{\CG, b, \mu}$, then $\wh{{\CM}^{\rm int}_{\CG, b, \mu}}_{/x}\simeq \Spd(\wh{\CO}_{{\sM}_{\CG, b, \mu}, x})$. In \S \ref{ss:repintLSV} we prove a kind of converse under some conditions. 
  
  The following conjecture makes Conjecture \ref{conjtubeMint} more precise  (see also \cite[Conj. 1]{Gl21}).
  \begin{conjecture}\label{conjtubeLM}
    Let $x\in X_{\CG}(b, \mu^{-1})(k)$ and let $y\in \BM^{\rm loc}_{\CG, \mu}(k)={\BM}^v_{\CG, \mu}(k)\subset {\rm Gr}^W_{\CG}(k)$ be the point obtained after 
  fixing a trivialization of the corresponding $\CG$-torsor over $W(k)$. 
Then there is an
  isomorphism
  \begin{equation}\label{GleasonCan}
  \widehat{{\CM}^{\rm int}_{\CG, b, \mu}}_{/x}\simeq \widehat{{\BM}^\loc_{\CG, \mu}}_{/y}=\Spd(\widehat{\CO}_{\BM, y}) ,
    \end{equation}
    where, for simplicity of notation, $\BM:=\BM^{\rm loc}_{\CG, \mu}$. 
      \end{conjecture}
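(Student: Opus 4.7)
The natural strategy is to construct a Grothendieck-Messing-type period morphism from $\widehat{\CM^{\rm int}_{\CG,b,\mu}}_{/x}$ to $\widehat{\BM^v_{\CG,\mu}}_{/y}$ via a local model diagram at the $v$-sheaf level. Define an auxiliary $v$-sheaf $\wt\CM$ on ${\rm Perfd}_k/\Spd(O_{\breve E})$ whose $S$-points parametrise tuples $(S^\sharp,\sP,\phi_\sP,i_r,\tau)$, where $(S^\sharp,\sP,\phi_\sP,i_r)\in \CM^{\rm int}_{\CG,b,\mu}(S)$ and $\tau$ is a trivialisation of the pullback of $\sP$ to $\Spec B^+_{\rm dR}(R^\sharp)$, the formal neighborhood of the leg $S^\sharp$ in $S\bdtimes \BZ_p$. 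Forgetting $\tau$ yields a projection $q_1\colon \wt\CM\to \CM^{\rm int}_{\CG,b,\mu}$. Reading off the relative position of $\phi_\sP(\phi^*\sP)$ with respect to the framings $i_r$ and $\tau$ (via Beauville-Laszlo, as in \cite[Prop.~20.3.2]{Schber}) yields a projection $q_2\colon \wt\CM\to \BM^v_{\CG,\mu}$.

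The plan is to show that both $q_1$ and $q_2$ are torsors under the same positive loop group $v$-sheaf $L^+_{\rm dR}\CG$ with $S$-points $\CG(B^+_{\rm dR}(R^\sharp))$. For $q_1$ this amounts to the fact that, locally for the $v$-topology, any $\CG$-torsor on $\Spec B^+_{\rm dR}(R^\sharp)$ is trivial (since $B^+_{\rm dR}(R^\sharp)$ is henselian along $(\xi)$ and one can invoke Ansch\"utz's extension result \cite{An}, cf.\ Proposition~\ref{AnExtension}), and any two trivialisations differ by an element of $\CG(B^+_{\rm dR}(R^\sharp))$. For $q_2$, one glues a $\mu$-bounded modification of the trivial $\CG$-torsor on $\Spec B^+_{\rm dR}(R^\sharp)$ with the framed shtuka $(\CG,\phi_b)$ on $\CY_{[r,\infty)}(S)\setminus S^\sharp$ back to a $\CG$-shtuka with framing via Beauville-Laszlo, and verifies that the fiber over a trivialised modification is again a torsor under $L^+_{\rm dR}\CG$.

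Granting these two torsor structures, the local model diagram
\begin{equation*}
   \xymatrix{
    & \wt\CM \ar[dl]_{q_1} \ar[dr]^{q_2} & \\
    \CM^{\rm int}_{\CG,b,\mu}  &  & \BM^v_{\CG,\mu}
        }
\end{equation*}
yields an isomorphism $\widehat{\CM^{\rm int}_{\CG,b,\mu}}_{/x}\simeq \widehat{\BM^v_{\CG,\mu}}_{/y}$ upon passing to formal completions at a base point $\wt x$ lying over the compatible pair $(x,y)$: both base changes become torsors under the same formal completion of $L^+_{\rm dR}\CG$ at the identity, which cancels. The right-hand side equals $\Spd(\widehat{\CO}_{\BM,y})$ upon invoking the representability of $\BM^v_{\CG,\mu}$ from Theorem~\ref{LMconj}, together with \cite[Prop.~18.4.1]{Schber} applied to the formal completion of the scheme $\BM^{\rm loc}_{\CG,\mu}$ at $y$.

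The principal obstacle is making ``taking formal completions'' rigorously commute with $L^+_{\rm dR}\CG$-torsors purely in the $v$-sheaf category, as $L^+_{\rm dR}\CG$ is an infinite-dimensional pro-object whose reduced part is non-trivial, and Gleason's formalism of tubular neighborhoods \cite{Gl} is best behaved for specialising $v$-sheaves that are already close to being formal schemes. The cleanest route is to first establish Conjecture~\ref{conjtubeMint}: once $\widehat{\CM^{\rm int}_{\CG,b,\mu}}_{/x}=\Spd(R)$ for a complete Noetherian local $R$, the period map becomes an honest morphism of complete Noetherian local rings, which one can check is an isomorphism on Artinian quotients using classical Grothendieck-Messing-type deformation theory for $\CG$-BKF modules. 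Thus the genuinely hard step is the representability of the formal completion of $\CM^{\rm int}_{\CG,b,\mu}$; in the Hodge type setting this is achieved by Theorem~\ref{thmrepintIntro}, whereas beyond that one must either invoke the sequel \cite{PRintlsv} or develop deformation theory for $\CG$-shtukas directly at the $v$-sheaf level.
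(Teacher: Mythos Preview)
This statement is presented in the paper as a \emph{conjecture} and is not proved there. The paper only records that it holds for RZ spaces via the classical local model diagram, and cautions (in the Remark immediately following) that the isomorphism cannot be canonical, giving an explicit example where functoriality fails. So there is no proof in the paper to compare against.

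Your strategy is nonetheless the natural one, and the paper does contain a closely related construction: Gleason's $v$-sheaf diagram~(\ref{GDia}) in \S\ref{ss:structure}, with vertex $\LMint$ and legs $\pi_\bullet,\pi_\star$, both of which are $\LG$-torsors. But the paper uses this diagram only to prove topological flatness and connectedness of the formal completion (Proposition~\ref{topplatt}), not the isomorphism with the local model. The reason it does not give the conjecture is precisely your ``which cancels'' step: the two legs are torsors for \emph{different} actions of the same group --- $\pi_\star$ for $h\star g=g^{-1}h$, and $\pi_\bullet$ for the Frobenius-twisted conjugation $h\bullet g=\phi(g)^{-1}hg$. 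Two torsor structures for distinct actions do not formally cancel to an isomorphism of the bases; that passage is where the content of the conjecture actually lives. Your variant has an additional issue: your $q_2$ depends only on $(\sP,\phi_\sP,\tau)$, not on the framing $i_r$, so its fibers also carry a free action of the framing-automorphism group $\wt G_b$, and $q_2$ is not an $L^+_{\rm dR}\CG$-torsor as claimed. Your closing diagnosis --- that one should first establish representability (Conjecture~\ref{conjtubeMint}) and then run a Grothendieck--Messing argument on Artinian quotients --- is essentially the route taken in the sequel~\cite{PRintlsv}.
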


    \begin{remark}
a) In the case of RZ-spaces, Conjecture \ref{conjtubeLM} holds true, as follows from the ``classical'' local model diagram, cf. \cite{R-Z}.

 b)  The isomorphism in Conjecture \ref{conjtubeLM} cannot be expected to be canonical. In fact, the isomorphism  
between the completions of local models which are induced using (\ref{GleasonCan}) and the functoriality of integral LSV's is not always the one obtained by the functoriality of local models.
To explain this statement, consider $(\CG, b, \mu)\to (\CG', b', \mu')$. This induces natural morphisms
    \begin{equation}\label{Natfun}
  {\BM}^v_{\CG, \mu} \to {\BM}^v_{\CG',  \mu'}\times_{\Spd(O_{E'})}\Spd( O_E),  \qquad  {\CM}^{\rm int}_{\CG, b, \mu}\to  {\CM}^{\rm int}_{\CG', b', \mu'}\times_{\Spd( O_{\breve E'})}\Spd( O_{\breve E}) .
    \end{equation}
    Let $x\in X_\CG(b, \mu^{-1})(k)$ and let $y\in {\BM}^v_{\CG, \mu}(k)\subset {\rm Gr}^W_{\CG}(k)$ be the point obtained after 
  fixing a trivialization of the corresponding $\CG$-torsor over $W(k)$. Let $x'\in X_{\CG', \mu'}(b')(k)$ be the image of $x$ and let $y'\in {\BM}^v_{\CG', \mu'}(k)\subset {\rm Gr}^W_{\CG'}(k)$ be the point obtained from the corresponding  trivialization of the corresponding $\CG'$-torsor over $W(k)$. Then, assuming the conjecture, we obtain a diagram 
   \begin{equation*} \xymatrix{
   \widehat {\CM_{\CG, b, \mu }^{\rm int}}_{/x}\ar[d]^{\simeq}\ar[r]  &\ar^{\simeq}[d]  
   \widehat {\CM_{\CG', b', \mu'}^{\rm int}}_{/x'}\times_{\Spd( O_{\breve E'})}\Spd( O_{\breve E})\\
      \widehat {\BM^v_{\CG, \mu }}_{/y} \ar[r]& \wh{\BM^v_{\CG', \mu'}}_{/y'}\times_{\Spd( O_{\breve E'})}\Spd( O_{\breve E}),}
  \end{equation*}
where the upper and the lower morphism are derived from \eqref{Natfun}, and where the vertical maps are the isomorphisms appearing in \eqref{GleasonCan}. This diagram will in general not commute, no matter how the isomorphisms in \eqref{GleasonCan} are chosen. An example is given by $\GL_2$, when $\mu=(1, 0)$ and $b$ is basic, and $\CG$ is the Iwahori model of $\GL_2$ and $\CG'$ is the hyperspecial model of $\GL_2$. In this case, the upper horizontal arrow is represented by a finite morphism but the lower horizontal morphism is induced by a blow-up morphism. 
    \end{remark}
  \begin{remark}\label{ideaSpecialization}
  The basic idea behind the construction of the specialization map for ${\rm Sht}_{G, b, \mu, K}$
  can be displayed as follows, see also Proposition \ref{AnExtension} and its proof. 
  
  Let
  $S=\Spa(C, C^+)\to  {\rm Sht}_{G, b, \mu, K}$ be a point over $\Spd(O_E)$ which corresponds to a $\CG$-shtuka $(\sP, \phi_\sP)$
  with trivialization $i_r$ over $\CY_{[r, \infty)}(S)$. Using the trivialization $i_r$ we can extend $(\sP, \phi_\sP)$ over $\CY_{[r, \infty]}(S)$
  and hence, by glueing, over $\CY_{[0, \infty]}(S)$. By the Extension Conjecture \ref{paraextconj} shown by Ansch\"utz \cite{An}, 
  this extends to a $\CG$-BKF-module $\sP^\natural$, i.e., a module
  over $\Spec(W(C^+))$.   The trivialization extends to a trivialization $\tilde i_r$ of the pullback of $\sP^\natural$ via 
  $\CY_{[r, \infty]}(S)\to \Spec(W(C^+))$. Let $\kappa$ be the residue field of $C^+$. Now the base change by $W(C^+)\to W(\kappa)$ gives a $\CG$-torsor 
  \[
  \sP_0=\sP^\natural\otimes_{W(C^+)}W(\kappa)
  \] 
  over $\Spec(W(\kappa))$ with Frobenius $\phi_0$ defined on $\sP_0[1/p]$. Base-changing the trivialization $\tilde i_r$ 
  under
  \[
  B^{[r,\infty]}_{(C, C^+)}\to W(\kappa)[1/p]
  \]
  defines a trivialization $\alpha$ of $\sP_0[1/p]$.
  We can see that the Frobenius on $\sP_0[1/p]$ is bounded by $\mu$. Hence $(\sP_0, \phi_0, \alpha)\in X_\CG(b, \mu^{-1})(\kappa)$.  
  {\cmag Unravelling the definition of the specialization map (\cite[Def. 4.12, Rem. 4.13]{Gl}) gives} 
  \[
  {\rm sp}(\sP, \phi_\sP,  i_r)=(\sP_0, \phi_0, \alpha).
  \]
  \end{remark}
  \subsection{Structure of the formal completions of integral LSV}\label{ss:structure}
 The aim of this subsection is to prove the following fact.
 \begin{proposition}\label{tubestruc}
Let $(G, b, \mu)$ be a local Shimura datum and let $\CG$ be a parahoric group scheme for $G$. Let $x\in X_{\CG}(b, \mu^{-1})(k)$.

1) The  formal completion  $\wh{ \CM^{\rm int}_{\CG, b, \mu}}_{/x}$ of the integral local Shimura
variety $ \CM^{\rm int}_{\CG, b, \mu}$ at $x$ is  topologically flat.

2) The topological space $|(\wh{{\mathcal M}^{\rm int}_{\CG, b, \mu}}_{/x})_\eta|$ given by the generic fiber  $(\wh{{\mathcal M}^{\rm int}_{\CG, b, \mu}}_{/x})_\eta$ of the formal completion is connected.
 \end{proposition}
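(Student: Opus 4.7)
I would derive both statements by combining Ansch\"utz's Extension Theorem \ref{paraextconj} with Gleason's specialization theory (Theorem \ref{Gleason}), and then analyze the generic fiber of the tube through the Grothendieck-Messing period map \eqref{GMmap}.

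For Part 1, I would first use the surjectivity of the specialization map on classical points,
\[
{\rm sp}\colon |\CM_{G, b, \mu, K}|^{\rm class}\to X_{\CG}(b, \mu^{-1})(k) .
\]
This surjectivity follows from the Extension Theorem applied as in Proposition \ref{AnExtension} and Remark \ref{ideaSpecialization}: starting from a point $(\sP_0, \phi_{\sP_0}, \alpha) \in X_{\CG}(b, \mu^{-1})(k)$, one lifts the $\CG$-torsor $\sP_0$ and the trivialization $\alpha$ to a $\CG$-BKF-module over $W(O_C)$ for some complete algebraically closed non-archimedean extension $C$ of $\breve{E}$, whose associated shtuka yields a classical generic-fiber point specializing to the chosen $k$-point. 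Combined with the specializing property of ${\rm sp}$ (Theorem \ref{Gleason}(c)), this implies that every topological point of $|X_{\CG}(b, \mu^{-1})|$ is a specialization of some topological point of $|\CM_{G, b, \mu, K}|^{\diamondsuit}$. Since $|\wh{\CM^{\rm int}_{\CG, b, \mu}}_{/x}| = {\rm sp}^{-1}(\{x\})$ by definition \eqref{functcompl}, this density statement localizes, via the functoriality of the formal completion, to the density of the generic fiber of the tube inside the tube itself, which is the definition of topological flatness.

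For Part 2, I would restrict the Grothendieck-Messing period map to the generic fiber of the formal completion,
\[
\pi_{GM}\colon (\wh{\CM^{\rm int}_{\CG, b, \mu}}_{/x})_\eta \to \CF_{G, \mu, \breve E}^{\diamondsuit} ,
\]
which is \'etale. The image lies in a small rigid-analytic neighborhood of the single $k$-point $\bar{y} \in \CF_{G, \mu}(k)$ determined by the Hodge filtration coming from $\phi_{\sP_0}(\phi^*(\sP_0))/\sP_0$ at $x$, cf. Remark \ref{ideaSpecialization}. Since $\CF_{G, \mu}$ is smooth and projective, this rigid-analytic tube around $\bar{y}$ is a connected open polydisc, so $(\wh{\CM^{\rm int}_{\CG, b, \mu}}_{/x})_\eta$ is \'etale over a connected base.

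The hard part is to show that exactly one connected component of the \'etale cover meets the specialization fiber over $x$. The approach I would take is the following. A $\Spa(C, O_C)$-point of the generic fiber of the tube, together with its framing $i_r$, gives, by Ansch\"utz's extension theorem applied as in Proposition \ref{AnExtension}, a $\CG$-BKF-module over $W(O_C)$ which extends the given local shtuka and whose reduction at the closed point of $\Spec(W(O_C))$ recovers the fixed data $(\sP_0, \phi_{\sP_0}, \alpha)$ defining $x$. The collection of such extensions, viewed as deformations over $W(O_C)$ of the specialization data at $x$, is parametrized by a formally connected moduli problem (morally, by $\wh{\BM^{\rm loc}_{\CG, \mu}}_{/y}$ in the notation of Conjecture \ref{conjtubeLM}, but one only needs to extract connectedness here, not representability). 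All such $\Spa(C, O_C)$-points therefore lie in the connected component of a distinguished ``base" lift produced from Part 1, and the tube's generic fiber is connected. The main obstacle in this step is to make the deformation argument uniform in $C$ and $v$-sheaf theoretic, rather than appealing to the unproven representability of the tube; here I would use the Hodge-Tate equivariant description of Proposition \ref{pairs2} (applied pointwise) to control the classical-point structure of the \'etale fibers.
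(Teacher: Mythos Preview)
Your proposal has genuine gaps in both parts.

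\textbf{Part 1.} Surjectivity of the specialization map on classical points is far weaker than topological flatness. What you have shown is that the generic fiber of the tube is nonempty; what you need is that it is dense, i.e., that it meets every nonempty open of $|\wh{\CM^{\rm int}_{\CG, b, \mu}}_{/x}|$. The sentence ``this density statement localizes'' has no content: the tube at $x$ has many topological points beyond the single closed $k$-point, and knowing that one classical point of the generic fiber specializes to $x$ says nothing about arbitrary opens. The paper's argument is structurally different: after reducing to the base point $x_0$, it uses Gleason's diagram \eqref{GDia}, in which $\LMint$ admits two maps $\pi_\bullet$, $\pi_\star$ to $\wh{\CM^{\rm int}_{\CG, b, \mu}}_{/x_0}$ and $\wh{\BM^v_{\CG, \mu}}_{/x_0}$, each a torsor under the group $v$-sheaf $\LG$. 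Since $\LG \to \Spd(\BZ_p)$ is formally smooth (Proposition \ref{fsmooth}) and hence universally open, and since the local model side is topologically flat by its representability (Theorem \ref{LMconj}) and Lemma \ref{topflat}, one transfers flatness through the torsor $\pi_\star$ to $\LMint$ and then projects via $\pi_\bullet$.

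\textbf{Part 2.} Your use of the Grothendieck--Messing period map is not well-posed. The map $\pi_{GM}$ lands in $\CF_{G,\mu,\breve E}^{\diamondsuit}$, a $v$-sheaf over $\Spd(\breve E)$; it has no $k$-points, and there is no natural map from the tube in $\CM^{\rm int}$ to a tube in any integral model of $\CF_{G,\mu}$ via $\pi_{GM}$. Even granting such a map, $\pi_{GM}$ is \'etale with discrete fibers $G(\BQ_p)/K$, so the preimage of a connected open disc can have infinitely many components, and your ``hard part'' --- showing only one component occurs --- is the entire content of the statement; the sketch involving a ``formally connected moduli problem'' and Proposition \ref{pairs2} does not supply an argument. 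The paper again uses the diagram \eqref{GDia}: the local model tube $(\wh{\BM^v_{\CG,\mu}}_{/x_0})_\eta$ is connected because $\BM^{\rm loc}_{\CG,\mu}$ is normal (Proposition \ref{Mconnect}), the fibers of $\pi_\star$ are connected because $|\LG \times_{\Spd(\BZ_p)} \Spa(C,C^+)|$ is connected (Proposition \ref{LGconnect}), so $|(\LMint)_\eta|$ is connected by the elementary Lemma \ref{easy}, and finally $\pi_\bullet$ is surjective on topological spaces.

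The missing idea in both parts is the diagram \eqref{GDia} and the analysis of $\LG$.
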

 
  For the proof we need some preparation. For a small $v$-sheaf $X$ over $\Spd(\BZ_p)$ we write $X_\eta=X\times_{\Spd(\BZ_p)}\Spd(\BQ_p)$
for its ``generic fiber". This is a small $v$-sheaf which comes with a map $X_\eta\to X$. We will say that $X$
is \emph{topologically flat}, if $|X_\eta|$ is dense in $|X|$. We need the following lemma.
{\cmag
\begin{lemma} \label{topflat} 
Suppose $\fkX$ is a formal scheme which is flat and formally of finite type over $\Spf(\BZ_p)$. Then the corresponding $v$-sheaf $\fkX^\diam$ over $\Spd(\BZ_p)$ is 
topologically flat. \qed 
\end{lemma}

\begin{proof}
This is given in \cite[Lem. 4.4]{LourencoThesis}, a similar argument also appears in the proof of \cite[Lemma 2.17]{AnRicLou}.
\end{proof}
}
In \cite[Def. 2.34]{Gl}, Gleason defines a $v$-sheaf of groups $\LG$ over $\Spd(\BZ_p)$
given   by 
\begin{equation}
\LG(S)=\{((S^\sharp, y), g)\ |\ g\in \CG(W(R^+)),\ g\equiv 1\, {\mathrm {mod}}\, [\varpi_g] \},
\end{equation}
where $S=\Spa(R, R^+)$ and $(S^\sharp, y)$ is an $S$-valued point of $\Spd(\BZ_p)$, and where $\varpi_g$ is a pseudouniformizer of $R^+$ (that depends on $g$).  We can think of $\LG$ as the formal completion at  the identity of the $v$-sheaf of groups 
$\BW^+\CG\times \Spd(\BZ_p)$ of loc. cit., with $\BW^+\CG$ given by 
\[
\BW^+\CG(\Spa(R, R^+))=\CG(W(R^+)).
\]
 Consider the $v$-sheaf $\wh \BW^+$ which associates to the affinoid perfectoid $S=\Spa(R, R^+)$ over $k$
the set 
\begin{equation}
\wh \BW^+(R, R^+)=\{a\in W(R^+) \ |\ a\in ([\pi_a]), \ \pi_a \hbox{\rm\ a pseudo-uniformizer depending on $a$}\}.
\end{equation}

\begin{lemma}\label{Wideal}
The subset $\wh \BW^+(R, R^+)$ is an ideal of the ring $W(R^+)$.
\end{lemma}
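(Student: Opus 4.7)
I will verify separately that $\wh\BW^+(R,R^+)$ is closed under multiplication by elements of $W(R^+)$ and under addition. The multiplicative closure is immediate: if $a=[\pi_a]b\in([\pi_a])$ and $c\in W(R^+)$, then $ca=[\pi_a](cb)\in([\pi_a])$, so $ca\in\wh\BW^+(R,R^+)$ with the same pseudo-uniformizer $\pi_a$.

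The main content is additive closure. Given $a\in([\pi_a])$ and $a'\in([\pi_{a'}])$, my strategy is to produce a single pseudo-uniformizer $\pi\in R^+$ that divides both $\pi_a$ and $\pi_{a'}$ in $R^+$. If $\pi_a=\pi d$ and $\pi_{a'}=\pi d'$ with $d,d'\in R^+$, then multiplicativity of Teichm\"uller lifts gives $[\pi_a]=[\pi][d]$ and $[\pi_{a'}]=[\pi][d']$, so both $a$ and $a'$ lie in $([\pi])\subset W(R^+)$, and hence so does $a+a'$.

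To construct $\pi$, I will first observe that $R^+$ is a perfect $\BF_p$-algebra. Since $(R,R^+)$ is perfectoid over $k$ of characteristic $p$, Frobenius is bijective on $R$; for any $x\in R^+$, a $p$-th root $y\in R$ satisfies $T^p-x=0$, hence is integral over $R^+$, and lies in $R^+$ by integral closedness of $R^+$ in $R$. I then set $\pi:=\pi_a^{1/p^n}\in R^+$ for an integer $n$ to be specified; it is obviously a pseudo-uniformizer (its spectral norm is $\alpha(\pi_a)^{1/p^n}<1$, and it is a unit in $R$) and it divides $\pi_a=\pi^{p^n}$ in $R^+$. Since $\pi_{a'}$ is topologically nilpotent in the $\pi_a$-adic topology of $R^+$, there exists $m\geq 0$ with $\pi_{a'}^{p^m}\in\pi_a R^+$, say $\pi_{a'}^{p^m}=\pi_a u$ with $u\in R^+$; taking $p^m$-th roots in $R^+$ (perfectness) yields $\pi_{a'}=\pi_a^{1/p^m}\cdot u^{1/p^m}\in R^+$. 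Choosing $n\geq m$ gives $\pi_a^{1/p^m}=\pi^{p^{n-m}}\in\pi R^+$, so $\pi\mid\pi_{a'}$ in $R^+$ as desired.

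The only point requiring any care is the construction of the common divisor $\pi$; once the perfectness of $R^+$ is in hand (which reduces to integral closedness plus the perfectoid hypothesis), the argument is essentially formal.
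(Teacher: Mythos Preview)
Your proof is correct and follows essentially the same approach as the paper: both arguments reduce additive closure to finding a common pseudo-uniformizer $\pi=\pi_a^{1/p^n}$ dividing both $\pi_a$ and $\pi_{a'}$, using perfectness of $R^+$ and the fact that any pseudo-uniformizer divides a power of any other. Your version is simply more explicit about why $R^+$ is perfect and why the divisibility $\pi_a^{1/p^n}\mid\pi_{a'}$ holds, whereas the paper states these facts tersely.
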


\begin{proof}
If $\pi_1$, $\pi_2$ are two pseudo-uniformizers in $R^+$, 
we have $\pi_1^{1/p^n}|\pi_2$ and $\pi^{1/p^n}_2|\pi_1$ for some $n$.
Then, $[\pi_1]b_1+[\pi_2]b_2=[\pi_1]b_1+[\pi^{1/p^n}_1 c]b_2=[\pi_1^{1/p^n}]([\pi^{1-1/p^n}_1]b_1+[c]b_2)$.
This shows that $\wh \BW^+(R, R^+)$ is a subgroup and, hence, also an ideal of $W(R^+)$.
\end{proof}
Recall the identity of Witt vectors (in characteristic $p$)
\[
(b_0, b_1, \ldots , b_n, \ldots )\cdot (\pi_a, 0, \ldots , 0,  \ldots  )=(b_0\pi_a, b_1\pi^p_a, b_2 \pi^{p^2}_a, \ldots ).
\]
Now choose a pseudo-uniformizer $\pi$ and hence a norm $|\ |$ on $R$.

 So $a=(a_0, a_1, \ldots , a_n, \ldots )\in W(R^+)$ belongs to $\wh \BW^+(R, R^+)$, if and only if
\[
 a_{n-1} \in \BB_{r^{p^n}, S}(R, R^+)  
\]
with $r=r_a=|\pi_a|$. Here, we have the ball of radius $r^{p^n}$ over $S$
\[
\BB_{r^{p^n}, S}=\Spa(R\langle t , t/\pi^{p^n}_a \rangle, R^+\langle t , t/\pi^{p^n}_a\rangle)^\diam,
\]
so for $(R, R^+)\to (A, A^+)$, $\BB_{r^{p^n}, S}(A, A^+)= \pi^{p^n}_a A^+$. 
The Frobenius on $R$ extends to a ring isomorphism $\phi: R\langle t , t/\pi_a \rangle\xrightarrow{\sim} R\langle t, t/\pi^p_a\rangle$
with $\phi(t)=t$. This gives
\[
\phi: \BB_{r, S}\xrightarrow{\sim} \BB_{r^p, S}.
\]
Hence, we see
\[
\wh \BW^+(R, R^+)=\bigcup_{r<1} (\BB_{r, S}\times \BB_{r^p, S}\times \BB_{r^{p^2}, S}\times\cdots )(R, R^+)\xrightarrow{\simeq } (\bigcup_{r<1}(\BB_{r, S})^{\BN})(R, R^+),
\]
the isomorphism given by $(1,\phi^{-1},\phi^{-2},\ldots )$. 
This isomorphism  gives
\begin{equation}\label{isoball}
\wh \BW^+\times S\simeq \bigcup_{r<1}(\BB_{r, S})^{\BN}.
\end{equation}
\begin{proposition}\label{Wfs}
The map $\wh \BW^+\to *$ is formally smooth in the sense of \cite[Def. IV.3.1]{FS}. 
\end{proposition}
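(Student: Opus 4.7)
My plan is to exploit the explicit description \eqref{isoball} which, v-locally on the base $S$, identifies $\wh \BW^+$ with the filtered union $\bigcup_{r<1} (\BB_{r,S})^{\BN}$ of countable products of closed balls. Since formal smoothness in the sense of \cite[Def. IV.3.1]{FS} is v-local on the target $*$, and, more importantly, v-local on the source in the sense that checking it after a v-cover suffices, the question reduces to establishing formal smoothness for (countable products of filtered unions of) closed balls over $\Spd k$.

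First I would check that each closed ball $\BB_{r,S} = \Spa(R\langle t,t/\pi^n\rangle, R^+\langle t,t/\pi^n\rangle)^\diam$ is formally smooth over $S$: given a test affinoid perfectoid $(A,A^+)/k$ together with a closed sub-perfectoid $(A_0,A_0^+)$ defined by a finitely generated ideal $I \subset A^+$ with topologically nilpotent generators (the shape of test data in the FS definition), a map $\Spa(A_0,A_0^+) \to \BB_{r,S}$ is given by an element of $A_0^+$ of norm $\le r$, and such an element lifts to $A^+$ with controlled norm $\le r' > r$ after passing to an approximation; after replacing $r$ by a slightly larger $r' < 1$ inside the filtered union the lift lands in $\BB_{r',S}$. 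This step uses only the standard surjectivity statement for reductions modulo topologically nilpotent ideals in perfectoid rings, combined with the fact that we have room to enlarge $r$.

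Next I would verify that formal smoothness is preserved under (i) arbitrary products indexed by $\BN$ and (ii) filtered colimits along closed immersions of v-sheaves. For (i) the lifting problem for a product splits componentwise; for (ii) a test map from a quasi-compact test object factors through some finite stage of the filtered union by topological compactness of $|T_0|$ (the image in the Hausdorff reduced space $|\wh\BW^+|$ meets only finitely many, or rather is contained in one, of the open pieces), and then the lift is produced at that stage. Combining the two steps with the identification $\wh \BW^+ \times S \simeq \bigcup_{r<1}(\BB_{r,S})^{\BN}$ gives formal smoothness of $\wh \BW^+ \times S \to S$, hence of $\wh \BW^+ \to *$ after descending along the v-cover provided by any affinoid perfectoid $S$ over $k$.

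The main obstacle I anticipate is step (ii), namely justifying that the filtered union $\bigcup_{r<1}$ actually preserves formal smoothness: one needs to argue that for any test datum $(T_0 \hookrightarrow T)$ given by a finitely generated ideal, a map $T_0 \to \bigcup_r (\BB_{r,S})^{\BN}$ factors through $(\BB_{r_0,S})^{\BN}$ for some $r_0 < 1$ so that the lifting problem can be solved inside a single $(\BB_{r_0',S})^{\BN}$ with $r_0 < r_0' < 1$; this requires a careful quasi-compactness argument (since the components in $\BN$ range over infinitely many indices, and for each one we must simultaneously control the norm). This is where the explicit formula \eqref{isoball}, with the exponents $p^n$ producing ever smaller balls, becomes essential: the radii $r^{p^n} \to 0$ force all but finitely many factors to take values in arbitrarily small neighborhoods of $0$ automatically, reducing the effective lifting problem to a finite product.
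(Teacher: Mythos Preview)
Your approach is essentially the same as the paper's: the paper simply cites the proof of \cite[Prop.~IV.3.3]{FS} to conclude that $\BB^{\BN}\to *$, and hence also $\bigcup_{r<1}(\BB_{r,S})^{\BN}\to S$, is formally smooth, and then invokes the identification \eqref{isoball}. Your proposal unpacks exactly what that citation entails; two minor comments: the appeal to ``v-locality on the source'' is unnecessary and not obviously valid as stated (the identification \eqref{isoball} is already an isomorphism over any affinoid perfectoid $S$, so one tests the lifting property directly), and the filtered-union concern is milder than you suggest, since by the very definition of $\wh\BW^+(R,R^+)$ any $T_0$-point comes equipped with its own pseudo-uniformizer $\pi_a$ and hence lands in a single stage $(\BB_{r_0,S})^{\BN}$, with the lift requiring only a slightly larger $r'<1$.
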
  
\begin{proof} The proof of  \cite[Prop. IV.3.3]{FS}, gives that
$\BB^{\BN}\to *$ is formally smooth and so is
\[
\bigcup_{r<1}(\BB_{r, S})^{\BN}\to S
\]
for all $S$. The result then follows from (\ref{isoball}).
\end{proof}
Note here that 
  \[
  \BB^{\BN}\simeq \varprojlim_{d}\BB^d,
  \]
where $\BB^d$ is the $d$-dimensional ball. 
If $U\subset \BB^{\BN}_S$ is a quasi-compact
open then, using \cite[Prop. 6.4]{Sch-Diam}, one sees that there is some $d\geq 1$ such that $U$ is
the inverse image of an open $V\subset \BB^d_S$ by the projection $\BB^{\BN}_S\to \BB^d_S$.

We write
\[
\CG(\wh \BW^+(R, R^+)):=\{    g\in \CG(W(R^+)),\ g\equiv 1\,  {\mathrm {mod}}\, [\varpi_g] \}.
\]
Set $\CO(\CG)=\Gamma(\CG, O_{\CG})$ and let $\wh{\CO(\CG)}_{e}$ be its completion 
along the kernel $I_\CG$ of the identity section. Since $\CG$ is smooth over $\Spec(\BZ_p)$, we have
\[
\wh{\CO(\CG)}_{e}\simeq \BZ_p\lps u_1,\ldots , u_d\rps,
\]
non-canonically, with $\wh I_\CG\simeq (u_1,\ldots , u_d)$.
An element $g\in \CG(\wh \BW^+(R, R^+))$ 
is given by an algebra map
\[
g^*: \CO(\CG)\to W(R^+)
\]
such that $g^*(I_\CG)\subset [\pi_g] W(R^+)\subset \wh \BW^+(R, R^+)$. Since $W(R^+)$ is $[\pi_g]$-complete, 
$g^*$ factors uniquely through ${\CO(\CG)}\to \wh{\CO(\CG)}_{e}=\BZ_p\oplus \wh I_\CG$. Using that
$\wh \BW^+(R, R^+)\subset W(R^+)$ is an ideal (Lemma \ref{Wideal}), we see that $g^*$ is uniquely given
by assigning the values $g^*(u_i)$, $i=1,\ldots, d$.
This gives
\[
\LG(R, R^+)\simeq \wh \BW^+(R, R^+)^d\times \Spd(\BZ_p)(R, R^+),
\]
and so
\begin{equation}\label{present}
\LG \simeq (\wh \BW^+) ^d\times \Spd(\BZ_p), 
\end{equation}
non-canonically (as $v$-sheaves of sets). 
 
\begin{proposition}\label{fsmooth}
The map $\LG\to \Spd(\BZ_p)$ is formally smooth in the sense of \cite[IV.3]{FS}. 
\end{proposition}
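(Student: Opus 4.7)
The strategy is to exploit the non-canonical presentation \eqref{present}, namely
\[
\LG \simeq (\wh\BW^+)^d \times \Spd(\BZ_p)
\]
as $v$-sheaves of sets, under which the structure morphism $\LG \to \Spd(\BZ_p)$ corresponds to the projection onto the second factor. Since formal smoothness in the sense of \cite[Def. IV.3.1]{FS} is a property of the morphism as a map of $v$-sheaves and does not involve the group structure on $\LG$, it is enough to prove that this projection is formally smooth.

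The next step is to invoke the standard stability properties of formal smoothness. Namely, formal smoothness is stable under base change, and consequently (via composition) under products: if $X \to Y$ is formally smooth, then so is $X \times Z \to Z$ for any $Z$, and similarly $X \times X' \to Y \times Y'$ is formally smooth whenever $X \to Y$ and $X' \to Y'$ are. Granting these properties, the projection $(\wh\BW^+)^d \times \Spd(\BZ_p) \to \Spd(\BZ_p)$ is the base change to $\Spd(\BZ_p)$ of the map $(\wh\BW^+)^d \to \ast$, which in turn is the $d$-fold product of $\wh\BW^+ \to \ast$ with itself. Thus we reduce to showing that $\wh\BW^+ \to \ast$ is formally smooth, which is precisely Proposition \ref{Wfs}.

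The only point that requires verification is that formal smoothness in the $v$-sheaf setting of \cite[Def. IV.3.1]{FS} indeed satisfies the stability properties invoked above; this is a direct consequence of the lifting criterion used to define it, with the same argument as in the algebro-geometric case. I do not anticipate any essential obstacle here, as the proof is fundamentally a matter of combining the known formal smoothness of $\wh\BW^+$ with formal stability properties of the notion.
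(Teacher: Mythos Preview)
Your proposal is correct and follows exactly the same approach as the paper: the paper's proof is a one-liner citing the presentation \eqref{present} and Proposition \ref{Wfs}, and you have simply spelled out the intermediate reductions (base change and products) that make this citation work.
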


\begin{proof}
This 
follows from (\ref{present}) and Proposition \ref{Wfs}. 
\end{proof}

\begin{corollary}\label{uopen}
The map $\LG\to \Spd(\BZ_p)$ is universally open and topologically flat. 
\end{corollary}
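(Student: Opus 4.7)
The plan is to deduce the corollary from the explicit presentation \eqref{present}, combined with the description of $\wh\BW^+$ as a filtered union of (iterated) balls via \eqref{isoball}. Since the isomorphism \eqref{present} identifies $\LG$ with $(\wh\BW^+)^d\times\Spd(\BZ_p)$ as $v$-sheaves of sets over $\Spd(\BZ_p)$, both universal openness and topological flatness of $\LG\to \Spd(\BZ_p)$ reduce to the corresponding properties of the projection $(\wh\BW^+)^d\times\Spd(\BZ_p)\to \Spd(\BZ_p)$, and hence, taking the $d$-fold product is harmless, to the map $\wh\BW^+\times\Spd(\BZ_p)\to\Spd(\BZ_p)$.

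Next I would exhibit $\wh\BW^+\times\Spd(\BZ_p)$ as a filtered ascending union of (quasi-compact open) sub-$v$-sheaves $U_{r,d}$, where each $U_{r,d}$ corresponds (via \eqref{isoball} and the fact noted after Proposition \ref{Wfs} that any quasi-compact open in $\BB^{\BN}$ comes from a finite-dimensional ball) to the $v$-sheaf associated to the base change to $\Spa(\BZ_p)$ of a finite-dimensional closed ball $\BB^d$ of radius $r<1$ over $\Spa(\BZ_p)$. Each such ball is an affinoid Noetherian adic space which is smooth (in particular $\BZ_p$-flat) over $\Spa(\BZ_p)$, so the structure morphism $\BB^d\times\Spa(\BZ_p)\to\Spa(\BZ_p)$ is open and its underlying map of topological spaces has dense generic fiber by Lemma \ref{topflat}. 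Passing to the associated $v$-sheaves and then to the filtered union, both properties are preserved: an ascending union of universally open maps is universally open (openness can be checked on a quasi-compact open exhaustion, which is stable under base change), and a filtered union of topologically flat sub-$v$-sheaves is topologically flat since density of the generic fiber can be tested on the quasi-compact opens $U_{r,d}$.

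The main point, and a minor subtlety, is to verify that the presentation \eqref{isoball} is compatible with base change from $\Spd(k)$ to $\Spd(\BZ_p)$ in a way that preserves the ball structure; this is essentially the observation that the $\phi$-twist used in \eqref{isoball} is an isomorphism of $v$-sheaves that does not interact with the untilt $S^\sharp$, so after $\times\Spd(\BZ_p)$ we still get a union of balls over $\Spd(\BZ_p)$ coming from adic spaces over $\Spa(\BZ_p)$. Once this identification is in place, the two properties follow formally from the standard facts about smooth morphisms of Noetherian adic spaces and from Lemma \ref{topflat}. I do not expect any genuine obstacle beyond this bookkeeping; Proposition \ref{fsmooth} is not strictly needed for the argument but serves as a conceptual justification (formal smoothness being the $v$-sheaf analogue of the smoothness of the approximating balls).
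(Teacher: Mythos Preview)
Your approach diverges from the paper's, and the divergence introduces a genuine gap. The paper does \emph{not} bypass Proposition~\ref{fsmooth}; on the contrary, formal smoothness is the engine of the argument. By \cite[Prop.~IV.3.2]{FS}, a formally smooth map of $v$-sheaves is universally open, so Proposition~\ref{fsmooth} immediately gives universal openness of $\LG\to\Spd(\BZ_p)$. Topological flatness is then deduced \emph{from} universal openness together with the topological flatness of the base $\Spd(\BZ_p)$ (Lemma~\ref{topflat} applied to $\Spa(\BZ_p)$, not to any ball): if $U\subset|\LG|$ is nonempty open, its image in $|\Spd(\BZ_p)|$ is nonempty open, hence meets $|\Spd(\BQ_p)|$; a short diagram chase using \cite[Prop.~12.10]{Sch-Diam} then shows $U\cap|(\LG)_\eta|\neq\emptyset$.

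Your alternative route---writing $\wh\BW^+\times\Spd(\BZ_p)$ as a union of pieces $U_{r,d}$ and claiming each is the diamond of a smooth Noetherian adic space over $\Spa(\BZ_p)$---does not go through as stated. The balls $\BB_{r,S}$ in \eqref{isoball} are defined over a perfectoid base $S$ in characteristic $p$, with the radius condition imposed on elements of $R^+$ (the tilt). Multiplying by $\Spd(\BZ_p)$ only adjoins an untilt $S^\sharp$; it does not convert $\BB_{r,S}$ into the diamond of a ball over $\Spa(\BZ_p)$, whose points would instead parametrize elements of $R^{\sharp+}$. There is no Noetherian adic space $Y/\Spa(\BZ_p)$ with $Y^\diam$ equal to your $U_{r,d}$ (over $\Spa(\BZ_p)$ the available radii are the discrete set $|p|^n$, and in any case the coordinates live on the wrong side of tilting), so Lemma~\ref{topflat} cannot be invoked for the $U_{r,d}$. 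You could try to salvage openness of each $\BB_{r,T}^N\to T$ after base change to perfectoid $T$ and pass to the union, but that is precisely what the abstract statement ``formally smooth $\Rightarrow$ universally open'' from \cite{FS} packages; and for topological flatness you would still need to feed in the topological flatness of $\Spd(\BZ_p)$ rather than of any approximating ball. In short, Proposition~\ref{fsmooth} is not optional here---it is the substance of the proof.
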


\begin{proof}
Proposition \ref{fsmooth} together with \cite[Prop. IV.3.2, p. 129]{FS} implies that $\LG\to \Spd(\BZ_p)$ is universally open. Hence, an open $U\subset |\LG|$ maps to an open in $|\Spd(\BZ_p)|$. Since $\Spd(\BZ_p)$ is topologically flat (Lemma \ref{topflat} above), the image of $U$ intersects the generic fiber $|\Spd(\BZ_p)_\eta|=|\Spd(\BQ_p,\BZ_p)|$. Now topological flatness also follows: indeed,
\[
|U\times_{\LG}(\LG)_\eta|=|U\times_{\Spd(\BZ_p)}\Spd(\BQ_p)|\to  |U|\times_{|\Spd(\BZ_p)|}|\Spd(\BQ_p)|
\]
is a surjection by \cite[Prop. 12.10]{Sch-Diam} and, by the above, the target is non-empty. Hence the source is non-empty. This source maps
\[
|U\times_{\LG}(\LG)_\eta|\to |U|\times_{|\LG|}|(\LG)_\eta|
\]
and so $|U|\times_{|\LG|}|(\LG)_\eta|=|U|\cap |(\LG)_\eta|\neq\emptyset$.
\end{proof}

We will also need:

\begin{proposition}\label{LGconnect}
For any affinoid perfectoid field $(K, K^+)$ over $k$ with $\Spa(K, K^+)\to \Spd(\BZ_p)$, the topological space $| \LG\times_{\Spd(\BZ_p)}\Spa(K, K^+)|$
is connected.
\end{proposition}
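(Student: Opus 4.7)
The plan is to unwind the explicit (non-canonical) presentations \eqref{present} and \eqref{isoball} of $\LG$, base-change them to $S = \Spa(K, K^+)$, and then prove connectedness directly on the explicit side. Starting from \eqref{present}, base change along $S \to \Spd(\BZ_p)$ gives an isomorphism of $v$-sheaves of sets
\[
\LG \times_{\Spd(\BZ_p)} S \;\simeq\; (\wh\BW^+)^d \times S.
\]
Applying \eqref{isoball} componentwise and reindexing via a bijection $\BN \cdot d \simeq \BN$ then yields
\[
\LG \times_{\Spd(\BZ_p)} S \;\simeq\; \bigcup_{r<1} (\BB_{r,S})^{\BN},
\]
a directed union of spatial diamonds with common zero section. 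Since the claim concerns only the underlying topological space, the set-theoretic nature of the isomorphism is sufficient, and the problem reduces to showing that $\bigl|\bigcup_{r<1}(\BB_{r,S})^{\BN}\bigr|$ is connected.

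The heart of the argument is to show that $|(\BB_{r,S})^{\BN}|$ is connected for each fixed $r < 1$. For finite $n$, the diamond $(\BB_{r,S})^n$ is associated with the affinoid perfectoid $\Spa\bigl(K\langle t_1, t_1/\pi, \ldots, t_n, t_n/\pi\rangle, K^+\langle\ldots\rangle\bigr)$ (for a pseudo-uniformizer $\pi$), whose ring is a Tate algebra over the perfectoid field $K$ and is an integral domain. Hence $(\BB_{r,S})^n$ is irreducible and $|(\BB_{r,S})^n|$ is connected. Writing $(\BB_{r,S})^{\BN} = \varprojlim_n (\BB_{r,S})^n$ as a cofiltered limit of spatial diamonds along quasicompact surjective coordinate projections (which admit zero sections), \cite[Lem. 11.22]{Sch-Diam} identifies the topological space with $\varprojlim_n |(\BB_{r,S})^n|$; a cofiltered limit of non-empty connected spectral spaces along quasicompact surjective spectral maps remains connected.

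With this in hand the conclusion is immediate: for $r \le r'$ the inclusion $(\BB_{r,S})^{\BN} \hookrightarrow (\BB_{r',S})^{\BN}$ sends zero to zero, and the identity of $\LG(S)$ corresponds under \eqref{isoball} to the zero section, which lies in every piece. Hence $\bigl|\bigcup_{r<1}(\BB_{r,S})^{\BN}\bigr|$ is a directed union of connected subspaces sharing a common point, and is therefore connected. The main anticipated obstacle is the topological inverse-limit step, where one has to verify carefully both the identification $|\varprojlim_n X_n| = \varprojlim_n |X_n|$ in the setting of spatial diamonds and the required surjectivity/spectrality of the transition maps; once these are checked, the rest is essentially bookkeeping.
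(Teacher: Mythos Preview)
Your proposal is correct and follows essentially the same route as the paper: reduce via \eqref{present} and \eqref{isoball} to the connectedness of $|(\BB_{r,S})^{\BN}|$, and deduce the latter from the connectedness of the finite balls $|(\BB_{r,S})^n|$. The only differences are cosmetic. First, the paper begins by passing to an algebraically closed $(C,C^+)$ (a surjection on topological spaces reduces the general case to this one), whereas you argue directly over $(K,K^+)$; your domain argument for the finite Tate algebras works over any field, so this reduction is indeed unnecessary. Second, where you invoke \cite[Lem.~11.22]{Sch-Diam} together with the general-topology fact that a cofiltered limit of connected spectral spaces along surjective spectral maps is connected, the paper instead argues directly: a putative clopen decomposition of $|\BB^{\BN}_S|$ consists of two quasi-compact opens, which by \cite[Prop.~6.4]{Sch-Diam} descend to some $|\BB^d_S|$, contradicting its connectedness. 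These are the same argument in different packaging. One small slip: the finite ball $\Spa(K\langle t_1,t_1/\pi,\ldots,t_n,t_n/\pi\rangle, K^+\langle\ldots\rangle)$ is not affinoid \emph{perfectoid} (the variables lack $p$-power roots); it is an affinoid analytic adic space whose diamond is spatial with the same underlying topological space, which is all you need.
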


\begin{proof}
It is enough to show this for $(K, K^+)=(C, C^+)$ algebraically closed over $k$.
By (\ref{present}) and (\ref{isoball}), 
\[
| \LG\times_{\Spd(\BZ_p)}\Spa(C, C^+)|\simeq |(\bigcup_{r<1}\BB^{\BN}_{r, \Spa(C, C^{+})})^d|.
\]

\begin{lemma}
Suppose that $(C, C^+)$ is an algebraically closed affinoid field over $k$. Then $|\BB^{\BN}_{(C, C^+)}|:=|\BB^{\BN}_{\Spa(C, C^+)}|$ is connected.
\end{lemma}
\begin{proof}
Note that
\[
\BB^{\BN}_{(C, C^+)}\simeq \Spa(C\langle t_1, t_2, \ldots \rangle, C^+\langle t_1, t_2, \ldots \rangle)^\diam
\]
 is quasi-compact. Suppose that $|\BB^{\BN}_{(C, C^+)}|=U\sqcup V$, with $U$ and $V$ both clopen subsets.
Since $|\BB^{\BN}_{(C, C^+)}|$ is quasi-compact, the closed subsets $U$ and $V$ are also quasi-compact. Hence, by
\cite[Prop. 6.4]{Sch-Diam}, they
are the inverse images of open subsets $U'$, $V'\subset \BB^d_{(C, C^+)}$ by the 
surjective projection
$pr_d: \BB^\BN_{(C, C^+)}\to \BB^d_{(C, C^+)}$, for some $d$.
We have $|\BB^d_{(C, C^+)}|=pr_d(U\sqcup V)=pr_d(U)\cup pr_d(V)=U'\cup V'$.
However, $|\BB^d_{(C, C^+)}|$ is connected, so $U'\cap V'\neq \emptyset$. This 
contradicts that $U$ and $V$ are disjoint.\end{proof}

Since $\BB^{\BN}_{r, \Spa(C, C^{+})}\simeq \BB^{\BN}_{\Spa(C, C^{+})}$ the lemma implies that 
$|\BB^{\BN}_{r, \Spa(C, C^{+})}|$ is also connected. The proof follows.
\end{proof}

Assume now that $b$ is $\mu$-admissible for $\CG$, i.e. $b\in \CG(\breve\BZ_p)w\CG(\breve\BZ_p)$, for some $w\in {\rm Adm}(\mu^{-1})_\CG$, cf. Remark \ref{remAdm423}. Then $X_\CG(b, \mu^{-1})(k)=\CM^{\rm int}_{\CG, b, \mu}(\Spd(k))\subset  G(W(k)[1/p])/\CG(W(k))$ has a ``base point" $x_0$
given by the image of the unit element.

 Gleason {\cred (\cite[Thm. 3]{Gl21})} constructs a diagram of $v$-sheaves over $\Spd( O_{\breve E})$
\begin{equation}\label{GDia}
\begin{gathered}
   \xymatrix{
	     &\LMint \ar[dl]_-{\text{$\pi_\bullet$}} \ar[dr]^-{\text{$\pi_\star$}}\\
	  \wh{ \CM^{\rm int}_{\CG, b, \mu}}_{/x_0}  & & \ \wh{\BM^v_{\CG, \mu}}_{/x_0} \ .
	}
\end{gathered}
\end{equation}

Let us explain this diagram.  For more details, the reader is referred to \cite[\S 3.6]{PRintlsv}, {\cred \cite[\S 2.4]{Gl21}.}
\smallskip

a) The $v$-sheaf $\LMint$ assigns to an affinoid perfectoid $S=\Spa(R, R^+)$ over $k$, the set
\[
\LMint(S)=\{((S^\sharp, y), h)\ |\ h\in \CG(W(R^+)[1/\xi_{R^\sharp}]),\ h
\equiv b\, {\mathrm {mod}}\, [\varpi_h], \ [h^{-1}]\in \BM^v_{\CG, \mu}(S)\},
\]
where $(S^\sharp=\Spa(R^{\sharp}, R^{\sharp +}), y)$ is an untilt of $S$ over $ O_{\breve E}$ and where $[h^{-1}]$ is the $S$-point of the ${\BB}_{\rm dR}$-affine Grassmannian ${\rm Gr}_{\CG, \Spd( O_{\breve E})}$ defined by the coset $h^{-1} \CG(\BB^+_{\rm dR}(R^\sharp))$.

\smallskip

b) The map $\pi_*$ is projection to the coset $[h^{-1}]=h^{-1} \CG(\BB^+_{\rm dR}(R^\sharp))$.
\smallskip

c) The map $\pi_\bullet$ sends $((S^\sharp, y), h)$ to the pair $((\sP,\phi_\sP), i_r)\in \CM^{\rm int}_{\CG, b, \mu}(S)$, where the $\CG$-shtuka over $S$ with leg at $y$ is given by the trivial $\CG$-torsor $\sP=\CG$ with
 Frobenius defined by $\phi_\sP=h\phi:  (\phi^*\CG)[1/\xi_{R^\sharp}]\xrightarrow{\sim} \CG[1/\xi_{R^\sharp}]$, and the framing $i_r$ is given by the unique lift (as in \cite[Lem. 2.1.28]{Gl}) of the identity trivialization modulo $[\varpi_h]$.

 Hence, for $((S^\sharp, y), h)\in \LMint(S)$ as above, the framing in (c) is given by the unique element $i(h)=i_r(h)\in G(B^{[r,\infty)}_{(R, R^+)})$ with $i(h)
\equiv 1\, {\mathrm {mod}}\, [\varpi_h]$ and the property 
 \[
 h=i(h)^{-1}\cdot b\cdot \phi(i(h)). 
 \]

d) The point $x_0\in \CM^{\rm int}_{\CG, b, \mu}(\Spd(k)) $ is the base point given as above. Similarly  $x_0\in \BM^v_{\CG,  \mu}$  denotes the base point of $\BM^v_{\CG, \mu}(\Spd(k))\subset {\rm Gr}^W_{\CG}(k)$ given by the coset   $b^{-1}\CG(W(k))$.

The $v$-sheaf of groups $\LG$ acts on $\wh{ L{\mathcal {M}}^{\rm int}_{\CG, b, \mu}}_{/x_0}$ on the right by $h\star g=g^{-1}h$ and also by $h\bullet g=\phi(g)^{-1}h   g $.  

By {\cred\cite[Lem. 2.35, Lem. 2.36, proof of Thm. 2.33]{Gl21}}, both $\pi_{\bullet}$, $\pi_\star$ are $\LG$-torsors  (for the $v$-topology)
for the two corresponding actions. 
\smallskip

{\it Proof of Proposition \ref{tubestruc}:}\label{proofofprop}   We first reduce the statement to the case where $x$ is the base point. 
Let $x\in {{\CM}^{\rm int}_{\CG, b, \mu}}(k)$ given by $(\CP, \phi_\CP, i_r)$, where $\CP$ is a $\CG$-bundle on $W(k)$. Choose a trivialization of $\CP$. 
Then $\phi_\CP$ is given as
\begin{equation}\label{bxbase}
\phi_{b_x}\colon \CG[1/p]\to \CG[1/p] ,
\end{equation}
which is bounded by $\mu$. Furthermore, $i_r$ is given by $g\colon \CG[1/p]\to \CG[1/p]$ with $g^{-1} b_x \sigma(g)=b$. We obtain an isomorphism 
\begin{equation}\label{translatesigma}
\tau_g: {{\CM}^{\rm int}_{\CG, b, \mu}}\xrightarrow{\sim} {{\CM}^{\rm int}_{\CG, b_x, \mu}}, \quad 
\tau_g((\sP, \phi_\sP), i_r)=((\sP, \phi_\sP), i_r\cdot g^{-1}),
\end{equation}
which sends $x$ to the base point $x_0$ of ${{\CM}^{\rm int}_{\CG, b_x, \mu}}$. Then $ \wh{ \CM^{\rm int}_{\CG, b, \mu}}_{/x}$ is isomorphic to $ \wh{ \CM^{\rm int}_{\CG, b_x, \mu}}_{/x_0}$. It is thus enough to consider the case of the base point in Proposition \ref{tubestruc}. 

 Recall that, by Theorem \ref{LMconj}, $\BM^v_{\CG, \mu}$ is representable by $\BM^{\rm loc}_{\CG, \mu}$. Hence,   
 \[
 \wh{\BM^v_{\CG, \mu}}_{/x_0}=\Spd(A, A),
 \]
 where $A$ is a complete normal local Noetherian flat $\CO_{\breve E}$-algebra. By Lemma \ref{topflat},
 $\wh{\BM^v_{\CG, \mu}}_{/x_0}$ is topologically flat; 
 similarly for $\BM^v_{\CG, \mu}$.  Proposition \ref{tubestruc} follows from the next proposition.

\begin{proposition} \label{topplatt}
Let $(G, b, \mu)$ be a local Shimura datum and let $\CG$ be a parahoric group scheme for $G$.  Assume $b$ is $\mu$-admissible for $\CG$ and let $x_0\in X_{\CG}(b, \mu^{-1})(k)$ be the base point.

1) The $v$-sheaf $\LMint$ is topologically flat. 

2) The  formal completion  $\wh{ \CM^{\rm int}_{\CG, b, \mu}}_{/x_0}$ of the integral local Shimura
variety $ \CM^{\rm int}_{\CG, b, \mu}$ at $x_0$ is  topologically flat.

3) The topological space $|(\wh{{\mathcal M}^{\rm int}_{\CG, b, \mu}}_{/x_0})_\eta|$ underlying the generic fiber  $(\wh{{\mathcal M}^{\rm int}_{\CG, b, \mu}}_{/x_0})_\eta$ of the formal completion is connected.
\end{proposition}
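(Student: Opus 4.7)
The plan is to exploit Gleason's diagram \eqref{GDia}, in which $\pi_\star$ and $\pi_\bullet$ are both $v$-torsors under $\LG$. By Proposition \ref{fsmooth} and Corollary \ref{uopen}, the morphism $\LG \to \Spd(\BZ_p)$ is formally smooth, universally open, and topologically flat. I expect these properties to transfer to $\pi_\star$ and $\pi_\bullet$: they are both surjective on underlying topological spaces (by $v$-surjectivity of torsors, via \cite[Prop.~12.10]{Sch-Diam}) and open on underlying topological spaces (being $v$-locally pullbacks of the universally open map $\LG \to \Spd(\BZ_p)$). A second key input is that $\wh{\BM^v_{\CG, \mu}}_{/x_0} = \Spd(A, A)$, where $A = \wh\CO_{\BM^{\rm loc}_{\CG, \mu}, y}$ is a complete Noetherian flat normal local $O_{\breve E}$-algebra; since a normal local Noetherian ring is an integral domain, $A$ is a domain, and $\wh{\BM^v_{\CG, \mu}}_{/x_0}$ is topologically flat by Lemma \ref{topflat}.

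For part (1), I would take any nonempty quasi-compact open $U \subset |\LMint|$. Its image $\pi_\star(U)$ is a nonempty open subset of $|\wh{\BM^v_{\CG, \mu}}_{/x_0}|$ by openness of $\pi_\star$, and hence meets the generic fiber at some point $y$ by topological flatness of the base. Since $y$ lies in the generic fiber of the target, the fiber $\pi_\star^{-1}(y)$ is contained in $|(\LMint)_\eta|$, and it meets $U$ because $y \in \pi_\star(U)$. This yields topological flatness of $\LMint$. For part (2), $\pi_\bullet$ is continuous and surjective on underlying topological spaces, and restricts to a map $|(\LMint)_\eta| \to |(\wh{\CM^{\rm int}_{\CG,b,\mu}}_{/x_0})_\eta|$ on generic fibers. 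Since continuous surjections send dense subsets to dense subsets of the target, part (1) immediately implies part (2).

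For part (3), it suffices to show $|(\LMint)_\eta|$ is connected, since $\pi_\bullet$ restricted to generic fibers is a continuous surjection onto $|(\wh{\CM^{\rm int}_{\CG,b,\mu}}_{/x_0})_\eta|$. The restriction of $\pi_\star$ to generic fibers is an $\LG_\eta$-torsor $(\LMint)_\eta \to (\wh{\BM^v_{\CG, \mu}}_{/x_0})_\eta$, which I expect to be open and surjective on underlying topological spaces by the same argument as above. Its fibers over points represented by $\Spa(K, K^+)$ with $K$ algebraically closed perfectoid are of the form $\LG \times_{\Spd(\BZ_p)} \Spa(K, K^+)$, whose underlying topological space is connected by Proposition \ref{LGconnect}. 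To show the base is connected, I use that $A$ is an integral domain flat over $O_{\breve E}$, so $A[1/p]$ is a domain and $\Spec(A[1/p])$ is irreducible; this translates into connectedness of the Berthelot rigid fiber of $\Spf A$, whose underlying topological space coincides with that of $(\wh{\BM^v_{\CG, \mu}}_{/x_0})_\eta$. The standard topological fact that a continuous open surjection with connected base and connected fibers has connected source then yields connectedness of $|(\LMint)_\eta|$.

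The main technical obstacle will be justifying rigorously that openness and surjectivity on underlying topological spaces descend from the trivializations along $v$-covers of the torsor morphisms $\pi_\star, \pi_\bullet$; this will likely require invoking the formal smoothness of $\LG \to \Spd(\BZ_p)$ in Proposition \ref{fsmooth} together with the framework of \cite[\S IV.3]{FS}. A subtler point will be the identification of $|(\wh{\BM^v_{\CG, \mu}}_{/x_0})_\eta|$ with the topological space of the rigid fiber of $\Spf A$, and the translation of algebraic irreducibility of $\Spec(A[1/p])$ into topological connectedness of this generic fiber.
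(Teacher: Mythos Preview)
Your proposal is correct and follows essentially the same route as the paper: use Gleason's diagram \eqref{GDia}, pull topological flatness back from $\wh{\BM^v_{\CG,\mu}}_{/x_0}$ along the $\LG$-torsor $\pi_\star$ (whose openness is deduced from Corollary~\ref{uopen} after trivializing over a $v$-cover), push it forward along the continuous surjection $\pi_\bullet$, and then prove connectedness of the generic fiber via Lemma~\ref{easy} applied to $|\pi_\star|$ with connected base and connected fibers (the latter via Proposition~\ref{LGconnect}). The only substantive deviation is your sketch for connectedness of $|(\wh{\BM^v_{\CG,\mu}}_{/x_0})_\eta|$: the paper invokes \cite[Lem.~7.3.5]{deJongCrys} (normality of $A$ implies connectedness of $\Spf(A)^{\rm rig}$) and then transfers this to the $v$-sheaf topological space via the Berkovich-space functor of \cite[13.7--13.12]{Sch-Diam}, whereas your route through irreducibility of $\Spec(A[1/p])$ would still need a bridge to the rigid-analytic topology---you flagged this yourself, and de~Jong's lemma is the cleanest such bridge.
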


\begin{proof} We recall the diagram (\ref{GDia}). 
Note that $\pi_\bullet$ induces a continuous map $|\pi_\bullet|$ between the corresponding
topological spaces. Since $\pi_\bullet$   splits $v$-locally, $|\pi_\bullet|$ is surjective.
 For simplicity, we will drop the subscripts $\CG$, $\mu$, $b$
from the notation.

We first show (1). For simplicity set $X=\LMs$, $Y=\wh{\BM^v_{/x_0}}=\Spd(A)$ and set
\[
\pi=\pi_* : X\to Y
\] 
for the $\LG$-torsor. Let $U\subset |X|$ be a non-empty open subset; we identify it in notation with the corresponding open $v$-subsheaf of $X$
(\cite[Prop. 12.9]{Sch-Diam}). We would like to show
$U\cap |X_\eta|\neq\emptyset$. It is enough to show that $|\pi|(U) $ is  
   open in $|Y |=|\Spd(A) |$ . Then by Lemma \ref{topflat}, 
$|\pi|(U)$ intersects $|Y_\eta|$. Since $|X_\eta|=|\pi|^{-1}(|Y_\eta|)$, then $U$ intersects $|X_\eta|$. 

{\cmag Since $\pi$ is a $\LG$-torsor for the $v$-topology (cf. \cite[Lem. 2.35]{Gl21}), there is a $v$-cover by a perfectoid space $T$,
\[
q: T\to \Spd(A)=Y ,
\]
 such that the 
base change of $\pi$ by $q$ splits,
\[
\LG\times_{\Spd(\BZ_p)}T\simeq X\times_{Y}T.
\]}
By Corollary \ref{uopen}, the image $\pi(|U\times_{Y}T|)$ is open in $|T|$.
By \cite[Prop. 12.10]{Sch-Diam} the canonical map
\[
|U\times_Y T|\to |U|\times_{|Y|}|T|
\]
is surjective. In the fibered product of sets
\begin{equation}\label{d1}
 \begin{aligned}
   \xymatrix{
     |U|\times_{|Y|}|T|  \ar[r]^{\ \ |q|}  \ar[d]_{|\pi|}  &  |U|\ar[d]^{|\pi|}\\
    |q|^{-1}(|\pi|(|U|)) \ar[r]^{\ \ |q|} &  |\pi|(|U|),
        }
        \end{aligned}
    \end{equation}
    the vertical maps are surjective.  Hence,
    \[
   |\pi|:  |U\times_Y T|\to |q|^{-1}(|\pi|(|U|))
    \]
is surjective and so
\[
|q|^{-1}(|\pi|(|U|))=|\pi|(|U\times_{Y}T|)
\]
 Hence, 
$|q|^{-1}(|\pi|(|U|))$ is open in $T$. By \cite[Prop. 12.9]{Sch-Diam}, the $v$-cover $q$ gives a quotient map
$|q|: |T|\to |Y|$. It follows  
that $|\pi|(|U|) $ is open in $|Y|$.  

Part (2) follows quickly from (1) since $|\pi_\bullet|$ is continuous and surjective.
 
 Finally, we show (3), i.e. that $|(\wh{{\mathcal M}^{\rm int}_{/x_0}})_\eta|$ is connected.
By continuity and surjectivity of $|\pi_\bullet|$ in the diagram (\ref{GDia}),
it is enough to show that the source $|(\LMs)_\eta|$ is connected.
We will use the following standard lemma:
  
  \begin{lemma}\label{easy}
 Let $f: Z\to W$ be a continuous map of topological spaces which is surjective and open. Assume that $W$ is connected and that for each $w\in W$, the fiber $Z\times_W \{w\}=f^{-1}(w)\subset Z$ is connected with the subspace
  topology. Then $Z$ is connected. \qed  
  \end{lemma}

  We will apply Lemma \ref{easy} to
  \[
  |\pi_*|: |(\LMs)_\eta|\to |(\wh{\BM^v_{/x_0}})_\eta|.
  \]
  Note that $|\pi_*|$ is surjective and open by the argument in the proof of (1) above.

\begin{proposition}\label{Mconnect}
The topological space $|(\wh{\BM^v_{/x_0}})_\eta|$ is connected.
\end{proposition}

\begin{proof}
Since  $\BM:=\BM^{\rm loc}_{\CG, \mu}$ and its strict completion $A$ are normal, the  (Berthelot) rigid analytic fiber 
$(\wh{\BM_{/x_0}})_\eta^{\rm rig}={\rm Spf}(A)^{\rm rig}$ is connected, cf.   \cite[Lem. 7.3.5]{deJongCrys}. 
In fact, by \cite[Prop. 6.1.1]{deJongCrys}, this 
is path connected in the sense considered in loc. cit. It 
then follows that the corresponding Berkovich space and then also the corresponding analytic adic space, and the
topological space $|(\wh{\BM^v_{/x_0}})_\eta|$ for the corresponding  $v$-sheaf are connected. 
For this last step one can use a construction of \cite[13.7-13.12]{Sch-Diam}: By \cite[Prop. 13.10]{Sch-Diam}, there is a functor $X\mapsto |X|^B=$
``Berkovich topological space of $X$", defined for small $v$-sheaves
$X$. This extends Berkovich's construction for rigid analytic spaces.  There is a functorial continuous quotient
map $|X|\to |X|^B$, and each fiber of the map has a generic point, so it is connected. Hence, if $|X|^B$ is connected, so is $|X|$,
by a simple variation of Lemma \ref{easy}. (Here, again recall that the topological space  for a $v$-sheaf which corresponds to an analytic adic space  agrees with that space.)
\end{proof}

Next, we show that the fibers of $|\pi_*|$ are connected. Set again $X_\eta=(\LMs)_\eta$ and 
  $Y_\eta=(\wh{\BM^v_{/x_0}})_\eta$ and $\pi=\pi_*$. Suppose that $y\in |Y_\eta|$ is represented by
  $\Spa(K, K^+)\to Y_\eta$, with $(K, K^+)$ affinoid perfectoid over $k$. Then $y$ is the image of the unique closed point in $|\Spa(K, K^+)|$
  and $|\Spa(K, O_K)|\to |\Spa(K, K^+)|$ gives a generization.
  Choose $(C, C^+)$ algebraically closed
  with $(K, K^+)\to (C, C^+)$. 
  We have a continuous surjective map
  \[
 |X_\eta\times_{Y_\eta}\Spa(C, C^+)|\to  |X_\eta\times_{Y_\eta}\Spa(K, K^+)|\to 
 |X_\eta|\times_{|Y_\eta|}|\Spa(K, K^+)|.
  \]
 By the torsor property, $X_\eta\times_{Y_\eta}\Spa(C, C^+)\simeq \LG\times_{\Spd(\BZ_p)}\Spa(C, C^+)$.
 This is connected by Proposition \ref{LGconnect}. Hence, 
  the fiber $|\pi_*|^{-1}(y)$ over $y$ is also connected. 
  
  The proof  of (3) now follows from Lemma \ref{easy}.
  \end{proof}

 \begin{remark} Using some more of the results of \cite{AnRicLou} we can see   that 2) in Proposition \ref{topplatt} (the formal completion $\wh{ \CM^{\rm int}_{\CG, b, \mu}}_{/x_0}$ is topologically flat), also holds for general $\mu$, i.e. not necessarily minuscule.

Indeed, the argument in the proof above shows that we can deduce this by knowing that  the formal completion $\wh {\BM^v_{\CG, \mu}}_{/x_0}$ is topologically flat. By \cite[Prop. 4.13]{AnRicLou}, see also \cite[Cor. 4.14]{LourencoThesis}, the general $v$-sheaf local model $\BM^v_{\CG, \mu}$ is  topologically flat, without assuming representability, and  this holds even for general $\mu$ (not necessarily minuscule). By \cite[Prop. 4.14]{AnRicLou}, ${\BM^v_{\CG, \mu}} $ is a ``prekimberlite"
and $\{x_0\}$ is constructible, also for general $\mu$. Then, {\cred by \cite[Prop. 4.22]{Gl21}},
 \[
 \wh {\BM^v_{\CG, \mu}}_{/x_0}\to \BM^v_{\CG, \mu}
 \]
is an open immersion. Hence the topological flatness of $\BM^v_{\CG, \mu}$ implies the topological flatness of  $\wh {\BM^v_{\CG, \mu}}_{/x_0}$. 
 \end{remark}

  \subsection{Characterization of the formal scheme $\sM_{\CG, b, \mu}$.}
  
 In this subsection, we provide a more ``classical" characterization of the formal scheme $\sM_{\CG, b, \mu}$
 of Conjecture \ref{repMint}. In what follows we assume that $\sM_{\CG, b, \mu}$ as in Conjecture \ref{repMint} actually exists.
 
Our main point is that the specialization map {\cmag in (iii) of Gleason's Theorem \ref{Gleason} }
 \[
{\rm sp}: |{\rm Sht}_{G,  b, \mu, K}|^{\rm class}\to X_{\CG}(b, \mu^{-1})(k) 
 \]
can be interpreted using the theory of Breuil-Kisin modules 
as follows. 
Given a classical point $y\in {\rm Sht}_{G,  b, \mu, K}(F)$ with $F/\breve E$ finite, we consider
 the corresponding Galois representation
 \begin{equation*}
 \rho_y: {\rm Gal}(\bar E/F)\to \CG(\BZ_p)\subset G(\BQ_p)
 \end{equation*}
obtained by evaluating the local system given by the period morphism over the point $\pi_{GM}(y)$.
This representation is crystalline: indeed, the point $y$ directly provides the corresponding admissible filtered Frobenius $G$-isocrystal $D_{\rm crys}(\rho_y)$ which is associated to $\rho_y$ by Fontaine's functor, cf. \cite[\S 1.6]{R-Z}. In fact,
the point $y$ also gives an isomorphism of the underlying Frobenius $G$-isocrystal over $k$ with the 
Frobenius $G$-isocrystal given by $b$.

Fix  a uniformizer  $\pi=\pi_F$  of $F$. Then by Breuil-Kisin theory (comp. \cite[Thm. 3.3.2]{KP}), to any Galois stable lattice in a crystalline  representation of  $ {\rm Gal}(\bar E/F)$, there is an associated Breuil-Kisin module $(\fkM, \phi_\fkM)$ over $O_{F}$. Let us recall this notion, cf. \cite[\S 4.1]{BMS}. There is a natural surjection of $W(k)$-algebras 
 \begin{equation}\label{BKtheta}
\theta\colon \frak{S}=W(k)\lps T\rps\to O_{F} ,
\end{equation}
sending $T$ to $\pi$. Its kernel is generated by an Eisenstein polynomial $E(T)$. There is a Frobenius $\phi$ on $\frak{S}$, which is the Frobenius on $W(k)$ and sends $T$ to $T^p$. 

A \emph{Breuil-Kisin module} over $O_{F}$ is a vector bundle $\fkM$ over $\Spec(\frak{S})$ equipped with an isomorphism
\begin{equation}
\phi_\fkM\colon\phi^*(\fkM)[\frac{1}{E(T)}]\isoarrow  \fkM[\frac{1}{E(T)}] .
\end{equation}

As explained in  \cite[\S 3.3, Cor. 3.3.6]{KP} (see also \cite[\S 4.2]{PCan}), one can use the extension result  \cite[Cor. 1.2]{An} to upgrade the Breuil-Kisin construction and obtain from a crystalline representation $\rho_y: {\rm Gal}(\bar E/F)\to \CG(\BZ_p)$ a $\CG$-torsor 
$\sP_{\rm BK}$ over $\frak S$ with a $\CG$-torsor isomorphism
\[
\phi_{\sP_{\rm BK}}: \phi^*(\sP_{\rm BK})[E(T)^{-1}]\xrightarrow{\sim} \sP_{\rm BK}[E(T)^{-1}]
\]
over $\frak S[E(T)^{-1}]$. The pair $(\sP_{\rm BK}, \Phi)$ is called a \emph{$\CG$-Breuil-Kisin module} in \cite{PCan} ($\CG$-BK module). 
By base changing via $\frak S\to W(k)$, given by $T\mapsto 0$,
we obtain a $\CG$-BKF-module  $(\sP_0, \phi_{\sP_0})$ over $k$. By the properties of the Breuil-Kisin functor (e.g. \cite[Thm. 3.3.2 (1)]{KP}), the Frobenius $G$-isocrystal $(\sP_0[1/p],  \phi_{\sP_0}[1/p])$ over $k$   is canonically the Frobenius $G$-isocrystal
underlying $D_{\rm crys}(\rho_y)$. Hence the point $y\in {\rm Sht}_{G,  b, \mu, K}(F)$ also provides the data of an isomorphism $\alpha$
of the Frobenius $G$-isocrystal $(\sP_0[1/p],  \phi_{\sP_0}[1/p])$ over $k$ with the one given by $b$.  We set 
\begin{equation}
{\rm sp}_{\rm BK}(y)=(\sP_0,  \phi_{\sP_0}[1/p]\circ \phi^*( \alpha))\in  {\rm Gr}^W_\CG(k).
\end{equation}
 This, as we will see in the next proposition, 
belongs to the subset $X_{\CG}(b, \mu^{-1})(k)\subset {\rm Gr}^W_\CG(k)$.

\begin{proposition}\label{propRZ}
\begin{altenumerate}
\item There is an identification of the {\cmag generic fiber $\sM_{\CG,b, \mu}^{\rm rig}$ }with \\ ${\CM}_{G,  b, \mu, K}$, as rigid analytic varieties over $\breve E$,

\item There is an identification of the perfection of the reduced special fiber $(\sM_{\CG,b, \mu})_\red^{\rm perf}$ with $X_{\CG}(b, \mu^{-1})$. 

\item The above identifications make the {\cmag specialization map 
\begin{equation}
{\rm sp}: |\sM_{\CG,b, \mu}^{\rm rig}|^{\rm class}\to (\sM_{\CG,b, \mu})_\red^{\rm perf}(k)
\end{equation}
agree} with the map 
\begin{equation}
{\rm sp}_{\rm BK}: |{\rm Sht}_{G,  b, \mu, K}|^{\rm class}\to X_{\CG}(b, \mu^{-1} )(k) .
\end{equation}
given above.

\end{altenumerate}
\end{proposition}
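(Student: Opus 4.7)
For part (i), I will unwind the definitions. By assumption, the formal scheme $\sM_{\CG, b, \mu}$ is normal, flat, and locally formally of finite type over $\Spf O_{\breve E}$, so its (Berthelot) generic fiber $\sM_{\CG, b, \mu}[1/p]$ is a normal flat rigid-analytic space locally of finite type over $\breve E$. By construction of the functor $\sY\mapsto \sY^\diam$ in \S\ref{sss:vs}, the $v$-sheaf associated with $\sM_{\CG, b, \mu}[1/p]$ coincides with the generic fiber of $\sM_{\CG, b, \mu}^\diam=\CM_{\CG,b,\mu}^{\rm int}$, which by definition equals $\CM^{\rm int}_{\CG,b,\mu}\times_{\Spd O_{\breve E}}\Spd(\breve E)={\rm Sht}_{G,b,\mu,K}=\CM_{G,b,\mu,K}^\diamondsuit$. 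Both $\sM_{\CG,b,\mu}[1/p]$ and $\CM_{G,b,\mu,K}$ are normal rigid spaces locally of finite type over $\breve E$, so the full-faithfulness part of Proposition \ref{propFFvsheaf} upgrades this to an identification of rigid-analytic spaces.

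For part (ii), I combine Gleason's theorem with the description of the reduced locus of the $v$-sheaf attached to a formal scheme. By Theorem \ref{Gleason}(a), $(\CM^{\rm int}_{\CG,b,\mu})_{\rm red}\simeq X_{\CG}(b,\mu^{-1})$ as small scheme-theoretic $v$-sheaves. For any flat formal scheme $\fkX$ locally formally of finite type over $\Spf O_{\breve E}$, one has $(\fkX^\diam)_{\rm red}\simeq (\fkX_{\rm red})^{\rm perf}$ as in the discussion in \S\ref{par331}. Applied to $\fkX=\sM_{\CG,b,\mu}$ this yields $(\sM_{\CG,b,\mu})_{\rm red}^{\rm perf}\simeq X_{\CG}(b,\mu^{-1})$ as perfect $k$-schemes, by the full faithfulness of $S\mapsto S^\diam$ on perfect schemes.

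Part (iii) is the main content. Let $y\in{\rm Sht}_{G,b,\mu,K}(F)$ be a classical point with $F/\breve E$ finite. The specialization ${\rm sp}(y)$ is computed as in Remark \ref{ideaSpecialization}: representing $y$ by $\Spa(C,O_C)\to\CM^{\rm int}_{\CG,b,\mu}$ with $C$ a complete algebraically closed nonarchimedean extension of $F$, the shtuka-with-framing over $\Spa(C^\flat,O_C^\flat)$ extends via Ansch\"utz's theorem (Proposition \ref{AnExtension}) to a $\CG$-BKF-module $\sP^\natural$ over $W(O_C^\flat)$, and ${\rm sp}(y)$ is the base change of $\sP^\natural$ along $W(O_C^\flat)\to W(k)$, together with the trivialization induced by the framing. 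On the other hand, ${\rm sp}_{\rm BK}(y)$ is obtained from the $\CG$-BK module $\sP_{\rm BK}$ over $\frak S=W(k)\lps T\rps$, base-changed along $T\mapsto 0$, together with the canonical identification of Frobenius $G$-isocrystals coming from $D_{\rm crys}(\rho_y)$. To compare the two, I will fix a compatible system of $p$-power roots of a uniformizer $\pi$ of $F$ inside $C$; this yields a Frobenius-equivariant embedding
\[
\iota\colon \frak S=W(k)\lps T\rps\hookrightarrow W(O_C^\flat),\qquad T\mapsto [\pi^\flat],
\]
that carries $E(T)$ to a generator of $\ker(W(O_C^\flat)\to O_C)$ up to a unit and Frobenius twist, and that becomes the reduction $W(O_C^\flat)\to W(k)$ after sending $T\mapsto 0$, since $[\pi^\flat]\in W(\fkm_{C^\flat})$.

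The comparison to establish is then that $\iota^*\sP_{\rm BK}\simeq\sP^\natural$ as $\phi$-$\CG$-torsors over $W(O_C^\flat)$, in a way compatible with the framings. Both torsors extend the same $\phi$-$\CG$-torsor on $\CY_{(0,\infty)}(C^\flat,O_C^\flat)$: on the BK side, this follows because $\sP_{\rm BK}$ recovers the pro-\'etale $\CG(\BZ_p)$-cover given by $\rho_y$ after inverting $p$ and applying the tilting/Fargues--Fontaine dictionary, and the shtuka attached to $y$ is, by Proposition \ref{pairs2}, precisely the one associated with that cover; on the Ansch\"utz side, this holds tautologically. Full-faithfulness away from infinity (Proposition \ref{FFres}) and Kedlaya's GAGA/extension result (Theorem \ref{FFKedlaya}(c)) applied to the perfectoid field $C^\flat$ then identify the two extensions canonically over $W(O_C^\flat)$. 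Base-changing to $W(k)$ and matching framings yields ${\rm sp}(y)={\rm sp}_{\rm BK}(y)$. The main obstacle here will be the bookkeeping of Frobenius twists in comparing $E(T)$, $\xi$, and $\phi(\xi)$, and verifying that the framing on the shtuka side matches the trivialization of $D_{\rm crys}(\rho_y)$ inherent in the Breuil--Kisin construction; once these are pinned down, the remainder of the argument is formal.
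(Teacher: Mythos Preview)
Your treatment of parts (i) and (ii) matches the paper's, and your overall strategy for (iii)---embedding $\frak S\hookrightarrow W(O_{C^\flat})$ via $T\mapsto[\pi^\flat]$ and comparing the pulled-back Breuil--Kisin torsor with the Ansch\"utz extension $\sP^\natural$---is exactly the paper's approach.

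The gap is in how you justify the key identification $\iota^*\sP_{\rm BK}\simeq\sP^\natural$. You argue that both ``extend the same $\phi$-$\CG$-torsor on $\CY_{(0,\infty)}$'' and then invoke Proposition~\ref{FFres} and Theorem~\ref{FFKedlaya}(c). But your justification on the BK side---``$\sP_{\rm BK}$ recovers the pro-\'etale $\CG(\BZ_p)$-cover given by $\rho_y$ after inverting $p$''---only pins down the local system, not the leg. Proposition~\ref{pairs2} says a shtuka is determined by the pair $(\BP,{\rm H})$: the torsor \emph{and} the period map to the flag variety. What you need is that the BKF module $\iota^*\sP_{\rm BK}$ produces, via Fargues' theorem, the correct pair $(T,\Xi)$ with $\Xi\subset T\otimes B_{\rm dR}$ the lattice coming from $D_{\rm dR}(\rho_y)$. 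This is not bookkeeping: it is a substantive compatibility between Kisin's integral $p$-adic Hodge theory and the Fargues--Scholze description of shtukas.

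The paper handles this by first reducing to $\CG=\GL_n$ (writing $\CG$ as the fixator of tensors in $\GL_n$, so the $\CG$-statement follows from the vector-bundle statement), and then citing \cite[Prop.~4.34]{BMS}, which establishes precisely that $\fkM\otimes_{\frak S}W(O_{C^\flat})$ corresponds under Fargues' theorem to the pair $(T,\Xi)$ attached to the crystalline representation. Your full-faithfulness citations do not substitute for this input: Proposition~\ref{FFres} concerns extending across $[\varpi]=0$, and Theorem~\ref{FFKedlaya}(c) extends vector bundles, but neither tells you that two BKF modules with the same underlying isocrystal (FF-bundle) and local system must have the same leg data. Once you supply \cite[Prop.~4.34]{BMS} for $\GL_n$ and the tensor argument for general $\CG$, the rest of your plan goes through and coincides with the paper's proof.
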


\begin{proof}  
Parts i) and ii) follow from Theorem \ref{Gleason} and it remains to show iii).

Recall that the specialization map ${\rm sp}$ is described  in Remark \ref{ideaSpecialization}.
We extend the natural homomorphism 
$W(k)\to W(O_{C^\flat})$ to 
\begin{equation}\label{injWA}
i: \frak{S}\to W(O_{C^\flat})
\end{equation}
 by sending $T$ to $[\pi^\flat]\in W(O_{C^\flat})$. Here $\pi^\flat$ is given by a choice of roots $(\pi^{1/p^n})_n$ in $C$.  
Then $i$ is compatible with the Frobenius homomorphisms and the map $\theta$ of \eqref{BKtheta}, resp. the map $\theta\colon W(O_{C^\flat})\to O_C$, in the sense that there is a commutative diagram
\begin{displaymath}
   \xymatrix{
         \frak{S}   \ar[r]^{i} \ar[d]_{\theta\ } &  W(O_{C^\flat})  \ar[d]^{\theta}\\
        O_{\breve F}\ar[r]^{\ } &  \ O_C.
        }
    \end{displaymath}
Furthermore, the image of $E(T)$ is a generator of the kernel of $\theta$ since it is primitive of degree $1$, comp. \cite[proof of Prop. 4.32]{BMS}.  Hence, $(i(E(T)))=(\xi)$ as ideals in $W(O_{C^\flat})$. 
Consider the map of locally ringed spaces $j: \CY_{[0, \infty)}(C^\flat, O_{C^\flat})\to \Spec(W(k)\lps T\rps)$ induced by $i\colon W(k)\lps T\rps\to W(O_{C^\flat})$, cf. \eqref{injWA}.  
  To show that ${\rm sp}$ agrees with ${\rm sp}_{\rm BK}$, it suffices to 
 show that the pull-back $j^*(\sP_{\rm BK}, \Phi)$ of the $\CG$-BK module attached to $\rho_y$ by the Breuil-Kisin functor (\cite{KFcrys})
 is isomorphic to the $\CG$-shtuka $(\sP, \phi_\sP)$
which is attached to the $(C, O_C)$-point of ${\rm Sht}_{G,  b, \mu, K}$  given by pre-composing $y$
with $\Spa(C, O_C)\to \Spa(F, O_F)$.  

Let first $\CG=\GL_n$. Then the  BKF-module $\frak M\otimes_{W(k)\lps T\rps}W(O_{C^\flat})$ given by the Breuil-Kisin module
$(\frak M, \Phi)$
extends the shtuka $(\sP, \phi_\sP)$.  Indeed, this shtuka is obtained by the de Rham $\BZ_p$-\'etale local system 
over $\Spec(F)$ given by $\rho_y$, as in Definition \ref{LSVectorShtuka}. Hence the assertion follows from \cite[Prop. 4.34]{BMS}, which  shows that
$\frak M\otimes_{W(k)\lps T\rps}W(O_{C^\flat})$ gives the ``correct" pair $(T, \Xi)$ (notation as in loc. cit., see also the proof of Proposition \ref{pairs0} above).
(This pair determines the shtuka by Fargues' theorem \cite[Thm. 14.1.1]{Schber}). 

This handles the case $\CG=\GL_n$. The case of general $\CG$ then follows by a standard argument by writing $\CG$ as the closed subgroup scheme 
of $\GL_n$ {\cmag given as the stabilizer of a family of tensors;}
see also the discussion in \S \ref{sss:torsorstensors}.
(In particular, this shows that the construction of ${\rm sp}_{\rm BK}$ is independent of the choice of the uniformizer $\pi_F$
and of $\pi_F^\flat$.)  
\end{proof}

\begin{proposition}
$\sM_{\CG,b, \mu}$ is the  unique  normal formal scheme $\sR$ which is flat and locally formally of finite type over $\Spf(O_{\breve E})$ and is equipped with identifications
\begin{altenumerate}
\item {\cmag $\sR^{\rm rig}=\CM_{G,b, \mu, K}$,}
\item ${\sR}_\red^{\rm perf}=X_{\CG}(b,\mu^{-1})$,
\end{altenumerate}
such that the following diagram is commutative: {\cmag
\begin{equation}\label{canlambda}
\begin{aligned}   \xymatrix{
        |\sR^{\rm rig}|^{\rm class} \ar[r]^{\rm sp_{\sR}} \ar[d]_{=} & {\sR_\red}(k)   \ar[d]^{=}\\
         |{\rm Sht}_{G,  b, \mu, K}|^{\rm class}\quad\ar[r]^{\,{\rm sp_{\rm Sht}}}  & X_{\CG}(b,\mu^{-1})(k) .
        }
        \end{aligned}
    \end{equation}
    }
\end{proposition}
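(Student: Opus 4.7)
The existence claim is immediate from Proposition \ref{propRZ}: under our standing hypothesis that $\CM^{\rm int}_{\CG,b,\mu}$ is representable by a formal scheme $\sM_{\CG,b,\mu}$, that proposition already exhibits $\sM_{\CG,b,\mu}$ as a normal flat and locally formally of finite type formal scheme over $\Spf(O_{\breve E})$, equipped with identifications (i), (ii) and satisfying the compatibility \eqref{canlambda}. The entire content of the proposition is therefore uniqueness.

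To prove uniqueness, suppose $\sR$ satisfies the hypotheses. My plan is to invoke the fully faithful embedding of normal flat formal schemes locally formally of finite type into $v$-sheaves over $\Spd(O_{\breve E})$ (\cite[Prop.\ 18.4.1]{Schber}, as incorporated into Proposition \ref{propFFvsheaf}). By this, it suffices to produce an isomorphism of $v$-sheaves
$$
\sR^\diam \isoarrow \sM_{\CG,b,\mu}^\diam = \CM^{\rm int}_{\CG,b,\mu},
$$
whose restriction to generic fibers recovers (i) and whose induced map on reduced loci recovers (ii). I would construct it locally, around each $x \in X_\CG(b,\mu^{-1})(k)$. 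The formal completions $\wh{\sR}_{/x}=\Spf(A_x)$ and $\wh{\sM_{\CG,b,\mu}}_{/x}=\Spf(A'_x)$ are spectra of complete Noetherian normal local $O_{\breve E}$-flat algebras. Their Berthelot rigid generic fibers embed as open subspaces of the common rigid-analytic space $\sR[1/p]^{\rm rig}=\sM_{\CG,b,\mu}[1/p]^{\rm rig}$ via (i), and by construction these open subspaces are the tubes $\mathrm{sp}^{-1}(x)$; by (iii) applied to classical points, together with their density in the rigid tube, the two tubes coincide as open rigid subspaces. A normal flat complete Noetherian local $O_{\breve E}$-algebra with residue field $k$ is canonically recovered from its Berthelot rigid fiber as the algebra of bounded global sections, so $A_x \simeq A'_x$ canonically. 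Translating through Gleason's tubular-neighborhood formalism (cf.\ \S\ref{par331}), this yields an isomorphism $\wh{\sR^\diam}_{/x} \isoarrow \wh{\CM^{\rm int}_{\CG,b,\mu}}_{/x}$ of $v$-sheaves, compatible with the generic-fiber identification.

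These local isomorphisms glue to a global isomorphism of $v$-sheaves by $v$-descent, since any geometric point $\Spa(C,C^+)\to \sR^\diam$ either factors through $\sR[1/p]^\diam$---handled by (i)---or its specialization lands in a unique $x \in X_\CG(b,\mu^{-1})(k)$, in which case the map factors through $\wh{\sR^\diam}_{/x}$ by the defining property \eqref{functcompl} of formal completions; the overlap of the two cases is exactly what the compatibility (iii) enforces. The main obstacle I foresee is the ``formal GAGA'' step used above, namely that a normal flat complete Noetherian local $O_{\breve E}$-algebra is canonically reconstructed from its Berthelot rigid fiber together with its residue field $k$. The cleanest way to bypass this algebraic reconstruction is to invoke \cite[Prop.\ 18.4.1]{Schber} directly on the formal completions themselves: since both $\wh{\sR}_{/x}$ and $\wh{\sM_{\CG,b,\mu}}_{/x}$ are representable normal flat formal schemes locally formally of finite type over $\Spf O_{\breve E}$, producing an isomorphism of their associated $v$-sheaves suffices, and this is reduced by the preceding local argument to the coincidence of their rigid generic fibers together with the coincidence of their closed points.
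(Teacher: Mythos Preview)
Your reduction to uniqueness is right, and your instinct to appeal to \cite[Prop.~18.4.1]{Schber} is sound. But the argument as written has a genuine gap, which you yourself flag as an ``obstacle'' and do not actually close.

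The step ``rigid generic fibers of the formal completions coincide, closed points coincide, therefore the associated $v$-sheaves coincide'' is not a consequence of \cite[Prop.~18.4.1]{Schber}: that proposition is full faithfulness of $(\,)^\diam$, so it lets you deduce an isomorphism of formal schemes \emph{from} an isomorphism of $v$-sheaves, not the other way around. To go from ``same rigid generic fiber and same reduced locus and same specialization'' to ``same $v$-sheaf'' (equivalently, same formal scheme) is precisely the content of Louren\c co's Riemannian Hebbarkeitssatz \cite{LourencoRH}, recorded as \cite[18.4.2]{Schber}: a normal flat formal scheme locally formally of finite type over $\Spf(O_{\breve E})$ is uniquely determined by the triple (rigid generic fiber, perfection of reduced special fiber, specialization map). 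Your ``bypass'' in the final paragraph invokes 18.4.1 on the completions and then asserts the very implication that needs proof, so it is circular.

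The paper's proof is a one-line citation of exactly this result of Louren\c co, applied globally to $\sR$ rather than to its formal completions. Once you have that theorem in hand there is no need for the local-to-global glueing you sketch: the triple $(\sR[1/p],\sR_{\rm red}^{\rm perf},{\rm sp}_\sR)$ already matches the corresponding triple for $\sM_{\CG,b,\mu}$ by hypotheses (i), (ii), and the commutativity of \eqref{canlambda}, so $\sR\simeq\sM_{\CG,b,\mu}$ directly.
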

\begin{proof} Indeed, by Louren\c co \cite{LourencoRH}, see \cite[18.4.2]{Schber}, the triple {\cmag $(\sR^{\rm rig}, {\sR}_\red^{\rm perf}, {\rm sp}_{\sR})$}
 characterizes such a formal scheme $\sR$. 
\end{proof}

  \subsection{Functoriality of integral LSV}
  
  Let $G\hookrightarrow G'$ be a group embedding compatible with local Shimura data
  $(G,b, \mu)\hookrightarrow (G', b', \mu')$. Then there is an inclusion of corresponding reflex fields, $E\supset E'$. Let $\CG$ and $\CG'$ be parahoric models of $G$, resp. $G'$, such that 
   \begin{equation}\label{equpara}
   \CG(\breve\BZ_p)=\CG'(\breve\BZ_p)\cap G(\breve\BQ_p)
   \end{equation}
   (intersection in $G'(\breve \BQ_p)$). Then, by \cite[Prop. 1.7.6]{BT2},
   $G\hookrightarrow G'$ extends to $\CG\to \CG'$. 
   
   \begin{lemma}\label{groupdil}
   Under the above assumptions, $\CG\to \CG'$  
   identifies
   $\CG$ with the group smoothening (in the sense of \cite{BLR}) of the Zariski closure $\bar\CG$ of $G$ in 
   $\CG'$, 
   \[
   \CG=\bar\CG^{\rm sm}\to \bar\CG\hookrightarrow \CG' .
   \]
   
   \end{lemma}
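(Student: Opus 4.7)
The plan is to produce the factorization $\CG \to \bar\CG^{\rm sm} \to \bar\CG \hookrightarrow \CG'$ using the universal property of group smoothening, and then to verify that the first map is an isomorphism by comparing $\breve\BZ_p$-points.

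First, I would recall that by definition $\bar\CG$ is the scheme-theoretic closure of $G \hookrightarrow \CG'_{\BQ_p} \hookrightarrow \CG'$, hence is a flat closed $\BZ_p$-subgroup scheme of $\CG'$ with generic fiber $G$. Since $\CG$ is $\BZ_p$-flat with generic fiber $G$, the morphism $\CG \to \CG'$ factors uniquely through $\bar\CG$. Because $\CG$ is smooth over $\BZ_p$, the universal property of the group smoothening (see \cite[\S 7.1]{BLR}, or the discussion in \cite[\S 1.2]{BT2}) yields a unique factorization $\CG \to \bar\CG^{\rm sm} \to \bar\CG$, where $\bar\CG^{\rm sm}$ is smooth and $\bar\CG^{\rm sm} \to \bar\CG$ restricts to the identity on generic fibers.

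Next, I would show $\CG \to \bar\CG^{\rm sm}$ is an isomorphism. Both source and target are smooth affine group schemes over $\BZ_p$ with the same generic fiber $G$, and the morphism is a $G$-equivariant $\BZ_p$-morphism of such group schemes. The defining property of the smoothening gives
\[
\bar\CG^{\rm sm}(\breve\BZ_p) = \bar\CG(\breve\BZ_p),
\]
and since $\bar\CG$ is the Zariski closure of $G$ in $\CG'$, we have $\bar\CG(\breve\BZ_p) = \CG'(\breve\BZ_p) \cap G(\breve\BQ_p)$ (intersection taken in $G'(\breve\BQ_p)$). By the hypothesis \eqref{equpara}, this last set equals $\CG(\breve\BZ_p)$. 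Hence $\CG \to \bar\CG^{\rm sm}$ induces a bijection on $\breve\BZ_p$-points, and both are smooth models of $G$ over $\BZ_p$ with the same parahoric subgroup in $G(\breve\BQ_p)$.

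Finally, I would invoke the uniqueness statement in Bruhat-Tits theory --- namely \cite[Prop. 1.7.6]{BT2} in combination with the standard fact that a smooth affine $\BZ_p$-group scheme of finite type with connected generic fiber is determined by its group of $\breve\BZ_p$-points viewed inside $G(\breve\BQ_p)$, together with the bounded, open nature of these subgroups. This forces $\CG \to \bar\CG^{\rm sm}$ to be an isomorphism. The main obstacle is the last step: one must verify that the equality on $\breve\BZ_p$-points really does promote to an isomorphism of group schemes (rather than merely a morphism that is bijective on points of the integral model), but this is precisely the content of the uniqueness part of \cite[Prop. 1.7.6]{BT2} applied to both $\CG$ and $\bar\CG^{\rm sm}$.
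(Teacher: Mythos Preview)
Your proposal is correct and follows essentially the same approach as the paper: obtain $\CG\to\bar\CG^{\rm sm}$ via the universal property of the group smoothening, verify that it induces a bijection on $\breve\BZ_p$-points using $\bar\CG^{\rm sm}(\breve\BZ_p)=\bar\CG(\breve\BZ_p)=\CG'(\breve\BZ_p)\cap G(\breve\BQ_p)=\CG(\breve\BZ_p)$, and then conclude via \cite[Prop.~1.7.6]{BT2}. Your write-up is slightly more detailed in spelling out why $\CG\to\CG'$ factors through $\bar\CG$, but the argument is the same.
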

   
   \begin{proof} By the universal property of the group smoothening, we have a morphism $\CG\to\bar\CG^{\rm sm}$. By (\ref{equpara}) 
   we have $\CG(\breve\BZ_p)=\CG'(\breve\BZ_p)\cap G(\breve\BQ_p)=\bar\CG(\breve\BZ_p)$.
  The group smoothening $\bar\CG^{\rm sm}\to \bar\CG$ satisfies 
  $\bar\CG^{\rm sm}(\breve\BZ_p)=\bar\CG(\breve\BZ_p)$, so the morphism $\CG\to\bar\CG^{\rm sm}$ induces a bijection $\CG(\breve\BZ_p)=\bar\CG^{\rm sm}(\breve\BZ_p)$. The characterization of smooth integral models of 
  $G$ by their $\breve\BZ_p$-points given by the extension property \cite[Prop. 1.7.6]{BT2}
now  implies $\CG\simeq \bar\CG^{\rm sm}$.
   \end{proof}
  In the above situation, $ \CG=\bar\CG^{\rm sm}\to \bar\CG$ is a dilation (see \cite{BLR}) and we will call the group scheme morphism $\CG\to \CG'$ a \emph{dilated immersion}.

  Consider the corresponding morphism \eqref{functmor} arising by functoriality, 
    \begin{equation}
  \rho: \CM^{\rm int}_{\CG,b, \mu}\to \CM^{\rm int}_{\CG', b', \mu'}\times_{\Spd( O_{\breve E'})}\Spd( O_{\breve E}).
  \end{equation}  
  \begin{proposition}\label{prop332}
  Under the  assumption \eqref{equpara}, $\rho$ is a closed immersion in the sense of \cite[Def. 17.4.2]{Schber}.
  \end{proposition}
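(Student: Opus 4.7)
The plan is to test the closed-immersion property of $\rho$ in the sense of \cite[Def. 17.4.2]{Schber} locally on the target. For an affinoid perfectoid $T=\Spa(R, R^+)$ over $k$ with a map to $\CM^{\rm int}_{\CG', b', \mu'}\times_{\Spd( O_{\breve E'})}\Spd( O_{\breve E})$ corresponding to a datum $(S^\sharp, \sP', \phi_{\sP'}, i'_r)$, the fiber product $T\times_{\CM^{\rm int}_{\CG', b', \mu'}} \CM^{\rm int}_{\CG, b, \mu}$ classifies reductions of this datum to a $\CG$-shtuka datum with compatible framing; note that the identification $b'=b$ via $G\subset G'$ is automatic here. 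It suffices to show that this sub-$v$-sheaf of $T$ is cut out by a closed immersion.

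By Lemma \ref{groupdil} the morphism $\CG\to\CG'$ factors as $\CG=\bar\CG^{\rm sm}\to\bar\CG\hookrightarrow\CG'$. For the closed immersion $\bar\CG\hookrightarrow\CG'$, I would use the Beauville--Laszlo description of torsors on $T\bdtimes\BZ_p$ (cf. \cite[Prop. 20.3.2]{Schber}): given the trivialization of $\sP'|_{\CY_{[r,\infty)}(T)}$ provided by $i'_r$, the torsor $\sP'$ is encoded by its modification point in ${\rm Gr}_{\CG', \Spd( O_{\breve E})}(T)$, and a $\bar\CG$-reduction of $\sP'$ compatible with $i'_r$ is the same data as a factorization of this point through the sub-$v$-sheaf ${\rm Gr}_{\bar\CG, \Spd( O_{\breve E})}\hookrightarrow {\rm Gr}_{\CG', \Spd( O_{\breve E})}$. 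The latter is a closed immersion because $\bar\CG\hookrightarrow\CG'$ is a closed immersion of affine $\BZ_p$-group schemes, so the factorization condition cuts out a closed sub-$v$-sheaf of $T$. The boundedness of the modification by $\mu$ is preserved under this factorization since $\BM^v_{\CG, \mu}$ maps naturally into ${\rm Gr}_{\bar\CG, \Spd( O_{\breve E})}$.

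For the dilation step $\CG\to\bar\CG$, I would show that every $\bar\CG$-reduction produced above canonically upgrades to a unique $\CG$-reduction, so this step imposes no further closed condition. Working on a $v$-cover of $T$ by products of points, Ansch\"utz's Extension Theorem \ref{paraextconj} extends the $\bar\CG$-torsor across $[\varpi]=0$ to a $\bar\CG$-torsor on $\Spec(W(R^+))$; applying smoothening at the level of torsors yields a $\CG$-torsor whose restriction to $T\bdtimes\BZ_p$ produces the desired $\CG$-reduction, with Frobenius and framing inherited. The hypothesis $\CG(\breve\BZ_p)=\bar\CG(\breve\BZ_p)$ from \eqref{equpara} ensures this upgrade is independent of the chosen extension, descends along the $v$-cover, and also yields the identity $\BM^v_{\CG, \mu}=\BM^v_{\bar\CG, \mu}$, so the $\mu$-boundedness passes through unchanged.

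The main obstacle is the dilation step: the dilation $\CG\to\bar\CG$ is an isomorphism only on generic fibers, so one must argue carefully that the smoothening upgrade is intrinsic, that it respects the framing data on all of $\CY_{[r,\infty)}(T)$ rather than just generically, and that it glues along the $v$-cover by products of points. Without \eqref{equpara}, the upgrade could fail to be canonical at the non-generic points where the dilation contracts; with it, the equality of $\breve\BZ_p$-points propagates through Ansch\"utz's trivialization to give uniqueness and descent. This is also where the delicate interplay between the Zariski-closedness of $\bar\CG\subset\CG'$ and the Bruhat--Tits calibration of $\CG$ enters decisively.
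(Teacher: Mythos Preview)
Your approach diverges substantially from the paper's, and it has genuine gaps. The paper does not attempt to describe the fiber product $T\times_{\CM^{\rm int}_{\CG', b', \mu'}}\CM^{\rm int}_{\CG, b, \mu}$ explicitly. Instead, it invokes the criterion \cite[Cor.~17.4.8]{Schber}: a map of $v$-sheaves is a closed immersion as soon as it is quasi-compact, quasi-separated, satisfies the valuative criterion for properness, and is injective on $(C,O_C)$-points for every algebraically closed $C$. Partial properness is immediate from Proposition~\ref{ppProp}. Injectivity on $(C,O_C)$-points comes from the concrete identity $\CG(B^{[r,\infty)}_{(C,O_C)})\cap\CG'(W(O_C))=\CG(W(O_C))$, deduced from the smoothening property and $W(C)\cap W(O_C)[1/p]=W(O_C)$. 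The most delicate point is quasi-compactness: the paper passes to the auxiliary $v$-sheaf $\CL\CM^{\rm int}_{\CG,b,\mu}$ (where the underlying torsor is trivial) and uses that the pole of $\Phi$ is uniformly bounded by $\mu$ to assemble, over a product of points, a single $(D,D^+)$-valued section of the fiber.

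Your proposal founders at two places. First, the claim that a $\CG'$-shtuka-with-framing over a general affinoid perfectoid $T$ is ``encoded by its modification point in ${\rm Gr}_{\CG',\Spd(O_{\breve E})}(T)$'' is not justified: the framing $i'_r$ lives only on $\CY_{[r,\infty)}(T)$, and Frobenius continuation propagates it at best to $\CY_{(0,\infty)}(T)$, not across the leg nor to the full $\CY_{[0,\infty)}(T)$. So a $\bar\CG$-reduction of $(\sP',\phi_{\sP'},i'_r)$ is genuinely more than a factorization of a Grassmannian point, and the closedness of ${\rm Gr}_{\bar\CG}\hookrightarrow{\rm Gr}_{\CG'}$ does not immediately give what you want. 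Second, the dilation step is broken: Ansch\"utz's extension theorem (Conjecture~\ref{paraextconj}, proved in \cite{An}) is for \emph{parahoric}, in particular smooth, group schemes. The Zariski closure $\bar\CG$ is typically not smooth, whence the smoothening $\CG=\bar\CG^{\rm sm}$ is nontrivial, so you cannot extend a $\bar\CG$-torsor over the punctured $\Spec(W(O_C))$ by that theorem. ``Smoothening at the level of torsors'' is not a well-defined operation without further argument. Finally, even if these issues were patched, you have not addressed quasi-compactness, which in the paper's proof is the step requiring the most work.
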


  \begin{proof} By \cite[Cor. 17.4.8]{Schber}, it is enough to show that the morphism is quasi-compact, quasi-separated, satisfies the valuative extension
criterion for properness as in loc. cit. and that for any algebraically closed field $C$ of characteristic $p$, the induced map 
\[
 \rho(C, O_C): \CM^{\rm int}_{\CG,b, \mu}(C,O_C)\to \CM^{\rm int}_{\CG', b', \mu'}(C, O_C)
\]
is injective. First note that using Proposition \ref{ppProp} and the Tannakian equivalence, 
$\CM^{\rm int}_{\CG,b, \mu}$ and $\CM^{\rm int}_{\CG',b', \mu'}$ satisfy ``partial properness":
$\CM^{\rm int}_{\CG,b, \mu}(R,R^\circ)=\CM^{\rm int}_{\CG,b, \mu}(R,R^+)$ and
$\CM^{\rm int}_{\CG',b', \mu'}(R,R^\circ)=\CM^{\rm int}_{\CG',b', \mu'}(R,R^+)$,
and so the valuation
criterion of properness is satisfied for $\rho$. 

Note that if $R$ is a flat $\BZ_p$-algebra, we have
\[
\CG(R)\subset \CG'(R).
\]
Arguing as in the proof of {\cred\cite[Prop. 2.25]{Gl21}}, we see that
the injectivity of $\rho(C, O_C)$ follows from
\begin{equation}\label{insideG'}
\CG(B^{[r,\infty)}_{(C, O_C)})\cap \CG'(W(O_C))=\CG(W(O_C)).
\end{equation}
To see this equality, observe that 
\[
\CG(B^{[r,\infty)}_{(C, O_C)})=G(B^{[r,\infty)}_{(C, O_C)}) ,
\]
 since $p$ is a unit in $B^{[r,\infty)}_{(C, O_C)}$. 
 For simplicity, we will set
 \[
 B^{[r, \infty)}_C=B^{[r,\infty)}_{(C, O_C)}=B^{[r,\infty)}_{(C, C^+)}.
 \]
Since $W(O_C)[1/p]\subset B^{[r,\infty)}_{C}$ and
 $G\hookrightarrow G'$ is a closed immersion,
 \[
 \CG(B^{[r,\infty)}_{C})\cap \CG'(W(O_C))\subset G(B^{[r,\infty)}_{C})\cap G'(W(O_C)[1/p])=G(W(O_C)[1/p]).
 \]
 Now by the property of group smoothening, it follows from \eqref{equpara} that 
 \[
 G(W(C)[1/p])\cap \CG'(W(C))=\CG(W(C)).
 \]
 Hence the intersection $\CG(B^{[r,\infty)}_{C})\cap \CG'(W(O_C))$ is contained in $\CG(W(C))\cap \CG(W(O_C)[1/p])$.
 Since $\CG$ is affine and $W(C)\cap W(O_C)[1/p]=W(O_C)$, this last intersection is $\CG(W(O_C))$, which 
 proves \eqref{insideG'}. The same argument shows that $\rho(R, R^+)$ is injective when $(R, R^+)$ is obtained as a product of points $(C_i, C_i^+)$, cf \S\ref{vcover}. (Quasi-)~Separateness now follows by the argument in {\cred\cite[Prop. 2.25]{Gl21}}.

 It remains to show that $\rho$ is quasi-compact.  Note that every $\Phi\in \CG(W(R^+)[1/\xi_{R^\sharp}])$ with pole bounded by $\mu$ defines a $\CG$-shtuka $\sP_\Phi$ over $(R, R^+)$
 with leg at $(R^\sharp, R^{\sharp +})$ by taking the trivial $\CG$-torsor with Frobenius
 given by the element $\Phi$. {\cred As in \cite[Def. 2.22]{Gl21}} we consider the small $v$-sheaf $\CL\CM^{\rm int}_{\CG,b,\mu}$ over ${\rm Spd}(O_E)$ which classifies
 pairs $(\Phi, i_r)$, where $\Phi\in \CG(W(R^+)[1/\xi_{R^\sharp}])$ is bounded by $\mu$ and $i_r$ is a trivialization of the restriction of
 $\sP_\Phi$ to $\CY_{[r,\infty)}(R, R^+)$. The forgetful morphism of $v$-sheaves 
 \[
 \pi: \CL\CM^{\rm int}_{\CG,b,\mu}\to \CM^{\rm int}_{\CG,b,\mu}
 \]
is surjective for the $v$-topology and is a torsor for the $v$-topology and for the $v$-sheaf in groups $(R, R^+)\mapsto \CG(W(R^+))$, {\cred see \cite[Prop. 2.23]{Gl21}.}
It will be enough to show that the natural morphism
\[
\rho: \CL\CM^{\rm int}_{\CG,b,\mu}\to \CL\CM^{\rm int}_{\CG',b',\mu'}
\]
 is quasi-compact. 
 
  The argument is inspired by the proof of \cite[Thm. 21.2.1]{Schber}.
 Let $S=\Spa(A, A^+)$ be affinoid perfectoid and  let $S\to
 \CL\CM^{\rm int}_{\CG',b',\mu'}$, given by $(\Phi', i'_r)$.
 Consider the fibered product (small) $v$-sheaf
 \[
  T:=\CL\CM^{\rm int}_{\CG,b,\mu}\times_{\CL\CM^{\rm int}_{\CG',b',\mu'}}S ,
 \]
which classifies $(\Phi, i_r)$ such that $(\rho(\Phi), \rho(i_r))=(\Phi', i'_r)$. 
It is enough to show that $T$ is quasi-compact for all such $S\to
 \CL\CM^{\rm int}_{\CG',b',\mu'}$. Each point $t\in |T|$ is in the image of some $\Spa(C_t, C^+_t)\to T$ given by 
 $(\Phi_t, i_{t,r})$. So, 
 for each $t\in |T|$, we have 
\[
\Phi_t\in \CG(W(C^+_t)[1/\xi_t]), \quad i_{t, r}\in \CG(B^{[r_t,\infty)}_{C_t}).
\]
 We set $\xi=(\xi_t)\in W(D^+)=\prod_t W(C^+_t)$.

The composition $\Spa(C_t, C^+_t)\to T\to S$ gives $A\to C^+_t$. Choose a pseudo-uniformizer $\varpi_A\in A^+$ and denote by $\varpi_t\in C^+_t$ its image  under $A\to C^+_t$. Now consider the product of points 
\[
(D, D^+)=((\prod_t C^+_t)[1/\varpi], \prod_t C^+_t)
\]
with $\varpi=(\varpi_t)_t$, see \S\ref{vcover}. We have $\Spa(D, D^+)\to S$ which extends $\Spa(C_t, C^+_t)\to S$, for each $t$.
 
 Observe that each $\Phi_t$ has pole bounded by $\mu$, which is the same for all $t$.
Choose a closed group scheme immersion $j: \CG\hookrightarrow \GL_m$. Then
 the  entries of the matrices $j(\Phi_t)$, $j(\Phi_t)^{-1}$ lie in $\xi_t^{-N}W(C^+_t)$ 
 with $N$ bounded above, 
uniformily in $t$. It follows that
\[
(\Phi_t)_t \in \prod_t \CG(W(C^+_t)[1/\xi_t])=\CG(\prod_t W(C^+_t)[1/\xi_t])
\]
actually lies in $\CG(W(\prod_t C^+_t)[1/\xi])=\CG(W(D^+)[1/\xi])$. Set $\Phi=(\Phi_t)_t\in \CG(W(D^+)[1/\xi])$.
Now observe that 
\[
(\rho(i_{t, r}))_t\in \CG'(\prod_t B^{[r_t,\infty)}_{C_t}) 
\]
comes from  $\CG'(B^{[r',\infty)}_{(A, A^+)})$ via
$
 B^{[r',\infty)}_{(A, A^+)}\to \prod_t  B^{[r_t,\infty)}_{C_t}.
$ Hence, $(i_{t, r})_t$ lies in the intersection
\[
\CG'(B^{[r', \infty)}_{(D, D^+)})\cap \CG(\prod_t B^{[r_t,\infty)}_{C_t}) .
\]
This intersection is $G(B^{[r', \infty)}_{(D, D^+)})$ since $\CO_{G}=\CO_{G'}/I$ is a quotient 
and we have an injection
\[
B^{[r', \infty)}_{(D, D^+)}\hookrightarrow \prod_t B^{[r_t,\infty)}_{C_t}.
\]
Hence, $i_{r'}=(i_{t, r_t})_t$ is in $G(B^{[r', \infty)}_{(D, D^+)})$. We now consider
the pair $(\Phi, i_{r'})$ which gives a point $\Spa(D, D^+)\to\CL\CM^{\rm int}_{\CG,b,\mu}$. Combined with $\Spa(D, D^+)\to S$ as above gives $\Spa(D, D^+)\to T$ which is surjective.
This shows that $T$ is quasi-compact, as required.
\end{proof}

\subsection{Representability of integral LSV}\label{ss:repintLSV}  
  The aim of this subsection is to prove the following confirmation of  Conjecture \ref{repMint} under  some assumptions.
  \begin{theorem}\label{thmrepint}
Let $(G, b, \mu)$ be a local Shimura datum and $\CG$ a parahoric group scheme. The following assumptions are imposed. 
  \begin{itemize}
  \item[1)] $(G, \mu)$ is of local Hodge  type, i.e., there
  is a closed group embedding $\rho: G\hookrightarrow \GL_n$ such that $
  \rho\circ \mu$ is minuscule.
  \item[2)] $\CG$ is the Bruhat-Tits stabilizer group $\CG_x$ of a point in the extended Bruhat-Tits building of $G(\BQ_p)$, i.e., $\CG=\CG_x=\CG_x^\circ$. 
  \item[3)] Conjecture \ref{conjtubeMint} on the representability of formal completions $\wh{{\CM}^{\rm int}_{\CG, b, \mu}}_{/x}$  is true for all points $x\in\CM_{\CG, b, \mu}^{\rm int}(k)$.
  \end{itemize}
  Then $\CM_{\CG, b, \mu}^{\rm int}$ is representable by a normal formal scheme $\sM$ which is flat and locally formally of finite type over $\Spf O_{\breve E}$. 
  \end{theorem}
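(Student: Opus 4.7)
The plan is to realize $\CM^{\rm int}_{\CG,b,\mu}$ as a closed sub-$v$-sheaf of a known representable integral LSV and then, imitating de Jong's construction from \cite[\S 7]{deJongCrys}, to cut out the desired formal scheme inside the ambient one. First, using local Hodge type (assumption 1) together with the connectedness $\CG=\CG_x=\CG_x^\circ$ (assumption 2), a standard Bruhat-Tits argument (comp. \cite[Prop. 1.7.6]{BT2}, as used in \cite{KP}) produces a closed embedding $\rho\colon G\hookrightarrow \GL(\Lambda)$ with $\rho\circ\mu$ minuscule, together with an extension to the hyperspecial parahoric $\CG'=\GL(\Lambda)$ satisfying the dilated immersion condition $\CG(\breve\BZ_p)=\CG'(\breve\BZ_p)\cap G(\breve\BQ_p)$. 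Push-out of torsors and Proposition \ref{prop332} then give a closed immersion of $v$-sheaves over $\Spd(O_{\breve E})$,
$$
\rho_*\colon\CM^{\rm int}_{\CG,b,\mu}\hookrightarrow \CM':=\CM^{\rm int}_{\CG',\rho(b),\rho\circ\mu}\times_{\Spd(O_{\breve E'})}\Spd(O_{\breve E}) .
$$

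Since $\rho\circ\mu$ is minuscule and $\CG'$ is the hyperspecial stabilizer of $\Lambda$, the target $\CM^{\rm int}_{\CG',\rho(b),\rho\circ\mu}$ is of EL-type Rapoport-Zink form and, by \cite[Cor. 25.1.3]{Schber}, is representable by a normal formal scheme, flat and locally formally of finite type over $\Spf(O_{\breve E'})$; after base change, $\CM'$ is represented by such a formal scheme $\sM'$ over $\Spf(O_{\breve E})$. The theorem thus reduces to showing that $\rho_*(\CM^{\rm int}_{\CG,b,\mu})\subset(\sM')^\diam$ has the form $\sM^\diam$ for a closed formal subscheme $\sM\hookrightarrow \sM'$. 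Working locally, I would pick an affine formal open $\Spf(A)\subset\sM'$ with $A$ an adic Noetherian $O_{\breve E}$-algebra. For each closed point $x\in X_\CG(b,\mu^{-1})=(\CM^{\rm int}_{\CG,b,\mu})_{\rm red}$ (Theorem \ref{Gleason}) lying over $\Spf(A)$, hypothesis (3), combined with the closed immersion of formal completions induced by $\rho_*$ (via Proposition \ref{prop332} applied on completions together with Proposition \ref{propFFvsheaf} and Corollary \ref{corFF}), will produce a canonical surjection of normal complete Noetherian local $O_{\breve E}$-algebras
$$
\widehat\CO_{\sM',\rho_*(x)}\twoheadrightarrow R_x ,
$$
where $R_x$ represents $\widehat{\CM^{\rm int}_{\CG,b,\mu}}_{/x}$.

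Next, in the style of \cite[\S 7.2]{deJongCrys}, I would define an ideal $I_A\subset A$ as the intersection, over all such $x\in\Spf(A)$, of the kernels of the composite maps $A\to \widehat\CO_{\sM',\rho_*(x)}\twoheadrightarrow R_x$, and show that $A/I_A$ is a Noetherian adic $O_{\breve E}$-algebra complete for the induced topology whose formal spectrum represents the pullback $\CM^{\rm int}_{\CG,b,\mu}\times_{(\sM')^\diam}\Spf(A)^\diam$. Once this is established, the pieces $\Spf(A/I_A)$ glue to a closed formal subscheme $\sM\subset\sM'$ representing $\CM^{\rm int}_{\CG,b,\mu}$; normality of $\sM$ will then follow from normality of each $R_x$ together with the local formal finite-type structure, and flatness over $O_{\breve E}$ from the topological flatness of the formal completions (Proposition \ref{tubestruc}, (1)). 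The main obstacle, and the technical heart of the argument, is precisely to verify that $A/I_A$ really is such a Noetherian adic algebra, and that $\Spf(A/I_A)^\diam$ matches the pullback exactly. This will require a careful adaptation of de Jong's gluing arguments to the $v$-sheaf setting, leaning on: (i) hypothesis (3), which supplies uniformly well-behaved representing rings $R_x$ at every closed point; (ii) the connectedness of each generic fiber $(\widehat{\CM^{\rm int}_{\CG,b,\mu}}_{/x})_\eta$ (Proposition \ref{tubestruc}, (2)), to exclude disconnection artefacts in the local-to-global step; (iii) the closed immersion $X_\CG(b,\mu^{-1})\hookrightarrow (\sM')_{\rm red}^{\rm perf}$ on reduced loci (from Theorem \ref{Gleason} and Proposition \ref{prop332}), which controls how the $R_x$ sit along the reduced special fiber of $\sM$; and (iv) the fully faithful passage between $v$-sheaves and (formal) schemes provided by Proposition \ref{propFFvsheaf} and Corollary \ref{corFF}, which is what permits upgrading the $v$-sheaf-level information to honest formal-scheme-theoretic data.
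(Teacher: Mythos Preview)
Your overall strategy matches the paper's: embed $\CM^{\rm int}_{\CG,b,\mu}$ as a closed sub-$v$-sheaf of the representable $\CM'=(\sM')^\diam$ for $\CG'=\GL(\Lambda)$, then carve out a closed formal subscheme by a de Jong-style argument. However, two points diverge from the paper in ways that create real gaps.

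First, the claimed \emph{surjection} $\widehat\CO_{\sM',\rho_*(x)}\twoheadrightarrow R_x$ is not justified. A closed immersion of $v$-sheaves $\Spd(R_x)\hookrightarrow \Spd(\widehat\CO_{\sM',\rho_*(x)})$ together with full faithfulness of $(\ )^\diam$ on normal formal schemes gives a ring map $\wt\rho^*\colon \widehat\CO_{\sM',\rho_*(x)}\to R_x$, but there is no reason this is surjective. The paper only deduces that the fiber over the closed point is Artin local, hence $\wt\rho^*$ is \emph{finite}. One then sets $V_x:=\Spf(\widehat\CO_{\sM',\rho_*(x)}/\ker\wt\rho^*)$, so that $\Spf(R_x)\to V_x$ exhibits $R_x$ as the \emph{normalization} of the unibranch $V_x$; Louren\c{c}o's theorem \cite[Thm. 4.6]{LourencoThesis} then gives $\widehat{\CM^{\rm int}_{\CG,b,\mu}}_{/x}\simeq V_x^\diam$. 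This is why the paper first constructs a possibly non-normal $\sM^-$ and only at the very end passes to its normalization $\sM$. Your normality argument (``normality of $\sM$ follows from normality of each $R_x$'') is circular without this.

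Second, your definition of $I_A$ as an infinite intersection of kernels over all closed points is not how the paper proceeds, and it is unclear why $A/I_A$ should be Noetherian adic or why $\Spf(A/I_A)^\diam$ should recover the pullback. The paper instead proves a precise ``formal descent'' statement (Proposition \ref{deJong}), whose input is the triple $(Z,T,(V_t)_t)$: the rigid-analytic generic fiber $Z=\CM_{\CG,b,\mu}$, the reduced locus $T$ coming from $X_\CG(b,\mu^{-1})$, and the local formal subschemes $V_t$ from the previous paragraph. The actual ideal is built not from intersecting kernels but from the ideals $I_n\subset B_n$ cutting out $Z\cap U_n$ in the Berthelot tubes $U_n=\Spf(B_n)^{\rm rig}$, and the compatibility condition $(\widehat\fkX_{/t})^{\rm rig}\cap Z=(V_t)^{\rm rig}$ (verified via topological flatness and connectedness from Proposition \ref{tubestruc}) is what forces the $\beta_n(I_n)$ to stabilize to an honest ideal $J\subset A$. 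The generic fiber data $Z$, which your construction does not use, is essential here; without it the ``main obstacle'' you flag cannot be resolved along the lines you suggest.
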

  \begin{remark}\label{remlocglob}
  Recall from \eqref{bxbase} that to $x$ we can associate $b_x\in G(\breve \BQ_p)$, well-defined up to $\sigma$-conjugacy by $\CG(\breve\BZ_p)$, which is $\sigma$-conjugate to $b$. Then $\CM_{\CG, b_x, \mu}^{\rm int}$ has a natural base point $x_0$ and there is an isomorphism as in \eqref{translatesigma} that induces an isomorphism 
  \[
  \wh{{\CM}^{\rm int}_{\CG, b, \mu}}_{/x}\simeq\wh{{\CM}^{\rm int}_{\CG, b_x, \mu}}_{/x_0}.
  \]
  Assume  that the local  data $(p, G, b, \mu, \CG)$ come from a   global datum $(p,\eG, X, \eK^p, \CG)$ of Hodge type and that there is a point ${\bf x}\in \sS_\eK(k)$ in the reduction of the integral model  in  \S \ref{ss:shimhodge} of the Shimura variety ${\rm Sh}_\eK(\eG, X)$  such that $b_x=b_{\bf x}$ (up to $\sigma$-conjugacy by $\CG(\breve\BZ_p)$). Here $b_{\bf x}$ is defined as in \eqref{bxbase}, using the extension of the $\CG$-shtuka to $\sS_\eK$, comp.  the passage before Remark \ref{Upsilonmap}. Then the hypothesis in 3) can be eliminated. Indeed, it is automatically satisfied, as  follows from Theorem \ref{mainhodge} below.

  The assumption can be formulated in the framework of \cite{HeR} as follows. Let
  $$
  C(\CG, \mu^{-1})=\big(\CG(\breve \BZ_p) {\rm Adm}(\mu^{-1})\CG(\breve \BZ_p)\big)/\CG(\breve \BZ_p)_\sigma ,
  $$
 and let $C(\CG, \mu^{-1})_{[b]}$ be the inverse image of $[b]$ under the natural map $C(\CG, \mu^{-1})\to B(G, \mu^{-1})$.  Then $b_x\in C(\CG, \mu^{-1})_{[b]}$. Consider the map from Remark \ref{Upsilonmap}
 \begin{equation}
 \Upsilon_\eK\colon \sS_\eK(k)\to C(\CG, \mu^{-1}) .
 \end{equation}
It may be conjectured that $\Upsilon_\eK$  is surjective (this follows from the system of axioms in \cite{HeR}, cf. \cite[Cor. 4.2]{HeR}).  If this conjecture holds true, then the assumption made above is satisfied.  In \cite{Zhou}  this conjecture is proved in the Hodge type case if $G$ is tamely ramified and residually split; and the same proof works when $G$ is unramified by using the results of Nie \cite{Nie}; see also  \cite{SYZ} for more classes of Shimura varieties.

We note that this approach to Conjecture \ref{repMint} is global, as it makes use of the theory of Shimura varieties. By contrast,   in \cite{PRintlsv} we pursue a purely local approach, with more general results. Indeed, in \cite{PRintlsv} we prove   Conjecture \ref{repMint} in the case when $(G, \mu)$ is of abelian type and $p\neq 2$ or $p=2$ and $G_\ad$ is of type A or C. However, even in this other approach, we use Theorem \ref{thmrepint} and its proof.
\end{remark}

  \subsubsection{A construction of formal subschemes}\label{parconstruction371}
  
  By the main theorem of \cite{La},  there is an equivariant embedding of extended buildings $\rho_*: \sB^e(G,\BQ_p)\hookrightarrow\sB^e(\GL_n,\BQ_p)$ which is associated to the embedding $\rho: G\hookrightarrow H:=\GL_n$. Set $\CH=\CH_{\rho_*(x)}$ for the stabilizer Bruhat-Tits group scheme
given by the point $\rho_*(x)\in \sB^e(H,\BQ_p)$. In this case of $\GL_n$ this group scheme is 
the stabilizer of a periodic lattice chain 
 \[
 \Lambda_i : \cdots \subset p\Lambda_0\subset \Lambda_{r} \subset \Lambda_{r-1}\subset\cdots \subset \Lambda_0\subset p^{-1}\Lambda_r\subset\cdots
 \] 
in $V=\BQ_p^n$ and is connected, so it is a 
parahoric group scheme.
We have 
\[
\CG(\breve\BZ_p)=\CH(\breve\BZ_p)\cap G(\breve\BQ_p).
\]
In this, we can assume at the cost of adjusting $\rho$, that 
$\CH=\GL(\Lambda)$ is hyperspecial and given by a single lattice. 
Indeed, we can replace $V$ by $V'=V^{\oplus r}$ and the lattice chain $\Lambda_\bullet$ by the lattice chain 
given by the multiples $p^\bullet \Lambda'$ of the single lattice
$\Lambda'=\Lambda_0\oplus\cdots \oplus \Lambda_r$.

Let $\ov\CG$ be the flat closure of $G$ in $\GL(\Lambda)$.
Here, $\CG_x=\ov\CG^{\rm sm}$ is the Neron group smoothening of $\ov\CG$, comp. Lemma \ref{groupdil}. Then
  \begin{equation}
  \CG(\breve\BZ_p)= \CG_x(\breve\BZ_p)= \ov\CG(\breve\BZ_p)= \CH(\breve\BZ_p)\cap G(\breve\BQ_p) .
 \end{equation}
 {\cmag The ``Grothendieck-Messing"  period morphisms
  fit in a commutative diagram of $v$-sheaves (in fact, diamonds) over $\Spd(\breve E)$,
 \begin{equation*} \xymatrix{
   {\rm Sht}_{\CG,b, \mu} \ar[r]^{\rho\ \ \ \ \ \ \ \ \ \ \ \  } \ar[d]_{\pi_{\rm GM}} &\ar^{\pi_{\rm GM}\times 1}[d]  {\rm Sht}_{\CH, \rho(b), \rho(\mu)}\times_{\Spd(\breve\BQ_p)}\Spd(\breve  E)\\
       {\rm Gr}_{G, \Spd(\breve  E), \mu}   \ar[r]^{\rho\ \ \ \ \ \ \ \ \ \ \ \  } & {\rm Gr}_{H, \Spd(\breve\BQ_p), \rho(\mu)}\times_{\Spd(\breve\BQ_p)}\Spd(\breve  E) ,}
  \end{equation*}
  {\cmag comp. \S \ref{ss:LSV}.}}
  
  The diamonds  ${\rm Sht}_{\CG,b, \mu}$ and  
  $ {\rm Sht}_{\CH,\rho(b), \rho(\mu)}$ are represented by the (smooth) rigid analytic varieties $\CM_{\CG,b, \mu}$ and $\CM_{\CH,\rho(b), \rho(\mu)}$ over $\breve E$; the period morphisms are \'etale with fibers $G(\BQ_p)/\CG(\BZ_p)$ and $H(\BQ_p)/\CH(\BZ_p)$
  respectively and the bottom horizontal morphism is induced by the closed immersion 
  of homogeneous spaces $\CF_{G,\mu}\hookrightarrow \CF_{H, \rho(\mu)}\otimes_{\breve\BQ_p}\breve E$. Since $\rho$ also gives $G(\BQ_p)/\CG(\BZ_p)\hookrightarrow H(\BQ_p)/\CH(\BZ_p)$, it follows that the top horizontal morphism is a closed immersion. (This is in agreement with
  Proposition \ref{prop332} which extends this to a closed immersion of the corresponding $v$-sheaf integral models.)
  
  By \cite[Cor. 24.3.5]{Schber}, $\CM^{\rm int}_{\CH,\rho(b), \rho(\mu)}$ is represented by a Rapoport-Zink formal scheme $\sM_{\CH,\rho(b),\rho(\mu)}$. It is normal, separated flat and locally formally of finite type over ${\rm Spf}({\breve\BZ_p})$.

  For simplicity of notation, set $\breve O=O_{\breve E}$, let $\pi$ be a uniformizer so that $k=\breve O/(\pi)$.
  
  Our construction of $\sM_{\CG, b, \mu}$ is based on the following ``formal descent" statement inspired by a similar statement
  in \cite{deJongCrys}.
  Suppose that $\fkX$ is a formal scheme over ${\rm Spf}(\breve O)$ which is separated, locally formally of finite type and flat over $\breve O$.
  Let {\cmag $\fkX^{\rm rig}$ be} the rigid analytic generic fiber of $\fkX$ over $\breve E$ in the sense of Berthelot, cf. \cite[chap. 5]{R-Z}. Let $\fkX_{\rm red}$
  the reduced locus of $\fkX$ which is a scheme locally of finite type over $\Spec(k)$. Let
  \begin{equation}
 {\rm sp}:  |\fkX^{\rm rig}|^{\rm class}\to \fkX_{\rm red}(k)
  \end{equation}
be the specialization map.

  \begin{proposition}\label{deJong}
Let the following data be given:
\begin{itemize}
\item[1)] A closed rigid analytic subvariety $Z\subset \fkX^{\rm rig}$,

\item[2)] A closed reduced $k$-subscheme $T\subset \fkX_{\rm red}$.
 
\item[3)] For each $t\in T(k)$, a closed formal subscheme $V_t\subset \widehat\fkX_{/t}$.
\end{itemize}
It is assumed that $
{\rm sp}(|Z|^{\rm class})\subset T(k)$, and that for all $t\in T(k)$, 
\begin{equation}\label{compatible}
(\widehat\fkX_{/t})^{\rm rig}\cap Z=(V_t)^{\rm rig}.
\end{equation}
Then, there is a unique closed formal subscheme $\fkZ\subset \fkX$ such that:
\begin{itemize}
\item[$\alpha$)] $\fkZ^{\rm rig}=Z$,
\item[$\beta$)]  $\fkZ_{\rm red}=T$,
\item[$\gamma$)] For all $t\in T(k)$, $\widehat \fkZ_{/t}=V_t$ as closed formal subschemes
of $\widehat\fkX_{/t}$.
\end{itemize}
\end{proposition}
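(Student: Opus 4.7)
The strategy follows de Jong's method in \cite[\S 7]{deJongCrys}: I would construct $\fkZ$ locally by writing down an explicit ideal, verify the three properties for this local construction, and then glue. The uniqueness would be handled first, since it also tells us what the local ideal \emph{must} be and thus guides the construction. So assume $\fkZ_1$, $\fkZ_2$ both satisfy $\alpha,\beta,\gamma$; working in an affine open $\Spf A$ of $\fkX$ ($A$ a Noetherian adic $\breve O$-algebra), the corresponding ideals $I_1,I_2\subset A$ have the same image in $A[1/\pi]$ (by $\alpha$), the same radical modulo $\pi$ (by $\beta$, since both reduce to the ideal of $T$), and the same extension $I_1\wh A_{\fkm_t}=I_2\wh A_{\fkm_t}$ for every $t\in T(k)\cap V(\fka)$ (by $\gamma$, where $\fka$ is the defining ideal of $\Spf A$). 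Using $\breve O$-flatness together with a Krull-type intersection argument — an element $a\in A$ that maps to zero in $A[1/\pi]$ is $\pi$-power torsion, but also its image in each $\wh A_{\fkm_t}$ vanishes, so a Noetherian support argument forces $a\in I_1\iff a\in I_2$ — yields $I_1=I_2$.

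For existence, after reduction to the affine case $\fkX=\Spf A$, set $Z_A=Z\cap(\Spf A)^{\rm rig}$, let $I_Z\subset A[1/\pi]$ be the coherent ideal of $Z_A$, and for each $t\in T(k)\cap V(\fka)$ let $J_t\subset\wh A_{\fkm_t}$ be the ideal defining $V_t\subset\Spf\wh A_{\fkm_t}$. I would then define
\[
I\ =\ \bigl\{a\in A\ \big|\ a/1\in I_Z\ \text{and}\ a\in J_t\ \text{for every}\ t\in T(k)\cap V(\fka)\bigr\}.
\]
The three properties $\alpha,\beta,\gamma$ are verified in turn. For $\alpha$ (that $(A/I)^{\rm rig}=Z$), the inclusion $I[1/\pi]\subset I_Z$ is automatic, and the reverse inclusion uses the standard fact that $I_Z$ is finitely generated and any generator can be cleared of $\pi$-denominators and then modified, using the compatibility $J_t[1/\pi]=I_Z\wh A_{\fkm_t}[1/\pi]$, to land in $I$. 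For $\beta$, the hypothesis ${\rm sp}(|Z|^{\rm class})\subset T(k)$ combined with the defining property of $I$ ensures that the support of $A/I$ on the special fiber is set-theoretically contained in $T$, while the reverse inclusion follows because each $V_t\ne\emptyset$ contributes the point $t$ to $V(I)\cap V(\pi)$.

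The step that will be the main obstacle is $\gamma$, i.e.\ showing $I\wh A_{\fkm_t}=J_t$ in $\wh A_{\fkm_t}$. The inclusion $\subset$ is tautological. For $\supset$, the plan is to exhibit every element of $J_t$ as a $\fkm_t$-adic limit of elements of $I\wh A_{\fkm_t}$: given $x\in J_t$ and $N\ge 1$, the compatibility condition \eqref{compatible} together with Artin approximation / Krull's theorem in the Noetherian local ring $\wh A_{\fkm_t}$ allows us to write $x\equiv a\pmod{\fkm_t^N}$ for some $a\in A$ whose image in $\wh A_{\fkm_{t'}}$ lies in $J_{t'}$ for every other relevant $t'$ (using that the $V_{t'}$ are disjoint-supported as $t'$ varies over $T(k)$) and whose generic image lies in $I_Z$; such an $a$ lies in $I$. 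Then $x\in\bigcap_N(I\wh A_{\fkm_t}+\fkm_t^N\wh A_{\fkm_t})=I\wh A_{\fkm_t}$ by closedness of finitely generated ideals in the Noetherian complete local ring $\wh A_{\fkm_t}$.

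Finally, to glue the local $I$'s across a covering of $\fkX$ by affine formal opens, one uses the uniqueness already established together with functoriality of the three-ingredient data under restriction to smaller affine opens; this produces a coherent ideal sheaf $\sI_\fkZ\subset\CO_\fkX$ cutting out the desired closed formal subscheme $\fkZ\subset\fkX$, completing the proof.
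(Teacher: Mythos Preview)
Your overall plan has the right shape, but the verification of $\gamma$ (and already of $\alpha$) contains a real gap. The set $T(k)$ is in general infinite (e.g.\ whenever $\dim T\ge 1$), so your ideal $I=\{a\in A:\ a\in I_Z\ \text{and}\ a\in J_{t'}\ \forall t'\}$ is cut out by \emph{infinitely many} local conditions. In your approximation step for $\gamma$ you need, given $x\in J_t$ and $N$, an element $a\in A$ with $a\equiv x\pmod{\fkm_t^N}$ \emph{and} image in $J_{t'}$ for every $t'\in T(k)$; the parenthetical about ``disjoint support'' does not supply any mechanism for satisfying these infinitely many congruences simultaneously, and Artin approximation at the single point $t$ certainly does not. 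The same issue bites in $\alpha$: clearing denominators on a generator of $I_Z$ and using $J_{t'}[1/\pi]=I_Z\wh A_{\fkm_{t'}}[1/\pi]$ puts you in $J_{t'}$ only after multiplying by a power $\pi^{m_{t'}}$ that a priori depends on $t'$, with no uniform bound. (A minor additional point: since $\fkX$ is only locally \emph{formally} of finite type, $\fkX^{\rm rig}$ is Berthelot's generic fiber, a rising union of affinoids, not $\Spec A[1/\pi]$; so ``$I_Z\subset A[1/\pi]$'' already needs reinterpretation.)

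The paper's proof, following de Jong \cite[7.5.2]{deJongCrys}, avoids this simultaneous-approximation problem by building the ideal from $Z$ alone. One writes $\fkX^{\rm rig}=\bigcup_n{\rm Sp}(C_n)$ with $C_n=B_n[1/\pi]$, $B_n=\widehat{A[I^n/\pi]}$ (here $I$ is the ideal of definition), takes $I_n\subset B_n$ cutting out $Z\cap{\rm Sp}(C_n)$, and sets $J:=\varprojlim_n\beta_n(I_n)\subset A$, using de Jong's surjections $\beta_n:B_n\to A/I^{n-c}$. Agreement with the given $J_t$ is de Jong's identity $\beta_n(I_n)\,\hat A_t/(I\hat A_t)^{n-c(t)}=J_t\bmod(I\hat A_t)^{n-c(t)}$. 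The one genuinely new ingredient beyond \cite{deJongCrys} is a uniformity lemma: the constants $c(\hat A_x,\hat\fkM_x)$ are bounded independently of $x$, proved via a normal-flatness stratification in the sense of Hironaka--Bennett. With this uniform $c$ in hand, faithful flatness of $A/I^{n-c}\to\prod_x\hat A_x/(I\hat A_x)^{n-c}$ makes the system $(\beta_n(I_n))_n$ coherent and yields $\gamma$. That uniformity is exactly the substitute for the infinite simultaneous approximation your argument would require.
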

  
  \begin{proof}
  This is given by an argument as in the proof of \cite[Prop. 7.5.2]{deJongCrys}.  By considering formal open subschemes ${\rm Spf}(A)\subset \fkX$ we can reduce to the case that
  $\fkX={\rm Spf}(A)$. Let  $I$ be a maximal ideal of definition 
  of $A$ such that $\fkX_{\rm red}=\Spec(A/I)$. As in \emph{loc. cit.}, 7.1.1,  we set
  \[
  B_n=\widehat{A[{I^n}/{\pi}]} ,
  \]
  where the hat denotes $I$-adic completion.  The natural map
  \[
  A[I^{n+1}/\pi]\hookrightarrow A[I^n/\pi]
  \]
 induces a continuous map $B_{n+1}\to B_n$. Then the association $A\mapsto (B_n(A))_n$ is functorial.
 Set $C_n=B_n[1/\pi]$ which is an affinoid $\breve E$-algebra. Then
 \[
 \fkX^{\rm rig}=\bigcup_{n} {\rm Sp}(C_n)=\varinjlim\nolimits_n {\rm Sp}(C_n) ,
 \]
 where ${\rm Sp}(C_n)\hookrightarrow {\rm Sp}(C_{n+1})$ is an affinoid subdomain.
 Let $U_n={\rm Sp}(C_n)$. The intersection $U_n\cap Z$ is given by an ideal $I_n\subset B_n$; by replacing
 $I_n$ by its saturation, we can suppose that $B_n/I_n$ is $\breve O$-flat. (In fact, we can choose $I_n$ to be the kernel of $B_n\to \Gamma(U_n\cap Z, \CO)$.)
 As in \emph{loc. cit.} 7.1.13, there is $c=c(A, I)\geq 0$ and compatible surjective $\breve O$-algebra homomorphisms
 \[
 \beta_n : B_n\to A/I^{n-c}
 \]
 for all $n\geq c$.
 
 For what follows, we will need some information on $c$. 
 Choose generators $(f_1,\ldots , f_r)$ of the ideal $I\subset A$ and consider
 the graded homomorphism
 \[
 \psi: (A/\pi A)[x_1,\ldots , x_r]\to  \bigoplus_{n=0} (I^n/\pi I^n) =A/\pi A\oplus I/\pi I\oplus I^2/\pi I^2\oplus\cdots 
 \]
 sending $x_i$ to $(0, \bar f_i, 0, \ldots )$. The kernel of $\psi$ is a homogenous ideal generated
 by homogeneous polynomials $(\bar P_1,\ldots , \bar P_s)$. In the construction of \emph{loc. cit.},  we
 can take
 \[
 c=c(\psi):={\rm max}(\deg(\bar P_i))_{i=1,\ldots ,s}.
 \]
Hence, in our arguments we can always take $c=c(A, I)$ to be 
the smallest such integer $c(\psi)$ among the possible presentations as above.

 Now given $x\in \Spec(A/I)(k)$, let $I\subset \fkM_x\subset A$ be the corresponding maximal ideal.
 For simplicity, write $\hat A_x$ for the completed local ring $\hat A_{\fkM_x}$.
 
 \begin{lemma}
Under our assumption on $(A, I)$, there is $C$ such that 
\[
c(\hat A_x, \hat\fkM_x)\leq C
\]
 for all $x\in \Spec(A/I)(k)$.
\end{lemma}

\begin{proof} This follows by an application of the theory of normal flatness (\cite{Hironaka}, see \cite[(2.2)]{Bennett}).
Set $R=A/\pi A$, $S=\Spec(A/I)$. We can find a sequence of reduced closed subschemes $S_{\rm red}=S_0\supset S_1\supset \cdots\supset  S_n=\emptyset$, such that $U_i=S_i\setminus S_{i-1}$ is regular and which has the following property: Let $J_i$ be the ideal of 
$(A/\pi A)\otimes_k\CO_{U_i}$ that corresponds to
the diagonal section $U_i\hookrightarrow \Spec(A/\pi A)\times_{\Spec(k)} U_i\to U_i$. Then, for all $i$,
\[
{\rm Gr}_{J_i}((A/\pi A)\otimes_k\CO_{U_i})=\oplus_{m\geq 0} J_i^m/J_i^{m+1}
\]
is flat over $\CO_{U_i}$. This implies that we can calculate the blow-up 
${\rm Proj}_{\CO_{U_i}}(\oplus_{m\geq 0} J_i^m)$ of $\Spec(A/\pi A)\times_k U_i$ along $U_i\hookrightarrow
\Spec(A/\pi A)\times_{\Spec(k)} U_i$ by an exact sequence
\[
0\to K_i\to ((A/\pi A)\otimes_k\CO_{U_i})[x_1,\ldots , x_r]\xrightarrow{\underline \psi} \oplus_{m\geq 0} J_i^m\to 0 ,
\]
in which all terms are flat over $\CO_{U_i}$. Write $K_i=(\bar P_{i1},\ldots , \bar P_{is})$,
where $\bar P_{ij}$ are homogeneous polynomials in $x_1,\ldots , x_r$. Take $x\in U_i(k)$ given by $\CO_{U_i}\to k$.
By flatness, the base change of the above exact sequence by $\CO_{U_i}\to k$ gives a presentation
of the Rees algebra for the blow-up of $A/\pi A$ at $x$. Hence, the specialization
of the polynomials $\bar P_{ij}$ at {\sl all} $x\in U_i(k)$ can be used to calculate the blow-up of $A_x/\pi A_x$ 
at $\fkM_x$, but also the blow-up of $\hat A_x/\pi \hat A_x$ at $\hat\fkM_x$. Therefore, 
\[
C_i={\rm max}(\deg(\bar P_{ij}))_{j=1,\ldots ,s}\geq c(A_x, \fkM_x)\geq  c(\hat A_x, \hat\fkM_x)
\]
for all $x\in U_i(k)$. We can now take $C={\rm max}(C_i)_{i=1,\ldots, n}$.
 \end{proof}
 
 Recall that for $t\in T(k)\subset \Spec(A/I)(k)$ we are given $V_t\subset \widehat{\fkX}_{/t}={\rm Spf}(\hat A_{t})$, which is given by an ideal
 \[
 J_t\subset \hat A_{t}.
 \]
 We also take $J_x=(1)$, for $x\in \Spec(A/I)(k)\setminus T(k)$.
 
 By \cite{deJongCrys}, identity (1) on p. 93, for all  $t\in T(k)$, there is $c(t)$ such that for every $n\geq c(t)$, we have
 \begin{equation}\label{deJongid}
 \beta_n(I_n)\, \hat A_t/ (I\hat A_t)^{n-c(t)}= J_t\, {\rm mod}\, (I\hat A_t)^{n-c(t)}.
 \end{equation}
 In fact, by the proof of identity (1) of loc.~cit.,  we see that we can take
 \[
 c(t)={\rm max}(c(A, I), c(\hat A_t, \hat\fkM_t)).
 \]
 Therefore, the arguments below work for $n\geq c={\rm max}(c(A, I), C)$, where $C$ is as in the lemma above.
   
 Also, since ${\rm sp}(|Z|^{\rm class})\subset T(k)$, we have  for $x\not\in T(k)$,
 \[
 \beta_n(I_n)\, \hat A_x/ (I\hat A_x)^{n-c}=\hat A_x/ (I\hat A_x)^{n-c} .
 \]
Now note that the natural homomorphism 
 $A\to \prod_x \hat A_x$  is faithfully flat and so is
 \[
 A/I^{n-c}\to \prod\nolimits_x \hat A_x/(I\hat A_x)^{n-c}.
 \]
 Using this, descent and (\ref{deJongid}), we see that, for all $n\geq c$,
 \[
 \beta_{n+1}(I_{n+1})\, {\rm mod}\, I^{n-c}=\beta_n(I_n) ,
 \]
 (an equality of ideals in $A/I^{n-c}$.) This implies that
 \[
 J:=\varprojlim\nolimits_n \beta_n(I_n)
 \]
 gives an ideal in $\varprojlim\nolimits_n A/I^{n-c}=A$.
 We set
 \[
 \fkZ={\rm Spf}(A/J)
 \]
 which satisfies the desired ($\alpha$), ($\beta$), ($\gamma$). 
  \end{proof}
 
  \subsubsection{Application to integral LSV} For simplicity, in the rest of the paragraph, we will omit $b$ and $\mu$ from  the notation,  and write
  $\CM^{\rm int}_\CG$, $X_\CG$, etc. instead of $\CM^{\rm int}_{\CG,b, \mu}$, $X_{\CG}(b, \mu^{-1})$, etc. and also
  $\CM^{\rm int}_\CH$ instead of $\CM^{\rm int}_{\CH,\rho(b),\rho(\mu)}$, etc.

  Note that the perfection
  $(\sM_{\CH}\hat\otimes_{\breve\BZ_p}\breve O)_\red^{\rm perf}$
  of $(\sM_{\CH}\hat\otimes_{\breve\BZ_p}\breve O)_{\rm red}$
  is identified (see Theorem \ref{Gleason} (a)) with $X_{\CH}$.
  The closed immersion of $v$-sheaves of Proposition \ref{prop332}
  \[
  \rho: \CM^{\rm int}_{\CG}\hookrightarrow \CM^{\rm int}_{\CH}
  \]
  gives, after applying the functor $(\ )_{\rm red}$ and the identification of 
  Theorem \ref{Gleason} (a), a morphism  of perfect $k$-schemes
  \[
  \rho: X_{\CG}\rightarrow X_{\CH } =(\sM_{\CH }\hat\otimes_{\breve\BZ_p}\breve O)_\red^{\rm  perf} .
  \]
Note that $X_\CG\subset {\rm Gr}^W_\CG$, $X_\CH\subset {\rm Gr}^W_\CH$, are closed and that 
 $\rho: X_{\CG}\rightarrow X_{\CH }$ is given by restricting the obvious
$\rho: {\rm Gr}^W_\CG\to {\rm Gr}^W_\CH$ which is a morphism of (ind-) perfectly proper 
schemes over $k$ such that $\rho(k)$ is injective.

 We now apply Proposition \ref{deJong} to: 
  
  \begin{altitemize}
  \item $\fkX=\sM_{\CH}\hat\otimes_{\breve\BZ_p}\breve O$,  
  
  \item $Z=\CM_{\CG}\subset \CM_{\CH}\hat\otimes_{\breve\BQ_p}\breve E$,
  
  \item
  $T=$ scheme theoretic image of the composition
  \[
  \rho: X_{\CG}\rightarrow X_{\CH}= (\sM_{\CH}\hat\otimes_{\breve\BZ_p}\breve O)_\red^{\rm perf}\to (\sM_{\CH}\hat\otimes_{\breve\BZ_p}\breve O)_{\rm red}
  \]
  where the last arrow is the natural morphism. We have $\rho: X_{\CG}\to T^{\rm perf}$ which gives 
  $T(k)=\rho(X_{\CG}(k))\subset X_{\CH}(k)$. In fact, this is a universal homeomorphism and so, by \cite[Lemma 3.8]{BS}, $\rho: X_{\CG}\xrightarrow{\sim} T^{\rm perf}$ is an isomorphism. 
    
  \item  Let $t\in T(k)=\rho( X_{\CG}(k))\simeq X_{\CG}(k)$. By the local model diagram  for the RZ formal scheme $\sM_{\CH, \rho(b), \rho(\mu)}$,  there exists an isomorphism  
  $\widehat{\sM_{\CH, \breve O }}_{/\rho(t)}\simeq \widehat {{\BM_{\CH, \breve O } }}_{/\widetilde{\rho(t)}}$, where the target is the formal completion of the base-change  $\BM_{\CH, \breve O }=\BM_\CH\otimes_{\breve \BZ_p}\breve O$ of the local model $\BM_\CH$ at a suitable point $\widetilde{\rho(t)}$. 
  Since we are assuming Conjecture \ref{conjtubeMint} for $t$, we have a normal complete  Noetherian local ring $R_t$ such that  $\widehat{\CM^{\rm int}_{\CG }}_{/t}\simeq \Spd(R_t)$ and
  \begin{equation}\label{injective4209}
  \Spd(R_t)\to \widehat{\CM^{\rm int}_{\CH,\breve O }}_{/\rho(t)}.
  \end{equation}
 By the full faithfulness of the $\diam$-functor, we obtain a morphism of affine formal schemes
  \begin{equation}\label{moronLM}
  \wt{\rho}: \Spf(R_t)\rightarrow \widehat {\sM_{\CH, \breve O }}_{/{\rho(t)}} ,
  \end{equation}
  which corresponds to a homomorphism of local rings $\wt \rho^*\colon \hat\CO_{\sM_{\CH, \breve O},{\rho(t)}}\to R_t$. The injectivity of (\ref{injective4209}) implies that the fiber of $\wt\rho^*$ over the closed point
   is given by an Artin local ring; hence $\wt\rho^*$ is finite.
  We define 
  \[
  {V_t}\hookrightarrow \widehat {\sM_{\CH, \breve O }}_{/{\rho(t)}}
  \]
   to be the formal closed subscheme of $\widehat {\sM_{\CH, \breve O }}_{/{\rho(t)}}$ {\cmag which corresponds 
   to the scheme-theoretic image of the corresponding morphism 
   of affine schemes induced by \eqref{moronLM}, i.e. defined by the kernel of $\wt\rho^*$. 
   Note that, by construction, the morphism $\wt \rho$ factors through  a morphism 
\[
 \Spf(R_t) \to  V_t.
\]
Since $R_t$ is assumed to be  normal, this   identifies $R_t$ with the
normalization of $\hat\CO_{V_t}:=\hat\CO_{\sM_{\CH, \breve O},{\rho(t)}}/\ker(\wt\rho^*)$. The corresponding morphism  of $v$-sheaves 
$\Spf(R_t)^\diam \to  V_t^\diam$ is then surjective, but, since  (\ref{injective4209}) is injective, it is also injective.
Hence,  
\[
\Spf(R_t)^\diam= \wh{\CM^{\rm int}_{\CG, b, \mu}}_{/t}\xrightarrow{\ \sim\ } V_t^\diam
\]
is an isomorphism of $v$-sheaves.}
\end{altitemize}
\quash{
the corresponding $V_t$ is unibranch.   What is the translation between $v$-sheaves and unibranchedness? Even so, the conclusion is in the world of $v$-sheaves, hence I don't see the usefulness of this. I also didn't understand the reference to Lourenco.}

  We now verify that these choices satisfy ${\rm Sp}(|Z|^{\rm class})\subset T(k)$ and (\ref{compatible}). 
  We will again use the $\diam$-functor $(\ )\mapsto (\ )^\diam$ into $v$-sheaves over $\Spd(\breve O)$, or over $\Spd(\breve E)$, and the fact that it is fully faithful
  when the source category is either the category of flat normal formal schemes locally of finite type over $\breve O$, or the category of 
  smooth rigid-analytic spaces over $\breve E$.

  Note that classical points
  of $Z=\CM_{\CG}$ over a finite extension $F/\breve E$ are uniquely given by morphisms $\Spa(F, O_F)^\diam\to Z^\diam$.
  Such a point specializes to a point in $T(k)=X_{\CG}(k)$, by Theorem \ref{Gleason} and the compatibility of Gleason's
  specialization map with the ``classical" specialization map.
  It  remains to show
  that (\ref{compatible}) holds. Again, it is enough to show this for the associated $v$-sheaves. 
  The $\diam$-functor commutes with formal completions (\cite[Prop. 4.19]{Gl}).    It also commutes with 
  taking generic fibers: This follows from the fact that, under the functor from rigid spaces over $\breve E$ to adic spaces over
  $\Spa(\breve E, O_{\breve E})$, Berthelot's generic fiber ${\rm Spf}(A)^{\rm rig}$ of a formal scheme ${\rm Spf}(A)$ 
  corresponds to the generic fiber of the corresponding adic space, i.e. to $\Spa(A, A)\times_{\Spa(O_{\breve E}, O_{\breve E})}\Spa(\breve E, O_{\breve E})$.  Hence we have
  \[
 ((\widehat\fkX_{/\rho(t)})^{\rm rig})^\diam= ((\widehat\fkX_{/\rho(t)})^{\diam})_\eta=((\widehat\fkX^{\diam})_{/\rho(t)})_\eta=(\widehat{\CM^{\rm int}_{\CH, \breve O}}_{/\rho(t)})_\eta.
  \]
  Since $Z^\diam=\CM^\diam_\CG={\rm Sht}_\CG$ and $\rho: \CM^{\rm int}_\CG\to \CM^{\rm int}_\CH$
  is a closed immersion (and hence injective)
  \[
  ((\widehat\fkX_{/\rho(t)})^{\rm rig})^\diam\times_{\CM^{\rm int}_{\CH}} Z^\diam=(\widehat{\CM^{\rm int}_{\CH}}_{/\rho(t)})_\eta\times_{\CM^{\rm int}_{\CH}} \CM^\diam_\CG=
  (\widehat{\CM^{\rm int}_{\CG}}_{/t})_\eta.
  \]
  In this, the RHS is the $v$-sheaf given as the generic fiber of the formal completion
 $\widehat{\CM^{\rm int}_{\CG}}_{/t}$.
  This is equal to the $v$-sheaf $(V_t^{\rm rig})^\diam$ associated to $V_t^{\rm rig}$, by our choice of $V_t$.

  Applying Proposition \ref{deJong} gives a formal scheme $\sM_{\CG,b, \mu}^-$. Denote by   $\sM_{\CG,b, \mu}$
  its normalization. Then 
  $\sM_{\CG,b, \mu}$ is as in the statement of Conjecture
  \ref{repMint} and, hence, this proves Theorem \ref{thmrepint}.  \hfill $\square$
  
   \section{Global Shimura varieties and their universal $\CG$-shtukas}

\subsection{Shimura varieties}
Let $(\eG, X)$ be a Shimura datum. Let $\{\mu\}$ be the $\eG(\bar\BQ)$-conjugacy class of the
corresponding minuscule cocharacter $\mu=\mu_X$. 
 We make the following blanket assumption on the split ranks of the connected center of $G$: 
 \begin{equation}\label{blanket}
 \begin{aligned}
 {\rm rank}_\BQ(Z^o)={\rm rank}_\BR(Z^o) .
  \end{aligned}
  \end{equation}
\begin{remark}
Assumption \eqref{blanket} is equivalent to the condition that there is no non-trivial subtorus of $Z^o$ which is anisotropic over $\BQ$ but splits over $\BR$. Imposing  this condition allows the construction of the natural pro-\'etale torsor on the Shimura variety ${\rm Sh}_\eK(\eG, X)$ used below. The main point is to ensure that an arithmetic subgroup of $Z^o(\BQ)$ is finite, and this holds true if and only if Assumption \eqref{blanket} is satisfied, cf. \cite[Prop., Ch. II, A. 2]{SerreAb}.  When  $Z^o$ splits over a CM-extension (Milne adds this to the axioms of a Shimura variety, cf. \cite[II. equ. (2.1.4)]{MilneAA}), then this assumption holds if $G$ is replaced by its quotient by the maximal anisotropic subtorus of $Z^o$ which splits over $\BR$. 
\end{remark}

Let $\eE=\eE(\eG, X)\subset\bar\BQ\subset \BC$ be the reflex field.
 For an open compact subgroup
 $\eK\subset \eG(\BA_f)$ of the finite adelic points of $\eG$,  the Shimura variety
 \begin{equation}
 {\rm Sh}_\eK(\eG, X)=\eG(\BQ)\backslash  (X\times \eG(\BA_f)/\eK)
 \end{equation} 
 has a canonical model ${\rm Sh}_\eK(\eG, X)_\eE$ over $\eE$. 
 
 Now fix a prime $p$. Suppose that $\eK=\eK_p\eK^p$
 with $K:=\eK_p\subset \eG(\BQ_p)$ and $\eK^p\subset \eG(\BA_f^p)$ compact open. 
 We always assume 
 that $\eK^p$ is sufficiently small. Fix a  parahoric group scheme $\CG$ over $\BZ_p$ with $\CG(\BZ_p)=K=\eK_p$.
 
 Choose a place $v$ of $\eE$ over $(p)$ given by $\eE\subset \bar\BQ_p$. Let $E=\eE_v$ be the completion of $\eE$ at $v$ and consider $\mu$ as also giving a conjugacy class of cocharacters of $G=\eG\otimes_{\BQ}\BQ_p$. We also denote this conjugacy class by $\{\mu\}$; then $E$ is the local reflex field
 of $\{\mu\}$. Recall that $k$ denotes the algebraic closure of the residue field; we also sometimes write $k_v$ for $k$.

Consider the pro-\'etale $\CG(\BZ_p)$-cover 
$\BP_\eK$ over $ {\rm Sh}_\eK(\eG, X)_E:= {\rm Sh}_\eK(\eG, X)_\eE\otimes_\eE E$
 obtained by the system of covers 
 \begin{equation}
  {\rm Sh}_{\eK'}(\eG, X)_E\to 
{\rm Sh}_{\eK}(\eG, X)_E,
 \end{equation} 
 where $\eK'=\eK'_p\eK^p\subset \eK=\eK_p\eK^p$, with $\eK'_p$ running over all compact open subgroups of $\eK_p=
\CG(\BZ_p)$. (See \cite[III]{MilneAA}, \cite[\S 4]{LZ}. Note that $\CG(\BZ_p)=\varprojlim_{\eK'_p} \eK_p/\eK'_p$.)  By our condition on the smallness of $\eK^p$, this is a tower of smooth varieties, with \'etale transition maps.

 \begin{proposition}\label{ShimuraShtThm}
Assume that $(\eG, X)$ satisfies \eqref{blanket}. There exists a $\CG$-shtuka $\sP_{\eK, E}$ over ${\rm Sh}_\eK(\eG, X)^\diam_E\to {\rm Spd}(E)$ with one leg bounded 
by $\mu$ which is associated to the pro-\'etale $\CG(\BZ_p)$-cover $\BP_\eK$, in the sense of Section \ref{ss:dR}. 
Furthermore, $\sP_{\eK, E}$ are supporting prime-to-$p$ Hecke correspondences, i.e., for  $g\in \eG(\BA^p_f)$ and $\eK'^{ p}$ with $g\eK'^{ p}g^{-1}\subset \eK_p$, there are compatible
isomorphisms $[g]^*(\sP_{\eK, E})\simeq \sP_{\eK', E}$ which cover  the natural morphisms $[g]: {\rm Sh}_{\eK_p\eK'^p}(\eG, X)_E\to  {\rm Sh}_{\eK_p\eK^p}(\eG, X)_E $. 

 \end{proposition}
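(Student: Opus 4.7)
The strategy is to apply Definition \ref{LSGShtuka} (equivalently Proposition \ref{detorsIntro}) directly to the pro-\'etale $\CG(\BZ_p)$-cover $\BP_\eK$ over the smooth $E$-scheme ${\rm Sh}_\eK(\eG, X)_E$. For this, it suffices to verify two facts: (a) that $\BP_\eK$ is actually a pro-\'etale $\CG(\BZ_p)$-cover (as opposed to just a pro-\'etale sheaf); this is where the blanket assumption \eqref{blanket} enters, via the argument of \cite[III]{MilneAA}, \cite[\S 4]{LZ}, guaranteeing that the transition maps ${\rm Sh}_{\eK_p'\eK^p}(\eG, X)_E\to {\rm Sh}_\eK(\eG, X)_E$ are finite \'etale Galois covers with Galois group $\eK_p/\eK_p'$, whose inverse limit is a $\CG(\BZ_p)$-torsor in the pro-\'etale sense; and (b) that $\BP_\eK$ is deRham with constant Hodge cocharacter $\{\mu_X\}$.

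Point (b) is a direct application of the main result of Liu-Zhu \cite{LZ}: it suffices to check the deRham property for one faithful representation of $G$, and Liu--Zhu show that any $\BZ_p$-local system on a smooth rigid analytic variety over a $p$-adic field which is deRham at one classical point is deRham everywhere, with locally constant Hodge-Tate weights. At any special point $x$ of ${\rm Sh}_\eK(\eG, X)_E$, the Hodge filtration associated to $\BP_{\eK,x}$ agrees by construction of the canonical model with the one coming from the cocharacter $\mu_X$, hence $\{\mu_{\BP_\eK,x}\} = \{\mu_X\}$; since this conjugacy class is locally constant on ${\rm Sh}_\eK(\eG, X)_E$ and special points are Zariski-dense on every connected component, it is constantly equal to $\{\mu_X\}$. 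Granted (a) and (b), Definition \ref{LSGShtuka} produces a $\CG$-shtuka $\sP_{\eK, E}$ over ${\rm Sh}_\eK(\eG, X)_E^\diam/\Spd(E)$ with one leg bounded by $\mu_X$, corresponding under Proposition \ref{pairs2} to the pair $(\BP_\eK, {\rm HT})$.

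The prime-to-$p$ Hecke equivariance is then automatic from the functoriality of the construction. Indeed, the pair $(\BP_\eK, {\rm HT}(\BP_\eK))$ is by definition functorial in the base: for $g\in \eG(\BA_f^p)$ and $\eK'^p$ with $g\eK'^p g^{-1}\subset \eK^p$, the morphism $[g]\colon {\rm Sh}_{\eK_p\eK'^p}(\eG,X)_E\to {\rm Sh}_{\eK_p\eK^p}(\eG, X)_E$ pulls $\BP_\eK$ back to $\BP_{\eK'}$ (because $g$ acts only on the prime-to-$p$ level) and commutes with the Hodge-Tate period maps. Transporting this compatibility through the equivalence of Proposition \ref{pairs2} yields the required isomorphisms $[g]^\ast (\sP_{\eK, E})\simeq \sP_{\eK', E}$, compatible for varying $g$. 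The main subtlety lies in verifying (b) at special points and propagating it by Liu-Zhu, but this is essentially standard once one accepts the description of canonical models via CM points.
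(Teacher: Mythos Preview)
Your proposal is correct and follows essentially the same route as the paper: verify that $\BP_\eK$ is a deRham pro-\'etale $\CG(\BZ_p)$-cover via Liu--Zhu \cite{LZ}, then apply Definition~\ref{LSGShtuka}. The paper's proof is terser---it simply cites \cite[Thm.~1.2]{LZ} together with \cite[Thm.~3.9~(iv)]{LZ} (the latter being Liu--Zhu's direct treatment of the automorphic local systems on Shimura varieties) and does not explicitly spell out the Hecke equivariance, whereas you unpack the Liu--Zhu input as ``deRham at one special point plus rigidity'' and make the functoriality for Hecke correspondences explicit; both are valid presentations of the same argument.
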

 
 \begin{proof} This follows by combining the results in section \ref{ss:dR}  with the ridigity theorems of Liu-Zhu \cite{LZ}. 
  By \cite[Thm. 1.2]{LZ} combined with \cite[Thm. 3.9 (iv)]{LZ}, the pro-\'etale $\CG(\BZ_p)$-torsors $\BP_\eK$ 
 over ${\rm Sh}_{\eK, E}$ is de Rham in the sense of Definition \ref{defDR}. 
  The desired shtuka is the one associated to 
the de Rham pro-\'etale $\CG(\BZ_p)$-torsor $\BP_\eK$
as in Definition \ref{LSGShtuka}.
\end{proof}

\subsection{Integral models} We continue to assume Condition \eqref{blanket} on $(\eG, X)$.
 Suppose that $K_p$ is parahoric and let $\CG$ be the corresponding Bruhat-Tits group scheme over $\BZ_p$.
Then we conjecture the existence of a system of normal integral models $\sS_{\eK}$ of ${\rm Sh}_{\eK, E}$ that support a ``universal" $\CG$-shtuka $\sP_\eK$  over $\sS_\eK$ with one leg bounded by $\mu$ that extends $\sP_{\eK, E}$. The  $v$-sheaves $\sS_{\eK}^\diam$ corresponding to these models should  formally locally coincide with 
Scholze's integral local Shimura varieties $\CM^{\rm int}_{\CG,  b, \mu}$, for varying $b$. More precisely, let $x\in  \sS_{\eK}(k)$.

The pull-back  $x^*(\sP_\eK)$ is a $\CG$-shtuka over $\Spec(k)$, i.e., yields by Example \ref{G-AlgClosedPoint} a $\CG$-torsor $\sP_x$ over $\Spec(W(k))$ with an isomorphism 
$$
\phi_{\sP_x}\colon \phi^*(\sP_x)[1/p]\isoarrow\sP_x [1/p].
$$
 The choice of a trivialization of the $\CG$-torsor $\sP_x$ defines an element $b_x\in G(\breve \BQ_p)$. This element is independent up to $\sigma$-conjugacy by an element of $\CG(\breve \BZ_p)$ of the choice of the trivialization and only depends on the point of $\sS_\eK$ underlying $x$. Since the shtuka $\sP_\eK$ is bounded by $\mu$, the $\sigma$-conjugacy class $[b_x]$ under $G(\breve \BQ_p)$ lies in the subset $B(G, \mu^{-1})$ of $B(G)$. 
\begin{remark}\label{Upsilonmap}
We obtain maps 
$$
\Upsilon_\eK\colon \sS_\eK(k)\to G(\breve\BQ_p)/ \CG(\breve\BZ_p)_\sigma,\quad \text{ resp. }\quad \delta_\eK\colon \sS_\eK(k)\to B(G)=G(\breve\BQ_p)/ G(\breve\BQ_p)_\sigma
$$
 The fibers of these maps define the \emph{central leaves, resp. the Newton stratification} of $\sS_\eK\otimes_{O_E}\kappa$, cf. \cite[Rem. 3.4, (3)]{HeR}. We recall from \cite{HeR} the map 
 $$
\ell_\eK\colon G(\breve\BQ_p)/ \CG(\breve\BZ_p)_\sigma\to \CG(\breve\BZ_p)\backslash G(\breve\BQ_p)/ \CG(\breve\BZ_p)=W_\eK\backslash \tilde W/W_\eK ,
 $$
 where $\tilde W$ denotes the Iwahori-Weyl group of $G(\breve \BQ_p)$ and $W_\eK$ the parabolic subgroup corresponding to $\CG(\breve\BZ_p)$. Let $\lambda_\eK=\ell_\eK\circ \Upsilon_\eK$,
 $$
 \lambda_\eK\colon \sS_\eK(k)\to W_\eK\backslash \tilde W/W_\eK .
 $$
 The fibers of this map define the \emph{Kottwitz-Rapoport stratification} of $\sS_\eK\otimes_{O_E}\kappa$, cf. \cite[eq. (3.4)]{HeR}.   The map $\Upsilon_\eK$ also allows to define the \emph{EKOR-stratification}, cf. \cite[\S 6]{HeR}.
\end{remark}
  
 The $v$-sheaf $ \CM^{\rm int}_{\CG,b_x, \mu}$ comes with a base point
 \begin{equation}\label{basepointM}
 x_0\in  \CM^{\rm int}_{\CG,b_x, \mu}(k) .
 \end{equation}
 This base point associates to $S\in{\rm Perfd}_k$ the tuple $(S^\sharp,  \sP_0 , \phi_{\sP_0 }, i_r)$, where $S^\sharp=S$, and $(\sP_0 , \phi_{\sP_0 }, i_r)=(\CG_{ \CY_{[0, \infty)}(S)}, \phi_{b_x}, \id)$. Indeed, the pair $(\sP_0, \id)$ lies in $X_{\CG}(b_x, \mu^{-1})\simeq (\CM^{\rm int}_{\CG, b_x, \mu})_\red$, cf. Theorem \ref{Gleason}, a). 
 This follows since the shtuka $(\sP_x, \phi_{\sP_x})$ has leg bounded by $\mu$. 
 
We have the following  conjecture.

\begin{conjecture}\label{par812} Let $\eK_p$ be parahoric, with corresponding parahoric model $\CG$ over $\BZ_p$.  

There exist normal flat models  $\sS_\eK$ of ${\rm Sh}_{\eK}(\eG, X)_E$ over $O_{E}$, for $\eK=\eK_p\eK^p$ with variable sufficiently small $\eK^p$, with the following properties. 

\begin{itemize}

\item[a)] For every dvr $R$ of characteristic $(0, p)$ over $O_E$, 
\begin{equation}\label{extprop}
(\varprojlim\nolimits_{\eK^p}{\rm Sh}_\eK(\eG, X)_E)(R[1/p])=(\varprojlim\nolimits_{\eK^p}\sS_\eK)(R).
\end{equation} If  ${\rm Sh}_{\eK}(\eG, X)_E$ is proper over $\Spec (E)$, then $\sS_\eK$ is proper  over $\Spec (O_{E})$.  In addition, the system  $\sS_\eK$ supports prime-to-$p$ Hecke correspondences, i.e., for  $g\in \eG(\BA^p_f)$ and $\eK'^{ p}$ with $g\eK'^{ p}g^{-1}\subset \eK^p$, there are finite \'etale  morphisms
$[g]: \sS_{\eK'}\to \sS_\eK $ which extend the natural maps $[g]: {\rm Sh}_{\eK_p\eK'^p}(\eG, X)_E\to  {\rm Sh}_{\eK_p\eK^p}(\eG, X)_E $.

\item[b)] The $\CG$-shtuka $\sP_{\eK, E}$ extends to a $\CG$-shtuka $\sP_{\eK}$ on $\sS_{\eK}$.

 \item[c)] For  $x\in \sS_\eK(k)$, let $b_x\in G(\breve \BQ_p)$ be defined by $\sP_{\eK}$, as explained above. 
There is an isomorphism of completions
 $$
\Theta_x\colon  \widehat{{{{\CM}}}^{\rm int}_{ \CG, b_x, \mu }}_{/x_0}\isoarrow (\widehat{\sS_{\eK_{/x}}})^\diam ,
$$ 
under which the pullback shtuka $\Theta_{ x}^*(\sP_{\eK})$ coincides with the tautological shtuka on $ {\CM}^{\rm int}_{ \CG, b_x, \mu }$ that arises from the definition of $ {\CM}^{\rm int}_{ \CG, b_x, \mu }$ as a moduli space of shtukas. Here $x_0$ denotes the base point of ${\CM}^{\rm int}_{ \CG, b_x, \mu }$, cf. \eqref{basepointM}. 
\end{itemize}
\smallskip 

\emph{We note that by Corollary \ref{shtExt}, the extension $\sP_{\eK}$ of $\sP_{\eK, E}$ is uniquely determined. In addition, the  prime-to-$p$ Hecke correspondences $[g]: \sS_{\eK'}\to \sS_\eK $ for  $g\in \eG(\BA^p_f)$ and $\eK'^{ p}$ with $g\eK'^{ p}g^{-1}\subset \eK^p$ in (a)  induce  by uniqueness compatible isomorphisms $[g]^*(\sP_\eK)\simeq \sP_{\eK'}$.}

\emph{ Replacing $b_x$ by $b'_x=gb_x\sigma^{-1}(g)$, where $g\in\CG(\breve \BZ_p)$, defines an isomorphism $ {{{\CM}}^{\rm int}_{ \CG, b_x, \mu }}\simeq  {{{\CM}}^{\rm int}_{ \CG, b'_x, \mu }}$ which preserves the base points. Hence Condition c) is independent of the choice of $b_x$. }  Also, note that Condition c) implies the representability of $ \widehat{{{{\CM}}}^{\rm int}_{ \CG, b_x, \mu }}_{/x_0}$, i.e.,  Conjecture \ref{conjtubeMint} for $ \CM^{\rm int}_{ \CG, b_x, \mu }$ at $x_0$. 
 \end{conjecture}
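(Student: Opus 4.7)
My plan is to establish Conjecture \ref{par812} first in the Hodge type case, where the problem reduces to analysis of a Siegel model, and then to bootstrap to the abelian type case via Deligne's twisting construction. Assume $(p, \eG, X, \eK)$ is of global Hodge type, so that there is a closed embedding of Shimura data $(\eG, X) \hookrightarrow (\GSp_{2g}, S_{2g}^\pm)$ and $\eK_p = \CG(\BZ_p)$ for a parahoric Bruhat--Tits stabilizer. Choose a Siegel level $\eK^\flat = \eK^\flat_p \eK^{\flat, p}$ with $\eK = \eK^\flat \cap \eG(\BA_f)$ and $\eK^{\flat,p}$ sufficiently small, and define $\sS_\eK$ to be the normalization of the Zariski closure of ${\rm Sh}_\eK(\eG, X)_E$ inside $\CA_{\eK^\flat} \otimes_{\BZ_{(p)}} O_E$, where $\CA_{\eK^\flat}$ is the Siegel moduli scheme. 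The prime-to-$p$ Hecke correspondences and finite \'etaleness of the transition maps for varying $\eK^p$ descend from the Siegel moduli interpretation. Property (a) then follows from the N\'eron--Ogg--Shafarevich criterion of good reduction applied to the polarized abelian variety pulled back from $\CA_{\eK^\flat}$: triviality of the inertia action on a prime-to-$p$ Tate module forces good reduction, and the Shimura-theoretic refinement recovers the desired $R$-point.

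To establish (b), I will construct $\sP_\eK$ by combining the tautological shtuka attached to the universal abelian scheme with the Hodge-tensor description of $\CG \subset \GL_{2g}$. The universal abelian scheme $\CA_\eK \to \sS_\eK$ yields via Example \ref{pdivExample} a $\GL_{2g}$-shtuka on $\sS_\eK$. In the generic fiber, the pro-\'etale $\CG(\BZ_p)$-cover $\BP_\eK$ from Proposition \ref{ShimuraShtThm} is cut out by a finite family of Hodge tensors $(s_\alpha)$ inside the frame bundle of $R^1 f_{*,\et}(\BZ_p)$; these correspond to Frobenius-invariant sections of the associated tensor bundles of the generic shtuka $\sP_{\eK, E}$. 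Applying Theorem \ref{vshtExt} to each tensor bundle, the $s_\alpha$ extend uniquely to sections $\tilde s_\alpha$ on all of $\sS_\eK$. The decisive nontrivial step is to verify that at every geometric point of $\sS_\eK$ the stabilizer of the extended tensors inside the ambient $\GL_{2g}$-shtuka is indeed a $\CG$-torsor. Here the theorem of Ansch\"utz on triviality of $\CG$-torsors on $\Spec(W(O_C)) \setminus \{s\}$ is the crucial input: it lets one trivialize the candidate $\CG$-bundle over the punctured spectrum and then propagate the reduction of structure through the Frobenius to conclude that the tensors globally define a $\CG$-shtuka bounded by $\mu$.

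Property (c) is the main obstacle and the technical heart of the proof. The strategy is to construct a framing of the pullback of $\sP_\eK$ to $\widehat{\CO}_{\sS_\eK, x}$ by lifting a trivialization of the fiber at $x$ via crystalline deformation theory for the $p$-divisible group of $\CA_\eK$ together with its Hodge tensors, and to use this framing to identify the formal completion at $x$ with the fiber of $\pi_\bullet \colon \LMint \to \widehat{\CM^{\rm int}_{\CG, b_x, \mu}}_{/x_0}$ from diagram \eqref{GDia}. The hard part is the rigidity of this construction: one must show that the framing is canonical up to the ambiguity in the choice of $b_x$ within its $\CG(\breve\BZ_p)$-$\sigma$-conjugacy class, which amounts to computing that the diamond automorphism group of the trivial $\CG$-shtuka with Frobenius $\phi_{b_x}$ over $\widehat{\CO}_{\sS_\eK, x}$ reduces to $J_{b_x}(\BQ_p)$. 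Granted the representability of $\widehat{\CM^{\rm int}_{\CG, b_x, \mu}}_{/x_0}$ (which follows from the Hodge-type closed immersion of Proposition \ref{prop332} into $\CM^{\rm int}_{\GL_{2g}, \rho(b_x), \rho(\mu)}$ and the known representability of the latter by an RZ space, via Theorem \ref{thmrepintIntro}), $\Theta_x$ then assembles by the Tannakian interpretation of shtukas. For Shimura varieties of abelian type but not Hodge type, the plan is to follow the Kisin--Shin--Zhu approach: pass to a central lift of Hodge type to obtain integral models on a connected component, then use Deligne's theory of connected components and twisting by a suitable torus to descend and reconstruct $\sS_\eK$ in general; the shtuka and completion data propagate by pushout functoriality of integral LSV.
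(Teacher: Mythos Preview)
The statement is a \emph{conjecture}; the paper establishes it only in the global Hodge type case (Theorem~\ref{mainhodge}), and explicitly leaves the abelian type extension as future work. Your outline for (a) and (b) in the Hodge type case matches the paper's argument closely.

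Your treatment of (c) has a genuine gap and a confusion of roles. You invoke rigidity---that the diamond automorphism group of $(\CG,\phi_{b_x})$ over $\widehat{\CO}_{\sS_\eK,x}$ reduces to $J_{b_x}(\BQ_p)$---as the mechanism for constructing $\Theta_x$. In the paper, this rigidity (Proposition~\ref{lemma423}, via Lemmas~\ref{boundonlambda} and~\ref{BCzero}) is the engine of the \emph{uniqueness} theorem (Theorem~\ref{uniq}), not of the existence of $\Theta_x$. The existence argument is Proposition~\ref{prop451}: one first obtains a framing of the ambient $\CH$-shtuka from the Siegel RZ uniformization, and the substantive step is to show this framing respects the $\CG$-structure. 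That descent is proved by analyzing the quotient $\sQ_b^{\geq 0}=\sH_b^{\geq 0}/\sG_b^{\geq 0}$ over the Fargues--Fontaine curve, filtering it by Banach--Colmez spaces of slopes in $(0,1]$ (Lemma~\ref{lemma452}, Lemma~\ref{boundonlambda}), and using that such spaces have no nonzero global sections over $\Spd(\hat R_x)$ (Lemma~\ref{BCzero}). This is the technical heart of (c), and your outline does not supply it. Your appeal to ``the fiber of $\pi_\bullet$'' is also off: $\pi_\bullet$ is an $\LG$-torsor, not a map whose fibers identify with the formal completion of $\sS_\eK$.

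Your logic on representability of $\widehat{\CM^{\rm int}_{\CG,b_x,\mu}}_{/x_0}$ is circular. You cite Theorem~\ref{thmrepintIntro} (=Theorem~\ref{thmrepint}), but that theorem \emph{assumes} representability of the formal completions as input (its hypothesis~3). A closed immersion of $v$-sheaves into a representable one (Proposition~\ref{prop332}) does not by itself yield representability of the sub-$v$-sheaf; this is precisely why the paper needs the de~Jong-style construction of Proposition~\ref{deJong}. In the paper's actual argument, representability of $\widehat{\CM^{\rm int}_{\CG,b_x,\mu}}_{/x_0}$ is an \emph{output} of the proof of (c): once Proposition~\ref{prop451} produces $\Psi_{G,x}\colon \Spd(\hat R_x)\to \widehat{\CM^{\rm int}_{\CG,b_x,\mu}}_{/x_0}$, one embeds both sides into the Siegel formal completion via diagram~\eqref{SCompletions}, compares the smooth generic fibers of equal dimension, uses connectedness of $(\widehat{\CM^{\rm int}_{\CG,b_x,\mu}}_{/x_0})_\eta$ from Proposition~\ref{topplatt}(3), and concludes by topological flatness (Proposition~\ref{topplatt}(2)) that $\Psi_{G,x}$ is an isomorphism.
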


 \begin{remark}\label{remAdm423}
 An element $b\in G(\breve \BQ_p)$ is called \emph{$\mu$-admissible}, if the homomorphism 
 $$
 \phi_b\colon {\rm Frob}^*(\CG\times{\Spec(W(k)[1/p]})\to \CG_{\Spec(W(k)[1/p])}
 $$  has pole bounded by $\mu$, in the sense of Remark \ref{rem332}.   By \eqref{ptsofvLMo},  this condition is equivalent to asking that  
 \begin{equation}
 b\in \bigcup\nolimits_{w\in {\rm Adm}(\mu^{-1})_\CG}\CG(\breve\BZ_p) w \CG(\breve\BZ_p) .
  \end{equation}
  If $[b]\in B(G, \mu^{-1})$, then  $[b]$ contains an element $b\in G(\breve\BQ_p)$ satisfying this last condition (He's theorem \cite{He}). If $b$ is $\mu$-admissible, then $\CM_{\CG, b, \mu}$ has a canonical base point $x_0$, defined as above in \eqref{basepointM} (with $b_x$ replaced by $b$). 
  
  If $b=b_x$ is attached to a point $x\in\sS_\eK(k)$ (and a $\CG$-shtuka $\sP_\eK$) as above, then $b_x$ is $\mu$-admissible. 
 \end{remark}
 
   \begin{theorem}\label{uniq}
   There is at most one system of  normal flat models  $\sS_\eK$ of ${\rm Sh}_{\eK}(\eG, X)_E$ over $O_{E}$, for $\eK=\eK_p\eK^p$, with variable sufficiently small $\eK^p$, with the  properties enumerated in Conjecture \ref{par812}. More precisely, if $\sS_\eK$ and $\sS'_\eK$ are two $O_{E}$-models which satisfy the above properties, then there are isomorphisms $\sS_\eK\simeq\sS'_\eK$ which induce the identity on the generic fibers and are compatible with changes in $\eK^p$.
   \end{theorem}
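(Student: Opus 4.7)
The plan is to reduce the uniqueness of the schemes $\sS_\eK$ to the uniqueness of the associated $v$-sheaves $\sS_\eK^\diam$ over $\Spd(O_E)$, restricting to the identity on generic fibers. Once we have a $v$-sheaf isomorphism $\sS_\eK^\diam \isoarrow (\sS'_\eK)^\diam$ which is the identity on the $\Spd(E)$-fiber, Corollary \ref{corFF} (full-faithfulness of the diamond functor on flat normal schemes) promotes it to the desired scheme isomorphism. To construct the $v$-sheaf isomorphism we plan to glue two pieces of data: the identity on the generic fiber, and canonical isomorphisms of formal completions at every $k$-point.

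First I would match $k$-points. Given $x\in \sS_\eK(k)$, apply the extension property (a) to the completed strict henselization of $\sS_\eK$ at $x$, or rather to a suitable dvr $R$ dominating $\widehat{\CO}_{\sS_\eK,x}$ with generic fiber a point of $\varprojlim_{\eK^p}{\rm Sh}_\eK(\eG,X)_E$. Then property (a) for $\sS'_\eK$ produces a canonical $R$-point of $\varprojlim_{\eK^p}\sS'_\eK$, hence a matching $k$-point $x'\in \sS'_\eK(k)$; by construction the specialisations of $\sP_\eK$ at $x$ and of $\sP'_\eK$ at $x'$ agree as meromorphic Frobenius $\CG$-crystals over $k$ (Theorem \ref{FFisocrystal} and Example \ref{G-AlgClosedPoint}), so in particular the classes $[b_x]=[b_{x'}]$ coincide and a choice of $\CG(\breve\BZ_p)$-$\sigma$-conjugation identifies $b_x$ with $b_{x'}$. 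Next, property (c) for the two models yields isomorphisms
\[
\Theta_x\colon \widehat{{\CM}^{\rm int}_{\CG,b_x,\mu}}_{/x_0}\isoarrow (\widehat{\sS_{\eK_{/x}}})^\diam,\qquad \Theta'_{x'}\colon \widehat{{\CM}^{\rm int}_{\CG,b_{x'},\mu}}_{/x'_0}\isoarrow (\widehat{\sS'_{\eK_{/x'}}})^\diam,
\]
each pulling back the respective $\CG$-shtuka to the tautological one. Composing gives a candidate isomorphism $\Xi_x\colon (\widehat{\sS_{\eK_{/x}}})^\diam\isoarrow (\widehat{\sS'_{\eK_{/x'}}})^\diam$.

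The main point is then a rigidity statement: the diamond automorphism group of the trivial $\CG$-shtuka with Frobenius $\phi_{b_x}=b_x\times{\rm Frob}$ over $\widehat{\CO}_{\sS_\eK,x}$ is precisely $J_{b_x}(\BQ_p)$, and this agrees with the group of automorphisms acting on ${\CM}^{\rm int}_{\CG,b_x,\mu}$ by changing the framing. Granting this, the ambiguity in the choice of $\sigma$-conjugation matching $b_x$ with $b_{x'}$ and the ambiguity in each of $\Theta_x$, $\Theta'_{x'}$ (which is a torsor under $J_{b_x}(\BQ_p)$) cancel, making $\Xi_x$ canonically well-defined and independent of all choices. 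Comparison with the generic fiber further shows that $\Xi_x$ is compatible with the identity on ${\rm Sh}_\eK(\eG,X)_E^\diamondsuit$. One then glues the $\Xi_x$ with the identity on the generic fiber: the collection of opens $\{(\widehat{\sS_{\eK_{/x}}})^\diam\}_{x}$ together with ${\rm Sh}_\eK(\eG,X)_E^\diamondsuit$ forms a $v$-cover of $\sS_\eK^\diam$ (using topological flatness, cf. Lemma \ref{topflat}, and Proposition \ref{tubestruc}), and the compatibilities on overlaps reduce to the already-verified compatibility with the generic fiber via Corollary \ref{shtExt}. Compatibility with variation of $\eK^p$ is proved in the same way using the Hecke correspondences in (a).

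The hard part will be establishing the rigidity of automorphisms claimed above. Its proof should proceed by extending automorphisms of the tautological shtuka on $\widehat{\sS_{\eK/x}}$ to its generic fiber by Theorem \ref{vshtExt}, computing there the centralizer via the associated deRham $\CG(\BZ_p)$-cover and the Fargues--Fontaine curve (Section \ref{ss:BC}, together with Ansch\"utz' extension theorem across $[\varpi]=0$ invoked in Proposition \ref{AnExtension}), and finally verifying that only elements of $J_{b_x}(\BQ_p)$ survive the compatibility with the framing and the specialization to $x$; this is precisely the computation flagged in the introduction as the crux of Theorem \ref{uniqIntro}. Everything else in the argument is a formal manipulation of $v$-sheaves and an application of the full-faithfulness results recalled in Section \ref{sss:vs}.
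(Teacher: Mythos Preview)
You have correctly identified the crux of the argument: the rigidity statement that the global sections of the diamond automorphism group of the framed shtuka over $\widehat{\CO}_{\sS_\eK,x}$ reduce to $J_{b_x}(\BQ_p)$. This is exactly Proposition \ref{lemma423} in the paper. However, there are two genuine gaps in your strategy.

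First, the ``cancellation'' claim is not correct. The rigidity tells you that the set of $\Theta_x$ satisfying property (c) is a torsor under $J_{b_x}(\BQ_p)$, and likewise for $\Theta'_{x'}$; but these two ambiguities are independent, so $\Xi_x=\Theta'_{x'}\circ\Theta_x^{-1}$ is still only determined up to a $J_{b_x}(\BQ_p)$-action. Worse, it is not a priori clear what it would even mean for $\Xi_x$ to ``restrict to the identity on the generic fiber'': the rigid generic fibers of $\widehat{\sS_{\eK}}_{/x}$ and $\widehat{\sS'_{\eK}}_{/x'}$ are the tubes over $x$ and over $x'$ inside ${\rm Sh}_\eK(\eG,X)_E^{\rm ad}$, and without further input there is no reason these two opens should coincide. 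Your matching $x\mapsto x'$ via a single lift to a dvr does not obviously produce a canonical correspondence at the level of tubes.

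The paper sidesteps both issues by introducing an auxiliary scheme: let $\sS''_\eK$ be the normalization of the closure of the diagonal generic fiber inside $\sS_\eK\times_{O_E}\sS'_\eK$. Then one has proper morphisms $\pi_\eK\colon\sS''_\eK\to\sS_\eK$ and $\pi'_\eK\colon\sS''_\eK\to\sS'_\eK$ which are isomorphisms on generic fibers, and the task reduces to showing they are isomorphisms on formal completions at every $x''\in\sS''_\eK(k)$. The matching of points is now automatic ($x=\pi_\eK(x'')$, $x'=\pi'_\eK(x'')$), and there is no glueing to perform. The rigidity lemma is then applied not to compare $\Theta_x$ with $\Theta'_{x'}$ directly, but to compare each with the \emph{natural} (universal) map from the framed torsor $\wt\sS_{/x}\to\CM^{\rm int}_{\CG,b,\mu}$; the discrepancy is an element of $\wt G_b(\widehat\sS_{/x})=G_b(\BQ_p)$, which can then be absorbed into the $G_b(\BQ_p)$-action on $\CM^{\rm int}_{\CG,b,\mu}$ to produce a commutative square identifying $\wt\sS''_{/x''}$ with the diagonal in $\widehat{\CM^{\rm int}}_{/x_0}\times\widehat{\CM^{\rm int}}_{/x_0}$.

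Finally, your sketch for the rigidity itself points to the wrong tools. The paper does not use Theorem \ref{vshtExt} or Ansch\"utz's extension here; instead it uses the Fargues--Scholze structure theory of $\wt G_b$ as an extension of $\underline{G_b(\BQ_p)}$ by a successive extension of positive Banach--Colmez spaces, then shows (Lemma \ref{boundonlambda}) that minusculity of $\mu$ forces all slopes to be $\leq 1$, and finally (Lemma \ref{BCzero}) that $\mathbf{B}(\lambda)(\widehat\sS_{/x})=0$ for $0<\lambda\leq 1$ via the explicit description of universal covers of simple $p$-divisible groups.
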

   \begin{proof}
   We imitate the proof of \cite[Thm. 6.1.5]{PCan}. It suffices to construct these isomorphisms after base change $O_{E}\to O_{\breve E}$. Let $\sS''_\eK$ be the normalization of the closure of the generic fiber in $\sS_\eK\times_{\Spec(O_E)}\sS'_\eK$. Then $\sS''_\eK$ is again a tower with finite \'etale transition morphisms, for varying $\eK^p$. The argument of the proof of \cite[Prop. 6.1.7]{PCan} shows that the morphisms
   $$
   \pi_\eK\colon \sS''_\eK\to \sS_\eK, \quad \pi'_\eK\colon \sS''_\eK\to \sS'_\eK
   $$
   are proper and isomorphisms in the generic fibers. By uniqueness, we obtain identifications of $\CG$-shtukas on $\sS''_\eK$, 
 $$
 (\pi_\eK)^*(\sP_{\eK})=(\pi'_\eK)^*(\sP'_{\eK})=\sP''_\eK .
 $$
 It suffices to show that $\pi_\eK$ and $\pi'_\eK$ induce isomorphisms on the strict completions at geometric points of the special fibers. More precisely, let $\hat x\in {\rm Sh}_{\eK_p}(\eG, X)(F)$, where $F/\breve E$ is a finite extension. 
 By  property a), $\hat x$ extends to  points $\tilde x$ of $\sS_\eK(O_F)$ and $\tilde x'$ of $\sS'_\eK(O_F)$ and a point $\tilde x''$ of $\sS''_\eK(O_F)$ mapping to $\tilde x$, resp. $\tilde x'$. By reduction, we obtain the points $ x\in\sS_\eK(k)$ and $ x'\in\sS'_\eK(k)$ and the point $ x''\in\sS''_\eK(k)$ mapping to $x$, resp. $x'$.  By uniqueness, the pullback $\CG$-shtukas ${\tilde x}^*(\sP_\eK)$ and $\tilde {x}^{\prime *}(\sP'_\eK)$ and $\tilde {x}^{\prime\prime *}(\sP''_\eK)$ on $\Spec(O_F)$ all coincide. Hence the $\sigma$-conjugacy classes $[b_x]$ and $ [b_{x'}]$ and $ [b_{x''}]$ coincide. Let $b\in G(\breve\BQ_p)$ be a representative of this class which is $\mu$-admissible, see Remark \ref{remAdm423}.

  Define,  for $r\gg0$, the $v$-sheaf $\wt{\sS}_{/x}$ over $(\wh{\sS}_{/x})^\diam$ by adding to a  point $y$ of $(\wh{\sS}_{/x})^\diam$ with values in $S=\Spa(R, R^+)\in {\rm Perfd}_k$ a trivialization of the $\CG$-shtuka
 $$
 i_r\colon G_{\CY_{[r, \infty)}(S)}\isoarrow y^*(\sP_\eK)_{| \CY_{[r, \infty)}(S)} ,
 $$
such that $\phi_{y^*(\sP_\eK)}=\phi_b=b\times{\rm Frob}$. Then the map $\wt{\sS}_{/x}\to(\wh{\sS}_{/x})^\diam$ is representable in locally spatial diamonds. It is a torsor under the diamond group 
\begin{equation}\label{diamgp}
\wt G_b= \underline{\rm Aut}(\CE^b),
\end{equation} 
{\cmag given as the automorphism $v$-sheaf of the $G$-bundle $\CE^b$ over the Fargues-Fontaine curve, see \cite[ch. III 5.1]{FS}. Then $\wt G_b$ is also the $v$-sheaf given by $S\mapsto {{\rm Aut}_{\CY_{[r, \infty)}(S)}}(G\times \CY_{[r, \infty)}(S), b\times {\rm Frob})$, automorphisms in the category of $G$-torsors over
$\CY_{[r, \infty)}(S)$ equipped with a Frobenius isomorphism. 
This diamond group is an extension of $\underline{G_b(\BQ_p)}$ by a smooth unipotent group $v$-sheaf, see \emph{loc. cit.}.} Here, to conform to the notation of \cite{FS}, we denote by $G_b$ what was denoted $J_b$ before, i.e. for a $\BQ_p$-algebra $A$
 \[
G_b(A)=\{g\in G(A\otimes_{\BQ_p}\breve\BQ_p)\ |\ b\sigma(g)=gb\}.
\]
 This is represented by a reductive group over $\BQ_p$ and $\underline{G_b(\BQ_p)}$
 is the locally profinite $v$-sheaf given by the $\BQ_p$-points $G_b(\BQ_p)$.
 We define $\wt{\sS'}_{/x'}$ and $\wt{\sS''}_{/x''}$ in an analogous way. 
 
 Recall the integral model $\CM^{\rm int}_{\CG, b, \mu}$ of the local Shimura variety corresponding to the fixed element $b$, with its tautological $\CG$-shtuka $(\CP_{\rm univ}, \phi_{\CP_{\rm univ}})$ and its trivialization $\iota_r$ for $r\gg0$. By universality, we obtain  natural morphisms fitting in a commutative diagram, 
 \begin{equation}\label{Stildediag}
 \begin{aligned}
   \xymatrix{
       &\ar[dl]_{\pi_\eK}\wt{\sS''}_{/x''}  \ar[d]^{{\rm nat}''} \ar[dr]^{\pi'_\eK}\\
      \wt{\sS}_{/x}\ar[r]^{{\rm nat}} &  \CM^{\rm int}_{\CG, b, \mu}&  \ar[l]_{{\rm nat}'}\wt{\sS'}_{/x'}.
        }
        \end{aligned}
    \end{equation}
 On the other hand, since  $\sS_\eK$ satisfies property c) of Conjecture \ref{par812}, we obtain a commutative diagram for a suitable section $g\in \wt G_b(\wh{\sS}_{/x})$,
 \begin{equation*}\label{Sdiag}
 \begin{aligned}
   \xymatrix{
       \wt{\sS}_{/x}   \ar[r]^{} \ar[d]_{{\rm nat}\circ g}\ar[dr] & \wh{\sS}_{/x}\ar[d]^{\simeq} \\
     \CM^{\rm int}_{\CG, b, \mu} &  \ar[l]\wh{\CM^{\rm int}_{\CG, b, \mu}}_{/x_0} ,
        }
        \end{aligned}
    \end{equation*}
 in which the right vertical arrow is the inverse $\Psi_x$ of the isomorphism $\Theta_x$. Furthermore, by the nature of the morphism 
  $\wt{\sS}_{/x}\to \wh{\sS}_{/x}$, this isomorphism is uniquely determined by the rest of the diagram.
Something analogous holds for $\sS'_\eK$ and a global section $g'\in \wt G_b(\wh{\sS}_{/x})$. 
\begin{proposition}\label{lemma423}
The natural projection $\wt G_b\to \underline{G_b(\BQ_p)}$ induces an isomorphism on global sections,
$$
\wt G_b(\wh{\sS}_{/x})\isoarrow G_b(\BQ_p) . 
$$
\end{proposition}
\begin{proof}
Let $\wt G_b^{>0}$ be the kernel of the natural projection. Then $\wt G_b^{>0}$ is a successive extension of positive (absolute) Banach-Colmez spaces, cf. \cite[Prop. III.5.1]{FS}. More precisely, there is a filtration $\wt G_b^{\geq \lambda}$ such that for every $\lambda>0$, there is a natural isomorphism
\[
\wt G_b^{\geq \lambda}/\wt G_b^{> \lambda}\xrightarrow{\sim} {\bf B}(({\rm ad}\CE_b)^{\geq \lambda}/({\rm ad}\CE_b)^{> \lambda}),
\]
with target the Banach-Colmez space associated to the $-\lambda$ isoclinic part of the Frobenius isocrystal 
 $({\rm Lie}(G)\otimes_{\BQ_p}\breve\BQ_p, {\rm Ad}(b)\sigma)$.

 We first prove the following lemma. A \emph{formal group} version of this lemma occurs in \cite[Prop. 4.2.11]{CaraSch}. 

\begin{lemma}\label{boundonlambda}
Let $[b]\in B(G, \mu)$, where $\mu$ is minuscule. Then $\wt G_b^{>0}$ is a successive extension of positive (absolute) Banach-Colmez spaces of slopes $\leq 1$. 
\end{lemma}

\begin{proof}
We need to prove that $\langle \nu_b, \alpha\rangle \leq 1$ for any positive root $\alpha$. Equivalently, we need to see that $\langle \nu_b, \tilde\alpha\rangle \leq 1$
for the highest root $\tilde\alpha$. But since $[b]\in B(G, \mu)$, we have $\mu-\nu_b\in C^\vee$, where $C^\vee$ denotes the obtuse Weyl chamber (spanned by the positive coroots). Since $\langle C^\vee, \tilde \alpha\rangle \geq 0$ (cf. \cite[Ch. VI, \S 1.8, Prop. 25]{Bour}), we get $\langle \nu_b, \tilde\alpha\rangle \leq \langle \mu, \tilde\alpha\rangle \leq 1$, since $\mu$ is minuscule. 
\end{proof}

We write $\bfB(\lambda)={\mathcal {BC}}(\CO(\lambda))$ and let
$\bfB(\lambda)(\wh{\sS}_{/x})$ be the group ${\rm Hom}(\Spa(\wh{O}_{\sS, x})^\diam, \bfB(\lambda))$ of maps  of $v$-sheaves over $\Spd(k)$.

 By Lemma \ref{boundonlambda}, it suffices to show

\begin{lemma}\label{BCzero}
For $0<\lambda\leq 1$, we have $\bfB(\lambda)(\wh{\sS}_{/x})=(0)$.
\end{lemma}

\begin{proof} Write $\lambda=r/s$ with coprime integers $r, s>0$. Let $\BX=\BX_\lambda$ be the simple $p$-divisible group of slope $\lambda$ over $k$ and denote by the same symbol a lift of $\BX$ over $W(k)$. Let 
$$
\wt \BX=\varprojlim_{\times p} \BX
$$
be the universal covering of $\BX$, cf. \cite{SW}. For any $p$-adically complete $W(k)$-algebra $A$,  we have
$$
\bfB(\lambda)(A)=\wt \BX(A)=\wt \BX(A/p) , 
$$ 
comp. \cite[\S 2.3]{LB}, also \cite[Prop. II.2.5]{FS}. 
Now 
$$
\wt \BX\simeq\Spf(W(k)[[T_1^{1/p^\infty},\ldots,T_r^{1/p^\infty}]]) ,
$$
We therefore obtain
$$
\bfB(\lambda)(A)=(\varprojlim_{x\mapsto x^p}A^{oo})^r .
$$
After identifying the ideal of 
 topologically nilpotent elements in $\wh{O}_{\sS, x}$ with the maximal ideal $\wh{\frak m}_{\sS, x}$, we have 
 $$
 \wt \BX(\wh{O}_{\sS, x})=(\varprojlim_{x\mapsto x^p}\wh{\frak m}_{\sS, x})^r .
 $$
Since $\cap_{n}\wh{\frak m}_{\sS, x}^{p^n}=(0)$, we see that $\bfB(\lambda)(\wh{\sS}_{/x})=(0)$. 
\end{proof} 
The   proof of Proposition \ref{lemma423} follows.
\end{proof}

 \begin{remark}  In fact,  for every positive
 Banach-Colmez space $\CE$, the group of global sections $\CE(A)$ is zero, for any noetherian $p$-adically complete $W(k)$-flat algebra $A$. Here is a sketch of the proof of this statement, which was suggested to the authors by Scholze. Observe first that it is enough to show this for $A$ a discrete valuation ring with perfect residue field and $\CE=\bfB(\lambda)$, with $\lambda=r/s>0$. Let $K$, resp. $k$, be the fraction field, resp. the residue field of $A$. Set $C=\wh{\overline K}$ and, as usual, let $O_C$ be the integral closure 
 of $A$ in $C$. By $v$-descent, a section $s\in \bfB(\lambda)(A)$ is given by a $G_K={\rm Gal}(\bar K/K)$-invariant element of $\bfB(\lambda)(O_C)=(B_{\crys}^+)^{\phi^s=p^r}\subset B_{\crys}^+$.  By a theorem of Fontaine (see \cite[Thm. 4.12]{Font} or \cite[Thm. 6.14]{FontaineO}) $(B_{\crys}^+)^{G_K}=W(k)[1/p]$, and we can see that the only such Galois invariant element
 is $0$.
 \end{remark}

Hence $g, g'\in G_b(\BQ_p)$ and, after correcting $\Psi_x$, resp. $\Psi_{x'}$ by the  automorphism of $\CM^{\rm int}_{\CG, b, \mu}$ corresponding to $g$, resp. $ g'$, we deduce from \eqref{Stildediag}  a unique left vertical isomorphism making the following diagram commutative,
\begin{equation*}\label{SMdiag}
 \begin{aligned}
   \xymatrix{
       \wt{\sS''}_{/x''}   \ar[r]^{(\pi_\eK, \pi'_\eK)} \ar[d]_{\simeq} & \wh{\sS}_{/x}\times\wh{\sS'}_{/x'}\ar[d]^{(\Psi_x, \Psi_{x'})} \\
     \wh{\CM^{\rm int}_{\CG, b, \mu}}_{/x_0}  \ar[r]^-\Delta&\wh{\CM^{\rm int}_{\CG, b, \mu}}_{/x_0}\times \wh{\CM^{\rm int}_{\CG, b, \mu}}_{/x_0} .
        }
        \end{aligned}
    \end{equation*}
  This implies that $\pi_\eK$ and $\pi'_\eK$ induce isomorphisms 
 $$
  \wh{\sS''}_{/x''}\isoarrow  \wh{\sS}_{/x}, \quad\text{ resp. } \quad \wh{\sS''}_{/x''}\isoarrow  \wh{\sS'}_{/x'}.
 $$ 
 Since the point $\hat x$ was arbitrary, the assertion follows. 
   \end{proof}

  \subsection{Some functorialities of integral models}\label{ss:funct431}
   The uniqueness statement in Theorem \ref{uniq} leads to some interesting functoriality properties.
   
   Let $(\eG, X)\hookrightarrow (\eG', X')$ be an embedding of Shimura data induced by a group embedding  $\iota: \eG\hookrightarrow \eG'$. Assume that $\eK_p$, $\eK'_p$ are parahoric subgroups of $\eG(\BQ_p)$ and $\eG'(\BQ_p)$ respectively, with corresponding Bruhat-Tits group schemes $\CG$ and $\CG'$ over $\BZ_p$, such that 
   \begin{equation}\label{intersectionCond}
   \breve \eK_p=\iota^{-1}(\breve \eK'_p)\cap G(\breve\BQ_p).
   \end{equation}
 (Note that given a parahoric $\eK_p$ of $\eG(\BQ_p)$ which is also a stabilizer, we can always find a parahoric $\eK'_p$ of $\eG'(\BQ_p)$
 which satisfies \eqref{intersectionCond}. This uses Landvogt's embedding of extended buildings $\sB^e(G,\BQ_p)\hookrightarrow \sB^e(G',\BQ_p)$, \cite{La}.) Then, $\iota: \eG\to \eG'$ extends uniquely to a group scheme homomorphism $\iota: \CG\to \CG'$ which is a dilated immersion, i.e. identifies $\CG$ with the Neron smoothening $\bar\CG^{\rm sm}$ of the Zariski closure
$\bar\CG$  of the image of $\CG$ in $G'=\eG'_{\BQ_p}$,
  \[
  \CG\cong \bar\CG^{\rm sm}\to \bar\CG\hookrightarrow \CG'.
  \]
Let $\eK^p\subset \eG(\BA^p_f)$ and $\eK'^p\subset \eG'(\BA^p_f)$ be compact open subgroups such that the natural map induced by $\iota: (\eG, X)\to (\eG', X')$ gives an immersion
\[
\iota\colon  {\rm Sh}_{\eK}(\eG, X)_E\hookrightarrow {\rm Sh}_{\eK'}(G', X')\otimes_{E'}E.
\]
  Here, as usual, $\eK=\eK_p\eK^p$, resp. $\eK'=\eK'_p\eK'^p$.   Note that, given a compact open $\eK^p\subset \eG(\BA^p_f)$, there is always a compact open
 $\eK'^p\subset \eG'(\BA^p_f)$ for which we have such an immersion, cf.  \cite[Lem. 2.1.2]{KisinJAMS}.
 If $\eK^p$ is sufficiently small, as we are always assuming, then we can also take $\eK'^p$ to be sufficiently small.

\begin{theorem}\label{functorialThm}  Let $\sS_\eK$, resp. $\sS'_{\eK'}$, be integral models of ${\rm Sh}_{\eK}(\eG, X)_E$, resp.  ${\rm Sh}_{\eK'}(\eG', X)_{E'}$ over $O_E$, resp. $O_{E'}$, with the  properties enumerated in Conjecture \ref{par812} and which are characterized by Theorem \ref{uniq}. Then, under the above assumptions which include (\ref{intersectionCond}), the immersion of $E$-schemes 
$
\iota\colon {\rm Sh}_{\eK}(\eG, X)_E\to {\rm Sh}_{\eK'}(\eG', X')_{E'}\otimes_{E'}E
$
above extends uniquely to a morphism  over $O_E$,
\[
\iota\colon\sS_{\eK}\to \sS'_{\eK'}\otimes_{O_{E'}}O_E .
\]

\end{theorem}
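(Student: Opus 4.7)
The uniqueness of the extension is essentially formal: since $\sS_\eK$ is flat over $O_E$, the generic fiber ${\rm Sh}_\eK(\eG, X)_E$ is schematically dense in $\sS_\eK$, and since $\sS'_{\eK'}\otimes_{O_{E'}}O_E$ is separated over $\Spec(O_E)$, any two extensions of the generic fiber morphism agree. For compatibility with the variation of $\eK^p$ and with the prime-to-$p$ Hecke correspondences, the same density argument applies.

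For existence, I propose to imitate the strategy from the proof of Theorem \ref{uniq}. Consider the graph morphism
\[
\gamma\colon {\rm Sh}_\eK(\eG, X)_E \to \sS_\eK\times_{\Spec(O_E)}\bigl(\sS'_{\eK'}\otimes_{O_{E'}}O_E\bigr)
\]
of the generic fiber immersion, and let $\sS''_\eK$ denote the normalization of the scheme-theoretic closure of the image of $\gamma$. The two projections provide morphisms $\pi\colon \sS''_\eK\to \sS_\eK$ and $\pi'\colon \sS''_\eK\to \sS'_{\eK'}\otimes_{O_{E'}}O_E$, with $\pi$ an isomorphism on generic fibers. It will suffice to show that $\pi$ is an isomorphism: then $\iota:=\pi'\circ\pi^{-1}$ is the desired extension, and the independence on $\eK^p$ built into the construction yields compatibility with Hecke correspondences.

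To show $\pi$ is an isomorphism, I work at the level of formal completions at $k$-points, following the method used for Theorem \ref{uniq}. Fix $x\in \sS''_\eK(k)$ with images $y=\pi(x)\in \sS_\eK(k)$ and $y'=\pi'(x)\in\sS'_{\eK'}(k)$. By Corollary \ref{shtExt}, the pushout $\CG'$-shtuka $\iota_*(\sP_\eK)$ on $\sS_\eK$ agrees under $\pi'$ with the pullback of $\sP'_{\eK'}$, since the two agree on the generic fiber. Consequently the attached elements satisfy $[b_{y'}]=[\iota(b_y)]$ up to $\sigma$-conjugacy by $\CG'(\breve\BZ_p)$. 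Applying property (c) of Conjecture \ref{par812} for both $\sS_\eK$ and $\sS'_{\eK'}$, together with the hypothesis \eqref{intersectionCond} and Proposition \ref{prop332}, I obtain identifications of $v$-sheaf formal completions
\[
\wh{(\sS_\eK)_{/y}}{}^\diam \simeq \wh{\CM^{\rm int}_{\CG, b_y, \mu}}_{/y_0}, \qquad \wh{(\sS'_{\eK'}\otimes O_E)_{/y'}}{}^\diam \simeq \wh{\CM^{\rm int}_{\CG', b_{y'}, \mu'}\times_{\Spd O_{\breve E'}}\Spd O_{\breve E}}_{/y'_0},
\]
together with a closed immersion of the former into the latter coming from Proposition \ref{prop332}. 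The formal completion $\wh{(\sS''_\eK)_{/x}}{}^\diam$ embeds into the product of these completions via $(\pi,\pi')$, and the generic-fiber compatibility forces the image to lie in the graph of this closed immersion. Hence $\pi$ induces an isomorphism $\wh{(\sS''_\eK)_{/x}}{}^\diam \isoarrow \wh{(\sS_\eK)_{/y}}{}^\diam$, which via Proposition \ref{propFFvsheaf} descends to an isomorphism of formal schemes.

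The main obstacle is then to pass from these pointwise formal isomorphisms to a global isomorphism. Concretely, one must verify that $\pi$ is proper (so that the formal-local isomorphisms combined with birationality force $\pi$ to be an isomorphism via Zariski's main theorem), even though $\sS'_{\eK'}\otimes_{O_{E'}}O_E$ is not assumed proper over $O_E$. I propose to handle this by invoking the extension property (a) of Conjecture \ref{par812} for $\sS'_{\eK'}$: it provides the valuative criterion of properness for $\pi$ restricted to dvr's $R$ of mixed characteristic $(0,p)$, and, combined with the formal-local surjectivity and finiteness of fibres proved above, this suffices to conclude that $\pi$ is quasi-finite and proper, hence an isomorphism by normality of $\sS_\eK$. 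Alternatively, one can bypass the issue by constructing the extension directly at the level of $v$-sheaves and invoking Corollary \ref{corFF} to descend to a scheme morphism.
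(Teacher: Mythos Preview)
Your approach diverges from the paper's, and there is a genuine gap in your argument at the ``generic-fiber compatibility'' step.

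The paper does \emph{not} form the graph in $\sS_\eK\times(\sS'_{\eK'}\otimes O_E)$. Instead it takes $\sS^\dagger_\eK$ to be the normalization of the closure of ${\rm Sh}_\eK(\eG,X)_E$ inside $\sS'_{\eK'}\otimes_{O_{E'}}O_E$ alone, and then verifies directly that $\sS^\dagger_\eK$ satisfies all the properties listed in Conjecture~\ref{par812}. By Theorem~\ref{uniq} this forces $\sS^\dagger_\eK=\sS_\eK$, and the tautological map $\sS^\dagger_\eK\to\sS'_{\eK'}\otimes O_E$ is the desired extension. Verifying the properties requires real work: one must extend the $\CG$-shtuka $\sP_E$ to $\sS^\dagger_\eK$ as a reduction of $\iota^*(\sP')$ (Step A, \S\ref{compl481}, paralleling \S\ref{ext463}), and then construct framings at formal completions of $\sS^\dagger_\eK$ (Step B, \S\ref{compl482}, paralleling Proposition~\ref{prop451}). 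This route bypasses your properness obstacle entirely, since $\sS^\dagger_\eK\to\sS'_{\eK'}\otimes O_E$ is finite by construction.

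Your shortcut---using the given isomorphisms $\Theta_y$, $\Theta_{y'}$ from property~(c) together with Proposition~\ref{prop332}---does not go through as written. The assertion that the image of $(\pi,\pi')$ lies in the graph of the closed immersion $\rho$ is precisely the compatibility $\Theta_{y'}^{-1}\circ\pi'=\rho\circ\Theta_y^{-1}\circ\pi$, and ``generic-fiber compatibility'' does not give this: the isomorphisms $\Theta_y$ and $\Theta_{y'}$ are produced abstractly by property~(c) and carry no a~priori relation to $\rho$. Both sides classify the same $\CG'$-shtuka with possibly different framings, so they differ by a section of $\wt G'_{b'}$ over $\wh{(\sS''_\eK)_{/x}}$. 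You would need Proposition~\ref{lemma423} to force this section into $G'_{b'}(\BQ_p)$, and even then you only get that $\pi'$ factors through $\pi$ formally locally---not that $\pi$ itself is an isomorphism. Your proposed fix for properness via the extension property~(a) is also problematic: that property is formulated for the pro-system $\varprojlim_{\eK^p}$, not at a fixed level, so it does not directly yield the valuative criterion for the single map $\pi$.
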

 Before we discuss the proof, we give a consequence:
  \begin{corollary}\label{corfunctorial}
   Let $\eK_p$ and $\eK'_p$ be parahoric subgroups of $\eG(\BQ_p)$, with corresponding parahoric group schemes $\CG$ and $\CG'$ over $\BZ_p$. Assume that  $\eK_p\subset \eK'_p$. For $\eK=\eK_p\eK^p$, resp. $\eK'=\eK'_p\eK^p$, let $\sS_\eK$, resp. $\sS_{\eK'}$, be integral models  of ${\rm Sh}_{\eK}(\eG, X)_E$, resp.  ${\rm Sh}_{\eK'}(\eG, X)_E$ over $O_{E}$ with the  properties listed in Conjecture \ref{par812} and characterized by Theorem \ref{uniq}. Then the morphism of $E$-schemes ${\rm Sh}_{\eK}(\eG, X)_E\to {\rm Sh}_{\eK'}(\eG, X)_E$ extends uniquely to $\pi_{\eK, \eK'}\colon\sS_{\eK}\to \sS_{\eK'}$ over $O_E$. 
   \end{corollary}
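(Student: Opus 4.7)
The plan is to imitate the uniqueness argument in the proof of Theorem \ref{uniq}. The inclusion $\eK_p\subset \eK_p'$ induces a group scheme homomorphism $\iota\colon \CG\to \CG'$ which is the identity on generic fibers. Let $\sS''$ denote the normalization of the scheme-theoretic closure, in $\sS_\eK\times_{\Spec O_E}\sS_{\eK'}$, of the graph of the finite \'etale morphism ${\rm Sh}_\eK(\eG, X)_E\to {\rm Sh}_{\eK'}(\eG, X)_E$. Then $\sS''$ is a normal flat $O_E$-scheme with proper projections $\pi\colon \sS''\to \sS_\eK$ and $\pi'\colon \sS''\to \sS_{\eK'}$, where $\pi$ is an isomorphism on generic fibers; properness follows from the extension property a) of Conjecture \ref{par812}, exactly as in the proof of Theorem \ref{uniq}.

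Since $\iota$ is generically an isomorphism, the two $\CG'$-shtukas $\iota_*\pi^*(\sP_\eK)$ and $\pi'^*(\sP_{\eK'})$ agree over the generic fiber $\sS''_E$, both being associated, in the sense of Definition \ref{LSGShtuka}, to the same pro-\'etale $\CG'(\BZ_p)$-cover coming from the Shimura tower at level $\eK_p'\eK^p$. By Corollary \ref{shtExt}, this identification extends uniquely to all of $\sS''$. Following the proof of Theorem \ref{uniq}, to show that $\pi$ is an isomorphism it suffices to verify that it induces isomorphisms on strict completions at every $k$-point $x''\in \sS''(k)$. Set $x=\pi(x'')$ and $x'=\pi'(x'')$. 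Fixing a trivialization of the $\CG$-torsor $x^*\sP_\eK$ and propagating it through $\iota$ to a trivialization of $x'^*\sP_{\eK'}$ yields a common representative $b\in G(\breve\BQ_p)$ for the $\sigma$-conjugacy classes $[b_x]$ and $[b_{x'}]$.

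Property c) of Conjecture \ref{par812} now supplies isomorphisms
\[
\Theta_x\colon \wh{\CM^{\rm int}_{\CG,b,\mu}}_{/x_0}\xrightarrow{\sim} (\wh{\sS_\eK}_{/x})^\diam,\qquad
\Theta_{x'}\colon \wh{\CM^{\rm int}_{\CG',b,\mu}}_{/x'_0}\xrightarrow{\sim} (\wh{\sS_{\eK'}}_{/x'})^\diam.
\]
Introduce the framing $v$-sheaves $\wt\sS_{/x}$, $\wt\sS'_{/x'}$, $\wt\sS''_{/x''}$ as in the proof of Theorem \ref{uniq}; each is a torsor under the diamond group $\wt G_b$ of \eqref{diamgp}, which depends only on $G$ and $b$ and is therefore the same in all three cases. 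One obtains a diagram analogous to \eqref{Stildediag}, compatible with the natural morphism $\CM^{\rm int}_{\CG,b,\mu}\to \CM^{\rm int}_{\CG',b,\mu}$ from \eqref{functmor}. Proposition \ref{lemma423} identifies the global sections of $\wt G_b$ over $\wh{\sS_\eK}_{/x}$ and over $\wh{\sS_{\eK'}}_{/x'}$ each with $G_b(\BQ_p)$; after correcting $\Theta_x$ and $\Theta_{x'}$ by appropriate elements of $G_b(\BQ_p)$, we obtain an isomorphism $\wh{\sS''}_{/x''}\xrightarrow{\sim}\wh{\sS_\eK}_{/x}$ induced by $\pi$. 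Hence $\pi$ is an isomorphism, and we set $\pi_{\eK,\eK'}:=\pi'\circ\pi^{-1}$; uniqueness is automatic from separatedness of $\sS_{\eK'}$ and density of the generic fiber.

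The main obstacle is that Theorem \ref{functorialThm} cannot be invoked directly: with $(\eG, X)=(\eG', X')$ and $\iota$ the identity, condition \eqref{intersectionCond} would force $\breve \eK_p=\breve \eK'_p$, which fails as soon as $\eK_p\subsetneq \eK_p'$. In particular, $\iota\colon \CG\to \CG'$ is not a dilated immersion, so Proposition \ref{prop332} gives no closed immersion of integral local Shimura varieties. The argument must therefore rest on the rigidity provided by Proposition \ref{lemma423} at the level of formal completions, rather than on any global closed-immersion input.
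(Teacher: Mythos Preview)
Your argument is essentially correct and works at the same level of rigor as the paper's proof of Theorem \ref{uniq}, but the paper takes a very different and much shorter path. Rather than redoing the rigidity argument with two distinct parahorics, the paper observes that Theorem \ref{functorialThm} \emph{does} apply, once one replaces the identity map $\eG\to\eG$ (which, as you correctly note, violates \eqref{intersectionCond}) by the diagonal embedding $\eG\hookrightarrow \eG\times\eG$ with target parahoric $\eK_p\times\eK_p'$. For this embedding, $(\breve\eK_p\times\breve\eK_p')\cap\Delta(G(\breve\BQ_p))=\breve\eK_p\cap\breve\eK_p'=\breve\eK_p$, so \eqref{intersectionCond} holds. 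One then checks that $\sS_\eK\times\sS_{\eK'}$ satisfies Conjecture \ref{par812} for the product datum $(\eG\times\eG,X\times X)$ at level $\eK\times\eK'$, applies Theorem \ref{functorialThm} to obtain an extension $\sS_\eK\to\sS_\eK\times\sS_{\eK'}$, and composes with the second projection.

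The trade-off is this: your approach avoids the auxiliary product Shimura datum and the verification that $\sS_\eK\times\sS_{\eK'}$ satisfies Conjecture \ref{par812}, but at the cost of re-running the formal-completion comparison with two different group schemes, where the diagonal of $\wh{\CM^{\rm int}_{\CG,b,\mu}}_{/x_0}\times\wh{\CM^{\rm int}_{\CG,b,\mu}}_{/x_0}$ in the proof of Theorem \ref{uniq} is replaced by the graph of the functoriality map $\wh{\CM^{\rm int}_{\CG,b,\mu}}_{/x_0}\to\wh{\CM^{\rm int}_{\CG',b,\mu}}_{/x'_0}$. This works because that graph still projects isomorphically onto the first factor, and because $\wt G_b$ depends only on $(G,b)$, not on the integral model $\CG$. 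The paper's trick is cleaner because it packages exactly this graph argument into a single invocation of Theorem \ref{functorialThm}.
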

   \begin{proof}
  We will apply Theorem \ref{functorialThm} to $(\eG', X')=(\eG\times \eG, X\times X)$ and $\iota: \eG\to \eG'$ the diagonal embedding. Also, we take the parahoric of the target group $\eG(\BQ_p)\times\eG(\BQ_p)$ to be $\eK_p\times \eK'_p$. The intersection of $\breve\eK_p\times\breve\eK'_p$ with the diagonal is $\breve\eK_p\subset \eG(\breve\BQ_p)$, so (\ref{intersectionCond}) holds. We can see that $\sS_{\eK}\times \sS_{\eK'}$ is the (unique by Theorem \ref{uniq}) integral model 
 for ${\rm Sht}_{\eK\times \eK'}(\eG\times \eG, X\times X)$ with the  properties enumerated in Conjecture \ref{par812}. By Theorem \ref{functorialThm} the diagonal morphism extends to
  \[
  \sS_{\eK}\to \sS_{\eK}\times \sS_{\eK'}.
  \]
 This, composed with the projection, induces the  desired map $\pi_{\eK, \eK'}\colon\sS_{\eK}\to \sS_{\eK'}$.
 \end{proof}

We now give an outline of the proof of Theorem \ref{functorialThm}. This proof will be completed in \S\ref{complFunct}
after we first give the argument in the special case $(G', X')=({\rm GSp}(V), S^{\pm})$. 

Let $\sS_{\eK}^{\dagger}$ be the normalization of the closure of the image of ${\rm Sh}_{\eK}(\eG, X)_E$ in $\sS'_{\eK'}\otimes_{O_{E'}}O_{E}$. 
 This is an integral model of ${\rm Sht}_{\eK}(G, X)_E$ and comes with a morphism
 \[
\iota: \sS_{\eK}^{\dagger}\to \sS'_{\eK'}\otimes_{O_{E'}}O_E
 \]
which extends the natural morphism on the generic fibers. It will be enough to show that  $\sS^\dagger_{\eK}$ satisfies the  properties listed in Conjecture \ref{par812}. Indeed, then by Theorem \ref{uniq}, $\sS^\dagger_{\eK}=\sS_{\eK}$. Note that in the case when $(G', X')=({\rm GSp}(V), S^{\pm})$ is a Siegel Shimura datum, $(G, X)\hookrightarrow ({\rm GSp}(V), S^{\pm})$ is a Hodge embedding. Then proving these properties for the normalization  $\sS_{\eK}^\dagger$ gives the construction of an integral model as in Conjecture \ref{par812} in the Hodge type case, i.e. gives the proof of Theorem \ref{mainhodge}, as in \S\ref{ss:extofshthodge}, \S\ref{ss:complhodge}, below. In what follows, we omit subscripts and write $\sS:=\sS_{\eK}$, $\sS':=\sS_{\eK'}$, $\sP:=\sP_{\eK}$, etc.

{\sl Step  A.} We will first show that the $\CG$-shtuka $\sP_{E}$ over ${\rm Sht}_\eK(G, X)_E$ extends to a $\CG$-shtuka $\sP^\dagger$ over  $\sS^\dagger$, compatibly with the pull-back $\iota^*(\sP')$ of the $\CG'$-shtuka over $\sS$. ``Compatibly" here is meant in the sense that there is an isomorphism of $\CG'$-shtuka over $\sS^\dagger$,
\[
\CG'\times^{\CG}\sP^\dagger\simeq \iota^*(\sP') ,
\]
  i.e. also respecting the Frobenius structures. Equivalently, we can think of $\sP^\dagger$ as a shtuka obtained 
 from $\iota^*(\sP')$ by reducing the structure group from $\CG'$ to $\CG$ via $\CG\to \CG'$.
 The proof of the existence of this extension of $\sP_E$ to $\sP^\dagger$ will be explained in \S\ref{compl481}.
\smallskip
 
{\sl Step B.}  We will next  show that the restriction of the $\CG$-shtuka $\sP^\dagger$ to the formal completion of 
 any point $x$ of $\sS^\dagger(k)$ admits a framing; this provides  a morphism of $v$-sheaves,
 \[
 \widehat{{\sS^\dagger}}_{/x}\xrightarrow{\ \ } \widehat{\CM^{\rm int}_{\CG, b_x,\mu}}_{/x_0}.
 \]
 This will provide the inverse of the desired isomorphism $\Theta_x$ in Conjecture \ref{par812}. This will be explained in  \S\ref{compl482}. 
\smallskip
 
 The rest of the proof then closely follows the proof of Theorem \ref{mainhodge}, which corresponds to the special case
 $(G', X')=({\rm GSp}(V), S^{\pm})$, see \S\ref{complFunct}.

 \subsection{A conjectural prismatic refinement}\label{ss:prism}
We conjecture that the integral models $\sS_\eK$ also support an object of prismatic nature which suitably refines the universal $\CG$-shtuka $\sP_\eK$. We will now try to make this more precise. To ease the notation, we omit the subscript $\eK$. 

Recall  that according to Bhatt and Scholze (\cite{BSPrism}),  a  ``prism"  is a pair $(A, I)$ where $A$ is a $\delta$-ring and $I\subset A$ is an ideal defining a Cartier divisor in $\Spec(A)$, such that
\begin{itemize}
\item[(1)] The ring $A$ is derived $(p, I)$-adically complete.

\item[(2)] The ideal $I+\phi_A(I)A$ contains $p$, where $\phi_A(x)=x^p+p\delta_A(x)$ is the Frobenius lift $\phi_A: A\to A$
induced by the $\delta$-structure of $A$.
\end{itemize}
A map $(A, I)\to (B, J)$ of prisms is a map of $\delta$-rings $A\to B$ taking $I$ to $J$. Then, by \cite[Prop. 1.5]{BSPrism}, one has $J=IB$.

Consider the big prismatic site $(\wh\sS)_\pri$ of the $p$-adic formal scheme $\wh\sS$ given by the $p$-adic completion of $\sS$. This is the opposite of the category of pairs $((A, I), x)$ of prisms $(A, I)$ together with a map $x: \Spf(A/I)\to \wh\sS$, endowed with faithfully flat covers, as defined in \cite{BSPrism}. We have the structure sheaf  
of rings $\CO_\pri$ on $(\wh\sS)_\pri$ taking a pair $((A, I), x)$ to $A$. This admits a Frobenius   
$\phi_{\pri}: \CO_\pri\to \CO_\pri$ given by the Frobenius lifts $\phi_A: A\to A$. 
There is also the sheaf of rings $\bar\CO_{\pri}$ taking $((A, I), x)$
to $A/I$ and the sheaf of ideals $I_\pri\subset \CO_\pri$ taking $((A, I), x)$ to $I$.

 Our object, which one might call a \emph{prismatic Frobenius crystal with $\CG$-structure} over $\wh\sS$, should be a pair $(\sP_{\pri}, \phi_{\sP_{\pri}})$ of
\begin{itemize} 
\item[a)] a $\CG\otimes_{\BZ_p}\CO_{\pri}$-torsor $\sP_{\pri}$ over  $(\wh\sS)_{\pri}$, 

\item[b)]  an isomorphism
 \[
 \phi_{\sP_{\pri}}: (\phi^*_\pri\sP_{{\pri}})_{|\Spec(\CO_\pri)\setminus V(I_\pri)}\xrightarrow{\sim} {\sP_{\pri }}_{|\Spec(\CO_\pri)\setminus V(I_\pri)}.
 \]
 \end{itemize}
 
 More concretely, it should assign to each $\wt x=((A, I), x\colon \Spf(A/I)\to \wh\sS)$ a pair $ (\sP_{\wt x}, \phi_{\sP_{\wt x}})$, 
where $\sP_{\wt x}$ is a $\CG$-torsor over $\Spec(A)$ and where
\[
\phi_{\sP_x} : (\phi^*_A(\sP_x))_{|\Spec(A)\setminus V(I)}\xrightarrow{\sim} (\sP_x)_{|\Spec(A)\setminus V(I)}
\]
is a $\CG$-isomorphism, together with compatible functorial base change isomorphisms for maps of prisms $(A, I)\to (B, J)$
with commutative diagrams
\begin{equation*}
  \begin{aligned}
   \xymatrix{
         \Spf(B/J) \ar[r] \ar[rd]_{y} & \Spf(A/I) \ar[d]_{x} \\
    & \wh\sS.
        }
        \end{aligned}
    \end{equation*}
    
Let $R^{\sharp+}$ be an integral perfectoid $O_E$-algebra with a map $x^+: \Spf(R^{\sharp+})\to \wh\sS$. Denoting by  $R^{+}$  the tilt 
of $R^{\sharp+}$, then $(W(R^{+}), \ker(W(R^{+})\to R^{\sharp+}))$ is a (perfect) prism. We can then evaluate 
$(\sP_{\Prism}, \phi_{\sP_{\pri}})$ at the point $\wt x^+$ given by this prism together with $x^+$. 
Suppose now that $R^{\sharp +}$
is part of an affinoid perfectoid pair $(R^\sharp, R^{\sharp +})$. Then $x^+$ gives 
\[
x: \Spa(R^\sharp, R^{\sharp +})\to \wh\sS^{\rm ad},
\]
hence a $\Spa(R, R^+)$-point $x$ of the $v$-sheaf $\sS^\sdiam/\Spd(O_E)$. 

We ask that 
$(\sP_{\pri}, \phi_{\sP_{\pri}})$ refines the $\CG$-shtuka $(\sP, \phi_\sP)$ in the following sense: for all such choices of 
$(R^\sharp, R^{\sharp +})$ and $x$, there is
an isomorphism between the pull-back of $ (\sP_{\wt x^+}, \phi_{\sP_{\wt x^+}})$ along
\[
\CY_{[0,\infty)}(R, R^+)\to \Spec(W(R^+))
\]
and the value of the shtuka $(\sP, \phi_\sP)$ at $x$.  Furthermore, these isomorphisms are supposed to be compatible
with the base change isomorphisms. 

In the case of PEL Shimura varieties, one should be able to obtain $(\sP_{\pri}, \phi_{\sP_{\pri}})$ 
from the prismatic cohomology of the universal abelian scheme $\CA\to \sS$: the value at $\wt x=((A, I), x\colon \Spf(A/I)\to \wh\sS)$
should be the pair $ (\sP_{\wt x}, \phi_{\sP_{\wt x}})$ where $\sP_{\wt x}$ is given by appropriate frames of
\[
{\rm H}^1_\pri(\wh\CA\times_{\wh\sS, x}\Spf(A/I)/A):= {\rm H}^1_\pri((\wh\CA\times_{\wh\sS, x}\Spf(A/I)/A)_\pri, \CO_\pri),
\]
and $\phi_{\sP_{\wt x}}$ is  induced by the $\phi_A$-linearization of its Frobenius map.

   \subsection{Shimura varieties of  Hodge type}\label{ss:shimhodge}
  In this subsection, we explain the class of Shimura varieties  for which we can prove Conjecture \ref{par812}.   
 \subsubsection{Shimura data of Hodge type} Fix a $\BQ$-vector space $V$ with a perfect alternating pairing $\psi.$ 
For any $\BQ$-algebra $R,$ we write $V_R = V\otimes_{\BQ}R.$ 
Let $\GSp = \GSp(V,\psi)$ be the corresponding group of symplectic similitudes, and let $S^{\pm}$ be the Siegel 
double space, defined as the set of maps $h: \mathbb S \rightarrow \GSp_{\BR}$ such that 
\begin{enumerate}
\item The $\BC^\times$-action on $V_{\BR}$ gives rise to a Hodge structure
\begin{equation}
V_{\BC} \simeq V^{-1,0} \oplus V^{0,-1}
\end{equation}  of type $(-1,0), (0,-1)$. 
\item $(x,y) \mapsto \psi(x, h(i)y)$ is (positive or negative) definite on $V_{\BR}.$
\end{enumerate}
The Shimura datum $(\eG, X)$ is of {\sl Hodge type} if there is a symplectic faithful representation 
 $\rho: \eG\hookrightarrow {\rm {GSp}}(V, \psi)$ inducing an embedding of Shimura data
 \begin{equation}
i\colon  (\eG, X)\hookrightarrow ({\rm {GSp}}(V, \psi), S^{\pm}).
 \end{equation}

\begin{definition}\label{pglobHodgetype}
  
 Let $p$ be a prime number. The tuple $(p, \eG, X, \eK)$, with the open compact subgroup $\eK=\eK^p\eK_p$, is of \emph{global Hodge type} if the following conditions are satisfied. 
 
 \begin{itemize}
  \item[1)] $(\eG, X)$ is a Shimura datum of Hodge type. 
 \item[2)] $\eK_p=\CG(\BZ_p)$, where $\CG$ is the Bruhat-Tits stabilizer group scheme 
 $\CG_x$ of a point $x$ in the extended Bruhat-Tits building of $G(\BQ_p)$ and $\CG$ is connected, i.e., we have
 $
\CG=
\CG_x=
\CG_x^\circ$. 
 \end{itemize}
 \end{definition} 
 
 \subsubsection{Integral models in the Hodge type case}\label{ss:KPextension} We now show how to construct integral models $\sS_\eK$ as in Conjecture \ref{par812}, when $(p, \eG, X, \eK)$ is of global Hodge type. We start with a Hodge embedding 
$ i\colon \eG\hookrightarrow {\rm {GSp}}(V, \psi)$.   We can then find a parahoric group scheme $\CH$ for the symplectic similitude group 
 $H={\rm {GSp}}(V_{\BQ_p}, \psi_{\BQ_p})$ such that
 there is a  homomorphism of group schemes over $\BZ_p$ 
 \begin{equation}\label{GtoGL}
 \iota\colon  
\CG\xrightarrow{\delta} \bar\CG \hookrightarrow \CH ,
 \end{equation}
 which is a dilated immersion and extends the closed embedding in the generic fiber,
\begin{equation*}
  G_{\BQ_p}\hookrightarrow {\rm {GSp}}(V_{\BQ_p}, \psi_{\BQ_p})\subset {\rm {GL}}(\prod_i (V_{\BQ_p}\oplus V_{\BQ_p})) ,
\end{equation*} 
see \cite[4.1.5]{KP} and Lemma \ref{groupdil}, comp. \S\ref{parconstruction371}. Here, $\CH$ is the parahoric group scheme ${\rm GSp}(\Lambda_\bullet )$ given by the stabilizer of some
 periodic self-dual $\BZ_p$-lattice chain 
 \[
 \Lambda_\bullet : \cdots \subset p\Lambda_0\subset \Lambda_{r} \subset \Lambda_{r-1}\subset\cdots \subset \Lambda_0\subset \Lambda_0^\vee\subset \cdots\subset \Lambda_{r-1}^\vee\subset \Lambda_r^\vee\subset p^{-1}\Lambda_0\subset p^{-1}\Lambda^\vee_0\subset\cdots
 \] 
Then
\[
 {\rm GSp}(\Lambda_\bullet )\hookrightarrow \prod_{i=0}^r(\GL(\Lambda_i)\times \GL(\Lambda^\vee_i)) ,
 \]
 is a closed group immersion.
 
 For $\Lambda_i$ in the lattice chain $\Lambda_\bullet$, let $V_{i, \BZ_{(p)}} = \Lambda_i\cap V,$ and fix a $\BZ$-lattice $V_{i, \BZ} \subset V$ such that $V_{i, \BZ}\otimes_{\BZ}\BZ_{(p)} = V_{i, \BZ_{(p)}}$. 
 Set $V_{\rm sum}=\prod_{i=0}^r (V\oplus V)$ and
 \[
 V_{{\rm sum}, \BZ_{(p)}}=\prod_{i=0}^r V_{i, \BZ_{(p)}}\oplus V^\vee_{i, \BZ_{(p)}}\subset V_{\rm sum}.
 \]
Consider the Zariski closure $G_{\BZ_{(p)}}$ of $\eG$ in $
\GL( V_{{\rm sum}, \BZ_{(p)}})$; then $G_{\BZ_{(p)}}\otimes_{\BZ_{(p)}}\BZ_p\cong \bar\CG$. Fix a collection of  tensors $(s_{a}) \subset V_{{\rm sum}, \BZ_{(p)}}^\otimes$ 
whose stabilizer is $G_{\BZ_{(p)}}.$  This is possible by the improved\footnote{in the sense that one does not need the symmetric and alternating tensors used in \cite[Prop. 1.3.2]{KisinJAMS}.} version of \cite[Prop. 1.3.2]{KisinJAMS} given in \cite{DeligneLetter}.
Finally, set
\begin{equation}\label{defLambda}
\Lambda:=V_{{\rm sum}, \BZ}\otimes_{\BZ_{(p)}}\BZ_p=\prod_{i=0}^r\Lambda_i \oplus\Lambda_i^\vee.
\end{equation}
We have $\eK_p = 
\CG(\BZ_p),$ and we set $\eK^\flat_p = \CH(\BZ_p)=\mathrm{GSp}(\Lambda_\bullet)(\BZ_p)=\mathrm{GSp}(V_{\BQ_p})\cap \GL(\Lambda).$  By  \cite[Lem. 2.1.2]{KisinJAMS}, for any compact open subgroup $\eK^p \subset \eG(\BA^p_f)$ there exists 
$\eK^{\flat p} \subset \GSp(\BA^p_f)$ such that $i$ induces an embedding over $\sf E$
\begin{equation}\label{ShGtoSiegel}
i\colon{\rm Sh}_{\eK}(\eG,X)_\eE \hookrightarrow {\rm Sh}_{\eK^\flat}(\GSp(V, \psi), S^{\pm})_\BQ\otimes_\BQ{\sf E},
\end{equation}
where $\eK^\flat=\eK^\flat_p\eK^{\flat p}$.

The choice of lattices $V_{i, \BZ}$ gives rise to an interpretation of the Siegel Shimura variety ${\rm Sh}_{\eK^\flat}(\GSp, S^{\pm})_\BQ$ as a moduli scheme of chains of $p$-isogenies between 
polarized abelian varieties $A_i$ with $\eK^{\flat p}$-level structure;
 this extends to $\CA_{\eK^\flat}$ over $\BZ_{(p)}$ 
 (see \cite{KisinJAMS}, \cite{KP}). Denote by $\BL_\eK$ the  local system given by the Tate module of the $p$-divisible group of the product $A=\prod_{i=0}^r A_i\times A_i^\vee$ of the universal abelian schemes over ${\rm Sh}_{\eK^\flat}(\GSp, S^{\pm})_\BQ\otimes_\BQ{\sf E}$ restricted to ${\rm Sh}_{\eK}(\eG,X)_\eE$. 
 
 Recall ${\rm Sh}_{\eK}(\eG,X)_E={\rm Sh}_{\eK}(\eG,X)_\eE\otimes_\eE E$. The tensors 
 $s_a\in \Lambda^\otimes$ define, using the compatibility between Betti cohomology and  \'etale cohomology, corresponding global sections $t_{a, \et}$ of $\BL_\eK$ over ${\rm Sh}_{\eK}(\eG,X)_E$, comp. \cite[\S 2.2]{KisinJAMS} or \cite[\S 6.5]{Zhou}.
 The pro-\'etale torsor $\BP_\eK$ under $\eK_p=\CG(\BZ_p)$ is given by 
  \[
\underline{\rm Isom}_{(t_{a, \et}), (s_a)}(\BL_\eK, \Lambda_{ {\rm Sh}_{\eK}(\eG,X)}),
 \]
 i.e., $\BP_\eK$ is the torsor of  trivializations of $\BL_\eK$ that
 respect the tensors.

We denote by $\sS^-_{\eK}(\eG,X)$ the (reduced) closure of 
${\rm Sh}_{\eK}(\eG,X)_E$ in the $O_{E}$-scheme 
$\CA_{\eK^\flat}\otimes_{\BZ_{(p)}}O_{E}$. Then the integral model $\sS_{\eK}(\eG,X)$  is defined to be the normalization of $\sS_{\eK}(\eG,X)^-$, comp.   \cite{KP}.
For simplicity of notation, we set 
\begin{equation*}
\sS_{\eK}:=\sS_{\eK}(\eG,X) .
\end{equation*}  
 The morphism \eqref{ShGtoSiegel} extends to a finite morphism
\begin{equation}\label{intHodgemb}
i\colon\sS_{\eK}\to \CA_{\eK^\flat} \otimes_{\BZ_{(p)}}O_{E}.
\end{equation}
We also set 
\begin{equation*}\label{intmodlimit}
\sS_{\eK_p}:=\sS_{\eK_p}(G, X)=\varprojlim\nolimits_{\eK^p}\sS_{\eK^p\eK_p}(G, X). 
\end{equation*}
Again, we can see that the transition maps are finite \'etale and so the limit exists.
 \begin{theorem}\label{mainhodge}
Let $(p, \eG, X, \eK)$ be of global Hodge type. Then the system of integral models $\sS_{\eK}$ constructed above satisfies the properties a)--c) of Conjecture \ref{par812}. In particular (by  uniqueness, Theorem \ref{uniq}), the system $\sS_{\eK}$ is independent of the Hodge embedding used in its construction. Furthermore, for any $x\in \sS_\eK(k)$, the integral local Shimura variety $\CM^{\rm int}_{\CG, b_x, \mu}$ satisfies Conjecture \ref{conjtubeMint} at the base point $x_0\in \CM^{\rm int}_{\CG, b_x, \mu}(k)$, i.e., $\widehat{{\CM}^{\rm int }_{\CG, b_x, \mu}}_{/x_0}$ is representable.
\end{theorem}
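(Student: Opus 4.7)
The construction of $\sS_\eK$ as the normalization of the Zariski closure of ${\rm Sh}_\eK(\eG,X)_E$ inside $\CA_{\eK^\flat}\otimes_{\BZ_{(p)}}O_E$ provides a finite morphism $i\colon \sS_\eK\to \CA_{\eK^\flat}\otimes_{\BZ_{(p)}} O_E$. The plan is to verify properties (a), (b), (c) of Conjecture \ref{par812} for this $\sS_\eK$ in order. Independence of the Hodge embedding will then follow from Theorem \ref{uniq}, and representability of $\widehat{\CM^{\rm int}_{\CG,b_x,\mu}}_{/x_0}$ is immediate from (c), since the isomorphism $\Theta_x$ identifies it with $(\widehat{\sS_{\eK_{/x}}})^\diam=\Spd(\widehat{\CO}_{\sS_\eK,x})$, and $\widehat{\CO}_{\sS_\eK,x}$ is a normal complete Noetherian local ring. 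For property (a), starting from $y_\eta\in(\varprojlim_{\eK^p}{\rm Sh}_\eK(\eG,X)_E)(R[1/p])$ for $R$ a dvr of characteristic $(0,p)$ over $O_E$, the image under the Hodge embedding in the Siegel tower extends to an $R$-point of $\varprojlim_{\eK^{\flat p}}\CA_{\eK^\flat}$ by the N\'eron--Ogg--Shafarevich criterion: Liu--Zhu \cite{LZ} gives that $\BP_\eK$ is de Rham, so the Galois action on the Tate module of the universal abelian variety is crystalline, forcing good reduction and extension of all prime-to-$p$ level structures. By normality of $R$, this $R$-point factors uniquely through $\varprojlim_{\eK^p}\sS_\eK$. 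Properness when ${\rm Sh}_\eK$ is proper follows from the valuative criterion together with (\ref{extprop}); prime-to-$p$ Hecke correspondences are inherited from the Siegel tower.

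For property (b), the universal $p$-divisible group $\sG$ over $\CA_{\eK^\flat}$ defines an $\CH$-shtuka $\CE(\sG)$ as in Example \ref{pdivExample}, and I set $\sQ_\eK:=i^*\CE(\sG)$, an $\CH$-shtuka on $\sS_\eK$. The tensors $s_a\in\Lambda^\otimes$ correspond via Betti/$p$-adic \'etale comparison to global sections $t_{a,\et}$ of $\BL_\eK^\otimes$ cutting out the pro-\'etale $\CG(\BZ_p)$-torsor $\BP_\eK$; the $\CG$-version of Proposition \ref{uniqueHT} underlying Definition \ref{LSGShtuka} produces from them Frobenius-equivariant global tensor sections $t_{a,\sP}$ of $\sQ_{\eK,E}^\otimes$ whose joint stabilizer is $\CG\subset\CH$, realizing $\sP_{\eK,E}$ via $\CH\times^\CG\sP_{\eK,E}\simeq \sQ_{\eK,E}$. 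Applying Theorem \ref{vshtExt} to the morphism of shtukas from the unit shtuka to $\sQ_\eK^\otimes$, each $t_{a,\sP}$ extends uniquely to a Frobenius-equivariant section over all of $\sS_\eK$. The remaining task --- showing that these extended tensors cut out a sub-$\CG$-torsor $\sP_\eK$ of $\sQ_\eK$ --- is a pointwise question at $x\in\sS_\eK(k)$, resolved by using Ansch\"utz's theorem \cite{An} (as in Proposition \ref{AnExtension}) to extend the shtuka $\sQ_\eK$ at $x$ to an $\CH$-BKF-module over $W(k)$, in which the extended tensors define a $\bar\CG$-subtorsor by flatness of $\sS_\eK$, which in turn yields the desired $\CG$-reduction via $\CG=\bar\CG^{\rm sm}$ (Lemma \ref{groupdil}) and normality. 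This produces the universal $\CG$-shtuka $\sP_\eK$ on $\sS_\eK$.

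For property (c), fix $x\in\sS_\eK(k)$ with associated $b_x\in G(\breve\BQ_p)$. The restriction $\sP_\eK|_{(\widehat{\sS_{\eK_{/x}}})^\diam}$ admits a canonical framing $i_r$ with Frobenius identified with $\phi_{b_x}$: crystalline Dieudonn\'e theory over the formal completion $\widehat{\CA_{\eK^\flat/i(x)}}$ provides such a framing of $\sQ_\eK$ (this is the classical Serre--Tate/Grothendieck--Messing mechanism underlying the RZ identification of \cite[Cor.~24.3.5]{Schber}), and the tensors $t_{a,\sP}$ reduce it to a framing of $\sP_\eK$. The framed shtuka defines a morphism of $v$-sheaves $\Psi_x\colon(\widehat{\sS_{\eK_{/x}}})^\diam\to \CM^{\rm int}_{\CG,b_x,\mu}$, factoring through $\widehat{\CM^{\rm int}_{\CG,b_x,\mu}}_{/x_0}$ (the special fiber of $\widehat{\sS_{\eK_{/x}}}$ is the single point $\{x\}$, mapping to $x_0$ by the compatibility of specialization maps, Theorem \ref{Gleason}). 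Post-composing $\Psi_x$ with the closed immersion $\rho$ of Proposition \ref{prop332} and using that on the Siegel side the analogous map $(\widehat{\CA_{\eK^\flat/i(x)}})^\diam\to\widehat{\CM^{\rm int}_{\CH,i(b_x),i(\mu)}}_{/i(x)_0}$ is an isomorphism (by \cite[Cor.~24.3.5]{Schber}), a closedness/normality diagram-chase shows $\Psi_x$ itself is an isomorphism. Setting $\Theta_x:=\Psi_x^{-1}$ gives the required identification, with $\Theta_x^*\sP_\eK$ the tautological shtuka by construction.

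The main obstacle is the integral part of property (b): the tensors $t_{a,\sP}$, which cut out a $\CG$-reduction over the generic fiber via the de Rham/pro-\'etale picture, must continue to do so after their extension to $\sS_\eK$ by Theorem \ref{vshtExt}. At a characteristic-$p$ point the fiber of $\sQ_\eK$ is a Dieudonn\'e crystal with no a priori reduction of structure group, and one must control the dimension of the stabilizer of the extended tensors across the Newton and KR strata. The crucial inputs are Ansch\"utz's triviality theorem for $\CG$-torsors on punctured spectra of $W(O_C)$, which lets one pass from a shtuka to an honest $\CH$-BKF-module and to test the reduction of structure group there, and the smoothening identification $\CG=\bar\CG^{\rm sm}$, which reduces the $\CG$-structure question to one about $\bar\CG=\mathrm{Stab}_\CH(s_a)$ and flat closures over the normal base $\sS_\eK$. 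Once this obstacle is cleared, properties (a), (c), the Hodge-embedding independence, and the representability of $\widehat{\CM^{\rm int}_{\CG,b_x,\mu}}_{/x_0}$ follow by the considerably more standard arguments sketched above.
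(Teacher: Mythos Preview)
Your overall architecture matches the paper's, but two key steps contain genuine gaps.

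The serious one is in (c). You assert that the Siegel quasi-isogeny framing of the $\CH$-shtuka $\sQ_\eK$ over $\Spd(\hat R_x)$ is ``reduced by the tensors $t_{a,\sP}$'' to a framing of $\sP_\eK$. This does not follow: the quasi-isogeny preserves the crystalline tensors at the closed point by construction, but there is no a priori reason it does so over all of $\Spd(\hat R_x)$. Establishing this is exactly Proposition~\ref{prop451}, the technical heart of (c). The obstruction to the $\CH$-framing respecting the $\CG$-structure is a section over $\Spd(\hat R_x)$ of a quotient $v$-sheaf $\wt Q_b^{>0}$ built from $\sH_b^{\geq 0}/\sG_b^{\geq 0}$ on the Fargues--Fontaine curve (Lemma~\ref{lemma452}); this is a successive extension of positive absolute Banach--Colmez spaces $\bfB(\lambda)$. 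Because $[b]\in B(G,\mu^{-1})$ with $\mu$ minuscule, all relevant slopes satisfy $0<\lambda\le 1$ (Lemma~\ref{boundonlambda}), and the computation $\bfB(\lambda)(\hat R_x)=0$ for such $\lambda$ (Lemma~\ref{BCzero}, using that $\hat R_x$ is complete Noetherian local) kills each graded piece. Without this argument $\Psi_x$ is not even defined and your diagram chase cannot start. Once $\Psi_x$ exists, the paper concludes via the closed immersion of Proposition~\ref{prop332} together with connectedness and topological flatness of $(\wh{\CM^{\rm int}_{\CG,b,\mu}}_{/x_0})_\eta$ from Proposition~\ref{tubestruc}, not merely ``normality''.

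There is a secondary gap in (b). That the extended tensors cut out a $\CG$-torsor is not ``a pointwise question at $x\in\sS_\eK(k)$'' resolved ``by flatness of $\sS_\eK$'': one must show the sheaf $\bar\sT(\sV,(t_a))$ of tensor-preserving frames has sections $v$-locally over $\CY_{[0,\infty)}(S)$ for every perfectoid $S\to\sS_\eK^\diam$, not just at $k$-points. The paper's \S\ref{ext463} does this in three steps. Step~1 (Lemma~\ref{GFisoLemma}) shows, for perfect $k$-algebras $R$, that the crystalline tensors define a $G$-torsor over $W(R)[1/p]$: the argument lifts a geometric point to characteristic~$0$, invokes the \'etale--crystalline comparison to see the tensors are ``correct'' at that fiber, and then spreads to all of $R$ via a Tannakian fiber-functor argument. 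Step~2 treats $(C,C^+)$ of characteristic~$0$ by gluing the $G$-torsor from Step~1 (over $W(\bar C^+)[1/p]$), the generic-fiber $\CG$-torsor over $\CY_{[0,\infty)}$, and Ansch\"utz's extension over $A_{\rm inf}(O_C)$, using arc-descent for $G$-torsors. Step~3 handles characteristic-$p$ points by lifting. Your ``flatness of $\sS_\eK$'' does not substitute for the comparison-theorem input in Step~1, which is what actually forces the integral tensors to cut out a torsor rather than an empty scheme.
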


We note that property a) (the extension property \eqref{extprop}) is very simple: for the Siegel model $\CA_{\eK^\flat}$ it holds by the N\'eron-Tate-Shafarevich criterion of good reduction, and this implies the extension property for $\sS_\eK$ by its definition as the normalization of the closure of the generic fiber in $\CA_{\eK^\flat}\otimes_{\BZ_{(p)}}O_E$. 
Property b) is proved in Subsection \ref{ss:extofshthodge}. Property c) is proved in Subsection \ref{ss:complhodge}, which also contains the representability of  $\widehat{{\CM}^{\rm int}_{\CG, b_x, \mu}}_{/x_0}$.

\subsection{Extension of shtukas}\label{ss:extofshthodge}In this subsection we give the construction of the extension of the $\CG$-shtuka $\sP_{\eK, E}$ in part b) of Conjecture \ref{par812} for the integral models $\sS_\eK$ of the last subsection. For simplicity of notation, we write $\sS=\sS_{\eK_p}$. Denote by $\wh\sS=\widehat{\sS}_{\eK_p}$ the formal scheme given by the $p$-adic completion of $\sS_{\eK_p}$.

\subsubsection{Torsors and tensors}\label{sss:torsorstensors}
We refer to \cite[App. to \S 19]{Schber} for a discussion of the various notions of a ``$\CG$-torsor".
Recall from Section \ref{ss:shimhodge} that we have
\[
\CG\xrightarrow{\delta} \bar\CG\hookrightarrow \GL(\Lambda).
\]
In this, $\delta$ is the group smoothening (a dilation) which is the identity on generic fibers and $\iota : \bar\CG\hookrightarrow \GL(\Lambda)$
is a closed immersion which
 realizes $\bar\CG$ as the {\cmag stabilizer} of a finite family of tensors $(s_a)\subset \Lambda^\otimes$, $a\in I$, i.e.
\[
\bar\CG(A)=\{g\in \GL(\Lambda\otimes_{\BZ_p}A)\ |\ g\cdot s_a=s_a, \forall a\in I\}
\]
for every $\BZ_p$-algebra $A$. Note that it follows immediately from the construction of the group smoothening 
 that $\delta$ gives a bijection
$
\CG(W(\kappa))\xrightarrow{\sim}\bar\CG(W(\kappa))
$,
for every perfect field $\kappa$, cf. \cite[3.1., Def. 1]{BLR}. {\cmag In fact, we have:
\begin{lemma}\label{BLRLemma} 
The dilation $\delta:\CG\to\bar\CG$ gives a bijection
\begin{equation}\label{barcg}
\delta(W(R)): \CG(W(R))\xrightarrow{\sim}\bar\CG(W(R)),
\end{equation}
for every perfect $\BF_p$-algebra $R$.
\end{lemma}

\begin{proof}
Both $\CG$ and $\bar\CG$ are affine group schemes of finite type over $\BZ_p$. Write $A=\Gamma(\CG, \CO_{\CG})$, $\bar A=\Gamma(\bar\CG, \CO_{\bar\CG})$ for the corresponding affine coordinate rings. The dilation $\delta$ induces an injection $\bar A\subset A$ with $A[1/p]=\bar A[1/p]$ and we can write $A=\bar A[f_1,\ldots, f_r]$, for some $f_1,\ldots, f_r\in A$. There are $n_i\geq 1$ such that $p^{n_i}\cdot f_i\in \bar A$. Since $p$ is not a zero divisor in $W(R)$ we quickly obtain that $\delta(W(R))$ is injective and it remains to show surjectivity: A $W(R)$-point of $\bar \CG$ is given by $\bar h: \bar A\to W(R)$ and we want to show that $\bar h$ extends to $h: A=\bar A[f_1,\ldots ,f_r]\to W(R)$. 
We have $\bar h: A\to W(R)[1/p]$ and $p^{n_i} \bar h(f_i)\in W(R)$. The existence of $h$ follows if, for all $i$, the element $\bar h(p^{n_i}f_i)$ lies in $p^{n_i} W(R)$; then $\bar h: A\to W(R)[1/p]$ takes values in the subring $W(R)\subset W(R)[1/p]$ and gives the desired extension $h$. 
Since $R$ is perfect, $p^nW(R)=V^nW(R)=\{(0,\ldots, 0, r_{n+1}, r_{n+2}, \ldots )\ |\ r_i\in R\}$. This implies that 
an element of $W(R)$ belongs to $p^nW(R)$ if and only if this happens after base change to all (perfect) residue fields $\alpha: R\to \kappa$. Since by the above, $\delta(W(\kappa))$ is a bijection for all perfect fields $\kappa$, $\alpha(\bar h(p^{n_i}f_i))\in p^{n_i}W(\kappa)$ for all such $\alpha$ and the result follows.
\end{proof}

}

Let $S$ be a scheme, or an adic space over $\BZ_p$. Let $\sP$ be a $\CG$-torsor over $S$. Then by the Tannakian formalism, the representation $\CG\rightarrow \GL(\Lambda)$ induces a vector bundle $\sV$ over $S$. The tensors $s_a\in \Lambda^\otimes$ induce corresponding
tensors $t_a\in \sV^{\otimes}(S)$. We can consider the sheaf of tensor-compatible trivializations of $\sV$,
\[
\bar\sT(\sV, (t_a))=\underline{\rm Isom}_{(t_a), (s_a\otimes 1)}(\sV, \Lambda\otimes_{\BZ_p}\CO_{S}) .
\]
This has a natural action of $\bar\CG$ and can be identified with the $\bar\CG$-torsor $\bar\sP$ given as the push-out
$
\bar\sP=\bar\CG\times^{\CG}\sP$
of $\sP$
by $\delta: \CG\to\bar\CG$. Note that, since $\delta[1/p]={\rm id}$, we have 
\[
\bar\sP[1/p]=\sP[1/p].
\]

Conversely, suppose that we are given a vector bundle $\sV$ over $S$
and a collection of tensors $t_a\in \sV^{\otimes}(S)$, $a\in I$, where $t_a$ has the same homogeneity as $s_a$. Then, we can consider the (fppf or \'etale) sheaf
\[
\bar\sT(\sV, (t_a)):=\underline{\rm Isom}_{(t_a), (s_a\otimes 1)}(\sV, \Lambda\otimes_{\BZ_p}\CO_{S})
\]
whose $T$-valued points for $T\to S$ are given by isomorphisms 
\[
f: \sV_T\xrightarrow{\sim}  \Lambda\otimes_{\BZ_p}\CO_{T}
\]
such that $f^{\otimes}(t_a)=s_a\otimes 1$, for all $a\in I$. There is an obvious  left action of $\bar\CG$ on $\bar\sT(\sV, (t_a))$. If $\bar\sT(\sV, (t_a))(T)$ is not empty, then
the action of $\bar\CG(T)$ on the set $\bar\sT(\sV, (t_a))(T)$
is free and transitive, i.e. $\bar\sT(\sV, (t_a))(T)\simeq \bar\CG(T)$. Under certain conditions on $T$, we will have 
$\CG(T)=\bar\CG(T)$, as for example in (\ref{barcg}) above. Then $\bar\sT(\sV, (t_a))(T)$ also acquires a free and transitive action of $\CG(T)$.

 \subsubsection{Variant for Witt vectors}\label{sss:Witt} We will apply the previous remarks not to $\CG$-torsors on $S$ but rather to $\CG$-torsors over $S\bdtimes \BZ_p$ or, more generally, over $\CY_I(S)$.   Let $S=\Spa(R, R^+)\in {\rm Perfd}_k$. If $\sP$ is a $\CG$-torsor over $\CY_I(S)$ (given for a example by a $\CG$-shtuka
 over $S$), we obtain a vector bundle $\sV$ over $\CY_I(S)$, together with tensors $(t_a)$.
 We can consider the sheaf of tensor-compatible trivializations of $\sV$,
\[
\bar\sT(\sV, (t_a))=\underline{\rm Isom}_{(t_a), (s_a\otimes 1)}(\sV, \Lambda\otimes_{\BZ_p}\CO_{\CY_I(S)}) .
\]
This has a natural action of $\bar\CG$ and can be identified with the $\bar\CG$-torsor $\bar\sP$ given as the push-out
$
\bar\sP=\bar\CG\times^{\CG}\sP$
of $\sP$
by $\delta: \CG\to\bar\CG$. 

We will also use the following:

\begin{proposition}\label{propvstack}
Let $R$ be a  perfect  $k$-algebra.
Then   $\CG$-torsors over $W(R)$  form a stack for the $v$-topology on $\Spec(R)$. 
\end{proposition}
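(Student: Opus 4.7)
The plan is to use the Tannakian description of $\CG$-torsors from \S \ref{sss:torsorstensors}, reducing the claim to $v$-descent for vector bundles over $W(R)$ together with descent for the tensor structure. Concretely, since $\CG \hookrightarrow \GL(\Lambda)$ factors through $\bar\CG$ which is cut out by the tensors $(s_a) \subset \Lambda^\otimes$, specifying a $\CG$-torsor on $W(R)$ amounts to specifying a vector bundle $\sV$ of rank $\mathrm{rk}(\Lambda)$ on $\Spec W(R)$ together with tensors $(t_a) \in \sV^\otimes$ such that the sheaf $\bar\sT(\sV,(t_a))$ of tensor-preserving trivializations is an fppf $\bar\CG$-torsor; by the bijection \eqref{barcg} for perfect rings, this is the same as an fppf $\CG$-torsor.

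Granting this Tannakian reduction, the descent of the tensors $t_a$ along a $v$-cover $\Spec R' \to \Spec R$ of perfect $k$-algebras is automatic once one has $v$-descent of the underlying vector bundles $\sV$, since tensor operations commute with pullback. Moreover, the property that $\bar\sT(\sV,(t_a))$ is locally trivial as a $\bar\CG$-torsor on $\Spec W(R)$ is fppf-local on $\Spec W(R)$, and so a fortiori is preserved under $v$-descent along $\Spec R$. The essential content of the proposition is therefore the following: \emph{vector bundles on $\Spec W(R)$ form a stack for the $v$-topology on $\Spec R$}. To prove this, I would reduce modulo $p^n$ via Beauville-Laszlo and $p$-adic completeness of $W(R)$, so it suffices to handle finite projective modules on each truncation $W_n(R)$. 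Here one invokes the result of Bhatt-Scholze \cite{BS} that for perfect $k$-algebras the functor $R \mapsto W_n(R)$ interacts well enough with $v$-covers (it takes a $v$-cover $R \to R'$ to a map which is $v$-effective for descent of finite projective modules) so that fpqc-type descent applies.

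The main obstacle is precisely this $v$-descent for vector bundles on $W(R)$, since $v$-covers of perfect schemes need not be flat, and therefore classical fpqc descent does not apply directly on $\Spec W(R)$. The way around this is the $p$-completely flat behaviour of $W$ on $v$-covers of perfect $k$-algebras established in \cite{BS}, which together with Beauville-Laszlo reduces the statement to ordinary fpqc descent modulo $p$ and its iterates. Once this is in hand, assembling the pieces via the Tannakian dictionary of \S \ref{sss:torsorstensors} completes the proof.
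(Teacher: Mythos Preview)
Your overall strategy---reduce to $v$-descent of vector bundles on $W(R)$ via Tannakian formalism, invoking \cite{BS}---matches the paper's. But the step where you write that the torsor property ``is fppf-local on $\Spec W(R)$, and so a fortiori is preserved under $v$-descent along $\Spec R$'' is a genuine gap, and in fact the ``a fortiori'' points in the wrong direction. A $v$-cover $R\to R'$ of perfect $k$-algebras need not be flat, and the induced map $W(R)\to W(R')$ is not in general an fppf cover. So fppf-locality on $\Spec W(R)$ does \emph{not} imply descent along $W(R)\to W(R')$; it is exactly this non-flat descent that has to be argued directly.

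The paper handles this by using the general Tannakian description: $\CG$-torsors over $W(R)$ are exact tensor functors ${\rm Rep}(\CG)\to\{\text{fin.\ proj.\ }W(R)\text{-modules}\}$ (\cite[Thm.~19.5.1]{Schber}). Descent of the underlying tensor functor comes from \cite{BS}, as you say; what remains is to check that \emph{exactness} descends along $W(R)\to W(R')$. For this the paper applies Lemma~\ref{exactLemma}, after observing that $W(R)\to W(R')$ is injective (since $R\to R'$ is injective, perfect rings being reduced) and that $\Spec W(R')\to \Spec W(R)$ hits every closed point (since the maximal ideals of $W(R)$ all come from $R$, and $\Spec R'\to\Spec R$ is surjective). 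This is the substitute for faithful flatness, and it is the step your sketch is missing. A secondary point: routing through $\bar\CG$-torsors (with $\bar\CG$ possibly non-smooth) and then invoking \eqref{barcg} to get back to $\CG$-torsors introduces an extra complication; the equivalence you need is essentially Lemma~\ref{lemmabarcg}, whose proof \emph{uses} Proposition~\ref{propvstack}. The paper's use of the general Tannakian equivalence for the smooth group $\CG$ sidesteps this.
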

\begin{proof}
By \cite{BS} vector bundles of fixed rank over $W(R)$ form 
 a stack for the $v$-topology on $\Spec(R)$. By the Tannakian equivalence, $\CG$-torsors over $W(R)$ are given by exact tensor functors ${\rm Rep}({\CG})\to \hbox{\rm fin. proj. } W(R)$-modules (\cite[Thm. 19.5.1]{Schber}). Using these facts we observe that it remains to show that if $R\to R'$ is $v$-surjective, a complex
 \[
 M_\bullet: 0\to M_1\to M_2\to M_3\to 0
 \]
 of finite projective $W(R)$-modules is exact if and only if the base change $M_\bullet\otimes_{W(R)}W(R')$ is exact.
 Observe that under our assumption, all maximal ideals of $\Spec(W(R))$ are in the image of $\Spec(W(R'))\to \Spec(W(R))$.
 Also $R\to R'$ is dominant, hence injective (since perfect algebras are reduced) and so $W(R)\to W(R')$ is also injective.
 The result now follows by applying Lemma \ref{exactLemma}. 
  \end{proof}

\begin{lemma}\label{lemmabarcg}
Let $R$ be a  perfect $k$-algebra  and let $\sP$, $\sP'$ be two $\CG$-torsors over $W(R)$, inducing $(\sV, (t_a))$, resp. $(\sV', (t'_a))$.
Let $\phi: \sV\xrightarrow{\sim} \sV'$ be  an isomorphism which preserves the corresponding
tensors, i.e. $\phi^{\otimes}(t_a)=t'_a$, $\forall a\in I$. Then $\phi$ is obtained from an isomorphism of $\CG$-torsors
$\tilde\phi: \sP\xrightarrow{\sim}\sP'$ which is unique.
\end{lemma}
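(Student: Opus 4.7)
The plan is to combine the $v$-stack property for $\CG$-torsors over $W(R)$ (Proposition \ref{propvstack}) with the identity $\CG(W(R')) = \bar\CG(W(R'))$ for perfect $k$-algebras $R'$ from \eqref{barcg}. First I would reinterpret $\phi$ as an isomorphism of $\bar\CG$-torsors $\bar\phi : \bar\sP \isoarrow \bar\sP'$, via the identification of $\bar\sP$ with the sheaf $\bar\sT(\sV,(t_a))$ of tensor-compatible trivializations (and similarly for $\bar\sP'$), so that the task becomes lifting $\bar\phi$ uniquely to an isomorphism $\tilde\phi$ of the underlying $\CG$-torsors along the dilation $\delta : \CG \to \bar\CG$.

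Next I would pass to a $v$-cover $R \to R'$ of perfect $k$-algebras over which both $\sP$ and $\sP'$ become trivial; one can for instance take $R'$ to be the product over the points of $\Spec R$ of algebraic closures of the residue fields, using that $\CG$ is smooth and that $W(K)$ is strictly henselian for $K$ algebraically closed. Fixing trivializations $\sP|_{W(R')} \cong \CG_{W(R')}$ and $\sP'|_{W(R')} \cong \CG_{W(R')}$, the isomorphism $\bar\phi|_{W(R')}$ becomes multiplication by an element $g \in \bar\CG(W(R'))$, and by \eqref{barcg} this $g$ agrees with a unique element $\tilde g \in \CG(W(R'))$. This defines a $\CG$-torsor isomorphism $\tilde\phi' : \sP|_{W(R')} \isoarrow \sP'|_{W(R')}$ lifting $\bar\phi|_{W(R')}$.

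To descend $\tilde\phi'$ back to $W(R)$ using Proposition \ref{propvstack}, I would verify the cocycle condition on $R'' := (R' \otimes_R R')^{\rm perf}$, which serves as the fibered product of $R'$ with itself over $R$ in perfect $k$-algebras and is again a $v$-cover of $R$. The two pullbacks $p_1^*\tilde g,\, p_2^*\tilde g \in \CG(W(R''))$ have identical images in $\bar\CG(W(R''))$ because both realize the restriction of $\bar\phi$ to $W(R'')$; applying \eqref{barcg} to the perfect algebra $R''$ forces $p_1^*\tilde g = p_2^*\tilde g$, so $\tilde\phi'$ descends to a unique $\CG$-isomorphism $\tilde\phi : \sP \isoarrow \sP'$ with $\bar\CG$-push-out equal to $\bar\phi$, hence inducing $\phi$ on $\sV$. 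Uniqueness follows by applying the same argument to $\tilde\phi_2^{-1} \circ \tilde\phi_1$: it is a $\CG$-automorphism of $\sP$ whose image in $\bar\CG$-automorphisms is trivial, hence trivial after passing to a trivializing perfect $v$-cover and applying \eqref{barcg}.

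The main obstacle I anticipate is the careful bookkeeping of the $v$-descent step within the category of perfect $k$-algebras: specifically, ensuring that the cocycle condition for $\tilde\phi'$ can legitimately be tested over the perfection $R'' = (R' \otimes_R R')^{\rm perf}$ (where \eqref{barcg} applies), rather than over the naive tensor product $R' \otimes_R R'$ (where it does not). Once the descent formalism of Proposition \ref{propvstack} is matched with the requirement that Witt vectors be taken only of perfect $k$-algebras, the lifting step is immediate from \eqref{barcg}; the subtlety lies entirely in this compatibility check.
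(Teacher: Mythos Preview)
Your proposal is correct and takes essentially the same approach as the paper, which also combines Proposition \ref{propvstack} with \eqref{barcg}; the paper packages the step slightly more abstractly by showing that $\underline{\rm Aut}_{\CG}(\sP)=\underline{\rm Aut}_{\bar\CG}(\bar\sP)$ as $v$-sheaves and hence $\underline{\rm Isom}_{\CG}(\sP,\sP')=\underline{\rm Isom}_{\bar\CG}(\bar\sP,\bar\sP')$ as torsors under the same group, rather than lifting an explicit element and checking a cocycle. Your concern about the fiber product resolves exactly as you indicate: the $v$-site in Proposition \ref{propvstack} is on perfect $k$-schemes, so fiber products are perfections and \eqref{barcg} applies over $R''=(R'\otimes_R R')^{\rm perf}$.
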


 \begin{proof} 
 By Proposition \ref{propvstack}, it is enough to produce
 the isomorphism $v$-locally on $R$.
 Consider the natural map
 \[
  {\rm Isom}_{\CG}(\sP, \sP')\to  {\rm Isom}_{\bar\CG}(\bar\sP, \bar\sP').
 \]
 The claim is that this is bijective. The source, resp. target, are the global sections 
 of $\underline{\rm Isom}_{\CG}(\sP, \sP')$, resp. $\underline{\rm Isom}_{\bar\CG}(\bar\sP, \bar\sP')$,
 which are $\underline{\rm Aut}_{\CG}(\sP)$-, resp. $\underline{\rm Aut}_{\bar\CG}(\bar\sP)$-torsors.
 These group schemes are ``pure" inner forms of $\CG$, resp. $\bar\CG$
 (see  \cite[Prop. III 4.1]{FS}). They split $v$-locally on $R$
 because the torsors $\sP$, $\sP'$ do. Since  $\CG(W(R))=\bar\CG(W(R))$ by (\ref{barcg}),
 we deduce that $\underline{\rm Aut}_{\CG}(\sP)=\underline{\rm Aut}_{\bar\CG}(\bar\sP)$.
 We can now see $\underline{\rm Isom}_{\CG}(\sP, \sP')=\underline{\rm Isom}_{\bar\CG}(\bar\sP, \bar\sP')$
 since they are torsors for the same group and the result follows.
 \end{proof}

\begin{remark} By \cite[Prop. 19.5.3]{Schber},  for $S\in {\rm Perfd}_k$,  $\CG$-torsors on $S\bdtimes \BZ_p$ form a $v$-stack
 over $S$. Hence there is  a variant of Lemma \ref{lemmabarcg} for $\CG$-torsors $\sP$, $\sP'$ over $S\bdtimes \BZ_p$ instead of $W(R)$. 
 Indeed, it is enough to produce $\tilde\phi$ $v$-locally, and so we can see it is enough to show the result for
 $S=\Spa(C, C^+)$, where $C$ is an algebraically closed perfectoid field. Then, we can construct $\tilde\phi$ by Beauville-Laszlo glueing along $p=0$: We see that
 $\phi$ defines $\tilde\phi[1/p]:\sP[1/p]\xrightarrow{\sim} \sP'[1/p]$ while the
 completion $\hat\CO_{\CY_{[0,\infty)}(C, C^+), p=0}$ is $W(C)$ and we have $\CG(W(C))=\bar\CG(W(C))$.
\end{remark}

 \subsubsection{Extension of $\sP_E$}\label{ext463}
  Recall  the $\CG$-shtuka $\sP_E$ over the generic fiber ${\rm Sh}_E^\diam={\rm Sh}_\eK(\eG, X)_E^\diam$.
 By the Tannakian formalism as above, $\iota: \CG\rightarrow \GL(\Lambda)$ and $\sP_E$ give a vector bundle shtuka $(\sV_E, \phi_{\sV_E})$
 over the generic fiber ${\rm Sh}_E^\diam$. This is the vector space shtuka that corresponds to the de Rham local system given by   the Tate module of the pull-back of the universal abelian scheme via the Hodge embedding, cf. \S \ref{ss:dR}. Again as above, $(\sV_E, \phi_{\sV_E})$
 is endowed with a finite family of tensors $t_{a, E}\in (\sV_E, \phi_{\sV_E})^\otimes$. We can view each such tensor as a shtuka homomorphism  over ${\rm Sh}_E^\diam$,
 \begin{equation}\label{tensors}
 t_{a, E}:  (\oplus_i \sV^{\otimes m_i}_E, \phi_{\oplus_i \sV^{m_i}_E})\to (\oplus_i \sV^{\otimes n_i}_E, \phi_{\oplus_i \sV^{\otimes n_i}_E}),
 \end{equation}
 for suitable  $m_i, n_i\geq 1$. By the discussion in Section \ref{sss:Witt}, we have
 \begin{equation}\label{PQ}
\bar \sP_{E}=\bar \sT(\sV_E, (t_{a, E})).
 \end{equation}
 Recall from  (\ref{intHodgemb}) the   finite morphism
 \[
i: \sS_\eK\to   \CA_{\eK^\flat} \otimes_{\BZ_{(p)}}O_{E} ,
 \]
which extends the Hodge embedding in the generic fibers. 

We first extend $\sV_E$ to a vector shtuka $(\sV, \phi_\sV)$ over $\sS$ as follows. First observe that it is enough to extend compatibly over the $p$-adic completion $\widehat\sS$. Now consider $S=\Spa(R, R^+)\in{\rm Perfd}_k$ and a map $S\to (\widehat\sS)^\diam$ given by $\Spa(R^\sharp, R^{\sharp +})\to \widehat\sS^{\rm ad}$. Let $M_{\inf}(R^{\sharp +})$ be the BKF-module (with leg along $\phi(\xi)=0$) of  the pull-back to $\Spec(R^{\sharp +})$ of the $p$-divisible group $A[p^\infty]$ of the universal abelian scheme over $\CA_{\eK^\flat} \otimes_{\BZ_{(p)}}O_{E}$, cf. Example \ref{pdivExample}.
  This is a finite locally free module  over $A_{\inf}(R^{\sharp +})=W(R^+)$. We denote by $(\sV_S, \phi_{\sV_S})$  the corresponding minuscule shtuka of height $2g$ and dimension $g$ over $S$ with leg at $S^\sharp$, given by the restriction of
\begin{equation}\label{realM}
 M(W(R^+))=(\phi^{-1})^*M_{\inf }(R^{\sharp +})
  \end{equation}
   to $\Spa(W(R^+))\setminus \{[\varpi]=0\}$  (as in Example \ref{pdivExample}). 
   
   Using \eqref{tensors}, we see that Theorem \ref{vshtExt} implies that  the tensors $t_{a, E}$ extend
(uniquely) to tensors $t_a\in \sV^\otimes$. We  now define the $v$-sheaf over $S\bdtimes \BZ_p$ with action of $\CG$,
     \begin{equation}\label{Gtors}
  \bar\sP_S=\bar\sT(\sV, (t_a))= \underline{\rm Isom}_{(t_a), (s_a\otimes 1)}(\sV_S, \Lambda\otimes_{\BZ_p}\CO_{S\bdtimes \BZ_p}) .
  \end{equation}
We will show that $\bar\sP_S$ is induced by a $\CG$-torsor $\sP_S$ over $S\bdtimes \BZ_p$,
which is then uniquely determined. We do this in three steps.
 
\emph{Step 1.} Let $R$ be a perfect $k$-algebra and let $Z=\Spec(R)\to \sS$ 
which induces $\Spd(R)\to (\widehat\sS)^\diam$. Note that the pull-backs of the tensor powers $\sV^{\otimes m}$
to shtukas over $\Spd(R)$ are given as in  Theorem \ref{FFisocrystal} by meromorphic $F$-crystals, namely
the tensors powers $M(W(R))^{\otimes m}$ of $M(W(R))$. Note that, as in \eqref{Dnatural}, $M(W(R))$ can be identified with 
\[
\BD^\natural(W(R))=(\phi^{-1})( \BD(W(R))^*),
\]
 where $\BD(W(R))^*$ is the linear dual of the contravariant Dieudonn\'e module of the pullback of the universal $p$-divisible group $A[p^\infty]$ to $\Spec(R)$. 
By pulling back $t_a$ along
this map and using the full-faithfulness  
\[
\hbox{\rm meromorphic $F$-crystals over $R$}\to \hbox{\rm shtukas over $\Spd(R)$}
\]
given by Theorem \ref{FFisocrystal}, we obtain tensors $t_{a, \crys}\in \BD^\natural(W(R))^\otimes$. We can now consider the affine scheme with $\CG$-action over $W(R)$, 
\[
\sT_{\crys}(R):=\underline{\rm Isom}_{(t_{a,\crys}), (s_{a}\otimes 1)}( \BD^\natural(W(R)), \Lambda\otimes_{\BZ_p}W(R)) .
\]
 We also consider the corresponding  affine scheme with $G$-action over $W(R)[1/p]$,
\[
T_{\crys}(R):=\sT_{\crys}(R)[1/p]:=\underline{\rm Isom}_{ (t_{a,\crys}),(s_{a}\otimes 1)}( \BD^\natural(W(R))[1/p],\Lambda\otimes_{\BZ_p}W(R)[1/p]) ,
\]
which has a natural  Frobenius action lifting the Frobenius on $W(R)$. \
\begin{lemma}\label{GFisoLemma}
The $G$-scheme $T_{\crys}(R)$ is a $G$-torsor over $\Spec(W(R)[1/p])$. 
\end{lemma}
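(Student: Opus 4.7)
The plan is to verify the $G$-torsor property by a reduction to algebraically closed geometric points followed by a lift to a classical point in characteristic zero, where the $\CG$-structure is supplied by Kisin's Breuil-Kisin theory for $\CG$.

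First I would observe that $T_\crys(R)$ is a closed subscheme of the $\GL$-torsor of isomorphisms $\BD^\natural(W(R))[1/p] \isoarrow \Lambda \otimes_{\BZ_p} W(R)[1/p]$ on which $G$ acts freely through its action on the target, so $T_\crys(R)$ is affine and flat over $\Spec W(R)[1/p]$. By fpqc descent and the fibral criterion for being a torsor under a smooth affine group, it suffices to trivialize after base change to each geometric point. The formation of $\BD^\natural$ (coming from $p$-divisible group Dieudonn\'e theory) and of the tensors $t_{a,\crys}$ (obtained from $t_a$ via the equivalence of Theorem~\ref{FFisocrystal}) commutes with base change $R \to K$ for $K$ an algebraically closed field extension of the residue field. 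Hence the problem reduces to showing that for every geometric point $x: \Spec K \to \sS$ the scheme $T_\crys(K)$ is non-empty over the field $W(K)[1/p]$.

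For such $x$ I would next lift to a classical point. Since $\sS$ is $O_E$-flat and of finite type, the formal completion $\wh{\sS}_{/x}$ has positive relative dimension over $\Spf W(K)$; choosing a one-dimensional formal branch we obtain a complete mixed-characteristic DVR $\breve\CO$ with residue field $K$ and a morphism $\tilde x : \Spec \breve\CO \to \sS$ lifting $x$, whose generic point defines a classical point $\eta \in \sS_E(F)$ for a finite extension $F/\breve E$. At $\eta$ the pro-\'etale $\CG(\BZ_p)$-torsor $\BP_\eK$ (Proposition~\ref{ShimuraShtThm}) specializes to a crystalline Galois representation $\rho_\eta : \Gal(\bar F/F) \to \CG(\BZ_p)$, as the deRham property of $\BP_\eK$ at classical points is equivalent to being crystalline. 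By the $\CG$-variant of Kisin's Breuil-Kisin functor, obtained using Ansch\"utz's extension theorem (cf.\ the discussion preceding Proposition~\ref{propRZ}), $\rho_\eta$ produces a $\CG$-Breuil-Kisin module $\sP_{\rm BK}$ over $\frak{S} = W(K)\lps T\rps$. Base-changing along $T \mapsto 0$ and inverting $p$ yields a $\CG$-torsor refining the Frobenius isocrystal of $\BD(x^*A[p^\infty])$, whose trivializations are precisely a $W(K)[1/p]$-point of $T_\crys(K)$.

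The main obstacle will be verifying the compatibility of the two origins of the tensors on $\BD^\natural(W(K))[1/p]$: on one hand, those obtained by extending $t_{a,E}$ from the generic fiber via Theorem~\ref{vshtExt} and then specializing through the equivalence of Theorem~\ref{FFisocrystal}; on the other, those produced by the Breuil-Kisin construction applied to $\sP_{\rm BK}$. The bridge is the map of locally ringed spaces $j : \CY_{[0,\infty)}(C^\flat,O_{C^\flat}) \to \Spec(\frak{S})$ appearing in the proof of Proposition~\ref{propRZ}, through which $j^*(\sP_{\rm BK},\Phi)$ is canonically identified with the $\CG$-shtuka at the $(C,O_C)$-point obtained from $\eta$; combined with the uniqueness of extension of shtuka homomorphisms across the normal flat base $\sS$ (Theorem~\ref{vshtExt}) and the full faithfulness of the restriction across $\CY_{[r,\infty)} \hookrightarrow \CY_{[r,\infty]}$ (Proposition~\ref{FFres}), this forces the two collections of tensors to coincide. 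Once this compatibility is in hand, the trivialization produced from $\sP_{\rm BK}$ lies in $T_\crys(K)$ and the lemma follows; this same identification will be reused in \S\ref{ss:complhodge} when constructing the framing $\Theta_x$ required by part~(c) of Conjecture~\ref{par812}.
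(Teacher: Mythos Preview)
Your reduction to geometric points contains a gap. You assert that $T_\crys(R)$ is ``affine and flat over $\Spec W(R)[1/p]$'' because it is a closed subscheme of the $\GL$-torsor of frames on which $G$ acts freely. Affineness follows, but flatness does not: a closed subscheme of a flat scheme need not be flat, and a free $G$-action does not help. Without flatness the fibral criterion fails, so even after you check that every geometric fiber is a (trivial) $G$-torsor you cannot conclude that $T_\crys(R)$ is a $G$-torsor over the base. Since $R$ is an arbitrary perfect $k$-algebra and $W(R)[1/p]$ is not Noetherian, the usual ``constant fiber dimension implies flat'' shortcuts are unavailable.

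The paper sidesteps this entirely. Rather than checking fibers and then globalizing, it constructs a global exact tensor functor $\omega\colon \mathrm{Rep}_{\BQ_p}(G)\to F\text{-Isoc}(Z)$ directly: every representation of $G$ is the kernel of a $G$-equivariant idempotent $e$ on a direct sum of $\Lambda_\BQ^{\otimes m}\otimes\Lambda_\BQ^{*\otimes n}$, one transports $e$ to the corresponding tensor powers of $\BD^\natural(W(R))[1/p]$ using the extended tensors, and takes its kernel. The \'etale-crystalline comparison at a single lifted point $\tilde x_0$ is invoked only to verify that $\omega$ is well-defined (i.e., that the specialization at $x_0$ really comes from a $G$-torsor). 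One then checks that $T_\crys(R)$ and the $G$-torsor underlying $\omega$ induce the same pushout functor $W\mapsto W\times^G(-)$ to $W(R)[1/p]$-modules, hence agree. This gives the torsor property globally without ever proving flatness first.

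A minor comment: at the level of a single geometric point your argument via integral Breuil--Kisin theory and Ansch\"utz extension is correct but heavier than needed. The lemma only asks for the rational $G$-structure, so the \'etale-crystalline comparison applied to tensor powers of $T_p(A)[1/p]$ (with the tensors $s_{a,\et}$ Galois-invariant because the representation lands in $G$) already suffices; there is no need to control the $\CG$-lattice or to route through the map $j$ from Proposition~\ref{propRZ}.
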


Using the Tannakian equivalence we see that the lemma implies that $T_{\crys}(R)$ gives a Frobenius $G$-isocrystal over 
the perfect scheme $Z=\Spec(R)$. Recall that a Frobenius $G$-isocrystal over $Z$ is a exact faithful tensor functor
\[
{\rm Rep}_{\BQ_p}(G)\to \text{$F$-}{\rm Isoc}(Z),
\]
where $\text{$F$-}{\rm Isoc}(Z)$ is the tensor category of Frobenius isocrystals. However, our argument proceeds in the opposite way. 

\begin{proof}
 We paraphrase the argument of  \cite[Cor. 1.3.12]{KMS}. Suppose that $x_0$ is a geometric point of $Z=\Spec(R)$. Let $K=\kappa(x_0)$. After replacing $Z$ by $Z\otimes_kK$, we may assume that $Z$ is defined over $K$ and that $x_0$ is a $K$-rational point.  There is a specialization functor
\begin{equation}\label{Fspec}
{\rm spec}_{x_0}\colon \text{$F$-}{\rm Isoc}(Z)\to \text{$F$-}{\rm Isoc}(x_0)
\end{equation}
between the categories of Frobenius isocrystals obtained by restricting to the point $x_0$. We may assume that $Z$ is connected. 
Then this functor is faithful and exact, and is in fact a fiber functor over $L=W(K)[1/p]$, after forgetting the Frobenius.
The $W(R)[1/p]$-scheme $T_\crys(R)$ specializes to  $T_\crys(x_0)$ which is an affine $L$-scheme with $G$-action. Lift $x_0$ to a point $\tilde x_0$ of $\sS$ with values in a complete discrete valuation ring $O_K$. Using the \'etale-crystalline comparison isomorphism applied to the crystalline representation of ${\rm Gal}(\bar K/K)$  given by the tensor powers of the Tate module $T_p(A[p^\infty])\otimes_{\BZ_p}\BQ_p$, we see that $T_\crys(x_0)$ is a $G$-torsor and so it gives a Frobenius $G$-isocrystal over $x_0$, i.e. the exact tensor functor 
$$
\omega_{T_0}\colon {\rm Rep}_{\BQ_p}(G)\to \text{$F$-}{\rm Isoc}(x_0), \quad W\mapsto W\times^G T_\crys(x_0) .
$$
 We claim that  $\omega_{T_0}$ factors as a composition of tensor functors
$$
\omega_{T_0}={\rm spec}_{x_0} \circ\omega,
$$
 where the tensor functor $\omega\colon  {\rm Rep}_{\BQ_p}(G)\to \text{$F$-}{\rm Isoc}(Z)$ takes the representation $\Lambda_\BQ$ of $G$ to $\BD^\natural(W(R))[1/p]$. This last condition forces on us the definition of $\omega(V_{m, n})$, where $V_{m, n}=\Lambda_\BQ^{\otimes n}\otimes \Lambda_\BQ^{* \otimes m} $. But any object of ${\rm Rep}_{\BQ_p}(G)$ is the kernel of a $G$-invariant map $e\colon W\to W$, where $W$ is a direct sum of objects of the form $V_{m, n}$; we can even assume that $e$ is an idempotent. By considering $e$ as a tensor to which we apply the previous construction, we define
a homomorphism $\omega(e)\colon\omega(W)\to \omega(W)$ of $F$-isocrystals over $Z$ and set $\omega(\Ker e)=\Ker(\omega(e))$. It is easy to see that this definition is independent of the presentation and defines an exact faithful tensor functor into the category $\text{$F$-}{\rm Isoc}(Z)$. We claim that the $G$-space $T_\crys(Z)$ is isomorphic to the  $G$-torsor which corresponds to $\omega$. It suffices to prove that the corresponding pushout functors coincide,
\begin{equation*}
\begin{aligned}
&{\rm Rep}_{\BQ_p}(G)\to \text{$W(R)[1/p]$-modules}\\
& W\mapsto W\times^G T_\crys(Z),\quad \quad W\mapsto \omega(W) .
\end{aligned}
\end{equation*}
However, by construction, both functors send $\Lambda_\BQ$ to $\BD(W(R))[1/p]$ and the morphisms 
$ s_{a}:  \oplus_i\, \Lambda_\BQ^{\otimes m_i}\to \oplus_i\, \Lambda_\BQ^{\otimes n_i}$ to $ t_{a, \crys}:  \oplus_i\, \BD^\natural(W(R))[1/p]^{\otimes m_i}\to \oplus_i \,\BD^\natural(W(R))[1/p]^{\otimes n_i}$. Since the tensor algebra of $\Lambda_\BQ$ and the tensors $s_a$ allow us to recover the group algebra of $G$, it follows that both functors coincide.  \end{proof}

\emph{Step 2.} Now consider $\Spa(C, C^+)$ with $C$ a complete non-archimedean algebraically closed field of characteristic $0$
 and with tilt $\Spa(C^\flat, C^{\flat +})\in  {\rm Perfd}_k$. Let
 \[
 x: \Spa(C, C^+)\to S^\sharp=\Spa(R^\sharp, R^{\sharp +})\to \widehat\sS^{\rm ad}
 \]
 be a morphism giving the point $\Spa(C^\flat, C^{\flat +})\to S\to (\widehat\sS)^\diam/\Spd(O_E)$.
 Since $C^+\subset O_C$, this also gives $\Spa(C, O_C)\to \Spa(C, C^+)\to \widehat{\sS}^{\rm ad}$,
 which we still denote by $x$. We set
 \[
 \bar C^+=C^+/\mathfrak m_C\subset k(C)=k(C^\flat)
 \]
 which is a valuation ring of $k(C)=O_C/\mathfrak m_C$. 
   
 As in Example \ref{pdivExample}, we have the Breuil-Kisin-Fargues module $M$  over $A_{\rm inf}(C^+)$ of the pull-back  $x^*(A[p^\infty])$ of
 the universal $p$-divisible group. By the existence of the $\CG$-shtuka over the generic fiber $\sS_E$, we have  a $\CG$-torsor 
 \begin{equation}\label{sTinf}
 \sT_{[0,\infty)}=\sT_{[0,\infty)}(C, C^{+})
 \end{equation}
  over $\CY_{[0, \infty)}(C^\flat, C^{\flat +})$. This induces a vector bundle $\sV_{[0,\infty)}$ given by the pullback of $M$ to $\CY_{[0, \infty)}(C^\flat, C^{\flat +})$ and Frobenius invariant
 tensors $x^*(t_a)\in \sV_{[0,\infty)}^\otimes$. By our construction of $t_a$ (see the proof of Proposition \ref{Extperfd}) the tensors $x^*(t_a)$ extend to tensors over the pullback of $M^\otimes$ to 
 \[
 \CY_{[0,\infty]}(C^\flat, C^{\flat +})=\Spa(A_{\rm inf}(C^+))\setminus \{[\varpi]=0, p=0\}.
 \]
 Furthermore, by full-faithfulness, these extend to tensors $t_{a, x}\in M^\otimes$ over $A_{\rm inf}(C^+)$.

 \begin{lemma} 
The $\CG$-torsor $\sT_{[0,\infty)}(C, C^{+})$ from \eqref{sTinf} extends to 
a scheme theoretic $\CG$-torsor $\sT(C^+)$ over $A_{\rm inf}(C^+)$  with the property that 
the construction of \S \ref{sss:Witt} applied to $\sT(C^+)$ gives $M$ with the tensors $t_{a, x}\in M^{\otimes}$. 
This extension is canonical, i.e unique, up to a unique isomorphism. The $\CG$-torsor $\sT(C^+)$ is trivial, since 
 $W(C^+)$ is strictly henselian. 
 \end{lemma}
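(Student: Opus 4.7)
The plan is to obtain $\sT(C^+)$ by a two-stage extension of $\sT_{[0,\infty)}$: first across the infinity divisor $\{p=0\}$ using Frobenius descent to the Fargues--Fontaine curve, and then across the closed point of $\Spec(A_{\rm inf}(C^+))$ via an Ansch\"utz-type extension theorem. Once this is achieved, the fact that the construction of \S\ref{sss:Witt} recovers $(M,(t_{a,x}))$ and the uniqueness clause will follow formally from the full-faithfulness results of Theorem \ref{FFKedlaya} and Proposition \ref{FFres}.

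For the first stage I would restrict $\sT_{[0,\infty)}$ to $\CY_{[r,\infty)}(C^\flat,C^{\flat+})$ for $r\gg 0$, so that it misses the leg, descend the resulting $\phi$-equivariant $\CG$-torsor to the relative Fargues--Fontaine curve, and then re-extend to a $\phi$-equivariant $\CG$-torsor on $\CY_{[r,\infty]}(C^\flat,C^{\flat+})$. Glueing with $\sT_{[0,\infty)}$ produces an extension to $\CY(C^\flat,C^{\flat+})$; this is the $\CG$-torsor counterpart of the vector-bundle arguments in Propositions \ref{extensiontoY} and \ref{AnExtension}. Kedlaya's GAGA equivalence (Theorem \ref{FFKedlaya}(a)), applied via the Tannakian formalism, then converts this into a scheme-theoretic $\CG$-torsor on the punctured spectrum $Y(C^\flat,C^{\flat+})=\Spec(W(C^{\flat+}))\setminus V(p,[\varpi])$.

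For the second stage I would invoke the extension theorem of Ansch\"utz \cite{An} (cf. Conjecture \ref{paraextconj}) to extend this scheme-theoretic torsor across the closed point and obtain the desired $\sT(C^+)$. The formulation in the excerpt concerns $A_{\rm inf} = W(O_C^\flat)$, so the main technical check is that Ansch\"utz's argument carries over when $O_C^\flat$ is replaced by the open-bounded valuation subring $C^{\flat+}$; I would handle this either by faithfully flat descent along $W(C^{\flat+})\hookrightarrow W(O_C^\flat)$, or, more cleanly, by deducing $\sT(C^+)$ directly from a $\CG$-Tannakian version of Proposition \ref{propBKFshtuka} applied to the perfectoid field $C$ with bounded valuation ring $C^+$. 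I expect this extension step to be the principal obstacle in the argument.

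The remaining claims are then immediate. That the construction of \S\ref{sss:Witt} applied to $\sT(C^+)$ recovers $(M,(t_{a,x}))$ follows because its restriction along $\CY_{[0,\infty)}\to \Spec(W(C^{\flat+}))$ agrees with the pair $(\sV_{[0,\infty)},(x^*(t_a)))$ coming from $\sT_{[0,\infty)}$; full-faithfulness of restriction (Theorem \ref{FFKedlaya}(b) and Proposition \ref{FFres}) then forces the ambient module to be $M$ and the tensors to be $t_{a,x}$. Uniqueness of $\sT(C^+)$ up to a unique isomorphism follows by the same extension-and-full-faithfulness principle applied to the $\underline{\rm Isom}$-sheaf of torsor isomorphisms, together with Lemma \ref{lemmabarcg}. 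Finally, triviality of $\sT(C^+)$ is immediate from the strict henselianness of $W(C^{\flat+})$ (which holds since $C^{\flat+}$ is a valuation ring with algebraically closed residue field) combined with the smoothness of $\CG$.
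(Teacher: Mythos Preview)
Your first stage contains the real gap, and it is precisely the step you treat as routine. Descending the $\phi$-equivariant $\CG$-torsor on $\CY_{[r,\infty)}(C^\flat,C^{\flat+})$ to the Fargues--Fontaine curve only gets you to $\CY_{(0,\infty)}/\phi^{\BZ}$; re-ascending gives you back a torsor on $\CY_{(0,\infty)}$ or $\CY_{[r,\infty)}$, not across the divisor $[\varpi]=0$. Crossing that divisor is exactly the essential surjectivity in Proposition \ref{FFres}, which is only proved for $(C,O_C)$, not for a general open bounded valuation subring $C^{+}$. Likewise Proposition \ref{AnExtension} is stated for $O_C^\flat$. Your alternative via a ``$\CG$-Tannakian version of Proposition \ref{propBKFshtuka}'' is circular: Propositions \ref{extensiontoY} and \ref{propBKFshtuka} extend the \emph{vector bundle} (and full faithfulness then extends the tensors $t_{a,x}$), but one still has to \emph{prove} that the scheme $\underline{\rm Isom}_{(t_{a,x}),(s_a\otimes 1)}(M,\Lambda\otimes W(C^{\flat+}))$ is a $\CG$-torsor over $\Spec(W(C^{\flat+}))$, which is the very content of the lemma. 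In other words, you have correctly located the module $M$ with tensors, but not the torsor property along $[\varpi]=0$.

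The paper's proof supplies exactly this missing input, and by a different mechanism than the one you propose. It uses the fiber product decomposition $C^{+}=\bar C^{+}\times_{k(C)}O_C$, which yields
\[
A_{\rm inf}(C^{+})[1/p]\;=\;W(\bar C^{+})[1/p]\times_{W(k(C))[1/p]}A_{\rm inf}(O_C)[1/p].
\]
Over $A_{\rm inf}(O_C)$ one has a $\CG$-torsor by Ansch\"utz (the case you already know); over $W(\bar C^{+})[1/p]$ one has a $G$-torsor by Step~1 (Lemma \ref{GFisoLemma}), which rests on the crystalline comparison and the argument of \cite{KMS}. Arc-descent for $G$-torsors (\cite[Lem.~11.3]{An}) glues these to a $G$-torsor $\sT[1/p]$ over $A_{\rm inf}(C^{+})[1/p]$; one then glues $\sT[1/p]$ with $\sT_{[0,\infty)}$ to obtain a $\CG$-torsor on $\CY_{[0,\infty]}(C^\flat,C^{\flat+})$, and finally applies \cite[Cor.~11.6]{An} (which \emph{does} cover general $C^{+}$) to extend across the closed point. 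So the ``principal obstacle'' is your first stage, not the second; and it is resolved not by a direct FF-curve extension but by importing the torsor property from the special-fiber isocrystal via Lemma \ref{GFisoLemma}.
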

 \begin{proof}

i) By Lemma \ref{GFisoLemma}, applied to $R=\bar C^+$, the tensors $t_{a, \crys}$ give  a $G$-torsor
 \[
 T_\infty=T_{\rm crys}(\bar C^+)
 \]
  over $W(\bar C^+)[1/p]$ which underlies a Frobenius $G$-isocrystal over $\bar C^+$.
  
ii) By Proposition \ref{exttoW} and the main result of \cite{An}, the $\CG$-torsor $\sT_{[0,\infty)}$
extends to a (trivial) $\CG$-torsor $\sT(O_C)$ over $A_{\rm inf}(O_C)=W(O_{C^\flat})$. Using the faithfulness of the pullback along 
\[
\CY_{[0, \infty)}(C^\flat, O_C^{\flat})\to \CY_{[0, \infty]}(C^\flat, O_C^{\flat})\to  \Spec(A_{\rm inf}(O_C)),
\]
on vector bundles and their homomorphisms, we see that this $\CG$-torsor gives, by the
construction of \S \ref{sss:Witt}, the BKF-module $M\otimes_{A_{\rm inf}(C^+)}A_{\rm inf}(O_C)$ of $x^*(A[p^\infty])$ with the tensors $t_{a, x}\in M^{\otimes}\otimes_{A_{\rm inf}(C^+)}A_{\rm inf}(O_C)$.

 The idea now is that $T_\infty$, $\sT_{[0, \infty)}$, and $\sT(O_C)$ combine to give 
 the $\CG$-torsor $\sT(C^+)$, by considering the fibered product
 \[
 C^+=\bar C^+\times_{k(C)} O_C ,
 \]
 which gives
\[
A_{\rm inf}(C^+)[1/p]=W(\bar C^+)[1/p]\times_{W(k(C))[1/p]} A_{\rm inf}(O_C)[1/p].
\]
Set 
\[
A=A_{\rm inf}(C^+)[1/p],
\]
\[
A_1=W(\bar C^+)[1/p], \quad 
A_2=A_{\rm inf}(O_C)[1/p], \quad A_0=W(k(C))[1/p]
\]
so that we have
\[
A=A_1\times_{A_0} A_2\ .
\]
Set
\[
\sT[1/p]=\sT(C^+)[1/p]={\rm Isom}_{(t_{a, x}), (s_a\otimes 1)}( M[1/p],\Lambda\otimes_{\BZ_p}A), 
\]
\[
 \sT_i[1/p]=\sT[1/p]\otimes_A A_i={\rm Isom}_{ (t_{a, x, i}),(s_a\otimes 1)}( M_i[1/p], \Lambda\otimes_{\BZ_p}A_i). 
\]
For each $i=0, 1, 2$, we know  by the above that $\sT_i[1/p]$ is a $G$-torsor. We now use \cite[Lem. 11.3]{An}, according to which the functor that associates to a perfect valuation ring $R$ over $k$ the groupoid of $G$-torsors over $W(R)[1/p]$ is a stack for the arc topology. This
implies that  $\sT_i[1/p]$, $i=0, 1, 2$, ``glue" to give a $G$-torsor $\sT'$ over $A$. The $G$-torsor $\sT'$ can also be described by 
a finite projective $A$-module $M'$ with tensors $t'_{a, x}\in M'^\otimes$. By its construction and the fact that $M$ and $t_a\in M^\otimes$ also
satisfy the required compatibilities with $M_i$ and $t_{a, x, i}\in M^\otimes_i$, we see that there is an isomorphism $M\simeq M'$
taking $t_{a, x}$ to $t_{a, x}'$. Therefore, $\sT[1/p]\simeq \sT'$ and $\sT[1/p]$ is also a $G$-torsor. Now the $G$-torsor $\sT[1/p]$ glues with the $\CG$-torsor $\sT_{[0,\infty)} $ over $\CY_{[0,\infty)}(C^\flat, C^{\flat +})$ to produce
a $\CG$-torsor over $\CY_{[0, \infty]}(C^\flat, C^{\flat +})=\Spa(A_{\rm inf}(C^+))\setminus \{[\varpi]=0, p=0\}$. This, using GAGA and  \cite[Cor. 11.6]{An}, 
extends to a $\CG$-torsor $\sT(C^+)$ over $\Spec(A_{\rm inf}(C^+))$ which is trivial.
By the full-faithfulness of the restriction to $\CY_{[0, \infty]}(C^\flat, C^{\flat +})$
and the construction, we see that the $A_{\rm inf}(C^+)$-module, resp. the tensors, obtained from the $\CG$-torsor $\sT(C^+)$ by the Tannakian formalism is $M$, resp. $t_{a, x}\in M^{\otimes}$. The rest of the properties of $\sT(C^+)$ 
in the statement also follow from the above full-faithfulness.
 \end{proof}
 
\emph{Step 3.} We now show that we obtain a corresponding $\CG$-torsor in characteristic $p$ also. Take $\Spa(D, D^{+})\in {\rm Perfd}_k$ 
given by an algebraically closed affinoid field $D$, equal to its untilt, 
and a point
\[
\Spa(D, D^+)\to \widehat{\sS}.
\]
We can lift this to a point of $\widehat\sS$ with values in $\Spa(C, C^+)$ with $C$ as above (of characteristic $0$), and with $\bar C^+=D^{+}$. Then, by Step 2, we obtain a $\CG$-torsor over $A_{\rm inf}(C^+)=W(C^{+\flat})$ which, via $C^{+\flat}/p\to D^+$, reduces to a $\CG$-torsor over $W(D^{+})$. By its construction, this (trivial) $\CG$-torsor $\sT(D^+)$ underlies the 
Dieudonn\'e crystal $\BD^\natural$ of the pull-back of the universal $p$-divisible group and the tensors $(t_{a, \crys})$. 
The induced $\bar\CG$-torsor $\bar\sT(D^+)$ is uniquely determined (does not depend on the lifting) 
as we can see by using the tensors $(t_{a,\crys})$, and so is the $G$-torsor $\sT(D^+)[1/p]=\bar\sT(D^+)[1/p]$
which in fact is trivial.
Since $\CG(W(D^+))=\bar\CG(W(D^+))$, it follows that the $\CG$-torsor $\sT(D^+)$ is also uniquely determined and does not depend on the choice of lifting above, cf. Lemma \ref{lemmabarcg}.
\smallskip

 Consider now 
 \[
 y: T=\Spa(B, B^+)\to S\to  (\widehat\sS)^\diam/\Spd(O_E) ,
 \]
  a corresponding product of points as in Steps 2 and 3 above with
 $B^+=\prod_j C^+_j$ such that $T\to S$ is a $v$-cover. We first give a $\CG$-torsor $\sP_T$ over $T\bdtimes \BZ_p$.
 By the work above, we have (trivial) $\CG$-torsors $\sP_{j}=\sT(C^+_j)$ over $A_{\rm inf}(C_j^+)=W(C^{+\flat}_j)$, for all $j$.
 By using the Tannakian formalism and the fact that families $(M_j)$ of finite free $W(C^{+\flat}_j)$-modules of constant rank 
 correspond to finite free modules over $\prod_j W(C^{+\flat}_j)=W(B^+)$, we obtain a $\CG$-torsor over $W(B^+)$
 whose restriction along the $j$-th component gives $\sP_j$.
 By restriction from $W(B^+)$ to $T\bdtimes \BZ_p$, this gives the $\CG$-torsor $\sP_T$ over $T\bdtimes \BZ_p$.

 By \cite[Prop. 19.5.3]{Schber}, $\CG$-torsors on $S\bdtimes \BZ_p$ form a $v$-stack
 over $S$ and so there is an equivalence of categories between $\CG$-torsors on $S\bdtimes \BZ_p$
 and $\CG$-torsors on $T\bdtimes \BZ_p$ with suitable descent data. Here, we can obtain a descent datum on $\sP_T$
 by using Lemma \ref{lemmabarcg} and that both the underlying vector bundle $\sV_T$ and the tensors $t_a\in \sV^\otimes_T$ have such descent data, since they are obtained by base-change from $S\bdtimes\BZ_p$. This then gives a $\CG$-torsor $\sP_S$ over $S\bdtimes\BZ_p$ which,
 in turn, gives $\sV_S$ and $(t_a)\in \sV^\otimes_S$. The $\CG$-torsor $\sP_S$ supports a Frobenius structure $\phi_{\sP_S}$
 obtained from the Frobenius structure of $\sV_S$, and $(\sP_S, \phi_{\sP_S})$ is a $\CG$-shtuka over $S$. Also, 
 $(\sP_S, \phi_{\sP_S})$ has leg bounded by 
 $\mu$. Indeed,  this is true in the generic fiber by construction. In general, it follows by reducing to $(C, O_C)$-valued points 
 and using \S \ref{ideaSpecialization}. (Note  that $\sV_S$ has dimension $g$ and height $2g={\rm rank}_{\BZ_p}(\Lambda)$, and that $(\BM^{\rm loc}_{\CG, \mu})^\diam(C, O_C)\subset {\rm Gr}(g, \Lambda)^\diam_{O_E}(C, O_C)$.) The association $S\mapsto (\sP_S, \phi_{\sP_S})$ gives the desired $\CG$-shtuka over $(\widehat\sS)^\diam$. 
 This concludes the proof of the existence of the extension $\sP$ of the $\CG$-shtuka $\sP_{E}$ of $\sS$.
   \hfill $\square$

\subsection{Local completions}\label{ss:complhodge}
Recall the $\CG$-shtuka $\sP_{\eK, E}$ over $ {\rm Sh}_{\eK}(\eG,X)_E$.
By our work above, this extends to a $\CG$-shtuka $\CP_{\eK}$ over ${\sS}_{\eK}$.
For simplicity of notation, we omit the subscript $\eK$ of $\sS_\eK$ in what follows.

\subsubsection{Completed local rings}\label{sss:comploc} Take $x\in \sS(k)$ and consider  the strict completion of the local ring $ \hat R_x=\hat\CO_{\sS, x}$ at the corresponding point of $\sS$.
Then $\hat R_x$ is a Noetherian normal integral local domain. The induced morphism
\[
\Spec(\hat R_x)\to \Spec(\hat\CO_{\CA_{\eK^\flat} \otimes_{\BZ_{(p)}}O_{E}, i(x)})
\]
is finite. Set $\hat A_x=\hat\CO_{\CA_{\eK^\flat} \otimes_{\BZ_{(p)}}O_{E}, i(x)}$ for simplicity. Also set $b=b_x$.

\begin{proposition}\label{prop451}
There is a $\Spd(\hat R_x)$-valued point  of $\CM^{\rm int}_{\CG, b, \mu}$
such that the corresponding $\CG$-shtuka over $\Spd(\hat R_x)$ is equal to the pull-back of 
the $\CG$-shtuka $\CP_\eK$ via the morphism $\Spd(\hat R_x)\to \widehat \sS^\diam/\Spd(O_E)$,
and which lifts the base point $x_0\colon\Spd(k)\to \CM^{\rm int}_{\CG, b, \mu}$.
\end{proposition}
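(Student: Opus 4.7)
The plan is to construct the sought $\Spd(\hat R_x)$-valued point of $\CM^{\rm int}_{\CG, b, \mu}$ by producing, functorially in an affinoid perfectoid $S = \Spa(R, R^+)$ with a map $S \to \Spa(\hat R_x, \hat R_x)^\diam$, a framing $i_r$ for the pullback of $\CP_\eK$ that identifies its Frobenius with $\phi_b = b \times {\rm Frob}$ on $\CY_{[r,\infty)}(S)$ for $r \gg 0$, and whose specialization at the closed point agrees with the trivialization of $\sP_x$ chosen to define $b_x$. Since the shtuka itself is given by pulling back $\CP_\eK$, the entire content of the proposition lies in this construction of the framing.

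First I would work at a geometric point. For a lift $\tilde x \colon \Spa(C, O_C) \to \wh{\sS}^{\rm ad}$ of $x$, the construction in Section \ref{ext463} produces a $\CG$-BKF module $\sT(C^+)$ over $W(O_{C^\flat})$ whose restriction along $\CY_{[0,\infty)}(C^\flat, O_{C^\flat}) \to \Spec(W(O_{C^\flat}))$ recovers the pullback of $\CP_\eK$ to $(C^\flat, O_{C^\flat})$. By Ansch\"utz's theorem, $\sT(C^+)$ is a trivial $\CG$-torsor, and any choice of trivialization converts its Frobenius into some element $\Phi \in \CG(W(O_{C^\flat})[1/\xi])$ whose reduction modulo $[\varpi]$ is $\sigma$-conjugate to $b$. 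Because $\CG$ is smooth, this reduction can be altered by $\sigma$-conjugation by a Hensel-lifted element of $\CG(W(O_{C^\flat}))$ so as to arrange $\Phi \equiv b \bmod [\varpi]$; a further $\LG$-adjustment in the sense of \cite[Lem. 2.1.28]{Gl} then upgrades this to the exact equality $\Phi = b$ on $\CY_{[r,\infty)}$ for $r \gg 0$, yielding a framing $i_r$ at the geometric point.

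Next I would assemble these geometric-pointwise framings into a framing over a product-of-points $v$-cover of $\Spd(\hat R_x)$, as in Section \ref{vcover}, and verify descent. Two framings with identical underlying shtuka differ by a section of the diamond group $\widetilde G_b$ of \eqref{diamgp}, and by Proposition \ref{lemma423} the sections of $\widetilde G_b$ over $\wh\sS_{/x}$ reduce to the discrete group $G_b(\BQ_p)$. Consequently, the framings produced over different geometric points automatically agree up to this discrete ambiguity, which is in turn eliminated by the requirement that the resulting $\Spd(\hat R_x)$-point lift the base point $x_0$: this fixes the framing uniquely at the closed point, and by the rigidity furnished by Proposition \ref{lemma423} this choice propagates uniquely across $\Spd(\hat R_x)$. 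The boundedness by $\mu$ of the resulting shtuka-with-framing is automatic since it holds at every geometric point.

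The main obstacle is the upgrade from the approximate framing $\Phi \equiv b \bmod [\varpi]$ to an exact framing $\Phi = b$ on $\CY_{[r,\infty)}$, together with the verification that this upgrade is compatible with $v$-descent along the product-of-points cover. This rests on the Frobenius continuation principle, which together with Proposition \ref{FFres} lets one transport Frobenius-equivariant data between $\CY_{[r,\infty]}$ and $\CY_{[r,\infty)}$, and on the pro-unipotent structure of the $v$-sheaf of groups $\LG$ controlling $\LG$-adjustments. Once these are in place, functoriality follows from the rigidity of $\widetilde G_b$-sections, and the compatibility with $x_0$ is built into the construction.
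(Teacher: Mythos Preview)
Your descent step has a genuine gap. You construct a framing at each geometric point $\Spa(C, O_C) \to \Spd(\hat R_x)$ by making several non-canonical choices (a trivialization of $\sT(C^+)$, a Hensel lift to arrange $\Phi \equiv b \bmod [\varpi]$), and then you assemble these into a putative framing over a product-of-points cover $T$. To descend to $\Spd(\hat R_x)$ you must show that the two pullbacks of this framing to $T \times_{\Spd(\hat R_x)} T$ agree, i.e.\ that the discrepancy---a section of $\widetilde G_b$ over $T \times_{\Spd(\hat R_x)} T$---vanishes. You invoke Proposition~\ref{lemma423}, but that proposition computes $\widetilde G_b(\Spd(\hat R_x))$, not $\widetilde G_b(T \times_{\Spd(\hat R_x)} T)$: the vanishing of the unipotent part $\widetilde G_b^{>0}$ relies on $\hat R_x$ being a complete Noetherian local ring (Lemma~\ref{BCzero}), and fails badly over general perfectoid spaces such as $T$ or its self-fiber-product. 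Normalizing at the closed point does not help either: the condition ``lift $x_0$'' constrains the framing only after specialization along $B^{[r,\infty]}_{(C,O_C)} \to W(\kappa)[1/p]$, and $\widetilde G_b^{>0}(C, O_C)$ acts nontrivially on the set of framings with fixed specialization. So your pointwise framings have no reason to cohere.

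The paper's proof avoids this by never constructing the $\CG$-framing from scratch. Instead it uses the Hodge embedding: the map $\Spf(\hat R_x) \to \sM_{\CH, i(b), i(\mu)}$ into the symplectic Rapoport--Zink space supplies, via the universal quasi-isogeny and Dieudonn\'e theory, an $\CH$-framing of the associated $\CH$-shtuka that is \emph{already coherent} over all of $\Spd(\hat R_x)$. The remaining task is to show this $\CH$-framing respects the $\CG$-structure, i.e.\ that the induced section of the quotient $\sQ_b^{\geq 0} = \sH_b^{\geq 0}/\sG_b^{\geq 0}$ over $X_{\Spd(\hat R_x)}$ is the one coming from the base point. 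This is where the Banach--Colmez analysis (Lemma~\ref{lemma452}, Lemma~\ref{boundonlambda}, Lemma~\ref{BCzero}) enters, now applied correctly to sections over $\Spd(\hat R_x)$ rather than over a product of points: it gives ${\rm H}^0(X_{\Spd(\hat R_x)}, \sQ_b^{\geq 0}) = {\rm H}^0(X_{\Spd(k)}, \sQ_b^{\geq 0})$, and the section is trivial at $\Spd(k)$ by construction. The missing idea in your approach is exactly this use of the ambient $\CH$-framing as a coherent starting point.
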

Here we recall that $\Spd(\hat R_x)$ denotes the $v$-sheaf $\Spa(\hat R_x, \hat R_x)^\diam$. Note that $\Spd(\hat R_x)$ is quasi-compact and can be covered in the $v$-topology
by a finite union of representable affinoid perfectoids $S$. Then the statement above essentially amounts
to showing that there is $r\gg0$ such that the pull-back of the $\CG$-shtuka $\CP_\eK$  
under  morphisms $S\to \Spd(\hat R_x)\to \widehat \sS^\diam/\Spd(O_E)$ admits a compatible
(equivalence class of) trivialization(s) $i_r$ over the sectors  $\CY_{[r, \infty)}(S)$ which lift the trivializations induced by the
base point.

 Assuming this for the moment, we can show part (c) of Conjecture \ref{par812} for 
 the models $\sS_\eK$ as defined above. 
 Indeed, by Proposition \ref{prop451} we obtain using the definition of ${\mathcal M}^{\rm int}_{\CG, b, \mu}$ as a moduli functor, a morphism of $v$-sheaves
\[
\Psi_G: \Spd(\hat R_x)\to  {\mathcal M}_{\CG, b, \mu}^{\rm int} ,
\]
where $b=b_x$. This fits into a commutative diagram

 \begin{equation}\label{SCompletions}
 \begin{aligned}
   \xymatrix{
       (\wh{\sS}_{/x})^\diam=\Spd(\hat R_x)    \ar[r]^{\ \ \  \ \Psi_{G, x}} \ar[d]_{i_x} &  \wh{{\mathcal M}^{\rm int}_{\CG, b, \mu}}_{/x_0}\ar[d]^{\wh i_{/x_0}} \\
     \Spd(\hat A_x) \ar[r]^{\Psi_{H, x} \ \ \ \ \ \ \ \ \ \ \ }&\wh{{\mathcal M}^{\rm int}_{\CH, i(b), i(\mu)}}_{/x_0}\times_{\Spd(\breve\BZ_p)}\Spd(O_{\breve E}).
        }
        \end{aligned}
    \end{equation}

In this diagram of $v$-sheaves over $\Spd(\CO_{\breve E})$, the map $\Psi_{H, x}$ is an isomorphism,
and the two vertical maps $i_x$ and $\wh i_{/x_0}$ are closed immersions. 
The generic fibers of all four $v$-sheaves in the diagram are representable by smooth rigid analytic spaces over $\breve E$.
Both generic fibers $\Spd(\hat R_x)_\eta$ and $(\wh{{\mathcal M}^{\rm int}_{\CG, b, \mu}}_{/x_0})_\eta$ are smooth of the same dimension  and by the above, are closed in $\Spd(\hat A_x)_\eta$. By Proposition \ref{topflat} (3), $(\wh{{\mathcal M}^{\rm int}_{\CG, b, \mu}}_{/x_0})_\eta$
is connected, hence $\Spd(\hat R_x)_\eta=(\wh{{\mathcal M}^{\rm int}_{\CG, b, \mu}}_{/x_0})_\eta$.
It then follows by  Proposition \ref{topflat} (2) (topological flatness) that $|\Spd(\hat R_x) |$ and $|\wh{{\mathcal M}^{\rm int}_{\CG, b, \mu}}_{/x_0}|$
agree as closed subsets of $|\Spd(\hat A_x)|$; then $\Psi_{G, x}$ is an isomorphism by \cite[Lemma 17.4.1]{Schber}
(or \cite[Prop. 12.15 (iii)]{Sch-Diam}).

This shows part (c) of Conjecture \ref{par812} for the
 integral model $\sS_\eK$, by taking $\Theta_{ x}=\Psi_{G, x}^{-1}$. In fact,  this also shows that
 $\Spd(\hat R_x)$ is isomorphic to  the formal completion $\wh{{\mathcal M}^{\rm int}_{\CG, b_x, \mu}}_{/x_0}$, which is therefore representable. 
\medskip

\subsubsection{Proof of Proposition \ref{prop451}} We need some preliminaries in which we use set-up and notations from \cite{FS}.

 Let $S=\Spa(R, R^+)\in {\rm Perfd}_k$.
In the following, we also need to use the ``schematic" (or ``algebraic") Fargues-Fontaine curve $X^{\rm alg}_S$
defined, for example, in \cite[II.2.3]{FS} (see also \cite{KL}).  Recall that by the GAGA type theorem \cite[Thm. 6.3.9]{KL}
(also \cite[Prop. II. 2.7]{FS}), there is an exact tensor equivalence of categories between vector bundles on $X^{\rm alg}_S$
and on $X_S$. Hence, by the Tannakian formalism, we have a similar equivalence between categories of $G$-torsors. In addition, vector bundles that correspond under the equivalence have isomorphic cohomology groups.
We now 
consider the automorphism group schemes
\[
\sG_b:=\underline\Aut_{G}(\CE_b),\quad \sH_b:=\underline\Aut_{H}(\CE_b)
\]
over $X^{\rm alg}_{S}$. 
These are forms of $G$, resp. $H$, over $X^{\rm alg}_S$. We can also consider
\[
\sG^{\geq 0}_b=\underline\Aut_{G, {\rm filt}}(\CE_b),\quad \sH^{\geq 0}_b=\underline\Aut_{H, {\rm filt}}(\CE_b)
\]
which are parabolic subgroup schemes of $\sG_b$, resp. $\sH_b$. These group schemes support ``HN filtrations"
 $\sG^{\geq \lambda}_b$,  $\sH^{\geq \lambda}_b$, for $\lambda\geq 0$, defined as in \cite[\S III.5]{FS} . Their global sections 
 (see the proof of Prop. III.5.1) are, for $S\in {\rm Perfd}_k$,
 \[
 \sG^{\geq \lambda}_b(X^{\rm alg}_S)=\wt G_b^{\geq \lambda}(S) ,
 \]
with $\wt G_b^{\geq \lambda}$ as in \cite[Prop. III.5.1]{FS}. The group $v$-sheaves $\wt G_b^{\geq \lambda}$ satisfy
\[
\wt G_b=\wt G_b^{\geq 0}=\wt G_b^{>0}\rtimes \underline {G_b(\BQ_p)},
\]
and  for every $\lambda>0$, there is a natural isomorphism
\[
\wt G_b^{\geq \lambda}/\wt G_b^{> \lambda}\xrightarrow{\sim} {\bf B}(({\rm ad}\CE_b)^{\geq \lambda}/({\rm ad}\CE_b)^{> \lambda}),
\]
with target the Banach-Colmez space associated to the $-\lambda$ isoclinic part of the Frobenius isocrystal 
 $({\rm Lie}(G)\otimes_{\BQ_p}\breve\BQ_p, {\rm Ad}(b)\sigma)$.
 We can also consider the fpqc quotient 
 \[
 \sQ_b^{\geq 0}:=\sH^{\geq 0}_b/\sG^{\geq 0}_b
 \]
  over $X^{\rm alg}_S$. To describe $\sQ_b^{\geq 0}$, we need two lemmas. 
 The proof of the first is left to the reader.
  
  \begin{lemma}\label{twofacts}
 
 i) The quotient $T_{H/G}={\rm Lie}(H)/{\rm Lie}(G)$ is naturally identified with the tangent space of the quotient 
affine scheme $H/G$ at the identity coset. Then $(T_{H/G}\otimes_{\BQ_p}\breve\BQ_p, {\rm Ad}(b)\cdot\sigma)$ 
is a Frobenius isocrystal. 

ii)   The quotient $H_b/G_b$ is represented (as a quotient of a reductive group by a closed
reductive subgroup) by an affine $\BQ_p$-scheme. The tangent space $T_{H_b/G_b}$ at the identity coset can be
identified with the slope zero part of $(T_{H/G}\otimes_{\BQ_p}\breve\BQ_p, {\rm Ad}(b)\cdot\sigma)$.  \hfill $\square$
\end{lemma}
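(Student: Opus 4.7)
My plan is to prove both parts of Lemma \ref{twofacts} by combining standard facts about quotients of reductive groups with the semisimplicity of the category of Frobenius isocrystals.

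For part (i), I would first note that since $G$ is a closed (reductive) subgroup of the reductive group $H$ over $\BQ_p$, a characteristic zero field, Matsushima's theorem guarantees that the fppf quotient $H/G$ exists as an affine $\BQ_p$-scheme of finite type. Smoothness of $H \to H/G$ (since $H$ is smooth and $G$ acts freely) implies that the tangent space at the identity coset $eG$ is naturally identified with $\mathrm{Lie}(H)/\mathrm{Lie}(G)$, which is exactly $T_{H/G}$. The adjoint action of $H$ on $\mathrm{Lie}(H)$ preserves the Lie subalgebra $\mathrm{Lie}(G)$, so it descends to an action on the quotient. For $b \in H(\breve\BQ_p)$, the map $\mathrm{Ad}(b) \circ \sigma$ is $\sigma$-semilinear on $T_{H/G} \otimes_{\BQ_p} \breve\BQ_p$ and is invertible, since $\sigma$ is a ring automorphism and $\mathrm{Ad}(b)$ is $\breve\BQ_p$-linear invertible; this endows $T_{H/G} \otimes_{\BQ_p}\breve\BQ_p$ with the structure of a Frobenius isocrystal.

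For part (ii), the $\sigma$-centralizers $G_b$ and $H_b$ are reductive $\BQ_p$-groups by a classical result of Kottwitz: for any element of a connected reductive group over $\BQ_p$, its $\sigma$-centralizer is an inner form of a Levi subgroup. The inclusion $G_b \hookrightarrow H_b$ is a closed immersion of reductive groups over $\BQ_p$, so Matsushima's theorem again shows that $H_b/G_b$ exists as an affine $\BQ_p$-scheme, with tangent space $\mathrm{Lie}(H_b)/\mathrm{Lie}(G_b)$ at the identity coset. Now for any $\BQ_p$-subspace $W \subset \mathrm{Lie}(H)$ stable under $\mathrm{Ad}(b)\sigma$, one has $\mathrm{Lie}(W_b) = (W \otimes_{\BQ_p}\breve\BQ_p)^{\mathrm{Ad}(b)\sigma = 1}$, which by Dieudonn\'e--Manin is the slope-zero part of the isocrystal $(W\otimes\breve\BQ_p, \mathrm{Ad}(b)\sigma)$.

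The final step is to identify $\mathrm{Lie}(H_b)/\mathrm{Lie}(G_b)$ with the slope-zero part of $T_{H/G}\otimes_{\BQ_p}\breve\BQ_p$. Apply base change to the short exact sequence
\begin{equation*}
0 \to \mathrm{Lie}(G) \to \mathrm{Lie}(H) \to T_{H/G} \to 0
\end{equation*}
to obtain a short exact sequence of Frobenius isocrystals over $\breve\BQ_p$ (all equipped with $\mathrm{Ad}(b)\sigma$). The key input is that the category of Frobenius isocrystals over $\breve\BQ_p$ is semisimple by Dieudonn\'e--Manin, so the functor ``take the slope-zero isotypic component'' is exact. Applying this functor yields the desired short exact sequence, and taking $\mathrm{Ad}(b)\sigma$-fixed points on the slope-zero parts gives the identification $T_{H_b/G_b} \cong (T_{H/G}\otimes_{\BQ_p}\breve\BQ_p)^{\mathrm{slope}\ 0}$. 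I expect the only mildly subtle point is invoking exactness of the slope-zero projector, but this is immediate once one has the semisimplicity of the isocrystal category over $\breve\BQ_p$.
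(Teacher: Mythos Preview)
The paper leaves this lemma to the reader, so there is no proof in the paper to compare against; your argument is the expected one and is essentially correct. One point does need fixing, however: you assert that ``the adjoint action of $H$ on $\mathrm{Lie}(H)$ preserves the Lie subalgebra $\mathrm{Lie}(G)$,'' and then take $b\in H(\breve\BQ_p)$. The first claim is false in general (it would force $G$ to be normal in $H$), and the second misreads the setup: in the paper $b\in G(\breve\BQ_p)$, embedded into $H(\breve\BQ_p)$ via $\iota$. It is precisely because $b$ lies in $G$ that $\mathrm{Ad}(b)$ preserves $\mathrm{Lie}(G)\otimes\breve\BQ_p$ and hence descends to $T_{H/G}\otimes\breve\BQ_p$. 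Once you make this correction, part (i) goes through as you wrote it.

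For part (ii), your argument via the short exact sequence of isocrystals and exactness of the slope-zero projector (from Dieudonn\'e--Manin semisimplicity) is the right idea. Two minor notational points: the expression ``$\mathrm{Lie}(W_b)$'' for a subspace $W$ is not meaningful; what you want is simply $\mathrm{Lie}(G_b)=(\mathrm{Lie}(G)\otimes\breve\BQ_p)^{\mathrm{Ad}(b)\sigma=1}$ and the analogue for $H$. Also be aware that the $\phi$-fixed vectors form a $\BQ_p$-space while the slope-zero isotypic piece is a $\breve\BQ_p$-space; the identification in the lemma is that $T_{H_b/G_b}\otimes_{\BQ_p}\breve\BQ_p$ is the slope-zero part, or equivalently that $T_{H_b/G_b}$ is its $\BQ_p$-rational structure. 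With these clarifications your proof is complete.
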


The points $(H_b/G_b)(\BQ_p)$ give a locally profinite set with a continuous action of the group $H_b(\BQ_p)$. 
We will consider the corresponding $v$-sheaf $\underline{(H_b/G_b)(\BQ_p)}$.

\begin{lemma}\label{lemma452} Let $S\in {\rm Perfd}_k$. 
The quotient $\sQ_b^{\geq 0}$ is represented
by a relatively affine scheme over $X_S^{\rm alg}$. 
There is a decreasing exhausting filtration 
\[
\sQ_b^{\geq \lambda}\subset \sQ_b^{\geq 0}
\]
by closed subschemes $\sQ_b^{\geq \lambda}$ over $X^{\rm alg}_S$, for $\lambda\geq 0$, such that

1) We have $\sQ_b^{\geq 0}\simeq ({(H_b/G_b)}\times_{\BQ_p} X^{\rm alg}_S)\times_{X^{\rm alg}_S} \sQ^{>0}_b $.
  
  2) For each $\lambda>0$, there is a surjective morphism
  \[
 f_\lambda:  \sQ_b^{\geq \lambda}\to 
\CV_{H/G, b, \lambda},
\]
with
\[
f_\lambda^{-1}(0)=\sQ_b^{> \lambda}.
\]
Here, $\CV_{H/G, b, \lambda}$ is the vector bundle  over $X^{\rm alg}_S$ which is associated to the $-\lambda$ isoclinic part of the Frobenius isocrystal 
 $(T_{H/G}\otimes_{\BQ_p}\breve\BQ_p, {\rm Ad}(b)\sigma)$.
  \end{lemma}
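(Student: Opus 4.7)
The plan is to construct $\sQ_b^{\geq 0}$ and its filtration by integrating from the Lie-algebra level: using a Levi decomposition on the slope-zero part and the exponential map on the positive-slope pro-nilpotent part. In the HN formalism of \cite[III.5]{FS}, both $\sG_b^{\geq 0}$ and $\sH_b^{\geq 0}$ admit Levi decompositions $\sG_b^{\geq 0} = \sG_b^{0}\ltimes \sG_b^{>0}$ and $\sH_b^{\geq 0} = \sH_b^{0}\ltimes \sH_b^{>0}$, with the Lie algebras of the unipotent radicals $\sG_b^{>0}$, $\sH_b^{>0}$ being the positive-slope parts of $\ad\CE_b^G$ and $\ad\CE_b^H$ respectively. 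These decompositions are compatible with the inclusion $\sG_b^{\geq 0}\hookrightarrow \sH_b^{\geq 0}$ because the HN filtration of $\ad\CE_b^G$ is induced by restriction from that of $\ad\CE_b^H$.

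For part (1), I would show that the Levi quotient $\sH_b^{0}/\sG_b^{0}$ is identified with $(H_b/G_b)\times_{\BQ_p}X_S^{\rm alg}$. The slope-zero parts of $\ad\CE_b^H$ and $\ad\CE_b^G$ correspond, under the classification of semistable slope-zero vector bundles on $X_S^{\rm alg}$, to the $\BQ_p$-Lie algebras $\Lie(H_b)$ and $\Lie(G_b)$ of Lemma \ref{twofacts}; the quotient of the corresponding reductive Levi group schemes is a twist of the affine $\BQ_p$-scheme $H_b/G_b$ (affine by Lemma \ref{twofacts}(ii)) which splits because $H_b$ acts transitively on $H_b/G_b$ and trivializes this twist. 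Setting $\sQ_b^{>0}:=\sH_b^{>0}/\sG_b^{>0}$, the Levi decomposition then yields the desired product description and shows $\sQ_b^{\geq 0}$ is relatively affine. For part (2), the Lie algebra $\Lie(\sQ_b^{>0}) = (\ad\CE_b^H)^{>0}/(\ad\CE_b^G)^{>0}$ carries the induced slope filtration; I define $\sQ_b^{\geq\lambda}$ as the exponential image of the sub-bundle of slopes $\geq\lambda$. Since the HN polygon has finitely many slopes and commutators strictly raise slope, this exponential is a genuine algebraic morphism of relatively affine schemes. The projection to the $\lambda$-isoclinic piece $(\ad\CE_b^H)^\lambda/(\ad\CE_b^G)^\lambda\simeq \CV_{H/G,b,\lambda}$ defines $f_\lambda$; on a single isoclinic slope the bracket vanishes, so the exponential restricted there is an isomorphism onto $\CV_{H/G,b,\lambda}$, which gives surjectivity, while $f_\lambda^{-1}(0)=\sQ_b^{>\lambda}$ by construction.

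The main obstacle I expect is making these Lie-theoretic constructions rigorous as morphisms of schemes (rather than only of functors on ${\rm Perfd}_k$) over the Fargues-Fontaine curve $X_S^{\rm alg}$. The two key technical points are (i) producing a \emph{compatible} Levi splitting of the inclusion $\sG_b^{\geq 0}\hookrightarrow \sH_b^{\geq 0}$ as subgroup schemes, so that $\sG_b^{0}\subset \sH_b^{0}$ and $\sG_b^{>0}\subset \sH_b^{>0}$ simultaneously, and (ii) showing that the exponential on the positive-slope pro-nilpotent part is representable by an algebraic morphism of relatively affine schemes. Both should be tractable by working $v$-locally on $S$ where $\CE_b$ is trivialized, constructing everything in the split case by classical algebraic group theory, and descending using the tannakian machinery of Lemma \ref{lemmabarcg} together with Proposition \ref{propvstack}.
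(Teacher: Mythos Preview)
Your approach is sound in spirit but takes a more laborious route than the paper. The paper defines the filtration directly as the scheme-theoretic quotient $\sQ_b^{\geq\lambda}:=\sH_b^{\geq\lambda}/\sG_b^{\geq\lambda}$, and the one nontrivial step is to show that the natural map $\sH_b^{\geq\lambda}/\sG_b^{\geq\lambda}\to \sH_b^{\geq 0}/\sG_b^{\geq 0}$ is a closed immersion, i.e.\ that $\sH_b^{\geq\lambda}\cap\sG_b^{\geq 0}=\sG_b^{\geq\lambda}$; this is checked on Lie algebras using that $G\hookrightarrow H$ is faithful. Once this is done, the product decomposition in (1) and the maps $f_\lambda$ in (2) are obtained by repeating the proof of \cite[Prop.~III.5.1, III.5.2]{FS} verbatim for the quotient $H/G$ in place of the group $G$ (reducing to the case of a trivial bundle and then to the Lie-algebra statement of Lemma~\ref{twofacts}). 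Your exponential construction is essentially what underlies \cite[III.5]{FS}, so the two arguments agree after unwinding; but by defining $\sQ_b^{\geq\lambda}$ as a quotient rather than integrating from the Lie algebra, the paper sidesteps both obstacles you correctly identify---there is no need to produce a compatible Levi splitting of $\sG_b^{\geq 0}\subset\sH_b^{\geq 0}$ at the level of group schemes, nor to justify representability of an exponential on a non-group quotient. The payoff of your approach would be a more explicit description of the filtration, but for the application (which only uses the graded pieces and the vanishing of their global sections over $\Spd(R)$) the quotient definition is enough.
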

  
  \begin{proof}
  We set
  \[
  \sQ^{\geq \lambda}_b:=\sH_b^{\geq \lambda}/\sG^{\geq \lambda}_b.
  \]
 We first show that the natural map
  \[
  \sH_b^{\geq \lambda}/\sG^{\geq \lambda}_b\to \sQ^{\geq 0}_b=\sH_b^{\geq 0}/\sG^{\geq 0}_b
  \] 
  is injective, i.e.
  that
  \[
  \sH^{\geq \lambda}_b\cap \sG^{\geq 0}_b=\sG^{\geq \lambda}_b.
  \]
  It suffices to check this on Lie algebras. Then it follows  from
   the fact that $i: G\hookrightarrow H$ is faithful.
   The rest is
  similar to the proof of \cite[Prop. III 5.1]{FS}: We first show the analogue of \cite[Prop. III 5.2]{FS} 
  for the quotient $H/G$. For that we can reduce to the  
  case that the $G$-bundle is trivial. The result then follows as in \emph{loc. cit.}
  by eventually reducing to considering the Lie algebras and using Lemma \ref{twofacts}. 
  \end{proof}

\noindent{\sl We can now prove  Proposition \ref{prop451}.}  Consider the formal scheme $\Spf(R)$, where $R=\hat R_x$. Since $\Spf(R)$ maps to the Rapoport-Zink formal scheme for the symplectic group, we have for every $R$-algebra  $B\in {\rm Nilp}_W$ a universal quasi-isogeny
\[
q: \BX_0\otimes_k B/pB\dashrightarrow \BX\otimes_B B/pB
\]
with $\BX=A[p^\infty]$. We can take the base point here to be $\BX_0=\BX\otimes_R k$ and $q$ the identity modulo $\mathfrak m_R$. Also, the Dieudonn\'e crystal ${\mathbb D}(\BX_0)(W(k))$ supports the 
Frobenius invariant tensors $t_{a, \crys}\in {\mathbb D}(\BX_0)(W(k))^\otimes$. 

By \cite[Thm. 25.1.2, Cor. 25.1.3]{Schber}, the $v$-sheaf of the RZ formal scheme for the symplectic group coincides with  
$ {\CM}^{\rm int}_{\CH, i(b), i(\mu)}$. Therefore, we can obtain an equivalence class of a framing of the corresponding $\CH$-shtuka
over $\Spd(R)$. By following the proof of \cite[Thm. 25.1.2]{Schber}, we see that this is given as follows:

Let $\Spa(B, B^{+})$ be affinoid perfectoid over $k$ and let a point $\Spa(B, B^{+})\to \Spd(R)$ be given by a map from 
$\Spa(B^\sharp, B^{\sharp +})$  to $\Spa(R, R)$, so we have a  continuous map $R\to B^{\sharp +}$. For a pseudo-uniformizer
$\pi_{B^\sharp}$ of $B^\sharp$, we have $R/\frak m^N\to B^{\sharp +}/(\pi_{B^\sharp})=B^+/(\pi_B)$ for some $N$. By evaluating the quasi-isogeny on the Dieudonn\'e crystal associated to the $p$-divisible group $\BX=A[p^\infty]$, we obtain a trivialization over $B^+_{\rm crys}(B^{+}/\pi_{B})$ and so a trivialization of the $\CH$-shtuka over $\CY_{[r, \infty)}(B, B^+)$,
for $r\gg0$.
This trivialization is compatible among all points $\Spa(B, B^+)\to \Spd(R)$, so it gives a framing of the corresponding $\CH$-shtuka
over $\Spd(R)=\Spa(R, R)^\diam$. Note that $\Spd(R)$ accepts a surjective $v$-cover from a
finite union of affinoid perfectoids, so we can pick a single $r\gg0$ such that the trivialization is defined 
over $\CY_{[r,\infty)}$.

We now want to show that this descends to a framing  of the $\CG$-shtuka $\CP$ over $\Spd(R)$, i.e. that the trivialization giving the framing respects the $\CG$-structure. Denote by $\CE=\CP_{FF}$ the
$G$-torsor over\footnote{Note that we do not really define a relative FF curve $X_{\Spd(R)}$, but just consider a category whose objects we think of as the ``$G$-torsors over $X_{\Spd(R)}$".} the ``relative FF curve" $X_R:=X_{\Spd(R)}$ obtained in the usual way 
by first restricting the shtuka $\CP$ to a sector $(r, \infty)$ and then descending to the quotient by Frobenius.
It is enough to show that the quasi-isogeny gives a trivialization of $\CE$, i.e. a $G$-torsor isomorphism 
$\CE_b\simeq \CE$ over $X_R$.

 First we show that the Harder-Narasimhan (HN) filtration on $\CE$ is constant over points over $\Spd(R)$, i.e., for any representation $\rho$ of $G$, the HN polygon of 
the vector bundle $\rho_*(\CE)$ is constant, i.e. the same on all geometric points $\Spa(C, O_C)\to \Spd(R, R)^\diam$.
Such points are given by $R\to O_{C^\sharp}$, for which $k=R/\frak m\hookrightarrow O_C/\pi^{1/N}_C$ and
the constancy follows from a theorem of Fargues-Fontaine, according to which Frobenius isocrystals over $O_C/\pi_C$ are
isotrivial, i.e. obtained from $k$ via base change by $k\to O_C/\pi_C$ (see \cite[\S 11.1]{FarguesFontaine} , especially Cor. 11.1.14). It now follows
from \cite[Thm. II. 2.18]{FS} (as in the proof of \cite[Prop. III. 5.3]{FS})  that the $G$-torsor $\CE$ admits
a HN filtration over $\Spd(R)$.  
This allows us to consider trivializations of $\CE$ over $X_{\Spd(R)}$ which respect this filtered structure.

 For any $a: S\to \Spd(R)$ with $S\in {\rm Perfd}_k$, consider now
 \[
 \sT_{G, S}=\underline{\rm Isom}_{G, {\rm filt}}(\CE_b, a^*(\CE)),\quad  \sT_{H, S}=\underline{\rm Isom}_{H, {\rm filt}}(\CE_b, a^*(\CE))
 \]
 which is a $\sG^{\geq 0}_b$-torsor, resp. a $\sH^{\geq 0}_b$-torsor, over $X^{\rm alg}_S$. By a 
  $\sG^{\geq 0}_b$-torsor  $\sT_G$  over $X_R:=X_{\Spd(R)}$, resp. a $\sH^{\geq 0}_b$-torsor $\sT_H$ over $X_R$, we mean a  collection of  $\sT_{G, S}$, resp. $\sT_{H, S}$, over varying $a: S\to \Spd(R)$, together with appropriate glueing 
  data. In our situation, the $\sH^{\geq 0}_b$-torsor $\sT_H$ over $X_R$ is trivial with a section provided by the universal quasi-isogeny as explained above. By its definition, $\sT_G$ is a reduction of $\sT_H$ in the sense that it comes
  with an isomorphism
  \[
  \sT_H\simeq \sH^{\geq 0}_b\times^{\sG^{\geq 0}_b}\sT_G.
  \]
  The set of such reductions of $\sT_H\simeq \sH^{\geq 0}_b$ to a $\sG^{\geq 0}_b$-torsor over $X_R$ are in bijection
  with the set  ${\rm H}^0(X_R, \sQ^{\geq 0}_b)$ of global sections of the quotient $\sQ^{\geq 0}_b=\sH^{\geq 0}_b/\sG^{\geq 0}_b$.  
Set
\[
\wt Q^{\geq \lambda}_b(R)={\rm H}^0(X_R, \sQ^{\geq \lambda}_b).
\]
We have
\[
{\rm H}^0(X_R, \sQ^{\geq 0}_b)=\wt Q^{\geq 0}_b(R)=\wt Q^{>0}_b(R)\times \underline{(H_b/G_b)(\BQ_p)}(\Spd(R)).
\]
Recall that the global sections of the vector bundle $\CV_{H/G, b, \lambda}$ associated to the Frobenius isocrystal $(T_{H/G}\otimes_{\BQ_p}\breve\BQ_p, {\rm Ad}(b)\sigma)_{-\lambda}$ are given by an absolute Banach-Colmez space
\[
{\mathbf B}(\lambda)^{\oplus m}={\mathcal {BC}}(\CO({\lambda}))^{\oplus m}.
\]
By Lemma \ref{boundonlambda}, all the slopes $\lambda$ that appear here satisfy $0< \lambda\leq 1$.
By Lemma \ref{lemma452}, induction, and Lemma \ref{BCzero}, we obtain
\[
\wt Q^{>0}_b(\Spd(R))=0.
\]
So, we have
\[
{\rm H}^0(X_{\Spd(R)}, \sQ^{\geq 0}_b)=\underline{(H_b/G_b)(\BQ_p)}(\Spd(R)).
\]
Similarly $\wt Q^{>0}_b(\Spd(k))=0$ and
\[
\underline{(H_b/G_b)(\BQ_p)}(\Spd(R))=\underline{(H_b/G_b)(\BQ_p)}(\Spd(k))
\]
by specialization. Hence, we obtain that specialization identifies
\[
{\rm H}^0(X_{\Spd(R)}, \sQ^{\geq 0}_b)={\rm H}^0(X_{\Spd(k)}, \sQ^{\geq 0}_b).
 \]
 Similarly, we have 
 \[
 {\rm H}^0(X_{\Spd(R)}, \sH^{\geq 0}_b)={\rm H}^0(X_{\Spd(k)}, \sH^{\geq 0}_b),\quad  {\rm H}^0(X_{\Spd(R)}, \sG^{\geq 0}_b)={\rm H}^0(X_{\Spd(k)}, \sG^{\geq 0}_b).
 \]
Hence, the reductions of the trivial torsor $\sT_H$ to a $\sG^{\geq 0}_b$-torsor are uniquely determined
by their specialization over $X_{\Spd(k)}$. The torsor $\sT_G$ is such a reduction and its specialization over $X_{\Spd(k)}$
is trivial with a section given by the quasi-isogeny (which respects the $G$-structure over the point $x_0$). Therefore, $\sT_G$
is trivial over $X_R$: in fact, the section of $\sT_H$ given by the quasi-isogeny gives a section of $\sT_G$ because this is true after specialization to $\Spd(k)$.  This section of $\sT_G$ provides
the desired framing of the $\CG$-shtuka over $\Spd(R)$. This concludes the proof of Proposition \ref{prop451}. \hfill$\square$

\subsection{Completion of the  proof of Theorem \ref{functorialThm}}\label{complFunct}

In order to harmonize  the notation with that used in the Hodge type case above, we denote the group $\eG'$ by $\eH$ and the parahoric group schemes by $\CG$ and $\CH$ so that, by assumption, we have a dilated immersion
\[
\CG=\bar\CG^{\rm sm}\to \bar\CG\hookrightarrow \CH.
\]
 By \cite[Prop. 1.3.2]{KisinJAMS}, \cite{DeligneLetter}, there is a closed group immersion $\CH\hookrightarrow \GL(\Lambda)$, for some finite free $\BZ_p$-module $\Lambda$ and finite collections of tensors 
$s_a$, $s_b\in \Lambda^{\otimes}$, with $a\in I$, $b\in J$, such that
\[
\CH=\{g\in \GL(\Lambda)\ |\ g\cdot s_b=s_b, \forall b\in J\},
\]
\[
\bar\CG=\{g\in \GL(\Lambda)\ |\ g\cdot s_a=s_a, g\cdot s_b=s_b, \forall a\in I, b\in J\}.
\]
Recall that $\sS^{\dagger}$ denotes the normalization of the closure of the image of ${\rm Sh}_{\eK}(\eG, X)_E$ in $\sS'\otimes_{O_{E'}}O_{E}$.  Hence, by construction, $\sS^\dagger$ is normal and flat over $O_E$. We denote the natural morphism by 
\[
\iota: \sS^\dagger\to \sS'\otimes_{O_{E'}}O_{E}.
\]
 As explained in \S \ref{ss:funct431}, it is enough to show that the integral model $\sS^\dagger$ satisfies the conditions in Conjecture \ref{par812}. 
Condition a) in Conjecture \ref{par812} follows
 from the corresponding condition for $\sS'$ and the construction of $\sS^\dagger$ as a normalization.
 It remains to show that  conditions b) and c) in Conjecture \ref{par812} are satisfied for $\sS^\dagger$.

  \subsubsection{Condition b)}\label{compl481}
  We  show that we can extend the $\CG$-shtuka $\sP_E$ over ${\rm Sh}_{\eK}(\eG, X)_E$
  to a $\CG$-shtuka $\sP^\dagger$ over the integral model $\sS^\dagger$ such that 
  $\CH\times^{\CG} \sP^\dagger$
  is isomorphic to the pull-back via $\iota$ of the ``universal" $\CH$-shtuka $\sP'$ over $\sS'\otimes_{O_{E'}}O_{E}$.
  This is done by following the arguments of \S\ref{ss:extofshthodge}, see especially \S\ref{ext463}; below we point out the additions and adjustments needed for the argument. 
  
  Applying $\CH\hookrightarrow \GL(\Lambda)$ to $\sP'$ gives, by the Tannakian formalism, a vector space shtuka $\sV$ over $\sS'$. We restrict this via $\iota$ above to obtain a vector space shtuka $\sV^\dagger$ over $\sS^\dagger$. The main idea now is to give tensors $t_a\in (\sV^\dagger)^\otimes$, $a\in I$, corresponding to $s_a\in \Lambda^\otimes$ that extend the tensors $t_{a, E}$ given by the $\CG$-torsor $\sP_E$ on the generic fiber. (Note that we already have tensors $t_b\in (\sV^\dagger)^\otimes$, $b\in J$, that correspond to $s_b\in \Lambda^\otimes$, which are given using $\sP'$). Then we use the collection of $t_a$, together with $t_b$, to give 
  the $\CG$-torsor underlying the desired shtuka $\sP^\dagger$ as in \S\ref{ext463}. We need to be careful in the following two points
  where the argument somewhat deviates from \S\ref{ext463}: First, we do not have a universal $p$-divisible group, or a 
  corresponding ``global" BKF module that underlies the vector space shtuka $\sV$, hence we need to find an alternative argument  to obtain the meromorphic Frobenius crystal over the special fiber of $\sS^\dagger$ used in Step 1 of \S\ref{ext463}.  Instead of this explicit construction, we use at this point Theorem \ref{FFisocrystal}. Second, to obtain the $A_{\rm inf}(C^+)$-module $M$ that appears in Step 2, we  use Proposition \ref{propBKFshtuka} in the easier
  case of an algebraically closed perfectoid field. Given these ingredients, the argument proceeds as in \S\ref{ext463}.

  \subsubsection{Condition c)}\label{compl482}
  We now show that $\sS^\dagger$, together with the $\CG$-shtuka $\sP^\dagger$ given as in \S\ref{compl481} above satisfies condition c) of Conjecture \ref{par812}.  The proof  follows the arguments in \S\ref{ss:complhodge} with $\sS_\eK$ and $\sP_\eK$ now replaced by $\sS^\dagger$ and $\sP^\dagger$. Indeed, Proposition \ref{prop451} extends to this situation with the same proof and the rest of the result quickly follows. Note that in this, the fact that $\Psi_{H, x}$ of (\ref{SCompletions}) is an isomorphism, is provided  by our assumption that the model $\sS'$ for $\eH$ satisfies condition c) of Conjecture \ref{par812}.

\subsection{The local model diagram}\label{ss:LMD}
Let $\sS_\eK$ be the integral model of ${\rm Sh}_\eK(\eG, X)_E$ over $O_E$, satisfying Conjecture \ref{par812}. In the classical theory, the local model diagram gives a global way to relate the singularities of the integral model $\sS_\eK$ to the singularities of the local model. We now interpret this construction in our set-up.

 \subsubsection{The global $v$-sheaf local model diagram}\label{sss:vLMD}
{\cmag  Recall that the $v$-sheaf group $\CG^\diam$ has $S$-valued points $\CG^\diam(S)$, for $S=\Spa(R,R^+)\in {\rm Perfd}$,  given by pairs of an untilt $S^\sharp=\Spa(R^\sharp, R^{\sharp+})$ of $S$ and an element of $\CG(R^\sharp)$. We  define as follows a $v$-sheaf $\CG^\diam$-torsor over  $\sS_\eK^{\diam/}\to \Spd(O_E)$. For simplicity, we will often omit the subscript $\eK$.

For $S\in {\rm Perfd}_{\kappa_E}$ and $x: S=\Spa(R, R^+)\to \sS^{\diam/}$ 
with corresponding untilt  
$ S^\sharp=\Spa(R^\sharp, R^{\sharp +})$, we consider the pull-back 
 (restriction) $\sP_{ \phi ({S^\sharp})}=\phi^*(\sP )_{|S^\sharp}$ of the $\CG$-torsor $\phi^*(\sP )$ 
along  
\[
S^\sharp\hookrightarrow \CY_{[0,\infty)}(S)=S\bdtimes\BZ_p .
\]
This pull back gives a $\CG$-torsor over $S^\sharp$ which, by \cite[Thm. 19.5.3]{Schber}, corresponds to a unique $\CG$-torsor over $\Spec(R^\sharp)$. For $S=\Spa(R, R^+)\in {\rm Perfd}_{\kappa_E}$, we now define $\wt\sS_\eK^v(S)$ to be the set of pairs $(x, \alpha)$,
where $x: S=\Spa(R, R^+)\to \sS^{\diam/}$ is a point of $\sS^{\diam/}$, and 
\[
\alpha: \CG\times {S^\sharp}  \xrightarrow{\sim }  \sP_{  \phi(S^\sharp)} 
\]
a $\CG$-isomorphism (i.e. a section), for $S^\sharp=\Spa(R^\sharp, R^{\sharp +})$ the untilt given by $x$.
This defines a $v$-sheaf with a natural morphism of $v$-sheaves
\begin{equation}
\pi^v: \wt\sS^v\to \sS^{\diam/}
\end{equation}
which is a $v$-sheaf $\CG^\diam$-torsor.}

Given a section $\alpha$ of the $\CG$-torsor $\sP_{ \phi(S^\sharp)}$ over $S^\sharp$ (which amounts to a section of the scheme theoretic $\CG$-torsor over $\Spec(R^\sharp)$) we can extend it, using the smoothness of $\CG$, to a section 
\begin{equation}\label{sectalph}
\wh\alpha: \CG\times \wh { S^\sharp }  \xrightarrow{\sim }(\phi^*(\sP))_{\wh { S^\sharp}}
\end{equation}
of the pull-back of $\phi^*(\sP)$ over the formal completion $\wh { S^\sharp }:=\Spec(\wh\CO_{S\bdtimes\BZ_p,  S^\sharp})$ of $S\bdtimes\BZ_p$ along $ S^\sharp $.  Then by using Beauville-Laszlo glueing, we see that
the pair
\[
(\sP, \    \phi_\sP \circ  \wh\alpha: \CG\times ({\wh S^\sharp\setminus S^\sharp}) \xrightarrow{\sim}  \sP_{\wh S^\sharp\setminus S^\sharp} )
\]
gives an $S$-valued point in the affine $\BB_{\rm dR}$-Grassmannian ${\rm Gr}_{\CG, \Spd(O_E)}$ over $\Spd(O_E)$
(see \cite[Prop. 19.1.2, Prop. 20.3.2]{Schber}).
Since the $\CG$-shtuka $\sP $ has leg bounded by $\mu$, this point is, by definition, a point of the $v$-sheaf local model $\BM^v_{\CG, \mu}\subset {\rm Gr}_{\CG, \Spd(O_E)}$ (see the text after Definition \ref{defGsht}). This defines a morphism of $v$-sheaves over $\Spd(O_E)$ which is $\CG^\diam$-equivariant, 
\begin{equation}\label{morphqv}
q^v\colon\wt\sS^v\to \BM^v_{\CG, \mu} .
\end{equation}
Indeed, let $\CL^+\CG$ be the \emph{positive $v$-sheaf loop group} over $\Spd (O_E)$ with values in $S=\Spa(R, R^+)$ given by the untilt $S^\sharp$ and 
$$
 \CL^+\CG(S)=\CG(\BB^+_{\rm dR}(R^\sharp)) .
$$
Then $\BB^+_{\rm dR}(R^\sharp)\to R^\sharp$ induces a homomorphism $\CL^+\CG\to \CG^\diam\times\Spd(O_E)$. Letting $\CL^1\CG$ denote the kernel of this homomorphism, any two choices of extensions $\wh\alpha$ as in \eqref{sectalph} differ by a section of $\CL^1\CG$. Since $\mu$ is minuscule, the action of $\CL^+\CG$ on $\BM^v_{\CG, \mu}$ factors through $\CG^\diam\times\Spd(O_E)$ (for details, see \cite{AnRicLou}), and so, in the definition of the morphism \eqref{morphqv}, the image is indeed independent of the choice of $\wh\alpha$. 

 Altogether we obtain a diagram of morphisms of $v$-sheaves
\begin{equation}\label{LMDvsheaf}
\begin{gathered}
   \xymatrix{
	     &\widetilde{\sS}^v_{\eK}  \ar[dl]_-{\text{$\pi^v_\eK$}} \ar[dr]^-{\text{$q^v_\eK$}}\\
	   {\sS}^{\diam/}_{\eK}   & & \ \BM^v_{\CG, \mu} \ ,
	}
\end{gathered}
\end{equation}
which we call the \emph{$v$-sheaf theoretic local model diagram} for $\sS_\eK$. 
{\cmag Here $\pi^v$ is a $v$-sheaf $\CG^\diam$-torsor and $q^v$ is $\CG^\diam$-equivariant.}
Note that the existence of this diagram is a formal consequence
of the existence of the $\CG$-shtuka over $\sS_\eK$, which is bounded 
by $\mu$.

\subsubsection{The scheme-theoretic local model diagram} 
We consider a diagram of $O_E$-schemes
\begin{equation}\label{LMD}
\begin{gathered}
   \xymatrix{
	     &\widetilde{\sS}_{\eK}  \ar[dl]_-{\text{$\pi$}} \ar[dr]^-{\text{$q$}}\\
	   {\sS}_{\eK}   & & \ \BM_{\CG, \mu} \ ,
	}
\end{gathered}
\end{equation}
where $\pi$ is a $\CG$-torsor and $q$ a $\CG$-equivariant morphism which is smooth. Here $ \BM_{\CG, \mu}={\mathbb M}^{\rm loc}_{\CG, \mu}$ denotes the scheme local model
as in Theorem \ref{LMconj}.

The generic fiber of such a  diagram can be given by the classical Borel embedding construction: It is the canonical model over $E$ 
of the natural diagram obtained from the Borel embedding of the domain $X$, cf. \cite[III, 4]{MilneAA}. In the Hodge type case, it can also be constructed using the de Rham cohomology of the universal 
abelian scheme (see \cite[2.3]{CaraSch}). Since the $\CG$-shtuka $\sP_\eK$ extends $\sP_{\eK, E}$, 
we can see that the corresponding $v$-sheaf diagram
agrees with the base change of  (\ref{LMDvsheaf}) to $\Spd(E)$. 

{\cmag  Applying the $\diam/$-functor, cf. Definition \ref{def:diam/},
from schemes  over $O_E$ to $v$-sheaves over $\Spd(O_E)$ to (\ref{LMD}) gives
\begin{equation}\label{LMDdiam}
\begin{gathered}
   \xymatrix{
	     &\widetilde{\sS}_{\eK}^{\diam/}  \ar[dl]_-{\text{$\pi^{\diam/}$}} \ar[dr]^-{\text{$q^{\diam/}$}}\\
	   {\sS}_{\eK}^{\diam/}   & & \ \BM_{\CG, \mu}^{\diam} \  ,
	}
\end{gathered}
\end{equation}
where $\pi^{\diam/}$ is a $\CG^{\diam/}$-torsor, and $q^{\diam/}$ is $\CG^{\diam/}$-equivariant. Here, note that since $\BM_{\CG, \mu}$ is proper over $O_E$, we have $\BM_{\CG, \mu}^{\diam}=\BM_{\CG, \mu}^{\diam/}$ and the group $v$-sheaf $\CG^\diam$ acts on $\BM_{\CG, \mu}^{\diam}$. 

 The following definition attempts to give a $v$-sheaf interpretation of the scheme local model diagram,  analogous to Scholze's $v$-sheaf definition of scheme local models, cf. Theorem \ref{LMconj}.
 
  \begin{definition}
A \emph{scheme-theoretic local model  diagram} for $\sS_\eK$ is a diagram \eqref{LMD} (where $\pi$ is a $\CG$-torsor and $q$ is a $\CG$-equivariant smooth map) such that the generic fiber is given by the Borel embedding construction and which gives the $v$-sheaf local model diagram (\ref{LMDvsheaf}) after pushing out  the $\CG^{\diam/}$-torsor $\pi^{\diam/}$ of (\ref{LMDdiam}) by 
$\CG^{\diam/}\to \CG^\diam$. 
\end{definition}

\begin{conjecture}\label{conjLMD}
A scheme-theoretic local model diagram for $\sS_\eK$ exists. 
\end{conjecture}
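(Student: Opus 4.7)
The plan is to lift the $v$-sheaf theoretic local model diagram \eqref{LMDvsheaf} to a diagram of $O_E$-schemes. This requires three steps: first, representing $\wt\sS^v_\eK$ by a scheme $\wt\sS_\eK$ which is smooth over $\sS_\eK$; second, promoting the morphism $q^v$ to a morphism of schemes $q\colon \wt\sS_\eK \to \BM_{\CG, \mu}$; and third, establishing the smoothness of $q$.

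For representability, I would observe that $\wt\sS^v_\eK$ is, by its very definition, the $v$-sheaf of trivializations of the $\CG$-torsor obtained by pulling back $\phi^*(\sP_\eK)$ from $\sS_\eK^\diam \bdtimes\BZ_p$ to $\sS_\eK^\diam$ along the universal untilt; it is thus a $\CG$-torsor over $\sS_\eK^\diam$ for the $v$-topology. Since $\CG$ is smooth affine over $\BZ_p$, this $v$-torsor should correspond to an \'etale $\CG$-torsor $\sQ_\eK$ on $\sS_\eK$: the full faithfulness of the $\diam$-functor from normal flat schemes of finite type to $v$-sheaves (Proposition \ref{propFFvsheaf}) matches up the torsor categories, smoothness of $\CG$ being used to produce a lift locally on $\sS_\eK$. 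I would then take $\wt\sS_\eK$ to be the total space of $\sQ_\eK$, which is smooth over $\sS_\eK$, hence normal and flat of finite type over $O_E$, and whose associated $v$-sheaf is $\wt\sS^v_\eK$; the map $\pi$ is a $\CG$-torsor by construction.

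For the morphism $q$, I would note that the target $\BM_{\CG, \mu} = \BM^\loc_{\CG, \mu}$ is a normal flat $O_E$-scheme of finite type representing $\BM^v_{\CG, \mu}$ by Theorem \ref{LMconj}. On generic fibers, the classical Borel embedding construction (as recalled in the passage after \eqref{LMD}) produces a canonical morphism $\wt\sS_{\eK, E} \to \CF_{G, \mu, E} = \BM_{\CG, \mu, E}$ which induces the generic fiber of $q^v$. Applying Corollary \ref{corFF} then yields a unique morphism of $O_E$-schemes $q\colon \wt\sS_\eK \to \BM_{\CG, \mu}$ with $q^\diam = q^v$ and with the prescribed generic fiber; $\CG$-equivariance of $q$ follows from the corresponding property of $q^v$ via the uniqueness clause of Corollary \ref{corFF}.

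The hard part will be verifying that $q$ is smooth. Since $q$ is a morphism between flat $O_E$-schemes of finite type, smoothness can be checked formally at each closed point $\tilde x \in \wt\sS_\eK(k)$ of the special fiber; set $x = \pi(\tilde x)$ and $y = q(\tilde x)$. Using property (c) of Conjecture \ref{par812} to identify $\wh{\sS_{\eK, /x}}^\diam \simeq \wh{\CM^{\rm int}_{\CG, b_x, \mu}}_{/x_0}$, and trivializing the $\CG$-torsor $\pi$ at $\tilde x$, the induced map on formal completions becomes a $\CG$-equivariant morphism $\wh{\CM^{\rm int}_{\CG, b_x, \mu}}_{/x_0} \to \wh{\BM^v_{\CG, \mu}}_{/y}$. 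Comparing this with Gleason's diagram \eqref{GDia}, in which $\pi_\bullet$ and $\pi_\star$ are both $\LG$-torsors over $\wh{\CM^{\rm int}_{\CG, b_x, \mu}}_{/x_0}$ and $\wh{\BM^v_{\CG, \mu}}_{/y}$ respectively, this morphism is expected to be an isomorphism, which is precisely Conjecture \ref{conjtubeLM}. Granted this identification, $q$ becomes formally \'etale at every closed point and hence smooth. Thus the general proof of Conjecture \ref{conjLMD} reduces to the content of Conjecture \ref{conjtubeLM}, which is established in many special cases (notably in the Hodge type case for $p \neq 2$ via \cite{PRintlsv}) but whose general proof remains the principal difficulty.
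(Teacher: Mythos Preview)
This statement is a \emph{conjecture} in the paper, not a theorem; the paper does not prove it in general. The paper only remarks that the conjecture follows in the global Hodge type case, under additional tameness hypotheses ($p\neq 2$, $G$ tamely ramified, $p\nmid|\pi_1(G_{\rm der})|$), from the main result of \cite{KP} (extended in \cite{KZhou}). That approach does not proceed by lifting the $v$-sheaf diagram at all: rather, one constructs $\wt\sS_\eK$ directly as the scheme of trivializations of the de Rham cohomology of the universal abelian variety (together with its Hodge tensors), and the map $q$ to the local model is built by hand from the Hodge filtration; only \emph{afterwards} does one check, via uniqueness, that the resulting diagram matches the $v$-sheaf one.

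Your proposal, by contrast, tries to descend the $v$-sheaf diagram to schemes, and this has genuine gaps. In Step~1 you invoke full faithfulness of the $\diam$-functor (Proposition~\ref{propFFvsheaf}, Corollary~\ref{corFF}) to ``match up the torsor categories'', but full faithfulness gives uniqueness of a representing scheme, not existence. A $v$-sheaf $\CG$-torsor over $\sS_\eK^\diam$ need not arise from an \'etale $\CG$-torsor on the scheme $\sS_\eK$; producing such a torsor amounts to extracting a genuine scheme-theoretic vector bundle (a ``de Rham realization'') from the shtuka $\sP_\eK$, which is precisely the content of the sought-for prismatic refinement in \S\ref{ss:prism} and is not available in general. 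In Step~3 there are two problems. First, Conjecture~\ref{conjtubeLM} only asserts the \emph{existence} of some isomorphism $\wh{\CM^{\rm int}_{\CG,b,\mu}}_{/x_0}\simeq\wh{\BM^{\rm loc}_{\CG,\mu}}_{/y}$; the paper explicitly warns (in the remark following Conjecture~\ref{conjtubeLM}) that such an isomorphism cannot be expected to be canonical or functorial, so you cannot conclude that the \emph{particular} map induced by $q$ realizes it. Second, your comparison with Gleason's diagram~\eqref{GDia} is off: the two maps $\pi_\bullet,\pi_\star$ are torsors under $\LG$ (the formal completion of the Witt-vector loop group, an infinite-dimensional object) for two \emph{different} actions, not under the finite-dimensional $\CG$, so there is no direct map $\wh{\CM^{\rm int}}_{/x_0}\to\wh{\BM^v}_{/x_0}$ coming from that diagram, and trivializing the $\CG$-torsor $\pi$ does not reduce you to it.
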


Let us remark that  
it is not clear that a 
scheme-theoretic local model diagram for $\sS_\eK$  is uniquely determined if it exists.
Indeed, note that we cannot apply Corollary \ref{corFF} since the morphism $\pi^v$, which is obtained from the universal shtuka as above, is a $\CG^\diam$-torsor but does not obviously come from a $\CG^{\diam/}$-torsor. A canonical $\CG^{\diam/}$-torsor inducing $\pi^v$ by push out by $\CG^{\diam/}\to \CG^\diam$ might be obtained by assuming the existence of a ``stronger'' structure underlying the universal shtuka, for example by assuming that the conjectural  prismatic refinement  described in \S\ref{ss:prism} exists.}

\quash{ We note that,  it follows from 
Corollary \ref{corFF} that a scheme-theoretic local model diagram for $\sS_\eK$  is uniquely determined if it exists: it is  the unique diagram of morphisms which represents
 (\ref{LMDvsheaf}) and has the given generic fiber.}
 
  This conjecture follows when $(p, \eG, X, \CG)$ is of global Hodge type from the main result 
of \cite{KP} under some additional tameness hypotheses: $p\neq 2$, $G$ splits over a tamely ramified extension of $\BQ_p$,
and $p\nmid |\pi_1(G_{\rm der})|$. Here we are using implicitly the uniqueness of the integral model (Theorem \ref{uniq}), which allows us to replace the original Hodge embedding by the Hodge embedding that is used in \cite{KP}. The tameness conditions are relaxed in \cite{KZhou},where the result is extended to most
reductive groups whose corresponding adjoint groups are {\cmag essentially tamely ramified in the sense of Remark \ref{Gacc}, i.e., Weil restrictions of scalars of tame groups from wildly ramified extensions. }

 {\cmag
\subsubsection{The local $v$-sheaf local model diagram}\label{sss:localLMD} 
The exact same argument
 as in \S \ref{sss:vLMD} also gives the {$v$-sheaf theoretic local model diagram} for $\CM^{\rm int}_{\CG, b, \mu}$, i.e.  a diagram of morphisms of $v$-sheaves,
\begin{equation}\label{LMDvsheafloc}
\begin{gathered}
   \xymatrix{
	     &\widetilde\CM^{\rm int}_{\CG, b, \mu}  \ar[dl]_-{\text{$\pi^v$}} \ar[dr]^-{\text{$q^v$}}\\
	   \CM^{\rm int}_{\CG, b, \mu}   & & \ \BM^v_{\CG, \mu} \ ,
	}
\end{gathered}
\end{equation}
where $\pi^v$ is a $\CG^\diam$-torsor, and $q^v$ is $\CG^\diam$-equivariant. Again, there is a representability conjecture.
\begin{conjecture}
There is a diagram of morphisms of normal formal schemes flat and locally  formally  of finite type
\begin{equation}\label{formalLMD}
\begin{gathered}
   \xymatrix{
	     &\widetilde\sM_{\CG, b, \mu}  \ar[dl]_-{\text{$\pi$}} \ar[dr]^-{\text{$q$}}\\
	   \sM_{\CG, b, \mu}   & & \ \wh{\BM}_{\CG, \mu} \ ,
	}
\end{gathered}
\end{equation}
where $\pi$ is a $\wh\CG$-torsor and $q$ is formally smooth and $\wh\CG$-equivariant. In addition, 
the diagram \eqref{LMDvsheafloc}
 is  obtained from (\ref{formalLMD}) by applying the $\diam$-functor on formal schemes, followed by push out by $\CG^\sdiam=(\wh\CG)^\diam\to \CG^\diam$. Here $\wh{\BM}_{\CG, \mu}$, resp. $\wh\CG$, denotes the completion  of $\BM_{\CG, \mu}$, resp. $\CG$, along its special fiber. 
\end{conjecture}

  }

\subsection{Uniformization by LSV}\label{ss:uni}

 We continue with the same assumptions and notations as in \S \ref{ss:KPextension}.
 In particular, $(p, \eG, X, \eK)$ is of global Hodge type and we choose an appropriate Hodge embedding
 which produces
 \[
 i: \sS_\eK\to \mathcal A_{\eK^\flat}\otimes_{\BZ_{(p)}}O_E,
 \]
as in \eqref{intHodgemb} and which factors through the closed embedding $\sS_\eK^-\hookrightarrow \mathcal A_{\eK^\flat}\otimes_{\BZ_{(p)}}O_E$. We also choose a point $x_0\in \sS_\eK(k)$.

\subsubsection{Construction of RZ spaces}
Following \cite{HamaKim}, \cite{HP}, we  construct a ``Rapoport-Zink (RZ) formal scheme" $\RRZ_{\CG, \mu,  x_0 }$ over $\Spf(O_{\breve  E})$.   

Let $\RRZ_{\CH, i(x_0) }$  be the RZ formal scheme over $\Spf(\breve \BZ_p)$ for $(\CH, \iota(\mu))$ with framing object given by the $p$-divisible group of the product of the abelian varieties in the chain of isogenies that corresponds by the moduli interpretation of $\CA_{\eK^\flat}$ to the point $i( x_0 )$.   We will denote this ``framing" $p$-divisible group by $\BX_0$. 
 
 The uniformization map for the Siegel moduli scheme is a morphism of formal schemes
  \begin{equation}\label{unifxSieg}
  \Theta^{\rm RZ}_{\CH,i( x_0 )}: \RRZ_{\CH,  i(x_0) }\to \widehat\CA _{\eK^\flat}.
  \end{equation}

  Now consider the $p$-adic completion $\widehat\sS_{\eK }$ of $\sS_\eK$  
  and the fiber product induced by the morphism \eqref{intHodgemb}, 
  \begin{equation}
  \RRZ'_{\CG, \mu,  x_0 }:=\widehat\sS_{\eK }\times_{\widehat
\CA_{\eK^\flat}\otimes_{\BZ_{p}}O_E} (\RRZ_{\CH, i( x_0 )}\otimes_{\breve \BZ_p} O_{\breve E}).
  \end{equation}
  The fiber product $ \RRZ'_{\CG, \mu,  x_0 }$ is a formal scheme which is locally of formally finite type over $\Spf(O_{\breve E})$. Recall that $\widehat\sS_{\eK }^\diam$ supports the  $\CG$-shtuka $\sP_\eK$.
  By its construction, it corresponds to a vector shtuka $\sV$ over $\widehat\sS_{\eK }^\diam$
  (obtained from the BKF-module of the universal $p$-divisible group) and Frobenius invariant tensors
  $t_a\in \sV^{\otimes}$, $a\in I$. The tensors $t_a$ give corresponding Frobenius invariant tensors $t_{a, \crys}$
  on the Dieudonn\'e isocrystal  of the universal $p$-divisible group over the perfection 
  $(\sS_\eK\otimes_{O_E}k)^{\rm perf}$. On the other hand, over 
  $(\RRZ_{\CH,  i(x_0) }\otimes_{\breve\BZ_p}O_{\breve E})_\red^{\rm perf}$, there is a universal 
  quasi-isogeny $q$ between the universal $p$-divisible group and the $p$-divisible group $\BX_0$.
  Hence, over $(\RRZ'_{\CG, \mu,  x_0 })_\red^{\rm perf}$, we have Frobenius invariant tensors $t_{a, \crys}$ and $q^*(t_{a, \crys, 0})$  both on the 
 Dieudonn\'e isocrystal of the universal $p$-divisible group. By definition, the specialization
 of $t_{a, \crys}$ at the base point $x_0$ is equal to $q^*(t_{a, \crys, 0})$. Now, by \cite[Lem. 4.3.3]{HamaKim}, 
 the locus where 
 \[
 t_{a, \crys}=q^*(t_{a, \crys, 0}),\quad \forall a\in I,
 \]
  is an open and closed subscheme of $(\RRZ'_{\CG, \mu,  x_0 })_\red^{\rm perf}$, hence a union of connected components of $(\RRZ'_{\CG, \mu,  x_0 })_\red^{\rm perf}$.
  We let $\RRZ_{\CG,\mu, x_0}$ be the unique open and closed formal subscheme of $\RRZ_{\CG,\mu, x_0}'$ whose perfect reduced locus is this union. 
 By construction,  we have a morphism of formal schemes
  \begin{equation}\label{ThetaRZ}
  \Theta^{\rm RZ}_{\CG, x_0 }:  \RRZ_{\CG, \mu,  x_0 }\to \widehat\sS_{\eK } ,
  \end{equation}
  which fits in a commutative diagram of formal schemes
  \begin{equation}\label{RZM}
  \begin{aligned}
   \xymatrix{
         \RRZ_{\CG, \mu,  x_0 } \ar[r] \ar[d]_{\Theta^{\rm RZ}_{\CG, x_0 }}&\RRZ_{\CH,  i(\mu),  i(x_0) }\otimes_{\breve\BZ_p}O_{\breve E}  \ar[d]^{  \Theta^{\rm RZ}_{\CH, i(x_0) }\otimes_{\BZ_p}O_{E} }\\
      \widehat\sS_{\eK } \ar[r] & \widehat\CA _{\eK^\flat}\otimes_{\BZ_p}O_{\breve E}\, .
        }
        \end{aligned}
    \end{equation}
      \begin{lemma}\label{RZnorm}
  The formal schemes $\RRZ'_{\CG, \mu,  x_0 }$ and $\RRZ_{\CG, \mu,  x_0 }$ are normal and flat  over $\Spf(O_{\breve E})$. Furthermore, for each $x\in  \RRZ_{\CG, \mu,  x_0 }(k)$, the morphism \eqref{ThetaRZ} induces an isomorphism of formal completions
  $$
  \wh{\RRZ_{\CG, \mu,  x_0}} _{/x}\xrightarrow{\ \sim\ } \widehat{\sS_{\eK }}_{/x} .
  $$
  Here on the RHS we have written $x$ for $\Theta^{\rm RZ}_{\CG, x_0 }(x)$.
  \end{lemma}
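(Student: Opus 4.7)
The plan is to reduce the lemma to two classical facts about the Siegel Rapoport--Zink space $\RRZ_{\CH, i(x_0)}$, via base change along the finite map $i\colon\widehat\sS_\eK\to \widehat\CA_{\eK^\flat}\otimes_{\BZ_p}O_E$ coming from the Hodge embedding. The two classical inputs, from \cite[Ch.~6]{R-Z}, are: (i) $\RRZ_{\CH, i(x_0)}$ is normal, flat, and locally formally of finite type over $\Spf(\breve\BZ_p)$; (ii) the Siegel uniformization morphism $\Theta^{\rm RZ}_{\CH, i(x_0)}$ induces, for every $z\in\RRZ_{\CH, i(x_0)}(k)$ with image $z'=\Theta^{\rm RZ}_{\CH,i(x_0)}(z)$, an isomorphism of (strict) formal completions $\wh{\RRZ_{\CH, i(x_0)}}_{/z}\isoarrow \wh{\widehat\CA_{\eK^\flat}}_{/z'}\otimes_{\BZ_p}O_{\breve \BZ_p}$. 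Statement (ii) is a direct consequence of Grothendieck--Messing deformation theory together with the Serre--Tate theorem: the universal quasi-isogeny on $\RRZ_{\CH, i(x_0)}$ pins down the $p$-divisible group up to isogeny, and any lift is governed by a lift of the Hodge filtration.

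First I would record that $\sS_\eK$ is normal and flat over $O_E$ by its very construction as the normalization of the Zariski closure of ${\rm Sh}_\eK(\eG,X)_E$ in $\CA_{\eK^\flat}\otimes_{\BZ_{(p)}}O_E$. Consequently, $\widehat\sS_\eK$ and $\widehat\sS_\eK\otimes_{O_E}O_{\breve E}$ are normal, flat, and locally formally of finite type over $\Spf(O_{\breve E})$. Next, for a point $x\in\RRZ'_{\CG,\mu,x_0}(k)$ with projections $y\in\widehat\sS_\eK(k)$ and $z\in\RRZ_{\CH, i(x_0)}(k)$ (so that $i(y)=\Theta^{\rm RZ}_{\CH,i(x_0)}(z)$ in $\widehat\CA_{\eK^\flat}(k)$), the compatibility of formal completion with fiber products of locally Noetherian formal schemes yields a natural identification
\begin{equation*}
\wh{\RRZ'_{\CG,\mu,x_0}}_{/x}\;\cong\; \wh{\widehat\sS_\eK\otimes_{O_E}O_{\breve E}}_{/y}\,\times_{\wh{\widehat\CA_{\eK^\flat}\otimes_{\BZ_p}O_{\breve E}}_{/i(y)}}\, \wh{\RRZ_{\CH, i(x_0)}\otimes_{\breve\BZ_p}O_{\breve E}}_{/z}.
\end{equation*}
By input (ii), base changed from $\breve\BZ_p$ to $O_{\breve E}$, the right-hand arrow of the fiber product is an isomorphism, so the identification collapses to
\begin{equation*}
\wh{\RRZ'_{\CG,\mu,x_0}}_{/x}\;\isoarrow\; \wh{\widehat\sS_\eK\otimes_{O_E}O_{\breve E}}_{/y}\;=\;\wh{\sS_\eK}_{/x}.
\end{equation*}

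The conclusion is then immediate. Since $\widehat\sS_\eK\otimes_{O_E}O_{\breve E}$ is normal, flat and locally formally of finite type over $\Spf(O_{\breve E})$, its strict completions at $k$-points are normal, flat, complete Noetherian local rings; by the displayed isomorphism, so are the strict completions of $\RRZ'_{\CG,\mu,x_0}$ at all of its $k$-points. For a formal scheme locally formally of finite type over $\Spf(O_{\breve E})$ both normality and $O_{\breve E}$-flatness may be checked on strict completions at closed points, and so $\RRZ'_{\CG,\mu,x_0}$ is normal and flat. The formal subscheme $\RRZ_{\CG,\mu,x_0}\subset\RRZ'_{\CG,\mu,x_0}$, being open and closed, inherits both properties, and the completion isomorphism above restricts to the claimed isomorphism $\wh{\RRZ_{\CG,\mu,x_0}}_{/x}\isoarrow\wh{\sS_\eK}_{/x}$ at any $x\in\RRZ_{\CG,\mu,x_0}(k)$.

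The main potential obstacle is input (ii)---the Siegel uniformization isomorphism on formal completions in the precise form needed above. However, this is entirely classical: via the moduli interpretation of $\CA_{\eK^\flat}$, deformations of the point $z'$ correspond to deformations of the underlying polarized chain of $p$-divisible groups, while deformations of $z$ correspond to the same data together with a lift of the fixed quasi-isogeny; by Drinfeld's rigidity, the latter is automatic. Hence no new machinery beyond \cite[Ch.~6]{R-Z} is required.
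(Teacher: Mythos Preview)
Your proposal is correct and follows essentially the same approach as the paper: both reduce to the Siegel case via Grothendieck--Messing/Serre--Tate deformation theory to obtain the isomorphism on formal completions, then use the fiber-product construction and the normality of $\sS_\eK$ (together with excellence) to deduce normality and flatness of $\RRZ'_{\CG,\mu,x_0}$ and its open-and-closed subscheme $\RRZ_{\CG,\mu,x_0}$. The only minor remark is that your input (i) on normality of the Siegel RZ space is not actually used in the argument---normality comes from $\sS_\eK$, exactly as in the paper.
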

  \begin{proof}
  The standard deformation theory of $p$-divisible groups (as for example in \cite{R-Z}) implies that the morphism $\Theta^{\rm RZ}_{\CH, i(x_0)}$ 
  induces an isomorphism of formal completions
  $$
  \wh{\RRZ_{\CH, \mu,  i(x_0)}} _{/i(x)}\xrightarrow{\ \sim\ } \widehat{\CA _{\eK^\flat}}_{/i(x)} .
  $$
  It follows from the construction of $\RRZ'_{\CG, \mu,  x_0 }$ as fiber product and then of 
  $\RRZ_{\CG, \mu,  x_0 }$ as an open and closed formal subscheme, that \eqref{ThetaRZ} 
  gives an isomorphism of formal completions
   $$
   \wh{\RRZ'_{\CG, \mu,  x_0}} _{/x}\simeq \wh{\RRZ_{\CG, \mu,  x_0}} _{/x}\xrightarrow{\ \sim\ } \widehat{\sS_{\eK }}_{/x} .
  $$
  The result now follows since $\sS_\eK$ is normal and excellent.
  \end{proof}

 By \cite[Cor. 25.1.3]{Schber}, there is an isomorphism of $v$-sheaves
 \[
 \RRZ_{\CH,  i(\mu),  i(x_0) }^\diam\xrightarrow{\sim}\CM^{\rm int}_{\CH,  b, i(\mu)}.
 \]
 The top horizontal arrow in the diagram \eqref{RZM} becomes under this identification 
 \begin{equation}\label{iRZ}
 i_{\mathrm {RZ}}: \RRZ_{\CG, \mu,  x_0 }^\diam\to \CM_{\CH,  b, i(\mu)}^{\rm int}\times_{\Spd(\breve\BZ_p)}\Spd(O_{\breve E}) .
 \end{equation}
 \begin{lemma}\label{RZembedd}
 a) The morphism \eqref{iRZ} factors 
 as
 \[
 i_{\mathrm {RZ}}\colon \RRZ_{\CG, \mu,  x_0 }^\diam\xrightarrow{c} \CM^{\rm int}_{\CG, b, \mu}\xrightarrow{\ i\ }\CM^{\rm int}_{\CH,  b, i(\mu)}\times_{\Spd(\breve\BZ_p)}\Spd(O_{\breve E}) ,
 \]
 where $i$ is the natural morphism. 
 
 b) For each $x\in  \RRZ_{\CG, \mu,  x_0 }(k)$, the morphism $c$ induces an isomorphism on formal completions
 \[
   \wh{\RRZ_{\CG, \mu,  x_0}} _{/x}\xrightarrow{\ \sim\ } \wh{\CM^{\rm int}_{\CG, b, \mu}}_{/c(x)}.
 \]
 In particular, $\wh{\CM^{\rm int}_{\CG, b, \mu}}_{/c(x)}$ is representable by the formal spectrum of a complete local ring.
 \end{lemma}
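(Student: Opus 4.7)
The plan is as follows. For part (a), I construct $c$ via the moduli interpretation of $\CM^{\rm int}_{\CG, b, \mu}$. For $S\in {\rm Perfd}_k$ and an $S$-point of $\RRZ_{\CG, \mu, x_0}^\diam$, composition with $\Theta^{\rm RZ}_{\CG, x_0}$ gives an $S$-point of $\widehat\sS_\eK^\diam$, over which one pulls back the universal $\CG$-shtuka $\sP_\eK$ from Theorem \ref{mainhodge} (b). To supply the required framing, use that the composition with the top horizontal arrow in \eqref{RZM} lands in $\RRZ_{\CH, i(\mu), i(x_0)}^\diam\simeq \CM^{\rm int}_{\CH, b, i(\mu)}\times_{\Spd(\breve\BZ_p)}\Spd(O_{\breve E})$ (via \cite[Cor. 25.1.3]{Schber}), and therefore carries a canonical framing of the induced $\CH$-shtuka coming from the universal quasi-isogeny $q$. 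The tensor-matching condition $t_{a,\crys}=q^*(t_{a,\crys,0})$ that defines $\RRZ_{\CG, \mu, x_0}$ as an open-and-closed formal subscheme of $\RRZ'_{\CG, \mu, x_0}$ forces this $\CH$-framing to preserve the tensors cutting out $\CG\subset \CH$, so by the torsors-with-tensors formalism of \S\ref{ss:extofshthodge} (in particular Lemma \ref{lemmabarcg}) it descends uniquely to a $\CG$-framing. This produces $c$, and the factorization $i_{\rm RZ}=i\circ c$ is tautological from this construction.

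For part (b), the idea is to compose three isomorphisms. First, Lemma \ref{RZnorm} gives $\widehat{\RRZ_{\CG, \mu, x_0}}_{/x}\isoarrow \widehat{\sS_\eK}_{/x}$, and passing to $v$-sheaves yields an isomorphism of the associated diamonds. Second, Theorem \ref{mainhodge} (c) supplies an isomorphism $\Theta_x\colon \widehat{\CM^{\rm int}_{\CG, b_x, \mu}}_{/x_0}\isoarrow \widehat{\sS_\eK}_{/x}^\diam$, where $b_x\in G(\breve\BQ_p)$ is read off from the fiber of $\sP_\eK$ at $x$. Third, the very definition of $\RRZ_{\CG, \mu, x_0}$ via the tensor-matching condition $t_{a,\crys}=q^*(t_{a,\crys,0})$ implies that the Dieudonn\'e $G$-isocrystal at $x$ is identified, as a $G$-isocrystal with tensors, with the one framed by $b$; consequently $b_x$ is $\sigma$-conjugate to $b$ by some $g\in \CG(\breve\BZ_p)$, and the translation isomorphism \eqref{translatesigma} yields $\tau_g\colon \widehat{\CM^{\rm int}_{\CG, b, \mu}}_{/c(x)}\isoarrow \widehat{\CM^{\rm int}_{\CG, b_x, \mu}}_{/x_0}$. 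Composing the three produces a candidate isomorphism
\[
\widehat{\RRZ_{\CG, \mu, x_0}}_{/x}^\diam\isoarrow \widehat{\CM^{\rm int}_{\CG, b, \mu}}_{/c(x)},
\]
and the representability of the target follows immediately, since it is identified with the formal completion of the formal scheme $\RRZ_{\CG, \mu, x_0}$ at $x$.

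The main obstacle is to verify that this composition actually equals the restriction of $c$ to the formal completion at $x$. Equivalently, one must check that the $\CG$-framing used in (a) (coming from the universal quasi-isogeny $q$ on $\RRZ_{\CH, i(x_0)}$, restricted to $\RRZ_{\CG, \mu, x_0}$ via the tensor condition) matches, after transport via $\tau_g$, the $\CG$-framing used to define $\Theta_x$ in Theorem \ref{mainhodge} (c), which was itself constructed in Proposition \ref{prop451} starting from the same universal quasi-isogeny on the pull-back of $\CA_{\eK^\flat}$ and applying the tensor-preserving reduction. Both framings are therefore built from the same crystalline datum and a priori can differ only by an automorphism of the trivial framed $\CG$-shtuka on $\widehat{\sS_\eK}_{/x}$; by Proposition \ref{lemma423} (applied to $\widehat{\sS_\eK}_{/x}$ via the isomorphism with $\widehat{\CM^{\rm int}_{\CG, b_x, \mu}}_{/x_0}$), the group of such automorphisms is $G_{b_x}(\BQ_p)$, and this residual ambiguity can be absorbed into the choice of $g$ (and hence of $\tau_g$). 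This pins down the agreement of the two formal-completion maps and completes the proof.
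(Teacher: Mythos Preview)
Your proof of part (a) matches the paper's exactly: pull back $\sP_\eK$ along $\Theta^{\rm RZ}_{\CG, x_0}$, inherit the $\CH$-framing from the universal quasi-isogeny, and use the tensor-matching condition defining $\RRZ_{\CG, \mu, x_0}$ to descend to a $\CG$-framing.

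For part (b) there is one slip and one detour. The slip: the quasi-isogeny identifies only the Frobenius $G$-\emph{isocrystals} at $x$ and $x_0$ (after inverting $p$), not the integral $\CG$-torsors over $W(k)$; so the element $g$ lies in $G(\breve\BQ_p)$, not $\CG(\breve\BZ_p)$. This is harmless, since $\tau_g$ in \eqref{translatesigma} is defined for any such $g$ and still carries $c(x)$ to the base point of $\CM^{\rm int}_{\CG, b_x, \mu}$.

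The detour is that the paper does not construct a separate candidate isomorphism and then match it with $c$. It observes directly that the restriction of $c$ to the formal completion at $x$, transported via Lemma \ref{RZnorm} to $\widehat{\sS_\eK}_{/x}$, is built by the \emph{same recipe} as the map $\Psi_{G,x}$ of Proposition \ref{prop451}: pull back $\sP_\eK$, take an $\CH$-framing coming from a quasi-isogeny, and reduce the structure group to $\CG$ via the tensors. The only difference between the two is the base of the quasi-isogeny ($x_0$ versus $x$), which amounts to a global translation by $\tau_g$. Since the argument establishing that $\Psi_{G,x}$ is an isomorphism---namely diagram \eqref{SCompletions} together with Proposition \ref{tubestruc}---is insensitive to that choice of base, it applies verbatim to $c$, and part (b) follows without invoking Proposition \ref{lemma423} or reconciling two separately constructed maps. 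Your route works, but the paper's is shorter and avoids the rigidity input.
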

 
 \begin{proof}
 Consider the pullback under \eqref{ThetaRZ} of the $\CG$-shtuka $\sP_\eK$ on  $ \widehat{\sS_{\eK }}$. The corresponding $\CH$-shtuka $ \CH\times^\CG\sP_\eK$ is equipped with a framing from the morphism \eqref{iRZ}. By its construction, this framing respects the tensors $t_{a, {\rm crys}}$, hence comes from a framing of the $\CG$-shtuka. This defines the factorization in a).  

 By Lemma \ref{RZnorm}, 
 $ \wh{\RRZ_{\CG, \mu,  x_0}} _{/x}\simeq \wh{\sS_{\eK}}_{/x}$. On the other hand, by the proof of Proposition \ref{prop451} 
 the morphism   
 \[
\wh{\sS_{\eK}}_{/x}\to \wh{\CM^{\rm int}_{\CG,  b, \mu}}_{/x},
 \]
given by $\Psi_{G, x}^*$ there, is in fact an isomorphism.  This shows b). 
\end{proof}

 The following statement is to be compared to \cite[Cor. 6.3]{Zhou}.
 
 \begin{proposition}  
 The morphism $c:  \RRZ_{\CG, \mu,  x_0 }^\diam\xrightarrow{ \ } \CM^{\rm int}_{\CG, b, \mu}$ is a closed immersion.
 \end{proposition}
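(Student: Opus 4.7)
The plan is to combine the factorization $i_{\mathrm{RZ}} = i \circ c$ from Lemma \ref{RZembedd}(a) with the criterion \cite[Cor. 17.4.8]{Schber} for closed immersions of $v$-sheaves, adapting the argument used in the proof of Proposition \ref{prop332}. Since the morphism $i\colon \CM^{\rm int}_{\CG,b,\mu} \hookrightarrow \CM^{\rm int}_{\CH,b,i(\mu)} \times_{\Spd(\breve\BZ_p)} \Spd(O_{\breve E})$ is a closed immersion by Proposition \ref{prop332} (the assumption \eqref{equpara} holds by the construction in \S \ref{ss:KPextension}) and in particular a monomorphism of $v$-sheaves, it suffices to show that $i_{\mathrm{RZ}}$ itself is a closed immersion. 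Indeed, $\RRZ^\diam_{\CG,\mu,x_0}$ would then embed via $i_{\mathrm{RZ}}$ as a closed sub-$v$-sheaf of $\CM^{\rm int}_{\CH,b,i(\mu)} \times \Spd(O_{\breve E})$ contained in the closed sub-$v$-sheaf $\CM^{\rm int}_{\CG,b,\mu}$, so $c$ exhibits $\RRZ^\diam_{\CG,\mu,x_0}$ as a closed sub-$v$-sheaf of $\CM^{\rm int}_{\CG,b,\mu}$.

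To show $i_{\mathrm{RZ}}$ is a closed immersion, I plan to verify the four conditions of \cite[Cor. 17.4.8]{Schber}: quasi-compactness, quasi-separatedness, the valuative extension criterion, and injectivity on $(C, O_C)$-points. The underlying morphism of formal schemes, $\RRZ_{\CG,\mu,x_0} \hookrightarrow \RRZ'_{\CG,\mu,x_0} \to \RRZ_{\CH,i(\mu),i(x_0)} \otimes_{\breve\BZ_p} O_{\breve E}$, is (locally) finite: the second arrow is the base change of the finite morphism $\wh\sS_\eK \to \wh\CA_{\eK^\flat} \otimes O_E$ coming from the Hodge embedding, and the first is an open-and-closed embedding. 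This yields quasi-compactness and quasi-separatedness of $i_{\mathrm{RZ}}$, while the valuative extension criterion is inherited from the standard partial properness of $v$-sheaves attached to formal schemes locally formally of finite type over $\Spf(O_{\breve E})$ combined with the properness of the finite underlying morphism.

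The heart of the argument is injectivity on $(C, O_C)$-points. Unwinding the definitions, such a point of $\RRZ^\diam_{\CG,\mu,x_0} \subset \RRZ'^\diam_{\CG,\mu,x_0}$ corresponds, for a choice of untilt $C^\sharp$ of $C$, to a pair $(x, y)$ with $x \in \wh\sS_\eK(O_{C^\sharp})$ and $y \in (\RRZ_\CH \otimes O_{\breve E})(O_{C^\sharp})$ whose images in $\wh\CA_{\eK^\flat}(O_{C^\sharp})$ coincide. Since $i_{\mathrm{RZ}}$ projects onto the second factor, injectivity reduces to showing that $\sS_\eK(O_{C^\sharp}) \hookrightarrow \CA_{\eK^\flat}(O_{C^\sharp})$ is injective. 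This factors as $\sS_\eK(O_{C^\sharp}) \hookrightarrow \sS_\eK(C^\sharp) \hookrightarrow \CA_{\eK^\flat}(C^\sharp)$, where the first inclusion follows from the uniqueness part of the valuative criterion applied to the separated morphism $\sS_\eK \to \Spec(O_E)$, and the second is injective because the Hodge embedding is a closed immersion of Shimura varieties on generic fibers. I expect the main technical obstacle to be the clean verification of the valuative extension criterion for $i_{\mathrm{RZ}}$, which requires carefully matching the formal-scheme-level properness of the finite underlying morphism with the appropriate notion of partial properness for the associated $v$-sheaves.
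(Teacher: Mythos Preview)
Your reduction is sound: since $i$ is a closed immersion, $c$ is the base change of $i_{\mathrm{RZ}}$ along $i$, so it suffices to show $i_{\mathrm{RZ}}$ is a closed immersion. The quasi-compactness, quasi-separatedness and valuative-criterion checks are essentially fine.

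The genuine gap is in the injectivity on $(C,O_C)$-points. Your factorization
\[
\sS_\eK(O_{C^\sharp}) \hookrightarrow \sS_\eK(C^\sharp) \hookrightarrow \CA_{\eK^\flat}(C^\sharp)
\]
invokes that the Hodge embedding is a closed immersion \emph{on generic fibers}. This is only valid when the untilt $C^\sharp$ has characteristic~$0$. But \cite[Cor.~17.4.8]{Schber} requires injectivity for \emph{all} algebraically closed perfectoid $C$ of characteristic $p$, including those whose map $\Spa(C,O_C)\to\Spd(O_{\breve E})$ factors through $\Spd(k)$, i.e.\ $C^\sharp=C$. In that case $\sS_\eK(C)\to\CA_{\eK^\flat}(C)$ is the map on $C$-points of the special-fiber morphism, which factors as the normalization $\sS_\eK\otimes k\to\sS^-_\eK\otimes k$ followed by a closed immersion. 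Normalization can collapse distinct points, so injectivity genuinely fails by your argument. Already for $k$-points you need to rule out that two branches of $\sS_\eK$ over the same point of $\CA_{\eK^\flat}$ can both lie in $\RRZ_{\CG,\mu,x_0}$ with the same image under $c$.

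This is exactly the obstacle the paper confronts. Rather than verifying \cite[Cor.~17.4.8]{Schber} for $i_{\mathrm{RZ}}$ directly, the paper passes to the scheme-theoretic image $\sM'_{\CG,b,\mu}\subset\sM_{\CH,b,i(\mu)}\otimes O_{\breve E}$ and reduces to showing that $\RRZ_{\CG,\mu,x_0}\to\sM'_{\CG,b,\mu}$ is an isomorphism of formal schemes. Since $c$ induces isomorphisms on formal completions at every $k$-point (Lemma~\ref{RZnorm} and Lemma~\ref{RZembedd}), only injectivity on $k$-points remains. For that the paper uses the diagrams \eqref{SCompletions}: if $c(x)=c(x')$, the isomorphisms $\Psi_{G,x}$, $\Psi_{G,x'}$ force the scheme-theoretic images of $\wh U_x\to\wh V_y$ and $\wh U_{x'}\to\wh V_y$ (with $y=i(x)=i(x')$) to coincide; both being irreducible components of the normalization of $\wh{(\sS^-_\eK)}_{/y}$, one deduces $x=x'$. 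In effect, the shtuka-theoretic datum $c(x)$ selects not just the point $y\in\CA_{\eK^\flat}(k)$ but also the branch of the normalization above it---this is the content your argument misses.
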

 
 \begin{proof}  
 Recall that the map $\CM^{\rm int}_{\CG, b, \mu}\to \CM^{\rm int}_{\CH, i(b), i(\mu)}\times_{\Spd(\breve\BZ_p)}\Spd(O_{\breve E})$ is a closed immersion, cf. Proposition \ref{prop332}. 
{\cmag Consider the  morphism of formal schemes in the top of diagram (\ref{RZM}),
\[
d: \RRZ_{\CG, \mu,  x_0 }\to \sM_{\CH,  b, i(\mu)}\otimes_{\breve\BZ_p}O_{\breve E}=\RRZ_{\CH,  i(\mu),  i(x_0) }\otimes_{\breve\BZ_p}O_{\breve E}  .
\]
 {\cmag By its construction, the source $\RRZ_{\CG, \mu,  x_0 }$ of this morphism is a union of connected components of $\RRZ'_{\CG, \mu,  x_0 }$ which then maps to the target by a finite morphism. Hence, the morphism $d$ is quasi-compact and quasi-separated and the induced map on topological spaces has Zariski closed image. The formal scheme-theoretic image of $d$ is defined and it is a formal closed subscheme $\sM'_{\CG,b,\mu}$ of the formal scheme $\sM_{\CH,  b, i(\mu)}\otimes_{\breve\BZ_p}O_{\breve E}$.} The corresponding $v$-sheaf $(\sM^{\prime}_{\CG, b, \mu})^\diam$ is, by Lemma \ref{RZembedd} (a), a closed $v$-subsheaf of $\CM^{\rm int}_{\CG, b, \mu}$. It suffices to prove that  the resulting map $c: \RRZ_{\CG, \mu,  x_0 }\to \sM^{\prime}_{\CG, b, \mu}$ is an isomorphism of formal schemes.
 By Lemma \ref{RZnorm} and  Lemma \ref{RZembedd}, the map $c$ induces isomorphisms on formal completions at each $k$-point.  Hence it suffices to  prove that the induced map on $k$-points
 \[
 c:  \RRZ^\diam_{\CG, \mu,  x_0 }(k)\xrightarrow{ \ } \CM^{\rm int}_{\CG, b, \mu}(k)
 \] 
 is injective.}

  Recall that we have an inclusion of $(b, \mu)$-admissible sets,
 \[
 \CM^{\rm int}_{\CG, b, \mu}(k)=X_\CG(b, \mu^{-1})(k)\subset X_{\CH}(i(b), i(\mu)^{-1})(k)=\CM^{\rm int}_{\CH, i(b), i(\mu)}(k).
 \]
 Suppose we have $x\neq x'\in \RRZ^\diam_{\CG, \mu,  x_0 }(k)$ with $c(x)=c(x')$. Then $x$, $x'$ map to the same point $y=i(x)=i(x')$ in $\RRZ^\diam_{\CH, i(\mu), i(x_0)}(k)$. The points $x$, $x'$ also give $k$-points of $\sS_\eK$ which we still denote by $x$, $x'$ and we have $y=i(x)=i(x')$
  in $\CA_{\eK^\flat}(k)$. Consider the $\CG$-shtukas $\sP_x$, $\sP_{x'}$ over $\Spd(k)$ obtained by specializing the $\CG$-shtuka $\sP_\eK$ over $\sS^\diam_\eK$ to the two points $x$ and $x'$. Here, these are given together with quasi-isogeny trivializations that respect the $\CG$-structure and provide elements $[(\sP_{x}, i_{r, x})]$ and $[(\sP_{x'}, i_{r, x'})]$ of $ \CM^{\rm int}_{\CG, b, \mu}(k)=X_\CG(b, \mu^{-1})(k)$. Under our assumption $c(x)=c(x')$ we have $s_0:=[(\sP_{x}, i_{r, x})]=[(\sP_{x'}, i_{r, x'})]$ in $ \CM^{\rm int}_{\CG, b, \mu}(k)$ with image $t_0\in \CM^{\rm int}_{\CH, i(b), i(\mu)}(k)$.
Consider the map
 \[
 \wh{\sS_\eK}_{/x}\sqcup  \wh{\sS_\eK}_{/x'}\to \wh{\sS_{\eK}^-}_{/y}\subset \wh{(\CA_{\eK^\flat}\otimes_{\breve\BZ_p}O_{\breve E})}_{/y}.
 \]
Set $\wh U_{x}=\wh{\sS_\eK}_{/x}$ and  $\wh U_{x'}=\wh{\sS_\eK}_{/x'}$ and $\wh U^-_{y}=\wh{\sS^-_\eK}_{/y}$ and $\wh V_{y}=\wh{(\CA_{\eK^\flat}\otimes_{\breve\BZ_p}O_{\breve E})}_{/y}$. By Proposition \ref{prop451} and (\ref{SCompletions}),  
we obtain commutative diagrams
\begin{equation*}\label{SC3}
 \begin{aligned}
   \xymatrix{
     \wh U_{x} \ar[r]^{\wh{\Psi_{G}}_{,x}} \ar[d]_{i_x} &  \wh{{\CM}^{\rm int}_{\CG, b, \mu}}_{/s_0}\ar[d]^{i_{s_0}} \\
   \wh V_{y}    \ar[r]^{\wh{\Psi_{H}}_{,y}\ \ \ \ \ \ \ \ \ \ \ \ \ }&\wh{({\CM}^{\rm int}_{\CH, i(b), i(\mu)}\otimes_{\BZ_p}O_{\breve E})}_{/t_0},
        }
        \end{aligned}\quad 
        \begin{aligned}
   \xymatrix{
       \wh U_{x'} \ar[r]^{\wh{\Psi_{G}}_{,x'}} \ar[d]_{i_{x'}} &  \wh{{\CM}^{\rm int}_{\CG, b, \mu}}_{/s_0}\ar[d]^{i_{s_0}} \\
   \wh V_{y}    \ar[r]^{\wh{\Psi_{H}}_{,y}\ \ \ \ \ \ \ \ \ \ \ \ \ }&\wh{({\CM}^{\rm int}_{\CH, i(b), i(\mu)}\otimes_{\BZ_p}O_{\breve E})}_{/t_0}.
        }
        \end{aligned}
    \end{equation*}
Note that these two diagrams share three vertices and the arrows between them. All horizontal arrows are isomorphisms (by the proof of \ref{par812} (c) for 
$\wh{\Psi_{H}}_{,y}$, see the text under (\ref{SCompletions})).
It follows that the scheme-theoretic images of $\wh U_{x}\to \wh V_y$ and $\wh U_{x'}\to \wh V_y$ are equal. Since $\wh U_x$ and $\wh U_{x'}$
are both irreducible components of the normalization of the image $\wh U^-_{y}=\wh{\sS^-_\eK}_{/y}$ in $\wh V_y$, it follows 
that $\wh U_{x}=\wh U_{x'}$ and so $x=x'$  in $\sS_\eK(k)$, and $x=x'$ in $\RRZ_{\CG, \mu,  x_0 }(k)$. This contradiction shows the asserted injectivity. 
    \end{proof}

  \subsubsection{Condition $\rm U$}\label{Uconj} 
  We conjecture that $c:  \RRZ_{\CG, \mu,  x_0 }^\diam\xrightarrow{ \ } \CM^{\rm int}_{\CG, b, \mu}$ is an isomorphism.
  By the above, this is equivalent to asking that $c: \RRZ^\diam_{\CG, \mu,  x_0 }(k)\to \CM^{\rm int}_{\CG, b, \mu}(k)=X_\CG(b, \mu^{-1})(k)$
  is bijective, or also  that it is surjective. In turn, this is equivalent to the following condition.
  
  \begin{itemize}
  \item[] 
  {\sl $({\rm U}_{x_0})$: Let $x_0\in \sS_\eK(k)$, $b=b_{x_0}$.  There is a morphism
  \[
  \Theta_{\CG, x_0}: X_\CG(b, \mu^{-1})\to \sS_{\eK}
  \]
  which sends the base point to $x_0$ and is such that the diagram
  \begin{displaymath}
  \xymatrix{
     X_\CG(b, \mu^{-1})(k)\ar[r] \ar[d]_{\Theta_{\CG, x_0 }(k)}& X_{\CH}(i(b), i(\mu^{-1}))(k)
     \ar[d]^{  \Theta_{\CH, i(x_0) }(k)}\\
      \sS_{\eK}(k) \ar[r] & \CA_{\eK^\flat}(k)\, }
    \end{displaymath}
   commutes.}
\end{itemize}

Indeed, assuming   $({\rm U}_{x_0})$ we obtain a morphism $X_\CG(b, \mu^{-1})\to \RRZ'_{\CG,\mu, x_0}$ whose image lands
in $\RRZ_{\CG,\mu, x_0}$.  This morphism gives on $k$-points a map $X_\CG(b, \mu^{-1})(k)\to \RRZ_{\CG, \mu,  x_0 }(k)$ which is easily seen to be an inverse of $c:  \RRZ_{\CG, \mu,  x_0 }(k)\to X_\CG(b, \mu^{-1})(k)$. In particular, there is at most one such map $\Theta_{\CG, x_0}$.

  Assuming $({\rm U}_{x_0})$ we can identify $\RRZ^\diam_{\CG, \mu, x_0}\simeq \CM^{\rm int}_{\CG, b, \mu}$. In particular,  $\CM^{\rm int}_{\CG, b, \mu}$ is representable by a formal scheme $\sM_{\CG, b, \mu}$ and we obtain a ``uniformization" morphism
 \begin{equation}\label{ThetM}
 \Theta^{\rm {RZ}}_{\CG, x_0}: \sM_{\CG, b, \mu}\to \wh\sS_\eK.
 \end{equation}
 which maps the base point to $x_0\in \sS_\eK(k)$.
 \begin{remark}\label{prevres}
Suppose that $p$ is odd, $G=\eG\otimes_\BQ \BQ_p$ splits over a tamely ramified extension of $\BQ_p$ and $p\nmid |\pi_1(G_{\rm der})|$.  In this case condition $({\rm U}_{x})$ is identical with Axiom A in \cite[\S 4.3]{HamaKim}; Hamacher and Kim also conjecture that,
under these hypotheses, it is always satisfied. By work of Zhou \cite{Zhou, vanH}, condition $({\rm U}_x)$ is known to hold under these hypotheses if in addition $x$ is basic, or $G$ is residually split or $\CG$ is absolutely special.  By Nie \cite{Nie}, it is known if $G$ is unramified. {\cmag As mentioned in the Introduction, condition $({\rm U}_x)$ is now known to hold in general, due to work of Gleason-Lim-Xu \cite[Cor. 1.10]{GLX22}.}
\end{remark}
 \subsubsection{Uniformization}
 Let $x_0\in\sS_\eK(k)$. We assume that Condition $({\rm U}_{x_0})$ from \S \ref{Uconj} is satisfied. Using the action of $\eG(\BA_f^p)$ on $\sS_{\eK_p}$ as a group of Hecke correspondences, the morphism $\Theta^{\rm {RZ}}_{\CG, x_0}$ from \eqref{ThetM} induces a  morphism
 \begin{equation}\label{unifmorKP}
 \sM_{\CG, b_{x_0}, \mu}\times \eG(\BA_f^p)\to \sS_{\eK_p}\otimes_{O_E}{O_{\breve E}} . 
 \end{equation}
 The image $\CI(x_0)$ on $k$-points is called the isogeny class of $x_0$. A point $x'\in\sS_{\eK_p}(k)$ lies in $\CI(x_0)$ if and only if the corresponding polarized abelian varieties $A_{x_0}$ and $A_{x'}$ are related by a quasi-isogeny respecting weakly the polarizations and such that under the induced maps on rational Dieudonn\'e modules, resp. on rational Tate modules for $\ell\neq p$, the tensors $s_{a, {\rm crys}, x_0}$ and $s_{a, {\rm crys}, x'}$, resp. $s_{a, {\ell}, x_0}$ and $s_{a, {\ell}, x'}$, are mapped to one another, comp. \cite[Prop. 1.4.15]{KisinLR}.  
 
  \begin{remark}\label{LRreal}
 If $x_0$ is basic, {\cmag i.e., the class $[b_{x_0}]\in B(G, \mu^{-1})$ is basic, } then   $\CI(x_0)$ is the set of $k$-points of a closed subscheme of $\sS_{\eK_p}\otimes_{O_E}k$ {\cmag which is a proper $k$-scheme.  In fact, one expects the following concrete description of $\CI(x_0)$ in this case.} Consider the basic locus $ \sS_{\eK_p, {\rm basic}}$,  a closed subscheme of  $\sS_{\eK_p}\otimes_{O_E}k$, cf. \cite[Thm. 1.3.13]{KMS}, 
 \begin{equation}
 \sS_{\eK_p, {\rm basic}}(k)=\{ x'\in \sS_{\eK_p}(k)\mid [b_{x'}]\in B(G, \mu^{-1})\text{ the unique basic element}\} .
 \end{equation} 
  Then $\sS_{\eK_p, {\rm basic}}(k)$ should always coincide with $\CI(x_0)$, for any basic $x_0$. This is known to be true at least if $p$ is odd and $G$ is unramified and $\eK_p$ hyperspecial, cf. \cite[Cor. 7.2.16]{XZ}. 
 It also holds if $p$ is odd, $G$ is quasisplit and splits over a tame extension, and $p\nmid |\pi_1(G_{\rm der})|$ and $\eK_p$ is special, cf. \cite{HZZ}. 
 The statement is also compatible with the Langlands-Rapoport conjecture which enumerates isogeny classes by equivalence classes of  admissible homomorphisms $\phi: \frak Q\to {\frak G}_G$ from the quasi-motivic gerb $\frak Q$ to the neutral gerb ${\frak G}_G$. Indeed,  up to equivalence,  there is a unique  such $\phi$ whose  $p$-component $\phi_p$ is basic, cf. \cite{HZZ}.   \end{remark} 
 
 The morphism \eqref{unifmorKP} induces a morphism
 \begin{equation}\label{unifK}
  \sM_{\CG, b_{x_0}, \mu}\times \eG(\BA_f^p)/\eK^p\to \sS_{\eK}\otimes_{O_E}{O_{\breve E}}. 
 \end{equation}
 Let $I_{x_0}(\BQ)$ be the group of self-isogenies of $A_{x_0}$ which respect the tensors $s_{a, {\rm crys}, x_0}$, resp. $s_{a, {\ell}, x_0}$. Then we have an action of $I_{x_0}(\BQ)$ on $\RRZ_{\CG, \mu, x_0}\simeq \sM_{\CG, b, \mu}$ (through its action on the Dieudonn\'e module respecting $s_{a, {\rm crys}, x_0}$), and an action of $I_{x_0}(\BQ)$ on $\eG(\BA_f^p)/\eK^p$,
  after fixing
  a level $\eK^p$-structure on the rational Tate modules of $A_{x_0}$ respecting $s_{a, {\ell}, x_0}$. 
  These two actions combine to an action of $I_{x_0}(\BQ)$ on the LHS of \eqref{unifK}. 
  
\begin{theorem}\label{HK}
 Let $(p,\eG, X, \eK)$ be of global Hodge type. Let $x_0\in \sS_\eK(k)$ and assume that Condition $({\rm U}_{x_0})$ in \S \ref{Uconj} holds. Then $\CM^{\rm int}_{\CG, b_{x_0}, \mu}$ is representable by a formal scheme $\sM_{\CG, b_{x_0}, \mu}$ and 
 there is non-archimedean uniformization along the isogeny class $\CI(x_0)$ in $\sS_\eK\otimes_{O_E} k$, i.e., an isomorphism of formal schemes over $O_{\breve E}$,
$$
 I_{x_0}(\BQ)\backslash ({\sM_{\CG,b_{x_0},\mu}}\times G(\BA_f^p)/\eK^p)\isoarrow \wh{({\sS_\eK\otimes_{O_E}O_{\breve E})}}_{/\CI(x_0)} .
$$
\emph{This isomorphism is to be interpreted (especially when $x_0$ is non-basic) as for its PEL counterpart in \cite[Thm. 6.23]{R-Z}.} 
\end{theorem} 
{\cmag
 \begin{proof} As noted in \eqref{ThetM}, the $v$-sheaf $\CM^{\rm int}_{\CG, b_{x_0}, \mu}$ is representable by $\RRZ_{\CG, \mu,  x_0 }$. We give the argument for the uniformization statement only in the basic case, when $\CI(x_0)$ is a closed subset  of $\sS_\eK\otimes_{O_E} k$. In the non-basic case, even the interpretation of the statement of the theorem is more involved since  $\CI(x_0)$ is then a countable union of locally  closed subsets of $\sS_{\eK_p}\otimes_{O_E}k$, cf. the explanation in the PEL case in \cite[Thm. 6.23]{R-Z}. In the basic case, one checks: 
 \begin{altenumerate}
 \item The formal scheme on the LHS is locally formally of finite type. Indeed, in the basic case, $I_{x_0}$ is an inner form of $G$ which is anisotropic at the archimedean place, cf. \cite[Cor. 2.1.7]{KisinLR}. Hence the LHS is a finite disjoint sum of formal schemes of the form $\Gamma\backslash \sM_{\CG,b_{x_0},\mu}$, where $\Gamma\subset J(\BQ_p)$ is a discrete cocompact subgroup, comp. \cite[proof of Thm. 6.23]{R-Z}. Furthermore, by replacing $K^p$ by a subgroup of finite index, we may assume that the action of $\Gamma$ is free (see loc.~cit.). Since $\sM_{\CG,b_{x_0},\mu}$ is  locally formally of finite type, the assertion follows. 
 
 \item The morphism induced on the underlying reduced schemes is proper. Indeed, in the diagram \eqref{RZM} the upper arrow induces a finite morphism on the underlying reduced schemes, with target a union of projective schemes, cf. \cite[Prop. 2.32]{R-Z}.  From (i) it follows that the underlying reduced scheme of the LHS is proper over $k$. 
 \item The map induced on $k$-points
 $$
 I_x(\BQ)\backslash ({X_\CG(b_{x_0}, \mu^{-1})(k)}\times G(\BA_f^p)/\eK^p)\to \CI(x)
 $$
 is a bijection. This follows from \cite[Prop. 9.1]{Zhou} (which is based on \cite[Prop. 2.1.3]{KisinLR}). Indeed, the assumption \cite[Assumpt. 6.1.7]{Zhou} is satisfied, thanks to hypothesis $({\rm U}_{x_0})$.  
 
 \item For each point of $ I_{x_0}(\BQ)\backslash {X_\CG(b_{x_0}, \mu^{-1})(k)}\times G(\BA_f^p)/\eK^p$, represented by a pair $(\tilde x', g)\in {X_\CG(b_{x_0}, \mu^{-1})(k)}\times G(\BA_f^p)$, with corresponding point $ {x'}\in\CI(x_0)$, the induced map on completed local rings 
 $$
 \hat\CO_{\sS, x'}\to  \hat\CO_{\sM, \tilde x'}
 $$
 is an isomorphism. Indeed, this follows from Lemma \ref{RZnorm} after identifying  $\sM_{\CG, b_{x_0}, \mu}$   with $\RRZ_{\CG, \mu,  x_0 }$.
 \end{altenumerate}
 The assertion follows, since any morphism of finite type between locally noetherian formal schemes that is formally \'etale, proper and radicial is an isomorphism, comp. \cite[proof of Thm. 6.23]{R-Z}. 
 \end{proof}
 
 }
 
\section{Appendix: Parahoric extension}\label{s:parext}
This appendix is a remnant of an earlier version of the paper. We summarize here some facts on extending torsors under parahoric group schemes over the punctured spectrum of $A_{\rm inf}$. Most of these results are due to Ansch\"utz \cite{An}, which is our main reference. At the time when we wrote the earlier version, the main conjecture was proved by Ansch\"utz in many cases, but not all. In the meantime, he has proved the conjecture in its entirety. We give here a proof of the conjecture in the \emph{tame case or when $p\geq 5$.}  (In fact, our result is somewhat more general, see Theorem \ref{extTHM}.) The reason for leaving in this appendix is that our proof under these restrictive conditions is simpler and more direct.  We also give a simple proof of the extension lemma of Kisin and the first author \cite[Prop. 1.4.3]{KP}. In particular, we do not use  results on ``Serre II" \cite{Gi} which are used by Ansch\"utz and in \cite{KP}, which ultimately proceed on a case-by-case basis. 

\subsection{Statement of the conjecture}

Let $E$ be a $p$-adic field with perfect residue field $k$ of characteristic $p$ and uniformizer $\pi$ (Ansch\"utz considers also local fields of characteristic $p$). Let $C$ be a   perfect non-archimedean field of characteristic $p$ which is a $k$-algebra, and let $C^+$ be an open and bounded valuation subring. We consider the local ring  $A_E=W_E(C^+)=W(C^+)\hat\otimes_{W(k)} O_E$, and
\begin{equation}
X=\Spec (A_E), \quad U=X\setminus S, \quad V=\Spec (A_E\otimes_{O_E} E) . 
\end{equation}
Here $S=V(\pi, [\varpi])$, where $\varpi$ is a pseudouniformizer of $C^+$. If $C^+=O_C$, then $S=\{s\}$ with $s$ the unique closed point of $X$. When we want to stress the dependence on $(C, C^+)$ or also on $E$, we also write $X_{(C, C^+)}$  and $U_{(C, C^+)}$  and $V_{(C, C^+)}$, or $X_{(E; C, C^+)}$  and $U_{(E; C, C^+)}$  and $V_{(E; C, C^+)}$. We consider the following functors on the categories of coherent sheaves of modules,
\begin{equation}
{\rm Coh}(X)\to{\rm Coh}(U), \ \CM\mapsto \CM_{|U}; \quad {\rm Coh}(U)\to{\rm Coh}(X), \ \CM\mapsto {{\rm H}^0(U,\CM)}^{\prime}.
\end{equation}
Here we use that ${\rm H}^0(U, \CM)$ is a finitely generated $A_E$-module, and denote by ${{\rm H}^0(U,\CM)}^{\prime}$ the associated coherent sheaf of modules on $X$. 
\begin{theorem}
The above functors induce mutually inverse  tensor equivalences between ${\rm Bun}(X)$ and ${\rm Bun}(U)$. 
In particular, all vector bundles on $U$ are trivial, i.e., free. 
\end{theorem}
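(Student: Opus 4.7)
The plan is to establish the theorem in two main pieces: full faithfulness of the restriction functor, which reduces to a Hartogs-type identity for the structure sheaf, and essential surjectivity together with triviality, which is the substantial content and reduces to the parahoric extension result for $\CG = \GL_n$.

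First, I would establish the Hartogs-type statement ${\rm H}^0(U, \CO_U) = A_E$ and the vanishing ${\rm H}^1(U, \CO_U)$ in terms of local cohomology — more precisely, show that $H^i_S(X, \CO_X) = 0$ for $i \leq 1$. By a standard Koszul argument this is implied by the existence of a length-two regular sequence in the ideal of $S$: namely, $\pi$ is a non-zero-divisor on $A_E = W_E(C^+)$ since $A_E$ is $\pi$-torsion-free, and the image of $[\varpi]$ in $A_E/\pi = C^+$ is a non-zero-divisor since $C^+$ is a domain and $\varpi \neq 0$. Granted this, Hartogs for $\CO$ follows formally.

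Second, full faithfulness of ${\rm Bun}(X) \to {\rm Bun}(U)$ is then immediate: for $\CM, \CM' \in {\rm Bun}(X)$, applying Hartogs to the locally free sheaf $\sHom(\CM, \CM')$ gives ${\rm H}^0(U, \sHom(\CM,\CM')_{|U}) = {\rm Hom}_{A_E}(\CM, \CM')$. In particular the unit $\CM \to {\rm H}^0(U, \CM_{|U})'$ of the adjunction is an isomorphism. (Alternatively one can invoke Theorem \ref{FFKedlaya}(b) directly.)

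Third, for essential surjectivity, let $\CN$ be a vector bundle on $U$, and set $M := {\rm H}^0(U, \CN)$; the goal is to show that $M$ is a finitely generated projective $A_E$-module whose associated sheaf restricts to $\CN$. The hard part is finite generation, which I would handle by reducing to the case $(C, C^+) = (C, O_C)$ with $C$ algebraically closed perfectoid via a faithfully flat descent argument using products of points as in \S\ref{vcover}. In this universal case, the extension is precisely the content of \cite[Thm. 1.1]{An} (Ansch\"utz) applied to $\CG = \GL_n$: every vector bundle on $\Spec(A_{\inf}) \setminus \{s\}$ is trivial, and in particular extends to a free module on all of $\Spec(A_{\inf})$. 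Finite generation and projectivity over $A_E$ then follow by descent from this universal case, together with Kedlaya's fully-faithful restriction from bundles on $\Spec(W(C^+))$ to bundles on $\CY(C, C^+)$ (Theorem \ref{FFKedlaya}(a),(b)) to ensure compatibility with descent data. The main obstacle is exactly this descent step: one must check that projectivity (not merely flatness) of $M$ is preserved, and that the isomorphism $\tilde M_{|U} \cong \CN$ is faithfully present.

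Finally, to conclude that every vector bundle on $U$ is trivial, by the equivalence already established it suffices to show every finitely generated projective $A_E = W_E(C^+)$-module is free. Since $C^+$ is a valuation ring, hence local, and $W_E(C^+)$ is the $\pi$-adic completion of a Witt-vector construction over this local base, $A_E$ is local with residue field the residue field of $C^+$; by Kaplansky's theorem, every projective module over a local ring is free, and this yields the desired triviality on $U$ as well.
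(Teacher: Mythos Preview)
Your full-faithfulness argument via the regular sequence $(\pi,[\varpi])$ and Hartogs is fine, and the final paragraph deducing triviality from locality of $A_E$ is correct. The problem is the essential surjectivity step.

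The paper's proof is a direct citation of Kedlaya's theorem; in fact the statement is essentially a restatement of Theorem~\ref{FFKedlaya}(c), which already covers the case of an arbitrary perfectoid field $(K,K^+)$ with open bounded valuation subring. Your proposed route instead tries to reduce to $(C,O_C)$ with $C$ algebraically closed and then invoke Ansch\"utz. There are two issues with this. First, the reduction is not well-posed: $(C,C^+)$ is already a single ``point'' in the sense of \S\ref{vcover}, so the product-of-points construction there does not give a nontrivial cover; and the map $C^+ \to O_C$ (when $C^+ \subsetneq O_C$) is a localization, hence flat but \emph{not} faithfully flat, so ordinary faithfully-flat descent does not let you pull the extension back from $W_E(O_C)$ to $W_E(C^+)$. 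Second, even in the target case $(C,O_C)$ with $C$ algebraically closed, the statement that every vector bundle on $\Spec(A_{\inf})\setminus\{s\}$ is free is exactly Kedlaya's theorem (cf.\ \cite[Prop.~14.2.6]{Schber}); Ansch\"utz takes this as input for the $\GL_n$ case rather than proving it independently, so citing \cite{An} here is at best an indirect citation of the same result and at worst circular, depending on which statement in \cite{An} you mean.

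The cleanest fix is simply to invoke Theorem~\ref{FFKedlaya}(c) (equivalently \cite[Thm.~2.7]{Kedlaya}) for the pair $(C,C^+)$ directly, which already gives the equivalence for arbitrary open bounded valuation subrings without any reduction step.
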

\begin{proof}
 This follows from a theorem of Kedlaya \cite{Kedlaya}, cf. \cite[Prop. 14.2.6]{Schber}.
\end{proof}

Now let $G$ be a reductive group over $E$. 
\begin{conjecture}[Extension conjecture]\label{paraextconj}
Let $\CG$ be a parahoric model of $G$ over $O_E$. Then any $\CG$-torsor over $U$ extends to a $\CG$-torsor over $X$. 
\end{conjecture}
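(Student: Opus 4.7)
The plan is to reduce the conjecture to two separate ingredients: (a) the reductive case via Tannakian methods and Kedlaya's theorem, and (b) a Bruhat--Tits dilation argument that descends from a reductive enveloping group to the parahoric $\CG$, valid in the tame case or for $p\geq 5$. As a preliminary, I would first use $v$-descent for $\CG$-torsors (via \cite[Prop.~19.5.3]{Schber}) across the cover $\Spa(C, O_C)\to\Spa(C, C^+)$ to reduce to the case $C^+=O_C$, where $U=X\setminus\{s\}$ is the punctured spectrum of the strictly henselian local ring $A_E$ and $S=\{s\}$ has codimension $2$ in $X$.

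Next, I would treat the reductive case, where $G$ is unramified over $E$ and $\CG$ is a reductive (hyperspecial) model. A $\CG$-torsor $\sP$ over $U$ corresponds by the Tannakian dictionary to an exact tensor functor $\omega_\sP\colon\Rep_{O_E}(\CG)\to\Bun(U)$. Kedlaya's theorem (Theorem~\ref{FFKedlaya}) together with the preceding equivalence $\Bun(X)\simeq\Bun(U)$ is an exact tensor equivalence, so $\omega_\sP$ lifts to an exact tensor functor $\widetilde\omega_\sP\colon\Rep_{O_E}(\CG)\to\Bun(X)$. Tannakian reconstruction for the reductive group scheme $\CG$ over the Noetherian ring $A_E$ then produces a $\CG$-torsor $\widetilde\sP$ on $X$ whose restriction to $U$ recovers $\sP$. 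This step is clean and requires nothing beyond Kedlaya and standard Tannakian formalism.

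For a general parahoric $\CG$, I would invoke Bruhat--Tits theory to exhibit $\CG$, in the tame setting (after a finite unramified or tamely ramified base change $E'/E$ splitting $G$) or when $p\geq 5$, as obtained from a reductive model $\CH$ by a finite sequence of group-scheme dilations $\CG=\CG_n\to\CG_{n-1}\to\cdots\to\CG_0=\CH$ along smooth closed subgroups of the successive special fibers. Given a $\CG$-torsor $\sP$ on $U$, the pushout $\CH\times^\CG\sP$ extends to an $\CH$-torsor on $X$ by the reductive case, and descent through each dilation step is controlled by the cohomology on $U$ of a smooth commutative group scheme (either a vector group or a subtorus). In both cases this cohomology vanishes: vector-group cohomology is computed by coherent $H^1$ on $U$, which is zero by Kedlaya, and torus cohomology reduces to the reductive case (or to $\Pic(U)=0$ by induction via Weil restriction). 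Finally, an \'etale/tamely ramified descent along $E'/E$ concludes; here the tameness or $p\geq 5$ hypothesis enters to ensure that $H^1$ of the Galois group $\Gal(E'/E)$ with coefficients in the relevant fixed-point groups vanishes.

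The main obstacle is the control of the dilation steps and the tame descent: one must verify both that each dilation subquotient is a sufficiently nice (vector or torus) group, and that the $H^1$ along $\Gal(E'/E)$ indeed vanishes. In the wildly ramified case at $p=2,3$, additional Artin--Schreier-type obstructions appear in the unipotent radicals of the special fiber of $\CG$, which is precisely why Ansch\"utz's fully general argument \cite{An} requires more sophisticated ``Serre II''-type input; our hypothesis circumvents these difficulties by keeping the relevant groups separable over the residue field.
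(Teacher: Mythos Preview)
Your proposal contains a fatal error in the parahoric step. You claim that ``vector-group cohomology is computed by coherent $H^1$ on $U$, which is zero by Kedlaya.'' This is false: $H^1(U,\CO_U)=H^2_{\mathfrak m}(A_E)$ is \emph{enormous}, not zero. Kedlaya's theorem says that vector bundles on $U$ extend to $X$, i.e., that the non-abelian $H^1(U,\GL_n)$ is trivial; it says nothing about the vanishing of coherent cohomology. In fact the paper exploits precisely the non-vanishing of $H^1(U',\CO)$ in Scholze's argument for Proposition~\ref{tamedescent}: the contradiction there is that $H^1(U',\CO)=H^2_{\mathfrak m}(W_{E'}(O_C))$ contains classes $\frac{1}{\pi^a[\varpi]^b}$ of unbounded $p$-torsion. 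So your dilation-step obstruction groups do \emph{not} vanish, and the descent from $\CH$ to $\CG$ through successive dilations cannot be carried out as you describe. (Incidentally, $A_E$ is not Noetherian either, though that is a minor point.)

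The paper's route is quite different. Rather than working one dilation at a time, it uses the Bruhat--Tits identity $\CG_x\cong\Res_{O'/O}(\CG'_x)^\Gamma$ for a tame Galois extension $E'/E$ with group $\Gamma$, pushes the torsor up to a $(\Gamma,\CG')$-torsor over $U'$, extends over $X'$ by the split/unramified case, and then shows that the $\Gamma$-fixed locus of the Weil restriction is a $\CG$-torsor over $X$. The last step is the delicate one: nonemptiness of the fiber over the closed point is proved by contradiction using the \emph{nonzero} $H^1(U',\CO)$. The unramified case itself (Proposition~\ref{unrext}) is handled not by pure Tannakian lifting but by embedding $\CG\hookrightarrow\GL_n$ with affine quotient, following \cite[Prop.~8.5]{An}.
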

Note that if such an extension exists, then it is unique. As pointed out above, Ansch\"utz has proved this conjecture
(see \cite[Thm 1, Cor.  11.6]{An}). 
\begin{proposition}\label{cohoV}
Let $C$ and the residue field $k$ of $E$ be algebraically closed. Let $\CG$ be a parahoric model of $G$. Then a $\CG$-torsor $\CP$ on $U$ extends to $X$ if and only if the $G$-torsor $\CP_{|V}$ is trivial. Moreover,  this holds for all $\CG$-torsors $\CP$ on $U$  if and only if ${\rm H}^1(V, G)=\{1\}$.

In particular, if the Extension Conjecture \ref{paraextconj} is true for one parahoric model $\CG$ of $G$, then it is true for any  parahoric model $\CG$ of $G$.
\end{proposition}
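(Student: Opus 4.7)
My plan rests on one central input: under the standing hypotheses that $C$ and the residue field $k$ of $E$ are algebraically closed, every $\CG$-torsor on $X=\Spec(A_E)$ is trivial, as is every $\CG$-torsor on $\Spec$ of the $\pi$-adic completion $\widehat{A_E}^{(\pi)}$. This is \cite[Cor.~11.6]{An} (and in the latter case, a mild variant of the same); alternatively, it follows from smoothness of $\CG$, Lang's theorem applied to the connected group $\CG_{\bar k}$, and successive approximation along the $(\pi,[\varpi])$-adic filtration on $A_E$. Granting this, the direction $(\Rightarrow)$ of (a) is immediate: any extension $\wt\CP$ of $\CP$ to $X$ is trivial, whence $\CP|_V=\wt\CP|_V$ is trivial.

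For the $(\Leftarrow)$ direction of (a), I plan to invoke Beauville-Laszlo glueing along the non-zero-divisor $\pi$, in the form extended from modules to $\CG$-torsors (which is legitimate as $\CG$ is smooth affine). This identifies $\CG$-torsors on $X$ with triples $(\CP_V,\CP_\pi,\iota)$ consisting of a $\CG$-torsor $\CP_V$ on $V$, a $\CG$-torsor $\CP_\pi$ on $\Spec\widehat{A_E}^{(\pi)}$, and a glueing isomorphism $\iota$ over $\Spec\widehat{A_E}^{(\pi)}[1/\pi]$. Given $\CP$ on $U$ with $\CP|_V$ trivial, I take both $\CP_V$ and $\CP_\pi$ to be the trivial torsor (the latter by the triviality input above), and construct $\iota$ from the restriction of $\CP$ to $\Spec\widehat{A_E}^{(\pi)}[1/\pi]\subset U$ compared against the two chosen trivializations. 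The resulting $\CG$-torsor on $X$ restricts to $\CP$ on $U$.

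Part (b) follows formally. The direction $(\Leftarrow)$ is immediate from (a), since vanishing of $H^1(V,G)$ trivializes $\CP|_V$ for every $\CG$-torsor $\CP$ on $U$. For $(\Rightarrow)$, I plan to show that every $G$-torsor $\CQ$ on $V$ arises as the restriction of some $\CG$-torsor on $U$; combined with the hypothesis that every such extended torsor has trivial restriction to $V$, this forces $\CQ$ to be trivial. The extension from $V$ to $U$ is carried out by an analogous Beauville-Laszlo construction in the subset $U\subset X$, using that $\CG$-torsors over the generic point $\Spec(C)$ of $V(\pi)\cap U$ are trivial by Lang's theorem. The ``in particular'' statement is then formal: by (b), the Extension Conjecture for a parahoric $\CG$ is equivalent to the cohomological vanishing $H^1(V,G)=\{1\}$, a condition that depends only on the generic fiber $G$ and not on the chosen parahoric model.

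The main technical obstacle is justifying Beauville-Laszlo glueing for $\CG$-torsors in this non-Noetherian setting and verifying compatibility of the construction with reduction of structure group along $\CG\hookrightarrow \GL_n$; the critical input making this work is the triviality of $\CG$-torsors on $\Spec\widehat{A_E}^{(\pi)}$, which in turn reduces to smoothness of $\CG$ and Lang's theorem for $\CG_{\bar k}$.
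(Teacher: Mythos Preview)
Your argument for $(\Leftarrow)$ in part (a) has a genuine gap. The ring $A_E = W_E(C^+) = W(C^+)\otimes_{W(k)}O_E$ is already $\pi$-adically complete (since $O_E$ is finite free over $W(k)$ and $W(C^+)$ is $p$-adically complete, the $\pi$-adic and $p$-adic topologies coincide). Hence $\widehat{A_E}^{(\pi)} = A_E$, and your Beauville--Laszlo description on $X$ is a tautology: the ``formal piece'' is all of $X$, and you are asking to produce a $\CG$-torsor on $X$ from a $\CG$-torsor on $X$. Your construction only feeds in the datum $\CP|_V$, so there is no mechanism by which the output could recover $\CP$ on the other chart $\Spec(A_E[1/[\varpi]]) \subset U$.

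The correct glueing, as in the paper (following Ansch\"utz), takes place on $U$ rather than on $X$. One completes the chart $\Spec(A_E[1/[\varpi]])$ along $\pi$; this completion is $O_{\CE} := W_E(C)$, a complete dvr with algebraically closed residue field, so $\CG$-torsors over $O_\CE$ are trivial. The isomorphism classes of $\CG$-torsors on $U$ trivial on $V$ and on $\Spec(O_\CE)$ are then parametrized by the double coset space
\[
\CG(O_\CE)\backslash G(\CE)/G(A_E[1/\pi]),
\]
and the entire content of $(\Leftarrow)$ is that this space is a single point. This is \emph{not} automatic from strict henselianness or Lang's theorem: the paper deduces it from the ind-properness of the Witt vector affine Grassmannian ${\rm Gr}_\CG$, which yields ${\rm Gr}_\CG(C) = {\rm Gr}_\CG(C^+)$ and hence the Cartan-type decomposition $G(\CE) = \CG(O_\CE)\cdot G(A_E[1/\pi])$. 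This step is absent from your proposal and cannot be circumvented.

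A smaller point: in your part (b), extending a $G$-torsor $\CQ$ on $V$ to a $\CG$-torsor on $U$ requires trivializing $\CQ$ over $\Spec(\CE)$ (the fraction field of $O_\CE$), which is Steinberg's theorem for the field $\CE$ of cohomological dimension $\leq 1$, not Lang's theorem over $\Spec(C)$. Citing \cite[Cor.~11.6]{An} for triviality of $\CG$-torsors on $X$ is also unnecessary (and risks circularity): it follows directly from the fact that $A_E$ is strictly henselian and $\CG$ is smooth.
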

\begin{proof}
We show that the proof of \cite[Cor. 9.3]{An} (in which $C^+=O_C$) carries over. Since $C$ is algebraically closed, 
$C^+$ is strictly henselian. Since $O_E$ is strictly henselian, so is $W_E(C^+)=W(C^+)\hat\otimes_{W(k)} O_E$, and therefore $\CG$-torsors
over $W_E(C^+)$ are trivial.  Hence a $\CG$-torsor on $U$ extends to $X$ if and only if it is trivial. Let $\varpi\in C^+$ be a pseudo-uniformizer. The $\pi$-adic completion of $W_E(C^+)[1/[\varpi]]$ is $W_E(C)=W(C)\hat\otimes_{W(k)} O_E$, which is a complete dvr with algebraically closed residue field $C$, so is also strictly henselian. Set $O_\CE:=W_E(C)$ and let $\CE$ be its fraction field. Then any $\CG$-torsor on $O_\CE$ is trivial and, by Steinberg's theorem, any $G$-torsor on $\CE$ is trivial.  By Beauville-Laszlo glueing, there is a bijection between the set of isomorphism classes of $\CG$-torsors on $U$ which are trivial on $V=\Spec (W_E(C^+)[1/\pi])$ and on $\Spec (O_\CE)$ and the double coset space 
\[
\CG(O_\CE)\backslash G(\CE)/G(W_E(C^+)[1/\pi]). 
\]
Hence for the first assertion of the proposition, since the triviality on $O_\CE$ is automatic, it suffices to show that this double coset space consists of only one element.  This follows as in the proof of \cite[Prop. 9.2]{An} from the ind-properness of the Witt vector Grassmannian ${\rm Gr}_\CG$, which implies identifications 
\begin{equation*}
\begin{aligned}
G(W_E(C)[1/\pi])/\CG(W_E(C))&={\rm Gr}_\CG(W_E(C)))\\&={\rm Gr}_\CG(W_E(C^+))= G(W_E(C^+)[1/\pi])/\CG(W_E(C^+)) .
\end{aligned}
\end{equation*}
In the second assertion of the proposition, if ${\rm H}^1(V, G)=\{1\}$, then any $G$-torsor on $V$ is trivial. Hence, again by the triviality of the double coset space, any $\CG$-torsor on $U$ is trivial and therefore extends to $X$. Conversely, let $P$ be a $G$-torsor on $V$. By Steinberg's theorem, the pullback $P_\CE$ of $P$ to $\CE$ is trivial. Hence, by  Beauville-Laszlo glueing, we can glue $P$ and the trivial $\CG$-bundle on $O_\CE$ to obtain a $\CG$-bundle $\CP$ on $U$. But by assumption, $\CP$ extends to $X$ and hence is trivial. But then $P$ is trivial, as claimed. 
\end{proof}
We can reduce Conjecture \ref{paraextconj} to the case considered in the previous proposition by the following descent lemma.
\begin{lemma}\label{redalgcl}
a) Let $C'^+$ be a faithful flat extension of $C^+$. If Conjecture \ref{paraextconj} holds for $\CG$-torsors over $U_{(C', C'^+)}$, then it does for $\CG$-torsors over $U_{(C, C^+)}$. 

b) Let $E'/E$ be an unramified extension. If Conjecture \ref{paraextconj} holds for parahoric $O_{E'}$-models of $G'=G\otimes_E E'$, then it holds for parahoric models of $G$. 
\end{lemma}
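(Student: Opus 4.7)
The plan is to prove each part by a descent argument, exploiting that extensions of $\CG$-torsors across $S=V(\pi,[\varpi])$ are \emph{unique}. This uniqueness is a Hartogs-type property which follows from Kedlaya's full-faithfulness theorem (Theorem \ref{FFKedlaya}(b)) for vector bundles on $\Spec(W(R^+))$ versus its punctured spectrum, combined with the Tannakian description of $\CG$-torsors via exact tensor functors out of ${\rm Rep}(\CG)$, exactly as in the proof of Lemma \ref{lemmabarcg}.

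For part (a), pull back a $\CG$-torsor $\CP$ on $U=U_{(C,C^+)}$ to a $\CG$-torsor $\CP'$ on $U'=U_{(C',C'^+)}$ and apply the hypothesis to obtain an extension $\wt\CP'$ on $X'=X_{(C',C'^+)}$. Since $C^+$ and $C'^+$ are perfect $\BF_p$-algebras, faithful flatness of $C^+\to C'^+$ propagates to faithful flatness of $W(C^+)\to W(C'^+)$ (Bhatt--Scholze: each $W_n$ of a perfect $\BF_p$-algebra is an iterated square-zero extension of $R$ by copies of $R$, preserving faithful flatness), hence to faithful flatness of $A_E=W_E(C^+)\to W_E(C'^+)=A_E'$. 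The tautological descent datum $\alpha\colon p_1^*\CP'\isoarrow p_2^*\CP'$ on $U'\times_UU'$ extends uniquely to $X'\times_XX'$ (and its cocycle identity to $X'\times_XX'\times_XX'$), because these products are themselves of the form $\Spec W_E(R^+)$ for an integral perfectoid $R^+$ (the appropriate completed tensor product), so that the uniqueness of extension applies verbatim. Finally, fpqc descent for $\CG$-torsors on the affine scheme $X$ yields the required $\CG$-torsor $\wt\CP$.

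For part (b), one reduces immediately to the case of a finite Galois unramified extension $E'/E$ with group $\Gamma=\Gal(E'/E)$, so that $O_{E'}/O_E$ is finite \'etale Galois. Because $E'/E$ is unramified, the base change $\CG'=\CG\otimes_{O_E}O_{E'}$ is a parahoric model of $G'=G\otimes_EE'$ corresponding to the image in $\sB(G',E')$ of the point of $\sB(G,E)$ defining $\CG$, under the identification $\sB(G,E)=\sB(G',E')^\Gamma$. A $\CG$-torsor $\CP$ on $U$ pulls back to a $\CG'$-torsor $\CP'$ on $U'=U_{(E';C,C^+)}$ which by hypothesis extends to $\wt\CP'$ on $X'=X_{(E';C,C^+)}$. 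The $\Gamma$-equivariant descent datum on $\CP'$ expressing $\CP'=f^*\CP$ extends uniquely to a $\Gamma$-equivariant descent datum on $\wt\CP'$, again by the same uniqueness argument (and here the fiber products split into finite coproducts of copies of $X'$, so the argument is elementary). \'Etale descent along the finite \'etale cover $X'\to X$ then produces the sought-for $\CG$-torsor $\wt\CP$ on $X$.

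The principal obstacle, which is really the only technical point in the proof, lies in part (a): verifying faithful flatness of $W_E(C^+)\to W_E(C'^+)$ in the stated generality, and checking that iterated fiber products $X'\times_XX'$ (and the triple product, needed for the cocycle) are themselves of the form $\Spec W_E(R^+)$ for $R^+$ integral perfectoid, so that the uniqueness of extension applies there. If the faithful flatness should fail in some pathological case, one can bypass it by appealing instead to $v$-descent for $\CG$-torsors (cf.\ \cite[Prop.~19.5.3]{Schber} together with Proposition \ref{propvstack}), which is more robust; in part (b) no such issue arises since the cover is finite \'etale.
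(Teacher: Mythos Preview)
Your approach to part (b) matches the paper's: the key fact is that $\CG\otimes_{O_E}O_{E'}$ is again parahoric, after which Galois (\'etale) descent is routine.

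For part (a), however, the paper takes a genuinely different and cleaner route that sidesteps your ``principal obstacle'' entirely. Rather than extending the descent datum across $S$, the paper (following \cite[Lem.~9.1]{An}) works purely through the Tannakian formalism on the \emph{smaller} base: a $\CG$-torsor $\CP$ on $U$ is an exact tensor functor ${\rm Rep}(\CG)\to{\rm Bun}(U)$; composing with the extension equivalence ${\rm Bun}(U)\xrightarrow{\sim}{\rm Bun}(X)$ (Kedlaya) already yields a tensor functor $\omega\colon{\rm Rep}(\CG)\to{\rm Bun}(X)$. The only thing to verify is that $\omega$ is \emph{exact}. But its composition with base change to $X'$ agrees (by uniqueness of extension on $X'$) with the functor associated to the $\CG$-torsor $\wt\CP'$ furnished by the hypothesis, hence is exact; and exactness of a sequence of finite projective modules descends along $A_E\to A_E'$ (cf.\ Lemma~\ref{exactLemma}). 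No fiber products, no descent data on $X'\times_X X'$, no appeal to $v$-descent.

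By contrast, your argument hinges on extending the descent isomorphism across $X'\times_X X'$, and your justification---that this fiber product is itself $\Spec W_E(R^+)$ for integral perfectoid $R^+$---is false as stated: $W_E(C'^+)\otimes_{W_E(C^+)}W_E(C'^+)$ is generally \emph{not} of this form (tensor products do not commute with Witt vectors). One can try to repair this by passing to $W_E$ of the perfect ring $C'^+\otimes_{C^+}C'^+$ and invoking a version of Proposition~\ref{propvstack}, but this requires additional care (e.g.\ the topology on the tensor product) and is considerably heavier than the paper's two-line Tannakian argument. Your proposal is not wrong in spirit, but the gap you flagged is real and the paper's method simply avoids it.
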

\begin{proof}
For a), the argument of \cite[Lem. 9.1]{An} applies: any  tensor functor $\omega\colon {\rm Rep}(\CG)\to {\rm Bun}(X_{(C, C^+)})$ which is exact after composition with the restriction functor ${\rm Bun}(X_{(C, C^+)})\to {\rm Bun}(U_{(C, C^+)})$ and with the base extension functor ${\rm Bun}(X_{(C, C^+)})\to {\rm Bun}(X_{(C', C'^+)})$ is itself exact.   For b), one uses that the base change under $O_E\to O_{E'}$ of a parahoric model is again a parahoric model, cf.  \cite[p. 25]{An}. 
\end{proof} 
\begin{proposition}\label{unrext}
Assume that $G$ is unramified. Then $G$ satisfies Conjecture \ref{paraextconj}.
\end{proposition}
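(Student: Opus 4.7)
The plan is to exploit that an unramified group admits a reductive (hyperspecial) parahoric model, and then to extend torsors by an associated-bundle argument whose hinge is the affineness of the quotient $\GL(\Lambda)/\CG$. After using Lemma \ref{redalgcl} to pass to the setting $C^+ = O_C$ with $C$ and $k$ algebraically closed, Proposition \ref{cohoV} tells me that it is enough to verify the conjecture for a single parahoric model of $G$. Since $G$ is unramified, there exists a hyperspecial point in the extended Bruhat-Tits building, and the associated parahoric $O_E$-group scheme $\CG$ is reductive; I would work with this $\CG$ throughout the rest of the argument.

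Next I would fix a closed immersion $\rho: \CG \hookrightarrow \GL(\Lambda)$ into the general linear group of some finite free $O_E$-module $\Lambda$, and invoke the classical result (Richardson/Haboush, extended to reductive group schemes over a base by Conrad/Alper) that, for a closed immersion of reductive group schemes $\CG \hookrightarrow \CG'$ over $O_E$, the fppf quotient $\CG'/\CG$ is represented by an affine $O_E$-scheme of finite type. Applied here, this yields that $Y := \GL(\Lambda)/\CG$ is affine over $O_E$.

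Given a $\CG$-torsor $P$ on $U$, the associated vector bundle $\CV = P \times^{\CG} \Lambda$ extends uniquely to a vector bundle $\tilde\CV$ on $X$ by Kedlaya's theorem (stated just before Conjecture \ref{paraextconj}). Since $A_E = W(O_C) \hat\otimes_{W(k)} O_E$ is local, $\tilde\CV$ is free, so the $\GL(\Lambda)$-torsor $\tilde{\bar P} := \underline{\Isom}_{\CO_X}(\Lambda \otimes \CO_X, \tilde\CV)$ on $X$ is trivial and the quotient sheaf $\tilde{\bar P}/\CG$ is canonically identified with $Y \times_{O_E} X$, an affine $X$-scheme. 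The reduction of $\tilde{\bar P}|_U$ recorded by $P$ corresponds to a section of $\tilde{\bar P}/\CG$ over $U$, i.e.\ to an $O_E$-algebra homomorphism $\Gamma(Y, \CO_Y) \to \Gamma(U, \CO_U)$. Since Kedlaya's equivalence applied to the structure sheaf identifies $\Gamma(U, \CO_U) = \Gamma(X, \CO_X) = A_E$, this map comes from a (unique) section over $X$, producing a $\CG$-reduction of $\tilde{\bar P}$ and hence a $\CG$-torsor $\tilde P$ on $X$ which restricts to $P$ on $U$.

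The hard part is really the affineness of the quotient $\GL(\Lambda)/\CG$: this is exactly the reductivity feature that fails for a general smooth parahoric model, and it is the reason the unramified hypothesis enters essentially. Once it is granted, the argument reduces cleanly to Kedlaya's extension of vector bundles and of functions across the ``punctured point'' $S \subset X$, with no further delicate input required.
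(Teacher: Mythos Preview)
Your proof is correct and follows essentially the same route as the paper's: the paper simply invokes \cite[Prop.~8.5]{An}, whose content is exactly the associated-bundle argument you spell out (embed the reductive model $\CG\hookrightarrow\GL(\Lambda)$, use affineness of $\GL(\Lambda)/\CG$, extend the $\GL(\Lambda)$-torsor by Kedlaya, then extend the section of the affine quotient using $\Gamma(U,\CO_U)=A_E$). Your explicit reduction via Lemma~\ref{redalgcl} and Proposition~\ref{cohoV} to a single parahoric model is also what the paper relies on implicitly when it passes directly to the reductive model; one small remark is that the reduction to $C^+=O_C$ is not actually needed, since Proposition~\ref{cohoV} is stated for general $C^+$ and your main argument works for any $(C,C^+)$.
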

\begin{proof}
Let $\CG$ be the reductive model of $G$ over $O_E$. Then the proof of \cite[Prop. 8.5]{An} applies since there exists an embedding $\CG\hookrightarrow \GL_n$ such that the quotient $\GL_n/\CG$ is affine. 
\end{proof}
\begin{proposition}\label{extconjfunct}
\noindent (i)  $G$  satisfies the Extension Conjecture if and only if $G_\ad$ does. 

\smallskip

\noindent (ii) The class of groups  satisfying the Extension Conjecture is closed under  direct products. 

\smallskip

\noindent (iii) Let $E'/E$ be a finite extension. If $G'$ over $E'$  satisfies the Extension Conjecture, then so does $\Res_{E'/E}(G')$.  Let $G$ be over $E$ and let $G'=G\otimes_E E'$. If $G'$ satisfies the Extension Conjecture, then so does $G$, provided that $E'/E$ is an unramified extension. 
\end{proposition}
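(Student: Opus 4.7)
My plan is to reduce all three statements to the cohomological criterion provided by Proposition~\ref{cohoV}, after first applying Lemma~\ref{redalgcl}(a) to assume $C^+=O_C$ with $C$ algebraically closed over $k$; we may also take $k$ algebraically closed since the Extension Conjecture is a statement only about reductive groups over $E$. Under these reductions, for any reductive $G/E$ with parahoric $\CG$, the Extension Conjecture holds iff $H^1(V,G)=\{1\}$, where $V=\Spec W_E(O_C)[1/\pi]$. Moreover, by the ``In particular'' sentence of Proposition~\ref{cohoV}, the validity for one parahoric model implies it for every other, so I am free to make convenient choices of parahorics whenever needed.

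For (ii), any parahoric of $G_1\times G_2$ is a product $\CG_1\times\CG_2$ of parahorics, since the extended Bruhat--Tits building of $G_1\times G_2$ is a product of buildings and the stabilizer (and its connected component) of a product of points is the product of stabilizers. A torsor under a direct product of smooth affine group schemes decomposes canonically into the pair of its pushouts, so extension from $U$ to $X$ is componentwise; cohomologically, $H^1(V,G_1\times G_2)=H^1(V,G_1)\times H^1(V,G_2)$. Statement (iii.b) is precisely Lemma~\ref{redalgcl}(b), already proved in the excerpt.

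For (iii.a), the first step is to check that $\CH:=\Res_{O_{E'}/O_E}(\CG')$ is a parahoric of $\Res_{E'/E}(G')$ whenever $\CG'$ is a parahoric of $G'$. This is standard: the extended building of $\Res_{E'/E}(G')$ over $E$ is canonically identified with the extended building of $G'$ over $E'$, and Weil restriction of a smooth affine connected stabilizer group scheme along a finite flat extension of complete discrete valuation rings is again smooth affine with connected fibers; concretely, $\CH$ represents the functor $R\mapsto \CG'(R\otimes_{O_E}O_{E'})$. Once $k$ is algebraically closed the maximal unramified subextension of $E'/E$ is trivial, so $E'/E$ is totally ramified and $W_E(O_C)\otimes_{O_E}O_{E'}=W_{E'}(O_C)$; consequently $X\otimes_{O_E}O_{E'}=X_{(E';C,O_C)}$ and similarly for $U$. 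The Weil restriction / base change adjunction therefore identifies the groupoid of $\CH$-torsors on $X$ (resp.~$U$) with the groupoid of $\CG'$-torsors on $X_{(E';C,O_C)}$ (resp.~$U_{(E';C,O_C)}$), so the extension problem for $\CH$ over $E$ is equivalent to the extension problem for $\CG'$ over $E'$.

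The main obstacle is (i), where I need to pass between $G$ and $G_\ad$ using the central extension $1\to Z\to G\to G_\ad\to 1$ with $Z$ the scheme-theoretic center, a group of multiplicative type. By passing to the connected components of stabilizer group schemes, one obtains a corresponding sequence $1\to \CZ\to \CG\to \CG_\ad\to 1$ of smooth affine $O_E$-group schemes (possibly after adjusting the parahoric choices, which is harmless by the last sentence of Proposition~\ref{cohoV}). The non-abelian cohomology sequence, valid because $\CZ$ is central, gives the exact sequences of pointed sets
\[
H^1(Y,\CZ)\to H^1(Y,\CG)\to H^1(Y,\CG_\ad)\xrightarrow{\delta}H^2(Y,\CZ)
\]
for $Y=U$ and $Y=V$. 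For the ``only if'' direction: given a $\CG_\ad$-torsor on $U$, its image in $H^2(U,\CZ)$ under $\delta$ can be shown to vanish after passing to a suitable $v$-cover, by exploiting Ansch\"utz's theorem for tori (the connected part of $\CZ$) together with the triviality of the finite multiplicative quotient on the strictly henselian base; this provides a lift to a $\CG$-torsor on $U$, which extends by hypothesis, and whose pushforward extends the original $\CG_\ad$-torsor. For the ``if'' direction: given a $\CG$-torsor $\CP$ on $U$, its $\CG_\ad$-pushforward extends by hypothesis, and the obstruction in $H^2(X,\CZ)$ to lifting the extension back to $\CG$ automatically dies in $H^2(U,\CZ)$ because of $\CP$; thus one must show the restriction map $H^2(X,\CZ)\to H^2(U,\CZ)$ is injective on the relevant classes. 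The hardest part, and the one requiring real computation, is controlling these $H^2$ groups of $\CZ$: since $\CZ$ is of multiplicative type, by Shapiro and duality this reduces to the cohomology of $\BG_m$ on $X$ and $U$ with coefficients in the character lattice, which in turn is governed by the structure of the ring $W_E(O_C)$ and the fact that the punctured spectrum $U$ agrees with $X$ outside a codimension-$2$ locus; the explicit computation mirrors the vector bundle comparison used in Kedlaya's theorem (Theorem~\ref{FFKedlaya}) but for line bundles in degree two, and this is the technical heart of the argument.
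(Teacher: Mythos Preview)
Your treatment of (ii) and (iii) is correct and matches the paper: (ii) is trivial, (iii.b) is Lemma~\ref{redalgcl}(b), and (iii.a) is the Shapiro/Weil-restriction argument (your identification $W_E(O_C)\otimes_{O_E}O_{E'}=W_{E'}(O_C)$ after making $k$ algebraically closed is exactly the right reduction).

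For (i), however, there is a genuine gap. First, the short exact sequence $1\to\CZ\to\CG\to\CG_\ad\to 1$ of \emph{smooth} $O_E$-group schemes that you invoke does not exist in general: the scheme-theoretic kernel of the map between parahorics need not be smooth (think of $Z$ containing $\mu_p$), and the map $\CG\to\CG_\ad$ is typically not fppf-surjective, so there is no long exact sequence in non-abelian cohomology over $U$ or $X$ to run. The clean route---which you set up at the outset but then abandon---is to work entirely on $V$ via Proposition~\ref{cohoV}: there one has the honest sequence $1\to Z\to G\to G_\ad\to 1$ of algebraic groups over $E$, and the question becomes $H^1(V,G)=\{1\}\Leftrightarrow H^1(V,G_\ad)=\{1\}$.

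Second, even over $V$ you leave the essential input as an assertion. For $G_\ad\Rightarrow G$ you need that $H^1(V,Z)\to H^1(V,G)$ is trivial, and for $G\Rightarrow G_\ad$ you need that the boundary map $H^1(V,G_\ad)\to H^2(V,Z)$ has trivial image. Your proposed mechanisms (``vanishes after passing to a suitable $v$-cover'', ``mirrors the vector bundle comparison\ldots\ but for line bundles in degree two'') are not arguments; this is precisely the content of \cite[Prop.~9.6, Lem.~9.8]{An}, which the paper cites. The actual proof there requires separate control of the torus and finite-multiplicative pieces of $Z$ and specific cohomological properties of $W_E(O_C)[1/\pi]$, none of which follows formally from what you have written.
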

\begin{proof}
Statement (i) is \cite[Prop. 9.6 and Lem. 9.8]{An}. Statement (ii) is trivial. The first statement of (iii) follows, using Lemma \ref{redalgcl}, from Proposition \ref{cohoV} by Shapiro's lemma. For the last statement in (iii), we note that the base change $\CG\otimes_{O_E}O_{E'}$ is again a parahoric model. Since $O_{E'}=W(k')\otimes_{W(k)}O_E$, we have  $ W(C^+)\otimes_{W(k')}O_{E'}=W(C^+)\otimes_{W(k')}({W(k')}\otimes_{W(k)}O_{E})=W(C^+)\otimes_{W(k)}O_{E}$. Hence  the argument in the proof of \cite[Lemma 9.1]{An} applies.  
\end{proof}

\subsection{Descent under a tamely ramified extension}
Let $E'/E$ be a tamely ramified finite extension, which is Galois with Galois group $\Gamma$. For simplicity of notation, set $O'=O_{E'}$ and $O=O_E$ and denote by $\breve O'$ and $\breve O$ the completions of the corresponding maximal unramified extensions. Let $G$ be a reductive group over $E$ and let $\CG$ be a parahoric model. By definition, $\CG=\CG_x^\circ$ where $x\in \CB(G, E)$ is a point in the (extended) Bruhat-Tits
building, and $\CG_x$ is the Bruhat-Tits smooth group scheme over $O$ with $\breve O$-points given the stabilizer
\[
\CG_x(\breve O)=\{g\in G(\breve E)\ |\ g\cdot x=x\} ,
\]
and $\CG_x^\circ$ is the neutral component. Set $G'=G\otimes_E E'$. Recall that by work of Prasad-Yu \cite{PYu}, since $E'/E$ is tame, we have an equivariant identification
\[
\CB(G, E)=\CB(G', E')^\Gamma.
\]
The point $x$ can be considered also as a point of $\CB(G', E')$ which is fixed by $\Gamma$. 

By functoriality, the corresponding
Bruhat-Tits group scheme $\CG'_x$ over $O'$ supports a semilinear action of $\Gamma$
and so does its neutral component $\CG'=(\CG'_x)^\circ$.
Hence, the Weil restrictions of scalars 
\[
{\rm Res}_{O'/O}(\CG'_x), \quad {\rm Res}_{O'/O}(\CG'),
\]
are smooth affine group schemes over $O$ with an action of $\Gamma$. 
Note that the second group scheme is also connected. By \cite[Prop. 1.3.9]{KP}, we have
\begin{equation}\label{tamedescBT}
\CG_x\cong {\rm Res}_{O'/O}(\CG'_x)^\Gamma.
\end{equation}
Let us recall here the argument for the proof of this isomorphism:
The fixed point loci
\[
{\rm Res}_{O'/O}(\CG'_x)^\Gamma, \quad {\rm Res}_{O'/O}(\CG')^\Gamma,
\]
are smooth (using tameness and \cite[Prop. 3.4]{Edix}). 
We have 
\[
\CG_x(\breve O)=\{g\in G(\breve E)\ |\ g\cdot x=x\}=\{g'\in G(\breve E')^\Gamma\ |\ g'\cdot x=x\}=\CG'_x(\breve O')^\Gamma.
\]
Now \eqref{tamedescBT} follows from the   characterization of the Bruhat-Tits group schemes as the unique smooth group schemes with
 group of $\breve O$-points given by the stabilizer.

Note that this gives a closed group scheme immersion $\CG_x\hookrightarrow {\rm Res}_{O'/O}(\CG'_x)$ which, by adjunction, induces a group scheme homomorphism $\CG_x\otimes_O O'\to \CG'_x$. 

Set $X'=\Spec(A_{E'})$, $X=\Spec(A_E)$ and denote by $U'$, resp. $U$, the complement
of the closed point. We have $X'=X\otimes_O O'$ and 
$U'=U\otimes_O O'$. The following proposition extends the second statement of Proposition \ref{extconjfunct}, (iii), which concerned unramified extensions.

\begin{proposition}\label{tamedescent} Let $E'/E$ be a finite tamely ramified Galois extension, with Galois group $\Gamma$. Let $G$ be a reductive group over $E$ and let $G'=G\otimes_E E'$. If $G'$ satisfies the Extension Conjecture \ref{paraextconj}, then so does $G$.
\end{proposition}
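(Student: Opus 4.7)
The plan is to descend the Extension Conjecture from $G'$ to $G$ by exploiting the identification \eqref{tamedescBT}, which realizes $\CG_x$ as the $\Gamma$-fixed points of $\Res_{O'/O}(\CG'_x)$, combined with Galois descent along the tame cover $X'/X$. First I reduce to the case where $C = C^+ = O_C$ with $C$ algebraically closed, using Lemma \ref{redalgcl}(a), and where the residue field $k$ of $E$ is algebraically closed, by base changing to $\breve{E}/E$ (which preserves tameness of $E'/E$ since the unramified and totally tamely ramified parts are linearly disjoint).

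Given a $\CG$-torsor $\CP$ on $U$, I base change it to $U' = U \otimes_O O'$ and push out along the homomorphism $\CG \otimes_O O' \to \CG'_x$ arising, via the Weil-restriction adjunction, from the closed immersion $\CG \hookrightarrow \Res_{O'/O}(\CG'_x)$. The result is a $\CG'_x$-torsor $\CP'_{U'}$ on $U'$ carrying a canonical $\Gamma$-equivariant structure. Applying the Extension Conjecture for $G'$ (in the variant for stabilizer group schemes, which follows from the parahoric case by the same double-coset argument as in Proposition \ref{cohoV}), the torsor $\CP'_{U'}$ extends uniquely to a $\CG'_x$-torsor $\overline{\CP}'$ on $X'$. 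Uniqueness of this extension, a consequence of the full-faithfulness of restriction from $X'$ to $U'$ on vector bundles (Kedlaya's theorem as recalled in the excerpt), propagates the $\Gamma$-equivariant structure to $\overline{\CP}'$. I then form $(\Res_{X'/X}(\overline{\CP}'))^\Gamma$, which by \eqref{tamedescBT} is a torsor under $(\Res_{O'/O}(\CG'_x))^\Gamma \times_O X = \CG_x \times_O X$; its restriction to $U$ recovers $\CP$, yielding the desired extension. The passage from $\CG_x$ to the parahoric $\CG = \CG_x^\circ$ is harmless since the starting $\CP$ already has connected structure group.

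The main obstacle is the final descent step, namely verifying that the Weil-restriction-followed-by-$\Gamma$-fixed-points operation genuinely produces a torsor under the fixed-point group scheme, rather than a smaller subfunctor. This is precisely where the tameness hypothesis is essential: with $|\Gamma|$ coprime to $p$, the representability and smoothness results of Edixhoven \cite{Edix} apply (as in the proof of \eqref{tamedescBT} in \cite{KP}), guaranteeing that $\Gamma$-fixed points of smooth affine $\Gamma$-schemes remain smooth and that the torsor structure passes to fixed points. Once this technical point is secured, the construction proceeds cleanly.
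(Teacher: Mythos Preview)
Your overall strategy matches the paper's: reduce to algebraically closed $C$ and $k$, push out $\CP$ to a $\Gamma$-equivariant $\CG'$-torsor on $U'$, extend it to $X'$, propagate the $\Gamma$-structure by full faithfulness, and form $(\Res_{X'/X}(\widetilde{\CP}'))^\Gamma$. The paper also first reduces to $G$ simply connected via Proposition \ref{extconjfunct}(i), which cleanly disposes of the $\CG_x$ vs.\ $\CG_x^\circ$ issue you wave away at the end.

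However, there is a genuine gap at the step you yourself flag as the main obstacle. Edixhoven's result \cite[Prop.~3.4]{Edix} tells you that the $\Gamma$-fixed locus of a smooth affine $\Gamma$-scheme is \emph{smooth}; it does \emph{not} tell you that the fiber of $(\Res_{X'/X}(\widetilde{\CP}'))^\Gamma$ over the closed point $s$ is non-empty. Smoothness over $X$ together with non-emptiness over $U$ does not force non-emptiness over $s$. Concretely, $\widetilde{\CP}'$ is trivial over the strictly local $X'$, so $\widetilde{\CP}' \cong \CG' \times X'$, but the $\Gamma$-equivariant structure is twisted by a cocycle, and you must rule out that this cocycle becomes ``bad'' at $s$. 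The paper supplies a nontrivial argument for this (attributed to Scholze): assuming the special fiber is empty, one deduces that $\CP = (\Res_{X'/X}(\widetilde{\CP}'))^\Gamma$ is entirely supported over $U$ and hence affine, so $\CP \times_U U'$ is affine; comparing ${\rm H}^1(\CP', \CO)$ with ${\rm H}^1(\CP \times_U U', \CO) = 0$ via the push-out morphism, one finds they agree up to bounded $p$-torsion, contradicting the fact that ${\rm H}^1(U', \CO) = {\rm H}^2_{\mathfrak m}(W_{E'}(O_C))$ (which sits inside the former via the triviality of $\CP'$) is not of bounded $p$-torsion. Without this or an equivalent argument, your proof is incomplete.
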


\begin{proof}  By Proposition \ref{cohoV} and the argument in the proof of \cite[Theorem 11.4]{An}, we see that 
it is enough to assume $C^+=O_C$ throughout. By Proposition \ref{extconjfunct} (i) and (ii), we may assume first that $G_\ad$ is simple and, passing through $G_\ad$, that $G$ is also simply connected.  It is enough to consider parahoric group schemes $\CG$ and $\CG'$ as above and, assuming that every $\CG'$-torsor over $U'$ extends to $X'$, to  show that every $\CG$-torsor over $U$ extends to $X$. Since $G$ is simply connected,  $\CG'=\CG'_x$ and $\CG=\CG_x= {\rm Res}_{O'/O}(\CG')^\Gamma$. 

Using Lemma \ref{redalgcl} a), we can reduce to the case that $C$ is algebraically closed, and by Lemma \ref{redalgcl} b) that  $k$ is also algebraically closed and that $X$ and  $X'$ are strictly local. Hence, every $\CG$-, resp. $\CG'$-torsor, over $X$, resp. $X'$, is trivial.

Note that $\CG'$ over $O'$ is a smooth affine 
$\Gamma$-group scheme, i.e. it affords a ($O'$-semilinear) $\Gamma$-action which is compatible with the Hopf $O'$-algebra
structure on $O(\CG')$.  We can make sense of the notion of a $(\Gamma, \CG')$-torsor over $X'$ or $U'$ as in Balaji-Seshadri \cite[\S4]{BalaS}.

 Let $\CP$ be a $\CG$-torsor over $U$. The base change of $\CP$ by $U'\to U$ followed by the push-out by $\CG\otimes_O O'\to \CG'$ gives naturally a $(\Gamma,\CG')$-torsor
$\CP'$ over $U'$. Recall that we assume that all $\CG'$-torsors over $U'$ extend to $X'$. Consider the $\CG'$-torsor $\CP'$ over $U'$ obtained by forgetting  the $\Gamma$-structure. This  extends to a $\CG'$-torsor 
$\widetilde\CP'$ over $X'$; by faithfully flat descent, this is affine and given by a flat $A_{E'}$-algebra $O(\widetilde \CP')$. Now observe that 
the restriction from $X'$ to $U'$ gives a fully faithful functor from affine flat schemes over $X'$ to affine flat schemes
over $U'$ (see \cite[Prop. 8.2]{An}; this uses Lazard's theorem to write a general flat algebra such as $O(\widetilde\CP')$ as a direct limit of finite free modules.
Alternatively, using the Tannakian equivalence to reduce to vector bundles, we see that the restriction of $\CG'$-torsors over $X'$ to $\CG'$-torsors over $U'$ is fully faithful, cf. \cite{An} Lemma 8.4.) By applying this full faithfulness to the isomorphisms given by 
the elements of the Galois group, we obtain that the $\Gamma$-action on $\CP'$ extends to a $\Gamma$-action on $\widetilde \CP'$. Hence, we obtain a $(\Gamma, \CG')$-torsor over $X'$ which enhances $\widetilde\CP'$.
\smallskip

\noindent {\it \ Claim:  ${\rm Res}_{X'/X}(\wt\CP')^\Gamma$ is a $\CG$-torsor over $X$ which extends the $\CG$-torsor $\CP$ over $U$.}

\smallskip

The proof that follows is due to Scholze. What has to be shown is that the fiber of ${\rm Res}_{X'/X}(\wt\CP')^\Gamma$ over the special point $s\in X$ is non-empty. Indeed, if this fiber is non-empty, a section can be lifted to a section of ${\rm Res}_{X'/X}(\wt\CP')^\Gamma$ over $X$ (use the smoothness of ${\rm Res}_{X'/X}(\wt\CP')^\Gamma$), hence ${\rm Res}_{X'/X}(\wt\CP')^\Gamma$ is a (trivial) $\CG$-torsor over $X$. 

Assume, by way of contradiction, that the fiber of ${\rm Res}_{X'/X}(\wt\CP')^\Gamma$ over $s$ is empty. Then $\CP={\rm Res}_{U'/U}(\CP')^\Gamma={\rm Res}_{X'/X}(\wt\CP')^\Gamma$ is an affine scheme. Hence, $\CP\times_{U}U'=\CP\otimes_O O'$ is also affine. Consider the push-out morphism
$$
\pi\colon \CP\times_{U}U'\to \CP'. 
$$
It induces a map on cohomology,
\begin{equation}\label{pusho}
{\rm H}^1(\CP', \CO)\to {\rm H}^1(\CP\times_U U', \CO) .
\end{equation}
The map \eqref{pusho} is an isomorphism up to bounded $p$-torsion. Indeed, since $\pi$ is an affine morphism, the map  \eqref{pusho}  is induced by the map of sheaves on $\CP'$ given by 
$$
\CO_{\CP'}\to \pi_*(\CO_{\CP\times_U U'}) ,
$$
and this map is injective, with cokernel a skyscraper sheaf on $U'\otimes_{O'} k$. Now ${\rm H}^1(\CP\times_U U', \CO)=0$ since $\CP\times_U U'$ is affine. 
Since $\CP'$ is a trivial $\CG'$-torsor over $U'$, the source of this map can be identified with ${\rm H}^1(U', \CO)\otimes {\rm H}^0(\CG', \CO)$. Since ${\rm H}^1(U', \CO)={\rm H}^2_{\mathfrak m}(W_{E'}(O_C))$ is not of bounded $p$-torsion (it contains the images of $\frac{1}{\pi^a[\varpi]^b}\in W_{E'}(O_C)_{\pi[\varpi]}$ for any $a>0, b>0$), this is the desired contradiction. 
\end{proof}

\begin{corollary}\label{cortame}
 If there exists a tamely ramified extension $E'$ of $E$ such that $G'=G\otimes_E E'$ is of the form $G'\simeq \Res_{\tilde E'/E}(\tilde G)$, where $\tilde E'$  is a finite extension  of $E'$ and  $\tilde G$ is an unramified group over $\tilde E'$, then $G$ satisfies the Extension Conjecture \ref{paraextconj}. 
\end{corollary}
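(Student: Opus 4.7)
The plan is to deduce Corollary \ref{cortame} by chaining together three facts already available in the excerpt: (a) Weil restrictions of groups satisfying the Extension Conjecture again satisfy it (Proposition \ref{extconjfunct}(iii)), (b) unramified groups satisfy it (Proposition \ref{unrext}), and (c) tame Galois descent (Proposition \ref{tamedescent}). The only mild subtlety is that the hypothesis only supplies a tame extension $E'/E$, not a Galois one.

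First, I would replace $E'$ by its Galois closure $E''/E$ inside a fixed algebraic closure. Since $E'/E$ is tame and the compositum of tame extensions is tame, $E''/E$ is a finite tame Galois extension, with Galois group $\Gamma$. The strategy is to show that $G'':=G\otimes_E E''$ satisfies the Extension Conjecture and then apply Proposition \ref{tamedescent} to descend this along $E''/E$, obtaining the statement for $G$.

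Next, I would analyze the structure of $G''$. By hypothesis, $G'=G\otimes_E E' \simeq \Res_{\tilde E'/E'}(\tilde G)$ with $\tilde G$ unramified over $\tilde E'$. Base-changing from $E'$ to $E''$ and using that $\tilde E'\otimes_{E'}E''$ decomposes as a finite product $\prod_i \tilde E''_i$ of finite field extensions of $E''$, one obtains
\[
G'' \;=\; G'\otimes_{E'}E'' \;\simeq\; \prod_i \Res_{\tilde E''_i/E''}\bigl(\tilde G\otimes_{\tilde E'}\tilde E''_i\bigr).
\]
Each factor $\tilde G\otimes_{\tilde E'}\tilde E''_i$ is unramified, since the base change of an unramified reductive group along any field extension remains unramified. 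By Proposition \ref{unrext}, each such factor satisfies the Extension Conjecture, by Proposition \ref{extconjfunct}(iii) the Weil restriction $\Res_{\tilde E''_i/E''}(\tilde G\otimes_{\tilde E'}\tilde E''_i)$ satisfies it, and by Proposition \ref{extconjfunct}(ii) the product $G''$ satisfies it.

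Finally, applying Proposition \ref{tamedescent} to the finite tame Galois extension $E''/E$ and the group $G$ (with $G''=G\otimes_E E''$) yields that $G$ itself satisfies the Extension Conjecture, completing the proof. There is no real obstacle here: the argument is a bookkeeping combination of the three cited results, the only point worth flagging being the reduction to the Galois case, where the Weil restriction hypothesis is preserved only after the base change decomposes into a product, so one needs the closure of the class under products (Proposition \ref{extconjfunct}(ii)) in addition to closure under Weil restriction and the unramified case.
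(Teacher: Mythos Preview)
Your proposal is correct and follows essentially the same approach as the paper, which simply cites Proposition \ref{tamedescent}, Proposition \ref{extconjfunct}(iii), and Proposition \ref{unrext} in a one-line proof. Your explicit passage to the Galois closure $E''/E$ (needed because Proposition \ref{tamedescent} requires a Galois extension) and the resulting product decomposition invoking Proposition \ref{extconjfunct}(ii) spell out details the paper leaves implicit, but the logic is the same.
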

\begin{proof}
 This follows from Proposition \ref{tamedescent}, Proposition \ref{extconjfunct}, (iii) and Corollary \ref{unrext}.
\end{proof}
\subsection{Summary}
 Now combining everything above, we  obtain the following result on the Extension Conjecture. We introduce the following terminology. We call a reductive group $G$ over $E$ \emph{essentially tamely ramified} if
  $
 G_{\rm ad}\simeq \prod_i {\rm Res}_{E_i/E} (H_i),
$
  where, for all $i$, $H_i$ splits over a tamely ramified extension of $E_i$.
 \begin{remark}\label{Gacc}
 A reductive group $G$ is essentially tamely ramified under either of the following hypotheses:
 \begin{itemize}
 \item[a)]  If $p\geq 5$.
 \item[b)] If $p=3$ and $G_{\rm ad}\otimes_E\breve E$ has no simple factors of type $D_4^{(3)}$ or $D_4^{(6)}$
 (ramified triality).
 \end{itemize}
 Indeed, it is enough to show the following: If $G$ is an adjoint simple group $G$ over $E$ and, either $p\geq 5$, or $p=3$ and the condition in (b) above is satisfied, then $G\simeq {\rm Res}_{E'/E} (G')$, where
$G'$ splits over a tamely ramified extension of $E'$. We can always write $G\simeq {\rm Res}_{E'/E} (G')$ and by Steinberg's theorem, $G'\otimes_{E'} \breve E'$ is quasi-split. Hence, $G'\otimes_{E'} \breve E'$ is isomorphic to
the quasi-split outer form of its split form $H'$ and can be written 
\[
G'\otimes_{E'} \breve E'={\rm Res}_{E''/\breve E'}(H'\otimes_{\BQ_p} \breve E')^\Gamma ,
\]
where $E''/\breve E'$ is Galois with (inertial) group $\Gamma$ which acts 
via diagram automorphisms. By examining the possible local Dynkin diagrams (comp. \cite[\S 7a]{PRTwisted}), we see
that $e=[E'':\breve E']$ can only take the values $1$, $2$ and $3$. Hence,
if $p\geq 5$, $G'$ splits over a tamely ramified extension.
 We have $e=3$ only in one case, when the local Dynkin diagram is of type $G^I_2$ which 
 corresponds to the ramified triality $D_4^{(3)}$ or  $D_4^{(6)}$; this shows the result in case (b).
 \end{remark}

 \begin{theorem}\label{extTHM}
 Let $G$  be a reductive group over $E$ which is essentially tamely ramified. Then $G$ satisfies the Extension Conjecture \ref{paraextconj}. 
\end{theorem}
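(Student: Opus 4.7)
The plan is to derive the theorem by combining the structural stability results for the Extension Conjecture already collected in this appendix, so that the argument reduces to a direct application of Corollary \ref{cortame} to the constituents of $G_\ad$. First, I will invoke Proposition \ref{extconjfunct}(i) to replace $G$ by its adjoint group. The acceptability hypothesis then furnishes a decomposition
\[
G_\ad \simeq \prod_i \Res_{E_i/E}(H_i),
\]
where each $H_i$ splits over some tamely ramified extension $\tilde E_i/E_i$. By Proposition \ref{extconjfunct}(ii), I can treat the factors $\Res_{E_i/E}(H_i)$ one at a time, and by Proposition \ref{extconjfunct}(iii) it then suffices to verify the Extension Conjecture for each $H_i$ as a reductive group over $E_i$.

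For a fixed $H_i$ over $E_i$, I will apply Corollary \ref{cortame} with $E' = \tilde E_i$, $\tilde E' = \tilde E_i$, and $\tilde G = H_i\otimes_{E_i}\tilde E_i$, so that the inner Weil restriction appearing in the corollary is trivial. The required hypothesis is that $\tilde G$ is unramified over $\tilde E_i$, and this holds automatically because $\tilde G$ is in fact split by definition of $\tilde E_i$. The corollary, which is itself a packaging of Proposition \ref{unrext} (unramified case), Proposition \ref{extconjfunct}(iii) (Weil restriction), and Proposition \ref{tamedescent} (tame Galois descent), then produces the Extension Conjecture for $H_i$, and backtracking through the two reductions above yields it for $G$.

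No substantial obstacle is expected, since everything needed has been assembled in the earlier portions of the appendix. The only mild subtlety is that Proposition \ref{tamedescent} is formulated for a Galois tame extension, whereas $\tilde E_i/E_i$ need not be Galois; however, the Galois closure of a tame extension is again tame, so one may silently replace $\tilde E_i$ by this Galois closure without losing the splitting of $H_i$. This point is already built into the proof of Corollary \ref{cortame}, so no separate argument is required.
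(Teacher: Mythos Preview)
Your proposal is correct and follows essentially the same route as the paper: reduce to the adjoint group via Proposition \ref{extconjfunct}(i), split into factors via (ii), strip off the Weil restriction via (iii), and then handle each $H_i$ by tame descent down to the split (hence unramified) case. The only cosmetic difference is that you invoke Corollary \ref{cortame} as a packaged statement, whereas the paper unpacks it and cites Proposition \ref{tamedescent} and Proposition \ref{unrext} directly; your remark on passing to the Galois closure of $\tilde E_i/E_i$ is exactly the point needed to make that citation honest.
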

 \begin{proof}
 By Proposition \ref{extconjfunct} (i), (ii), we can assume that $G$ is adjoint and simple. Then  $G={\rm Res}_{E'/E}(G')$,
 and $G'$ splits over a tamely ramified extension.  By Proposition \ref{extconjfunct} (iii), we reduce to the case
 that $G$ splits over a tamely ramified extension. Finally,  by Proposition \ref{tamedescent}, we reduce to the case that 
 $G$ is, in fact, split. Then the result follows from Proposition \ref{unrext}.
 \end{proof}
 
 \begin{corollary}
 a) If $p\geq 5$, then $G$ satisfies the Extension Conjecture \ref{paraextconj}. 
 
 b) If $p=3$ and $G_{\rm ad}\otimes_E\breve E$ has no simple factors of type $D_4^{(3)}$ or $D_4^{(6)}$
 (ramified triality), then $G$ satisfies the Extension Conjecture \ref{paraextconj}. 
 \end{corollary}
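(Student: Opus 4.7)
The plan is simply to combine Remark \ref{Gacc} with Theorem \ref{extTHM}. First I would observe that Theorem \ref{extTHM} already reduces the entire problem to checking that $G$ is acceptable in the sense defined there, i.e., that $G_{\rm ad}\simeq \prod_i {\rm Res}_{E_i/E}(H_i)$ with each $H_i$ splitting over a tamely ramified extension of $E_i$. So the corollary is really a statement about tameness of splitting fields of adjoint simple groups.

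For part (a), I would use the standard structure theory: after decomposing $G_{\rm ad}$ into a product of simple factors and writing each factor as a Weil restriction of an absolutely simple group $G'$ over some $E'$, Steinberg's theorem makes $G'\otimes_{E'}\breve E'$ quasi-split, so it is obtained as the fixed points of a diagram automorphism action of a cyclic group of order $e\in\{1,2,3\}$ (the case $e=6$ being impossible as the outer form must be realized by a cyclic extension of $\breve E'$). In particular $e$ always divides $6$, so the splitting field is automatically tame whenever $p\geq 5$. This is exactly the content of Remark \ref{Gacc}(a). For part (b), the same analysis shows that the only obstruction to tameness at $p=3$ comes from the case $e=3$, and by the classification of local Dynkin diagrams this occurs precisely for the ramified triality types $D_4^{(3)}$ and $D_4^{(6)}$; excluding these, the splitting field is again tame, which is Remark \ref{Gacc}(b).

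Once acceptability is confirmed, the rest is essentially automatic: Theorem \ref{extTHM} reduces acceptability to the adjoint simple case via Proposition \ref{extconjfunct}(i),(ii), then to a group splitting over a tame extension via Proposition \ref{extconjfunct}(iii), then to the split case via Proposition \ref{tamedescent}, and finally concludes by Proposition \ref{unrext}. There is no genuine obstacle here, since Remark \ref{Gacc} has already been established in the text and Theorem \ref{extTHM} does all the heavy lifting; the corollary is merely the explicit numerical translation of acceptability into hypotheses on $p$ and the local Dynkin type.
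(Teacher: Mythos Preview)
Your proposal is correct and follows exactly the paper's approach: the paper's proof is the one-line observation that the corollary follows from Theorem \ref{extTHM} and Remark \ref{Gacc}. Your additional unpacking of the content of Remark \ref{Gacc} and of the reduction steps inside Theorem \ref{extTHM} is accurate but more than is needed.
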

 
\begin{proof} 
This follows from Theorem \ref{extTHM} and Remark \ref{Gacc}. \end{proof}

\begin{remark}
In the proof in Remark \ref{Gacc} above, we have $e=\#\Gamma=2$, when the local Dynkin diagram is of types $B$-$C_n$
($n\geq 3$), $C$-$B_n$ ($n\geq 2$), $C$-$BC_n$ ($n\geq 1$), or $F^I_4$.  The types $B$-$C_n$, $C$-$BC_n$, correspond to ramified unitary groups and $C$-$B_n$
to (ramified) even orthogonal groups. The cases of wildly ramified unitary groups (for $p=2$) can be handled
by using unpublished  work of Kirch, as shown in the first version of \cite[\S 9]{An}. Ansch\"utz's result covers this case, as well as the wildly ramified triality group (for $p=3$), wildly ramified even orthogonal groups (for $p=2$), 
and the wildly ramified outer form of $E_6$ (for $p=2$). 
\end{remark}

\subsection{On the extension theorem of \cite{KP}}
As before, let $G$ be a reductive group over $E$ and $\CG$ a parahoric model over $O_E$. In \cite{KP}, it is assumed that $O_E=W(k)$, where $k$ is a finite field or algebraically closed. The following generalizes \cite[Prop. 1.4.3]{KP}.
\begin{proposition}\label{KPnew}
Assume that $G$ is essentially tamely ramified.    Then any $\CG$-torsor over $\Spec (O_E\lps u\rps)\setminus \{\frak m\}$ extends to $\Spec (O_E\lps u\rps)$. (Here $\frak m$ denotes the maximal ideal of the local ring $O_E\lps u\rps$.)
\end{proposition}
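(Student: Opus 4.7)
The plan is to run the proof of Theorem \ref{extTHM} with $W_E(C^+)$ replaced throughout by $R = O_E\lps u\rps$ and $U$ now standing for $\Spec(R)\setminus\{\mathfrak m\}$. First, I would verify that the functoriality results of Proposition \ref{extconjfunct} transfer to this setting: parts (i) and (ii) are formal, while part (iii) uses Shapiro's lemma in the Weil restriction direction and the natural identification $O_{E'}\otimes_{O_E} R \cong O_{E'}\lps u\rps$ in the unramified descent direction. Combined with the acceptability of $G$, these reductions bring us to the case where $G$ is adjoint simple and splits over a tame Galois extension $E'/E$.

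Second, I would transport the tame descent argument of Proposition \ref{tamedescent} to this context. The only non-formal ingredient is Scholze's cohomological contradiction, which here requires that $\mathrm{H}^1(U',\CO) = \mathrm{H}^2_{\mathfrak m'}(O_{E'}\lps u\rps)$ fail to be of bounded $p$-torsion. This holds because the classes of $\frac{1}{\pi^a u^b}$, for all $a,b>0$ and with $\pi$ a uniformizer of $E'$, yield unbounded $p$-torsion in this local cohomology module, in direct analogy with the original argument. The rest of the descent (full faithfulness of restriction from $X'$ to $U'$, triviality of $\CG$-torsors on the strictly henselian local scheme $X'$) carries over verbatim. This reduces to the case where $G$ is split over $E$.

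Third, for split $G$ I would handle the Chevalley parahoric $\CG = G_{O_E}$ directly via a closed embedding $\CG\hookrightarrow \GL_n$ with affine quotient $\GL_n/\CG$: Kedlaya's theorem implies that the vector bundle associated to a $\CG$-torsor on $U$ extends to $X$, and affineness of the quotient then ensures that the reduction of structure group extends as well, exactly as in the proof of \cite[Prop.~8.5]{An}. To pass from this hyperspecial case to an arbitrary parahoric model of the split group $G$, I would establish the analog of Proposition \ref{cohoV} for $R$: after passing (by faithful flatness) to an algebraically closed residue field, Beauville--Laszlo glueing along $V(\pi)\subset X$ identifies the set of isomorphism classes of $\CG$-torsors on $U$ extending to $X$ with a double coset space of the form $\CG(\widehat R_\pi)\backslash G(\widehat R_\pi[1/\pi]) / G(R[1/\pi])$, and the triviality of this space is forced by the ind-properness of the Witt vector affine Grassmannian for $\CG$, hence is independent of the parahoric choice.

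The main obstacle I anticipate is precisely this last parahoric-independence step. The original Proposition \ref{cohoV} relies crucially on Steinberg triviality on $\CE = W_E(C)[1/\pi]$, and the analog for $R$ requires a triviality statement for $G$-torsors on $E\lps u\rps$ (or an appropriate strictly henselian completion of $R[1/\pi]$). Proving this directly, without invoking ``Serre II"--type results that the paper explicitly wishes to avoid, will be the most delicate part of the argument; one promising route is to combine the equal-characteristic ind-properness of $\CG$'s loop Grassmannian over $O_E$ with a careful Beauville--Laszlo reduction to this triviality on a 1-dimensional slice.
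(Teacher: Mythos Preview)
Your approach is a genuine detour compared to the paper's, and the obstacle you yourself flag at the end is exactly what the paper's method sidesteps.

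The paper does not re-run any of the argument of Theorem \ref{extTHM} for $R=O_E\lps u\rps$. Instead, it produces a single faithfully flat ring homomorphism
\[
f\colon O_E\lps u\rps\longrightarrow W_E(O_C),\qquad u\longmapsto [\varpi],
\]
for a perfect non-archimedean field $C$ over $k$ with pseudouniformizer $\varpi$ (faithful flatness is \cite[Lem.~10.2]{An}). Given a $\CG$-torsor $\CP$ on $\Spec(O_E\lps u\rps)\setminus\{\mathfrak m\}$, its pullback along $f$ is a $\CG$-torsor on $U_{(C,O_C)}$, which extends to $X_{(C,O_C)}$ by Theorem \ref{extTHM}, already proved. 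One then descends the extension along $f$ using the full faithfulness of restriction to the punctured spectrum for flat affine schemes (\cite[Prop.~8.2, Lem.~8.3]{An}), exactly as in the descent step of Proposition \ref{tamedescent}. That is the whole proof.

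Your route, by contrast, requires re-establishing the analogues of Propositions \ref{cohoV}, \ref{unrext}, \ref{extconjfunct}, and \ref{tamedescent} over $O_E\lps u\rps$ from scratch. Some pieces are fine (vector bundles on the punctured spectrum of the two-dimensional regular local ring $O_E\lps u\rps$ are free, so your split/hyperspecial step goes through), but the parahoric-independence step genuinely needs a Steinberg-type triviality input on a field like $E\lps u\rps$ or its completion, and your ``promising route'' via ind-properness does not supply that input on its own---ind-properness lets you reduce to a statement about $G$-torsors on a one-dimensional base, but you still have to know those are trivial. The paper's faithfully flat transfer avoids this entirely: all the hard work (Steinberg on $\CE$, ind-properness of ${\rm Gr}_\CG$, tame descent) has already been done once over $W_E(O_C)$, and there is no need to repeat it.
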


\begin{proof}
 Let $C$ be a perfect non-archimedean field which is a $k$-algebra, and let 
 $$
 f\colon O_E\lps u\rps\to W_E(O_C)
 $$  be the homomorphism defined by sending $u$ to $[\varpi]$, for a fixed pseudouniformizer $\varpi\in \frak m_C$. Then $f$ is faithfully flat, cf. \cite[Lem. 10.2]{An}. Hence Proposition \ref{KPnew} follows from Theorem \ref{extTHM} by using descent  \cite[Prop. 8.2, Lemma 8.3]{An}, comp. the argument in the proof of Proposition \ref{tamedescent}. 
 \end{proof}

\begin{remark}
Again, Ansch\"utz proves this in complete generality, i.e.,  without any tameness hypothesis, as a consequence of his proof of the Extension Conjecture \ref{paraextconj}, cf. \cite[Prop. 10.3]{An}.
\end{remark}

\vfill\eject

\end{document}